\titleformat{\chapter}[block]{\Large\bfseries\filcenter}{\thechapter}{1em}{}
\titleformat{\section}[hang]{\normalsize\bfseries\filcenter}{\thesection}{1em}{}
\titleformat{\subsection}[hang]{\normalsize\bfseries\filcenter}{\thesubsection}{1em}{}
\newtheorem{Thm}{Theorem}[chapter] 
\newtheorem{Lem}[Thm]{Lemma}
\newtheorem{Fac}[Thm]{Fact}
\newtheorem{Cor}[Thm]{Corollary}
\newtheorem{Cor.Conj}[Thm]{Corollary of Conjecture}
\newtheorem{Prop}[Thm]{Proposition}
\newtheorem{Prob}[Thm]{Problem}
\newtheorem{Conj}[Thm]{Conjecture}
\newtheorem{Claim}[Thm]{Claim}
\newtheorem{Ass}[Thm]{Assumption}
\newtheorem{THM}{Theorem}
\newtheorem{RTHM}{Theorem}
\newtheorem{MConj}[THM]{Main Conjecture}
\newtheorem{PROP}[THM]{Proposition}
\newtheorem{COR}[THM]{Corollary}
\newtheorem{REM}[THM]{Remark}
\theoremstyle{remark}
\newtheorem{Rem}[Thm]{Remark}
\newtheorem{Ex}[Thm]{Example}
\theoremstyle{definition}
\newtheorem{Def}[Thm]{Definition}
\newtheorem{Step}{Step}
\newtheorem{Stp}{Step}
\newtheorem{Stpp}{Step}  
\newtheorem{Const}[Thm]{Construction}
\newtheorem*{ntt}{Notations}
\newcommand{\Aut}{\mathop{\mathrm{Aut}}\nolimits}
\newcommand{\re}{\mathop{\mathrm{Re}}\nolimits}
\newcommand{\im}{\mathop{\mathrm{Im}}\nolimits}
\newcommand{\Lie}{\mathop{\mathrm{Lie}}\nolimits}
\newcommand{\R}{\ensuremath{\mathbb{R}}}
\newcommand{\C}{\ensuremath{\mathbb{C}}}
\newcommand{\G}{\ensuremath{\mathbb{G}}}
\newcommand{\Z}{\ensuremath{\mathbb{Z}}}
\newcommand{\Q}{\mathbb{Q}}
\begin{document}

\title
{\bf \Large{Collapsing K3 Surfaces, Tropical Geometry \\  
and Moduli Compactifications of Satake, \\ Morgan-Shalen Type}}

\author{Yuji Odaka, Yoshiki Oshima}
\date{}

\maketitle

\newpage

\begin{abstract}
This research monograph mainly aims to provide, or make a step toward, a theory of 
{\it canonical} and explicit compactifications of 
moduli spaces of K-trivial varieties, notably K3 surfaces. 
We require a geometric meaning i.e., to make the boundary of the compactified moduli space
parametrize limits of Ricci-flat K\"ahler metrics, which are often collapsed and 
also coincides with a posteriori defined ``tropicalized version'' of 
original varieties. 
From differential geometric perspective, this work 
provides a 
moduli-theoretic framework for the limiting behavior 
of Ricci-flat K\"ahler metrics. 
We emphasize that the compactification we use are {\it no longer varieties}. More precisely speaking, 
our actual discussion proceeds as follows. 

A general form of our desired compactifications  
can be obtained by applying 
a variant of {\it Morgan-Shalen} type compactifications \cite{MS} 
to 
locally Hermitian symmetric spaces.
Note that the compactifying procedure 
depends on either the theory of valuations or 
 that of dual intersection complexes, 
which often appear in the contexts of birational geometry, 
non-archimedean geometry or tropical geometry. 
Our first observation 
identifies the compactification with 
 {\it the Satake compactification for the adjoint representation}. 
 Note that it is different from the so-called Satake-Baily-Borel compactification, but is still 
a particular example of \cite{Sat2} and hence constructed 
in the context of Lie theory or symmetric spaces. 
 The existence of two descriptions of the same compactification are beneficially used in our work. 

\bigskip

\medskip

We 
apply this compactification to moduli spaces of compact hyperK\"ahler manifolds, which includes 
K3 surfaces but special case of K-trivial varieties. 
Our main conjectures state that the compactification 
 parametrizes the Gromov-Hausdorff limits 
 of the rescaled Ricci-flat (hyperK\"ahler) 
 metrics with fixed diameters.  A benefit of our conjectures is that, once it is confirmed, then for quite general sequences 
 which are not even necessarily ``maximally degenerating", we can determine the Gromov-Hausdorff limits.  
The main body of this monograph aims to give 
(partial) confirmation of the 
 conjectures in various cases. 
We first establish the easier case of 
 abelian varieties, refining the previous work of the first author. 
 Then, we proceed to the cases of K3 surfaces and hyperK\"ahler manifolds. 

 Our results for K3 surfaces provide, for example, a proof 
 of \cite[Conjecture 1]{KS} and \cite[Conjecture 6.2]{GW} for K3 surfaces, 
 which ask the limits of their one parameter maximally degenerating family. In this case, 
 the obtained Gromov-Hausdorff limits are metrized spheres $S^{2}$, appeared and studied in \cite{GW,KS}, 
 which have natural integral affine structures with singularities and are 
 often regarded as tropical analog of K3 surfaces. We also identify such limits along one parameter holomorphic families
 from the monodromy information. We also prove analogous results 
 for (not necessarily algebraic) K\"ahler K3 surfaces.  
 
 \vspace{5mm}
Trying to prove our conjectures for higher dimensional hyperK\"ahler manifolds, this monograph at least 
establishes the non-collapsing part of the 
moduli 
and make certain progress on algebro-geometric preparations for the collapsing part. 
More precise statement of the former progress states that, for a fixed deformation class of polarized irreducible symplectic manifolds, 
the set of all $\mathbb{Q}$-Gorenstein degenerations as polarized  symplectic varieties with ample 
\textit{$\Q$-line bundles} are bounded, and further that 
the corresponding partial compactification of the moduli space becomes an orthogonal locally symmetric variety as in K3 surfaces case. 
Finally, we discuss possible extension of our collapsing picture for general Ricci-flat K\"ahler metrics of $K$-trivial varieties.

In the text, we often review some basic prerequisites to make the documents relatively self-contained. 
\end{abstract}

\newpage
\begin{center}
{\bf \Large{Acknowledgments}}
\end{center}
Since the first author presented the main conjecture \ref{K3.Main.Conjecture} 
at Oxford in 
the Clay conference ``Algebraic geometry --- new and old---" in September 2016 
(resp., Conjecture \ref{K3.Main.Conjecture2} in the 
Mirror symmetry international conference at Kyoto university in December 2016), 
we have talked and discussed  
on the topic at Singapore, Levico Terme, Ann Arbor, 
Shanghai, Xiamen, Moscow, and various cities in Japan. 
We thank the organizers and the hosts for making such enjoyable chances which encouraged us. 

This work is posted on arXiv at arXiv:1810.07685 and its announcement \cite{OO} has been also published. 
Especially until then, our work benefited from the helpful comments 
from our colleagues and friends whom we thank very much; 
Yuki Arano, Kenji Hashimoto, Shouhei Honda, Hiroshi Iritani, Tetsushi Ito, Chen Jiang,  Ryoichi Kobayashi, Teruhisa Koshikawa, 
Radu Laza, Daisuke Matsushita, Giovanni Mongardi, Yosuke Morita, 
Shigeru Mukai, Hiraku Nakajima, 
Yoshinori Namikawa, Yuichi Nohara, Bernd Siebert, Song Sun, Cristiano Spotti, Kazushi Ueda, 
Ken-Ichi Yoshikawa. 
After our announcement paper \cite{OO}, 
we could also have some fruitful discussions with 
Lorenzo Foscolo, Song Sun, David Morrison, Jeff A. Viaclovsky in June 2018, on Conjecture \ref{K3.Main.Conjecture2} and possible 
relations with 
\cite{ChCh}, \cite{Fos}, \cite{HSVZ}, \cite{Morr}. 
Mattias Jonsson, Valentino Tosatti, Keita Goto, and 
Masafumi Hattori 
gave us helpful comments on the manuscript.  
Also the anonymous kind referees gave us various 
helpful comments and suggestions for the revision. We would like to thank them all gratefully. 
\medskip

\begin{center}
{\Large{Financial supports}}
\end{center}
During this research, the first author was partially supported by JSPS Grant-in-Aid for Young Scientists (B) No.\ 26870316, 
Grant-in-Aid (S), No.\ 16H06335 and Grant-in-Aid for Early-Career Scientists 
No.\ 18K13389. The second author was partially supported by JSPS Grant-in-Aid for Young Scientists (B) No.\ 16K17562.

\newpage
\tableofcontents


\newpage
\chapter{Introduction}

\section{Background}
This monograph is a sequel  to \cite{TGC.I, TGC.II} by the first author, 
which compactified the moduli of hyperbolic curves $M_{g}$ and 
of the principally polarized abelian varieties $A_{g}$, by 
attaching moduli of certain ``tropical varieties". 
In this joint work, while we mainly focus on the case of 
moduli of K3 surfaces, 
we introduce new general perspectives using 
the theory of symmetric spaces and the Morgan-Shalen type compactification, 
to reveal more explicit structures of the compactification. 

In general, as partially discussed in the introduction of {\it op.cit.}, 
we expect that for any moduli $\mathcal{M}$ of general polarized 
K\"ahler-Einstein varieties with \textit{non-positive} Ricci curvatures, 
there is a pair of similar (non-variety) compactifications and 
we hope our joint work here will be useful to study their structures, 
extending \cite{TGC.I, TGC.II}. 
The two types of compactifications are, firstly 
the Gromov-Hausdorff 
compactification $\overline{\mathcal{M}}^{\rm GH}$ with respect to rescaled K\"ahler-Einstein metrics 
of fixed diameters, and secondly its refinement which we call 
``tropical geometric compactification'' $\overline{\mathcal{M}}^{\rm T}$ 
whose boundary encodes more structure of the 
collapses rather than just metric structures. 
Here, the superscripts GH stands for Gromov-Hausdorff and 
T stands for tropical. 

Recall that the Gromov-Hausdorff distance is a distance between 
compact metric spaces (cf., e.g., \cite{BBI}), and 
there are notions of Gromov-Hausdorff limit of a sequence of 
compact metric spaces and the 
Gromov-Hausdorff topology on a (sub)set of compact metric spaces 
accordingly. 
Our Gromov-Hausdorff compactification $\overline{\mathcal{M}}^{\rm GH}$ above 
means to attach the set of all Gromov-Hausdorff limits on 
the boundary. The famous Gromov precompactness theorem (cf., e.g., 
\cite{BBI}) 
shows that any set of compact Riemannian manifolds of fixed dimension  with 
the Ricci curvature bounded from both sides and bounded diameters 
is precompact with respect to the Gromov-Hausdorff topology. 
In particular, it ensures the existence of Gromov-Hausdorff 
compactification in the Ricci-flat case. 

The name for $\overline{\mathcal{M}}^{\rm T}$ is inspired by the 
recent studies on the Strominger-Yau-Zaslow conjecture for the mirror symmetry (cf., e.g., \cite{SYZ, GW, KS, Gross}), in which 
the half-dimension collapses of Ricci-flat K\"ahler metrics along one parameter maximal degenerations are often regarded as ``tropicalization'' of the original Calabi-Yau manifolds and are expected to play a crucial role toward the geometric construction of ``mirror'' (cf., e.g., \cite{KS, GS.logI, GS11}). 
However, unfortunately, by now 
we only have case by case definitions of 
 tropical geometric compactifications depending on the particular classes of varieties. 
 Correspondingly we only use the term 
 ``tropical'' briefly only when we explain the background or motivation. 
 In these contexts, ``tropicalization'' of projective varieties (resp., open varieties) vaguely refers to the dual intersection complex  
 of log minimal (dlt) degenerations (resp., boundary divisor of projective dlt compactifications).  
 
In particular, in \cite{TGC.I, TGC.II}, we introduced the 
compactifications for $M_{g}$ and $A_{g}$ cases and studied their 
explicit structures, which we now briefly recall as follows: 

\begin{RTHM}[For $M_{g}$, in {\cite{TGC.I}, \cite{TGC.II}}]\label{TGC.Mg.review}
There is a pair of compact Hausdorff topological spaces 
$\overline{M_{g}}^{\rm GH}$ and $\overline{M_{g}}^{\rm T}$
which both contain 
the moduli space $M_{g}$ for compact hyperbolic curves of genus $g (\ge 2)$
 with the complex analytic topology, as an open dense subset, 
such that the following holds.

\begin{enumerate}
\item 
The boundary $\overline{M_{g}}^{\rm GH}\setminus M_{g}$ of 
$\overline{M_{g}}^{\rm GH}$ 
parametrizes the underlying metric spaces of connected 
metrized graphs $\Gamma$ which satisfy 
$$v_{1}(\Gamma)+b_{1}(\Gamma)\le g,$$
where $v_{1}(\Gamma)$ refers to 
the number of 1-valent vertices, and $b_{1}(\Gamma)$ means 
the first Betti number of $\Gamma$. 
Accordingly, there is a natural finite stratification 
by the homeomorphic type of $\Gamma$ whose strata 
are open simplices 
of dimension at most $3g-4$ 
divided by finite groups. 

\item The boundary $\overline{M_{g}}^{\rm T}\setminus M_{g}$ of 
$\overline{M_{g}}^{\rm T}$ parametrizes the connected 
metrized graphs $\Gamma$ with 
$w\colon V(\Gamma)=\{\text{vertices of }\Gamma\}\to \Z_{\ge 0}$ 
which satisfy 
\begin{itemize}
\item 
$b_{1}(\Gamma)+\sum_{v\in V(\Gamma)} w(v)=g$,  
\item for any $v$ with $w(v)=0$, its degree (valency) ${\rm deg}(v)$ 
is at least $3$, 
\item the diameter of $\Gamma$ is $1$. 
\end{itemize}
These $\Gamma$ (without assuming the last normalization condition) 
is also known as tropical curves in the literatures  
(cf., e.g., \cite{MZ}, 
\cite{BMV}). 
Accordingly, there is again a natural finite stratification 
by the combinatorial type of $(\Gamma,w)$ whose strata 
are open simplices 
of dimension at most $3g-4$ divided by finite groups. 

\item \label{Mg.ii} There exists a continuous map  
$\overline{M_{g}}^{\rm T}\to \overline{M_{g}}^{\rm GH}$ 
whose restriction to $M_{g}$ is the identity map. 
Its moduli-theoretic meaning is to forget $w$ and take 
underlying metric space of $\Gamma$.

 \item \label{Mg.v}
For any algebraic morphism from 
 a punctured curve $f\colon C\setminus \{p\}\to M_{g}$, 
the limit $\lim_{q\to p} f(q)\in \overline{M_{g}}^{\rm T}$ exists and 
the 
corresponding limit metrized graphs satisfy that 
lengths of all edges are the same. 
 \end{enumerate}
\end{RTHM}
Since $\overline{M_{g}}^{\rm GH}$ is nothing but the 
Gromov-Hausdorff compactification of $M_{g}$ (see \cite{TGC.I}), 
combination of the above 
\ref{Mg.ii} and \ref{Mg.v} implies 
the determination of Gromov-Hausdorff limits of 
any algebraic family of hyperbolic proper curves of genus $g (\ge 2)$ 
over $C\setminus \{p\}$ as the metrized graph whose edges have 
all same lengths. We also note that 
in \cite[Theorem 3.7]{TGC.II}, we identified 
$\overline{M_{g}}^{\rm T}$ with a slight variant of 
Morgan-Shalen type compactification of $M_{g}$. 

Our analogous results for the moduli space 
$A_{g}$ for principally polarized abelian varieties of dimension $g$ 
is as follows. 

\begin{RTHM}[For $A_{g}$, in {\cite{TGC.II}}]\label{TGC.Ag.review}
There is a compact Hausdorff topological space 
$\overline{A_{g}}^{\rm T}$ which contains 
the moduli space $A_{g}$ for principally polarized abelian varieties of dimension $g$ with the complex analytic topology, 
as an open dense subset, 
which satisfies the following. 
\begin{enumerate}
\item 
The boundary $\overline{A_{g}}^{\rm T}\setminus A_{g}$ 
parametrizes
 all flat (real) tori $\R^{i}/\Z^{i}$ of diameter $1$,
 where $i$ runs through integers with $1\le i\le g$. 
Hence, the boundary has obvious stratification 
by $i$ accordingly. 
 
\item \label{Ag.ii}
$\overline{A_{g}}^{\rm T}$ 
is exactly the Gromov-Hausdorff compactification of $A_{g}$ 
once we attach rescaled flat K\"ahler metric 
 in the principal polarization, with diameter $1$ to each abelian variety. 
 
\item \label{Ag.iii}
For any algebraic morphism from a punctured curve $f\colon C^{o}=C\setminus \{p\}\to A_{g}$, 
with trivial Raynaud extension, $\lim_{q\to p} f(q)\in \overline{A_{g}}^{\rm T}$ exists
 and such limits, where $(C,p)$ and $f$ run, 
form a dense subset of $\partial \overline{A_{g}}^{\rm T}$,
 which consists of points with rational coordinates. 
(The Raynaud extension triviality assumption is removed in Theorem~\ref{1par.AV} of this paper.) 
 \end{enumerate}
\end{RTHM}
In particular, by the above \ref{Ag.ii} and \ref{Ag.iii}, 
it follows that any algebraic family of $g$-dimensional 
principally polarized abelian varieties has the Gromov-Hausdorff 
limit as flat tori. We postpone more detailed analysis until 
Chapter~\ref{Abel.sec}. 

When we tried to go beyond the above two cases, we faced an essential difficulty which is that, unlike the two cases, 
we do \textit{not} have an explicit description of the K\"ahler-Einstein metrics (yet) in concern. Nevertheless, in this paper, we 
provide some explicit pictures for the 
Gromov-Hausdorff compactifications essentially 
depending on general compactification 
theory of locally symmetric spaces and the Morgan-Shalen type compactification theory of complex varieties.

\section{Outline of this paper}

\cite{OO} is the announcement of this paper. 
Most of the contents in this monograph are stated there, 
while we add some improvements especially in Chapters \ref{Gen.HSD} and \ref{high.dim.HK.sec}. 

\subsection{Compactifying general Hermitian locally symmetric spaces}

In Chapter~\ref{Gen.HSD},
 we prove a general theorem
 on compactifications of Hermitian locally symmetric spaces. 
 Recall that in  \cite[Appendix, especially A.9, A.13]{TGC.II}, generalizing the construction in \cite{MS, BJ16}, we construct generalized 
 Morgan-Shalen compactifications of the coarse moduli spaces 
 of dlt stacky pairs and toroidal stack. 
Furthermore, 
the obtained compactification for the latter 
does not depend on the choice of cone complex as shown in \cite[A.12, 
A.13]{TGC.II}. In the following theorem, 
we apply the compactification procedure 
to locally Hermitian symmetric spaces and identify the outcome with 
one of Satake compactifications \cite{Sat1, Sat2}. 

\begin{THM}[{=Theorem~\ref{MS.Sat}}]\label{MS.Sat.Intro}
For an arithmetic quotient of Hermitian symmetric domain $\Gamma\backslash D$, 
we consider the toroidal compactifications 
in the sense of \cite{AMRT} 
and their associated (generalized) Morgan-Shalen compactifications of $\Gamma\backslash D$ 
introduced in \cite[Appendix]{TGC.II} after \cite{MS, BJ16}. 
Since it does not depend on the choice of cone complex, 
we simply denote it by $\overline{\Gamma\backslash D}^{\rm MSBJ}$. 

On the other hand, we denote the Satake compactification for the 
adjoint representation \cite{Sat1, Sat2} by 
$\overline{\Gamma\backslash D}^{\rm Sat, \tau_{\rm ad}}$. 
Then, 
we have a homeomorphism 
$$
\overline{\Gamma\backslash D}^{\rm MSBJ}\simeq \overline{\Gamma\backslash D}^{\rm Sat, \tau_{\rm ad}}, 
$$
extending the identity of $\Gamma\backslash D$. 
\end{THM}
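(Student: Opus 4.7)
The plan is to identify the two compactifications stratum by stratum according to (conjugacy classes of) rational parabolic subgroups $P\subset G$, and then to verify the topologies agree on neighborhoods of each stratum via the horospherical decomposition. Both spaces are naturally obtained as $\Gamma$-quotients of corresponding $G(\Q)$-equivariant compactifications of $D$ itself, so the task reduces to producing a $\Gamma$-equivariant continuous bijection at the $D$-level; compactness of both sides will promote this to a homeomorphism. The starting point is the observation that the MSBJ space is, by construction, independent of the toroidal fan \cite[A.12, A.13]{TGC.II}, while the adjoint Satake compactification is intrinsically defined in terms of the representation $\tau_{\rm ad}$; so the comparison should be phrased in terms that reference neither the fan nor a matrix realization of $\tau_{\rm ad}$.

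First I would unwind the MSBJ construction applied to a single toroidal compactification $\overline{\Gamma\backslash D}^{\Sigma}$: the Morgan-Shalen step collapses each cone $\sigma\in\Sigma$ sitting in the self-adjoint homogeneous cone $C_P$ of a rational parabolic $P$ to a simplex of rays, so the MSBJ boundary stratum over $P$ is canonically the space of rays in $C_P$ modulo an arithmetic subgroup of the Levi of $P$. Next I would invoke Satake's explicit description of $\overline{\Gamma\backslash D}^{\rm Sat,\tau_{\rm ad}}$: its $P$-stratum records the projective limit behaviour of $\tau_{\rm ad}(g_i)$ for sequences $g_i\cdot x_0$ escaping along $P$. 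A direct weight-space computation for the adjoint action on $\mathrm{Lie}(G)$ — which decomposes according to the characters of the split center $A_P$ of the Levi — shows that the $\tau_{\rm ad}$-projective limit depends exactly on the ray class in $C_P$, yielding the same parameter space as MSBJ, equivariantly for the arithmetic Levi action.

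The comparison map itself is then defined as follows. Given a MSBJ boundary point over $P$, represented by a ray in $C_P$, pick any sequence $z_i\in\Gamma\backslash D$ whose preimages in $D$ escape in that ray direction in the horospherical decomposition $D \cong U_P\cdot A_P\cdot M_P$; send the MSBJ point to the Satake limit of $z_i$. Well-definedness and continuity follow from the weight-space calculation above, and the assignment is manifestly a bijection of strata. Globally, the map on $\Gamma\backslash D$ is the identity, and what remains is to check that neighborhoods match at the boundary.

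The main obstacle is the incidence between strata attached to different parabolics: in the Satake compactification, the closure of the $P$-stratum picks up strata for parabolics $Q\supset P$ in a precise, Lie-theoretically prescribed way, and in MSBJ the analogous closure relations come from face relations of cones in $\Sigma$. I would handle this by a uniform Siegel-set / reduction-theoretic argument: on a fundamental Siegel domain for $\Gamma$, approaches to all boundary components are controlled simultaneously by the $A_P$-coordinates along various $P$, and Borel-Ji's general framework (which underlies the well-definedness of MSBJ in the first place) provides exactly the uniform estimates needed to identify the two topologies on a neighborhood of the $P$-stratum. Once the match is verified for each $P$, the resulting continuous bijection between compact Hausdorff spaces is a homeomorphism, completing the identification $\overline{\Gamma\backslash D}^{\rm MSBJ}\simeq \overline{\Gamma\backslash D}^{\rm Sat,\tau_{\rm ad}}$.
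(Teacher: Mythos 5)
Your overall plan tracks the paper's proof closely: both match strata indexed by rational parabolics, both reduce to a Siegel-set analysis near the boundary, and both promote a continuous bijection to a homeomorphism using compactness (the paper phrases this as local metrizability plus a sequential argument, but the conclusion is the same). The stratum identifications you describe — MSBJ strata as rays in $C_P$ modulo the arithmetic Levi, and the Satake $\tau_{\rm ad}$-boundary via weight spaces of $A_P$ — are exactly what the paper establishes in \S\ref{Satake.adjoint} and equations \eqref{Satake.strat}, \eqref{MS.strat}.

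Two points deserve caution, though. First, you propose to produce a $\Gamma$-equivariant homeomorphism ``at the $D$-level'' and then quotient, but the MSBJ space is built from a toroidal compactification of $\Gamma\backslash D$, whose construction already involves taking $U(F)_{\Z}$-quotients fiber-by-fiber (equation \eqref{Siegel.domain}) before gluing; there is no clean MSBJ partial compactification of $D$ to equivariantize. The paper avoids this by working directly on $\Gamma\backslash D$ and comparing convergent sequences. Second, and more substantively: the weight-space computation identifies \emph{which} boundary point a sequence lands on, but it does not by itself show the topologies match. The paper's actual analytic content (Claim~\ref{gen.seq}) is the passage from the Langlands coordinates $n_i a_i l_i z_i$ of a Satake-convergent sequence to the Siegel-domain-of-the-third-kind coordinates $(u_i,y_i,z_i')$, where the key point is that $\im u_i - v_i = h_{z_i'}(y_i,y_i)$ stays bounded, so the limiting ratios of $\{\im\langle m,u_i\rangle\}$ — which is what MSBJ records — coincide with those of $\{\langle m,v_i\rangle\}$. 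Your appeal to ``Borel-Ji's general framework provides exactly the uniform estimates needed'' papers over this specific boundedness step, which is the crux of the argument and is not supplied directly by Borel–Ji; it requires the explicit tube-domain realization from \cite{AMRT}. So the proposal is correct in outline and follows essentially the same route, but that one analytic lemma is where the real work lies, and it needs to be named and proved rather than delegated to general reduction theory.
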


For the definition of the right hand side, we refer to \S\ref{Gen.HSD.start} and \cite{Sat1, Sat2, BJ}. 
We emphasize that this Satake compactification for the adjoint representation is \textit{not a variety}, hence \textit{different from} the so-called 
Satake-Baily-Borel compactification which is a projective variety. 
We apply the above compactifications to more geometric situations --- 
moduli of (polarized) varieties such as abelian varieties, K3 surfaces and hyperK\"ahler varieties 
and discuss their geometric meanings. 

\subsection{Moduli of abelian varieties}

The first theorem on the moduli of abelian varieties 
below refines our Theorem~\ref{TGC.Ag.review} proved in \cite{TGC.II}. 
Recall that $\overline{A_g}^{\rm T}$ stands for
 the tropical geometric compactification of \textit{op.cit.}, 
 which is the same as the Gromov-Hausdorff compactification in this case. 
$\overline{A_g}^{\rm Sat,\tau_{\rm ad}}$ 
stands for the Satake compactification 
for the adjoint representation $\tau_{\rm ad}$. 

\begin{THM}[{=Theorem~\ref{Ag.TGC.Satake.MS}}]\label{Ag.TGC.Satake.MS.Intro}
There is a homeomorphism 
$$\overline{A_g}^{\rm T} \simeq \overline{A_g}^{\rm Sat,\tau_{\rm ad}},$$
extending the identity map on $A_g$. \end{THM}
\noindent
In particular, the dual intersection complex of 
(orbi-smooth) toroidal compactification of $A_{g}$ is identified with 
the compact moduli space of flat real tori of dimension $i$ ($1\le i\le g$) with diameters $1$. 
On the other hand, 
recall that Abramovich-Caporaso-Payne \cite{ACP} 
showed that the moduli spaces of tropical curves, which also appear as our boundary of $\overline{M_{g}}^{\rm T}$ of Theorem \ref{TGC.Mg.review} (\cite{TGC.I}, \cite{TGC.II}), 
is homeomorphic to the dual intersection complex of 
the boundary of Deligne-Mumford compactification of $M_{g}$ as 
an algebraic stack. 
Since flat real tori are regarded as tropical analog of abelian varieties, 
Theorem~\ref{Ag.TGC.Satake.MS.Intro} gives an abelian varieties 
case analog of the result of \cite{ACP}. 
In Remark \ref{MZ.relation}, we also discuss a connection between these flat tori 
and the notion of {\it tropical principally polarized abelian variety} in \cite{MZ} by Mikhalkin-Zharkov. 

\subsection{Moduli of K3 surfaces}\label{Intro.K3}

Now we move on to more difficult K3 surfaces case. 
This is studied in Chapters \ref{F2d.sec} to \ref{along.boundary.sec}.
We propose a pair of conjectures on the explicit structure 
of the Gromov-Hausdorff compactifications of 
the moduli $\mathcal{F}_{2d}$ of polarized (ADE singular) K3 surface of degree $2d$, 
and the moduli $\mathcal{M}_{\rm K3}$
 of Ricci-flat K\"ahler (ADE singular) K3 surfaces modulo hyperK\"ahler rotation,
 and we partially confirm them. 
An important role is played by an explicit assignment of 
geometric objects (compact metric spaces with some additional structures) 
to each point of the Satake compactification for the 
adjoint representation 
$\overline{\mathcal{F}_{2d}}^{{\rm Sat},\tau_{\rm ad}}$ (resp., 
$\overline{\mathcal{M}_{\rm K3}}^{{\rm Sat},\tau_{\rm ad}}$),
 which we call the {\it geometric realization map}
 and denote by $\Phi_{\rm alg}$ (resp., $\Phi$). 
We leave the details to Chapters \ref{F2d.sec} and \ref{Kahler.section} 
but $\Phi_{\rm alg}|_{\mathcal{F}_{2d}}$ and $\Phi|_{\mathcal{M}_{\rm K3}}$ 
is just to assign corresponding Ricci-flat (ADE singular) K3 surfaces 
by Yau's theorem \cite{Yau}. 

In the case of $\overline{\mathcal{F}_{2d}}^{\rm Sat,\tau_{\rm ad}}$, 
we have two kinds of boundary components: the quotients of $18$-dimensional real 
balls $\mathcal{F}_{2d}(l)$ and $0$-dimensional strata 
$\mathcal{F}_{2d}(p)$. To each point in the ball quotient $\mathcal{F}_{2d}(l)$, 
by $\Phi_{\rm alg}$ 
we assign a certain metrized sphere (underlying a tropical K3 surface, cf.\  Chapter~\ref{F2d.sec}), which appeared in \cite{GW,KS}. 
To each $0$-dimensional 
stratum $\mathcal{F}_{2d}(p)$, we assign the segment of length $1$. The 
geometric realization map $\Phi$ for the case of 
$\mathcal{M}_{\rm K3}$ is more complicated 
where not only metrized spheres or segments but also 
real $3$-dimensional metrized orbifolds appear. 
See Chapter~\ref{Kahler.section} for the details. 

Our conjectures are of the following form, which we partially prove in this paper. 
See Conjectures~\ref{K3.Main.Conjecture} and \ref{K3.Main.Conjecture2}
 and discussion around them for more detailed statements. 
We write ${\it CMet}_{1}$ for the set of isometry classes of compact metric spaces
 with diameter one equipped with the Gromov-Hausdorff topology.

\begin{MConj}\label{Main.Conj}
\begin{enumerate}
\item ($=$ Conjecture \ref{K3.Main.Conjecture}) 
The geometric realization map 
\[\Phi_{\rm alg}\colon \overline{\mathcal{F}_{2d}}^{{\rm Sat},\tau_{\rm ad}}
 \to {\it CMet}_{1}\]
given in Chapter~\ref{F2d.sec} is continuous. 
\item
($=$ Conjecture~\ref{K3.Main.Conjecture2})
The geometric realization map 
\[\Phi\colon \overline{\mathcal{M}_{\rm K3}}^{\rm Sat,\tau_{\rm ad}} 
\to {\it CMet}_{1}\]
 in Chapter~\ref{Kahler.section} is continuous.  
\end{enumerate}
\end{MConj}

Morally speaking, above Conjecture~\ref{Main.Conj} gives geometric meaning to these 
Satake compactifications for the adjoint representation, or the Morgan-Shalen type compactification through Theorem~\ref{MS.Sat.Intro}. 
Once one confirms the above conjecture~\ref{Main.Conj} in future, we would like to regard each compactification 
$\overline{\mathcal{F}_{2d}}^{{\rm Sat},\tau_{\rm ad}}$ and $\overline{\mathcal{M}_{\rm K3}}^{\rm Sat,\tau_{\rm ad}}$, 
with their associated geometric realization maps, as the ``tropical geometric compactifications'' (cf., \cite{TGC.I}, \cite{TGC.II}) 
in the case of K3 surfaces. 
An immediate consequence of the above conjectures are 
a complete classification of Gromov-Hausdorff limits of 
Ricci-flat K3 surfaces with bounded diameters. 

\begin{PROP}[Classification of the Gromov-Hausdorff limits]
\begin{enumerate}
\item \label{classif1}
If Conjecture \ref{K3.Main.Conjecture} is true, 
then the following holds: For any sequence of the form 
$(X_{i},\frac{g_{i}}{{\rm diam}(g_{i})^{2}}) (i=1,2,\cdots)$ 
where each $X_{i}$ is a smooth K3 surface, 
$g_{i}$ is a 
Ricci-flat K\"ahler metric with the primitive integral K\"ahler class of the same 
degree (volume), if its Gromov-Hausdorff limit exists, it is either 
\begin{itemize}
\item a Ricci-flat K3 surface possibly with ADE singularities, 
\item a metrized sphere ($S^{2}$), 
\item or the unit segment, i.e., $[0,1]$. 
\end{itemize}
\item \label{classif2}
If Conjecture \ref{K3.Main.Conjecture2} is true, 
then the following holds: For any sequence of 
Ricci-flat K3 surfaces of fixed diameters, 
if its Gromov-Hausdorff limit exists, it is either 
\begin{itemize}
\item 
a Ricci-flat K3 surface possibly with ADE singularities, 
\item $T^{n}/(\Z/2\Z)$, i.e., a flat 
real $n$-dimensional torus divided by the $(-1)$-multiplication, where $n=1, 2, 3$, 
\item 
a metrized sphere ($S^{2}$), 
\item 
or the unit segment, i.e., $[0,1]$. 
\end{itemize}
\end{enumerate}
\end{PROP}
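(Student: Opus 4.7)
The plan is straightforward once the two conjectures are accepted: combine compactness of the Satake compactifications with continuity of the geometric realization maps, and conclude using the Hausdorff property of the Gromov--Hausdorff space of isometry classes of compact metric spaces. The two parts have identical structure, so I would write out (\ref{classif1}) in full and indicate the obvious modifications for (\ref{classif2}).

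Assuming Conjecture~\ref{K3.Main.Conjecture}, fix a sequence $(X_i,\overline{g}_i)$ with $\overline{g}_i:=g_i/\mathrm{diam}(g_i)^2$ whose Gromov--Hausdorff limit $Y$ exists. Regard each $X_i$ as a point $[X_i]\in\mathcal{F}_{2d}\subset\overline{\mathcal{F}_{2d}}^{\mathrm{Sat},\tau_{\mathrm{ad}}}$. By compactness of the latter, extract a subsequence $[X_{i_k}]\to x_\infty$, and apply the continuity of $\Phi_{\rm alg}$ to conclude that $\Phi_{\rm alg}([X_{i_k}])\to\Phi_{\rm alg}(x_\infty)$ in the Gromov--Hausdorff topology. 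Since $\Phi_{\rm alg}$ restricted to the open part sends $[X_{i_k}]$ to $(X_{i_k},\overline{g}_{i_k})$ via Yau's theorem, and the latter sequence already converges to $Y$ by hypothesis, uniqueness of Gromov--Hausdorff limits forces $Y=\Phi_{\rm alg}(x_\infty)$. The classification then reduces to enumerating the possible values of $\Phi_{\rm alg}$: by the definition in \S\ref{F2d.sec}, the open stratum contributes Ricci-flat (at worst ADE singular) K3 surfaces, each $18$-real-dimensional ball-quotient stratum $\mathcal{F}_{2d}(l)$ contributes a metrized $S^2$, and each $0$-dimensional stratum $\mathcal{F}_{2d}(p)$ contributes the unit segment $[0,1]$. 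This exhausts the cases listed in (\ref{classif1}).

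Part (\ref{classif2}) proceeds line-for-line in the same way with $\mathcal{M}_{\rm K3}$, $\Phi$ from \S\ref{Kahler.section}, and Conjecture~\ref{K3.Main.Conjecture2} in place of their polarized analogues. The only new bookkeeping is that the boundary of $\overline{\mathcal{M}_{\rm K3}}^{\mathrm{Sat},\tau_{\mathrm{ad}}}$ has additional strata absent in the polarized picture; by construction $\Phi$ sends these precisely to the flat orbifolds $T^n/(\Z/2\Z)$ with $n\in\{1,2,3\}$, which are exactly the extra entries in the (\ref{classif2}) list.

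The only logically nontrivial point inside the corollary is the step that promotes subsequential convergence of $[X_{i_k}]$ in the Satake compactification to identification of the Gromov--Hausdorff limit of the full sequence $(X_i,\overline{g}_i)$; this is immediate from uniqueness of $Y$ and needs no more than Hausdorffness of the target. Consequently the real obstacle is not internal to this proposition at all but lies in establishing the two conjectures, i.e.\ in verifying continuity of $\Phi_{\rm alg}$ and of $\Phi$ at the boundary of the respective Satake compactifications; that is the work carried out (partially) in the subsequent chapters, and the above argument should then be regarded as the immediate downstream corollary.
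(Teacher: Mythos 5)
Your argument is correct and is exactly what the paper has in mind when it calls these statements ``an immediate consequence'': extract a subsequential limit in the compact Satake space, use continuity of the geometric realization map, invoke uniqueness of Gromov--Hausdorff limits, and enumerate the image of the map over the finitely many boundary strata. The only trivial nit is your final sentence for part (ii): $\Phi$ sends the strata $\mathcal{M}_{\rm K3}(b_1)$, $\mathcal{M}_{\rm K3}(c_1)$ to $T^3/(\Z/2\Z)$, $T^2/(\Z/2\Z)$ respectively and the strata $\mathcal{M}_{\rm K3}(b_2)$, $\mathcal{M}_{\rm K3}(c_2)$, $\mathcal{M}_{\rm K3}(d)$ to the unit segment, so ``$T^1/(\Z/2\Z)$'' appears in the proposition's list only because it is isometric (after normalization) to the segment, not because $\Phi$ is defined to output it as a separate case.
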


From the results of \cite{GW, GTZ1, Fos, ChCh, HSVZ}, for instance, 
conversely all above types in \eqref{classif2}
 indeed appear as such Gromov-Hausdorff limits of 
 Ricci-flat K3 surfaces. 

Note that the above conjecturally  
continuous map $\Phi_{\rm alg}$ in Conjecture 
\ref{Main.Conj} (i) combined with Theorem~\ref{MS.Sat.Intro}, 
gives an interpretation of dual intersection complex of 
the boundary of toroidal compactification of $\mathcal{F}_{2d}$ 
morally as moduli of tropicalized version of K3 surface 
(see \S \ref{trop.K3.1} for more details). Therefore, it would again give 
an K3 surfaces case analog of the result of Abramovich-Caporaso-Payne \cite{ACP} for the moduli of curves and its tropical version. 
\medskip

Our main results for complex K3 surfaces can be summarized as follows, which provide 
partial confirmation of Conjecture \ref{Main.Conj}. 

\begin{THM}[{=Theorems~\ref{K3.Main.Conjecture.18.ok}, \ref{K3a.GH.conti}, \ref{K3.Main.Theorem2}}]
\begin{enumerate}\label{K3.Main.Conjecture.18.ok.Intro}
\item (=Theorem~\ref{K3.Main.Conjecture.18.ok})
The restriction of the 
geometric realization map $\Phi_{\rm alg} 
\colon \overline{\mathcal{F}_{2d}}^{{\rm Sat},\tau_{\rm ad}}\to {\it CMet}_{1}$ 
to the complement of the finite points 
$\bigcup_{p}\mathcal{F}_{2d}(p)$ 
is continuous. 
\item (=Theorems \ref{K3a.GH.conti}, \ref{K3.Main.Theorem2}) 
There exists a (real) $36$-dimensional 
boundary component $\mathcal{M}_{\rm K3}(a)$ 
of $\overline{\mathcal{M}_{\rm K3}}^{\rm Sat,\tau_{\rm ad}}$, 
parametrizing tropical K3 surfaces (certain metrized spheres 
cf., Chapters~\ref{F2d.sec} and \ref{Kahler.section} for details) such that the restrictions 
of the geometric realization map $\Phi\colon \overline{\mathcal{M}_{\rm K3}}^{\rm Sat,\tau_{\rm ad}}\to {\it CMet}_{1}$ to 
$\mathcal{M}_{\rm K3}\cup \mathcal{M}_{\rm K3}(a)$ and to 
the closure of $\mathcal{M}_{\rm K3}(a)$ inside 
$\overline{\mathcal{M}_{\rm K3}}^{\rm Sat,\tau_{\rm ad}}$ 
are both continuous. 
\end{enumerate}
\end{THM}

One of the main advantages of our compactification is, 
via Conjecture \ref{Main.Conj}, we can discuss Gromov-Hausdorff 
limits of Ricci-flat K\"ahler metrics along any sequences, 
which are not of special types such as one parameter holomorphic family. 

For a given holomorphic family $(\mathcal{X}^{*},\mathcal{L}^{*})\twoheadrightarrow \Delta^{*}$
 of polarized K3 surfaces, it can be shown that the period map 
$\Delta^{*}\to \mathcal{F}_{2d}$ extends continuously to $\Delta \to \overline{\mathcal{F}_{2d}}^{{\rm Sat},\tau_{\rm ad}}$ (Theorem~\ref{rationality.HSD}, 
Theorem~\ref{monodromy}). Therefore, above Theorem~\ref{K3.Main.Conjecture.18.ok.Intro}(i) 
is enough to prove the conjecture of 
Kontsevich-Soibelman \cite[Conjecture 1]{KS}, Gross-Wilson \cite[Conjecture 6.2]
{GW} and Todorov for the K3 surface case:  
\begin{COR}[K3 surfaces case of 
{\cite[Conjecture 1]{KS}, \cite[Conjecture 6.2]
{GW}}]
Let $(\mathcal{X}^{*},\mathcal{L}^{*})\twoheadrightarrow \Delta^{*}$ be a 
meromorphic punctured family of polarized K3 surfaces of degree $2d$, possibly 
with ADE singularities, and suppose it is of type III in the sense of Kulikov
(``maximal degeneration'') \cite{Kul, PP}. 
For any $t\in \Delta^{*}$, 
put the rescaled Ricci-flat K\"ahler metric (of diameter $1$)
$\frac{d_{\rm KE}(\mathcal{X}_{t})}{{\rm diam}(\mathcal{X}_{t})}$,
where the K\"ahler class of 
$d_{\rm KE}(\mathcal{X}_{t})$ is $c_{1}(\mathcal{L}|_{\mathcal{X}_{t}})$, 
to each fiber $\mathcal{X}_{t}$. Then they
converge in the Gromov-Hausdorff sense to a Monge-Amp\`ere manifold with singularity,
homeomorphic to $S^{2}$ when $t\to 0$. 
\end{COR}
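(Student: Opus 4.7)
The plan is to chain together the extension of the period map (Theorems~\ref{rationality.HSD} and \ref{monodromy}), the Kulikov-type classification via the monodromy weight filtration, and the continuity of the geometric realization map on the $18$-dimensional boundary (Theorem~\ref{K3.Main.Conjecture.18.ok.Intro}(i)). Let $\pi\colon \Delta^{*}\to \mathcal{F}_{2d}$ denote the period map of $(\mathcal{X}^{*},\mathcal{L}^{*})$. By the cited extension results, $\pi$ extends continuously to
\[
\bar{\pi}\colon \Delta \to \mathcal{F}_{2d}\sqcup \bigsqcup_{l}\mathcal{F}_{2d}(l)\sqcup \bigsqcup_{p}\mathcal{F}_{2d}(p),
\]
where $l$ (resp.\ $p$) runs over $\Gamma$-orbits of rational isotropic lines (resp.\ planes) in the K3 lattice of signature $(2,19)$.

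Next I would identify the stratum containing $\bar{\pi}(0)$. Let $N$ be the logarithm of the unipotent part of the monodromy around $0$ acting on $H^{2}(\mathcal{X}_{t},\Q)$; the weight-two condition forces $N^{3}=0$, and the Kulikov type III hypothesis is equivalent to $N^{2}\neq 0$. In that case $l:=N^{2}H^{2}$ is a canonical rational isotropic line, and via the Morgan--Shalen/Satake dictionary of Theorem~\ref{MS.Sat.Intro} specialised to $\mathcal{F}_{2d}$, this $l$ indexes the Satake-adjoint stratum containing $\bar{\pi}(0)$. The assumption $N^{2}\neq 0$ simultaneously rules out $\bar{\pi}(0)\in\mathcal{F}_{2d}$ (type I, $N=0$, smooth limit) and $\bar{\pi}(0)\in\mathcal{F}_{2d}(p)$ (type II, $N\neq 0 = N^{2}$, with $p$ an isotropic plane). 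Hence $\bar{\pi}(0)\in \mathcal{F}_{2d}(l)\subset \overline{\mathcal{F}_{2d}}^{{\rm Sat},\tau_{\rm ad}}\setminus \bigcup_{p}\mathcal{F}_{2d}(p)$.

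In particular $\bar{\pi}(0)$ lies in the open set on which Theorem~\ref{K3.Main.Conjecture.18.ok.Intro}(i) guarantees continuity of $\Phi_{\rm alg}$. That theorem then yields the Gromov--Hausdorff convergence $\Phi_{\rm alg}(\pi(t))\to \Phi_{\rm alg}(\bar{\pi}(0))$ as $t\to 0$. For $t\in\Delta^{*}$, by Yau's theorem and the definition of $\Phi_{\rm alg}$ on the interior, $\Phi_{\rm alg}(\pi(t))$ is the diameter-$1$-rescaled Ricci-flat K\"ahler K3 surface $(\mathcal{X}_{t},d_{\rm KE}(\mathcal{X}_{t})/\mathrm{diam}(\mathcal{X}_{t}))$; on the boundary, the construction of $\Phi_{\rm alg}|_{\mathcal{F}_{2d}(l)}$ in \S\ref{F2d.sec} produces a metrised $S^{2}$ carrying the integral affine structure with $24$ singular points and the associated Monge--Amp\`ere metric (the tropical K3 surface in the sense of \cite{GW, KS}). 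This identifies the limit as the asserted singular Monge--Amp\`ere $2$-sphere.

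The main obstacle, and the step deserving the most care, is the second paragraph: pinning down the correspondence between the Kulikov nilpotency class of $N$ and the Satake-adjoint stratum containing $\bar{\pi}(0)$. This requires matching the rational parabolic subgroup of $\mathrm{O}(2,19)$ produced by the nilpotent orbit theorem with the rational parabolic indexing the boundary stratum in $\overline{\mathcal{F}_{2d}}^{{\rm Sat},\tau_{\rm ad}}$; once Theorem~\ref{MS.Sat.Intro} (which relates the Morgan--Shalen boundary to the adjoint Satake boundary) is combined with the standard $SL_{2}$-orbit/nilpotent-orbit dictionary, this reduces to bookkeeping of index conventions, but it is the one point where the argument is not purely formal.
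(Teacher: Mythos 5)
Your argument follows the paper's own proof of Corollary~\ref{1par.lim.K3.}: extend the period map to $\overline{\mathcal{F}_{2d}}^{\rm Sat,\tau_{\rm ad}}$ via Theorems~\ref{rationality.HSD} and~\ref{monodromy}, use the type III hypothesis ($N^3=0$, $N^2\neq 0$) to place $\bar\pi(0)$ in an $18$-dimensional stratum $\mathcal{F}_{2d}(l)$ with $l=N^2H^2$ (a rational isotropic line, as in the paper's assertion~(ii) via~\cite{FriS}), and then invoke Theorem~\ref{K3.Main.Conjecture.18.ok}. The one small misattribution is in your second paragraph: the identification of the limit point with the monodromy class $[N]\in\mathbb{P}(\mathfrak{g})$, and hence of the stratum, is already the content of Theorem~\ref{monodromy} (together with the description of boundary components following~\eqref{k_center}), so the ``bookkeeping of index conventions'' you flag as the delicate step is not an open gap but precisely what that theorem supplies -- Theorem~\ref{MS.Sat.Intro} is used upstream (in Theorem~\ref{rationality.HSD}) rather than for this identification; also, the limit sphere has \emph{at most} $24$ singular points, not exactly $24$ in general.
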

See \S\ref{K3.along.disk} (especially Corollary~\ref{1par.lim.K3.}) for more detailed information. 

Chapter~\ref{GTZ.extend.proof} is devoted to analytic estimates of K\"{a}hler forms which are necessary for the proof of Theorem~\ref{K3.Main.Conjecture.18.ok.Intro}.
Recall that \cite{GW, GTZ1, GTZ2} studied collapsing of 
fixed elliptic K3 surface along a certain specific behavior of Ricci-flat 
K\"ahler metrics, i.e., when the K\"ahler class changes affine-linearly 
to its base, and they made a partial confirmation of the conjecture by
 using hyperK\"ahler rotations. 
However, for the full proof of the conjecture, hyperK\"ahler rotation of their affine
 linear variation of K\"ahler class seems to be not enough
 (see discussions around \cite[\S1, Theorem 6.2]{GW})
 as indeed both the complex structures and the initial K\"ahler class of rotated
 one parameter family can vary in general. 
Our point is that, after their very pioneering important papers, we use our explicit moduli compactification framework in 
\cite[Appendix]{TGC.II}, Chapter~\ref{Gen.HSD} and 
combine with some refinements of their works
in Chapter~\ref{GTZ.extend.proof}, to yield the proof. 
Further, we prove more about the limit $S^{2}$ (see Corollary~\ref{1par.lim.K3.}). 

It would be also very interesting to discuss how we can use or relate our picture to 
the problem of reconstructing one parameter families from the collapsed Monge-Amp\`ere manifolds 
with singularities, with some additional structures as in \cite{KS, GS11}. 
We wish to study this problem in near future. 

We also remark an analog for the non-maximally degenerating case: 

\begin{REM}
If Conjecture \ref{Main.Conj}(i)(=Conjecture \ref{K3.Main.Conjecture}) holds, then the following is true: 
Let $(\mathcal{X}^{*},\mathcal{L}^{*})\twoheadrightarrow \Delta^{*}$ be a 
meromorphic punctured family of polarized K3 surfaces of degree $2d$, possibly  
with ADE singularities, and suppose it is of type II in the sense of Kulikov \cite{Kul, PP}.  
For any $t\in \Delta^{*}$,  
put the rescaled Ricci-flat K\"ahler metric (of diameter $1$) 
$\frac{d_{\rm KE}(\mathcal{X}_{t})}{{\rm diam}(\mathcal{X}_{t})}$, 
where the K\"ahler class of 
$d_{\rm KE}(\mathcal{X}_{t})$ is $c_{1}(\mathcal{L}|_{\mathcal{X}_{t}})$,  
to each fiber $\mathcal{X}_{t}$. Then they 
converge in the Gromov-Hausdorff sense to the segment of length $1$ when $t\to 0$.  
\end{REM}

In Chapter~\ref{along.boundary.sec}, we study tropical K3 surfaces and their moduli space
 in terms of Weierstrass models of elliptic K3 surfaces.
In particular,
 we consider the GIT compactification of the moduli of Jacobian elliptic K3 surfaces
 and compare it with the boundary of Satake compactification
 $\overline{\mathcal{M}_{\rm K3}}^{\rm Sat,\tau_{\rm ad}}$
 given in Chapter~\ref{Kahler.section}.
In \S\ref{ML.limit} we study the asymptotic behavior of metrics on tropical K3 surfaces
 and show that the restriction of $\Phi$ to the closure of $\mathcal{M}_{\rm K3}(a)$
 is continuous.

In Chapter~\ref{high.dim.HK.sec}, we discuss generalizations of our framework  
to higher dimensional Ricci-flat K\"ahler analytic spaces. In particular,  
we give a precise conjecture that the
compact hyperK\"ahler varieties case fits 
to essentially the same picture as the K3 surfaces case (Conjecture~\ref{Main.Conj}), 
since many parts of our partial proof of the K3 surface case 
extend notably because of the existence of the Beauville-Bogomolov-Fujiki form 
on the second singular cohomology and the recent breakthrough 
for the Torelli theorem \cite{Ver, Mark.Torelli, Ver.er}, although several technical gaps are not yet filled 
for substantial progress. 

In Chapter~\ref{high.dim.gen.sec}, we seek a further extension of our framework to general Calabi-Yau manifolds. 

\medskip

Last but not least, to avoid confusion, we would like to give moral or formal remarks on our projects. 
Firstly, the set of non-variety compactifications of 
moduli spaces we discuss in this monograph and the previous series \cite{TGC.I, TGC.II} is \textit{different} 
from what we expect in the framework of K-moduli (cf., e.g., \cite[\S3]{Od12}), at the collapsing locus or the locus in the K-moduli where 
strictly (semi-)log-canonical varieties are parametrized. 
Indeed, even for the simplest $M_g$ case, the K-moduli is nothing but the Deligne-Mumford compactification 
which is a normal projective variety as shown in \cite{Od0, Od2} and different from our \cite{TGC.I}. 

For polarized K3 surfaces case, the algebro-geometrically meaningful projective compactifications of the moduli spaces $\mathcal{F}_{2d}$ 
has been, as a full generality, a classical open problem which had some partial progress such as \cite{Shah, Shah2, Laza2, LO} and \cite{AET} which recently appeared 
after the appearance of our work. Our compactification theory in Chapter~\ref{F2d.sec} 
gives a geometrically meaningful compactification although \textit{not} in the category of varieties.


\newpage

\chapter{Compactifications of 
Hermitian locally symmetric spaces}\label{Gen.HSD}

In this chapter, we introduce some 
compactification theory for general Hermitian locally symmetric spaces, 
notably Theorem~\ref{MS.Sat}. 
We later apply it to various moduli spaces. 

\section{Setting}\label{Gen.HSD.start}

We first set up the stage for this chapter, 
following \cite{AMRT} and \cite{BJ}. 

\begin{ntt}\label{notation.groups}

\begin{enumerate}

\item $\mathbb{G}$ is a reductive algebraic group over $\mathbb{Q}$,
 $G$ is an open subgroup 
 of the Lie group $\mathbb{G}(\mathbb{R})$,
 $K$ is (one of) its maximal compact subgroup.

\item \label{HSD.assumption} 
$D:=G/K$ which we suppose to have a {\it Hermitian} symmetric domain structure. 
We moreover assume that $D$ is irreducible and that $G$ is simple
 in the sense that its identity component (or the Lie algebra) is simple.

\item \label{Gamma}
$\Gamma$ is an arithmetic subgroup of $\mathbb{G}(\mathbb{Q})$, 
i.e., commensurable to $\mathbb{G}(\mathbb{Z})$. We assume $\Gamma$ to 
act holomorphically on $D$. 

\item $N(F)$  is the maximal real parabolic subgroup (``$\mu$-saturated parabolic 
subgroup'' in \cite{BJ}) corresponding to a boundary component $F$ in the 
Harish-Chandra embedding of $D$, following the notation of \cite{AMRT}. 
This $N(F)$ will be also denoted by $Q(P)$ or $Q$ below, following \cite{BJ}. 

\item $W(F)$ is the unipotent radical of $N(F)$ and $U(F)$ is its center.

\end{enumerate}

\end{ntt}

Ichiro Satake \cite{Sat1}, \cite{Sat2} 
constructed compactifications of Riemannian locally symmetric spaces 
associated to certain faithful projective representations, 
then classified them in terms of their highest weights. 

Given a faithful projective complex representation 
 $\tau\colon G\hookrightarrow 
 PSL(\mathbb{C})$ 
 with certain conditions
 (so-called geometric rationality conditions, see Borel-Ji~\cite[{\S}III.3]{BJ} for details),
 the corresponding Satake compactification is defined
 as a compact topological space, and has
 the following stratification (see e.g.\ \cite[p.312]{BJ}):
\begin{align}\label{Satake.stratif}
\overline{\Gamma\backslash D}^{\rm Sat, \tau}
 =\Gamma\backslash D\sqcup \bigsqcup_{P}
(\Gamma\cap Q(P))\backslash M_{P}/(K\cap M_{P}).
\end{align} 
Here, $P$ runs over (representatives of) all the $\Gamma$-conjugacy classes
 of $\mu(\tau)$-connected rational parabolic subgroups, 
 $P=N_{P}A_{P}M_{P}$ denotes the Langlands decomposition, and 
 $Q(P)$ is the $\mu(\tau)$-saturation of $P$ which we also denote by $Q$. 
In particular, $N_P$ is the unipotent radical of $P$,
 $A_P$ is the maximal $\mathbb{R}$-split torus of $P$,
 and $M_P$ is a reductive subgroup. 
We remark that
 in general (also in the case where $\tau=\tau_{\rm ad}$ as below),
 Satake compactifications are stratified by real orbifolds as \eqref{Satake.stratif},
 but do not have a structure of complex variety.

\section{Satake compactification for adjoint representation}\label{Satake.adjoint}
Under our assumption (\ref{HSD.assumption}), the restricted root system of $G$
 associated to a maximal $\mathbb{R}$-split torus in $G$ is of type $BC$ or $C$.
We are particularly interested in the case when
 \textit{the highest weight 
 $\mu(\tau)$ is connected only to the exact opposite of the distinguished root} 
(i.e., ``$\frac{(\gamma_{1}-\gamma_{2})}{2}$'' in the notation of e.g.~\cite[\S\,III.2.8]{AMRT}). 
This is the case if we take $\tau$ to be the adjoint
 representation.
Let $\tau_{\rm ad}$ denote the adjoint representation of $G$
 and we consider
 $\overline{\Gamma\backslash D}^{\rm Sat, \tau_{\rm ad}}$,
 which we call
 {\it the Satake compactification for the adjoint representation}.

To have a more explicit description,
 let $P_{\rm min}$ be a minimal parabolic subgroup of $G$
 and write $\theta$ for the Cartan involution of $G$ such that $G^{\theta}=K$.
Write $P_{0}=M_{0}A_{0}N_{0}$
 for the Langlands decomposition such that
 $M_{0}\subset K$ and $\mathfrak{a}_0\subset \mathfrak{g}^{-\theta}$,
 where $\mathfrak{a}_0:=\Lie(A_{0})$ and $\mathfrak{g}:=\Lie(G)$.
Then we can choose a maximal set of
 strongly orthogonal roots
 $\gamma_1,\dots,\gamma_r\in\mathfrak{a}_0^*$
 in the restricted root system $\Delta(\mathfrak{g},\mathfrak{a}_0)$.
The set of positive roots can be chosen as
\[\{\tfrac{1}{2}(\gamma_i+\gamma_j)\text{ for $i\leq j$};
 \ \tfrac{1}{2}(\gamma_i-\gamma_j)\text{ for $i < j$}\}\]
if $\Delta(\mathfrak{g},\mathfrak{a}_0)$ is of type $C$ and 
\[\{\tfrac{1}{2}(\gamma_i+\gamma_j)\text{ for $i\leq j$};
 \ \tfrac{1}{2}(\gamma_i-\gamma_j)\text{ for $i < j$};
 \ \tfrac{1}{2}\gamma_i \}\]
if $\Delta(\mathfrak{g},\mathfrak{a}_0)$ is of type $BC$.
The corresponding Dynkin diagram is:
\begin{align*}
\begin{xy}
\ar@{-} (0,0) *++!D{\tfrac{\gamma_1-\gamma_2}{2}}
 *{\circ}="A"; (15,0) *++!D{\tfrac{\gamma_2-\gamma_3}{2}} 
 *{\circ}="B"
\ar@{-} "B"; (25,0)  *{}="C"
\ar@{.} "C"; (35,0)  *{}="D"
\ar@{-} "D"; (45,0)  *++!D{\tfrac{\gamma_{r-1}-\gamma_r}{2}} *{\circ}="E"
\ar@{=}  "E"; (60,0) *++!D{(\tfrac{1}{2})\gamma_r} *{\circ}="F"
\end{xy} 
\end{align*}
where the right most root (so-called the \textit{distinguished root})
 is $\gamma_r$ or $\frac{1}{2}\gamma_r$ according to the root system
 is of type $C$ or of type $BC$. Hence, the right most arrow is 
 $\Leftarrow$ for type $C$ and $\Rightarrow$ for type $BC$, although 
 the latter (restricted) root system of type $BC$ is not reduced. 
  
The highest weight $\mu(\tau_{\rm ad})$ of the adjoint representation of $G$ is
 the highest root $\gamma_1$.
Hence it is connected (non-orthogonal) only to $\frac{\gamma_1-\gamma_2}{2}$
 among simple roots.
Since $\tau_{\rm ad}$ is defined over $\Q$,
 the geometric rationality is assured by \cite[Theorem 8]{Sap}.
Hence the Satake compactification
 can be defined as a compact Hausdorff space
 $\overline{\Gamma\backslash D}^{\rm Sat, \tau_{\rm ad}}$.
In the original Satake's construction \cite{Sat1}, when applied to the adjoint representation $\tau_{\rm ad}$, 
 $D=G/K$ is embedded in the real projective space $\mathbb{P}(\mathfrak{g}^{*}\otimes {}^{\theta}\mathfrak{g}^{*})$ 
and partially compactified there. 
Here, ${}^{\theta}\mathfrak{g}^{*}$ 
means the dual vector space $\mathfrak{g}^{*}$ of $\mathfrak{g}$ with the $\theta$-twisted coadjoint $G$-action on $\mathfrak{g}^*$. 
Then in \cite{Sat2}, its $\Gamma$-quotient is considered
 and it is proved that the quotient becomes a compact topological space 
 with respect to the so-called Satake topology. 
Instead of that, we can alternatively obtain the same space by embedding
 $D$ into $\mathbb{P}(\mathfrak{g})$ (see \cite[Lemma~2 and Corollary~4]{Sap} for the proof). 
The embedding $D\hookrightarrow \mathbb{P}(\mathfrak{g})$ can be given as follows. 
By our assumption \eqref{HSD.assumption}, $K$ has a one-dimensional center.
Let $\mathfrak{z}(\mathfrak{k})$ denote its Lie algebra.
Since a vector in $\mathfrak{z}(k)$ is $K$-invariant, we have
 an embedding $\iota\colon D\hookrightarrow \mathbb{P}(\mathfrak{g})$
 by $\iota(g\cdot o)={\rm Ad}(g)[\mathfrak{z}(\mathfrak{k})]$,
 where $o\in G/K$ is a base point.
We note that by e.g.\ \cite[Theorem 2.4]{AMRT},
 a vector in $\mathfrak{z}(\mathfrak{k})$ is
 of the form 
\begin{align}\label{k_center}
Y+\sum_{j=1}^r(X_{\gamma_j}+\theta(X_{\gamma_j})).
\end{align}
Here, $X_{\gamma_j}\in\mathfrak{g}_{\gamma_j}$ are root vectors
 and $Y\in \Lie (M_0)$.

We now see the boundary components
 of $\overline{\Gamma\backslash D}^{\rm Sat, \tau_{\rm ad}}$.
Thanks to the description of $\Q$-roots in \cite[p.467--468]{BB},
 any maximal rational parabolic subgroup of $G$
 is also maximal as a real parabolic subgroup.
Then it turns out that $Q(P)$ in \eqref{Satake.stratif}
 runs over (representatives of)
 all the $\Gamma$-conjugacy classes of maximal rational parabolic subgroups.
Therefore, the boundary components of 
 $\overline{\Gamma\backslash D}^{\rm Sat, \tau_{\rm ad}}$
 correspond bijectively to 
 the $\Gamma$-conjugacy classes of
 maximal rational parabolic subgroups of $G$.
Let $Q(P)$ be a rational parabolic appearing in \eqref{Satake.stratif}.
Replacing $P_0$ by its conjugate if necessary, we may assume $Q(P)\supset P_0$.
Let $Q(P)=M_QA_QN_Q$ be the Langlands decomposition such that
 $M_Q\supset M_0$, $A_Q\subset A_0$, and $N_Q\subset N_0$.
Since $Q(P)$ is maximal, there is only one simple root
 that is not a root for $M_Q$, which we denote by $\alpha$.
If $\alpha=\gamma_r$ or $\frac{\gamma_r}{2}$,
 put $s:=r$.
If not, $s$ is defined by $\alpha=\frac{\gamma_s-\gamma_{s+1}}{2}$.
Let $\{a_i\}\subset A_Q$ be a sequence such that
 $a_i^{\alpha}\to +\infty$.
Then the sequence $a_i\cdot o$ in $D$ is contained in one Siegel set
 and it has a limit in $\mathbb{P}(\mathfrak{g})$ belonging to
 the boundary component corresponding to $Q(P)$.
By using \eqref{k_center}, it is easy to see that
 the limit point in $\mathbb{P}(\mathfrak{g})$ is
 represented by $\sum_{j=1}^s X_{\gamma_j}$.
Therefore, the boundary component
 $M_P/(K\cap M_P)\subset \mathbb{P}(\mathfrak{g})$
 inside $\mathbb{P}(\mathfrak{g})$
 equals the $Q(P)$-orbit through $\sum_{j=1}^s X_{\gamma_j}$.

Note that 
 if a representation $\tau$ is chosen
 in such a way that $\mu(\tau)$ is connected only to the distinguished root,
 the compactification is called the Satake-Baily-Borel compactification
 and known to have a structure of normal complex projective variety
 (\cite{BB}).
The stratification of the Satake-Baily-Borel compactification 
 as in \eqref{Satake.stratif} is written as
\begin{align*}
\overline{\Gamma\backslash D}^{\rm SBB}
 =\Gamma\backslash D\sqcup \bigsqcup_{F}
(\Gamma\cap N(F))\backslash F
\end{align*} 
in the notation of \cite{AMRT}.
Here, $F$ runs over (representatives of)
 all the $\Gamma$-conjugacy classes of rational boundary components
 and then $N(F)$ runs over maximal rational parabolic subgroups.
Therefore, there is a natural bijection between
 the set of boundary components of
 $\overline{\Gamma\backslash D}^{\rm SBB}$
 and that of
 $\overline{\Gamma\backslash D}^{{\rm Sat}, \tau_{\rm ad}}$.
Under this correspondence,
 the closure relations of boundary components of them are opposite.

\section[Morgan-Shalen compactification and monodromy]{Relation with Morgan-Shalen compactification and monodromy}

\subsection{Relation with Morgan-Shalen compactification}
We follow the notation above. 
For an arithmetic quotient of Hermitian symmetric space $\Gamma\backslash D$, 
we consider toroidal compactifications and their associated (generalized) Morgan-Shalen-Boucksom-Jonsson 
compactifications of $\Gamma\backslash D$ 
in the sense of \cite[Appendix]{TGC.II}, which we briefly recall. 

The idea of the pioneering construction of Morgan-Shalen \cite[\S I.3]{MS}, 
where the technique is applied to the character varieties, 
is to compactify a non-compact complex manifold with the boundary 
of ``locally polyhedral'' nature essentially by use of some valuations 
of the meromorphic functions. In particular, 
the obtained compactification is far from being a complex analytic 
space in general. 
Recently, 
Boucksom-Jonsson \cite[\S 2]{BJ16} introduced a variant of \cite[\S I.3]{MS} 
as follows: for a complex manifold $X$ and its open dense subset 
$U$ such that $X\setminus U$ is a simple normal crossing divisor 
with finite number of strata, 
\cite[Definition 2.3]{BJ16} introduces 
a partial compactification of $U$ as $U\sqcup \Delta(X\setminus U)$ 
with a certain topology, where $\Delta(X\setminus U)$ denotes 
the dual intersection complex of
$X\setminus U$. The construction is compatible with that of 
original \cite[\S I.3]{MS}. The 
authors of \cite{BJ16} denoted the obtained 
partial compactification of $U$ by $X^{\rm hyb}$ and 
called it the hybrid space. 
If $X$ is compact, the obtained partial compactification 
$X^{\rm hyb}$ is also compact. 
Later, in \cite[Appendix]{TGC.II}, the construction of \cite{BJ16} 
is further generalized 
to the (coarse moduli spaces of) what {\it loc.cit.}\ called 
{\it dlt stacky pair} (A.9 of {\it loc.cit.})
 and {\it toroidal stack} (A.12 of {\it loc.cit.}) in a compatible manner. The generalization in {\it loc.cit.}\ 
 includes three directions - to allow dlt singularity, 
 toric singularities, and reflect stacky structures. 
 We call the obtained generalization as 
 (generalized) {\it Morgan-Shalen(-Boucksom-Jonsson)} 
 compactification and put MSBJ for the superscript. 
 
 \vspace{2mm}
 Now we come back to our particular setting. 
 For any $\Gamma$-admissible collection of polyhedra $\{\sigma_{\alpha}\}$ (see \cite[Chapter III Definition 5.1]{AMRT}), we consider the 
 so-called toroidal compactification, i.e., 
 the corresponding proper algebraic stack given in 
 \cite[Chapter III, Theorems 5.2 and 7.5]{AMRT} which we denote by 
 $([\Gamma\backslash D]^{\rm AMRT, \{\sigma_{\alpha}\}},
 [\Gamma\backslash D]^{\rm AMRT, \{\sigma_{\alpha}\}}\setminus 
 [\Gamma\backslash D])$. 
 We apply either A.9 or A.12 of \cite[Appendix]{TGC.II} 
 (the result is the same by A.13 of {\it loc.cit}) 
to the pair. 
Recall that it does not depend on the choice of cone complex as shown 
in \cite[A.12, A.13]{TGC.II}. 
Thus we denote it by $\overline{\Gamma\backslash D}^{\rm MSBJ}$. 

The main theorem of this chapter is to compare such $\overline{\Gamma\backslash D}^{\rm MSBJ}$ 
 with $\overline{\Gamma\backslash D}^{\rm Sat, \tau_{\rm ad}}$,
 the Satake compactification for the adjoint representation, 
 which we discussed in the previous section from 
 a priori rather different perspectives. 

Here is the main theorem of this chapter. 

\begin{Thm}\label{MS.Sat}
In the above setting,
 there exists a homeomorphism 
\[\overline{\Gamma\backslash D}^{\rm MSBJ}\simeq 
\overline{\Gamma\backslash D}^
{\rm Sat, \tau_{\rm ad}}\]
extending the identity map on $\Gamma\backslash D$. 
\end{Thm}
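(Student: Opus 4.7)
The plan is to produce a continuous bijection between the two compactifications extending the identity on $\Gamma\backslash D$; since both sides are compact Hausdorff, this will automatically be a homeomorphism. The natural intermediary is the toroidal compactification $\overline{\Gamma\backslash D}^{\rm tor}$. By \cite[A.12,\,A.13]{TGC.II}, $\overline{\Gamma\backslash D}^{\rm MSBJ}$ is a canonical continuous quotient of $\overline{\Gamma\backslash D}^{\rm tor}$ that is independent of the choice of $\Gamma$-admissible polyhedral cone decomposition. It therefore suffices to (i) construct a continuous surjection $\pi\colon\overline{\Gamma\backslash D}^{\rm tor}\to \overline{\Gamma\backslash D}^{\rm Sat,\tau_{\rm ad}}$ extending the identity, (ii) verify that $\pi$ factors through the MSBJ quotient, and (iii) check that the induced continuous map $\phi\colon \overline{\Gamma\backslash D}^{\rm MSBJ}\to \overline{\Gamma\backslash D}^{\rm Sat,\tau_{\rm ad}}$ is bijective.

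For (i) and (ii) I use the embedding $i\colon D\hookrightarrow \mathbb{P}(\mathfrak{g})$ and the description of boundary strata recalled in Section~\ref{Satake.adjoint}. Given a toroidal boundary point lying over a rational boundary component $F$ with associated cone $\sigma\subset U(F)_{\R}$ in the chosen fan, approach it by a sequence $a_j u_j\cdot o\in D$ inside a Siegel set, where $\log(a_j)$ tends to infinity along a ray in the relative interior of $\sigma$. Formula \eqref{k_center} for vectors in $\mathfrak{z}(\mathfrak{k})$, combined with the explicit $Q(P)$-orbit computation of Section~\ref{Satake.adjoint}, shows that the limit in $\mathbb{P}(\mathfrak{g})$ is a projective class built from partial sums $\sum_{j\in J(\sigma)}X_{\gamma_j}$ up to the $Q(P)$-action, and depends on the approaching data only through the ray of $\log(a_j)$ in $U(F)_\R$ modulo positive scaling and the arithmetic subgroup $\Gamma\cap U(F)$. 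This is precisely the equivalence defining the MSBJ collapse described in \cite[Appendix]{TGC.II}, so (i) and (ii) follow simultaneously.

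For (iii), both compactifications admit a canonical stratification indexed by $\Gamma$-conjugacy classes of maximal rational parabolic subgroups: on the Satake side this is \eqref{Satake.stratif} specialized to $\tau_{\rm ad}$, combined with the observation in Section~\ref{Satake.adjoint} that every maximal rational parabolic actually occurs; on the MSBJ side it is the description in \cite[Appendix]{TGC.II}. By construction $\phi$ respects this stratification, so bijectivity reduces to a stratum-by-stratum check: for each maximal rational parabolic $Q=Q(P)$, I must show that the MSBJ stratum, essentially $\Gamma_F\backslash (\mathrm{int}\,C(F))/\R_{>0}$ where $C(F)\subset U(F)_\R$ is the self-dual cone, maps bijectively onto the $(\Gamma\cap Q)$-quotient of the $Q$-orbit through $\sum_{j=1}^s X_{\gamma_j}$ in $\mathbb{P}(\mathfrak{g})$.

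The main obstacle is precisely this stratum-wise bijectivity. Injectivity requires identifying the stabilizer in $Q$ of the projective class $[\sum_{j=1}^s X_{\gamma_j}]\in\mathbb{P}(\mathfrak{g})$: using the strong orthogonality of $\gamma_1,\dots,\gamma_r$ and the root vector property $X_{\gamma_j}\in\mathfrak{g}_{\gamma_j}$, one checks that this stabilizer absorbs exactly the scalar rescaling of the cone vector together with the factor of $M_Q$ and $N_Q$ that acts trivially on the direction. Surjectivity amounts to showing that the assignment $v\mapsto \lim_{t\to\infty}\mathrm{Ad}(\exp(tv))[\mathfrak{z}(\mathfrak{k})]$ from $\mathrm{int}\,C(F)$ sweeps out the full $Q$-orbit, which is the essential content of the explicit limit calculation at the end of Section~\ref{Satake.adjoint}. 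Once both are established, compactness of $\overline{\Gamma\backslash D}^{\rm MSBJ}$ together with Hausdorffness of $\overline{\Gamma\backslash D}^{\rm Sat,\tau_{\rm ad}}$ forces $\phi$ to be a homeomorphism.
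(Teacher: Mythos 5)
Your proposal is correct and essentially mirrors the paper's argument: both identify the two compactifications through the common stratification by maximal rational parabolics ($S(F) = C(F)/\R_{>0}$ vs.\ $(\Gamma\cap N(F))\backslash M_P/(K\cap M_P)$, matched via AMRT's $C(F)\cong G_\ell(F)/(K\cap G_\ell(F))$), verify that the tautological bijection respects topologies by a Siegel-set convergence analysis, and then invoke compactness and Hausdorffness. The only real difference is organizational: you route the comparison through the toroidal compactification explicitly and do the limit computation in the adjoint embedding $D\hookrightarrow\mathbb{P}(\mathfrak{g})$ via $\mathrm{Ad}(\exp(tv))[\mathfrak{z}(\mathfrak{k})]$, whereas the paper proves the identity map is sequentially continuous by comparing ratios of $\mathrm{Im}\langle m,u_i\rangle$ with $\langle m,v_i\rangle$ in the Siegel domain of the third kind coordinates $(u,y,z')$ — two realizations of the same Siegel-set estimate.
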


In particular, the dual intersection complex of the boundary divisor for 
any toroidal compactification of $\Gamma\backslash D$ (in the sense explained in \cite[Appendix]{TGC.II}) has a finite 
stratification by (explicit) locally symmetric spaces. 
While finishing the manuscript and before submitting this paper, we learned \cite[Proposition 2.1.1]{HZII} 
mentions that statement under some assumptions. Our Theorem~\ref{MS.Sat} elaborates the result at the level of 
compactification of $\Gamma\backslash D$, rather than the homeomorphism of two boundaries, without any assumption. 

\begin{proof}[proof of Theorem~\ref{MS.Sat}]
By our assumption, the highest weight $\mu(\tau_{\rm ad})$
 (with respect to the restrict root system for maximal $\R$-split torus)
 is connected only to the opposite side root of the distinguished root
 in the Dynkin diagram for $G$. 
Let $P$ be an arbitrary $\mu(\tau_{\rm ad})$-connected parabolic subgroup
 and set $Q$ as its $\mu(\tau_{\rm ad})$-saturation.
Since we assumed that $G$ is simple, $Q$ is a real maximal parabolic subgroup.
This can be written as $N(F)$ in the \cite{AMRT} notation,
 when $F$ is the corresponding rational boundary component in the
 Harish-Chandra embedding of $D$. 

In the notation of \cite[p.144]{AMRT}, a Levi component of $Q$
 is decomposed as $G_h(F)\cdot G_{\ell}(F)\cdot M(F)$.
The semisimple part of $P$ equals that of $G_{\ell}(F)$ up to compact factors.
Recall that the cone $C(F)$,
 which is to be divided into rational polyhedral subcones
 for the toroidal (partial) compactifications around $F$,
 is defined in \cite{AMRT} as a certain $G_{\ell}(F)$-orbit in $U(F)$,
 and it is shown in \cite[III.\ Theorem 4.1]{AMRT} that 
\[C(F)\cong G_{\ell}(F)/(K\cap G_{\ell}(F)).\] 
This implies that the boundary component corresponding to $F$, 
to put in the Satake 
compactification for the adjoint representation is 
 $C(F)/\R_{>0}$ 
 (cf.\  e.g.\ \cite[Lemma III 7.7]{BJ}), which is denoted by 
 $S(F)$ from now on. 

Hence we can stratify the compactification as 
\begin{equation}\label{Satake.strat}
\overline{\Gamma\backslash D}^{\rm Sat, \tau_{\rm ad}}
 =\Gamma\backslash D\sqcup \bigsqcup_{F}(\Gamma\cap N(F))\backslash S(F), 
\end{equation}
where $F$ runs through all the equivalence classes of 
the rational boundary strata (in the sense of 
\cite{AMRT}, i.e.\ for the Baily-Borel compactification) with respect to the $\Gamma$-action. 

Now we are ready to consider the structure of the 
Morgan-Shalen type construction in \cite[Appendix A.13 (after A.11, A.12)]{TGC.II},
 which is simply a stacky extension of \cite{MS, BJ16},
 applied to toroidal compactifications
 and compare with the above Satake type compactification. 
Recall that the toroidal compactifications for a given 
 polyhedral decomposition $\Sigma$ naturally 
 have structures as Deligne-Mumford stacks, 
 which are even smooth (as stacks)
 if all the rational polyhedral subcones we take are regular. 
We denote the obtained compactification of $\Gamma\backslash D$
 by $\overline{\Gamma\backslash D}^{\rm MSBJ}$
 after the names of \cite{MS, BJ16}. Note that \cite{TGC.II}
 (or discussion below) shows that
 their isomorphism classes as compactifications are
 independent of $\Sigma$ after all. 

Let us briefly recall the construction of toroidal compactifications
 following \cite[Chapter III]{AMRT}. 
 In {\it op.cit.}, $G$ is assumed to be
 the connected component ${\rm Aut}^{o}(D)$ of ${\rm Aut}(D)$ 
 but our assumptions \eqref{HSD.assumption}, \eqref{Gamma} are enough for the construction. 
 They use a description of the Hermitian symmetric space $D$ as a 
so-called Siegel domain of the third kind 
 (the existence of such presentation is proved in \cite{PS}, \cite{KW}): 
\begin{align}\label{third.kind}
D\cong\{(x,y,z)\in U(F)_{\mathbb{C}}\times \mathbb{C}^{k}\times F
\mid \im(x)\in C(F)+h_{z}(y,y)\},
\end{align}
where $U(F)_{\C}:=U(F)\otimes \C$ and
 $h_{z}$ is a $(U(F)\otimes \R)$-valued real bilinear quadratic form on $\mathbb{C}^{k}$. 
We regard the domain $D$ as an open subset of
 $U(F)_\C \times \mathbb{C}^{k}\times F$,
 which has a projection onto $\mathbb{C}^{k}\times F$. 
The fibers of $D$ inside $U(F)_{\C}$ are some translations of the same
 tube domains $U(F)\times C(F)\subset U(F)_{\mathbb{C}}$. 
As input data, take a $\Gamma$-admissible collection of polyhedra
 $\{\sigma_{\alpha}^{F}\}_{F}$ and we compactify $\Gamma\backslash D$.
Consider the embedding 
\begin{equation*}
D\hookrightarrow D(F):= U(F)_{\C}
 \cdot D(\subset \check{D})\cong U(F)_{\C}
 \times \mathbb{C}^{k}\times F 
\end{equation*}
 and its $U(F)_{\mathbb{Z}} (:=U(F)\cap \Gamma)$-quotient,
 where $\check{D}$ is the compact dual of $D$. 
For each $\Gamma$-admissible collection of polyhedra $\{\sigma_\alpha^F\}$ 
(\cite[III.~Definition~5.1]{AMRT}), 
 we apply toric construction to get a natural partial compactification 
\begin{align}\label{Siegel.domain}
U(F)_{\mathbb{Z}}\backslash D(F)
  &\cong T_{U(F)_{\mathbb{Z}}}\times \mathbb{C}^{k}\times F \\ \nonumber
& \subset T_{U(F)_{\mathbb{Z}}}{\rm emb}\{\sigma_{\alpha}^{F}\}
  \times \mathbb{C}^{k}\times F \\ \nonumber
&=:(U(F)_{\Z}\backslash D(F))_{\{\sigma_{\alpha}^{F}\}}, 
\end{align}
 where $U(F)_{\Z}:=U(F)\cap \Gamma$ is a free abelian group
 of finite rank.
Let
 $\overline{(U(F)_{\mathbb{Z}}\backslash D)}_{\{\sigma_{\alpha}^{F}\}}$ be 
 the interior of the closure of the image of $D$ inside
 $\overline{(U(F)_{\Z}\backslash D(F))}_{\{\sigma_{\alpha}^{F}\}}$. 
Then we divide 
 $\overline{(U(F)_{\mathbb{Z}}\backslash D)}_{\{\sigma_{\alpha}^{F}\}}$ by 
``the rest part of the discrete subgroup''
 $U(F)_{\mathbb{Z}}\backslash (\Gamma\cap N(F))$
 and then naturally glue together to get 
 the desired toroidal compactification
 $\overline{\Gamma\backslash D}_{\{\sigma_{\alpha}^{F}\}}$. 
Then the set of all boundary components (prime divisors) of partial toric compactification
 of $\Gamma\backslash D$ in this $F$-direction has a bijection 
 with rays in $\{\sigma_{\alpha}^{F}\}$. 
From the above construction and our extension, 
we again have the same set-theoretic description 
\begin{eqnarray}\label{MS.strat}
\overline{\Gamma\backslash D}^{\rm MSBJ}
 &\cong &\Gamma\backslash D\sqcup \bigsqcup_{F}
 (\Gamma\cap N(F))\backslash C(F)/\mathbb{R}_{>0} \\ \nonumber
&=& \Gamma \backslash D \sqcup \bigsqcup_{F}
 (\Gamma\cap N(F))\backslash S(F).
\end{eqnarray}

Since the above two compactifications (\ref{Satake.strat}) and (\ref{MS.strat}) have 
the same stratification (cf., also \cite[Proposition 2.1.1]{HZII}), it suffices to show the 
equivalence of the both topologies by some analysis.

Note that the topology on our Satake compactification is (locally) metrizable as it is 
dominated by the Borel-Serre compactification, a manifold with corners up to finite covering, 
and so is the Morgan-Shalen type compactification \cite[Appendix]{TGC.II} from its 
construction. Hence, it is enough to show that 

\begin{Lem}\label{all.seq}
A converging sequence in 
the Satake compactification $\overline{\Gamma\backslash D}^{\rm Sat,\tau_{\rm ad}}$ 
is also a converging sequence in the 
Morgan-Shalen type compactification $\overline{\Gamma\backslash D}^{\rm MSBJ}$.  
\end{Lem}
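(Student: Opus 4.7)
The plan is to leverage the already-established bijection of stratifications \eqref{Satake.strat} and \eqref{MS.strat} between the two compactifications, and promote it to a homeomorphism by proving continuity of the identity-extending map from the Satake topology to the Morgan-Shalen topology. Since both spaces are compact Hausdorff (being quotients up to finite covering of the Borel-Serre compactification, a manifold with corners), a continuous bijection is automatically a homeomorphism, so it is enough to check sequential continuity; this is exactly the content of the lemma.

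First, I would invoke the reduction theory of arithmetic groups (\cite[Chap.~III]{BJ}) to reduce, after $\Gamma$-translation and passing to a subsequence, to a sequence $x_n \in D$ lying in a fixed Siegel set attached to the maximal rational parabolic $N(F)$ that indexes the Satake limit stratum. Via the Siegel domain of the third kind \eqref{third.kind}, write $x_n = (w_n, y_n, z_n)$ with $\im(w_n) - h_{z_n}(y_n,y_n) \in C(F)$. Inside the Siegel set the pair $(y_n, z_n)$ remains bounded and, up to a further subsequence, converges, so the whole boundary-bound asymptotics is controlled by the behavior of $\im(w_n)$ in $C(F)$.

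Second, I would characterize boundary convergence in these Siegel coordinates for each compactification. For the Satake side, use the embedding $i(g\cdot o) = [\mathrm{Ad}(g)\mathfrak{z}(\mathfrak{k})]$ into $\mathbb{P}(\mathfrak{g})$ together with the explicit generator \eqref{k_center}; a root-space computation with respect to the split torus of $N(F)$ shows that $i(x_n)$ converges in $\mathbb{P}(\mathfrak{g})$ to the point of the $N(F)$-orbit of $[\sum_j X_{\gamma_j}]$ corresponding to the limiting ray $[\im(w_n)] \in C(F)/\R_{>0} = S(F)$. For the Morgan-Shalen side, the toric construction \eqref{Siegel.domain} translates the same data directly: working in a toric chart attached to a cone of $\{\sigma_\alpha^F\}$ containing $\R_{>0}\cdot[\im(w_n)]$, the boundary coordinates $\exp(2\pi i \langle \rho, w_n\rangle)$ tend to $0$ along the correct rays precisely when $[\im(w_n)]$ stabilizes in $C(F)/\R_{>0}$. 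The two limiting recipes agree and land at the same point of $(\Gamma\cap N(F))\backslash S(F)$, which proves the lemma.

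The main obstacle is the asymptotic computation on the Satake side for a \emph{general} Siegel-set sequence. The material preceding Theorem~\ref{MS.Sat} carries out this computation only along a one-parameter ray $a_n^\alpha \to +\infty$ in $A_Q$; here one must extend it to arbitrary degeneration of $g_n$ inside $A(F)\cdot N_{N(F)}$ and verify that the projective limit of $[\mathrm{Ad}(g_n)\mathfrak{z}(\mathfrak{k})]$ depends only on the $C(F)/\R_{>0}$-direction of $\im(w_n)$, with the unipotent contribution, the terms in $y_n$ and $z_n$, and the bounded Levi/compact component all becoming subdominant after rescaling. Once this leading-eigenspace analysis is pinned down, the comparison with the toroidal chart coordinates is essentially combinatorial, and the matching of limits in $S(F)$ is automatic from how both constructions parametrize the stratum by $C(F)/\R_{>0}$.
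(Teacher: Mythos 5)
Your plan diverges from the paper's at the point you yourself flag as "the main obstacle," and that obstacle is real in your route but simply bypassed in the paper's. The paper never recomputes the Satake-side limit in $\mathbb{P}(\mathfrak{g})$ for a general Siegel-set sequence. Instead it uses, as input, the standard characterization of convergence in the Satake topology (cf.\ \cite{Sat1,Sat2,BJ}): a sequence $x_i \in \Gamma\backslash D$ converges to $x_\infty \in (\Gamma\cap N(F))\backslash S(F)$ precisely when representatives $\tilde{x}_i$ admit a decomposition $n_i a_i l_i z_i \in W(F)\cdot A_Q\cdot S(F)\cdot F$ with $n_i$ bounded, $a_i^\alpha \to +\infty$ for every $W(F)$-root $\alpha$, and $l_i \to x_\infty$. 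This is part of the definition/theory of the Satake compactification, not something to be re-derived. The work in Lemma~\ref{all.seq} is entirely on the Morgan--Shalen side: translate those conditions into the Siegel-domain coordinates $(u_i, y_i, z'_i)$, observe that the logarithms $\im\langle m, u_i\rangle$ whose ratios encode the MSBJ limit agree asymptotically with $\langle m, v_i\rangle$, and conclude. Your proposed leading-eigenspace analysis of $[\mathrm{Ad}(g_n)\mathfrak{z}(\mathfrak{k})]$ for arbitrary degenerations of $g_n$ would, if carried out, re-prove a piece of the Satake theory; it is neither wrong nor impossible, but it is substantial extra work and you have not supplied it.

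The second, more concrete gap is your unjustified claim that "inside the Siegel set the pair $(y_n, z_n)$ remains bounded" in the Siegel-domain-of-the-third-kind coordinates attached to $F$. If $F$ is not a minimal rational boundary component, $(\Gamma\cap N(F))\backslash F$ need not be compact, and the $F$-component $z_n$ can escape to infinity modulo $\Gamma\cap N(F)$. The paper handles this by first passing to a \emph{minimal} rational boundary stratum $F'$ of $F$, for which $(\Gamma\cap N(F'))\backslash F'$ is compact; after this replacement (and a subsequence), the decomposition $\tilde{x}_i = w_i v_i z'_i \in W(F')\cdot C(F')\cdot F'$ has $w_i$ and $z'_i$ bounded, and the divergence is concentrated in $v_i \in C(F')$ whose projective class converges to $\tilde{x}_\infty \in S(F)\subset \overline{S(F')}$. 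Without this step, the boundedness of $\im(u_i) - v_i = h_{z'_i}(y_i,y_i)$ — the key cancellation that lets the MSBJ coordinates track $v_i$ alone — does not follow. Finally, your opening paragraph leaps from "compact Hausdorff" to "sequential continuity suffices"; what you actually need, and what the paper records, is local metrizability (via domination by the Borel--Serre compactification), and you also need the diagonalization step reducing convergence of sequences with boundary terms to convergence of interior sequences, which the paper isolates as the implication from Claim~\ref{gen.seq}.
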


In the above statement, a natural bijection 
between two compactifications 
\[I\colon \overline{\Gamma\backslash D}^{\rm Sat,\tau_{\rm ad}} \to 
\overline{\Gamma\backslash D}^{\rm MSBJ}\]
which is defined as the identification of two stratifications \eqref{Satake.strat}, \eqref{MS.strat} 
is used. We first reduce the proof of above lemma to a special case by a standard argument: 

\begin{Claim}\label{gen.seq}
If a sequence $\{x_i\}_{i=1,2,\dots}$ in $\Gamma\backslash D$ converges to 
 $x_{\infty}\in \overline{\Gamma\backslash D}^{\rm Sat,\tau_{\rm ad}}$ with
 respect to the Satake topology on
 $\overline{\Gamma\backslash D}^{\rm Sat,\tau_{\rm ad}}$, 
then $\{I(x_i)\}$ converges to $I(x_{\infty})$ in $\overline{\Gamma\backslash D}^{\rm MSBJ}$. 
\end{Claim}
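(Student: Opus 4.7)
The plan is to exploit the fact that the set-theoretic stratifications \eqref{Satake.strat} and \eqref{MS.strat} are identical: the bijection $I$ is the identity on each stratum $(\Gamma\cap N(F))\backslash S(F)$, so the content of the claim is purely topological. First I would reduce to the case in which $x_\infty$ lies in a single fixed boundary stratum associated with a maximal rational parabolic $Q=N(F)$. By standard reduction theory one may, after replacing each $x_i$ by a $\Gamma$-translate, take the lifts $\tilde x_i\in D$ to lie in a Siegel set for $Q$, and decompose $\tilde x_i = n_i a_i m_i\cdot o$ via the Langlands decomposition $Q=N_Q A_Q M_Q$, with $n_i\in N_Q$ and $m_i\in M_Q$ staying in bounded sets, $m_i$ converging to some $m_\infty$ modulo $K\cap M_Q$, and $a_i\in A_Q$ escaping in the direction of the simple root $\alpha$ cutting out $Q$.

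The next step is to compute the limit on each side and identify them. On the Satake side, formula \eqref{k_center} and the adjoint embedding $D\hookrightarrow\mathbb{P}(\mathfrak{g})$ evaluate the limit by applying $\mathrm{Ad}(n_i a_i m_i)$ to a generator of $\mathfrak{z}(\mathfrak{k})$: the terms $\mathrm{Ad}(a_i)X_{\gamma_j}$ for $1\le j\le s$ dominate, the opposite $\theta(X_{\gamma_j})$-terms and the $Y$-term become negligible, and the projective limit is the line spanned by $\mathrm{Ad}(m_\infty)\bigl(\sum_{j=1}^s X_{\gamma_j}\bigr)$; under the identification $S(F)=C(F)/\mathbb{R}_{>0}$ with $C(F)\subset U(F)$, this line is precisely $I(x_\infty)$. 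On the MSBJ side, I would realize $\tilde x_i$ in the Siegel-domain-of-the-third-kind coordinates \eqref{third.kind}: the $A_Q$-action expands the imaginary part of the $U(F)_{\mathbb{C}}$-component along the very same ray through $\mathrm{Ad}(m_\infty)(\sum_j X_{\gamma_j})\in C(F)$, while the $\mathbb{C}^k\times F$-components remain in a compact set. Under the toric partial compactification \eqref{Siegel.domain} along any chosen $\{\sigma_\alpha^F\}$, followed by the Morgan-Shalen operation that collapses each toric stratum to its cone modulo $\mathbb{R}_{>0}$-dilation, this is precisely convergence to the same point of $(\Gamma\cap N(F))\backslash S(F)$.

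The main obstacle I expect is careful bookkeeping of the $n_i$-contribution under the successive quotients by $U(F)_{\mathbb{Z}}$ and by $U(F)_{\mathbb{Z}}\backslash(\Gamma\cap N(F))$. One must verify that a Siegel-set sequence whose $A_Q$-part escapes along a ray in $C(F)$ produces an MSBJ-convergent sequence whose boundary limit depends only on that ray and on $m_\infty$, and not on the bounded nilpotent fluctuations $n_i$; this requires the standard estimate that the $U(F)_{\mathbb{C}}$-imaginary part concentrates along the correct direction, combined with the cocompactness of $U(F)_{\mathbb{Z}}$ in $U(F)$ so that the $n_i$-tail is absorbed by the toric quotient. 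Once this estimate is in place, $\Gamma$-equivariance makes the descent to $x_i\in\Gamma\backslash D$ automatic, and the independence from the cone decomposition $\{\sigma_\alpha^F\}$ is free from \cite[A.12, A.13]{TGC.II}.
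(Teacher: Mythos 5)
Your overall strategy --- comparing the Siegel-set limits on both sides and matching the stratifications --- is aligned with the paper's, but there is a genuine gap at the step where you assert that the $\mathbb{C}^k\times F$-components remain in a compact set.

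Satake-topology convergence of $x_i$ to a boundary point $x_\infty\in(\Gamma\cap N(F))\backslash S(F)$ for $\tau_{\rm ad}$ only controls the $G_\ell(F)$-part of the Levi of $Q=N(F)$: in the decomposition $\tilde x_i=n_ia_il_iz_i\in W(F)\cdot A_Q\cdot S(F)\cdot F$, the hypotheses are that $n_i$ is bounded, $a_i^\alpha\to\infty$, and $l_i\to x_\infty$; \emph{nothing} constrains the $F$-component $z_i$ (equivalently the $G_h(F)$-part of your $m_i$). When $F$ is not minimal, $(\Gamma\cap N(F))\backslash F$ is non-compact, so $z_i$ can escape toward the rational cusps of $F$. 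Your computation of the MSBJ limit via the Siegel-domain-of-the-third-kind coordinates for $F$ then breaks down: you cannot conclude that $\mathrm{Im}\,u_i-v_i=h_{z_i}(y_i,y_i)$ stays bounded, which is the very estimate needed to show the ratios $\{\mathrm{Im}\langle m,u_i\rangle\}$ and $\{\langle m,v_i\rangle\}$ have the same limit.

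The paper circumvents this by passing from $F$ to a \emph{minimal} rational boundary component $F'$ with $\overline{F'}\subset\overline{F}$, for which $(\Gamma\cap N(F'))\backslash F'$ is compact. Re-expressing $\tilde x_i=w_iv_iz'_i\in W(F')\cdot C(F')\cdot F'$ (after $\Gamma$-translation and a subsequence), compactness forces $z'_i$ to stay bounded, and the toric/MSBJ computation is carried out over the cusp $F'$ using $\{\sigma_\alpha^{F'}\}$ rather than over $F$. The claimed boundary point $\tilde x_\infty\in S(F)\subset\overline{S(F')}$ is then read off as a ray in the \emph{closure} of $C(F')$, not of $C(F)$. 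This reduction to the minimal cusp is the essential device that makes the boundedness of the $F'$-component automatic, and it is exactly what your proposal omits. Without it, your ``main obstacle'' paragraph about absorbing the $n_i$-tail does not resolve the issue, because the problematic unbounded direction is in $F$ (resp.\ $G_h$), not in the unipotent part.
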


\begin{proof}[proof of ``Claim~\ref{gen.seq} implies Lemma~\ref{all.seq}"]
We put metrics $d_{\rm Sat}$ and $d_{\rm MS}$ on 
$\overline{\Gamma\backslash D}^{\rm Sat,\tau_{\rm ad}}$ and 
$\overline{\Gamma\backslash D}^{\rm MSBJ}$ respectively, 
which are compatible with their topologies. 
Take an arbitrary converging sequence $x_i\to x_\infty$ as $i\to \infty$ 
in $\overline{\Gamma\backslash D}^{\rm Sat,\tau_{\rm ad}}$.
Each $x_i$ (with fixed $i$) 
 can be approximated by a sequence 
 $\{x_i^j\}_{j=1,2,\cdots}$ in $D$ 
 which converge to $x_i$ in the Satake compactification. 
By Claim~\ref{gen.seq}, $I(x_i^j)\to I(x_i)$ as $j\to \infty$. 
For each $i$, take an index $j(i)$ such that 
 $d_{\rm Sat}(x_i^{j(i)}, x_i)<\frac{1}{i}$ and 
 $d_{\rm MS}(I(x_i^{j(i)}), I(x_i))<\frac{1}{i}$.
Then a sequence $\{x_i^{j(i)}\}_i$ converges to $x_\infty$
 and hence $\{I(x_i^{j(i)})\}_i$ converges to $I(x_\infty)$
 by Claim~\ref{gen.seq} again. 
Therefore, $I(x_i)\to I(x_{\infty})$ as $i\to \infty$. 
\end{proof}

It only remains to prove Claim~\ref{gen.seq}. 
Suppose that a sequence $\{x_i\} \in \Gamma\backslash D$
 converges to $x_\infty\in (\Gamma\cap N(F)) \backslash S(F)$. 
Let $Q:=N(F)$ with the Langlands decomposition $Q=W(F)A_Q M_Q$. 
The convergence of $x_i$ to $x_{\infty}$ with respect to the Satake topology 
 means that $x_i$ has a representative $\tilde{x}_i\in D$ with the decomposition
\[\tilde{x}_i = n_ia_il_iz_i\in W(F)\cdot A_{Q}\cdot S(F)\cdot F,\]
such that 
\begin{enumerate}
\item $n_i$ stay bounded, 
\item \label{div}
$a_i^{\alpha}$ diverges to $+\infty$ for 
every $\alpha \in \Delta(\Lie(W(F)), \Lie(A_Q))$, and 
\item $l_i$ converges to $x_{\infty}$ in $(\Gamma\cap N(F))\backslash S(F)$. 
\end{enumerate}

Choose a minimal boundary stratum $F'$ of $F$
 so $(\Gamma\cap N(F')) \backslash F'$ is compact. 
Here, $N(F')$ is the normalizer of $F'$ and we will write $W(F')$ for its unipotent radical.
By replacing $\tilde{x}_i$ and taking a subsequence if necessary we may assume that
\[\tilde{x}_i= w_i v_i z'_i \in W(F')\cdot C(F')\cdot F'\]
 have the following conditions:
\begin{enumerate}
\item $w_i$ stay bounded, 
\item $v_i$ for all $i$ belong to one Siegel set in $C(F')$
 and they diverge to $\infty$,
\item $[v_i]\in S(F')=C(F')/\R_{>0}$ 
 converge to a point
 $\tilde{x}_{\infty}\in S(F)\subset \overline{S(F')}$
 which is sent to $x_{\infty}$ by the quotient map
 $S(F)\twoheadrightarrow (\Gamma\cap N(F))\backslash S(F)$, and
\item $z'_i$ stay bounded.
\end{enumerate}

Fix a $\Gamma$-admissible collection of rational polyhedral
 decomposition 
$\{\sigma_{\alpha}^{F'}\}$, which we assume to be regular for simplicity. Then 
the set of all boundary components (prime divisors) of partial toric compactification
 of $\Gamma\backslash D$ in this $F'$-direction has bijection 
with rays in $\{\sigma_{\alpha}^{F'}\}$. 
Note that the lattice we use
 for the partial toric construction 
is $\Gamma\cap N(F')$ so we denote its dual lattice by $(\Gamma\cap N(F'))^{*}$. 
We may assume all $v_i$ lie in one polyhedral cone $\sigma_{\alpha}^{F'}$
 by \cite[II.\ Corollary~4.3]{AMRT}.
Let 
\[\tilde{x}_i=(u_i, y_i, z'_i) \in U(F')_\C\times \C^k\times F'\]
 be the decomposition with respect to the description \eqref{third.kind} for $F'$.
Recall from \cite[III.\S4.3]{AMRT} that the projection map $D\to \C^k\times F'$
 can be identified with an $N(F')^o$-equivariant map
 $N(F')^o/ (K_{\ell} \cdot K_h \cdot M)
 \to N(F')^o/(U(F')\cdot G_{\ell} \cdot K_h \cdot M)$
 and hence with $W(F')\times C(F') \times F' \to (W(F')/U(F')) \times F'$.
Then our conditions on $\tilde{x}_i=w_iv_iz'_i$ imply that
 $(y_i,z'_i)\in \C^k\times F'$ are bounded.
Also, recall from \cite[A.12, A.13]{TGC.II} that  
 the limit inside the Morgan-Shalen type compactification encodes 
 the limit of ratios of
$$
\bigl\{-\log |e^{2\pi i\langle m, u_i\rangle}|
 =2\pi\im\langle m, u_i\rangle\bigr\}_{m\in {(\sigma_{\alpha}^{F'})}^{\vee}\cap (\Gamma\cap N(F'))^{*}}, 
$$
where $(\sigma_{\alpha}^{F'})^{\vee}
 :={\operatorname{Hom}}_{\text{monoid}}
 (\sigma_{\alpha}^{F'}\cap \Gamma\cap N(F'),\,\R_{\ge 0})$. 
Since $(y_i,z'_i)$ are bounded,
 $\im u_i-v_i=h_{z'_i}(y_i,y_i)$ are also bounded.
Hence the limit of ratios of 
 $\{\im\langle m, u_i\rangle\}$ is equal to
 that of $\{\langle m, v_i\rangle\}$, which corresponds to the point $\tilde{x}_\infty$.
We therefore conclude that the limit of $I(x_i)$ exists and equals $I(x_{\infty})$, 
 completing the proof of Claim~\ref{gen.seq} and that of Theorem~\ref{MS.Sat}. 
\end{proof}


\begin{Rem}[Non-simple Lie group case]
Recall that we assumed $G$ is a \textit{simple} Lie group at the beginning 
of this chapter. 
Even if $G=\mathbb{G}(\mathbb{R})$
 is \textit{not} simple, we can apply
 the Morgan-Shalen-Boucksom-Jonsson compactification
 (\cite{BJ16}, \cite[Appendix]{TGC.II})
 to toroidal compactifications of a locally symmetric space
 $\Gamma\backslash D=\Gamma\backslash G/K$
 for an arithmetic group $\Gamma$. 
From \cite[A.12, A.13 (also cf.\  A.10)]{TGC.II}, this does not 
depend on the choice of the combinatoric data --- the admissible 
cone decomposition. 
We expect this can be still reconstructed in the Satake's 
representation theoretic manner 
\cite{Sat1, Sat2} and hope to treat it more properly in future (cf., comments in \cite[III.3.18]{BJ}). 

\begin{Ex}
Let us think of the Hilbert modular varieties (cf.\  e.g.\ \cite{vG}). 
In this case, 
$\mathbb{G}={\rm Res}_{F/\Q}(SL_{2})$ for a totally real number field $F$. 
Then $G\simeq SL_{2}(\R)^{[F:\Q]}$ as a Lie group 
and $D\simeq \mathbb{H}^{[F:\Q]}$, a product of upper half plane,
 as a symmetric space 
since $F\otimes_{\Q}\R\simeq \R^{[F:\Q]}$. We set $r:=[F: \Q]$. 
(Most typical and classically well-studied from the time of 
Hilbert-Blumenthal, is when $F$ is a (real) quadratic 
field and $\Gamma=SL_{2}(\mathcal{O}_{F})\subset \mathbb{G}(\Q)=SL_{2}(F)$.)

Note that the $\Q$-rank of $\mathbb{G}$ is $1$ and 
the boundary components of our Morgan-Shalen compactification (after \cite[Appendix]{TGC.II}) is 
real $(r-1)$-dimensional while the adjoint representation 
$\mathfrak{g}\simeq \mathfrak{sl}_{2}(\R)^{\oplus r}$ 
is not 
irreducible so that the framework of Satake \cite{Sat2} does not apply directly. 

This is not directly related to our main contents, but note that the Satake-Baily-Borel compactification's 
($0$-dimensional) boundary components  are famously known to be in 
one to one correspondence with the ideal classes of $\mathcal{O}_{F}$. 
Hence the number of the boundary components is the class number 
$\# {\rm Cl}(\mathcal{O}_{F})$. 
\end{Ex}

From the next subsection, again we discuss under the assumption that $G$ is a simple Lie group,
 to avoid complication.
\end{Rem}


\subsection{Extendability of holomorphic morphism and monodromy}\label{extension.monodromy}

The case of moduli of principally polarized abelian varieties, i.e., $A_g$, 
of the following phenomenon 
is partially proved in \cite{TGC.II} by using 
Mumford-Faltings-Chai uniformization description. 

\begin{Thm}[Extension to Satake compacification]\label{rationality.HSD}
Take an arbitrary holomorphic map $f\colon \Delta^*\to \Gamma\backslash D$, where 
$\Delta^*:=\{z\in \mathbb{C}\mid z\neq 0,\, |z|<1\}$. 
Then $f$ also extends to a continuous map $\Delta\to \overline{\Gamma\backslash D}^{\rm Sat,\tau_{\rm ad}}$ 
where $0\in \Delta$ is sent to a ``rational point'', i.e., a point 
in $(C(F)\cap U(F)_{\Q})/\Q_{>0}\subset S(F)$. 
\end{Thm}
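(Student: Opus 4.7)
\emph{The strategy.} The plan is to use Theorem~\ref{MS.Sat} to reformulate the question in terms of the Morgan-Shalen-Boucksom-Jonsson compactification $\overline{\Gamma\backslash D}^{\rm MSBJ}$ for some (and hence any) admissible toroidal decomposition, and then to apply classical extension/monodromy theory for period maps to produce the limit and read off its rationality. Since $D$ is a bounded symmetric domain and $\Gamma$ is arithmetic, Borel's monodromy theorem asserts that the monodromy $\gamma\in\Gamma$ of $f$ around $0$ is quasi-unipotent; after replacing $\Delta^*$ by a finite cover $z\mapsto z^n$, we may assume $\gamma=\exp(N)$ with $N\in\mathfrak{g}(\Q)$ nilpotent. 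Because a continuous extension for the cover clearly implies one for the original $f$ (sending $0$ to the same boundary point), this reduction is harmless.

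\emph{Local analysis via nilpotent orbits.} The rational nilpotent element $N$ determines a rational boundary component $F$ such that $N\in U(F)(\Q)$ and $[N]\in C(F)/\R_{>0}$ is a well-defined rational point. Fix a $\Gamma$-admissible rational polyhedral decomposition containing a cone $\sigma_\alpha^F$ through $N$ and work in the associated MSBJ compactification. Lifting via $z=e^{2\pi i\tau}$, we obtain $\tilde f\colon\mathbb{H}\to D$ satisfying $\tilde f(\tau+1)=\exp(N)\tilde f(\tau)$, and writing $\tilde f(\tau)=(u(\tau),y(\tau),z'(\tau))$ in the Siegel-domain presentation \eqref{third.kind} at $F$ gives $u(\tau+1)=u(\tau)+N$, $y(\tau+1)=y(\tau)$, $z'(\tau+1)=z'(\tau)$. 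Consequently $y$, $z'$ and $u-\tau N$ descend to holomorphic functions of $z\in\Delta^*$, and Schmid's nilpotent orbit theorem (in its elementary form adapted to the Hermitian symmetric setting) shows that these descended functions remain bounded as $z\to 0$ and extend holomorphically across $0\in\Delta$.

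\emph{Taking the limit and the main obstacle.} By the explicit MSBJ recipe recalled in the proof of Theorem~\ref{MS.Sat}, the limit of $f(z)$ is detected by the asymptotic ratios of $\{2\pi\,\im\langle m, u(\tau)\rangle\}_m$ as $m$ ranges over $(\sigma_\alpha^F)^\vee\cap (\Gamma\cap N(F))^*$. Using $\im(\tau)=-\frac{1}{2\pi}\log|z|\to+\infty$ together with the boundedness of $u(\tau)-\tau N$, one gets $\im\langle m, u(\tau)\rangle=\im(\tau)\langle m, N\rangle+O(1)$, so the ratios converge precisely to $\{\langle m, N\rangle\}_m$, identifying the limit with $[N]\in C(F)/\R_{>0}$. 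Since $N\in U(F)(\Q)$, this class lies in $(C(F)\cap U(F)_\Q)/\Q_{>0}\subset S(F)$ as required, and Theorem~\ref{MS.Sat} transports the convergence to the Satake compactification for the adjoint representation. The main obstacle is precisely the boundedness of $u(\tau)-\tau N$, $y(\tau)$, and $z'(\tau)$ near $z=0$: while this is formally a standard consequence of the nilpotent orbit theorem, verifying it cleanly from the Siegel-domain inequalities $\im(u)\in C(F)+h_{z'}(y,y)$ --- rather than invoking general variation-of-Hodge-structure machinery --- is the only non-mechanical ingredient, after which the argument closes.
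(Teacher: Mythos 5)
The proposal takes a genuinely different route from the paper's proof, and you should be aware that what you are trying to prove "by hand" in the Siegel-domain coordinates is in fact handled by a different piece of machinery in the paper.

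\emph{What the paper does.} The paper's proof has a three-step structure that avoids monodromy analysis entirely. First, the extension theorem of Borel, Kiernan, and Kobayashi--Ochiai gives a holomorphic extension of $f$ to a map $\Delta\to\overline{\Gamma\backslash D}^{\rm SBB}$, which is what controls the behavior near the puncture and determines the boundary component $F$. Second, the valuative criterion of properness (applied to the local ring of convergent power series at $0$ and the birational morphism from a toroidal compactification to $\overline{\Gamma\backslash D}^{\rm SBB}$) produces an extension to any toroidal compactification; after a finite base change, this gives a lift $\bar{f}\colon\Delta\to T_{U(F)_\Z}\mathrm{emb}\{\sigma_\alpha^F\}\times\C^k\times F$. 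Third, the statement reduces to Theorem~\ref{rationality.MSBJ}, a purely elementary lemma about a holomorphic arc $\Delta\to X$ meeting a snc boundary $Y$: the limit in the dual complex has barycentric coordinates given by the integer orders $\mathrm{ord}_t f_i/\sum_j\mathrm{ord}_t f_j$, which are automatically rational. No nilpotent orbit theorem, no $SL_2$-orbit theorem, no explicit monodromy computation.

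\emph{What you do instead.} Your proposal invokes Borel's quasi-unipotence theorem, passes to a unipotent monodromy $\exp(N)$, sets up Siegel-domain coordinates $(u,y,z')$, and computes the MSBJ limit explicitly as $[N]$ from the asymptotics of $\im\langle m,u(\tau)\rangle$. This is genuinely the approach of the paper's \emph{separate} Theorem~\ref{monodromy}, which proves the stronger statement that the limit equals $[N]$ --- but that theorem is stated only for variations of polarized Hodge structures (so that Schmid's machinery is literally available) and is proved without recourse to Theorem~\ref{MS.Sat} via the full $SL_2$-orbit theorem [Schm, (5.26)], not just the nilpotent orbit theorem. Your argument is a hybrid: you use Theorem~\ref{MS.Sat} to transfer, yet still lean on Schmid-type asymptotics.

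\emph{The gap.} You explicitly flag the boundedness of $u(\tau)-\tau N$, $y(\tau)$, $z'(\tau)$ as the nontrivial step, and attribute it to ``Schmid's nilpotent orbit theorem in its elementary form adapted to the Hermitian symmetric setting.'' This is not the right citation for a \emph{general} holomorphic $f\colon\Delta^*\to\Gamma\backslash D$: Schmid's theorem is stated for a VHS over $\Delta^*$ with flat connection and Griffiths transversality, which is not given here (though for Hermitian $D$ one can manufacture such a structure, this is a detour). The clean input is precisely the extension theorem of [Bor, Kie, KO] --- a big-Picard-type statement depending on the hyperbolicity of $D$ --- which the paper invokes directly and which automatically delivers the boundedness you need by way of a holomorphic extension to the SBB compactification. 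Once you replace your appeal to ``Schmid's nilpotent orbit theorem'' by the Borel extension theorem (and then use the toric local structure of the toroidal compactification near $F$ to bound $u-\tau N$), your argument closes; but at that point you have essentially reconstructed the paper's proof minus the simplification of reducing to Theorem~\ref{rationality.MSBJ}, while also carrying the extra weight of identifying the limit as $[N]$, which the statement does not require.
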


\begin{proof}
From the extension theorem (\cite{Bor, Kie, KO}), 
we get a holomorphic extension of $f$ to a holomorphic map from $\Delta$ to the 
Satake-Baily-Borel compactification $\Delta\to \overline{\Gamma\backslash D}^{\rm SBB}$. 
We suppose $0$ is sent to a point in a boundary component $(\Gamma\cap N(F))\backslash F$. 
From the valuative criterion of properness applied to birational proper surjective morphisms 
with the local ring of convergence power series at $0\in \Delta$, it follows that 
for any proper variety $\overline{(\Gamma\backslash D)}$ compactifying $\Gamma\backslash D$, 
$f$ extends to a holomorphic map from $\Delta$ to $\overline{(\Gamma\backslash D)}$. 
In particular, $f$ is automatically meromorphic 
in the sense of e.g., \cite{BJ16}. 
From the construction of toroidal compactification in \cite[Chapter III, \S5, \S6, especially Proposition 6.10]{AMRT},
after a finite base change of $\Delta$, 
we can assume that the morphism $f$  lifts to $\Delta^*\to U(F)_{\Z}\backslash D$, 
which we denote by $\bar{f}$. 
As we reviewed in 
\eqref{Siegel.domain} (or see \cite[Chapter III, \S4]{AMRT}), we obtain an extension 
$$\bar{f}\colon \Delta\to T_{U(F)_{\mathbb{Z}}}{\rm emb}\{\sigma_{\alpha}^{F}\}\times \mathbb{C}^{k}\times F,$$ 
where 
$\Delta^*$ is sent to $T_{U(F)_{\Z}}\times \mathbb{C}^{k}\times F$ 
while we can assume the cone decomposition $\{\sigma_{\alpha}^{F}\}$ 
is regular. We consider its composite 
$$\varphi:=(p_1\circ \bar{f})\colon \Delta\to T_{U(F)_{\mathbb{Z}}}{\rm emb}\{\sigma_{\alpha}^{F}\}$$ 
with the first projection $p_1$. Thus the proof of the desired extendability is reduced to that of the following general theorem, 
which itself is of own interest. 

\begin{Thm}[Extension to Morgan-Shalen compactification]\label{rationality.MSBJ}
Suppose $X$ is a smooth proper variety
 and $Y$ is a simple normal crossing divisor of $X$. Set $U:=X\setminus Y$. 
For a holomorphic morphism $\varphi\colon \Delta\to X$
 whose restriction $\varphi^o:=\varphi|_{\Delta^*}$ maps to $U$, 
$\varphi^o$ extends also to a continuous map
 $$\overline{\varphi^{o}}\colon \Delta\to \overline{U}^{\rm MSBJ}(X)$$
 where $\overline{U}^{\rm MSBJ}(X)$ denotes the Morgan-Shalen-Boucksom-Jonsson compactification
 \cite{BJ16} (cf., also \cite[A.1]{TGC.II}) originally denoted by $\overline{U}^{\rm hyb}(X)$. 
\end{Thm}

\begin{proof}[proof of Theorem~\ref{rationality.MSBJ}]
In fact, the extendability, i.e., the existence of $\overline{\varphi^{o}}$ itself is essentially a special case of the functoriality of the Morgan-Shalen construction, observed in \cite[Theorem A.15]{TGC.II}, 
although we consider the \textit{analytic} open disk $\Delta$. 
Indeed, we can see \textit{op.cit.}\ as a partial generalization
 of Theorem~\ref{rationality.MSBJ}
 to higher dimensional domains. 
Here, we give a direct proof in our situation for the sake of convenience and completeness, which 
includes a specification of the image of the origin. 

Let us  take holomorphic local coordinates $(z_1,\cdots,z_n)$
 on a neighborhood $V$ of $\varphi(0)\in X$
 such that $Y\cap V=\prod_{i=1}^{m}z_i=0$ for some $m\le n$. 
Then, by the coordinate $t$ on $\Delta$, $\varphi$ can be written as
 $\varphi(t)=(f_1(t),f_2(t),\cdots,f_n(t))$ with convergent power series $f_i(t)$. 
Then it is easy to confirm that a continuous map $\overline{\varphi^{o}}$ exists
 such that  $\overline{\varphi^{o}}(0)$ in the dual complex
 $\Delta(Y\cap V)$ (an $(m-1)$-simplex)
 has barycenter coordinate 
\[\Bigl(\frac{{\rm ord}_t f_1}{\sum_i {\rm ord}_t f_i},
 \frac{{\rm ord}_t f_2}{\sum_i {\rm ord}_t f_i}, 
\cdots, \frac{{\rm ord}_t f_m}{\sum_i {\rm ord}_t f_i}\Bigr).\] 
We end the proof of Theorem~\ref{rationality.MSBJ}. 
\end{proof}
Therefore, combined with Theorem~\ref{MS.Sat}, 
the above completes the proof of Theorem~\ref{rationality.HSD}. 
\end{proof}

\bigskip

In the case when the symmetric domain $D$ is a period domain for Hodge structures, 
such as the moduli spaces of marked abelian varieties or marked polarized K3 surfaces,
 above Theorem~\ref{rationality.HSD} can be rephrased as 
the existence of rational Hodge structures as a limit of varying Hodge structures along holomorphic 
degenerations. Now we discuss this more rigorously and systematically from general Hodge theoretic viewpoint. 
(We will also slightly extend Theorem~\ref{rationality.HSD} and 
 Theorem~\ref{rationality.MSBJ} in Proposition~\ref{MS.reconstruction} later.)

Suppose that a variation of polarized $\Z$-Hodge structures
 of weight $n$
 on the punctured disk $\Delta^*$ is given. 
For a reference point in $\Delta^*$, 
 let $H_{\Z}$ be the assigned integral Hodge structure
 with a polarization $Q$.
Let $\G:=\Aut(H_{\Q},Q)$
 be an automorphism group defined over $\Q$ 
 and let $G$ be the connected open subgroup of the Lie group $\G(\R)$.
Then we have a holomorphic period mapping 
 $\Phi\colon \Delta^*\to \Gamma\backslash D$,
 where $D$ is a period domain of Griffiths (\cite{Griffiths}) and $\Gamma$ is an arithmetic subgroup of $\G(\R)$  
 which 
 includes the monodromy at $c\in \Delta\setminus \{0\}$ (we fix $c$) around $0\in\Delta$. 
 
 Note that if we set $K:=K_{\Phi(c)}$ as the isotropy group of the $G$-action on $D$, 
 there is a natural $G$-equivariant homeomorphism $D\simeq G/K$. 
We assume Notations (i), (ii), (iii) at the beginning
 of \S\ref{Gen.HSD.start}. 
In particular, the period domain $D=G/K$
 is a Hermitian symmetric space. 
The monodromy at $c\in \Delta\setminus \{0\}$ around $0\in\Delta$ will be denoted by 
 $\gamma\in \Gamma$. 
If $\gamma=\gamma_s\gamma_u$ 
 is its Jordan decomposition,
 the semisimple part $\gamma_s$ is known to be of finite order 
 (\cite{Lan}, cf., also \cite[\S3]{Griffiths}). 

Let us revisit
 the construction of the Satake compactification
 $\overline{\Gamma\backslash D}^{\rm Sat,\tau_{\rm ad}}$
  (see \S\ref{Satake.adjoint}). 
 For each point $z\in D$,
 we have a Hodge decomposition
 $H_{\C}=\bigoplus_{p+q=n}H^{p,q}_z$.
Define $\iota(z)\in {\rm End}(H_{\C})$ to be
\begin{equation}\label{emb.to.Pg}
\iota(z)|_{H^{p,q}_z}=\sqrt{-1}(p-q).
\end{equation}
On the other hand, we have a $\mathfrak{g}$-action
 on $H_{\C}$ for Lie algebra $\mathfrak{g}=\operatorname{Lie} (G)$. 
 Let us set $K_{z}$ to be the isotropy subgroup of $z$ in $G$, i.e., which preserves the 
 Hodge structure encoded in $z$, and its Lie algebra as $\mathfrak{k}_{z}$. 
Then it is easy to see that $\iota(z)$
 is given by action of an element in $\mathfrak{z}(\mathfrak{k}_{z})$,
 which we also denote by $\iota(z)$. 
 Here, $\mathfrak{z}(\mathfrak{k}_{z})$ 
 is the center of $\mathfrak{k}_{z}$. 
This gives an embedding $\iota\colon D\hookrightarrow \mathfrak{g}$
 and then $\iota\colon D\hookrightarrow \mathbb{P}(\mathfrak{g})$,
 which is compatible with our previous description in \S\ref{Satake.adjoint}. 
 Indeed, this is just a Hodge-theoretic re-description of the (unique) embedding $\iota$ of Hermitian symmetric domain 
 $D$ introduced in \S\ref{Satake.adjoint}. 
The Satake compactification 
 $\overline{\Gamma\backslash D}^{\rm Sat,\tau_{\rm ad}}$
 is defined as a quotient by $\Gamma$ of a partial compactification
 of $D$ in the projective space $\mathbb{P}(\mathfrak{g})$ by \cite[Lemma~2 and Corollary~4]{Sap}, for instance, 
 as we reviewed in \S\ref{Satake.adjoint}. 

Let $N := \log \gamma_u \in \mathfrak{g}$ and assume $N\neq 0$. 
It will turn out (in the proof of Theorem~\ref{monodromy})
 that the point $[N]\in \mathbb{P}(\mathfrak{g})$
 lies in the partial compactification of $D$ and 
 hence gives a point 
 $[N]\in \overline{\Gamma\backslash D}^{\rm Sat,\tau_{\rm ad}}$.

The nilpotent orbit theorem by Schmid~\cite{Schm} relates
 the asymptotic behavior of the period mapping and
 the nilpotent orbit through $N$, which is determined by the monodromy.
This enables us to show directly that 
 the period mapping extends to the Satake compactification
 with the limit point given by $[N]$,
 without relying on our previous Theorem~\ref{MS.Sat},
 while the proof of Theorem~\ref{rationality.HSD} does rely on Theorem~\ref{MS.Sat}. 

\begin{Thm}[Revisiting extension $\&$ Relation with monodromy]\label{monodromy}
In the above setting, 
 the period mapping $\Delta^*\to \Gamma\backslash D$
 extends continuously to
 $\Delta \to \overline{\Gamma\backslash D}^{\rm Sat,\tau_{\rm ad}}$
 which sends $0\in \Delta$ to
 $[N]\in \overline{\Gamma\backslash D}^{\rm Sat,\tau_{\rm ad}}$.
\end{Thm}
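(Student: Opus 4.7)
My plan is to give a direct proof via Schmid's asymptotic theory, independent of Theorem~\ref{MS.Sat}. Pulling back along the universal cover $\mathbb{H}\to\Delta^{*}$, $z\mapsto e^{2\pi iz}$, I obtain a holomorphic lift $\tilde\Phi\colon\mathbb{H}\to D$ satisfying $\tilde\Phi(z+1)=\gamma\cdot\tilde\Phi(z)$. After a finite base change of $\Delta^{*}$ which trivializes the finite-order semisimple part $\gamma_{s}$ (and leaves the projective class $[N]$ unchanged), Schmid's nilpotent orbit theorem~\cite{Schm} gives the factorization $\tilde\Phi(z)=\exp(zN)\cdot\psi(e^{2\pi iz})$, where $\psi\colon\Delta\to\check D$ is holomorphic with $\psi(0)=F_{0}$ and $\exp(zN)F_{0}\in D$ for $\im(z)$ sufficiently large.

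The core task is then to compute the asymptotic behavior of $h(\tilde\Phi(z))$ under the embedding \eqref{emb.to.Pg} as $\im(z)\to\infty$. Schmid's $SL_{2}$-orbit theorem attaches to $(N,F_{0})$ an $\mathfrak{sl}_{2}$-triple $(N^{-},Y,N)\subset\mathfrak{g}$, where $Y$ is the grading operator of the monodromy weight filtration. Decomposing $\mathfrak{g}=\bigoplus_{\ell\in\Z}\mathfrak{g}_{\ell}$ into $\mathrm{ad}(Y)$-eigenspaces, one may expand
\[
h\bigl(\exp(iyN)F_{0}\bigr)\;=\;yN+h_{\infty}+O(y^{-1})\qquad\text{in }\mathfrak{g},
\]
where $h_{\infty}\in\mathfrak{g}_{0}$ is (the image of) the Weil operator of the limit mixed Hodge structure split by $Y$, and the remainder lies in $\bigoplus_{\ell\le -1}\mathfrak{g}_{\ell}$. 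The baby case $G=SL_{2}(\R)$, in which a direct computation yields $h(iy)=yN-y^{-1}N^{-}$, verifies the leading behavior. For $z=x+iy$ with $x$ bounded, conjugation by $\exp(xN)$ fixes $N$ and perturbs the lower-order terms by a bounded amount; the holomorphic correction $\psi(e^{2\pi iz})$ tends exponentially to $F_{0}$ in $\check D$, contributing an $O(e^{-2\pi y})$ correction to $h$. Dividing by $y$ and projectivizing, $[h(\tilde\Phi(z))]\to [N]$ in $\mathbb{P}(\mathfrak{g})$.

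The main obstacle is making the $SL_{2}$-orbit expansion precise enough --- in particular controlling both the bounded holomorphic perturbation $\psi(e^{2\pi iz})-F_{0}$ and the translation by $x=\re z$ on a fundamental domain for $\gamma$ --- so that all non-dominant terms stay in a fixed bounded subset of $\mathfrak{g}$ while the $yN$ term blows up. Granted this, since $\exp(N)$ stabilizes the line $[N]$, the limit $[N]$ descends to a well-defined point of the Satake stratum $(\Gamma\cap N(F))\backslash S(F)$ for the rational boundary component $F$ corresponding to the $\mu(\tau_{\mathrm{ad}})$-saturation of the parabolic containing $\gamma$, and the extended map $\Delta\to\overline{\Gamma\backslash D}^{\mathrm{Sat},\tau_{\mathrm{ad}}}$ is continuous at $0$. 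This refines Theorem~\ref{rationality.HSD} by identifying the rational limit explicitly in terms of the monodromy logarithm $N$.
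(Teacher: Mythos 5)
Your strategy is the same one the paper follows: lift to $\mathbb{H}$, invoke Schmid's asymptotic theory, identify the dominant term of $h(\tilde\Phi(z))$ in $\mathfrak{g}$ as a multiple of $N$, and read off the limit $[N]\in\mathbb{P}(\mathfrak{g})$. You correctly flag the missing step yourself; the concrete way the paper fills it is by quoting Schmid's (5.26) Theorem wholesale rather than re-deriving the expansion. That theorem supplies a decomposition $\tilde\Phi(x+\sqrt{-1}y)=r(x,y)\,t(x,y)\,m(x,y)\cdot o$ relative to a minimal $\mathbb{Q}$-parabolic $P=RTM$, together with: convergence of $r(x,y)$, $\exp(\tfrac12\log y\,\psi_*(Y))t(x,y)$ and $m(x,y)$ locally uniformly in $x$; the fact that $\lim_{y\to\infty}r(x,y)$ fixes $[N]$; and, via (5.29) Corollary, Siegel-set containment. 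Everything then reduces to the single explicit $\mathfrak{sl}_2$-calculation $h\bigl(\exp(-\tfrac12\log y\,\psi_*(Y))\cdot o\bigr)=h(o)+\sqrt{-1}\psi_*(Z)-\psi_*\bigl(\begin{smallmatrix}0&y\\-y^{-1}&0\end{smallmatrix}\bigr)$, whose first two terms lie in the commutant of $\psi_*(\mathfrak{sl}(2,\R))$ and stay bounded while the third contributes $-yN+y^{-1}N^{-}$, giving $[N]$ after projectivizing.

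Two things in your write-up deserve particular caution. First, convergence of $[h(\tilde\Phi(z))]$ in $\mathbb{P}(\mathfrak{g})$ is not by itself the same as convergence in the Satake topology on $\overline{\Gamma\backslash D}^{\rm Sat,\tau_{\rm ad}}$; one also needs the orbit to stay in one Siegel set for the \emph{arithmetic} group $\Gamma$, and keeping $\re z$ in a fundamental domain for the cyclic group $\langle\gamma\rangle$ does not supply that --- this is precisely what Schmid's (5.29) Corollary is cited for. Second, your assertion that the remainder in $h(\exp(\sqrt{-1}yN)F_0)=yN+h_\infty+O(y^{-1})$ ``lies in $\bigoplus_{\ell\le-1}\mathfrak{g}_{\ell}$'' conflates the $\mathrm{ad}(Y)$-eigenspace grading of $\mathfrak{g}$ with the asymptotic order in $y$; these only coincide after applying $\mathrm{Ad}(\exp(\tfrac12\log y\,Y))$, which scales $\mathfrak{g}_{\ell}$ by $y^{\ell/2}$ --- and that reformulation is exactly the one the paper's computation carries out.
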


\begin{proof}
The proof depends on the nilpotent orbit theorem
 and the $SL_2$-orbit theorem by Schmid~\cite{Schm},
 results on the asymptotics of the period mapping.
By taking a finite covering $z\mapsto z^m$ of $\Delta^*$ if necessary,
 we may and do assume that $\gamma_s$ is the identity element.
We have the universal covering map
 $\mathbb{H}:=\{x+\sqrt{-1}y \in\C \mid y>0\}\to \Delta^*$,
 $z\mapsto e^{2\pi\sqrt{-1}z}$
 and then the map
 $\Phi(e^{2\pi\sqrt{-1}-})\colon
 \mathbb{H} \to \Gamma\backslash D$
 lifts to a holomorphic map
 $\tilde{\Phi}\colon \mathbb{H}\to D$
 such that
 $\tilde{\Phi}(z+1)=\gamma\cdot\tilde{\Phi}(z)$.

We now utilize \cite[(5.26) Theorem]{Schm}, which is a consequence
 of the nilpotent orbit theorem and the $SL_2$-orbit theorem.
This involves
\begin{itemize}
\item
 a homomorphism $\psi: SL(2) \to \G$ of algebraic groups over $\Q$,
\item
 a minimal $\Q$-parabolic subgroup $P$ of $G$
 with Langlands decomposition $P=RTM$,
 where $R$ is the unipotent radical,
 $T$ is a maximal $\Q$-split torus,
 and $M$ is anisotropic over $\Q$.
\end{itemize}
Let $\psi_*\colon \mathfrak{sl}(2,\R)\to \mathfrak{g}$
 denote the differential of $\psi$.
Then 
\[\psi_* \begin{pmatrix}0 & 1 \\ 0 & 0\end{pmatrix} = N,
 \qquad  \psi_* (Y) \in \operatorname{Lie} (T),
 \text{ where } Y=\begin{pmatrix}-1 & 0 \\ 0 & 1\end{pmatrix},\]
and $\langle \alpha, \psi_*(Y)\rangle\leq 0$ for any root
 $\alpha\in \Delta(\operatorname{Lie} (R), \operatorname{Lie} (T))$. 

\cite[(5.26) Theorem]{Schm} states that
 if the base point $o\in D$ is suitably chosen,
 there exist functions $r(x,y)$, $t(x,y)$, $m(x,y)$
 on $\{(x,y)\in \R^2 \mid y>\beta \}$ for some $\beta>0$
 with values in $R$, $T$, $M$, respectively, 
 such that the following are satisfied:
\begin{enumerate}
\item  $\tilde{\Phi}(x+\sqrt{-1}y) = r(x,y)t(x,y)m(x,y)\cdot o$,
\item the limits of $r(x,y)$,
 $\exp(\frac{1}{2}\log y \psi_*(Y))t(x,y)$,
 $m(x,y)$ as $y\to \infty$ exist
 locally uniformly for $x$.
\item $\lim_{y\to \infty}\exp(\frac{1}{2}\log y \psi_*(Y))t(x,y)=e$,
 and $\lim_{y\to \infty}m(x,y)=e$. 
\end{enumerate}
Moreover, it can be seen from the proof of \cite[(5.26) Theorem]{Schm}
 that the limit $\lim_{y\to \infty}r(x,y)\in G$
 fixes the point $[N]\in \mathbb{P}(\mathfrak{g})$.
Furthermore, $\tilde{\Phi}(x+\sqrt{-1}y)$
 for $|x|\leq C$, $y\gg 0$ lie in one Siegel set
 by \cite[(5.29) Corollary]{Schm}. 
Therefore, it is enough to show that 
 $\iota(\tilde{\Phi}(x+\sqrt{-1}y))$ as $y\to \infty$ converges to $[N]$
 in $\mathbb{P}(\mathfrak{g})$ locally uniformly for $x$, 
 where $\iota\colon D\hookrightarrow \mathfrak{g}$ is as in \eqref{emb.to.Pg}. 

Combining above claims, we have
\[
\tilde{\Phi}(x+\sqrt{-1}y) = r(x,y)m(x,y)
\exp(\tfrac{1}{2}\log y \psi_*(Y))t(x,y)
\exp(-\tfrac{1}{2}\log y \psi_*(Y))\cdot o
\]
and  $\lim_{y\to \infty} r(x,y)m(x,y)
 \exp(\frac{1}{2}\log y \psi_*(Y))t(x,y)$
 fixes $[N]$.
Hence it is enough to show that 
$\iota(\exp(-\frac{1}{2}\log y \psi_*(Y))\cdot o)\to [N]$
 as $y\to \infty$.

A Hodge structure on $\mathfrak{sl}(2,\R)$ is given by
\begin{align*}
&\mathfrak{sl}(2,\C)^{-1,1}=\C X_+,\quad 
\mathfrak{sl}(2,\C)^{0,0}=\C Z,\quad
\mathfrak{sl}(2,\C)^{1,-1}=\C X_-,\\
&\text{where}\ \ 
X_+ = \frac{1}{2}\begin{pmatrix}-\sqrt{-1} & 1 \\ 1 & \sqrt{-1}\end{pmatrix},\quad 
Z = \begin{pmatrix} 0 & - \sqrt{-1} \\ \sqrt{-1} & 0\end{pmatrix},\\
&X_- = \frac{1}{2}\begin{pmatrix} \sqrt{-1} & 1 \\ 1 & -\sqrt{-1}\end{pmatrix}.
\end{align*}
Let $H_{\C}=\bigoplus H^{p,q}_o$ be the Hodge decomposition
 assigned to the chosen base point $o$,
 which induces a Hodge structure of $\mathfrak{g}$
 via $\mathfrak{g}\to {\rm End}(H_{\C})$.
Then $\psi_*$ is a mapping of type $(0,0)$.
Hence 
\[
\psi_*(X_+)(H^{p,q}_o) \subset H^{p-1,q+1}_o,\quad
\psi_*(Z)(H^{p,q}_o) \subset H^{p,q}_o,\quad
\psi_*(X_-)(H^{p,q}_o) \subset H^{p+1,q-1}_o.
\]
Recall that $\iota(o)\in{\rm End}(H_{\C})$
 acts by $\sqrt{-1}(p-q)$ on $H^{p,q}_o$, where $\iota$ is the embedding of $D$ into ${\rm End}(H_{\C})$ (or 
 $\mathbb{P}(\mathfrak{g})$) we introduced earlier. 
Therefore, $[\psi_*(X_+), \iota(o)]=2\sqrt{-1}\psi_*(X_+)$.
Since $[\psi_*(X_+),\psi_*(Z)]=\psi_*([X_+,Z])
 =-2\psi_*(X_+)$,
 the operator $\iota(o)+\sqrt{-1}\psi_*(Z)$ commutes with
 $\psi_*(X_+)$.
Similarly, $[\iota(o)+\sqrt{-1}\psi_*(Z),\psi_*(X_-)]=0$
 and hence $\iota(o)+\sqrt{-1}\psi_*(Z)$ commutes with $\psi_*(\mathfrak{sl}(2,\R))$.
Then we calculate
\begin{align*}
&\iota(\exp(-\tfrac{1}{2}\log y \psi_*(Y))\cdot o) \\
&= {\rm Ad}\bigl(\exp(-\tfrac{1}{2}\log y \psi_*(Y))\bigr) \iota(o) \\
&= \iota(o)+\sqrt{-1}\psi_*(Z)
 - {\rm Ad}\bigl(\exp(-\tfrac{1}{2}\log y \psi_*(Y))\bigr)
  (\sqrt{-1}\psi_*(Z)).
\end{align*}
The last term is calculated as 
\begin{align*}
&{\rm Ad}\bigl(\exp(-\tfrac{1}{2}\log y \psi_*(Y))\bigr)
  (\sqrt{-1}\psi_*(Z))\\
&= \psi_*
\Bigl(
\begin{pmatrix} y^{\frac{1}{2}} & 0 \\ 0 & y^{-\frac{1}{2}}\end{pmatrix}
\begin{pmatrix} 0 & 1 \\ -1 & 0 \end{pmatrix}
\begin{pmatrix} y^{\frac{1}{2}} & 0 \\ 0 & y^{-\frac{1}{2}}\end{pmatrix}^{\!\!-1}
\, \Bigr) \\
&= \psi_*
 \begin{pmatrix} 0 & y \\ -y^{-1} & 0 \end{pmatrix}.
\end{align*}
The other terms do not depend on $y$ and hence they are bounded.
We therefore conclude that 
 $\iota(\exp(-\tfrac{1}{2}\log y \psi_*(Y))\cdot o)$
 tends to $$\biggl[\psi_* \begin{pmatrix}0 & 1 \\ 0 & 0\end{pmatrix}\biggr]=[N]$$ 
 as $y\to \infty$ as desired. 
\end{proof}
From the proof above, the statements will remain valid for 
any $\Gamma$ which includes any discrete subgroup of $G$ which contains the monodromy of 
the variation of Hodge structure on $\Delta^{*}$. 

\section{Application to (co)homology}

Here we give a direct application of Theorem~\ref{MS.Sat} to some topological aspect which is again a classical subject, that is, 
the (co)homologies of arithmetic subgroups $\Gamma\subset \mathbb{G}(\Q)$. We do this 
through the well-known isomorphism $H^{i}(\Gamma\backslash D;\Q)\simeq H^{i}(\Gamma,\Q)$
 for a locally symmetric (orbi-)space $\Gamma\backslash D$ (cf., e.g., \cite{Edi}). 

Take an orbi-smooth toroidal compactification of $\Gamma\backslash D=X$ as 
$\overline{X}=X\sqcup \partial\overline{X}$ with orbi-snc divisor $\partial\overline{X}$. 
Then Theorem~\ref{MS.Sat} asserts that its dual intersection complex in the slightly extended sense of 
\cite[A.12, A.13]{TGC.II} has the following structure: 
\begin{align*}
\Delta(\partial\overline{X})=S_{1}\sqcup \cdots \sqcup S_{l}, 
\end{align*}
where $S_{i}$ is the disjoint union of finite locally symmetric spaces $\Gamma_{i,j}\backslash D_{i}$ 
for $1\le j\le m_{i}$ of the same dimensions $d_{i}$. Here, $D_{i}$ is a Riemannian symmetric space for 
a reductive subgroup $\mathbb{G}_{i}$ of certain rational parabolic subgroup of $\mathbb{G}$ and 
$\Gamma_{i,j}$ are arithmetic subgroups of $\mathbb{G}_{i}(\Q)$. We assume $d_{1}>\cdots > d_{l}$ and 
set $S_{0}:=X$, $d:=d_{0}:=\dim_{\C}(X)$. 

On the other hand, due to the theory of the mixed Hodge structure of Deligne \cite{Del71, Del74}, we have the natural weight filtration (increasing filtration) $W$ on $H^{i}(X,\Q)\simeq H^{i}(\Gamma,\Q)$ which satisfies the following. 

\begin{Cor}[of Theorem~\ref{MS.Sat}]\label{application.cohomology}
For a positive integer $k$ with $1\le k\le 2d$, we have 
\begin{align}\label{cohom.isom}
{\rm Gr}^{W}_{2d}H^{2d-k}(\Gamma,\Q)\simeq \tilde{H}_{k-1}(\partial{\overline{\Gamma\backslash D}^{\rm Sat, \tau_{\rm ad}}},\Q). 
\end{align}
Here, the left hand side is the top graded piece $W_{2d}H^{2d-k}(\Gamma,\Q)/W_{2d-1}H^{2d-k}(\Gamma,\Q)$, 
while the right hand side denotes the reduced homology of 
$\partial{\overline{\Gamma\backslash D}^{\rm Sat, \tau_{\rm ad}}}$. 

In particular, for instance, we have 
$$\dim_{\Q}H^{2d-k}(\Gamma,\Q)\ge \dim_{\Q}H_{k-1}(\partial{\overline{\Gamma\backslash D}^{\rm Sat, \tau_{\rm ad}}},\Q),$$ 
for $2\le k\le 2d$. 
\end{Cor}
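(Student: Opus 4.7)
The plan is to combine three classical inputs: the identification $H^{i}(\Gamma,\Q)\simeq H^{i}(\Gamma\backslash D,\Q)$ (e.g.~\cite{Edi}), Payne's formula for top-weight cohomology in terms of boundary complexes, and Theorem~\ref{MS.Sat}. First I would reduce to the torsion-free case by choosing a neat normal subgroup $\Gamma'\subset\Gamma$ of finite index: since we work with rational coefficients, $H^{*}(\Gamma\backslash D,\Q)=H^{*}(\Gamma'\backslash D,\Q)^{\Gamma/\Gamma'}$, the $\Gamma/\Gamma'$-action preserves the mixed Hodge structure, and the same invariants identification descends on the right hand side of \eqref{cohom.isom} via $\partial\overline{\Gamma\backslash D}^{\rm Sat,\tau_{\rm ad}}=(\partial\overline{\Gamma'\backslash D}^{\rm Sat,\tau_{\rm ad}})/(\Gamma/\Gamma')$. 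Fixing a regular $\Gamma'$-admissible cone decomposition produces a smooth projective toroidal compactification $\overline{X'}$ of $X':=\Gamma'\backslash D$ whose boundary $\partial\overline{X'}$ is genuinely simple normal crossing.

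Next I would invoke Deligne's weight spectral sequence
\[E_{1}^{-p,q}=H^{q-2p}(\partial\overline{X'}^{(p)},\Q)(-p)\Rightarrow H^{q-p}(X',\Q),\]
where $\partial\overline{X'}^{(p)}$ denotes the disjoint union of codimension-$p$ strata of $\partial\overline{X'}$; this sequence is well-known to degenerate at $E_{2}$. To isolate the top graded piece $\operatorname{Gr}^{W}_{2d}H^{2d-k}(X',\Q)$ I restrict to the row $q=2d$: since $\partial\overline{X'}^{(k)}$ is smooth of complex dimension $d-k$, the group $H^{2d-2k}(\partial\overline{X'}^{(k)},\Q)$ is the free $\Q$-module on the fundamental classes of its irreducible components, which correspond bijectively to the $(k-1)$-simplices of the dual intersection complex $\Delta(\partial\overline{X'})$. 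A direct sign-checking identifies $(E_{1}^{-\bullet,2d},d_{1})$ with the augmented simplicial chain complex of $\Delta(\partial\overline{X'})$, so that
\[\operatorname{Gr}^{W}_{2d}H^{2d-k}(X',\Q)\cong E_{2}^{-k,2d}\cong\tilde{H}_{k-1}(\Delta(\partial\overline{X'}),\Q),\]
which is the content of Payne's theorem on boundary complexes and top-weight cohomology.

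Finally I would invoke Theorem~\ref{MS.Sat}. By the very construction of the MSBJ compactification attached to a toroidal compactification recalled in \cite[Appendix]{TGC.II}, the boundary $\partial\overline{X'}^{\rm MSBJ}$ is canonically homeomorphic to $\Delta(\partial\overline{X'})$; combined with Theorem~\ref{MS.Sat}, this yields a canonical homeomorphism $\Delta(\partial\overline{X'})\simeq\partial\overline{X'}^{\rm Sat,\tau_{\rm ad}}$, which descends to $\Gamma$ upon taking $\Gamma/\Gamma'$-invariants. Substituting into the displayed formula above establishes \eqref{cohom.isom}. The stated inequality on dimensions is then automatic, since the left hand side of \eqref{cohom.isom} is a subquotient of $H^{2d-k}(\Gamma,\Q)$.

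The main delicacy I foresee is the orbifold bookkeeping: verifying that the identification of the dual complex $\Delta(\partial\overline{X'})$ with $\partial\overline{X'}^{\rm MSBJ}$ is truly built into the construction of the MSBJ compactification so requires no extra work, and that the $\Gamma/\Gamma'$-action intertwines the three identifications (weight spectral sequence, dual complex, Satake boundary) used above. Both amount to checking compatibility of functorial constructions under finite quotients and should be routine with rational coefficients.
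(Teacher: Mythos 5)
Your proof is correct and follows essentially the same route as the paper: both rest on the identification of the top graded weight piece of the cohomology of a smooth variety (or DM stack) with the reduced homology of the dual boundary complex of a log-smooth compactification, combined with Theorem~\ref{MS.Sat} to identify that dual complex with the Satake boundary. The only real difference is packaging. The paper applies \cite[Theorem 6.8]{CGP} directly to an \emph{orbi-smooth} toroidal compactification of $\Gamma\backslash D$, since that theorem is formulated for smooth Deligne-Mumford stacks; this avoids the detour through a neat normal subgroup entirely. You instead pass to a torsion-free finite-index $\Gamma'\subset\Gamma$, run Deligne's weight spectral sequence on the genuinely simple normal crossing boundary, re-derive the Payne/CGP statement in that smooth setting, and descend to $\Gamma$ by taking $\Gamma/\Gamma'$-invariants with $\Q$-coefficients. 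This is a valid and slightly more self-contained alternative; the one point you flag as "delicate'' (compatibility of the three identifications with the $\Gamma/\Gamma'$-action, and the equality $\partial\overline{\Gamma\backslash D}^{\rm Sat,\tau_{\rm ad}}=(\partial\overline{\Gamma'\backslash D}^{\rm Sat,\tau_{\rm ad}})/(\Gamma/\Gamma')$) is indeed routine by functoriality of the Satake and MSBJ constructions, but it is precisely the kind of bookkeeping that the paper's direct appeal to the stacky CGP theorem was designed to sidestep.
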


Note that the boundary $\partial{\overline{\Gamma\backslash D}^{\rm Sat, \tau_{\rm ad}}}$ has a stratification as 
$\sqcup S_{i}$. 
Given Theorem~\ref{MS.Sat}, the above Corollary~\ref{application.cohomology} can be proved by using
 known fact of the Hodge theory. 
Indeed, we can directly apply \cite[Theorem 5.8]{CGP} (see also \cite[Theorem 3.1]{Hac}, \cite[Theorem 4.4]{Pay}) 
to an arbitrary orbi-smooth toroidal compactification of $\Gamma\backslash D$ and obtain the proof of Corollary~\ref{application.cohomology}. 
At least some special cases seem to be known; 
when $\mathbb{G}$ are certain inner forms of $Sp_{4}$ and $G=Sp_{4}(\R)$,
 the isomorphism is shown in \cite[Corollary~4.3]{OdaSchwermer09} 
(cf., also \cite[5.3, 5.5]{OdaSchwermer90}), 
which also uses the analysis of toroidal compactifications.

By further ``decomposing'' the right hand side 
to the contributions of each stratum $S_{i}$, which are all $\Q$-coefficients classifying spaces for the discrete groups $\Gamma_{i,j}$, 
we expect to obtain some relations of (co)homology of $\Gamma$ and those of $\Gamma_{i,j}$. 

In particular, the above isomorphism \eqref{cohom.isom} might be re-interpreted with 
 the theory of Eisenstein cohomologies, but 
 the authors do not know how this can be done. 
See \cite{OdaSchwermer90, OdaSchwermer09, HZII, Harris, Nair} for related works.


\newpage

\chapter{Abelian varieties case}\label{Abel.sec}


\section{Tropical geometric compactification of $A_g$ revisited}

This section studies the tropical geometric 
compactification $\overline{A_g}^{\rm T}$ of the moduli 
variety $A_{g}$ of complex $g$-dimensional principally polarized abelian 
varieties, introduced in \cite{TGC.II}. We refer to Theorem~\ref{TGC.Ag.review}
 to review the statements in {\it op.cit}. 

In algebraic geometry, 
the most classical and popular compactification of $A_g$ by Ichiro Satake \cite{Sat0} 
is the Satake-Baily-Borel compactification 
which is often 
also called simply 
``The'' Satake compactification or the Baily-Borel compactification. 
The history is that \cite{Sat0} constructed the compactification and later \cite{BB} proved it is actually underlying a normal 
projective variety. 

However, what we do here is to 
identify our tropical geometric compactification $\overline{A_g}^{\rm T}$ 
of $A_g$ with \textit{another} Satake's compactification constructed in 
\cite{Sat2}, i.e., that for the adjoint representation introduced in the previous section \S\ref{Satake.adjoint}. 
Recall the usual description of $A_{g}$ through the weight one polarized Hodge structures with
 the corresponding groups $\mathbb{G}=Sp(2g,\Q)$,
 $G=Sp(2g,\R)$, and $\Gamma=Sp(2g,\Z)$ as is well-known. 

\begin{Thm}\label{Ag.TGC.Satake.MS}
There are homeomorphisms between the three compactifications  
\[\overline{A_g}^{\rm T} \simeq \overline{A_g}^{\rm Sat,\tau_{\rm ad}}(\simeq  \overline{A_g}^{\rm MSBJ}),\]
extending the identity map on $A_g$. 
\end{Thm}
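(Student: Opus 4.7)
The second homeomorphism $\overline{A_g}^{\rm Sat,\tau_{\rm ad}}\simeq \overline{A_g}^{\rm MSBJ}$ is immediate from Theorem~\ref{MS.Sat}: the hypotheses of \S\ref{Gen.HSD.start} are satisfied for $\mathbb{G}=Sp(2g)$, $G=Sp(2g,\R)$, $K=U(g)$, $D=\mathfrak{H}_{g}=G/K$ and $\Gamma=Sp(2g,\Z)$, since $\mathbb{G}$ is $\Q$-simple and $D$ is an irreducible Hermitian symmetric domain of tube type $C_{g}$.

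For the first homeomorphism $\overline{A_g}^{T}\simeq \overline{A_g}^{\rm Sat,\tau_{\rm ad}}$, I would first match boundary strata set-theoretically. Up to $\Gamma$-conjugation, maximal rational parabolic subgroups of $Sp(2g)$ are indexed by the dimension $k\in\{1,\dots,g\}$ of a rational isotropic subspace $V\subset\Q^{2g}$; for the associated boundary component $F=F_{k}$, the unipotent center is $U(F)=\mathrm{Sym}^{2}(V_{\R})\cong\mathrm{Sym}^{2}(\R^{k})$, the self-adjoint cone $C(F)$ is the cone of positive definite symmetric $k\times k$ matrices, and $\Gamma\cap N(F)$ acts on $C(F)$ through the natural $GL_{k}(\Z)$-action by congruence. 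The stratification \eqref{Satake.strat} therefore reads
$$
\overline{A_g}^{\rm Sat,\tau_{\rm ad}}\;=\;A_{g}\;\sqcup\;\bigsqcup_{k=1}^{g}\bigl(GL_{k}(\Z)\backslash\mathrm{Sym}^{2}_{>0}(\R^{k})/\R_{>0}\bigr),
$$
and via the classical bijection between $GL_{k}(\Z)$-classes of positive definite quadratic forms modulo scaling and isometry classes of flat real $k$-tori modulo rescaling, the $k$-th stratum is exactly the moduli of flat real $k$-tori of diameter $1$ parametrizing the boundary of $\overline{A_g}^{T}$ in Theorem~\ref{TGC.Ag.review}. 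This defines a canonical bijection $\iota\colon \overline{A_g}^{T}\to \overline{A_g}^{\rm Sat,\tau_{\rm ad}}$ extending ${\rm id}_{A_{g}}$.

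To upgrade $\iota$ to a homeomorphism, I would exploit that $\overline{A_g}^{T}$ was built in \cite{TGC.II} from a toroidal compactification of $A_{g}$ (e.g.\ the second Voronoi one) via the Morgan-Shalen--Boucksom-Jonsson recipe. Hence up to the cone-decomposition independence of \cite[A.12, A.13]{TGC.II}, the left hand side of our statement is already identified with $\overline{A_g}^{\rm MSBJ}$ by construction, and Theorem~\ref{MS.Sat} then closes the chain. Equivalently, one can reprove continuity of $\iota$ and $\iota^{-1}$ directly by working in a Siegel set: the Mumford-Faltings-Chai uniformization shows that along a holomorphic degeneration $\tau_{i}\in\mathfrak{H}_{g}$, the blown-up imaginary part of $\tau_{i}$ controls simultaneously the Satake-adjoint limit point (through the vector $\sum_{j=1}^{s}X_{\gamma_{j}}$ appearing in the proof of Theorem~\ref{MS.Sat}) and the Gromov-Hausdorff limit of the rescaled flat K\"ahler metric on the corresponding ppav.

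The main obstacle in either approach is bookkeeping rather than conceptual: one must verify that the projectivisation $C(F_{k})/\R_{>0}$ on the Satake side matches precisely the diameter-$1$ condition on the GH side, and that the $GL_{k}(\Z)$-orbit of the limit positive definite quadratic form picked up on the two sides agree. Both checks are linear-algebraic and essentially already carried out in \cite{TGC.II}; what is new here is only to transport them through Theorem~\ref{MS.Sat} to obtain the Satake identification.
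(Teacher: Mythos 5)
Your set-theoretic matching of boundary strata (maximal rational parabolics $\leftrightarrow$ rank $k$ isotropic subspaces $\leftrightarrow$ $GL_k(\Z)\backslash\mathrm{Sym}^2_{>0}(\R^k)/\R_{>0}$ $\leftrightarrow$ flat $k$-tori of diameter $1$) is correct and agrees with the paper. The appeal to Theorem~\ref{MS.Sat} for the second homeomorphism is also exactly what the paper does. However, your first proposed route for upgrading the bijection $\iota$ to a homeomorphism contains a genuine error, and your fall-back route is underdeveloped at precisely the point that carries the main content.

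The error: you write that $\overline{A_g}^{T}$ "was built in \cite{TGC.II} from a toroidal compactification of $A_{g}$ \dots via the Morgan-Shalen--Boucksom-Jonsson recipe," so that it is "already identified with $\overline{A_g}^{\rm MSBJ}$ by construction." This is not so. By Theorem~\ref{TGC.Ag.review}, $\overline{A_g}^{T}$ is \emph{defined} to be $\overline{A_g}^{\rm GH}$, the Gromov-Hausdorff compactification, a metric-geometric construction which attaches rescaled flat K\"ahler metrics to principally polarized abelian varieties and takes closure in the Gromov-Hausdorff topology. It is not built from any toroidal compactification, and the coincidence $\overline{A_g}^{\rm GH}\simeq\overline{A_g}^{\rm MSBJ}$ is precisely what Theorem~\ref{Ag.TGC.Satake.MS} is claiming; treating it as definitional is circular. (For comparison: in the $M_g$ case the tropical geometric and Gromov-Hausdorff compactifications are genuinely different, and only the former is toroidal-flavored; in the $A_g$ case they are declared equal, but both are a priori metric constructions.)

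Your "equivalent" alternative is closer to the paper's actual argument, but it cannot be delegated to Theorem~\ref{MS.Sat}, and it is not enough to treat one-parameter holomorphic degenerations. The missing ingredient is the metric input: one needs the fact (from \cite[Theorem~2.1]{TGC.II}) that for an arbitrary sequence $V(i)$ in $A_g$, lifted into a Siegel set of $\mathfrak{H}_g$ as $X(i)+\sqrt{-1}\,Y(i)$, the Gromov-Hausdorff limit of the rescaled flat tori is controlled by $\lim_i [Y(i)]\in\mathbb{P}_\R(\mathfrak{gl}_g(\R))$, and then a direct computation in the Siegel set (via the Iwasawa decomposition $Y(i)=\alpha(i)\tau(i)^2\,{}^t\alpha(i)$ and the natural $2g$-dimensional representation) showing that the Satake-adjoint limit is detected by the upper-left $g\times g$ block and hence is exactly $[Y(i)]$ as well. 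This must be carried out for arbitrary convergent sequences, both interior-to-boundary and boundary-to-boundary (the paper's Claims~\ref{Cl1.convergence} and~\ref{Cl2.convergence}), not merely holomorphic arcs.
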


\smallskip
\begin{Rem}\label{ACP.Ag}
Recall that Abramovich-Caporaso-Payne \cite{ACP} proved that the moduli spaces of tropical curves, which also appears as our boundary of $\overline{M_{g}}^{\rm T}$ of Theorem \ref{TGC.Mg.review} (\cite{TGC.I}, \cite{TGC.II}), 
is homeomorphic to the dual intersection complex of 
the boundary of Deligne-Mumford compactification of $M_{g}$ as 
an algebraic stack. 

From the definition of the third term $\overline{A_g}^{\rm MSBJ}$,
the above Theorem \ref{Ag.TGC.Satake.MS} morally says that the (compactified) moduli of tropical abelian varieties 
are the dual intersection complex (in a generalized sense) of the toroidal algebro-geometric compactification of the 
moduli of complex abelian varieties. This can be seen as an analog of \cite{ACP} for $M_g$, for abelian varieties case.

We discuss K3 surfaces analog later in Remark \ref{ACP.K3}. 
\end{Rem}

\begin{Rem}\label{MZ.relation}
As we briefly reviewed from \cite{TGC.II} 
in our Introduction as 
Theorem \ref{TGC.Ag.review} from 
\cite{TGC.II}, the above compactification 
$\overline{A_g}^{\rm T}$ 
parametrizes Gromov-Hausdorff limits of 
principally polarized abelian varieties with 
the corresponding K\"ahler-Einstein metrics, with 
the fixed diameters rescale. They are real flat tori of 
dimension $i$ where $1\le i\le g$ so that the 
the boundary of the compactification is stratified with respect to 
$i$ accordingly. 

We clarify here how this real flat torus is related to the 
notion of {\it tropical abelian variety with principal polarization}  
introduced in \cite[\S 5.1]
{MZ} by Mikhalkin-Zharkov. That is a real torus $\R^{i}/\Lambda$ 
of dimension $i$ where $\R^{i}$ is given the natural 
integral affine structure by $\Z^{i}\subset \R^{i}$ and 
the matrix of a basis of $\Lambda$ is symmetric and positive 
definite. The latter condition comes from the assumed existence of 
tropical principal polarization, as a tropical analog of 
the Hodge-Riemann bilinear relation. 

Consider 
a real flat torus $V/\Lambda$ 
where $V$ is a $i$-dimensional real vector space and $\Lambda$ is 
its lattice, 
representing an arbitrary point in 
the boundary of $\overline{A_{g}}^{\rm T}$, we put an 
  integral affine structure simply by the basis of 
$\Lambda$. It is easy to see that its Legendre transform 
is $i$-dimensional principally polarized tropical abelian variety 
in the sense of \cite[\S 5.1]{MZ}. Conversely, 
any $i$-dimensional principally polarized tropical abelian variety 
in the sense of {\it loc.cit.} is obtained in such manner. 
This gives a relation with the theory of \cite[\S 5.1]{MZ}. 
For the idea of Legendre transforms of affine structures, 
it at least traces back to 
\cite[\S 5]{Hit97}, \cite[Proposition 1.2]{Gross}.
\end{Rem}

\begin{proof}

The second canonical homeomorphism exists as we proved in Theorem~\ref{MS.Sat} 
for general locally Hermitian symmetric spaces. 

Let us give an outline of the proof 
 for the first homeomorphism. 
We construct first a natural bijective map 
$$\varphi\colon \overline{A_g}^{\rm T}\to \overline{A_g}^{\rm Sat, \tau_{\rm 
ad}}$$
 extending the identity map of $A_g$.
To prove that $\varphi$ is a homeomorphism, 
 it suffices to show the following two equivalences of convergences: 

\begin{Claim}\label{Cl1.convergence}
The convergence of $[V(i)]\in A_g$ $(i=1,2,\cdots)$ to 
$[T(\infty)]\in \partial \overline{A_g}^{\rm T}$ and that of 
$\varphi([V(i)])\in A_g$ to $\varphi([T(\infty)])$ 
are equivalent. 
\end{Claim}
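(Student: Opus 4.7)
The plan is to work in the Siegel upper half space via the presentation $A_g = Sp(2g,\Z)\backslash\mathbb{H}_g$ and to translate both convergences into concrete asymptotic statements about the imaginary parts $Y(i)$ of period matrix representatives $Z(i) = X(i)+\sqrt{-1}\,Y(i)\in\mathbb{H}_g$. Since Theorem~\ref{MS.Sat} is now available, it suffices to compare $\overline{A_g}^{T}$ with $\overline{A_g}^{\mathrm{MSBJ}}$, which is combinatorially more tractable. First I would fix any $\Gamma$-admissible regular polyhedral decomposition of the rational closure of the cone of positive definite symmetric $g\times g$ real matrices and construct the associated smooth toroidal compactification; the boundary strata of $\overline{A_g}^{T}$ and of $\overline{A_g}^{\mathrm{MSBJ}}$ are then both canonically identified with $\Gamma$-orbits of open rational rays in this cone, and $\varphi$ on the boundary is defined by matching these two descriptions of the same data.

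Next I would unwind what the two convergences mean. On the tropical side, by Theorem~\ref{TGC.Ag.review} recalled from \cite{TGC.II}, $[V(i)]\to [T(\infty)]$ is equivalent to saying that, after passing to a subsequence and acting by some $\gamma(i)\in Sp(2g,\Z)$, the rescaled positive definite bilinear forms $Y(i)/\mathrm{diam}(Y(i))^{2}$ converge to a positive semi-definite form on $\Z^{g}$ whose quotient by the kernel realizes $T(\infty)$ with diameter one. On the MSBJ side, by \cite[A.12, A.13]{TGC.II} (as recalled in the proof of Theorem~\ref{MS.Sat}), convergence of the same sequence is characterized via the limit of the rescaled ratios of $\{2\pi\langle m, Y(i)\rangle\}$ as $m$ ranges over the dual lattice of a relevant rational cone, i.e., via the projective class of $Y(i)$ regarded as a linear functional on that cone. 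Both conditions are controlled by the same asymptotic datum, namely the $\Gamma$-orbit of the projective class of the limiting semi-definite form obtained from $Y(i)$, so the equivalence of the two convergences will follow.

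The main obstacle is the precise book-keeping at the junction between strata: we must verify that the $\gamma(i)$ obtained on the tropical side can be arranged to simultaneously place $Y(i)$ into a single rational polyhedral cone for the MSBJ description, and that the rescaling by $\mathrm{diam}(Y(i))^{2}$ on the tropical side and the rescaling implicit in passing to projective ratios on the MSBJ side produce the same projective ray up to positive scalars. This ultimately reduces to the standard reduction theory for $Sp(2g,\Z)$ acting on $\mathbb{H}_g$, combined with the normalization fixing the limit torus to have diameter one.
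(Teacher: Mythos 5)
Your proposal is correct in broad outline, but it takes a genuinely different route than the paper. The paper does \emph{not} pass through $\overline{A_g}^{\rm MSBJ}$ or invoke Theorem~\ref{MS.Sat} for this claim; instead it compares $\overline{A_g}^T$ \emph{directly} with $\overline{A_g}^{\rm Sat,\tau_{\rm ad}}$. Concretely, after lifting $V(i)$ to a Siegel set and writing the lift in Iwasawa form $n_i a_i k_i$, the paper observes that the Satake compactification for $\tau_{\rm ad}$ coincides with the one induced by the standard inclusion $Sp(2g,\R)\hookrightarrow GL(2g,\R)$, and then computes the limit of $\bigl[\,g_i\,{}^tg_i\,\bigr]\in\mathbb{P}_\R(\mathfrak{gl}_{2g}(\R))$; because $C(i)=0$ and $\alpha(i),\beta(i)$ stay bounded, the projective limit is governed by the upper-left block, which is exactly $[Y(i)]$. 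Your route — invoking Theorem~\ref{MS.Sat} to replace $\overline{A_g}^{\rm Sat,\tau_{\rm ad}}$ by $\overline{A_g}^{\rm MSBJ}$, then reading off MSBJ convergence from the projective limit of $\{\langle m,Y(i)\rangle\}_m$ in a rational polyhedral cone — is logically sound (MS.Sat does not depend on this claim) and conceptually clean, since in the tube-domain case the MS topology at the $0$-dimensional cusp is essentially defined in terms of $[Y(i)]\in\mathbb{P}(\mathrm{Sym}^2\R^g)$. What you gain is that you avoid the explicit $GL(2g)$-matrix computation and sidestep the observation that the standard and adjoint representations of $Sp(2g)$ give the same Satake boundary. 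What you lose is self-containedness: your version of the comparison step (MSBJ convergence $\Leftrightarrow$ convergence of $[Y(i)]$ after Siegel reduction into one cone) is asserted rather than verified, and it requires first justifying that the relevant $\gamma(i)$ can be chosen to place all $Y(i)$ in a single member of the polyhedral decomposition (which the paper gets from \cite[II, Cor.~4.3]{AMRT} in the proof of Theorem~\ref{MS.Sat}); you should make that appeal explicit and also cite \cite[Theorem 2.1]{TGC.II} rather than the overview Theorem~\ref{TGC.Ag.review} for the Gromov–Hausdorff limit being governed by $[Y(i)]$. With those details supplied, your proof goes through.
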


\begin{Claim}\label{Cl2.convergence}
The convergence of 
$[T(i)]\in \partial \overline{A_g}^{\rm T}$ $(i=1,2,\cdots)$
 to $[T(\infty)]\in \partial \overline{A_g}^{\rm T}$ 
and that of $\varphi([T(i)])\in \partial \overline{A_g}^{\rm Sat,\tau_{\rm ad}}$ to 
 $\varphi([T(\infty)])\in \partial \overline{A_g}^{\rm Sat,\tau_{\rm ad}}$ are 
 equivalent. 
\end{Claim}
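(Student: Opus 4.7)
The plan is to prove Claim~\ref{Cl2.convergence} (and the similar Claim~\ref{Cl1.convergence}) by an adapted Siegel-set analysis, paralleling the proof of Lemma~\ref{all.seq} inside Theorem~\ref{MS.Sat}. First one defines the bijection $\varphi$ set-theoretically: maximal rational parabolic subgroups of $Sp(2g,\Q)$ correspond $\Gamma$-equivariantly to primitive rational isotropic subspaces $V\subset\Q^{2g}$, and to such $V$ of dimension $i$ both compactifications attach the same stratum, canonically identified with $GL_i(\Z)\backslash \mathcal{P}_i/\R_{>0}$, where $\mathcal{P}_i$ is the cone of positive definite symmetric $i\times i$ real matrices. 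On the Satake side this stratum is $(\Gamma\cap N(F))\backslash S(F)$ from \eqref{Satake.strat}; on the tropical side it is the moduli of diameter-$1$ flat real tori $\R^i/L$. Thus each boundary stratum is itself a locally symmetric space of $GL_i$-type, and $\varphi$, defined by this canonical identification together with the identity on $A_g$, is a bijection extending the identity on $A_g$.

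For the forward direction of Claim~\ref{Cl2.convergence}, I would take a converging sequence $[T(n)]\to[T(\infty)]$ in $\partial\overline{A_g}^T$. After passing to a subsequence we may assume all $[T(n)]$ correspond to one fixed isotropic $V$ of dimension $i$, while $[T(\infty)]$ lies in the stratum of some $V'\subset V$ of dimension $i'\le i$, with the corresponding maximal parabolics ordered by the closure-reversal recorded in \S\ref{Satake.adjoint}. Place representatives of $[T(n)]$ in a common Siegel set for a Levi factor compatible with the flag $V'\subset V$, arranged so that the split-torus coordinates on the Gram matrices (the ``outer'' block sizes in a basis compatible with $V'\subset V$) diverge. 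This divergence pattern is precisely the root-divergence condition characterizing Satake convergence $\varphi([T(n)])\to \varphi([T(\infty)])$ (cf.\ the proof of Theorem~\ref{MS.Sat}), and it is simultaneously exactly what forces the rescaled flat torus $\R^{i}/L_n$ to Gromov-Hausdorff collapse onto the flat torus $\R^{i'}/L_\infty$ determined by the bounded inner block; this last is the Euclidean-torus analogue of the collapse computation in \cite{TGC.II}. The converse direction is the same argument run in reverse.

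Combined with Claim~\ref{Cl1.convergence}, whose proof runs on the same template in the Siegel upper half space $\mathbb{H}_g$: writing period matrices as $\Omega_n=X_n+\sqrt{-1}Y_n$ in a Siegel set for $\Gamma\cap N(F)$, the imaginary part $Y_n$ acquires a block structure paralleling that of the boundary Gram matrices, while the bounded real part $X_n$ and bounded cross-blocks of $Y_n$ are killed in the Gromov-Hausdorff limit by the computation of \cite{TGC.II}. Together with the homeomorphism $\overline{A_g}^{\rm MSBJ}\simeq\overline{A_g}^{\rm Sat,\tau_{\rm ad}}$ given by Theorem~\ref{MS.Sat}, the two claims establish Theorem~\ref{Ag.TGC.Satake.MS}.

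The main obstacle is the quantitative matching of the two topologies along the cross-directions in the Siegel set: one has to check that the off-diagonal blocks of the Gram matrices neither affect the Gromov-Hausdorff limit of the rescaled $\R^i/L_n$ nor produce spurious Satake convergences, and that the action of $GL_i(\Z)$ on each stratum is compatibly absorbed on both sides. This is handled by Minkowski reduction for $\mathcal{P}_i/GL_i(\Z)$ combined with the explicit Siegel-coordinate formula for the flat metric, together with the density reduction ``Claim~\ref{gen.seq} implies Lemma~\ref{all.seq}'' to pass from sequences inside one stratum to arbitrary convergent sequences.
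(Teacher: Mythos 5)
Your proposal is essentially the same approach as the paper's: identify each boundary stratum of $\partial\overline{A_g}^T$ with a $GL_d$-type locally symmetric space, pass to Siegel set representatives, and observe that both the restricted Satake topology and the Gromov--Hausdorff topology on moduli of rescaled flat tori are controlled by the limit of the projectivized Gram matrix. The paper's write-up of Claim~\ref{Cl2.convergence} is more concrete than yours in one spot: instead of discussing flags $V'\subset V$ and invoking Minkowski reduction, it writes the Iwasawa coordinates $(\alpha(i),\tau(i))\in N'A'$ explicitly for $GL(g,\R)$ (so $\alpha(i)$ is unipotent upper-triangular and stays in a fixed compact set $U\subset N'$, and $\tau(i)$ is diagonal) and then simply notes that the Satake limit in $\mathbb{P}(\mathfrak{gl}_g(\R))$ is the limit of $[\alpha(i)\tau(i)^2\,{}^t\alpha(i)]$, which is simultaneously the limit of the Gram matrices that govern the Gromov--Hausdorff convergence of the rescaled tori. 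This dissolves the ``main obstacle'' you identify about off-diagonal blocks: since $\alpha(i)$ is bounded in the Siegel set, the cross-terms cannot produce spurious Satake convergences or perturb the Gromov--Hausdorff limit; no separate Minkowski-reduction bookkeeping is needed. Your proof strategy is correct, but if you phrase it through the explicit $\alpha\tau^2\,{}^t\alpha$ formula in a Siegel set you will avoid having to argue the cross-direction control as a separate step.
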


Our construction of $\varphi$ is as follows. 
Recall that our tropical geometric compactification $\overline{A_{g}}^{\rm T}$ can be 
set-theoretically written as 
$A_g \sqcup \bigsqcup_{1\le d\le g} MT_{d}$ where $MT_{d}$ stands for 
the moduli of compact flat tori of dimension $d$ of diameter $1$. 
See the parametrized flat tori as $\mathbb{R}^d/\mathbb{Z}^d$ with 
the Gram matrix\footnote{We called it metric matrix in the previous paper \cite{TGC.II}.} 
$G\, {}^{t}G$ where $G$ is a real matrix of $d\times d$
 and ${}^{t}G$ denotes its transpose.
It follows that each $MT_{d}$ is naturally
 homeomorphic to the locally symmetric space  
$$GL(d,\mathbb{Z})\backslash GL(d,\mathbb{R})/(\mathbb{R}_{>0}\cdot O(d)).$$

Since the boundary of the Satake compactification $\overline{A_g}^{\rm Sat,\tau_{\rm 
ad}}$ is stratified as
$\bigsqcup_{1\le d\le g} GL(d,\mathbb{Z})\backslash GL(d,\mathbb{R})/(\mathbb{R}_{>0}\cdot O(d))$ 
by its definition,
 we can define $\varphi$ on the boundary as $\bigsqcup_{d}\psi_{d}^{-1}$, 
where $\psi_d\colon MT_{d}
 \to GL(d,\mathbb{Z})\backslash GL(d,\mathbb{R})/(\mathbb{R}_{>0}\cdot O(d))$ 
are natural homeomorphisms.

From here we are going to prove the above Claims \ref{Cl1.convergence} and \ref{Cl2.convergence}. 
Actually Claim~\ref{Cl2.convergence} follows from
 Claim~\ref{Cl1.convergence} by the same argument as the 
proof of Claim~\ref{gen.seq} 
but for its own interests, we provide proofs of both claims. 

We first prove Claim~\ref{Cl1.convergence}. 
For each principally polarized abelian variety, 
we will take a lift in a Siegel set inside the Siegel upper half space. 
Let us recall some basic facts and set up the notations. 
Set $G=Sp(2g,\mathbb{R})$ and its maximal compact subgroup 
$K\cong U(g)$. 
Recall that we have the Siegel upper half space
$$G/K \cong \mathfrak{H}_g:=\{ X+\sqrt{-1} Y\mid X,Y\in \mathfrak{gl}_g(\mathbb{R}),\, 
Y={}^{t} Y>0,\, X={}^{t} X\}, $$
where $G$ acts by the fractional transformation 
\[
\left(\begin{array}{c c}
     A     &               B    \\ 
   C      &                 D  \\
\end{array}\right)\colon 
\sqrt{-1} I_g \mapsto (\sqrt{-1}A +B)(\sqrt{-1}C+D)^{-1}\]
 and $I_g$ denotes the $g\times g$ identity matrix. 
The Iwasawa decomposition gives us 
$G=NAK$ such that
$$N=\left\{ 
\left(
\begin{array}{c|c}
  \alpha       & \beta    \\ \hline
   O       &  {}^t \alpha^{-1}   
\end{array}
\right) \in G
\right\}, \qquad
A=
\left\{ \tilde{\tau}=
\left(
\begin{array}{c|c}
  \tau       & O    \\ \hline
   O       &  \tau^{-1}  
\end{array}
\right) \in G
\right\},
$$
where $\alpha$ is an upper triangular unipotent $g\times g$ matrix and
 $\tau$ is a diagonal matrix ${\rm diag}(t_1,\cdots,t_g)$.
The product of the two matrices 
\[\left(\begin{array}{c|c}
  \alpha       & \beta    \\ \hline
   O       &  {}^t \alpha^{-1}   
\end{array}\right)
\left(
\begin{array}{c|c}
  \tau       & O    \\ \hline
   O       &  \tau^{-1}  
\end{array}
\right) \]
maps $\sqrt{-1}I_g\in \mathfrak{H}_g$ 
to 

$$(\beta\, {}^{t}\!\alpha)+
\sqrt{-1} (\alpha \, \tau^{2}\, {}^{t}\alpha)
\in \mathfrak{H}_g.$$ 

Also, recall that the Siegel reduction ensures that each point in $\mathfrak{H}_g$ 
can be translated by $Sp(2g,\Z)$-action to a point in the following Siegel set: 
$$\mathfrak{S}_{U,c}:=\{\nu \tilde{\tau}K \mid \nu \in U,\  t_i t_{i+1}^{-1}>c\, (\forall i<g),\  t_g>c\},$$
where $U$ is a relatively compact subset inside $N$ and $c> 0$ is a fixed real number 
(cf.\  \cite[I\S9, III 1.17]{BJ}). 

Now, going back to our sequence of principally polarized abelian varieties 
$V(i)$ $(i=1,2,\cdots)$ in concern, we can lift it to a sequence of points 
$\{(\alpha(i),\beta(i),\tau(i))\}$ 
inside the above $\mathfrak{S}_{U,c}$, and then to a sequence 
$
\begin{pmatrix}
     A(i)     &               B (i)   \\
   C(i)      &                 D(i)  \\
\end{pmatrix}
$
in $G$.

Recall that \cite[Theorem 2.1]{TGC.II} proved that 
the Gromov-Hausdorff limit of a sequence of 
principally polarized abelian varieties $V(i)$ 
is $\mathbb{R}^{g}/\mathbb{Z}^{g}$ 
with the Gram matrix\footnote{possibly degenerate! so that the torus has dimension lower than $g$} 
$\lim_{i\to \infty}[Y(i)] \in \mathbb{P}_{\mathbb{R}}(\mathfrak{gl}_{g}(\mathbb{R}))$, 
where $Y(i)$ is the imaginary part of the 
point $X(i)+\sqrt{-1}Y(i)$ inside the Siegel upper half space $\mathfrak{H}_g$ 
corresponding to $V(i)$ as our previous notation. 
Recall that a simple calculation shows 
$Y(i)=\alpha(i)\cdot \tau(i)^{2}\cdot {}^{t}\alpha(i)$. 

On the other hand, our Satake compactification comes from 
the natural representation 
$Sp(2g,\mathbb{R})\hookrightarrow GL(2g,\mathbb{R})$ 
and hence the limit point inside the Satake compactification corresponds to 
that of 
$\begin{pmatrix}
     A(i)     &               B (i)   \\
   C(i)      &                 D(i)  \\
\end{pmatrix}
\begin{pmatrix}
    {}^{t} A(i)     &      {}^{t} C (i)   \\
   {}^{t} B(i)      &     {}^{t} D(i)  \\
\end{pmatrix}
\in \mathbb{P}_{\mathbb{R}}(\mathfrak{gl}_{2g}(\mathbb{R})). 
$

Since we have $A(i)=\alpha(i)\tau(i)$, $B(i)=\beta(i)\tau(i)^{-1}$,
 $C(i)=0$, $D(i)={}^{t}\alpha(i)^{-1} \tau(i)^{-1}$ 
 in our setting, $\alpha(i)$ and $\beta(i)$ remain in some bounded set.
If we write $\tau(i)={\rm diag}(t_1(i),\dots,t_g(i))$,
 then $t_1(i)\to +\infty$ as $\tau(i)$ are not bounded. 
Therefore, to calculate
\[
\biggl[
\begin{pmatrix}
     A(i)     &               B (i)   \\
   C(i)      &                 D(i)  \\
\end{pmatrix}
\begin{pmatrix}
    {}^{t}A(i)     &      {}^{t}C (i)   \\
   {}^{t}B(i)      &     {}^{t}D(i)  \\
\end{pmatrix}
\biggr]\in \mathbb{P}_{\mathbb{R}}(\mathfrak{gl}_{2g}(\R)),\]
 it is enough to see the upper left part, 
 which is exactly the limit of $[Y(i)]$
 in $\mathbb{P}_{\mathbb{R}}(\mathfrak{gl}_{g}(\R))$. 
We finish the proof of the Claim~\ref{Cl1.convergence}.


The proof of Claim~\ref{Cl2.convergence} is quite similar and somewhat easier as we do not 
need analysis of abelian varieties in \cite{TGC.II}. 
In this case, we compare the Gromov-Hausdorff topology
 with the Satake topology
 of $\bigsqcup_{1\le d\le g}GL(d,\mathbb{Z})\backslash GL(d,\mathbb{R})/
(\mathbb{R}_{>0}\cdot O(g))$. 
The Iwasawa decomposition of $GL(g,\mathbb{R})$ is $N'A'O(g)$ with
$$N'=\Biggl\{ \alpha=
\begin{pmatrix}
     1       &               &\text{\large{$\ast$}}\\
             &  \ddots  &                        \\
     0       &               &        1              \\
             \end{pmatrix}
 \in GL(g,\R) 
\Biggr\},$$
$$
 A'=\Biggl\{  \tau=
\begin{pmatrix}
     t_1       &               &                \\
             &  \ddots  &                      \\
             &               &        t_g          \\
\end{pmatrix}\in GL(g,\R) 
\Biggr\}, 
$$
and the Siegel set $\mathfrak{S}'_{U,c}$ for a fixed bounded set $U\subset N'$ and a positive constant $c$
is 
$$\{  \alpha\tau (\R_{>0}\cdot O(g)) \mid {t_{i}}t_{i+1}^{-1}>c\ (\forall i<g)
\}. 
$$ 
From the above, we see that the convergence
 of arbitrary sequence $(\alpha(i),\tau(i))$ in the above Siegel set 
 with respect to the Satake topology, 
 corresponding to the natural representation
 $GL(g,\mathbb{R})\to GL(g,\mathbb{R})$, 
is that of $[\alpha(i)\cdot \tau(i)^{2}\cdot {}^{t}\alpha(i)]$ in 
$\mathbb{P}_{\mathbb{R}}(\mathfrak{gl}_{g}(\mathbb{R}))$. 

On the other hand, the Gromov-Hausdorff limit of a sequence of real flat tori 
$\mathbb{R}^{g}/\mathbb{Z}^{g}$ with metric matrix $\alpha(i)\tau(i) \cdot {}^{t}\tau(i){}^{t}\alpha(i)$ 
(possibly degenerate, so that the dimension could be less than $g$) 
for $i=1,2,\cdots$ is exactly the torus corresponding to 
the limit of $\alpha(i)\tau(i) \cdot {}^{t}\tau(i){}^{t}\alpha(i)$,
 hence coincides with the limit point 
in the Satake topology discussed above. 
\end{proof}


\section{Monodromy and Gromov-Hausdorff limit}\label{monodromy.AV}

The following relationship between the monodromy
 for a family of polarized abelian varieties and the Gromov-Hausdorff limit, 
 is a direct corollary to the combination of Theorem~\ref{monodromy} and Theorem~\ref{Ag.TGC.Satake.MS}. 

\begin{Thm}[Holomorphic limits and monodromy --- abelian varieties case]
\label{1par.AV}
Suppose
 $(\mathcal{X}^{*},\mathcal{L}^{*})\to \Delta^{*}$
 is a family of $g$-dimensional principally polarized abelian varieties 
 on the punctured disk. 
Let $\gamma\in Sp(2g,\Z)$ denote the monodromy on $H^{1}(\mathcal{X}_{s},\Z)$ for a fixed $s\neq 0$ around $0\in \Delta$ 
with respect to a marking of $H^{1}(\mathcal{X}_{s},\Z)$.  
The Gromov-Hausdorff limit of $\mathcal{X}_{t}$ 
 with the rescaled flat K\"ahler metric of diameter $1$, whose K\"ahler class is in 
 $\R c_{1}(\mathcal{L}_{t})$, 
 as $t\to 0$ exists as a flat real torus 
 and it is determined by $\gamma$. 

More precisely, it follows that the unipotent part $\gamma_u$ of
 the Jordan decomposition of $\gamma$
 is conjugate to the matrix of the form
\[
\left(
\begin{array}{cc}
  I_g       & B    \\ 
   O       &  I_g   
\end{array}
\right), \text{ where } B \text{ is a symmetric real }g\times g \text{ matrix. }
\]
Then the limit metric space is the rescaled flat real torus
 with the Gram matrix $B$, which is possibly degenerate.
The dimension of torus equals $\operatorname{rank} B$. 
\end{Thm}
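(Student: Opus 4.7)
The plan is to combine Theorems~\ref{monodromy} and~\ref{Ag.TGC.Satake.MS} with a direct calculation using Schmid's nilpotent orbit theorem. First I would apply Theorem~\ref{monodromy} to the period map $\Delta^*\to A_g=\Gamma\backslash\mathfrak{H}_g$ (with $\mathbb{G}=Sp_{2g}$ and $\Gamma=Sp(2g,\Z)$), obtaining a continuous extension $\Delta\to\overline{A_g}^{{\rm Sat},\tau_{\rm ad}}$ sending $0$ to $[N]$, where $N=\log\gamma_u\in\mathfrak{sp}(2g,\R)$. Combined with the homeomorphism $\overline{A_g}^{{\rm Sat},\tau_{\rm ad}}\simeq\overline{A_g}^T$ of Theorem~\ref{Ag.TGC.Satake.MS} and the fact recalled in Theorem~\ref{TGC.Ag.review} that $\overline{A_g}^T$ is the Gromov-Hausdorff compactification for rescaled flat K\"ahler metrics of diameter $1$, this immediately yields existence of the Gromov-Hausdorff limit as a flat real torus depending only on the monodromy $\gamma$.

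Next I would verify the normal form of $\gamma_u$. By Landman's theorem cited before Theorem~\ref{monodromy}, the semisimple part $\gamma_s$ has finite order, so a finite base change $z\mapsto z^m$ reduces us to $\gamma=\gamma_u$; this only replaces $N$ by $mN$, affecting neither $[N]\in\mathbb{P}(\mathfrak{g})$ nor the rescaled Gromov-Hausdorff limit. For a polarized variation of weight-one Hodge structures, the monodromy weight filtration forces $(\gamma_u-I)^2=0$, so $N=\gamma_u-I$ satisfies $N^2=0$. By the classification of such rank-$r$ nilpotent elements in $\mathfrak{sp}(2g,\R)$, there is a symplectic basis in which $N=\left(\begin{array}{cc}0&B\\0&0\end{array}\right)$, and the Lie algebra condition ${}^tNJ+JN=0$ directly forces $B={}^tB$; exponentiating gives $\gamma_u=I+N=\left(\begin{array}{cc}I_g&B\\0&I_g\end{array}\right)$ as asserted.

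Finally, to identify the limit torus with the flat torus of Gram matrix $B$, I would run the matching of Claim~\ref{Cl1.convergence} on the nilpotent orbit. By Schmid's nilpotent orbit theorem, a lift $\tilde\Phi\colon\mathbb{H}\to\mathfrak{H}_g$ of the period map is asymptotic, as $y=\im(z)\to\infty$, to $\exp(zN)\cdot o$ for a base point which may be chosen to be $o=\sqrt{-1}I_g$. Since $N^2=0$, the fractional transformation action of $\exp(zN)=\left(\begin{array}{cc}I_g&zB\\0&I_g\end{array}\right)$ on $\sqrt{-1}I_g$ gives $xB+\sqrt{-1}(yB+I_g)$, whose imaginary part $Y(z)=yB+I_g$ lies in a single Siegel set and satisfies $[Y(z)]\to[B]$ in $\mathbb{P}_\R(\mathfrak{gl}_g(\R))$. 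The proof of Claim~\ref{Cl1.convergence} identifies both the Satake limit and the Gromov-Hausdorff limit with precisely this projective class, hence the limit is the flat torus $\R^g/\Z^g$ with (possibly degenerate) Gram matrix $B$, of dimension $\operatorname{rank} B$. The main potential obstacle is the rigorous passage from the nilpotent-orbit asymptotic lift to a Siegel-set representative to which Claim~\ref{Cl1.convergence} directly applies, but this is essentially contained in \cite[(5.26),\,(5.29)]{Schm}, already invoked in the proof of Theorem~\ref{monodromy}.
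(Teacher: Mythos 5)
Your proposal correctly fills in what the paper itself states only as a ``direct corollary to the combination of Theorem~\ref{monodromy} and Theorem~\ref{Ag.TGC.Satake.MS},'' and it is exactly the paper's intended primary route: Theorem~\ref{monodromy} gives the extension to the Satake boundary with limit $[N]$, Theorem~\ref{Ag.TGC.Satake.MS} transports this to the Gromov--Hausdorff compactification, and the residual computations you supply (the symplectic normal form of $\gamma_u$ and the asymptotic $[I_g+yB]\to[B]$ matched against Claim~\ref{Cl1.convergence}) are the pieces the paper leaves implicit. Note that the paper also records, in \S\ref{reconst.AV.sub}, a genuinely different ``alternative proof'' via the Faltings--Chai degeneration data, identifying the limit Gram matrix with $(\operatorname{ord}_t p_{i,j}(t))$ and the Grothendieck--Deligne monodromy pairing, thereby bypassing Schmid's $SL_2$-orbit theorem; your version instead leans on Theorem~\ref{monodromy} and is equally valid.
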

Soon later in \S\ref{reconst.AV.sub}, we also give an alternative proof for the above Theorem~\ref{1par.AV}.


\section{Revisiting the reconstruction of family 
of abelian varieties}\label{reconst.AV}

We give a general observation on the Morgan-Shalen type compactification,
 in particular aiming to provide a moduli-theoretic interpretation 
 of the recent hard reconstruction theorems of 
 \cite{KS}, \cite{GS11}
 of certain maximal degenerations in the case of 
 abelian varieties. Note that it is nothing but 
 the ``trivial'' case 
 from the viewpoints of \cite{KS, GS11} since their 
 most crucial process of ``quantum correction'' does not exist in 
 this case and hence we do not claim any crucial overlap with 
 their theory. Nevertheless, 
 we hope to pursue this line of thoughts more in future. 

\subsection{Reviewing the extendability of morphisms}

Consider any proper dlt stack $(\mathcal{X},\mathcal{D})$ over $\C$ and 
the corresponding Morgan-Shalen compactification of the coarse moduli space of 
$\mathcal{U}:=\mathcal{X}\setminus \mathcal{D}$ as in \cite[Appendix]{TGC.II}, 
which we denote again by $U\subset \bar{U}^{\rm MSBJ}(\mathcal{X})$. 
The coarse moduli spaces of $\mathcal{X}$ and $\mathcal{D}$ will be 
denoted by $X$ and $D$ respectively. Below, we 
also regard $\mathcal{X}$ as a stack with finite isotropies 
in the category of complex analytic spaces. 

Note that an analogous result to the following Proposition~\ref{MS.reconstruction} for Hermitian locally symmetric spaces 
(resp., general log smooth pairs) 
is obtained as Theorem~\ref{rationality.HSD} 
(resp., as Theorem~\ref{rationality.MSBJ}). Hence, this is simply a 
subtle technical generalization. 

\begin{Prop}\label{MS.reconstruction}
Take any holomorphic 
morphism $\varphi\colon \Delta\to \mathcal{X}$ and its 
descent to the coarse moduli space $\bar{\varphi}\colon \Delta \to X$, 
such that $\bar{\varphi}(0)$ in $X$ 
resides in some $0$-dimensional strata (log-canonical center) of 
$(\mathcal{X},\mathcal{D})$. Then 
$\bar{\varphi}|_{\Delta^{*}}\colon \Delta^{*}\to U$ extends also 
to a continuous map $\Delta\to \bar{U}^{\rm MSBJ}(\mathcal{X})$. 
\end{Prop}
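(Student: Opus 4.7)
The plan is to reduce the statement to the log smooth case already settled by Theorem~\ref{rationality.MSBJ}. Continuity of an extension $\Delta \to \bar U^{\rm MSBJ}(\mathcal{X})$ at the origin is local on $\mathcal{X}$ near $x_0 := \varphi(0)$, and the construction of $\bar U^{\rm MSBJ}(\mathcal{X})$ in \cite[Appendix A.12, A.13]{TGC.II} is étale-local on $\mathcal{X}$. By the standard structure theory of dlt pairs (due to Szab\'o), $(\mathcal{X}, \mathcal{D})$ is log smooth on an open neighborhood of every log-canonical center; hence there exists an étale representable morphism $p \colon \tilde V \to V$ from a smooth variety $\tilde V$ onto a neighborhood $V \ni x_0$ for which $p^{*}\mathcal{D}$ is a simple normal crossings divisor. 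Since $\bar x_0$ lies in a $0$-dimensional lc center, all $n := \dim \mathcal{X}$ components of $p^{*}\mathcal{D}$ pass through a chosen preimage $\tilde x_0 \in p^{-1}(x_0)$, and we may find local coordinates $(z_1,\ldots,z_n)$ on $\tilde V$ around $\tilde x_0$ in which $p^{*}\mathcal{D} = \{z_1 \cdots z_n = 0\}$.

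\textbf{Lift, apply Theorem~\ref{rationality.MSBJ}, and descend.} The map $\varphi$ lifts, after replacing $\Delta$ by a finite ramified cover $\rho \colon \tilde\Delta \to \Delta$, $t \mapsto t^m$, that absorbs the isotropy of $\mathcal{X}$ at $x_0$, to a holomorphic map $\tilde\varphi \colon \tilde\Delta \to \tilde V$ with $\tilde\varphi(0) = \tilde x_0$. Writing $\tilde\varphi(t) = (f_1(t), \ldots, f_n(t))$ in the chosen coordinates, Theorem~\ref{rationality.MSBJ} produces a continuous extension
\[ \tilde\Delta \longrightarrow \overline{\tilde V \setminus p^{-1}\mathcal{D}}^{\rm MSBJ}(\tilde V) \]
whose value at $0$ is the barycentric point
\[ \left( \frac{\operatorname{ord}_t f_1}{\sum_j \operatorname{ord}_t f_j}, \ldots, \frac{\operatorname{ord}_t f_n}{\sum_j \operatorname{ord}_t f_j} \right) \]
in the top-dimensional $(n-1)$-simplex of the dual complex sitting above $\tilde x_0$. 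By the functoriality of the Morgan--Shalen--Boucksom--Jonsson construction for stacks, established in \cite[Theorem A.15]{TGC.II}, the pair of morphisms $\tilde V \to V \hookrightarrow \mathcal{X}$ and $\rho \colon \tilde\Delta \to \Delta$ fits into a commutative diagram of continuous maps to the respective hybrid compactifications; unwinding this diagram yields the desired continuous extension of $\bar\varphi|_{\Delta^{*}}$ to $\Delta \to \bar U^{\rm MSBJ}(\mathcal{X})$.

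\textbf{Main obstacle.} The technical crux is the descent step: one must verify that the limit point computed on the smooth chart $\tilde V$ descends to a well-defined point of the stacky dual intersection complex of $(\mathcal{X}, \mathcal{D})$, independently of the choices of étale chart $p$ and of the ramified cover $\tilde\Delta \to \Delta$. Concretely, the isotropy group of $\mathcal{X}$ at $x_0$ acts on the local branches of $\mathcal{D}$ passing through $x_0$ and permutes the coordinates $z_i$, so one must check that this action is compatible with the barycentric coordinates, so that the image in the quotient simplex is intrinsic. This is morally covered by the functoriality content of \cite[Appendix]{TGC.II}, but carefully tracking the orbifold ramification data is where most of the work would lie. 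I also note that the hypothesis that $\bar\varphi(0)$ lies in a $0$-dimensional lc center is used essentially: it forces $\tilde\varphi$ to meet every local component of $p^{-1}\mathcal{D}$, so the limit lies in the interior of a top face of the dual complex, rather than on a transverse stratum whose identification inside $\bar U^{\rm MSBJ}(\mathcal{X})$ would require additional case analysis.
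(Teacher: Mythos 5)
Your proof is correct and follows essentially the same route as the paper: pass to an \'etale chart near the $0$-dimensional lc center where the pair is log smooth (this is exactly where the paper also uses that such a center forces a simple normal crossing model locally), write the map in local coordinates, and read off the barycentric limit from the vanishing orders. The one superfluous step is the ramified base change $\tilde\Delta\to\Delta$: since you are given $\varphi\colon\Delta\to\mathcal{X}$ as a map to the \emph{stack} (not merely its descent $\bar\varphi$ to the coarse space), the pullback of a finite \'etale cover $\tilde V\to V\subset\mathcal{X}$ along $\varphi$ is finite \'etale over the contractible $\Delta$ and therefore trivial, so $\varphi$ lifts directly with no base change of the disk; this is how the paper phrases it, and appealing to Theorem~\ref{rationality.MSBJ} rather than redoing the coordinate computation is a fine shortcut.
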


\begin{proof}
Lifting $\varphi\colon \Delta\to \mathcal{X}$ 
to an algebraic \'etale finite cover of $\mathcal{X}$ and replacing by 
small enough open neighborhood of the limit point, i.e., the $0$-dimensional 
lc center, 
we can assume $\mathcal{X}$ is a smooth variety and $D$ is a simple normal 
crossing. Hence, in particular, $\varphi=\bar{\varphi}$.
Let $n:=\dim X$. 
Suppose $D=\bigcup_{i=1}^n D_{i}$ and irreducible components are
$D_{i}=(z_{i}=0)$ for some local holomorphic functions $z_{i}$. 
Then $\varphi(t)$ can be written in terms of the local coordinates 
for $|t|\ll 1$ as 
$$\varphi(t)=(f_{1}(t),\cdots,f_{n}(t)),$$
with holomorphic function $f_{i}(t)$ around $0\in \Delta$. 
Then it is easy to see that $\varphi(t)$ converges as $t\to 0$,
 inside the Morgan-Shalen compactification to a
 point in the $(n-1)$-simplex corresponding to $\bigcap_{i}D_{i}$ and
 its barycenter coordinates is 
\[\Bigl(\frac{{\rm ord}_{t}(f_{1}(t))}
{\sum_{i}{\rm ord}_{t}(f_{i}(t))},\frac{{\rm ord}_{t}(f_{2}(t))}
{\sum_{i}{\rm ord}_{t}(f_{i}(t))},\cdots,
\frac{{\rm ord}_{t}(f_{n}(t))}
{\sum_{i}{\rm ord}_{t}(f_{i}(t))}\Bigr).\] 
\end{proof}

\subsection{On the moduli and degenerations of abelian varieties}
\label{reconst.AV.sub}

Now, we turn back to 
the moduli of abelian varieties and recall that degeneration family 
of abelian varieties is classically 
known to be encoded in some systematic ``degeneration data'', 
following the idea of 
Mumford \cite{Mum72.AV}, Raynaud and Faltings-Chai \cite{FC90}. 
Note there was also a series of works by 
Y.~Namikawa and I.~Nakamura (cf.\  e.g.\ \cite{Nak}, \cite{Nam}) in the 
complex analytic setting after \cite{Mum72.AV}. 

Recently, this was revisited and extended 
by Kontsevich-Soibelman \cite{KS} 
and Gross-Siebert \cite{GS11}
from a different viewpoint as a splendid 
construction of degenerating 
Calabi-Yau families out of affine manifolds with singularities. 
One of their core results is the general execution of 
actual construction of family of Calabi-Yau varieties at least 
at the formal level, from affine manifolds with singularities and 
additional structure. Indeed, the singularities of the 
affine structures are the source of technical difficulties and 
provides depth in their technical accomplishment. 

Our aim in this subsection is rather modest, only 
to provide a moduli-theoretic understanding of their theory in 
the most ``trivial'' case 
i.e., the family of abelian varieties which do not involve 
singularities in the corresponding dual intersection complex, 
which is nothing but the flat tori. 

Consider a punctured holomorphic family of principally polarized 
abelian varieties as 
$(\mathcal{X}^{*},\mathcal{L}^{*})\to \Delta^{*}$ corresponding to 
$\varphi\colon \Delta^{*}\to A_{g}$ which we assume to extend to 
$\Delta\to \overline{A_{g}}^{\rm Vor}$, the Voronoi toroidal compactification. 
We say such a family is {\it meromorphic} in this paper, following \cite{BJ16}. 
Here, 
$\Delta:=\{t\in \C\mid |t|<1\}$ and $\Delta^{*}:=\{t\in \C\mid 0<|t|<1\}$ 
as usual. For our purpose here, we assume it corresponds to a maximal degeneration. 

Recall from \cite{Mum72.AV, FC90} (revisited in \cite[\S 3.1]{TGC.II}) we can write 
\begin{eqnarray*}
\mathcal{X}_t&=&\mathbb{C}^{g} \Bigl/ \left(
\begin{array}{ccc|ccc}
1         &               &              &                &           &                     \\
                  & \ddots &                &             &    {\dfrac{
                  \log(p_{i,j}(t))}{2\pi i}}&    \\ 
                   &            & 1     &               &          &                   
\\
\end{array}
\right)\cdot \mathbb{Z}^{2g}\\
&\simeq&(\C^{*})^{g}/\langle(p_{i,1}(t))_{i},(p_{i,2}(t))_{i},\cdots,(p_{i,g}(t))_{i}\rangle
\end{eqnarray*}
for $t\in \C$ with $|t|\ll 1$, 
where the entries of the symmetric matrix $(p_{i,j}(t))$ value in the 
(convergent) 
meromorphic functions field $\mathbb{C}((t))^{\it mero}\subset \mathbb{C}((t))$. 
(In the notation of \cite{FC90}, $p_{i,j}(t)=\frac{1}{b(y_i,\phi(y_j))(t)}$.) 
In particular, it (interestingly) induces a lifted morphism 
$\Delta^{*}\to (U(F)\cap  Sp(2g,\Z))\backslash \mathfrak{H}_{g}$, 
where $F$ is now a $0$-dimensional cusp, i.e., 
a rational boundary component of $0$-dimension, of 
the Satake-Baily-Borel compactification of the Siegel upper half space 
$\mathfrak{H}_{g}$ and 
$U(F)$ follows Notation~\ref{notation.groups}, i.e., 
the center of the unipotent radical of the corresponding 
parabolic subgroup. 

As shown also in \cite[\S 3.1]{TGC.II}, 
$\mathcal{X}_t$  $(t\neq 0)$ with the flat K\"ahler metrics
 converge to $\mathbb{R}^{r}/\mathbb{Z}^{r}$ with
 the Gram matrix $({\rm ord}_{t}(p_{i,j}(t)))$,
 appropriately rescaled to make the diameter $1$. This Gram matrix is equal to the 
``$B$-part'' of the Faltings-Chai degeneration data \cite{FC90} 
and had been known to be determined by 
the monodromy on $H^{1}(\mathcal{X}_{t},\Z)$. This fact can be at least 
traced back to the 
late work of A.~Grothendieck \cite{SGA7-1}, combined with 
the Deligne's comparison theorem (\cite[3, XIV.2.8]{SGA7-2}, also cf., e.g., \cite[\S 6]{HN}). 
\cite{SGA7-1} introduced ``l'accouplement de monodromie $u$" for 
semiabelian scheme over henselian DVRs, a priori defined 
in terms of the Galois action on the $l$-adic Tate modules (when $l\neq p$, cf.\  
\cite[\S9.1]{SGA7-1} for details in more generality) 
and showed that it gives rise to a symmetric non-degenerate 
pairing on $Y\otimes_{\Z}\Z_{l}$ (cf.\  \cite[Theorem 10.4]{SGA7-1}). 
The discussion here gives an alternative proof of Theorem~\ref{1par.AV}.

We denote the Gromov-Hausdorff limit of $\mathcal{X}_t$
 by $B(\mathcal{X},\mathcal{L})$, that 
is a flat real torus hence can be written as $V/L$ where $L$ is a finitely generated torsion-free abelian group such that 
$L\otimes \R\simeq V$. It is obviously a Monge-Amp\`ere manifold in the sense of \cite{KS}, 
if we define its integral affine structure by the natural real extension of $L\simeq \Z^{g}$ denoted by 
$V\simeq \R^{g}$. The limit metric on $V/L$ is flat. We take the discrete Legendre transform 
(cf.\  \cite{GS.logI}) and denote it by $\check{B}(\mathcal{X},\mathcal{L})$. 
This object encodes the (dual) affine structure plus the flat (Monge-Amp\`ere) metric $g$, same as $B(\mathcal{X},\mathcal{L})$. 

Again, coming back to the setting under the maximal degeneration assumption, 
we observe the following reconstruction of the family $\pi$, 
by using only the moduli space and 
its compactification. We set the valuation field 
$K:=\overline{\C((t))^{\it mero}}$, the algebraic closure of 
the convergent meromorphic function field $\C((t))^{\it mero}$. 

\begin{Prop}\label{AV.GH.recon.prop}
Take a meromorphic family of maximally degenerating principally polarized 
abelian varieties 
$\pi\colon (\mathcal{X},\mathcal{L})\to \Delta$. 
From the above discussion,
 the Gromov-Hausdorff limit of $\mathcal{X}_t$
 as $t\to 0$ exists.
We denote the limit by $B(\mathcal{X},\mathcal{L})$
 and its discrete Legendre transform by
 $\check{B}(\mathcal{X},\mathcal{L})$. 

Then we can enhance the underlying integral affine structure of 
$\check{B}(\mathcal{X},\mathcal{L})$ 
as $K$-affine structure (in the sense of \cite[\S7.1]{KS}) naturally 
via the data of $\pi$. 
Furthermore, such $K$-affine structure recovers $\pi$ 
up to an equivalence relation generated by 
base change (replace $t$ by $t^{a}$ with $a\in \Q_{>0}$). 
\end{Prop}

\begin{proof}
By analyzing moduli space, we previously 
observed (cf., e.g., \cite[\S 3.1]{TGC.II}) 
that the family $\pi$ 
should be written as 
$(\C^{*})^{g}/\langle(p_{i,1}(t))_{i},(p_{i,2}(t))_{i},\cdots,(p_{i,g}(t))_{i}\rangle$, with $p_{i,j}=p_{j,i}\in \C((t))^{\rm mero}$ such that 
$(B_{i,j}:=-{\rm ord}_{t} \, p_{i,j})_{i,j}$ is a positive definite symmetric matrix. 
From the previous description of the toroidal compactification above, 
the limit $\lim_{t\to 0}\varphi(t)$ inside the corresponding Morgan-Shalen 
compactification $\overline{A_{g}}$, the limit is $\R^{g}/\Z^{g}$ with metric 
matrix $cB_{i,j}$ (as we knew from the analysis in \cite{TGC.II} and 
Theorem~\ref{Ag.TGC.Satake.MS}) for some 
$c\in \R_{>0}$. 
Thus the dual affine structure on $\check{B}$ 
of the trivial affine structure with respect 
to the metric is written as translation given by $cB_{i,j}$. 
The desired enhancement to the $K$-affine structure can be simply defined as 
$p_{i,j}(t)$ itself so that our assertion of this theorem can be 
now regarded just as a tautology. 
\end{proof}


\newpage

\chapter{Algebraic K3 surfaces case}\label{F2d.sec}

From now, our focus moves on to the collapsing of 
Ricci-flat \textit{K3 surfaces}. We begin by 
reviewing and setting up the basic background. 

\section{Moduli of polarized K3 surfaces}\label{Mod.pol.K3}
Let us first review the well-known construction of moduli of polarized K3 surfaces, 
and set our notation (cf.\  e.g.\ \cite{Huy}). 
This description uses the Torelli theorem of algebraic K3 surfaces 
(\cite{PS.Sha}) and 
surjectivity of the period maps
 (\cite{Kul}, \cite{PP}).\footnote{Their K\"ahler versions are due to Burns-Rapoport \cite{BR}
 (Torelli theorem) and Todorov \cite{Tod.surj}, \cite{Tod} (surjectivity) respectively. }
Let $U=\mathbb{Z} e_0 \oplus \mathbb{Z} f_0$ be a lattice of rank two
 with a symmetric bilinear form given by $(e_0,e_0)=(f_0,f_0)=0$ and
 $(e_0,f_0)=1$.
Define $\Lambda_{\rm K3}:=U^{\oplus 3}\oplus E_8^{\oplus 2}$ the even unimodular lattice
 of rank $22$. 
Here, $E_8$ denotes the negative definite $E_8$-lattice
 so $\Lambda_{\rm K3}$ has signature $(3,19)$.
Fix a positive integer $d$.
A primitive element $\lambda\in \Lambda_{\rm K3}$ with $(\lambda,\lambda)=2d$ is unique
 up to automorphisms of $\Lambda_{\rm K3}$.
We fix $\lambda=d e_0+f_0$ contained in one of $U$
 and then put $\Lambda_{2d}:=\lambda^{\perp} \subset \Lambda_{\rm K3}$.
The lattice $\Lambda_{2d}$ has signature $(2,19)$ and
 $\Lambda_{2d}\simeq \mathbb{Z}(de_0-f_0)\oplus U^{\oplus 2}\oplus E_8^{\oplus 2}$.

Let $\mathcal{F}_{2d}$ be the moduli space of 
K3 surfaces allowing possible ADE singularities, with primitive polarizations of degree $2d$. 
Recall that a polarization (ample line bundle) $L$ is called primitive if it can not be written as $M^{\otimes m}$ 
with $m>1$. 
Its structure is well-known as follows. Let us set 
$$\Omega(\Lambda_{2d}):=\{[w]\in \mathbb{P}(\Lambda_{2d}\otimes \mathbb{C})\mid
 (w,w)=0,\ (w,\bar{w})>0\},$$ 
which has two connected components. 
 Note there is a natural involution $\iota\colon [w]\mapsto [\bar{w}]$,
 which interchanges the two components. 
We choose one of its connected components and denote
 by $\mathcal{D}_{\Lambda_{2d}}$. 
Also, $\Omega(\Lambda_{2d})$ can be identified with
 the set of positive definite oriented two-dimensional planes in
 $\Lambda_{2d}\otimes \mathbb{R}$
 by assigning $[w]$ to $\mathbb{R} (\re w) \oplus \mathbb{R} (\im w)$.
The choice of one connected component $\mathcal{D}_{\Lambda_{2d}}$
 corresponds to giving orientations on all the positive definite
 planes in $\Lambda_{2d}\otimes \R$.

Let $O(\Lambda_{\rm K3})$ be the automorphism
 group of the lattice $\Lambda$ preserving the bilinear form. 
Let 
$\tilde{O}(\Lambda_{2d})=\{g|_{\Lambda_{2d}} : g\in O(\Lambda_{\rm K3}),\, 
g(\lambda)=\lambda\}$ (cf.\ \cite[1.5.2, 1.6.1]{Nik}). 
The group $\tilde{O}(\Lambda_{2d})$ naturally acts on $\Omega(\Lambda_{2d})$.
Let $\tilde{O}^{+}(\Lambda_{2d})\subset \tilde{O}(\Lambda_{2d})$ 
 be the index two subgroup consisting of the elements 
 which preserve each connected component of $\Omega(\Lambda_{2d})$. 
The following is well-known: 
\begin{Fac}\label{period.alg.K3}
We have an isomorphism 
\begin{align*}
\mathcal{F}_{2d}
\simeq
\tilde{O}(\Lambda_{2d})\backslash \Omega(\Lambda_{2d})
(\simeq 
\tilde{O}^{+}(\Lambda_{2d})\backslash \mathcal{D}_{\Lambda_{2d}}).
\end{align*}
\end{Fac}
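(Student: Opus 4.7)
The plan is to exhibit the isomorphism as the period map, following the classical strategy which reduces the statement to the Torelli theorem and the surjectivity of the period map. Given $(X,L)\in \mathcal{F}_{2d}$, I would first pass to the minimal resolution $\pi\colon \tilde X\to X$: then $\tilde X$ is a smooth K3 surface and $\tilde L:=\pi^*L$ is a nef and big line bundle with $\tilde L^2=2d$ which contracts exactly the ADE configurations of $(-2)$-curves above $\mathrm{Sing}(X)$. Since $H^2(\tilde X,\Z)$ is an even unimodular lattice of signature $(3,19)$, it is isometric to $\Lambda_{\rm K3}$; moreover primitive vectors of square $2d$ form a single $O(\Lambda_{\rm K3})$-orbit, so one can choose a marking $\phi\colon H^2(\tilde X,\Z)\xrightarrow{\sim}\Lambda_{\rm K3}$ with $\phi(c_1(\tilde L))=\lambda$. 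The line $\phi_{\C}(H^{2,0}(\tilde X))$ then lies in $\lambda^\perp\otimes\C=\Lambda_{2d}\otimes\C$ and satisfies the Riemann bilinear relations, yielding a point of $\Omega(\Lambda_{2d})$. Two such markings differ by an element of $\tilde O(\Lambda_{2d})$, giving a well-defined map $\Phi\colon \mathcal{F}_{2d}\to \tilde O(\Lambda_{2d})\backslash\Omega(\Lambda_{2d})$.

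For injectivity I would invoke the Torelli theorem of Piatetski-Shapiro--Shafarevich. If $\Phi([X,L])=\Phi([X',L'])$, there exist markings such that a Hodge isometry $\psi\colon H^2(\tilde X,\Z)\to H^2(\tilde X',\Z)$ sends $c_1(\tilde L)$ to $c_1(\tilde L')$. After composing with a product of reflections in $(-2)$-classes orthogonal to $\lambda$, I would arrange that $\psi$ maps the K\"ahler cone of $\tilde X$ into that of $\tilde X'$; the Torelli theorem then produces an isomorphism $f\colon \tilde X\xrightarrow{\sim}\tilde X'$ with $f^*\tilde L'\simeq \tilde L$, which contracts the exceptional loci of $\pi$ and $\pi'$ and descends to $(X,L)\simeq(X',L')$. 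For surjectivity I would apply the theorem of Kulikov--Persson--Pinkham: every point of $\mathcal{D}_{\Lambda_{2d}}$ is the period of some marked K3 surface $\tilde X$, and $\lambda$ is then of type $(1,1)$ with positive square. Using reflections in $(-2)$-classes orthogonal to $\lambda$ (which lie in $\tilde O^{+}(\Lambda_{2d})$ and fix $\lambda$), one can move $\lambda$ into the closure of the K\"ahler cone, so that $\lambda=c_1(\tilde L)$ for a nef big $\tilde L$ whose linear system contracts the $(-2)$-curves orthogonal to $\lambda$ and produces an ADE-singular polarized K3 surface $(X,L)$ of degree $2d$ with the prescribed period.

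The parenthetical identification is essentially formal: complex conjugation $\iota$ lies in $\tilde O(\Lambda_{2d})\setminus \tilde O^{+}(\Lambda_{2d})$ and interchanges the two components of $\Omega(\Lambda_{2d})$, hence $\tilde O(\Lambda_{2d})\backslash\Omega(\Lambda_{2d})$ is canonically homeomorphic to $\tilde O^{+}(\Lambda_{2d})\backslash \mathcal{D}_{\Lambda_{2d}}$. The main obstacle in this strategy is the Weyl-group adjustment used in both injectivity and surjectivity: one must verify that reflections in $(-2)$-classes \emph{orthogonal to} $\lambda$ suffice to send the period into the correct chamber while preserving $\lambda$ itself. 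This is precisely the point at which the group $\tilde O^{+}(\Lambda_{2d})$ (as opposed to a larger or smaller stabilizer) appears, and it is also where the distinction between strictly polarized and quasi-polarized K3 surfaces must be handled carefully in order to obtain a bijection onto $\mathcal{F}_{2d}$ in the ADE-singular sense of the statement.
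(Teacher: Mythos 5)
Your argument is essentially the same as the paper's: both reduce the statement to the strong Torelli theorem of Piatetski-Shapiro--Shafarevich for injectivity and the surjectivity of the period map (Kulikov, Persson--Pinkham) for surjectivity, with a Weyl-group (\text{$(-2)$}-reflection) adjustment handling the ADE-singular case. The only cosmetic difference is that the paper packages the singular case via Morrison's marking $\alpha_X\colon IH^2(X,\Z)\hookrightarrow \Lambda_{\rm K3}$ (extendable to $H^2(\tilde X,\Z)\simeq\Lambda_{\rm K3}$), whereas you work directly with markings of $H^2(\tilde X,\Z)$ for the minimal resolution $\tilde X$ and the pulled-back quasi-polarization $\tilde L=\pi^*L$; these are equivalent, and the subtle point you flag at the end (that it suffices to compose with reflections in effective $(-2)$-classes orthogonal to $\lambda$, which do lie in $\tilde O^{+}(\Lambda_{2d})$ and act simply transitively on the chambers whose closure contains $\lambda$) is exactly the content implicit in the paper's appeal to Morrison and the Torelli theorem.
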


We briefly recall how Fact~\ref{period.alg.K3} is proved. The morphism is given by periods of K3 surfaces as follows.
Let $(X,L)$ be a polarized smooth K3 surface of degree $2d$. 
There is an isomorphism $\alpha_X\colon H^2(X,\mathbb{Z})\xrightarrow{\sim} \Lambda_{\rm K3}$
 such that $\alpha_X(c_{1}(L))=\lambda$.
Then the image of $H^{2,0}$ by the morphism 
$\alpha_{X}\otimes \mathbb{C}\colon H^{2}(X,\mathbb{C})
\to \Lambda_{\rm K3}\otimes \mathbb{C}$
 is contained in $\Lambda_{2d}\otimes \mathbb{C}$ and it defines an element
 of $\Omega(\Lambda_{2d})$.
Other choices of $\alpha_X\colon H^2(X,\mathbb{Z})\xrightarrow{\sim} \Lambda_{\rm K3}$
 correspond to $\tilde{O}(\Lambda_{2d})$-translates in $\Omega(\Lambda_{2d})$. 
If $X$ has ADE singularities, we need to 
use markings in the sense of \cite{Mor} as follows. 
We consider the minimal 
resolution $\tilde{X}$ with exceptional $(-2)$-curves $e_{i}$.
If we set\footnote{According to \cite{Mor}, $IH$ stands for 
the Intersection Cohomology.}
 $IH^{2}(X,\Z)$ as the orthogonal complement of $e_{i}$s  in 
$H^{2}(\tilde{X},\Z)$, then any K\"ahler class of $X$
 and the cycle class of any closed subcurve
 which does not pass through the singularity of $X$, are in $IH^{2}(X,\Z)$. 
Then, for 
possibly ADE singular polarized K3 surface $X$, Morrison \cite{Mor} 
defines the marking as an isometric embedding 
 $\alpha_{X}\colon IH^{2}(X,\mathbb{Z})\hookrightarrow \Lambda_{\rm K3}$ 
which can be extended to a whole isometry $H^{2}(\tilde{X},\mathbb{Z})\simeq \Lambda_{\rm K3}$. 
If $X$ is attached with an ample line bundle $L$, we consider such 
$\alpha_{X}$ which satisfies $\alpha_{X}(c_{1}(L))=\lambda$. 
Then we consider  $(\alpha_{X}\otimes \C)([\Omega_{X}])$ as the period, 
 where $\Omega_{X}$ is a non-zero holomorphic $2$-form on $X$. 
This gives a morphism $\mathcal{F}_{2d}
\simeq \tilde{O}(\Lambda_{2d})\backslash \Omega(\Lambda_{2d})$, 
which is an isomorphism thanks to the strong Torelli theorem by \cite{PS.Sha}. 

Let $\mathbb{G}:=O(\Lambda_{2d}\otimes \Q)$ and
 $G=\mathbb{G}(\mathbb{R})=O(2,19)$.
The indefinite orthogonal Lie group $G$ acts on $\Omega(\Lambda_{2d})$
 and we have an isomorphism
\begin{align*}
\Omega(\Lambda_{2d}) \simeq O(2,19)/(SO(2)\times O(19)).
\end{align*}
Let $O^+(2,19)$ be the index two subgroup of $O(2,19)$ which preserves
 each connected component of $\Omega(\Lambda_{2d})$.
Then $O^+(2,19)$ acts holomorphically on the Hermitian symmetric space
 $\mathcal{D}_{\Lambda_{2d}}$ and 
\begin{align*}
\mathcal{D}_{\Lambda_{2d}} \simeq O^+(2,19)/(SO(2)\times O(19)).
\end{align*}
Since $\tilde{O}^{+}(\Lambda_{2d})$ and $\tilde{O}(\Lambda_{2d})$
are arithmetic subgroups of $O(2,19)$,
 $\mathcal{F}_{2d}$ is an arithmetic quotient of
 a Hermitian symmetric space.

Note that by sending $[w]$ to the oriented two-dimensional subspace
 $\mathbb{R} (\re w) \oplus \mathbb{R} (\im w)$, we get another isomorphism
\begin{multline*}
\mathcal{F}_{2d}
\simeq 
\tilde{O}(\Lambda_{2d})\backslash 
\{V\subset \Lambda_{2d}\otimes \mathbb{R}\mid 
 \text{oriented two-dimensional subspaces} \\
\text{with signature $(2,0)$}\}.
\end{multline*}

Also recall that, the subset $\mathcal{F}_{2d}^{o}$ of $\mathcal{F}_{2d}$ parametrizing 
\textit{smooth} K3 surfaces with primitive polarizations of degree $2d$, is Zariski open. More precisely, $\mathcal{F}_{2d}^{o}$ coincides with the 
complement of the union of the Heegner divisors (cf.\  \cite{Huy} etc.) in $\mathcal{F}_{2d}$. 

\section{Description of Satake compactification of $\mathcal{F}_{2d}$}
\label{K3.Sat.sec}

We study the structure of the Satake compactification
 of $\mathcal{F}_{2d}$ for the adjoint representation in our situation. 
We saw that $\mathcal{F}_{2d}$ has a structure of locally symmetric space:
\begin{align*}
\mathcal{F}_{2d}
\simeq 
\tilde{O}^{+}(\Lambda_{2d})\backslash \mathcal{D}_{\Lambda_{2d}}
&\simeq 
\tilde{O}^{+}(\Lambda_{2d})\backslash O^+(2,19)/SO(2)\times O(19)\\
&\simeq 
\tilde{O}^{+}(\Lambda_{2d})\backslash O(2,19)/O(2)\times O(19).
\end{align*}
As we partially explained in the previous section, this fits the general theory in \S\ref{Gen.HSD} by 
 letting $\mathbb{G}=O(\Lambda_{2d}\otimes \Q)$, $G=\mathbb{G}(\R)=O(2,19)$, 
 $K=O(2)\times O(19)$ and $\Gamma=\tilde{O}^{+}(\Lambda_{2d})$.
Hence we can consider its Satake compactification and Morgan-Shalen compactification. 
Let $\overline{\mathcal{F}_{2d}}^{\rm Sat,{\tau_{\rm ad}}}=:
\overline{\mathcal{F}_{2d}}^{{\rm Sat}}$ be the Satake compactification of 
 $\mathcal{F}_{2d}$ 
 corresponding to the adjoint representation of $O(2,19)$. 
One way of seeing its structure is through an identification of the 
 adjoint representation of $O(2,19)$ with $\bigwedge^{2}(\Lambda_{2d}\otimes \R)$. 
From that and the Pl\"ucker embedding,
 we can regard $\overline{\mathcal{F}_{2d}}^{\rm Sat}$ as a quotient of 
partial compactification of $\mathcal{D}_{\Lambda_{2d}}$ in the Grassmannian of 
 real planes ${\rm Gr}_{2}(\Lambda_{2d}\otimes \R)$.
Thus our Satake compactification decomposes as 
\[\overline{\mathcal{F}_{2d}}^{\rm Sat}=
\mathcal{F}_{2d}\sqcup \bigcup_{l} \mathcal{F}_{2d}(l)
\sqcup \bigcup_{p} \mathcal{F}_{2d}(p),\]
where $l$ runs over 
one-dimensional isotropic
 subspaces of $\Lambda_{2d}\otimes \mathbb{Q}$,
 and $p$ runs over 
 two-dimensional isotropic subspaces of
 $\Lambda_{2d}\otimes \mathbb{Q}$. 
 
The boundary component $\mathcal{F}_{2d}(l)$ is given as 
\begin{align}\label{F2d(l)}
\mathcal{F}_{2d}(l)
= \{v\in (l^{\perp}/l) \otimes \mathbb{R} \mid (v,v)>0\}/\sim.
\end{align}
Here $v \sim v'$ if $g\cdot v=c v'$ for some
 $g\in \tilde{O}^{+}(\Lambda_{2d})$ such that $g\cdot l = l$
 and $c\in \mathbb{R}^{\times}$. 
Note that here, $c$ runs over whole $\R^{\times}$ rather than 
 $\R_{>0}$. This is the reason why later in \S \ref{trop.K3.1}, 
 the assigned tropical K3 surfaces to this boundary component $\mathcal{F}_{2d}(l)$ do \textit{not} have canonical orientation. 
If $g\cdot l = l'$ for some $g\in \tilde{O}^{+}(\Lambda_{2d})$, 
 then $\mathcal{F}_{2d}(l)=\mathcal{F}_{2d}(l')$,
 where the identification is given by the action of $g$.
If otherwise, $\mathcal{F}_{2d}(l)\cap\mathcal{F}_{2d}(l')=\emptyset$.
Let $l_{\R}=l\otimes\mathbb{R}$
 and $l_{\R}^{\perp}=l^{\perp}\otimes \mathbb{R}$.
Since $l_{\R}^{\perp}/l_{\R}$ has signature $(1,18)$,
 there is an isomorphism
\begin{align}\label{F2d(l)2}
\{v\in l_{\R}^{\perp}/l_{\R} \mid (v,v)>0\}/{\mathbb{R}}^{\times}
\simeq O(1,18)/(O(1)\times O(18))
\end{align}
and hence $\mathcal{F}_{2d}(l)$
 is an arithmetic quotient of $O(1,18)/(O(1)\times O(18))$.

Let $v\in \mathcal{F}_{2d}(l)$ and take a representative
 in $l^{\perp}_{\R}$,
 which we also denote by the same letter $v$.
The real plane $l_{\R}\oplus \R v$ in $\Lambda_{2d}\otimes \R$
 is positive semidefinite and
 can be described as a limit of positive definite planes.
Recall that we have chosen
 one connected component $\mathcal{D}_{\Lambda_{2d}}$, which corresponds to 
 fixing orientations for all the positive definite real planes
 in $\Lambda_{2d}\otimes \R$.
Therefore, $l_{\R}\oplus \R v$ has also a compatible orientation.
In other words, if we choose a primitive element $e\in l\cap \Lambda_{2d}$, 
 then $v$ with $(v,v)=1$ 
 may be taken canonically modulo $l$.  

The other stratum $\mathcal{F}_{2d}(p)$ is a point
 and $\mathcal{F}_{2d}(p)=\mathcal{F}_{2d}(p')$
 if and only if $g\cdot p=p'$ for some $g\in \tilde{O}^{+}(\Lambda_{2d})$. 

Therefore, if we take representatives of $l$ and $p$
 from each $\tilde{O}^+(\Lambda_{2d})$-orbit,
 we get a finite stratification
\begin{align}\label{K3.Sat.strata}
\overline{\mathcal{F}_{2d}}^{\rm Sat}=
\mathcal{F}_{2d}\sqcup \bigsqcup_{l} \mathcal{F}_{2d}(l)
\sqcup \bigsqcup_{p} \mathcal{F}_{2d}(p).
\end{align}
The number of strata is counted in 
\cite[\S4, \S5]{Sca}. 
For the theory of Siegel sets and the topology on
 $\overline{\mathcal{F}_{2d}}^{\rm Sat}$, 
we refer to \cite{Sat1, Sat2, BJ} for the details. 
We also refer to the end of \S\ref{trop.K3.1} 
for some relation between the parameter spaces discussed in 
\cite[\S3.3, \S6.7.1]{KS}. 

\section{Tropical K3 surfaces}\label{trop.K3.1}

To discuss tropical K3 surfaces,
 we recall the definitions of Hessian metrics and (real) Monge-Amp\`ere equations.
See \cite{Hit97, KS, GS.logI} for details.
On an affine manifold $B$, a {\it Hessian metric} is a Riemannian metric $g$ on $B$
 such that locally for affine coordinate functions $(x_1,\dots,x_n)$,
 there exists a smooth function $K$ such that
 $g_{ij} = \frac{\partial^2 K}{\partial x_i \partial x_j}$.
We say it satisfies {\it Monge-Amp\`ere equation}, 
or simply a {\it Monge-Amp\`ere metric}, if
 $\det (\frac{\partial^2 K}{\partial x_i \partial x_j})$ is a constant. 

In our paper, what we mean by \textit{tropical K3 surface}
 is a topological space $B$ homeomorphic to the sphere $S^{2}$ with 
 two integral affine structures on the complement of certain finite points ${\rm Sing}(B)$ 
 and a metric $g$ which is Monge-Amp\`ere for both of the two affine structures on
 $B\setminus {\rm Sing}(B)$. 
We also assume that 
 the two affine structures are Legendre dual to each other
 with respect to $g$ in the standard sense in this field (cf.\  e.g.\ \cite{Gross}). 
 Such $B\setminus {\rm Sing}(B)$, i.e., 
 with two affine structures and Monge-Amp\`ere metric, 
 is also called Monge-Amp\`ere manifold in \cite{KS}. 
We remark that 
 here we do \textit{not} encode an orientation of $B$, unlike the definition in e.g.\ \cite{HU}. 
In this paper, 
we follow \cite{Gross} for the definition of 
affine structure, that is, an isomorphism class of local affine coordinates atlas. 
Studies of such object as tropical version of 
K3 surfaces are pioneered in well-known papers of Gross-Wilson \cite{GW} and 
Kontsevich-Soibelman \cite{KS}. 

In this section, as a part of geometric realization map $\Phi_{\rm alg}$, 
we shall assign such a tropical K3 surface, 
which we denote by 
$\Phi_{\rm alg}([e,v])$, to each point $[e,v]$ in
 the boundary component $\mathcal{F}_{2d}(l)$ with $l=\Q e$. 
 
Let $l$ be a one-dimensional isotropic
 subspace of $\Lambda_{2d}\otimes \mathbb{Q}$.
Let $e$ be a primitive element of $l\cap\Lambda_{2d}$.
Take a vector $v\in l^{\perp}_{\R}$ such that $(v,v)=1$.
Write $[e,v]$ for the 
corresponding point in $\mathcal{F}_{2d}(l)$. 
Replacing $v$ by $-v$ if necessary, 
 we may assume that the orientation on the plane $l_{\R}\oplus \R v$
 given by $e\wedge v$ agrees the one given in \S\ref{K3.Sat.sec}. 
Then there exists a (not necessarily projective) K3 surface $X$
 and an isomorphism
 $\alpha_{X}\colon H^2(X,\mathbb{Z})\to \Lambda_{\rm K3}$ such that
\begin{enumerate}
\item $\alpha_{X,\mathbb{C}}(H^{2,0})= \mathbb{C}(\frac{1}{\sqrt{2d}}\lambda-\sqrt{-1}v)$,
\item $\alpha_{X}^{-1}(e)\in H^{1,1}$ and it is in the closure of 
K\"ahler cone.
\end{enumerate}
This is well-known to experts but for instance,
 \cite[Chapter 8, Remark 2.13]{Huy} gives the proof. 
Let $L$ be a line bundle on $X$ such that $\alpha_{X}([L])=e$. 
Then by above (ii) and Fact~\ref{bp.free}, 
 we get an elliptic fibration $\pi\colon X\to B(\simeq \mathbb{P}^1)$. 
These are unique up to the change of marking, appears as a composite of reflections along $(-2)$-curves in the 
singular fibers. Its corresponding Jacobian elliptic surface and the Weierstrass model 
are both preserved if we replace $v$ by $v+ce$ for some $c\in \R$. 
Take a holomorphic volume form $\Omega$ on $X$ such that
 $\alpha_X([\re \Omega])=\lambda$.
The map $\pi$ is a Lagrangian fibration with respect to
 the symplectic form $\re \Omega$.
Hence it gives an affine manifold structure
 on $B\setminus{\rm Sing}(B)$,
 where ${\rm Sing}(B)$ denotes the finite set of singular points.
Similarly, the imaginary part $\im \Omega$ gives another affine manifold
 structure on $B\setminus{\rm Sing}(B)$.
If we choose $-e$ instead of $e$, then the elliptic fibration
 corresponding to $[-e,-v]$ is the complex conjugate
 of $\pi\colon X\to B$.

We endow the base space $B$ with metric. 
Let $b\in B\setminus{\rm Sing}(B)$ and $\xi_1,\xi_2\in T_b B$.
Set 
\begin{align}\label{eq:McLean}
g(\xi_1,\xi_2)
:=-\int_{\pi^{-1}(b)}\iota(\tilde{\xi}_1)\re \Omega
 \wedge \iota(\tilde{\xi}_2)\im \Omega, 
\end{align}
where $\tilde{\xi}_1, \tilde{\xi}_2$ are lifts of $\xi_1, \xi_2$,
 respectively.
This is the definition of the McLean metric on the base $B$ (cf.\  \cite{ML}, \cite{Gross}), 
when we regard $f$ as a special Lagrangian fibration after hyperK\"ahler rotation 
(cf.\  \cite{HL}, \cite{SYZ}, \cite{GW}). 
It is also well-known that 
the distance induced by this metric on $B\setminus {\rm Sing}(B)$ 
extends to the whole distance of $B$ naturally as a length space of finite 
diameter. We will include its proof for convenience later in 
Corollary~\ref{McLean.finite.distance} although it is essentially well-known in 
a more general situation (see \cite{Yos10}, \cite[Proposition 2.1]{GTZ2}, \cite[Theorem A]{EMM17},
 \cite[Theorem 3.4]{TZ}). 
$B$ with such distance structure, regarded as a compact metric space, 
\textit{without} orientation, 
will be denoted by $\Phi_{\rm alg}([e,v])$. This is the  
part of our geometric realization map $\Phi_{\rm alg}$ along the boundary 
component $\mathcal{F}_{2d}(l)$. The fact that 
$\Phi_{\rm alg}([e,v])$ does not encode an orientation is 
analogous to the fact that the collapsed $S^{1}$ of flat K\"ahler 
elliptic curves (cf., e.g., \cite{Gross}, \cite{TGC.II}) 
do not have a canonical orientation either. 

Also recall that the symplectic form $\re \Omega$ on $X$
gives an integral affine structure on $B\setminus {\rm Sing}(B)$ 
by integrating it over lifts of paths in $B\setminus {\rm Sing}(B)$. 
Note that replacing $\Omega$ by $e^{i\theta}\Omega$ ($\theta\in \R$) 
does not change $g$ while it changes the affine structure. 
This is a tropical analog of 
the hyperK\"ahler rotation and if $\theta=\frac{\pi}{2}$ 
 it is exactly the Legendre transform of the 
affine structure (cf.\  \cite{Gross}) we mentioned. 
The metric $g$ is Hessian and satisfies the Monge-Amp\`ere equation
 with respect to the affine structure on $B\setminus {\rm Sing}(B)$
 defined by $\re (e^{i\theta}\Omega)$ for any $\theta\in \R$
 (see \cite{Hit97}).

This metric on the base coincides with the 
``special K\"ahler metric" introduced and studied 
in \cite{Str90, DW96, Hit99, Freed99} etc.\ and appears as the metric on $\mathbb{P}^1$ 
in \cite{GTZ2}. 
Set a holomorphic local coordinate $y$ defined at any contractible small open subset $U$ of 
$B\setminus {\rm Sing}(B)$ and local holomorphic section $s$ on $U$. Note 
that whole $B\setminus {\rm Sing}(B)$ is covered by such $U$ so we can work over $U$. 
Via $s$, we can identify $\pi^{-1}(U)$ with its Jacobian fibration hence we can take 
its corresponding Darboux coordinate $y,z$ on $\pi^{-1}(U)$. 
Then $\Omega|_{\pi^{-1}(U)}$ can be assumed to be $dy\wedge dz$ 
(after multiplying a nonvanishing local holomorphic function to $y$ if necessary). 
The elliptic fibers are written as $\mathbb{C}/(\mathbb{Z}+\mathbb{Z}\tau)$ 
where $\tau$ is a local function of $y$. 
Then what we want to show is 
$$
\frac{\sqrt{-1}}{2}((\im \tau)dy\wedge d\bar{y})(\xi_1,\xi_2)
=-\int_{\pi^{-1}(b)}\iota(\tilde{\xi}_1)\re \Omega\wedge 
\iota(J\tilde{\xi}_2)\im \Omega,
$$
for any $b\in U$ and for any $\xi_1, \xi_2 \in T_bU$. 
Here, $\tilde{\xi}_1$ and $\tilde{\xi}_2$ denote the lifts of $\xi_1$ and $\xi_2$, respectively as before. 
The above equality can be verified by a straightforward computation;
 if we write 
 $\xi_i=a_i\frac{\partial}{\partial \re y}
 +b_i\frac{\partial}{\partial \im y}$
 for $i=1,2$ then the both hand sides are equal to 
$(\im \tau) (a_1b_2-a_2b_1)$. 

It is also known that the Ricci form of the K\"{a}hler metric
 $g$ defined as \eqref{eq:McLean}
 coincides with the Weil-Petersson metric (\cite[Proposition 4.1]{Tos}).

\begin{Rem}\label{metric.class}
Recall the notion of the \textit{class of metric} (resp., the 
\textit{radiance obstruction}) 
 of Monge-Amp\`ere manifolds $B$ with singularities 
introduced in \cite{KS.earlier} and discussed in \cite{GS.logI} in more details (resp., introduced by \cite{GoldH}). 
We denote them respectively by 
$m(B)\in H^1(B, i_*\tilde{\mathcal{T}}^{\vee}\otimes \R)$, 
$r(B)\in H^1(B,i_*\mathcal{T})$.
Here, $\mathcal{T}$ is the affine structure
 as a $\mathbb{Z}^{{\rm dim}(B)}$-local system on $B$
 in the tangent bundle $T(B\setminus {\rm Sing}(B))$;
 $-^{\vee}$ denotes the  
dual local system of $-$;
 and $\tilde{\mathcal{T}}^{\vee}$ is the 
local system of integral affine functions,
 and $i\colon (B\setminus {\rm Sing}(B))\hookrightarrow B$. 
In particular, by seeing the slope of affine functions, we naturally have a morphism of local systems 
$f\colon \tilde{\mathcal{T}}^{\vee}\twoheadrightarrow \mathcal{T}^{\vee}$ which induces 
$f_*\colon H^1(B, i_*\tilde{\mathcal{T}}^{\vee}\otimes \R)
\to H^1(B, i_*\mathcal{T}^{\vee}\otimes \R)$. 
We can extract the ``linear" part of metric class as $f_* m(B)\in 
H^{1}(B,i_{*}\mathcal{T}^{\vee}\otimes \R)$. 

Now, we consider a tropical K3 surface $B_{[v]}$ which is obtained in 
\S\ref{trop.K3.1} for a point in a boundary component 
$[e,v]\in \partial \overline{\mathcal{F}_{2d}}^{\rm Sat}$. 
Then the class of metric recovers
 $\bar{v}\in l_{\R}^{\perp}/l_{\R}$, 
 namely, $f_*m(B_{[v]})$ is sent to $\bar{v}$
 under the natural $\R$-linear map 
$$H^1(B_{[v]}, i_*\mathcal{T}^{\vee}\otimes \R)\hookrightarrow l_{\R}^{\perp}/l_{\R}$$ 
coming from the Leray spectral sequence applied to the elliptic fibration 
$\pi\colon X\twoheadrightarrow B_{[v]}$. 
Here, we used the $\mathbb{Z}$-simpleness 
$R^1\pi_*\Z_X\simeq i_*\mathcal{T}^{\vee}$ which follows from 
\cite[Lemma 1.3.13]{FMg} because there are no multiple fibers, 
 and we used $H^0(B_{[v]}, i_*\mathcal{T}^{\vee})=H^2(B_{[v]}, i_*\mathcal{T}^{\vee})=0$. 
If all the fibers of $\pi$ are irreducible, the above injective map is an isomorphism.
The above discussion is a tropical analog of the theory of 
the period and the K\"ahler class for complex K3 surfaces.

A germ of our general idea of \S4 is related to 
\cite[\S3.3, \S6.7.1]{KS}. In particular,  our $\mathcal{F}_{2d}(l)$ 
can be regarded as a leaf of 
foliation $\mathcal{F}$ of $\mathcal{P}$ in \cite[\S3.3]{KS}. 
Here, our $\lambda$ is regarded as a vector in
 the set $\{v\in \Lambda_{2,18}\otimes \R\mid (v,v)>0\}/{\rm Aut}(\Lambda_{2,18})$
 in \cite[\S6.7.1]{KS} which determines the foliation $\mathcal{F}$.
In particular, the monodromy representation
 $\pi_1(B_{[v]}\setminus {\rm Sing}(B_{[v]}))\to SL(2,\Z)\ltimes \R^2$
 does not depend on $v\in \mathcal{F}_{2d}(l)$ at least for the generic case
 where the corresponding elliptic fibration $\pi_{[v]}\colon X_{[v]}\to B_{[v]}$
 has $24$ $I_1$-type degenerations.

Also, our previous results in \cite{TGC.II} and \S\ref{Ag.TGC.Satake.MS} for abelian 
varieties case 
can be re-interpreted by such ``tropical periods'' (but with ``weight'' one). 
\end{Rem}

\section{Gromov-Hausdorff collapse of K3 surfaces}\label{Alg.K3.statements.sec}

In this section, 
we assign a metric space to each point in
 $\overline{\mathcal{F}_{2d}}^{\rm Sat}$. 
For a polarized K3 surface $(X,L)\in \mathcal{F}_{2d}$, 
 we endow $X$ with a Ricci-flat metric with K\"ahler class $c_1(L)$,
 which uniquely exists by Yau's theorem~\cite{Yau}.
For $[e,v]\in \mathcal{F}_{2d}(l)$ we defined a metric space $B_{[v]}$
 in \S\ref{trop.K3.1}. 
For a point in $\mathcal{F}_{2d}(p)$ we assign a (one-dimensional) segment.
Let us normalize these metric spaces so that their diameters are $1$.
We thus obtain a map
 $\Phi_{\rm alg}\colon\overline{\mathcal{F}_{2d}}^{\rm Sat}
 \to {\it CMet}_{1}$, where ${\it CMet}_{1}$ denotes
 the set of isometry classes of compact metric spaces with diameter one
 equipped with the Gromov-Hausdorff topology.
We call $\Phi_{\rm alg}$ the geometric realization map.

\begin{Conj}\label{K3.Main.Conjecture}
The geometric realization map 
\[\Phi_{\rm alg}\colon \overline{\mathcal{F}_{2d}}^{\rm Sat}
 \to {\it CMet}_{1}\]
defined above is continuous.
\end{Conj}

This is one of the main conjectures in this monograph (also see Conjecture \ref{K3.Main.Conjecture2}
 and \S\ref{high.dim.HK.sec}, \S\ref{high.dim.gen.sec}). 
\begin{Rem}
There exists an action of Galois group ${\rm Gal}(\mathbb{C}/\mathbb{R})$ on 
 $\mathcal{F}_{2d}$, which sends 
 K3 surfaces to their complex conjugates.
This $\mathbb{Z}_2$-action extends to $\overline{\mathcal{F}_{2d}}^{\rm Sat}$
 and the map $\Phi_{\rm alg}$ factors through the quotient by this action.
\end{Rem}

\begin{Rem}\label{remark.HNU}
We remark that 
\textit{Kenji Hashimoto, Yuichi Nohara, Kazushi Ueda} \cite{HNU} 
studied a certain $2$-dimensional 
analytic subspace of the moduli $\mathcal{F}_{2d}$, i.e., the moduli 
of $(E_{8}^{\oplus 2}\oplus U)$-polarizable K3 surfaces 
and its Gromov-Hausdorff limits.
Their study matches well with the above conjecture. 
Moreover, it directly follows from a result of Hashimoto and Ueda \cite{HU}
 that a generic elliptic K3 surface with a holomorphic section
 is determined up to complex conjugation
 by the metric on $B\setminus {\rm Sing}(B)$ defined in \eqref{eq:McLean}.
As a result, 
$\Phi_{\rm alg}|_{\mathcal{F}_{2d}(l)}\colon \mathcal{F}_{2d}(l)\to\Phi_{\rm alg}(\mathcal{F}_{2d}(l))$
 is ``generically" one to one (resp.\ two to one)
 if the map $j\colon \mathcal{F}_{2d}\to j(\mathcal{F}_{2d})$
 defined at the end of \S\ref{STK.MK3} is generically one to one (resp.\ two to one).\footnote{This corrects our explanation after Theorem 4.4 in the announcement paper~\cite{OO}, 
 where we wrote the map is ``generically" one to one.}
See also Remark~\ref{remark.HNU2}.
We appreciate their gentle discussion shared with us. 
\end{Rem}

\begin{Rem}\label{ACP.K3}
This conjecture is analogous to Theorem~\ref{Ag.TGC.Satake.MS} for the moduli of principally polarized abelian varieties $A_g$. 
Assuming that Conjecture~\ref{K3.Main.Conjecture} holds, as in Remark \ref{ACP.Ag}, 
it gives an analog of the result of Abramovich-Caporaso-Payne \cite{ACP} for K3 surfaces case. 
\end{Rem}

Towards the settlement of Conjecture~\ref{K3.Main.Conjecture}, 
let us first start from several kinds of easy partial confirmation. 
Recall that $\mathcal{F}_{2d}^o(\subset \mathcal{F}_{2d})$
 be the open subset 
consisting of 
isomorphism classes of {\it smooth} polarized K3 surfaces of degreed $2d$. 
Then one can first easily confirm: 

\begin{Prop}\label{easy.confirmation}
The restriction of $\Phi_{\rm alg}$ to $\mathcal{F}_{2d}^o$ is continuous.
\end{Prop}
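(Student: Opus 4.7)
The plan is to reduce the continuity of $\Phi_{\rm alg}|_{\mathcal{F}_{2d}^{o}}$ to the smooth (parameter) dependence of the solution of the complex Monge-Amp\`ere equation of Yau, combined with the elementary fact that smooth convergence of Riemannian metrics on a fixed compact manifold implies Gromov-Hausdorff convergence. Fix a point $[X_{\infty},L_{\infty}]\in\mathcal{F}_{2d}^{o}$ and a sequence $[X_{i},L_{i}]\to[X_{\infty},L_{\infty}]$. Passing to a local analytic chart (possibly after a finite orbifold cover to kill isotropy), we may realize these points as images of points $b_{i}\to b_{\infty}$ in the base of a local Kuranishi family $\pi\colon(\mathcal{X},\mathcal{L})\to B$ of polarized smooth K3 surfaces, with $\mathcal{X}$ a complex manifold, $\pi$ proper smooth, and $\mathcal{L}$ a $\pi$-ample line bundle.

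First I would trivialize the underlying smooth structure: by Ehresmann's theorem, after shrinking $B$, there is a diffeomorphism $\mathcal{X}\cong M\times B$ over $B$ identifying each fiber with a fixed real $4$-manifold $M$. Choose a smooth hermitian metric on $\mathcal{L}$ whose curvature $\omega_{0}\in c_{1}(\mathcal{L})$ is relatively K\"ahler, and denote by $\omega_{b}$ its restriction to $X_{b}$ and by $\Omega_{b}$ a locally holomorphically varying generator of $H^{0}(X_{b},K_{X_{b}})$. Yau's theorem furnishes, for each $b$, a unique smooth solution $\varphi_{b}$ of the Monge-Amp\`ere equation
\begin{equation*}
(\omega_{b}+\sqrt{-1}\,\partial\bar{\partial}\varphi_{b})^{2}=c_{b}\,\Omega_{b}\wedge\overline{\Omega_{b}},\qquad \int_{X_{b}}\varphi_{b}\,\omega_{b}^{2}=0,
\end{equation*}
with $c_{b}>0$ determined by the cohomological constraint; the Ricci-flat metric $g_{b}^{\rm RF}$ is the one associated to $\omega_{b}^{\rm RF}:=\omega_{b}+\sqrt{-1}\,\partial\bar{\partial}\varphi_{b}$. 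The standard parameter version of Yau's theorem (implicit function theorem applied to the MA operator on H\"older spaces, followed by elliptic bootstrapping; cf.\ Yau's original paper, or Kodaira-Morrow/Tian accounts) shows that $b\mapsto\varphi_{b}$ is smooth into $C^{\infty}(M)$. Pulled back through the Ehresmann trivialization, this yields a family of Riemannian metrics on $M$ that converges in $C^{\infty}(M)$ as $b_{i}\to b_{\infty}$.

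Smooth convergence of Riemannian metrics on a fixed compact manifold forces Gromov-Hausdorff convergence of the associated length spaces, and the diameter functional is continuous under $C^{0}$ convergence of metrics; hence the rescaled metrics $g_{b_{i}}^{\rm RF}/\operatorname{diam}(g_{b_{i}}^{\rm RF})^{2}$ also converge smoothly, and a fortiori in Gromov-Hausdorff sense, to $g_{b_{\infty}}^{\rm RF}/\operatorname{diam}(g_{b_{\infty}}^{\rm RF})^{2}$. Since $\Phi_{\rm alg}$ is, by definition, the assignment of this rescaled Ricci-flat K3 surface, this is exactly the required continuity at $[X_{\infty},L_{\infty}]$.

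The only non-formal input is the smooth (even continuous) dependence of $\varphi_{b}$ on $b$; this is standard but is the step that genuinely uses the geometric assumptions (smoothness of all fibers, positivity of $\omega_{b}$ on each fiber, uniformity of the constants in Yau's a priori estimates across the compact base). On $\mathcal{F}_{2d}^{o}$ these are automatic, which is why the proposition is the ``easy'' confirmation; the difficulty in the full Conjecture~\ref{K3.Main.Conjecture} is entirely concentrated at the Heegner loci and the Satake boundary, where fibers acquire ADE singularities or collapse to lower-dimensional objects and uniform estimates fail.
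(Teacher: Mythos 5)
Your argument is exactly the one the paper gestures at: the paper's entire proof is the single sentence that this follows from the implicit function theorem applied to the complex Monge-Amp\`ere equation, and you have simply unpacked that sentence (Kuranishi family, Ehresmann trivialization, parameter-dependent Yau solution, $C^\infty$ convergence of metrics implies Gromov-Hausdorff convergence). Same approach, correctly executed, just spelled out in more detail than the paper bothers to give.
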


\begin{proof}
It is a straightforward consequence of the implicit function theorem applied to 
the complex Monge-Amp\`ere equation determining the 
Ricci-flat condition. 
\end{proof}

Another easy partial confirmation of Conjecture \ref{K3.Main.Conjecture} 
is Theorem~\ref{Kummer}, which is essentially reduced to 
the case of abelian varieties, 
i.e., \cite{TGC.II} plus our Theorem~\ref{Ag.TGC.Satake.MS}. 
Recall that from a 
principally polarized abelian surface $(A,M)$ we can 
divide $(A,M^{\otimes 2})$ via $(-1)$-multiplication (involution), say $\iota$,  
to get $(K(A),L)$ of degree $4$ with $16$ nodes. 
The moduli space $\mathcal{M}_{\rm Km,alg}$ of such $(K(A),L)$ forms a $3$-dimensional 
subvariety inside $\mathcal{F}_{4}$ as the intersection of $16$ 
Heegner divisors. 
\begin{Thm}\label{Kummer}
The restriction of $\Phi_{\rm alg}$ to the closure 
$\overline{\mathcal{M}_{\rm Km,alg}}$
of the $3$-dimensional variety $\mathcal{M}_{\rm Km,alg}$,
 the moduli space of Kummer surfaces, is continuous.
\end{Thm}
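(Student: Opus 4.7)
The plan is to reduce Theorem~\ref{Kummer} to the abelian surfaces case (Theorem~\ref{Ag.TGC.Satake.MS}) by exploiting the Kummer construction, which is functorial both at the level of moduli and at the level of metrics.

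First I would set up the Kummer morphism $\kappa\colon A_2 \to \mathcal{F}_4$, sending $(A,M)$ to $(A/\iota, L)$ with $\pi^{*}L \simeq M^{\otimes 2}$ for $\pi\colon A \to A/\iota$; its image is precisely $\mathcal{M}_{\rm Km,alg}$. The rank $6$ sublattice $H^2(A,\mathbb{Z}) \subset H^2(K(A),\mathbb{Z})$ (with intersection form rescaled and orthogonal to the $16$ exceptional $(-2)$-classes), together with the compatibility of Hodge structures, induces an equivariant embedding of period domains $\mathfrak{H}_2 \hookrightarrow \mathcal{D}_{\Lambda_4}$ and of $\mathbb{Q}$-reductive groups $\mathop{\mathrm{Sp}}(4) \hookrightarrow \mathop{\mathrm{O}}(\Lambda_4 \otimes \mathbb{Q})$. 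Combined with the compatibility of rational parabolic subgroups and hence of the adjoint-type Satake strata, this would extend $\kappa$ continuously to $\bar\kappa\colon \overline{A_2}^{\rm Sat,\tau_{\rm ad}} \to \overline{\mathcal{F}_4}^{\rm Sat,\tau_{\rm ad}}$, whose image is exactly $\overline{\mathcal{M}_{\rm Km,alg}}$ by continuity and density.

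Second, the key metric observation on the open locus is that, since $\iota$ acts as a holomorphic isometry of $(A,g_M)$, the Ricci-flat K\"ahler metric on the ADE singular Kummer surface $(A/\iota, L)$ in the class $c_1(L)$ is precisely the quotient flat metric $g_M/\iota$. Rescaling to diameter $1$ yields the identity $\Phi_{\rm alg}(\kappa(x)) = T_x/\iota$ for every $x\in A_2$, where $T_x$ is the flat torus assigned by the abelian-side geometric realization. I would then propagate this identity to the boundary: Theorem~\ref{Ag.TGC.Satake.MS} gives that $x\mapsto T_x$ is Gromov-Hausdorff continuous on all of $\overline{A_2}^{\rm Sat,\tau_{\rm ad}}$, and on each boundary stratum $\iota$ extends to the antipodal map, which continues to act isometrically; hence $x\mapsto T_x/\iota$ is Gromov-Hausdorff continuous, since quotienting by a continuously varying isometric involution is a continuous operation in the GH topology. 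Combined with the continuity and surjectivity of $\bar\kappa$, this gives continuity of $\Phi_{\rm alg}|_{\overline{\mathcal{M}_{\rm Km,alg}}}$, \emph{provided} the identity $\Phi_{\rm alg}(\bar\kappa(x)) = T_x/\iota$ persists at boundary points.

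The hard part will be precisely this boundary identification. For a Kummer boundary point $[e,v] \in \mathcal{F}_4(l) \cap \overline{\mathcal{M}_{\rm Km,alg}}$, one must show that the tropical K3 surface $B_{[v]}$ of \S\ref{trop.K3.1}, constructed from an elliptic fibration $K(A_\infty) \to B_{[v]}$, a holomorphic volume form on $K(A_\infty)$, and the resulting McLean metric on the base, is isometric to the quotient $T^2/\iota \cong S^2$ of the limit flat $2$-torus. The verification would proceed by recognizing the limit elliptic fibration as the composition of $K(A_\infty) \twoheadrightarrow A_\infty/\iota$ with a natural factor projection arising in the degenerate limit, then computing the McLean integral directly and matching it with the $\iota$-quotient flat metric on the fiber direction. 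A parallel check on the deeper stratum $\mathcal{F}_4(p)$ (limit a segment, matching $S^1/\iota$) would then complete the argument.
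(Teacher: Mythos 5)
Your proposal follows the same route as the paper's own proof: identify $\mathcal{M}_{\rm Km,alg}$ with $A_2$, observe that the Ricci-flat orbifold metric on an ADE singular Kummer surface is the $\iota$-quotient of the flat metric on the corresponding abelian surface, and reduce continuity to Theorem~\ref{Ag.TGC.Satake.MS} for $\overline{A_2}^{T}$. In fact your outline is more explicit than the paper's one-paragraph argument, which simply asserts (without carrying out the McLean computation) that the construction of \S\ref{trop.K3.1} yields the doubled parallelograms and segments at the boundary --- precisely the step you flag as the remaining ``hard part.''
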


\begin{proof}
Let us recall that $\mathcal{M}_{\rm Km,alg}$ is known to be naturally isomorphic to the 
Siegel modular variety $A_{2}$. Any 
Kummer surface $K(A)$ with its Ricci flat K\"ahler metric is simply the 
$\Z/2\Z$-quotient of a flatly metrized principally polarized 
abelian surface. 
From general construction of the Satake compactification,
 there exists a continuous map
 $\overline{A_2}^{\rm Sat,\tau_{\rm ad}}\to\overline{\mathcal{F}_{4}}^{\rm Sat}$,
 which extends the natural inclusion 
 $A_2\simeq \mathcal{M}_{\rm Km,alg}\hookrightarrow \mathcal{F}_{4}$.
Moreover,
 we can identify $\overline{A_2}^{\rm Sat,\tau_{\rm ad}}$ with $\overline{A_2}^{\rm T}$
 by Theorem~\ref{Ag.TGC.Satake.MS}, 
 and with our construction in \S\ref{trop.K3.1}, 
its boundary parametrizes flat $2$-dimensional 
tori divided by $\iota$, which are doubled parallelograms 
(having $4$ singularities at the vertices), and flat $1$-dimensional 
tori, namely $S^{1}$, divided by $\iota$, hence the segment. 
\end{proof}

\begin{Rem}
Group-theoretically, the isomorphism
 $\mathcal{M}_{\rm Km,alg}\simeq A_{2}$
 comes from the fact that
 $Sp(4,\R)$ is a double covering of $SO_0(2,3)$. 
\end{Rem}

By using the study of McLean metrics in \S\ref{along.boundary.sec}
 we obtain the continuity along the boundary:

\begin{Prop}\label{F2d.boundary.conti}
The restriction of $\Phi_{\rm alg}$ to
 the boundary 
 $\partial \overline{\mathcal{F}_{2d}}^{\rm Sat}$
 is continuous. 
\end{Prop}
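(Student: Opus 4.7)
The plan is to split the boundary stratification
\[\partial\overline{\mathcal{F}_{2d}}^{\rm Sat,\tau_{\rm ad}} = \bigsqcup_l \mathcal{F}_{2d}(l) \sqcup \bigsqcup_p \mathcal{F}_{2d}(p)\]
and verify Gromov-Hausdorff continuity of $\Phi_{\rm alg}$ separately within each stratum and across the incidence relations $\mathcal{F}_{2d}(l)\leadsto \mathcal{F}_{2d}(p)$ with $l\subset p$. Continuity within a single $\mathcal{F}_{2d}(p)$ is vacuous, and the relation $\mathcal{F}_{2d}(l)\leadsto \mathcal{F}_{2d}(l')$ with $l\neq l'$ does not occur by the stratification in \eqref{K3.Sat.strata}.

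First I would verify continuity on each stratum $\mathcal{F}_{2d}(l)$. Given a continuously varying family $[e,v_s]\in\mathcal{F}_{2d}(l)$ parametrised by $s$, the construction of \S\ref{trop.K3.1} produces a continuously varying family of elliptic K3 surfaces $\pi_s\colon X_{v_s}\to \mathbb{P}^1$: the complex structure on $X_{v_s}$ depends real-analytically on $v_s$ through the period $\tfrac{1}{\sqrt{2d}}\lambda-\sqrt{-1}v_s$ by strong Torelli, while the elliptic fibration class $[e]$ is fixed. Since the McLean metric on $\mathbb{P}^1\setminus\mathrm{Sing}(B)$ is computed by fibre integration of forms built from the holomorphic volume form, it varies continuously in $s$, locally uniformly on compact subsets of the smooth locus. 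Combined with uniform diameter control and a uniform bound on the distance contribution of neighbourhoods of the discriminant $\mathrm{Sing}(B)$ supplied by \S\ref{along.boundary.sec} (essentially Corollary~\ref{McLean.finite.distance}), this yields Gromov-Hausdorff continuity after the diameter-one normalisation.

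The harder step is continuity at a point of $\mathcal{F}_{2d}(p)$ reached from strata $\mathcal{F}_{2d}(l)$ with $l\subset p$. A sequence $[e_i,v_i]\in \mathcal{F}_{2d}(l_i)$ converging in the Satake topology to $\mathcal{F}_{2d}(p)$ can, by the Siegel set description underlying \S\ref{K3.Sat.sec} and the identification with the Morgan-Shalen compactification via Theorem~\ref{MS.Sat}, be arranged (up to subsequence and the $\tilde{O}^+(\Lambda_{2d})$-action) so that the normalised representatives $v_i/|v_i|$ concentrate on the image of $p$ inside $l_{i,\R}^{\perp}/l_{i,\R}$. Tropically this is a ``further maximal'' degeneration of the elliptic K3: the affine structure on the base $S^2$ becomes long and thin in the distinguished direction singled out by $l\subset p$, and the discriminant points align along a one-dimensional subset. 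The McLean-metric asymptotics of \S\ref{along.boundary.sec} should then yield that, after diameter-one normalisation, the spheres $B_{[v_i]}$ Gromov-Hausdorff collapse to the unit segment, which is precisely $\Phi_{\rm alg}(p)$.

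The main obstacle is this cross-stratum continuity: we do not have a single holomorphic deformation and we must treat a possibly discontinuous sequence of elliptic K3 surfaces uniformly in $l_i$ as $l_i$ ranges over its $\tilde{O}^+(\Lambda_{2d})$-orbit. Extracting a quantitative one-dimensional collapse of the McLean metric from the distinguished divergence of $v_i$ described above is the delicate point, and it is precisely what the analysis in \S\ref{along.boundary.sec} must provide; once such uniform McLean asymptotics are established, a standard $\varepsilon$-net argument combined with the finite-diameter estimate completes the Gromov-Hausdorff convergence.
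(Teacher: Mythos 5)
Your plan is broadly in the same spirit as the paper's proof (reduce to analysis of McLean metrics on the base $\mathbb{P}^1$, treat the stratum-by-stratum continuity, identify the collapse to the segment at the $0$-dimensional cusps as the hard step), but it leaves the key reduction unstated and, as written, does not close the argument. The paper's proof factors $\Phi_{\rm alg}|_{\partial \overline{\mathcal{F}_{2d}}^{\rm Sat,\tau_{\rm ad}}}$ through the GIT compactification $\overline{M_W}$ of the Weierstrass moduli: there is a continuous map $\partial\overline{\mathcal{F}_{2d}}^{\rm Sat,\tau_{\rm ad}} \to \overline{M_W}$ (extending the natural map $\mathcal{F}_{2d}(l)\to M_W$ and, by Proposition~\ref{F2d.MW}, sending all $\mathcal{F}_{2d}(p)$ to the single point $M_W^{\rm nn}\cap M_W^{\rm seg}$), and $\Phi_{\rm alg}$ on the boundary equals $\Phi_{\rm ML}$ composed with this map. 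The Proposition then follows immediately from Theorem~\ref{Mwbar.GH.conti}, which asserts the continuity of $\Phi_{\rm ML}\colon \overline{M_W}\to\{\text{compact metric spaces}\}$.

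The genuine gap in your proposal is precisely the cross-stratum continuity at $\mathcal{F}_{2d}(p)$. You acknowledge this as ``the delicate point'' and appeal to ``uniform McLean asymptotics'' from \S\ref{along.boundary.sec}, but you neither identify the statement you need nor show how the Siegel-set reduction you describe converts into the requisite estimate. Your picture that ``the affine structure on the base $S^2$ becomes long and thin'' and that ``the discriminant points align along a one-dimensional subset'' is phenomenologically right, but it is not backed by any quantitative control; in particular, the Satake convergence to $\mathcal{F}_{2d}(p)$ puts no a priori constraint on the configuration of the $24$ discriminant points, so a naive ``$\varepsilon$-net argument'' cannot be run. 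The paper's resolution is to transplant the problem into the explicit Weierstrass/GIT moduli $\overline{M_W}$, identify $\overline{M_W}\simeq\overline{M_W}^{\rm SBB}$ (Theorem~\ref{GIT.SBB}) so that the relevant degenerations are the concrete one-parameter families in \S\ref{Weier.mod.sec}, and then prove the collapse to the segment via the elliptic-integral estimate Lemma~\ref{ML.estimate} and the circle/segment length bounds (Lemmas~\ref{integral.estimate0}, \ref{integral.estimate}) leading to Proposition~\ref{prop:seg.conti}. The same caveat applies, less severely, to your within-stratum step: ``the McLean metric varies continuously locally uniformly away from $\mathrm{Sing}(B)$'' is true, but passing to Gromov--Hausdorff continuity additionally requires a uniform bound on the contribution of shrinking neighborhoods of the discriminant (Proposition~\ref{ML.estimate.disc} in the paper), which you invoke implicitly but do not supply.
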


\begin{proof}
The proposition follows from Theorem~\ref{K3a.GH.conti} and Remark~\ref{MK3.to.F2d}.
Note that Theorem~\ref{K3a.GH.conti} is a K\"{a}hler K3 version of
 Proposition~\ref{F2d.boundary.conti}
 and it is a consequence of Theorem~\ref{Mwbar.GH.conti},
 which in turn follows from the analysis of McLean metrics in \S\ref{along.boundary.sec}.
\end{proof}

We now move on to more substantial confirmation
 of Conjecture~\ref{K3.Main.Conjecture}. 
Since our discussions below involve various kinds of complications, 
lying between several areas, 
we divide into several statements and try to describe them step by step. 
The first result Theorem~\ref{GW} 
is obtained as an application of \cite{GW}, and 
confirms the above conjecture along a certain (real) $20$-dimensional direction
 (among the whole $38$-dimension) passing through 
generic point of $18$-dimensional boundary strata. 
Later, we extend Theorem~\ref{GW} to more general and stronger 
Theorem~\ref{K3.Main.Conjecture.18.ok} 
by replacing the use of \cite{GW} by some technical 
refinements of \cite{Tos.lim, Tos, GTZ1,GTZ2}. 
Nevertheless, the main idea of our approach should be already clear 
in the proof of Theorem~\ref{GW} below. 

\begin{Thm}[A weaker version of Theorem~\ref{K3.Main.Conjecture.18.ok}]\label{GW}
Let $l$ be a one-dimensional isotropic subspace in $\Lambda_{2d}\otimes \Q$, 
 let $e\in \Lambda_{2d}$ be its primitive generator, 
 and let $[e,v]\in \mathcal{F}_{2d}(l)$ be a point in the corresponding
 boundary component with generic $v$
 (this genericity assumption will be made explicit as Assumption~\ref{I1} during the proof). 
Let $\{x_i\}_{i=1,2,\dots}$ be a sequence in $\Lambda_{2d}\otimes \C$ such that 
\begin{enumerate}
\item $(\re{x_i},\re{x_i})=(\im{x_i},\im{x_i})=1$ and $(\re{x_i},\im{x_i})=0$.
\item $\im{x_i} \pmod{l_{\R}} = v$.
\item $(\re{x_i},e)=\epsilon_i>0$ and $\epsilon_i\to 0\ (i\to \infty)$.
\end{enumerate}
Then the sequence $[x_i]\in \mathcal{F}_{2d}$ converges to
 $[e,v]$ in $\overline{\mathcal{F}_{2d}}^{\rm Sat}$. 
Moreover, the Gromov-Hausdorff limit of
 $\Phi_{\rm alg}([x_i])$ as $i\to \infty$
 is $\Phi_{\rm alg}([e,v])$.
\end{Thm}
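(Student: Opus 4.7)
The plan is to verify the theorem in two stages: first, the period-theoretic convergence $[x_i] \to [e,v]$ in $\overline{\mathcal{F}_{2d}}^{\rm Sat, \tau_{\rm ad}}$, and second, the Gromov--Hausdorff convergence of the associated Ricci-flat K3 surfaces, reduced via hyperK\"ahler rotation to the collapsing theorem of Gross--Wilson \cite{GW}.

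For the Satake convergence, I would work inside the Pl\"ucker realization $[x]\mapsto [\re x \wedge \im x]\in \mathbb{P}(\bigwedge^2(\Lambda_{2d}\otimes \R))$ of the adjoint-representation Satake compactification recalled in \S\ref{K3.Sat.sec}. Fix a hyperbolic splitting $\Lambda_{2d}\otimes \R = (\R e\oplus \R f) \oplus W$ with $(e,f)=1$ and a lift $v = c_v e + w_v$ with $w_v \in W$ of squared norm $1$; decompose $\re x_i = \alpha_i e + \epsilon_i f + w_i$ and $\im x_i = (c_v+c_i) e + w_v$. The normalization $(\re x_i,\re x_i)=1$ forces $2\alpha_i\epsilon_i = 1 - (w_i,w_i)$, and after Siegel reduction for $\tilde{O}^+(\Lambda_{2d})$ we may assume $w_i$ and $c_i$ stay bounded; the genericity Assumption~\ref{I1} (which I expect to take the form that $(w_i,w_i)$ is bounded away from $1$, equivalently that the limit elliptic fibration has only $I_1$-type singular fibers) then gives $\alpha_i \to +\infty$ at the rate $\sim 1/\epsilon_i$. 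A direct expansion shows that the leading term of $\re x_i\wedge \im x_i$ in the Pl\"ucker embedding is $\alpha_i\, e\wedge w_v = \alpha_i\, e\wedge v$, dominating all other contributions; hence $[\re x_i \wedge \im x_i]\to [e\wedge v]$ in $\mathbb{P}(\bigwedge^2(\Lambda_{2d}\otimes \R))$, establishing $[x_i]\to [e,v]$ in the Satake topology.

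For the Gromov--Hausdorff step, rescale $\lambda$ to $\tilde{\lambda}:=\lambda/\sqrt{2d}$ of squared norm $1$, so that $\{\tilde{\lambda},\re x_i, \im x_i\}$ is an orthonormal frame of a positive $3$-plane in $H^2(X_i,\R)$. Let $X'_i$ denote the hyperK\"ahler rotation of $X_i$ with period $[\tilde{\lambda} + \sqrt{-1}\im x_i]$ and K\"ahler class $\omega'_i := \re x_i$; the underlying Ricci-flat Riemannian metric is unchanged. Since $(e,\tilde{\lambda}) = 0 = (e,\im x_i)$, the class $e$ is of type $(1,1)$ on $X'_i$, and combined with $e^2=0$, primitivity, Assumption~\ref{I1}, and Fact~\ref{bp.free}, it induces an elliptic fibration $\pi'_i\colon X'_i \to \mathbb{P}^1$ with fiber class $e$. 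The K\"ahler class $\omega'_i = \alpha_i e + \epsilon_i f + w_i$ has fiber component $\alpha_i\to \infty$, placing $(X'_i, \omega'_i)$ exactly in the Gross--Wilson regime. Applying \cite[Theorem~6.2]{GW}, the Ricci-flat K\"ahler metric on $X'_i$ in class $\omega'_i$, rescaled to diameter $1$, Gromov--Hausdorff converges to the base $\mathbb{P}^1$ endowed with the McLean metric of $\pi'_i$. Since this underlying metric coincides with the one used to define $\Phi_{\rm alg}([x_i])$ on $X_i$, and since the limit period $\tilde{\lambda} + \sqrt{-1} v$ is precisely the period defining the K3 surface $X$ attached to $[e,v]$ in \S\ref{trop.K3.1}, the limit is $\Phi_{\rm alg}([e,v])$, as desired.

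The main obstacle is that \cite{GW} is stated for a fixed complex structure on a fixed elliptic K3 surface with a one-parameter family of K\"ahler classes, whereas here both the complex structure (through the shift $c_i e$ in $\im x_i$ and through the $i$-dependent hyperK\"ahler rotation) and the K\"ahler class vary with $i$. Closing this gap requires either a modest extension of Gross--Wilson over a compact family of complex structures on the total space of an elliptic K3, or a separate continuity statement for the McLean metric in the elliptic-fibration data, which is essentially furnished by the analysis in \S\ref{along.boundary.sec}. This is precisely the technical point that motivates the later sharpening of the present theorem into Theorem~\ref{K3.Main.Conjecture.18.ok} via the machinery of \cite{Tos.lim, Tos, GTZ1, GTZ2}.
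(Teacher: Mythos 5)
Your overall strategy is the same as the paper's: first the Satake convergence by a Siegel-set computation, then a hyperK\"ahler rotation to produce an elliptic fibration with fiber class $e$, and finally an appeal to Gross--Wilson. You have also correctly located the delicate technical point at the end (Gross--Wilson is stated for a fixed elliptic K3 with a one-parameter family of K\"ahler classes, whereas here the twist $c_ie$ makes the complex structures vary), which the paper handles precisely by Fact~\ref{Jac.equal} / Fact~\ref{Jac.equal.ADE} showing all the relative Jacobians of the rotated fibrations $\pi_i$ coincide.

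There is, however, a genuine confusion in your treatment of the genericity hypothesis and of how $\alpha_i\to\infty$ is established. You claim that Assumption~\ref{I1} ``takes the form that $(w_i,w_i)$ is bounded away from $1$, equivalently that the limit elliptic fibration has only $I_1$-type singular fibers.'' These are not equivalent. Assumption~\ref{I1} in the paper is exactly the statement that the elliptic fibration obtained from $v$ has only $I_1$ (irreducible nodal) fibers; this is a condition on the boundary point $[e,v]$ alone, and it is used solely to put oneself in the regime of \cite[Theorem~6.4]{GW}. It plays no role in forcing $\alpha_i\to\infty$. The fact that $\alpha_i$ (the paper's $N_i$) goes to infinity is obtained by a \emph{separate} normalization: one applies Eichler transvections $\phi_{e,\xi}\in\tilde O^+(\Lambda_{2d})\cap Q$ from \eqref{Eichler} to replace $x_i$ by a $\Gamma\cap Q$-translate with $w'_i\in\epsilon_i\mathcal{U}$ for a fixed bounded set $\mathcal{U}$. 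Then $(w'_i,w'_i)\to 0$, which together with $(\re x_i,\re x_i)=1$ forces $N_i\epsilon_i\to\tfrac12$, hence $N_i\to\infty$ at rate $1/\epsilon_i$. Merely knowing ``$w_i$ bounded'' (as a vague consequence of Siegel reduction) does not preclude $(w_i,w_i)\to 1$ and $\alpha_i\epsilon_i\to 0$, so your argument as stated does not establish $\alpha_i\to\infty$. This is also needed for the nef-ness argument (the analogue of Claim~\ref{e.nef}), where the constants depend on the fundamental domain $\mathcal{U}$ through the normalization $w'_i\in\epsilon_i\mathcal{U}$. Once you replace the hand-waved Siegel reduction with this explicit Eichler-transvection step and decouple it from Assumption~\ref{I1}, your Pl\"ucker computation of the Satake limit and the remaining part of the argument go through as in the paper. (A minor point: the convergence statement in Gross--Wilson you want to invoke is their Theorem~6.4, not Conjecture~6.2.)
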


\begin{Rem}
The above theorem~\ref{GW} is not enough for proving
 the continuity of $\Phi_{\rm alg}$ on a neighborhood of
 the boundary component $\mathcal{F}_{2d}(l)$,
 i.e.\ for proving Theorem~\ref{K3.Main.Conjecture.18.ok}. 
Indeed, two particular conditions are imposed in Theorem~\ref{GW}
 on the sequences $[x_i]\in \mathcal{F}_{2d}$ 
 among those converge to $[e,v]$.
One is that $\im{x_i} \pmod{l_{\R}}$ is constant (condition (ii))
 and the other is that $v$ is generic (Assumption~\ref{I1}).
To prove general Theorem~\ref{K3.Main.Conjecture.18.ok}, 
 we need to remove these two conditions, 
 which will be done later in this section and the next chapter.
\end{Rem}

\begin{proof}[proof of Theorem~\ref{GW}]
Take an isotropic element $f\in\Lambda_{2d}\otimes \Q$ such that $(e,f)=1$. 
Let us define $\Lambda'_{2d}:=\{x\in \Lambda_{2d}\mid (x,e)=(x,f)=0\}$.
Let $v'\in \Lambda'_{2d}\otimes \R$ such that $v' \pmod {l_{\R}} = v$ and $(v',v')=1$.
Our condition (ii) implies that we may write
 $x_i=w_i+\sqrt{-1}(v'+c_i e)$
 with $w_i\in\Lambda_{2d}\otimes \R$ and $c_i\in \R$.
By conditions (iii), we may write $w_i=N_i e + \epsilon_i f + w'_i$,
 where $N_i\in \R$ and $w'_i\in\Lambda'_{2d} \otimes \R$.

We set $G:=O(\Lambda_{2d}\otimes \R)$, 
the isometry group which is isomorphic to $O(2,19)$, 
and define $Q$ to be the stabilizer of $l_{\R}$,
 which is a real maximal parabolic subgroup of $G$. 
In order to prove the first statement, we will replace $x_i$
 by their certain $(\tilde{O}^{+}(\Lambda_{2d})\cap Q)$-translates, i.e., 
 change the markings of corresponding K3 surfaces. 
For a vector $\xi\in l^{\perp}\cap \Lambda_{2d}$ one can construct an element
 (called elementary transformation or the Eichler transvection \cite[\S3]{Eich}, \cite{Sca})
 $\phi_{e,\xi}\in \tilde{O}^{+}(\Lambda_{2d})\cap Q$ defined as
\begin{equation}\label{Eichler}
\phi_{e,\xi}(x):=x+(x,\xi)e-\frac{1}{2}(\xi,\xi)(x,e)e-(x,e)\xi
\quad (x\in \Lambda).
\end{equation}
It is easy to see that \eqref{Eichler} stabilizes
 $l_{\R}$ and $l_{\R}^{\perp}$, respectively. 
If we write $w_i=N_i e + \epsilon_i f + w'_i$ as above,
 $\phi_{e,\xi}(w_i)
 = \epsilon_i f + w'_i - \epsilon_i\xi \pmod {l_{\R}}$.
Take a bounded set $\mathcal{U}\subset \Lambda'_{2d}\otimes \R$ such that 
 $\Lambda'_{2d}+\mathcal{U} = \Lambda'_{2d}\otimes \R$,
 or equivalently,
 $\epsilon_i\Lambda'_{2d}+\epsilon_i \mathcal{U}
 = \Lambda'_{2d}\otimes \R$.
Then for each $i$ we can replace $x_i$ by the above transformation
 and get $w'_i \in \epsilon_i \mathcal{U}$.
This replacement does not change the conditions and the claims of the theorem,
 so we assume $w'_i\in \epsilon_i \mathcal{U}$ in the following.
Under this assumption,
 $(\re x_i, \re x_i)=1$ implies $N_i \epsilon_i\to \frac{1}{2}$
 and also $(\re x_i, \im x_i)=0$ implies $\{c_i\}_{i}\subset \R$ is bounded.
 Let us summarize as follows.  
\begin{Claim}[Reduction]\label{Reduction}
 After applying an appropriate element of $\tilde{O}^{+}(\Lambda_{2d})\cap Q$ to 
 $x_{i}\in \Lambda_{2d}\otimes \C$ 
 for each $i$ (the group element depends on $i$), we can assume that 
 $\{x_{i}\}_{i}$ satisfies the following two conditions: 
 \begin{enumerate}
 \item $w'_i \in \epsilon_i \mathcal{U}\,(\subset 
 \Lambda'_{2d}\otimes \R)$,
 \item $\{c_i\}_{i}\subset \R$ is bounded. 
 \end{enumerate}
\end{Claim}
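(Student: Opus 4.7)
The plan is to derive both assertions from the Eichler transvection calculation already assembled above, in two independent steps. First I would fix a bounded fundamental domain $\mathcal{U}\subset\Lambda'_{2d}\otimes\R$ for translation by the lattice $\Lambda'_{2d}$; rescaling yields $\epsilon_i\Lambda'_{2d}+\epsilon_i\mathcal{U}=\Lambda'_{2d}\otimes\R$. Combined with the congruence $\phi_{e,\xi}(w_i)\equiv\epsilon_i f+w'_i-\epsilon_i\xi\pmod{l_{\R}}$ and with $\phi_{e,\xi}\in\tilde{O}^{+}(\Lambda_{2d})\cap Q$, I can pick $\xi_i\in\Lambda'_{2d}$ so that $w'_i-\epsilon_i\xi_i\in\epsilon_i\mathcal{U}$, and replace $x_i$ by $\phi_{e,\xi_i}(x_i)$. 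Since $\phi_{e,\xi_i}$ is an isometry fixing $e$ and preserving $l_{\R}$ and $l_{\R}^{\perp}$, the three hypotheses (i)--(iii) of the theorem are preserved under this substitution, and this yields part (i) of the Claim.

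For part (ii), with $w'_i\in\epsilon_i\mathcal{U}$ available I would expand the orthogonality condition $(\re x_i,\im x_i)=0$ using the decompositions $\re x_i=N_ie+\epsilon_if+w'_i$ and $\im x_i=v'+c_ie$. Because both $v'$ and $w'_i$ lie in $\Lambda'_{2d}\otimes\R$, hence are orthogonal to $e$ and $f$, and because $e,f$ are isotropic with $(e,f)=1$, the expansion collapses to $\epsilon_i c_i+(w'_i,v')=0$. This gives $c_i=-\epsilon_i^{-1}(w'_i,v')$, which is bounded because $\epsilon_i^{-1}w'_i\in\mathcal{U}$ and $\mathcal{U}$ is a bounded set. (As an aside, the same type of expansion of $(\re x_i,\re x_i)=1$ recovers $2N_i\epsilon_i+(w'_i,w'_i)=1$, and $\|w'_i\|=O(\epsilon_i)$ then forces $N_i\epsilon_i\to\tfrac{1}{2}$, matching the paragraph preceding the Claim.)

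The only point requiring genuine care is to verify that the Eichler transvection actually lies in the identity component $\tilde{O}^{+}(\Lambda_{2d})$ of the stabilizer of $\lambda$, not merely in $\tilde{O}(\Lambda_{2d})$. I would handle this by noting that the continuous family $\phi_{e,t\xi}$ for $t\in[0,1]$ connects $\phi_{e,\xi}$ to the identity inside $O(\Lambda_{2d}\otimes\R)$, so no component-switching can occur. Beyond this mild check no real obstacle is anticipated: every remaining step is a mechanical expansion of the intersection pairing on $\Lambda_{2d}$ together with the choice of fundamental domain.
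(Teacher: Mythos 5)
Your argument is correct and follows the paper's own route exactly: choose a lattice vector $\xi_i$ so that the Eichler transvection $\phi_{e,\xi_i}$ moves $w'_i$ into $\epsilon_i\mathcal{U}$, note that the transvection fixes $e$, preserves $l_\R$, $l_\R^\perp$ and hypotheses (i)--(iii), and then read off the boundedness of $c_i$ from the expanded relation $(\re x_i,\im x_i)=0$. The only supplement you add is the homotopy $\phi_{e,t\xi}$ to justify $\phi_{e,\xi}\in\tilde{O}^{+}(\Lambda_{2d})$, which the paper takes as standard with a citation to Eichler and Scattone; that check is sound, though ``identity component'' is a slight misnomer for $\tilde{O}^{+}(\Lambda_{2d})$ (it is the index-two subgroup preserving each component of $\Omega(\Lambda_{2d})$, which does contain the image of the connected component of the identity of $O(\Lambda_{2d}\otimes\R)$, so your conclusion holds).
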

This is essentially known as ``Siegel reduction". 
As discussed above, the second condition automatically follows from 
the first, under our assumptions in Theorem~\ref{GW}.  
For $t\in \R_{>0}$, let $a_t \in G$ be the group element given by
\[
e\mapsto te,\quad f\mapsto t^{-1}f,\quad
 x\mapsto x\ (x\in \Lambda'_{2d}\otimes \R).
\]
Define $A$ to be the subgroup of $G$ consisting of all $a_t$.
Then $A$ is a connected component of a split torus in 
the center of Levi component of $Q$, and 
 we have the Langlands decomposition $Q=NAM$.
We note that $M$ acts by $\pm 1$ on $\R e +\R f$ and stabilizes
 $\Lambda'_{2d}\otimes \R$.
Also, $Q$ stabilizes the flag
 $0\subset l_{\R} \subset l_{\R}^{\perp} \subset \Lambda_{2d}\otimes \R$
 and $N$ acts as identities on its successive quotients.
Let $x_o := \frac{1}{2}e + f + \sqrt{-1}v' \in \Lambda_{2d}\otimes \C$ and 
 choose a base point of the Hermitian symmetric domain
 $o\in \mathcal{D}_{\Lambda_{2d}}$ as $o=[x_o]$. 
The stabilizer of $o$ is a maximal compact subgroup $K$ of $G$.
We can write $[x_i]=n_ia_im_i\cdot o$ for $n_i\in N$, $a_i\in A$, $m_i\in M$.
This means $n_ia_im_i\cdot x_o = c x_i$ for some $c\in \C^{\times}$.
Since $(x_o,e), (x_i,e) \in \R^{\times}$ and $n_ia_im_i$ stabilizes $l_{\R}=\R e$,
 we must have $c\in \R^{\times}$.
Moreover, since $(\re x_o, \re x_o)=(\re x_i, \re x_i)=1$,
 it follows that $c=\pm 1$.
Then we see that 
\begin{equation}\label{A.Siegel}
a_i=a_{\epsilon_i^{-1}},
\end{equation}
 and $m_i v' = \pm v'$. 
The latter implies 
\begin{equation}\label{M.Siegel}
\text{ all $\{m_i\}_i$  lie in a bounded set. }
\end{equation}
To see the behavior of $n_i$, let us observe that the map 
\[N\to l_{\R}^{\perp} / l_{\R},
 \qquad n\mapsto n \cdot f - f \pmod {l_{\R}}\]
 is an isomorphism, where ``$n \cdot $ '' means the group action. 
By the assumption $w'_i\in \epsilon_i \mathcal{U}$, 
 we have $n_i\cdot f - f \in \mathcal{U}$.
Hence 
\begin{equation}\label{N.Siegel}
\text{ all $\{n_i\}_i$  lie in a bounded set. }
\end{equation}
From (\ref{A.Siegel}), (\ref{M.Siegel}), (\ref{N.Siegel}), 
we conclude that $[x_i]$ lie in the same Siegel set. 
Recall that the symmetric space $\mathcal{D}_{\Lambda_{2d}}$
 can be embedded into the Grassmannian
 ${\rm Gr}_2(\Lambda_{2d}\otimes \R)$,
 which moreover has the Pl\"ucker embedding into 
 $\mathbb{P}(\bigwedge^2 (\Lambda_{2d}\otimes \R))$.
The adjoint representation of $G\simeq O(2,19)$ is
 identified with $\bigwedge^2 (\Lambda_{2d}\otimes \R)$.
Following the original definition of the Satake topology
 (\cite{Sat1, Sat2}), 
 we can take the limit of $[x_i]$ in the Grassmannian.
Since $\R(\re x_i) \to \R e$
 and  $\im x_i  = v' \pmod {l_{\R}}$,
 the sequence of planes $\mathbb{R}(\re x_i)\oplus \mathbb{R}(\im x_i)$
 converges to $\mathbb{R}e\oplus \mathbb{R}v'$.
This means that
 $[x_i]\in \mathcal{F}_{2d}$ converges to
 $[e,v]\in\overline{\mathcal{F}_{2d}}^{\rm Sat}$
 in the Satake compactification.  

For the second statement of Theorem~\ref{GW} on the Gromov-Hausdorff convergence, 
let us recall the following facts about 
elliptic K3 surfaces. 

\begin{Fac}\label{bp.free}
Let $X$ be a possibly ADE singular K3 surface
 and let $e\in IH^{2}(X,\Z)$ be a primitive isotropic element. 
If $e$ belongs to the closure of the K\"ahler cone, then
 there exists an elliptic fibration $X\to \mathbb{P}^1$
 with the fiber class $e$.
\end{Fac}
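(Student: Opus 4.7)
The plan is to reduce to the classical case of smooth K3 surfaces and then descend the resulting fibration back to $X$. First, I would pass to the minimal resolution $\pi\colon \tilde{X}\to X$. By the very definition of $IH^2(X,\Z)$ used in Section~\ref{Mod.pol.K3}, the class $e$ is orthogonal inside $H^2(\tilde{X},\Z)$ to all exceptional $(-2)$-curves $E_i$ of $\pi$. Moreover, since $e$ lies in the closure of the K\"ahler cone of $X$, its image $\tilde{e}\in H^{2}(\tilde{X},\Z)$ lies in the closure of the K\"ahler cone of $\tilde{X}$ and in particular is nef; it is still primitive and isotropic. Because $\tilde{e}$ is a $(1,1)$-class, it comes from a line bundle $L$ on $\tilde{X}$.

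For the smooth K3 surface $\tilde{X}$, the result is the classical theorem of Piatetski-Shapiro--Shafarevich. The key input is Riemann-Roch on a K3 surface, which gives $\chi(L)=2+\tfrac{1}{2}(\tilde{e},\tilde{e})=2$, so $h^0(L)+h^0(L^{-1})\geq 2$. Nef-ness of $\tilde{e}$, together with the fact that $\tilde{e}$ is nonzero (so pairs strictly positively with any K\"ahler class by the Hodge index theorem applied to a small perturbation), excludes $-\tilde{e}$ being effective, hence $h^0(L^{-1})=0$ and $h^0(L)\geq 2$. Then I would run the standard base-point-freeness argument: write the movable part and fixed part of $|L|$, and use that $\tilde{e}$ is primitive isotropic and nef to conclude that the fixed part is trivial and $|L|$ is in fact base-point free, producing a morphism $\tilde{X}\to \mathbb{P}(H^0(L)^{*})$ whose image must be $\mathbb{P}^1$ (since a curve in $H^0$ would force $h^0=2$ and Stein factorization then yields the desired fibration). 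The generic fiber has arithmetic genus $1+\tfrac{1}{2}(\tilde{e},\tilde{e})=1$, giving an elliptic fibration $\tilde{f}\colon \tilde{X}\to \mathbb{P}^1$ with fiber class $\tilde{e}$.

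Finally, I would descend $\tilde{f}$ to $X$. Since $(\tilde{e},E_i)=0$ for every exceptional curve $E_i$ of $\pi$, each $E_i$ is contained in a fiber of $\tilde{f}$, i.e.\ $\tilde{f}$ contracts the exceptional locus of $\pi$. Consequently, $\tilde{f}$ factors through $\pi$ and yields a morphism $f\colon X\to \mathbb{P}^1$ whose generic fiber is a smooth elliptic curve and whose fiber class is $e$, as desired.

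The main obstacle is the base-point-freeness step: it requires a careful case analysis (possibly distinguishing whether $|L|$ itself is a pencil or has a larger linear system) using primitivity and isotropy of $\tilde{e}$ to rule out nontrivial fixed components. This is the classical heart of the argument, and everything else (the reduction to the smooth resolution and the final descent) is formal and follows from the intersection-theoretic definition of $IH^2$ and the standard Stein-factorization principle.
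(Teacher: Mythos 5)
Your proposal is correct and follows essentially the same route as the paper, which cites Huybrechts's textbook (Chapter 3.10) for the smooth case and then says the ADE singular case reduces directly to the minimal resolution; you simply unpack those steps (nefness of $\pi^*e$ on $\tilde X$, Riemann--Roch, the Saint-Donat-style base-point-freeness analysis, Stein factorization, and the descent via contraction of the exceptional curves). The only cosmetic remark is that the paper emphasizes that the textbook argument — stated there for projective $X$ — goes through in the K\"ahler (non-projective) setting once one replaces ample classes by K\"ahler classes; your argument is already insensitive to projectivity since Riemann--Roch and the Hodge index theorem are available analytically, so no change is needed.
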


For the proof of Fact~\ref{bp.free}, we refer to the recent textbook 
\cite[Chapter II, 3.10]{Huy} for instance. 
In \textit{op.cit.}, projectivity and smoothness 
of $X$ are assumed. Nevertheless, for smooth $X$, its proof extends to above 
analytic statement verbatim, once we replace the use of ample class by K\"ahler class. Indeed, 
what is needed is the existence of uniform positive lower bound of the degree of $(-2)$-curves along the fixed K\"ahler class, 
which simply follows from the fact that $(-2)$-curves cannot accumulate in the closure of the positive cone. 
Furthermore, the case for general ADE singular K3 surface 
is directly reduced to that of the minimal resolution. 

\begin{Fac}\label{Jac.equal}
Let $X_i\ (i=1,2)$ be a K3 surface
 with a holomorphic volume form $\Omega_i$. 
Let $e_i\in H^2(X_i,\Z)$ be a primitive isotropic element
 and write $\pi_i\colon X_i\to \mathbb{P}^1$ for an elliptic fibration
 with fiber class $e_i$.
If there exists an isomorphism (isometry) $\iota\colon H^2(X_1,\Z)\to H^2(X_2,\Z)$ such that 
 $\iota(e_1)=e_2$ and $\iota([\Omega_1])\in \C[\Omega_2]+\C e_2$,
 then the relative Jacobians of $f_i$ are isomorphic (over $\mathbb{P}^{1}$) 
 with each other.
\end{Fac}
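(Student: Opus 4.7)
The plan is to reduce to the strong Torelli theorem for Jacobian elliptic K3 surfaces, by constructing a Hodge isometry $\tilde{\iota}\colon H^{2}(J(X_{1}),\Z)\to H^{2}(J(X_{2}),\Z)$ that preserves fiber and section classes, and then invoking Torelli. First I would recall the standard Hodge-theoretic comparison between an elliptic K3 surface and its Jacobian: for $\pi_{i}\colon X_{i}\to\mathbb{P}^{1}$, its relative Jacobian $J_{i}:=J(X_{i})$ is again an elliptic K3 surface over $\mathbb{P}^{1}$, now admitting a zero section, and there is a canonical Hodge isometry
\[
\psi_{i}\colon e_{i}^{\perp}/\Z e_{i} \iso e_{J_{i}}^{\perp}/\Z e_{J_{i}},
\]
where $e_{J_{i}}\in H^{2}(J_{i},\Z)$ denotes the fiber class of $J_{i}$. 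This comparison is classical and is discussed e.g.\ in \cite{Huy} Chapter~11; the key point is that the transcendental lattice and the Hodge filtration are determined by the period of the generic fiber, which is the same for $X_{i}$ and $J_{i}$.

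Second, I would use the hypotheses to descend $\iota$. Since $\iota(e_{1})=e_{2}$, $\iota$ restricts to a lattice isometry $e_{1}^{\perp}\to e_{2}^{\perp}$ and induces a lattice isometry on the quotients $e_{i}^{\perp}/\Z e_{i}$. The assumption $\iota([\Omega_{1}])\in \C[\Omega_{2}]+\C e_{2}$ says that, modulo $\C e_{2}$, the periods agree up to scalar, so the induced map $\bar{\iota}\colon e_{1}^{\perp}/\Z e_{1}\to e_{2}^{\perp}/\Z e_{2}$ is a Hodge isometry. Composing with the $\psi_{i}$'s gives a Hodge isometry between $e_{J_{1}}^{\perp}/\Z e_{J_{1}}$ and $e_{J_{2}}^{\perp}/\Z e_{J_{2}}$. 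Because each $J_{i}$ has a section, the fiber and section classes span a unimodular copy of $U$ inside $H^{2}(J_{i},\Z)$ that splits off as an orthogonal summand, with complement isomorphic to $e_{J_{i}}^{\perp}/\Z e_{J_{i}}$. Extending the above Hodge isometry by the identification of these copies of $U$ (sending fiber to fiber and section to section) then yields the desired $\tilde{\iota}\colon H^{2}(J_{1},\Z)\to H^{2}(J_{2},\Z)$.

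Third, I would apply the strong Torelli theorem \cite{PS.Sha}. After composing $\tilde{\iota}$ with an appropriate element of the Weyl group generated by reflections in $(-2)$-classes orthogonal to both the fiber and section classes of $J_{2}$, one arranges that $\tilde{\iota}$ sends the K\"ahler cone of $J_{1}$ into that of $J_{2}$; then Torelli produces an isomorphism $\phi\colon J_{2}\iso J_{1}$ realizing $\tilde{\iota}$. Preservation of the fiber class forces $\phi$ to be compatible with the elliptic fibration to $\mathbb{P}^{1}$ (up to an automorphism of the base, which can be absorbed by choice of marking), and preservation of the section class forces $\phi$ to identify the zero sections, hence to give an isomorphism of Jacobian elliptic fibrations over $\mathbb{P}^{1}$. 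The case of ADE singular $X_{i}$ is reduced to the smooth case by passing to the minimal resolution and working with $IH^{2}$ in the sense of \cite{Mor}.

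The main obstacle will be the first step: articulating and verifying the canonical Hodge isometry $\psi_{i}$ between the quotient Hodge structures of $X_{i}$ and $J_{i}$, and checking that it is indeed compatible with arbitrary lattice isometries of the ambient $H^{2}$ preserving the fiber class. Once that comparison is in place, the remainder is a lattice-theoretic bookkeeping followed by a standard Torelli argument; the K\"ahler cone adjustment in the last step is routine but needs to be stated carefully to ensure $\phi$ can be chosen as an honest isomorphism of elliptic surfaces rather than merely a birational map.
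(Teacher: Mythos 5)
Your proof is correct in outline and reaches the same conclusion, but by a genuinely different route. Both arguments ultimately reduce to the strong Torelli theorem, but they compare $X_i$ with its relative Jacobian $J_i$ differently. The paper chooses a $C^\infty$-section of $\pi_i$ and uses it to identify the underlying smooth $4$-manifolds of $X_i$ and $J_i$, thereby obtaining a full lattice isometry $H^2(X_i,\Z)\simeq H^2(J_i,\Z)$ carrying $e_i$ to the fiber class of $J_i$; this entirely sidesteps the quotient-lattice Hodge comparison $e_i^{\perp}/\Z e_i$ that you flag as the ``main obstacle.'' The paper then applies a \emph{single Eichler transvection} $\phi_{e_1,\iota^{-1}(f_2)-f_1}$, as in \eqref{Eichler}, to modify $\iota$ so that it also carries the transported zero-section class $f_1$ to $f_2$. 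The payoff is twofold: first, the indeterminacy $\iota([\Omega_1])\in\C[\Omega_2]+\C e_2$ collapses automatically to $\iota([\Omega_1])\in\C[\Omega_2]$ by pairing with $f_2$, since $([\Omega_i],f_i)=0$ and $(e_2,f_2)=1$; second, the K\"ahler cones are matched by the explicit classes $e_i+\epsilon f_i$ with $0<\epsilon\ll 1$, so no Weyl-group bookkeeping enters. Your route --- quotient Hodge structures $e_i^{\perp}/\Z e_i$, the hyperbolic splitting $\langle e_{J_i},f_{J_i}\rangle\simeq U$, re-assembly, then a Weyl-group adjustment --- is heavier machinery, but it is more honest about two things the paper absorbs silently: it names the Ogg--Shafarevich input (your $\psi_i$) that the $C^\infty$-identification quietly encodes, and your Weyl-group step addresses a genuine subtlety, since when $J_i$ has reducible fibers not meeting the zero section, $e_i+\epsilon f_i$ is nef but not K\"ahler, so a chamber adjustment (or an appeal to biregularity of birational maps between smooth K3 surfaces) really is needed before applying Torelli. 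Once the respective gaps are filled, the two proofs are comparable in rigor; the paper's is shorter, yours is more modular.
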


This immediately extends to the ADE case,
 by reducing it to Fact~\ref{Jac.equal} by taking the minimal resolutions. 

\begin{Fac}\label{Jac.equal.ADE}
Let $X_i\ (i=1,2)$ be a K3 surface possibly with ADE singularities
 and let $\Omega_i$ be its holomorphic volume form. 
Let $e_i\in IH^2(X_i,\Z)$ be a primitive isotropic element
 and write $\pi_i\colon X_i\to \mathbb{P}^1$ for an elliptic fibration
 with fiber class $e_i$. 
Suppose there exists an isomorphism (isometry)
 $\iota\colon IH^2(X_1,\Z)\to IH^2(X_2,\Z)$ that is extendable to
 an isometry $\tilde{\iota}\colon H^2(\tilde{X_1},\Z)\to H^2(\tilde{X_2},\Z)$,
 where $\tilde{X_i}$ is the minimal resolution of $X_i$,  
such that 
 $\iota(e_1)=e_2$ and $\iota([\Omega_1])\in \C[\Omega_2]+\C e_2$. 
 Then, the relative Jacobians of $f_i$ are isomorphic (over $\mathbb{P}^{1}$) 
 with each other. 
\end{Fac}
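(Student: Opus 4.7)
The plan is to reduce Fact~\ref{Jac.equal.ADE} directly to the smooth case (Fact~\ref{Jac.equal}) by passing to minimal resolutions, the key point being that all the hypotheses lift cleanly and the relative Jacobian is insensitive to ADE resolution.

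First I would pass to the minimal resolutions $\sigma_i\colon \tilde{X}_i\to X_i$ and consider the composed elliptic fibrations $\tilde{\pi}_i:=\pi_i\circ \sigma_i\colon \tilde{X}_i\to \mathbb{P}^1$. Since $\sigma_i$ is an isomorphism over the smooth locus of $X_i$, the generic fiber is unchanged, and under the tautological inclusion $IH^2(X_i,\Z)\hookrightarrow H^2(\tilde{X}_i,\Z)$ the fiber class of $\tilde{\pi}_i$ in $H^2(\tilde{X}_i,\Z)$ is exactly $e_i$. Because ADE singularities are canonical, the holomorphic volume form $\Omega_i$ extends uniquely across the exceptional locus to a holomorphic $2$-form $\tilde{\Omega}_i$ on $\tilde{X}_i$. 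Its class lies in $H^{2,0}(\tilde{X}_i)$ and hence pairs trivially with each exceptional $(-2)$-curve class (a $(2,0)$-form wedged with a $(1,1)$-form is of type $(3,1)$, so integrates to zero on a complex surface); thus $[\tilde{\Omega}_i]\in IH^2(X_i,\C)\subset H^2(\tilde{X}_i,\C)$.

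Next I would invoke the assumed extension $\tilde{\iota}\colon H^2(\tilde{X}_1,\Z)\to H^2(\tilde{X}_2,\Z)$ of the isometry $\iota$. Viewed on the subspaces $IH^2$, it coincides with $\iota$, so $\tilde{\iota}(e_1)=e_2$ and $\tilde{\iota}([\tilde{\Omega}_1])\in \C[\tilde{\Omega}_2]+\C e_2$, now understood as elements of $H^2(\tilde{X}_i,\C)$. All hypotheses of Fact~\ref{Jac.equal} are then in place for the smooth triples $(\tilde{X}_i,\tilde{\Omega}_i,e_i)$ with fibrations $\tilde{\pi}_i$, so that fact gives an isomorphism of relative Jacobians $J(\tilde{\pi}_1)\simeq J(\tilde{\pi}_2)$ over $\mathbb{P}^1$.

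The final step is to identify the relative Jacobians of $\pi_i$ and $\tilde{\pi}_i$ as elliptic fibrations over $\mathbb{P}^1$. This is where the main subtlety lies: the singular fibers of $\pi_i$ and $\tilde{\pi}_i$ differ by exceptional $(-2)$-curves sitting inside singular fibers, so one must check that the Weierstrass models (equivalently, the associated Jacobian fibrations) coincide. However, the relative Jacobian depends only on the smooth part of the fibration together with the Weierstrass normal form, and both are intrinsic to $\pi_i$ restricted to the open locus where $X_i$ is smooth, which $\sigma_i$ leaves untouched. Hence $J(\pi_i)\simeq J(\tilde{\pi}_i)$ canonically over $\mathbb{P}^1$, and combining this with the previous step yields $J(\pi_1)\simeq J(\pi_2)$ over $\mathbb{P}^1$. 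The main obstacle in this plan is precisely this last identification: it is standard in the theory of elliptic surfaces but merits a careful verification (e.g.\ via the fact that both Jacobians arise as the unique smooth relatively minimal elliptic surface over $\mathbb{P}^1$ with given $j$-invariant and functional/homological invariants determined by the generic fiber).
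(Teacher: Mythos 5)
Your proposal is correct and follows the same strategy the paper intends: the paper's entire proof of Fact~\ref{Jac.equal.ADE} is the one-line remark preceding it (``This immediately extends to the ADE case, by reducing it to Fact~\ref{Jac.equal} by taking the minimal resolutions''), and your write-up supplies exactly the verifications that remark leaves implicit --- that the fiber class, the class of the volume form (via its orthogonality to the exceptional $(-2)$-curves, which follows from Hodge type), and the isometry all lift coherently to the resolutions, and that the relative Jacobian is a birational invariant of the fibration over the generic point of $\mathbb{P}^1$. No gaps; your detailed treatment of the last identification is appropriate even if the paper takes it for granted.
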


We put a proof of the above Fact \ref{Jac.equal} for readers' convenience: 
\begin{proof}[proof of Fact \ref{Jac.equal}]
Let us denote the relative Jacobian of $X_{i}$ by $J_{i}$.
By a $C^{\infty}$-section of $X_{i}\to \mathbb{P}^1$,
 the underlying $C^{\infty}$-manifolds of $X_{i}$ and $J_{i}$
 are identified with each other, giving an isomorphism
 $H^{2}(X_{i},\Z)\simeq H^{2}(J_{i},\Z)$. 
Let $f_{i}$ denote the Poincar\'e dual (cohomology class)
 of the zero section of $J_{i}$. 
Then, we consider an elementary transformation 
$\phi_{e_{1},\iota^{-1}(f_{2})-f_{1}}$
 on $H^{2}(X_{1},\Z)$ as (\ref{Eichler})
which satisfies 
$\phi_{e_{1},\iota^{-1}(f_{2})-f_{1}} (\iota^{-1}(f_{2})) = f_{1}$. 
Therefore, changing $\iota$ by the above elementary transformation,
 we can assume without loss of generality that
 $\iota(e_1)=e_2$ and $\iota(f_1)=f_2$.
Therefore, $J_{1}$ and $J_{2}$
 share K\"ahler classes of the form $e_{1}+\epsilon f_{1}$
 with $0<\epsilon \ll 1$. 
Furthermore, by intersecting with $f_{i}$, 
we can see that automatically we have
 $\iota([\Omega_{1}])\in \C[\Omega_{2}]$. 
Hence, by applying the strong Torelli theorem, 
we obtain an isomorphism between $J_{1}$ and $J_{2}$, 
preserving the fibration structures.
We confirmed Fact~\ref{Jac.equal}.
\end{proof}

Now, we come back to Theorem~\ref{GW} 
by turning to the proof of its second (main) statement, i.e., 
the Gromov-Hausdorff convergence. 
We prove it as follows by reducing it to a result of Gross-Wilson~\cite{GW}. 
For our sequence $x_i$, take corresponding K3 surfaces $X_i$ 
with the polarization and the marking. 
\begin{Const}[HyperK\"ahler rotation]\label{HK.rotation.}
By applying the hyperK\"ahler rotation, 
 we obtain K3 surfaces $X_i^{\vee}$
 with a holomorphic volume form $\Omega_i^{\vee}$
 and a marking $\alpha_i \colon H^2(X_i^{\vee},\Z)\to \Lambda_{\rm K3}$
 such that 
\begin{itemize}
\item $\alpha_i([\Omega_i^{\vee}])=\frac{1}{\sqrt{2d}}\lambda-\sqrt{-1}(v'+c_ie)$, and
\item $\alpha_i^{-1}(w_i)$ belongs to the K\"ahler cone of $X_i^{\vee}$.
\end{itemize}
\end{Const}

In order to apply Gross-Wilson~\cite[Theorem 6.4]{GW}, 
 we will show the following claim:
 
\begin{Claim}\label{e.nef}
There is a positive constant $C(v')$ which depends only on 
 $v'$ and $\mathcal{U}$, such that for any $i$ with 
$0<\epsilon_{i}<C(v')$, 
$\alpha_i^{-1}(e)$
 belongs to the closure of the K\"ahler cone of $X_{i}^{\vee}$. 
 \end{Claim}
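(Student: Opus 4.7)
The plan is to verify the three standard criteria for $\alpha_i^{-1}(e)$ to lie in the closure of the K\"ahler cone of $X_i^{\vee}$: (1) it is of type $(1,1)$; (2) it lies in the closure of the correct component of the positive cone; (3) it has non-negative intersection with every effective $(-2)$-curve. For (1), I would check $(e,\alpha_i([\Omega_i^{\vee}]))=0$: $(e,\lambda)=0$ because $e\in\Lambda_{2d}=\lambda^{\perp}$, $(e,v')=0$ because $v'\in\Lambda'_{2d}\subset l^{\perp}$, and $(e,e)=0$ by isotropy. For (2), I would compute $(e,w_i)=\epsilon_i(e,f)=\epsilon_i>0$, which together with $(e,e)=0$ places $\alpha_i^{-1}(e)$ in the closure of the positive cone containing $\alpha_i^{-1}(w_i)$.

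The substantive step is (3). Suppose for contradiction that some effective $(-2)$-curve $C$ on $X_i^{\vee}$ has $b:=(e,\alpha_i(C))\leq-1$. Writing $\alpha_i(C)=ae+bf+C'$ with $a,b\in\mathbb{Z}$ and $C'\in\Lambda'_{2d}$, I extract three constraints: (i) $C^2=-2$ gives $2ab+(C')^2=-2$; (ii) being of type $(1,1)$ gives $(C',v')=-c_i b$; (iii) K\"ahlerness of $w_i$ gives $\epsilon_i a+(w_i',C')\geq 1+N_i|b|$. The heart is a Lorentzian Cauchy--Schwarz on the signature $(1,18)$ lattice $\Lambda'_{2d}\otimes\mathbb{R}$ with unit timelike vector $v'$: when $(C')^2\geq 0$, reverse Cauchy--Schwarz yields $(v',C')^2\geq (C')^2$, which combined with (i) and (ii) gives $a\leq \tfrac{c_i^2|b|}{2}+\tfrac{1}{|b|}$; when $(C')^2<0$, (i) forces $a|b|<1$, hence $a=0$. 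Either way, $|a|\leq O(|b|)$. Introducing a fixed positive-definite refinement $(\cdot,\cdot)_+$ of $(\cdot,\cdot)$ on $\Lambda'_{2d}\otimes\mathbb{R}$ (agreeing with $(\cdot,\cdot)$ on $\mathbb{R}v'$ and its negative on $v^{\prime\perp}$), the identity $\|C'\|_+^2=2(v',C')^2-(C')^2=2c_i^2b^2+2+2ab$ together with the bound on $a$ gives $\|C'\|_+=O(|b|)$. Since $w_i'\in\epsilon_i\mathcal{U}$ with $\mathcal{U}$ bounded, Cauchy--Schwarz yields $|(w_i',C')|\leq \|w_i'\|_+\|C'\|_+=O(\epsilon_i|b|)$.

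Substituting these into (iii) then gives $1+N_i|b|=O(\epsilon_i|b|)$; using $N_i\sim\tfrac{1}{2\epsilon_i}$, this rearranges to $\tfrac{1}{\epsilon_i^2}=O(1)$, a contradiction for $\epsilon_i$ sufficiently small. All implicit constants depend only on $v'$, $\mathcal{U}$, and $\bar{c}:=\sup_i |c_i|$; but from the orthogonality $(\re x_i,\im x_i)=0$ one computes $\epsilon_i c_i+(w_i',v')=0$, whence $|c_i|\leq\sup_{u\in\mathcal{U}}|(u,v')|$, so $\bar{c}$ itself depends only on $v'$ and $\mathcal{U}$. Thus there is a constant $C(v')>0$ depending only on $v'$ and $\mathcal{U}$ such that for $\epsilon_i<C(v')$ the contradiction is reached, forcing $b\geq 0$ and completing the verification of (3).

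The main obstacle is the uniform estimate $\|C'\|_+=O(|b|)$ across all potential $(-2)$-classes, which needs a careful case split between $(C')^2\geq 0$ and $(C')^2<0$ in the indefinite lattice $\Lambda'_{2d}$ and uses the signature $(1,18)$ geometry of $v'$ in an essential way. Once this estimate is secured the rest is bookkeeping on the growth rates in $\epsilon_i$ and $|b|$.
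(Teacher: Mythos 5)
Your overall strategy is the same as the paper's: decompose the $(-2)$-class as $a e + b f + C'$ with $C' \in \Lambda'_{2d} \otimes \mathbb{R}$, translate the $(1,1)$, effectivity, and K\"ahler-positivity conditions into constraints on $a$, $b$, $C'$, and then exploit the positive-definite form on $\Lambda'_{2d} \otimes \mathbb{R}$ (your $\|\cdot\|_+$ is literally the paper's Eichler form $(\cdot,\cdot)_{E,v'}$) together with Cauchy--Schwarz and the Siegel-domain growth $N_i \sim \tfrac{1}{2\epsilon_i}$ to derive a contradiction for small $\epsilon_i$. The parametrization, the form, and the contradiction scheme all match.

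There is, however, a genuine gap in your key estimate $\|C'\|_+ = O(|b|)$. You have $\|C'\|_+^2 = 2c_i^2 b^2 + 2 + 2ab$ with $b < 0$, so $2ab = 2|a||b|$ when $a < 0$: if $a$ is very negative, this term is large and positive and $\|C'\|_+$ is \emph{not} $O(|b|)$. Your case analysis produces only an \emph{upper} bound $a \leq \tfrac{c_i^2|b|}{2} + \tfrac{1}{|b|}$ (the ``$a = 0$'' claim in the $(C')^2 < 0$ case additionally assumes $a\in\mathbb{Z}$, which is not justified because $f$ lies only in $\Lambda_{2d}\otimes\mathbb{Q}$, though that particular slip is inconsequential). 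Without a comparable \emph{lower} bound on $a$, the claimed $O(|b|)$ bound on $\|C'\|_+$ does not follow. The paper sidesteps this by never isolating a bound on $\|C'\|_+$: instead it estimates the combined quantity $s_i + \sqrt{(\delta'_i,\delta'_i)_{E,v'}}$ (your $a + \|C'\|_+$, up to constants) and uses AM--GM in the form $\sqrt{|2C_2 s_i t_i|} \leq |s_i| + \tfrac{C_2 t_i}{2}$ together with $s_i + |s_i| = \max\{0, 2s_i\}$, so that when $a$ (their $s_i$) is very negative the linear term cancels the blow-up of the square root and the whole combination is still $O(|b|)$. You can repair your argument the same way (bound $\epsilon_i a + |(w_i',C')|$ as a single quantity rather than $\|C'\|_+$ alone), or by observing separately that for $a \ll -|b|$ the term $\epsilon_i a$ in condition (iii) already dominates and produces a contradiction with no estimate on $C'$ at all. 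As written, though, the step ``the bound on $a$ gives $\|C'\|_+ = O(|b|)$'' is incorrect.
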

 \begin{proof}[proof of Claim~\ref{e.nef}]
Recall that the K\"{a}hler cone of a K3 surface $X$ is 
\[K(X):=\{x\in C^+(X) \mid (x,\delta) >0 \ (\forall\delta\in \Delta(X)^+)\},\]
where $C^+(X)\subset H^{1,1}(X,\R)$ denotes the positive cone and
\[\Delta(X)^+ = \{\delta\in H^{1,1}(X,\Z)\mid
 (\delta,\delta)=-2 \text{ and $\delta$ is effective}\}.\]
Since $(w_i,e)>0$, $\alpha_i^{-1}(e)\in \overline{C^+(X_i^{\vee})}$. Otherwise, 
$-\alpha_i^{-1}(e)\in \overline{C^+(X_i)}$ which contradicts by the Cauchy-Schwarz inequality. 
To prove $\alpha_i^{-1}(e) \in \overline{K(X_i^{\vee})}$,
 it is enough to show
 that if $\epsilon_i$ is sufficiently small,
 there does not exist $\delta_i\in \Lambda_{2d}$ such that
 $(\delta_i, \delta_i)=-2$, $(\delta_i,v'+c_ie)=0$, 
 $(\delta_i,w_i)>0$, and $(\delta_i,e)<0$.
Let us write $\delta_i=s_ie-t_if+\delta_i'
\ (s_i,t_i\in \R,\ \delta'_i\in\Lambda'_{2d}\otimes \R)$.
The above four conditions become
\begin{align*}
&(\delta_i, \delta_i)=-2s_it_i+(\delta'_i,\delta'_i)=-2,\qquad
(\delta_i,v'+c_ie)=-c_it_i+(\delta'_i,v')=0,\\
&(\delta_i, w_i)=s_i\epsilon_i-t_iN_i+(\delta'_i,w'_i)>0,\qquad
(\delta_i,e)=-t_i \in \Z_{<0}.
\end{align*}
On $\Lambda'_{2d}\otimes \R$ we define a symmetric form $(\cdot, \cdot)_E$ by 
\[(x,y)_{E,v'}:=-(x,y-2(y,v')v').\]
Since $\Lambda'_{2d}\otimes \R$ has signature $(1,18)$, this is positive definite. 
Also, it is easy to see that $(x,y)^2\leq (x,x)_{E,v'}(y,y)_{E,v'}$ 
by the Cauchy-Schwarz inequality. 
Since $-c_it_i+(\delta'_i,v')=0$ and $c_i$ is bounded, there exists
 a constant $C_1$
 such that $$|(\delta'_i,v')|\leq C_1t_i.$$ Then we have 
\begin{align*}
(\delta'_i,\delta'_i)_{E,v'}=-(\delta'_i,\delta'_i-2(\delta'_i,v')v')
&=-(\delta'_i,\delta'_i)+2(\delta'_i,v')^2 \\
&\leq (-2s_it_i+2)+2C_1^2t_i^2.
\end{align*}
Since $(\delta'_i,\delta'_i)_{E,v'}\geq 0$, we must have
 \begin{equation}\label{s.t.ineq}
 s_i\leq \frac{1}{t_i}+ C_1^2 t_i\leq 1 + C_1^2 t_i.
 \end{equation}
Recall we assumed $\epsilon_i^{-1}w'_i$ is bounded.
Set a constant 
$$C_2:=\max \{ (w,w)_{E,v'}\mid
 w\in \mathcal{U}\subset \Lambda'_{2d}\otimes \R \}>0$$ 
so that $(w'_i,w'_i)_{E,v'} \leq 
\epsilon_i^2 C_2$.
Then we have 
\begin{align*}
|(\delta'_i,w'_i)|^2
 \leq (w'_i,w'_i)_{E,v'} (\delta'_i,\delta'_i)_{E,v'}
\leq 2 C_2 \epsilon_i^2 (C_1^2t_i^2-s_it_i+1).
\end{align*}
Since $\epsilon_iN_i\to \frac{1}{2}$ as $\epsilon_i \to 0$,
we may assume $N_i\geq \frac{1}{3} \epsilon_i^{-1}$. 
Hence
\begin{align}\label{w.e}
(\delta_i, w_i)=s_i\epsilon_i-t_iN_i+(\delta'_i,w'_i)
\leq -\frac{t_i}{3\epsilon_i}
 + \epsilon_i\Bigl(s_i+\sqrt{2C_2(C_1^2t_i^2-s_it_i+1)}\Bigr).
\end{align}
We claim that
 $$s_i+\sqrt{2C_2(C_1^2t_i^2-s_it_i+1)} \leq C_3t_i$$
 for some constant $C_3>0$, which depends only on $C_1$ and $C_2$. 
Indeed, we calculate
\begin{align*}
s_i+\sqrt{2C_2(C_1^2t_i^2-s_it_i+1)}
&=s_i+\sqrt{-2C_2s_it_i+2C_2(C_1^2t_i^2+1)} \\
&\leq s_i+\sqrt{|2C_2s_it_i|}+\sqrt{2C_2(C_1^2t_i^2+1)}\\
&\leq s_i+|s_i|+\frac{C_2t_i}{2}+\sqrt{2C_2(C_1^2t_i^2+1)}\\
&\leq \max\{0, 2s_i\}+\frac{C_2t_i}{2}+\sqrt{2C_2(C_1^2t_i^2+1)}.
\end{align*}
The claim follows from \eqref{s.t.ineq}. 
If $\epsilon_i$ is sufficiently small, more precisely if 
$\epsilon_i < (3C_3)^{-\frac{1}{2}}, $
then the above value \eqref{w.e}
cannot be positive, 
 which contradicts our assumption $(\delta_i, w_i)>0$.
Note that our constants $C_{1}, C_{2}, C_{3}$ only depends on 
 $v'$
 and a fundamental domain $\mathcal{U}$ in $\Lambda'_{2d}\otimes \R$ 
 and hence we complete the proof of Claim~\ref{e.nef}. 
\end{proof}

We return to the proof of Theorem~\ref{GW}. 
By Fact~\ref{bp.free} and Claim~\ref{e.nef}, we obtain an elliptic fibration
 $\pi_i\colon X_i^{\vee}\twoheadrightarrow \mathbb{P}^1$ with the fiber class $e$ 
for sufficiently large $i$. 
Moreover, by Fact~\ref{Jac.equal.ADE}, relative Jacobians of
 $\pi_i\colon X_i^{\vee}\twoheadrightarrow \mathbb{P}^1$
 are all isomorphic to each other.
We now assume the genericity condition on $v$ 
(as we wrote in the statements of Theorem~\ref{GW}). 
Suppose that: 
\begin{Ass}\label{I1}
 the obtained elliptic fibration $\pi_i$ only has (exactly $24$) $I_{1}$-type degenerations 
 in the Kodaira's notation, i.e., all singular fibers are irreducible nodal rational curves. 
 \end{Ass}
Now we can apply \cite[Theorem 6.4]{GW}. 
It proves that the Gromov-Hausdorff limit of $(X_{i}^{\vee},\omega_{i}^{\vee})$
 as $i\to \infty$,
 where $\omega_{i}^{\vee}$ is the unique Ricci-flat K\"ahler form
 whose K\"ahler class is $w_{i}$, 
 is given by 
 the base $B(\simeq \mathbb{P}^{1})$ 
 with the (singular) K\"ahler metric which \cite{GW} denote by 
  ``$W_{0}^{-1}dy\otimes d\bar{y}$''. 
It is, by \cite{GTZ1,GTZ2}, also a special K\"ahler metric
 in the sense of (\cite{Str90, DW96, Hit99, Freed99} etc.) 
 and also coincides with 
 the McLean metric (\cite[Remark 6.5]{GW})
 due to straightforward calculations as we wrote in \S\ref{trop.K3.1}.\footnote{This is also called in yet another name e.g. in \cite{Lof}. }
Anyhow, the second statement of Theorem~\ref{GW}
 directly follows from \cite[Theorem 6.4]{GW}.
\end{proof}

We generalize and strengthen Theorem~\ref{GW} as Theorem 
\ref{K3.Main.Conjecture.18.ok} below.  
The proof is also a technical extension of the above arguments 
proving Theorem~\ref{GW}, which will be done as replacing the direct 
use of \cite{GW} by certain refinements of \cite{Tos.lim,Tos,GTZ1,GTZ2}
 in the K3 surface case. 
Later in \S\ref{K3.along.disk},
 we show that this also implies 
 Kontsevich-Soibelman~\cite[Conjecture 1]{KS}
 for polarized algebraic K3 surfaces case in the full generality.

\begin{Thm}
\label{K3.Main.Conjecture.18.ok}
Our geometric realization map $\Phi_{\rm alg}$ is continuous except
 for the finite points $\bigcup_{p}\mathcal{F}_{2d}(p)$, i.e., 
\begin{align*}
\Phi_{\rm alg}\colon \overline{\mathcal{F}_{2d}}^{\rm Sat}
 \setminus\bigcup_{p}\mathcal{F}_{2d}(p)
 =\mathcal{F}_{2d}\cup \bigcup_{l}\mathcal{F}_{2d}(l) \to 
{\it CMet}_{1}
\end{align*}
 is continuous. 
\end{Thm}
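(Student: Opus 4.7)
The plan is to extend the argument behind Theorem~\ref{GW} by removing both restrictions imposed there: we allow $\im x_i \pmod{l_\R}$ to vary with $i$, and we drop the genericity Assumption~\ref{I1}. Continuity of $\Phi_{\rm alg}$ on the open smooth locus $\mathcal{F}_{2d}^{o}$ is Proposition~\ref{easy.confirmation}, while continuity of $\Phi_{\rm alg}$ restricted to $\partial\overline{\mathcal{F}_{2d}}^{\rm Sat,\tau_{\rm ad}}$ is the Proposition preceding this theorem, via the factorization through $\overline{M_W}$ and Theorem~\ref{Mwbar.GH.conti}. What remains is to prove Gromov--Hausdorff convergence $\Phi_{\rm alg}([x_i]) \to \Phi_{\rm alg}([e,v])$ for every sequence $[x_i]\in \mathcal{F}_{2d}$ converging to $[e,v] \in \mathcal{F}_{2d}(l)$ in the Satake topology, together with continuity at points of $\mathcal{F}_{2d}\setminus\mathcal{F}_{2d}^{o}$.

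For the main case, first reduce to Siegel form by the Eichler transvections $\phi_{e,\xi}$ of Claim~\ref{Reduction}, obtaining representatives $x_i = w_i+\sqrt{-1}(v'_i+c_ie)$ with $w_i = N_i e+\epsilon_i f + w'_i$, $w'_i\in\epsilon_i \mathcal{U}$, $\epsilon_i N_i\to 1/2$ and $c_i$ bounded; crucially, these transvections preserve $\im x_i \pmod{l_\R}$, so the reduction still goes through without requiring that residue class to be constant, and $v'_i\to v'$ by the Satake convergence. Apply the hyperK\"ahler rotation of Construction~\ref{HK.rotation.} to obtain K3 surfaces $X_i^{\vee}$ with periods $\frac{1}{\sqrt{2d}}\lambda-\sqrt{-1}(v'_i+c_ie)$ and K\"ahler classes $w_i$. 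The estimates in Claim~\ref{e.nef} involve only boundedness data for $v'$ and $\mathcal{U}$, so they adapt uniformly to $v'_i\to v'$ and force $\alpha_i^{-1}(e)$ into the closure of the K\"ahler cone of $X_i^{\vee}$ for $i$ large; by Fact~\ref{bp.free}, each $X_i^{\vee}$ then carries an elliptic fibration $\pi_i\colon X_i^{\vee}\to \mathbb{P}^1$ with fiber class represented by $e$.

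In place of \cite[Theorem 6.4]{GW}, which required Assumption~\ref{I1}, I would invoke the collapsing theorems of Tosatti \cite{Tos.lim, Tos} and Gross--Tosatti--Zhang \cite{GTZ1, GTZ2}, which are valid for arbitrary elliptic K3 fibrations of any Kodaira singular-fiber type. To apply them when the complex structure of $X_i^{\vee}$ also varies, one upgrades their estimates to a parameter-dependent version, exploiting that the $\pi_i$ lie in a compact family of elliptic fibrations converging to the fibration $\pi^{\vee}\colon X^{\vee}\to \mathbb{P}^1$ associated with $[e,v]$. The resulting Gromov--Hausdorff limits are bases $(\mathbb{P}^1, g_{{\rm ML},i})$, and continuity of the McLean metric across this compact family is precisely the content of Theorem~\ref{Mwbar.GH.conti} on $\overline{M_W}$, identifying the limit with $\Phi_{\rm alg}([e,v])$. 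Continuity at an ADE-singular point of $\mathcal{F}_{2d}$ is independent and follows from Anderson-type Gromov--Hausdorff continuity of Ricci-flat K\"ahler metrics along crepant smoothings in bounded polarized families. The main obstacle is this uniform collapsing step: promoting \cite{Tos.lim, Tos, GTZ1, GTZ2} to a statement that is simultaneously uniform in a compact family of complex structures and free of any genericity hypothesis on singular fibers. Once this refined collapsing is in hand, the continuity of the McLean metric on $\overline{M_W}$ closes the argument.
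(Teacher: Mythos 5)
Your outline follows the paper's own route essentially step by step: continuity on $\mathcal{F}_{2d}^o$ via the implicit function theorem; Siegel reduction by Eichler transvections to normalize representatives of $[x_i]$; hyperK\"ahler rotation to turn the degenerating K\"ahler class into a nef isotropic class giving an elliptic fibration (extending Claim~\ref{e.nef} from constant $v$ to $v'_i\to v$, which is exactly what the paper does as Claim~\ref{e.nef.general}); and then a uniform Tosatti/Gross--Tosatti--Zhang--type collapsing theorem replacing the Gross--Wilson result that required generic $I_1$ fibers. That is precisely the structure of the paper's proof, which also reduces everything to a single analytic input, stated as Theorem~\ref{GTZ.extend}.

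Where you diverge in detail: for the continuity across the ADE/Heegner locus inside $\mathcal{F}_{2d}$, you invoke an Anderson-type compactness argument, while the paper gives an alternative proof via Donaldson--Sun \cite{DS} (with \cite{KT}, \cite{Kob90} as another option); both are legitimate in complex dimension two. You also thread the argument through the McLean-metric continuity statement Theorem~\ref{Mwbar.GH.conti}, whereas the paper packages the corresponding information directly into the uniform statement of Theorem~\ref{GTZ.extend} (convergence to $(B_s,\omega_{s,\mathrm{ML}})$ uniform in $s$ over a carefully chosen compact $S_{\epsilon_0}\subset K\Omega^{e\ge 0}$).

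The one thing to flag is that the step you identify as ``the main obstacle''---a collapsing theorem uniform over a compact family of complex structures and ADE degenerations, with no restriction on Kodaira fiber types---is not a footnote you can wave at. It is precisely Theorem~\ref{GTZ.extend}, and the paper devotes an entire chapter (\S\ref{GTZ.extend.proof}) to proving it, including construction of an auxiliary reference metric via Kronheimer ALE spaces to get $s$-uniform bisectional-curvature bounds, fiberwise Green-function estimates, and a Bishop--Gromov argument near singular fibers. So your proposal is a faithful reduction of the theorem to the correct analytic lemma, but it does not constitute a proof of that lemma; the paper's proof of the statement you were asked about likewise ``assumes'' Theorem~\ref{GTZ.extend} within this section, so the reduction you give is the same one the paper makes, with the heavy analysis deferred in both cases.
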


Recall from  \S\ref{K3.Sat.sec} that 
$p$ (resp.\ $l$) denotes the finite equivalence classes of rational 
isotropic planes (resp.\ isotropic lines) inside $\Lambda_{2d}\otimes \Q$ and 
$\mathcal{F}_{2d}(p)$ is one point for each $p$ (resp.\ 
$\mathcal{F}_{2d}(l)$ is the corresponding $18$-dimensional boundary component). 

\begin{proof}[proof of Theorem~\ref{K3.Main.Conjecture.18.ok}]
First we prove that $\Phi_{\rm alg}$ is continuous on $\mathcal{F}_{2d}$. 

The continuity on $\mathcal{F}_{2d}^{o}$, the locus of 
\textit{smooth} polarized K3 surfaces, was proved in
 Proposition~\ref{easy.confirmation}. 

On the other hand, the following claim
 follows from (or can be proved similarly to) Proposition~\ref{MK3.conti}. 
\begin{Claim}[Around the discriminant locus]\label{GH.conti.ADE}
$\Phi_{\rm alg}$ is continuous 
 on a neighborhood of $\mathcal{F}_{2d}\setminus \mathcal{F}_{2d}^{o}$,
 the locus of ADE singular polarized K3 surfaces. 
\end{Claim}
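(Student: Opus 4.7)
The plan is to reduce the claim to the Gromov-Hausdorff continuity of Ricci-flat Kähler metrics along families of polarized K3 surfaces whose central fiber acquires at most ADE singularities. Since the claim is local on $\mathcal{F}_{2d}$, I would first fix a sequence $[X_i,L_i]\to[X_\infty,L_\infty]$ in $\mathcal{F}_{2d}$ with $X_\infty$ having ADE singularities. By the standard diagonal approximation argument used already in the proof of Lemma \ref{all.seq}, it suffices to treat sequences that arise as fibers of a one-parameter holomorphic family, so I would reduce to that case.

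Next I would use the local Torelli theorem together with the Kuranishi/versal deformation theory for ADE-singular polarized K3 surfaces to realize a small analytic neighborhood of $[X_\infty,L_\infty]$ as the base $S$ of a flat polarized family $\pi\colon(\mathcal{X},\mathcal{L})\to S$ with $\mathcal{X}_0\simeq X_\infty$. After a finite base change we may apply Brieskorn's simultaneous resolution to produce a smooth family $\widetilde{\pi}\colon\widetilde{\mathcal{X}}\to S$ together with a relative birational morphism $\widetilde{\mathcal{X}}\to\mathcal{X}$ contracting only the exceptional $(-2)$-curves over the ADE locus of $\mathcal{X}_0$. The pullback $\widetilde{\mathcal{L}}$ is relatively nef and big, and its restriction to each fiber has square $2d$; Yau's theorem applied fiberwise produces a unique Ricci-flat Kähler metric $\omega_s$ in the class $c_1(\widetilde{\mathcal{L}}_s)$. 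For $s$ off the discriminant, $(\widetilde{\mathcal{X}}_s,\omega_s)$ is isometric to $\Phi_{\rm alg}([X_s,L_s])$, while for the central fiber $\omega_0$ descends to the orbifold Ricci-flat metric $\Phi_{\rm alg}([X_\infty,L_\infty])$.

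The core step is therefore the Gromov-Hausdorff convergence $(\widetilde{\mathcal{X}}_{s_i},\omega_{s_i})\to(X_\infty,\omega_\infty)$ as $s_i\to 0$. The inputs are a uniform diameter upper bound and a uniform volume lower bound (which follow from continuity of the class $c_1(\widetilde{\mathcal{L}}_s)$ and constancy of $(\widetilde{\mathcal{L}}_s)^2=2d$), a uniform $L^\infty$-bound on the Kähler potentials by Kołodziej's estimate, non-collapsing from the volume bound, and the Cheeger-Colding/Anderson $\epsilon$-regularity theory for Einstein four-manifolds combined with the Bando-Kasue-Nakajima classification of Ricci-flat ALE bubbles. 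Together these pin the Gromov-Hausdorff limit down to the unique orbifold Ricci-flat metric on $X_\infty$, i.e., $\Phi_{\rm alg}([X_\infty,L_\infty])$. This is precisely the content I would invoke from Proposition \ref{MK3.conti} (or reproduce by the same argument), after normalizing diameters to $1$.

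The hard step is unambiguously the non-collapsed Gromov-Hausdorff convergence: once it is established, the extension to an arbitrary sequence is formal via the diagonal argument above, and continuity at every point of $\mathcal{F}_{2d}\setminus\mathcal{F}_{2d}^o$ follows. I expect no new difficulty beyond what is already required to prove Proposition \ref{MK3.conti}; in particular the ADE (not log-canonical) nature of the singularities keeps us in the regime where classical four-dimensional Einstein compactness applies verbatim.
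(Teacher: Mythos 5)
Your proposal matches the paper's primary route: the authors also state that the claim ``follows from (or can be proved similarly to)'' Proposition~\ref{MK3.conti}, and your diagonal reduction and the reduction to a one-parameter family are the same as theirs. Where you differ is in how you would fill in the non-collapsed Gromov--Hausdorff convergence: you invoke the classical four-dimensional Einstein compactness package (Cheeger--Colding, Anderson $\epsilon$-regularity, Bando--Kasue--Nakajima bubbles), whereas the paper's written-out alternative proof uses the uniform diameter bound from Tosatti's \cite{Tos.lim}, Bishop--Gromov for non-collapsing, and then Donaldson--Sun \cite{DS} as a single packaged input; the proof of Proposition~\ref{MK3.conti} itself instead relies on the Kobayashi--Todorov and Kobayashi results \cite{KT}, \cite{Kob90} on smooth variation of the Ricci-flat metrics away from the exceptional set. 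Both of these buy something your sketch leaves slightly implicit: the Einstein compactness tools you cite pin the Gromov--Hausdorff limit down to \emph{some} Ricci-flat K\"ahler orbifold with ADE singularities, but identifying that limit specifically as $\Phi_{\rm alg}([X_\infty,L_\infty])$ requires an additional input --- either the algebro-geometric identification of the limit via Hilbert scheme convergence (the content of \cite[Theorem 1.2]{DS} and the remarks around \cite[\S4.4]{DS}), or the $C^\infty_{\rm loc}$ convergence of the rescaled metrics on $X_\infty^{o}=X_\infty\setminus {\rm Sing}(X_\infty)$ (the content of \cite[Theorem 8]{KT}, \cite[Theorem 21]{Kob90}). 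Your simultaneous-resolution framing of the family does provide the latter, and so the proposal is correct, but this identification step is the one place you should not leave implicit.
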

Here, we give an alternative proof which uses
 a general result from \cite{DS}. 
First we take $[(X_{\infty},L_{\infty})]\in 
\mathcal{F}_{2d}\setminus \mathcal{F}_{2d}^{o}$ and suppose 
a sequence $[(X_{i},L_{i})]\in \mathcal{F}_{2d}^{o}$ converges to 
$[(X_{\infty},L_{\infty})]$ in the analytic topology of  
$\mathcal{F}_{2d}$. If we write $\omega_{i}$ for
 the Ricci-flat K\"ahler metric of $X_{i}$ in the class $c_{1}(L_{i})$, 
then by \cite[Theorem 3.1]{Tos.lim} the diameter 
${\rm diam}(X_{i},\omega_{i})$ is uniformly bounded above
 (see \S\ref{Tos.lim.uni}). 
Then combined with the Bishop-Gromov inequality, which infers the 
non-collapsingness condition \cite[(1.2)]{DS}, we can 
apply \cite[Theorem 1.2]{DS} to the sequence $(X_{i},L_{i},\omega_{i})$ 
so that its Gromov-Hausdorff limit is still 
a weak Ricc-flat log terminal projective variety. 
This has to be an ADE singular K3 surface in our case. 
If we take a closer look at the proof in \cite{DS}, 
 it furthermore proves that this ADE K3 surface is a limit inside the Hilbert scheme 
of $X_{i}$ embedded by $L_{i}^{\otimes m}$ for $m\gg 0$.
Hence it is the limit of $[(X_{i},L_{i})]\in \mathcal{F}_{2d}$
 with respect to the analytic topology of $\mathcal{F}_{2d}$,
 i.e., $[(X_{\infty},L_{\infty},\omega_{\infty})]$. 
For a general sequence $[(X_{i},L_{i})]\in \mathcal{F}_{2d}$ converging to 
$[(X_{\infty},L_{\infty})]$ with respect to the complex analytic topology of 
$\mathcal{F}_{2d}$ with possibly singular $X_{i}$,
 we employ the usual diagonalizing argument (as in the proof of
 ``Claim~\ref{gen.seq} implies Lemma~\ref{all.seq}" in \S\ref{Gen.HSD})
 to reduce it to the case of smooth $X_{i}$.
Therefore, we confirmed the continuity of $\Phi_{\rm alg}$
 on the whole $\mathcal{F}_{2d}$. 

Now we move on to the most analytically complicated part of this paper,
 i.e., estimates of Ricci-flat metrics for the collapsing case. 
We first make our setup as follows. 

Define the period domain to be
\[\Omega(\Lambda_{\rm K3}):=
 \{[w]\in \mathbb{P}(\Lambda_{\rm K3}\otimes \mathbb{C})\mid
 (w,w)=0,\ (w,\bar{w})>0\}\] 
 and define 
\[\Delta(\Lambda_{\rm K3})
 :=\{\delta\in \Lambda_{\rm K3}\mid (\delta,\delta)=-2\}.\]
For $[w]\in \Omega(\Lambda_{\rm K3})$, let 
\[V([w]):=\{\kappa\in \Lambda_{\rm K3}\otimes \R
\mid \kappa\perp [w] \text{ and } (\kappa,\kappa)=1\},
\]
which has two connected components.
We choose one of the connected components of $V([w])$
 which is continuous in $[w]$ and denote it by $K_{[w]}$.
The choice of $K_{[w]}$ is assumed to be compatible with
 the choice of one connected component
 $\mathcal{D}_{\Lambda_{2d}}$ in $\Omega(\Lambda_{2d})$
 in the following sense:
if $[w]\in \mathcal{D}_{\Lambda_{2d}}$ and  
 $[u]:=[\frac{1}{\sqrt{2d}}\lambda-\sqrt{-1}\im w]$, then $[\re w]\in K_{[u]}$.
Define the weakly polarized domain 
 $K\Omega$ as (see \cite{KT})
\[K\Omega:=\{[[w],\kappa]\in \Omega(\Lambda_{\rm K3})
 \times (\Lambda_{\rm K3}\otimes \R) \mid \kappa\in K_{[w]}\}.\]
This can be regarded as 
 the union of the K\"ahler cones 
 for possibly ADE singular marked K3 surfaces.
Its open subset 
\[K\Omega^{o}:=\{[[w],\kappa]\in K\Omega \mid 
([w], \delta) \neq 0
 \text{ or } (\kappa,\delta)\neq 0
 \text{ for all } \delta\in \Delta(\Lambda_{\rm K3})\}\]
 represents that of smooth marked K3 surfaces. 
We write ${\rm pr}\colon K\Omega\twoheadrightarrow \Omega(\Lambda_{\rm K3})$
 for the natural projection so we have $K_{p}={\rm pr}^{-1}(p)$.
Let $K_{p}^{o}:={\rm pr}^{-1}(p)\cap K\Omega^{o}$. 
The decomposition of $K_{p}^o$ into its connected components
 is denoted by $K_{p}^{o}=\bigsqcup_{i} K_{p}^{o,i}$. 
Each connected component $K_{p}^{o,i}$
 is the image of the K\"ahler cone of a smooth K3 surface by a marking. 
More generally, whole $K_{p}$ for a fixed $p=[w]$ can be
 stratified by walls as follows. 
For $\kappa\in K_{p}$, define 
\begin{align*}
&I_+ := \{\delta\in \Delta(\Lambda_{\rm K3})
 \cap [w]^{\perp} \mid (\delta,\kappa) >0\},\\
&I_0 := \{\delta\in \Delta(\Lambda_{\rm K3})
 \cap [w]^{\perp} \mid (\delta,\kappa) =0\},\\
&I_- := \{\delta\in \Delta(\Lambda_{\rm K3})
 \cap [w]^{\perp} \mid (\delta,\kappa) <0\}.
\end{align*}
This gives the decomposition
 $$\Delta(\Lambda_{\rm K3})\cap [w]^{\perp}=I_+\sqcup I_0\sqcup I_-.$$
We write $I(\kappa):=(I_+,I_0,I_-)$.
Then $K_p$ can be stratified by $I(\kappa)$, namely, 
for each decomposition $I$ of 
$\Delta(\Lambda_{\rm K3})\cap [w]^{\perp}$ into three disjoint 
subsets, we define a stratum by
 $K_{p}^{I}:=\{\kappa\in K_{p}\mid I=I(\kappa)\}$, 
 which is the image of the K\"ahler cone of a possibly ADE singular K3 surface 
 by a marking in the sense of \cite{Mor}. 
We discuss this with some different viewpoint again
 in \S\ref{Kahler.section} but see \cite{Mor, KT} for the details. 

Fix an isotropic vector $e\in \Lambda_{\rm K3}$ and 
define 
\[\Omega^{e}(\Lambda_{\rm K3})
 :=\{[w]
 \in \Omega(\Lambda_{\rm K3}) 
\mid 
[w] \perp e\}.\]
Moreover, we set 
\[K\Omega^{e\ge 0}
:=\bigsqcup_{p\in \Omega^e(\Lambda_{\rm K3})}K_{p}^{e\ge 0},\]
where 
\[K_{p}^{e\ge 0}:=\bigsqcup_{\overline{K_{p}^{I}}\ni e}K_{p}^{I}.\] 
By Fact \ref{bp.free} and the discussion before that, 
the condition $\overline{K_{p}^{I}}\ni e$ 
means that 
$s\in K\Omega^{e\ge 0}$ 
gives an elliptic fibration structure to the corresponding
 possibly ADE singular K3 surface 
$$\pi_{s}\colon (X_{s},\omega_{X_{s}})\twoheadrightarrow 
B_{s}(\simeq \mathbb{P}^{1}),$$ 
with a marking 
in the sense of \cite{Mor}, 
whose fiber class is $e$.


To continue our proof of Theorem~\ref{K3.Main.Conjecture.18.ok}, 
 we introduce the following necessary analytic input. 
We leave its proof to \S\ref{GTZ.extend.proof}
 as the discussion needed there is of very different nature and is lengthy.

\begin{Thm}[cf.\ \cite{Tos}, \cite{GTZ1}, \cite{GTZ2}, \cite{TZ}]\label{GTZ.extend}
Let $S$ be an open subset of $K\Omega\cap {\rm pr}^{-1}(\Omega^e(\Lambda_{\rm K3}))$
 such that $S\subset K\Omega^{e\ge 0}$ and the closure
 $\overline{S}$ in $K\Omega$ is a compact subset of $K\Omega^{e\ge 0}$. 
Let $\{\pi_{s}\colon 
(X_{s},\omega_{X_{s}})\twoheadrightarrow B_{s}\simeq \mathbb{P}^{1}\}_{s\in S}$
 be a collection of
 elliptic (K\"ahler, not necessarily algebraic)
 K3 surfaces, possibly with ADE singularities on $X_{s}$, 
 with markings (in the sense of \cite{Mor}), 
 whose $\pi_s$-fiber class is $e$ and
 the period and the K\"ahler class of $\omega_{X_{s}}$
 is given by $s$ via the marking. 

Consider the Ricci-flat K\"ahler (orbi-)metrics $\tilde{\omega}_{s,t}\in 
[\pi_{s}^{*}\omega_{B_{s}}+t\omega_{X_{s}}]$, where $\omega_{B_{s}}$ is a 
 smooth family of K\"ahler metrics on the base $B_{s}$
 and $t>0$. 
Let ${\rm disc}(\pi_{s})$ denote the discriminant locus of $\pi_s$. 
Define a metric on $B_{s}\setminus {\rm disc}(\pi_{s})$ as in \eqref{eq:McLean}
 and write $\omega_{s,\rm{ML}}$ for the corresponding K\"ahler form.
Write $(B_{s}, \omega_{s,{\rm ML}})$ for the metric completion of
 $(B_{s}\setminus {\rm disc}(\pi_{s}), \omega_{s,{\rm ML}})$.
Then $(X_{s},\tilde{\omega}_{s,t})$ converge to $(B_{s},\omega_{s,{\rm ML}})$
 in the Gromov-Hausdorff sense when $t\to 0$. 
Moreover, this convergence is uniform with respect to $s\in S$. 
\end{Thm}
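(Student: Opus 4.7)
The plan is to upgrade the pointwise Gromov-Hausdorff convergence established in \cite{Tos, GTZ1, GTZ2, TZ} to a uniform statement in $s\in S$, exploiting the compactness of $\overline{S}$ in $K\Omega^{e\ge 0}$. I would split the argument into three steps: reduction to the smooth case, pointwise convergence for each fixed $s$, and promotion to uniform convergence.

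First, since ADE singularities on $X_s$ occur inside fibers of $\pi_s$, they admit simultaneous small resolutions over any sufficiently small neighborhood in $S$. This allows one either to reduce to smooth K3 surfaces (at the cost of working with possibly non-K\"ahler complex structures on the resolution, which does not affect the Ricci-flat metric analysis once one fixes a K\"ahler representative on the singular model), or to work directly in the orbifold K\"ahler-Einstein category where the techniques of \cite{Tos, GTZ1, GTZ2} extend. Second, for each fixed $s$ the theorem is essentially in the literature: \cite{Tos, GTZ1} give $C^{\infty}_{\mathrm{loc}}$-convergence of $\tilde{\omega}_{s,t}$ to $\pi_s^*\omega_{s,\mathrm{ML}}$ on $X_s\setminus \pi_s^{-1}(\mathrm{disc}(\pi_s))$, while the fiber-collapse rate and uniform diameter bound are in \cite{GTZ2, TZ}. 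Combining these yields $(X_s,\tilde{\omega}_{s,t})\to (B_s,\omega_{s,\mathrm{ML}})$ in the Gromov-Hausdorff sense via the standard argument that horizontal distances are controlled by the limit on the base while vertical (fiber) distances uniformly collapse.

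For uniformity in $s$, the strategy is to cover $\overline{S}$ by finitely many relatively compact open sets on which the family $\pi_s\colon X_s\to B_s$ can be smoothly trivialized, and to track the dependence of constants in the cited proofs. A local smooth trivialization can be extracted from the Kuranishi family for elliptic K3 surfaces together with the marking, giving a smooth family of diffeomorphisms intertwining the elliptic fibrations modulo smoothly varying data in $s$. The constants entering the Laplacian estimates, Moser iteration, and pluripotential bounds in \cite{Tos, GTZ1, GTZ2, TZ} depend only on uniform control of the K\"ahler class $[\omega_{X_s}]$, the reference metric $\omega_{B_s}$, and the geometric data of $\pi_s$ (locations and Kodaira types of singular fibers), all of which stay in a compact set as $s$ varies in $\overline{S}$. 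Uniform $C^0$-convergence on the total space then combines with the uniform diameter bound to give uniform Gromov-Hausdorff convergence, and a diagonal argument transfers this from a finite open cover to all of $S$.

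The hardest part will be obtaining uniform control near singular fibers whose Kodaira type may jump with $s$: along walls of $\overline{S}$, singular fibers may coalesce (for instance, two $I_1$ fibers merging into an $I_2$ or into a worse configuration), and the Ooguri-Vafa-type model metric used to approximate $\tilde{\omega}_{s,t}$ near a singular fiber depends discontinuously on the type. A natural circumvention is to apply model-metric corrections only on tubular neighborhoods of $\mathrm{disc}(\pi_s)$ whose radii shrink as $t\to 0$, and to absorb the resulting discrepancy into pluripotential estimates of \cite{GTZ2}-type that depend only on uniform $L^{\infty}$ and capacity bounds. Equivalently, one can localize to preimages of $B_s$ minus a small neighborhood of the whole discriminant locus, where everything varies continuously in $s$, and handle the complement by a volume-collapse estimate controlled uniformly by $\int_{X_s}\tilde{\omega}_{s,t}^{2}\to 0$ on that region. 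Carefully pushing this through in the non-algebraic K\"ahler setting, with ADE singularities allowed on $X_s$, constitutes the technical content deferred to \S\ref{GTZ.extend.proof}.
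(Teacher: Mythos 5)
There is a genuine gap, centered on a hidden failure of the uniformity-by-compactness argument. You assert that ``the constants entering the Laplacian estimates, Moser iteration, and pluripotential bounds \ldots depend only on uniform control of the K\"ahler class $[\omega_{X_s}]$, the reference metric $\omega_{B_s}$, and the geometric data of $\pi_s$\ldots, all of which stay in a compact set as $s$ varies in $\overline{S}$.'' This is false for the $C^2$-estimate. The Chern--Lu/Yau method requires an \emph{upper} bound on the holomorphic bisectional curvature of the reference metric on $X_s$, and the natural choice (the Ricci-flat reference $\omega_{X_s}$) has bisectional curvature that blows up as $s$ degenerates to the ADE wall $K\Omega\setminus K\Omega^o$ (this is the classical curvature-concentration phenomenon for Eguchi--Hanson/ALE degenerations, cf.\ \cite{BKN,And,Kro89a,Kob90}). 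Since $\overline{S}$ is allowed to meet that wall, compactness alone does not save you: the ``constants'' you want to take a supremum over are not finite along a sequence $s_i\to s_\infty\in S_{\rm ADE}$. The paper's central device is precisely to build a \emph{second} reference metric $\omega_{X_s,{\rm new}}$ (as a fixed positive combination of Ricci-flat metrics pulled along transversal generic directions to the wall), prove it has uniformly bounded bisectional curvature, and run the $C^2$-estimate against it rather than against $\omega_{X_s}$; near the ADE points one further glues in Kronheimer's ALE family to control the $H$-type trace functions. Without something playing the role of this new reference metric, your Step~3 does not close.

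Two further points are worth flagging. First, you treat the coalescence of singular \emph{fibers} of $\pi_s$ (Kodaira type jumping) as the ``hardest part,'' but that is a different and milder issue than ADE singularities on the total space $X_s$; the latter corresponds to walls in the K\"ahler cone and is what actually destroys the curvature bound, and it can occur even when the discriminant $\mathrm{disc}(\pi_s)\subset B_s$ varies smoothly. Second, your appeal to Ooguri--Vafa-type model metrics and pluripotential $L^\infty$/capacity estimates near the discriminant is not how the paper controls that region in the $s$-uniform, ADE-permitting K\"ahler setting: the paper instead uses a {\L}ojasiewicz inequality to get a uniform lower bound on the $H$-function in terms of a $\sigma$-function vanishing at $\mathrm{disc}(\pi_s)$, and then exploits $\dim_{\C}\pi_s^{-1}(y)=1$ to linearize the fiberwise Monge--Amp\`ere equation and bound the fiberwise oscillation via an explicit Green-function estimate on elliptic curves. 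Your strategy, even if it could be made to work for the interior of $S$, leaves a genuine hole exactly where the theorem is hardest and new: uniformity \emph{up to} the ADE locus.
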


In this section, we prove Theorem~\ref{K3.Main.Conjecture.18.ok} assuming Theorem~\ref{GTZ.extend}. 
As the reference metric $\omega_{X_{s}}$, we take the Ricci-flat K\"ahler metrics. 
Take an arbitrary sequence in $\mathcal{F}_{2d}$
 which converges
 to a point $[e,v]\in \mathcal{F}_{2d}(l)$ for an isotropic line $l=\Q e$. 
(This is not necessarily of the form in Theorem~\ref{GW}.) 
We take corresponding K3 surfaces $X_i$
 with polarizations of degree $2d$
 and want to identify the Gromov-Hausdorff limit
 with a (underlying metric space of) specific tropical K3 surface. 

Let us employ the Siegel reduction as follows. 
As in Theorem~\ref{GW} (and its proof),
 we take an isotropic vector $f\in \Lambda_{2d}\otimes \Q$ with $(e,f)=1$
 and define the lattice
 $\Lambda'_{2d}=\langle e, f \rangle ^{\perp}\subset\Lambda_{2d}$
 in the same way.  
Recall the Langlands decomposition of the stabilizer $Q$ of $l_{\R}=\R e$ 
from the proof of Claim~\ref{Reduction}. 
By the definition of Satake topology,
 we can take representatives $[x_i]\in \mathcal{D}_{\Lambda_{2d}}$
 of the given sequence in $\mathcal{F}_{2d}$
 which are in the same Siegel set. 
For a bounded open subset
 $\mathcal{U}$, $\mathcal{V}$ of $\Lambda'_{2d}\otimes \R$
 and $t>0$, define the set 
$\mathfrak{S}_{\mathcal{U},t,\mathcal{V}}
(\subset \mathcal{D}_{\Lambda_{2d}})$ 
(cf.\ \S\ref{K3.Sat.sec}, \cite{Sat1, Sat2, BJ}) 
 as the set consisting of $[x]\in \mathcal{D}_{\Lambda_{2d}}$
 for $x\in \Lambda_{2d}\otimes \C$ satisfying the conditions
\begin{align}\label{Siegel.condition}\nonumber
&(\re x, \re x)=(\im x, \im x)=1,\quad (\re x, \im x)=0, \\ 
&\re x\in N e + \epsilon f +\epsilon \, \mathcal{U} \quad
 (N>0,\ 0<\epsilon<t),\\ \nonumber
&\im x = c e + v' \quad  (c \in\R,\ v'\in \mathcal{V}).
\end{align}
The value $c$ is automatically bounded on
 $\mathfrak{S}_{\mathcal{U},t,\mathcal{V}}$
 by the condition $(\re x, \im x)=0$.
Then we may assume that our sequence
 $[x_i]\in \mathcal{D}_{\Lambda_{2d}}$ lie in 
 this set for some bounded sets $\mathcal{U}$ and $\mathcal{V}$
 and $t>0$ which do not depend on $i$,
 namely, we have
\begin{align*}
&(\re {x_i}, \re {x_i})=(\im {x_i}, \im {x_i})=1,
 \quad (\re {x_i}, \im {x_i})=0, \\
&\re {x_i}\in N_i e + \epsilon_i f +\epsilon_i \, \mathcal{U} \quad
 (N_i,\epsilon_i>0),\\
&\im {x_i}= c_i e + v'_i \quad  (c_i \in\R,\ v'_i\in \mathcal{V}).
\end{align*}
The convergence $[x_i]\to [e,v]$ with respect to the Satake topology
 implies that $\epsilon_i\to 0$ and $v'_i \pmod {l_\R}\to v$
 as $i\to \infty$.
By applying a hyperK\"ahler rotation to
 the polarized K3 surfaces $X_i$  as in the proof of Theorem~\ref{GW}, 
 we obtain marked K3 surfaces $X_i^{\vee}$
 with holomorphic volume form $\Omega_i^{\vee}$
 and K\"ahler form $\omega_i^{\vee}$ such that 
 $\alpha_i([\Omega_i^{\vee}]) = \frac{1}{\sqrt{2d}}\lambda-\sqrt{-1}\im x_i$
 and  $\alpha_i([\omega_i^{\vee}]) = \re x_i$.

For $x\in \Lambda_{2d}\otimes \C$
 satisfying \eqref{Siegel.condition} for small enough $t$, 
 we define $X^{\vee}$ as above.
Combining Fact~\ref{bp.free} and a refinement of Claim~\ref{e.nef} as 
Claim~\ref{e.nef.general}
 we obtain an elliptic fibration structure on $X^{\vee}$:
\[\pi\colon X^{\vee}\twoheadrightarrow B\simeq \mathbb{P}^{1},\] 
with the fiber class $[\pi^{-1}(\text{point})]=\alpha^{-1}(e)$.
The obtained fibration can be also regarded
 as a \textit{special Lagrangian (torus) fibration} on $X$ which is 
``close to" large complex structure limit. 
In fact, in view of Claim~\ref{e.nef}, 
we constructed such fibration structures on K3 surfaces parametrized 
 by an explicit neighborhood of 
 the boundary $\mathcal{F}_{2d}(l)$. 
Recall that in general,
 it is a challenging problem to construct a special Lagrangian fibration
 on general Calabi-Yau varieties around large complex structure limits. 

Also recall that on $\mathcal{F}_{2d}$, more precisely on a finite cover
 or in a stacky sense, we have a holomorphic proper family of
 (possibly ADE) polarized K3 surfaces $(X,L)$ with degree $2d$. 
By Fact~\ref{bp.free} and Claim~\ref{e.nef.general}, 
 for each $\mathcal{U}$ and $\mathcal{V}$, 
 the existing family $\mathcal{X}=\{(X,L,\alpha)\}$
 of marked ADE singular polarized K3 surfaces on 
 $\mathfrak{S}_{\mathcal{U},t,\mathcal{V}}$
 for small enough $t>0$, has simultaneous elliptic fibration structure
 $X\twoheadrightarrow B(\simeq \mathbb{P}^{1})$
 (with non-canonical isomorphism $\simeq$)
 such that the fiber classes are $\alpha^{-1}(e)$. 

Now we come back to our sequence $\{X_i\}_i$
 and its hyperK\"ahler rotation $X_i^{\vee}$. 
We apply Claim~\ref{e.nef.general}
 to obtain $\pi_i\colon X_i^{\vee}\twoheadrightarrow B_i\simeq \mathbb{P}^1$.
Let $\mathcal{Z}$ be a bounded
 open neighborhood of $\frac{\lambda}{\sqrt{2d}}$
 in the set $\{z\in l_{\R}^{\perp} \mid (z,z)=1\}$.
For $\epsilon_0>0$, define $S_{\epsilon_0}\subset K\Omega$ to be 
\begin{multline*}
S_{\epsilon_0}=\{[[z-\sqrt{-1}\im x], \re x]\in K\Omega
\mid \text{$x\in \Lambda_{2d}\otimes \C$ satisfies
 \eqref{Siegel.condition}
 for $\epsilon_0<\epsilon<2\epsilon_0$}\\
\text{ and $z \in \mathcal{Z}$}\}.
\end{multline*}
Then for sufficiently small $\epsilon_0$,
 Claim~\ref{e.nef.general} implies that
 $S_{\epsilon_0}\subset K\Omega^{e\geq 0}$
 and that $S_{\epsilon_0}$ satisfies the assumption 
 on $S$ in Theorem~\ref{GTZ.extend}.
Moreover, for sufficiently large $i$,
 there exists a Ricci-flat K\"ahler form $\omega_{X_i^{\vee}}$
 on $X_i^{\vee}$ and $t_i>0$ such that 
 $(X_i^{\vee}, \alpha_i, \omega_{X_i^{\vee}})$ is parametrized
 in $S_{\epsilon_0}$ and that 
 the cohomology class $[\omega_i^{\vee}]$
 equals $[t_i\omega_{X_i^{\vee}}] + e$ up to $\R^{\times}$-multiplication.
Furthermore, $t_i\to 0$ as $i\to \infty$.
We can therefore apply Theorem~\ref{GTZ.extend} to $S=S_{\epsilon_0}$
 and $\omega_i^{\vee}$ appears 
 as ``$\tilde{\omega}_{s,t_i}$" up to rescaling
 in the notation of Theorem~\ref{GTZ.extend}.
This shows that for any sequence $[x_i]\in \mathcal{F}_{2d}$
 which converges to $[e,v]\in \mathcal{F}_{2d}(l)$,
 the metric spaces $\Phi_{\rm alg}([x_i])$ converge to
 $\Phi_{\rm alg}([e,v])$ with respect to the Gromov-Hausdorff topology.

By an argument as in the proof of
 ``Claim~\ref{gen.seq} implies Lemma~\ref{all.seq}" in \S\ref{Gen.HSD}, 
 we obtain Theorem~\ref{K3.Main.Conjecture.18.ok}. 
(See also the related extension,
 the proof of Theorem~\ref{K3.Main.Theorem2}.) 
\end{proof}

\begin{Rem}[Tropical Heegner divisor]\label{Trop.ADE.F2d}
Consider the Heegner divisor 
$\mathcal{H}_{\delta}\subset \mathcal{D}_{\Lambda_{2d}}$ 
for each $(-2)$-element
 $\delta\in\Delta(\Lambda_{\rm K3})\cap \Lambda_{2d}$. 
Recall that the union of its image 
${\rm Im}(\bigcup_{\delta} \mathcal{H}_{\delta})\subset \mathcal{F}_{2d}$ 
exactly parametrizes ADE singular (non-smooth) polarized K3 surfaces 
(cf.\ e.g.\ \cite{Huy}). 

Similarly, inside each $18$-dimensional boundary component 
$\mathcal{F}_{2d}(l)$ (see \S\ref{K3.Sat.sec})
we can define a closed subset $\mathcal{F}_{2d}^{\rm ADE}(l)$ of 
real codimension $1$ (i.e., dimension $17$) which parametrizes 
those appearing as the base of ADE singular (\textit{non-}smooth) 
elliptic K3 surfaces. It is defined as 
$$
\mathcal{F}_{2d}^{\rm ADE}(l):=
\overline{{\rm Im}(\bigcup_{\delta} \mathcal{H}_{\delta})}\cap 
\mathcal{F}_{2d}(l). 
$$
Here, $\{\mathcal{H}_{\delta}\}$ is the set of the Heegner divisors 
parametrizing ADE polarized K3 surfaces 
($\delta$ runs over the set of the 
$\tilde{O}^{+}(\Lambda_{2d})$-equivalence classes of 
$(-2)$-elements in $\Lambda_{2d}$), which is known to be finite. 
Indeed, for instance, by generic smoothness of polarized K3 surfaces parametrized by $\mathcal{F}_{2d}$ with respect to 
the Zariski topology implies the desired finiteness. 

Now we confirm a tropical analog of such finiteness. 
For each point $[e,v]\in\mathcal{F}_{2d}(l)$, 
 and for a subset $U(\subset \mathcal{D}_{\Lambda_{2d}})$ of the form \eqref{Siegel.condition}
 which maps to the intersection of a neighborhood of $[e,v]$ with $\mathcal{F}_{2d}$, we show the following: 
\begin{Claim}\label{delta.finite} 
the $(\tilde{O}^{+}(\Lambda_{2d})\cap
 {\rm stab}(l_{\R}))$-\textit{equivalence classes of} $\delta$
 with $\mathcal{H}_{\delta}\cap U\neq \emptyset$ form a finite set,
 where ${\rm stab}(l_{\R})$ stands for the stabilizer subgroup for $l_{\R}$ 
 in $O(\Lambda_{2d}\otimes \R)$. 
\end{Claim}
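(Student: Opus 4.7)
The plan is to parametrise $\delta\in\Lambda_{2d}$ adapted to the cusp $l=\Q e$, and then to use the Eichler transvections in $\tilde O^{+}(\Lambda_{2d})\cap \mathrm{stab}(l_{\R})$, together with the Siegel-set bounds defining $U$, to reduce each admissible $\delta$ to a bounded region of $\Lambda_{2d}\otimes\R$, after which the finiteness follows from the discreteness of the lattice.

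Concretely, I would fix a rational isotropic $f\in\Lambda_{2d}\otimes\Q$ with $(e,f)=1$ and $\Lambda'_{2d}=\langle e,f\rangle^{\perp}\cap\Lambda_{2d}$ as in the proof of Theorem~\ref{GW}, and write $\delta=\sigma e+\tau f+\delta'$ with $\sigma,\tau\in\Q$ of bounded denominator and $\delta'\in\Lambda'_{2d}\otimes\Q$. Under the Eichler transvection $\phi_{e,\xi}$ with $\xi=ne+\xi'\in l^{\perp}\cap\Lambda_{2d}$, the quantity $\tau=(\delta,e)$ is preserved, $\delta'$ is shifted by $\tau\xi'$, and $\sigma$ is shifted correspondingly. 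For $x=\re x+\sqrt{-1}\im x\in U$ with $\re x=Ne+\epsilon f+\epsilon u$, $\im x=ce+v'$, the normalisations $(\re x,\re x)=(\im x,\im x)=1$ and $(\re x,\im x)=0$ force $N\epsilon$ into a fixed compact subinterval of $(0,\infty)$ and $c=-(u,v')$ to be bounded, while $(\delta,x)=0$ is equivalent to
\[
N\tau+\epsilon\sigma+\epsilon(u,\delta')=0,\qquad c\tau+(v',\delta')=0.
\]

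In the case $\tau=0$, these reduce to $\sigma=-(u,\delta')$ together with $(v',\delta')=0$, and combined with $(\delta',\delta')=-2$ the positive-definite majorant norm $\|\delta'\|^{2}_{E,v'}=-(\delta',\delta')+2(\delta',v')^{2}=2$ confines $\delta'$ to a bounded region of $\Lambda'_{2d}\otimes\R$ (the $E$-norms $\|\cdot\|_{E,v'}$ being uniformly equivalent over the bounded set $\mathcal V$). Hence only finitely many $\delta'\in\Lambda'_{2d}$ occur, and for each one the Eichler transvections shift $\sigma$ by the subgroup $\{(\delta',\xi'):\xi'\in\Lambda'_{2d}\}\subset\Z$, leaving finitely many orbit representatives. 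In the case $\tau\neq 0$, I would first establish a bound $|\tau|\le C(t,\mathcal U,\mathcal V)$; granted this, for each such $\tau$ the Eichler transvections reduce $\delta'$ modulo the finite-index sublattice $\tau\Lambda'_{2d}\subset\Lambda'_{2d}$, and $(\delta,\delta)=-2$ then forces $\sigma=-(1+(\delta',\delta')/2)/\tau$ up to finite ambiguity.

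The hardest step will be bounding $|\tau|$ when $\tau\neq 0$. The natural tool is the majorant positive-definite form $\langle\cdot,\cdot\rangle_{x}=-(\cdot,\cdot)+2(\cdot,\re x)(\cdot,\re x)+2(\cdot,\im x)(\cdot,\im x)$ associated to the positive plane $\mathrm{span}(\re x,\im x)$, for which $\delta\perp x$ gives $\|\delta\|^{2}_{\mathrm{maj},x}=-(\delta,\delta)=2$. Since $\|f\|^{2}_{\mathrm{maj},x}=2N^{2}$ with $N\geq 1/(4\epsilon)$, the $\tau f$-contribution is of order $\tau^{2}/\epsilon^{2}$; the technical task is to verify that this term is not compensated by cross contributions involving unbounded $\sigma$ or $\delta'$, so that $\|\delta\|^{2}_{\mathrm{maj},x}=2$ indeed forces $|\tau|\lesssim\epsilon<t$. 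A clean way to do this is to choose a basis of $\Lambda_{2d}\otimes\R$ orthogonal for $\langle\cdot,\cdot\rangle_{x}$ (tailored to the Siegel decomposition) and express the identity as a sum of squares in which the $\tau^{2}/\epsilon^{2}$-term appears with a definite positive coefficient, so that bounded majorant norm directly yields the sought bound on $|\tau|$ and completes the proof.
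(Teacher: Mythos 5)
Your argument for the case $\tau=(\delta,e)=0$ is correct and matches what the paper does: once $(\delta,e)=0$, the conditions $(\delta',v')=0$ and $(\delta',\delta')=-2$ give $\|\delta'\|^{2}_{E,v'}=2$, the uniform equivalence of the $E$-norms over the bounded set $\mathcal V$ confines $\delta'$ to a bounded region, and the Eichler transvections then normalise $\sigma$ to finitely many values. The gap is entirely in how you reduce to that case.

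For $\tau\neq 0$ you leave the crucial bound $|\tau|\lesssim\epsilon$ as a ``technical task,'' and the concrete plan you propose --- Gram-Schmidt orthogonalisation of $e,f,\Lambda'_{2d}\otimes\R$ for the majorant form, and reading off a positive coefficient on $\tau^{2}/\epsilon^{2}$ --- does not work as stated. After orthogonalising, the coefficients of the sum of squares expansion of $\|\delta\|^{2}_{\mathrm{maj},x}=2$ involve the projections of $\delta'$ onto the earlier basis vectors, and since $\sigma$ and $\delta'$ are not a priori bounded these contributions are not controlled. You need to combine $(\delta,\re x)=0$, $(\delta,\im x)=0$ and $(\delta,\delta)=-2$ jointly: substituting $\sigma\tau=-1-(\delta',\delta')/2$ and $(\delta',v')=-\tau c$ into the quadratic relation one can first bound $\|\delta'\|^{2}_{E,v'}$ via the discriminant (using $(\delta',u)^{2}\le \|\delta'\|^{2}_{E,v'}\|u\|^{2}_{E,v'}$ and $N/\epsilon\to\infty$), and only then extract $|\tau|=O(\epsilon)$. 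This is essentially re-proving Claim~\ref{e.nef}.

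The paper avoids this entirely. It observes that the inequality estimates in the proof of Claim~\ref{e.nef} already show the conjunction $(\delta,\re x)\ge 0$ and $(\delta,e)<0$ is impossible for $x$ in a sufficiently small Siegel set of the form~\eqref{Siegel.condition}. Since $x\in\mathcal{H}_{\delta}$ gives $(\delta,\re x)=0$, this forces $(\delta,e)\ge 0$; applying the same to $-\delta$ (which defines the same Heegner divisor) gives $(\delta,e)\le 0$. Hence $(\delta,e)=0$ outright, and the $\tau\neq 0$ branch is vacuous. You would do well to invoke this sign argument (which you already have from Claim~\ref{e.nef}) rather than attempting a fresh quadratic-form estimate; it is cleaner, and it sidesteps the dichotomy you set up.
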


\begin{proof}
Suppose that $x\in \mathcal{H}_{\delta}\cap U$.
As in the proof of Claim~\ref{e.nef},
 we decompose as $\delta=se-tf+\delta'\,(\delta'\in \Lambda'_{2d})$.
In that proof (see also Claim~\ref{e.nef.general}),
 we assumed $(\delta, \re x)>0$ and $(\delta,e)<0$ and
 got a contradiction if $(\re x, e)$ is small enough.
But, it can be similarly seen that
 $(\delta, \re x)\geq 0$ and $(\delta,e)<0$ yield a contradiction.
Hence in our situation, if $(\delta, \re x)=0$, then $(\delta,e)\geq 0$.
By replacing $\delta$ by $-\delta$, we also have $(\delta,e)\leq 0$
 and therefore $(\delta,e)=0$.

Let $\mathcal{V}\subset \Lambda'\otimes \R$ be a bounded set
 appearing in \eqref{Siegel.condition} and 
 set for $y\in \Lambda'_{2d}\otimes \R$ 
\[
(y,y)_{E,\mathcal{V}}:=\inf \{(y,y)_{E,v'}\mid v'\in \mathcal{V}\} 
\]
(cf.\ the proof of Claim~\ref{e.nef}), 
which may not be a bilinear form. However, from the
 positive definiteness of each
 $(,)_{E,v'}$ and the boundedness of $\mathcal{V}$,
 we see that
 $\{y\in \Lambda'_{2d}\otimes \R\mid (y,y)_{E,\mathcal{V}}<C\}$
 for a fixed constant $C$ is bounded in $\Lambda'_{2d}\otimes \R$. 
Then $(\delta',v')=0$ implies 
 $(\delta',\delta')_{E,v'}=2$, where
 $v'$ is the $\Lambda'_{2d}$-component of $\im x$,
 showing $\delta'$ is contained in a bounded set. 
Hence there are only finitely many such $\delta'$.
By the Eichler transvections, we may change $s$ so that $s$ becomes bounded.
Therefore it follows that the equivalence classes of $\delta$ is finite.
\end{proof}

Due to Claim~\ref{delta.finite},
 we get that $\mathcal{F}_{2d}^{\rm ADE}(l)$ is
 a $17$-dimensional subset of $\mathcal{F}_{2d}(l)$,
 which can be seen as tropical analog of Heegner divisors. 
\end{Rem}

\section{Holomorphic limits and monodromy}\label{K3.along.disk}

In this section, as in \cite{Gross, KS, TGC.II} 
 and our \S\ref{monodromy.AV}, \S\ref{reconst.AV}, 
we consider a one parameter holomorphic family of polarized K3 surfaces 
and consider the Gromov-Hausdorff limit of the rescaled Ricci-flat K\"ahler metrics on the fibers along the degeneration. 
As discussed in \cite{TGC.II}, even the existence of the limit 
 (independent of the choice of convergent sequence to $0$), 
is not a priori clear. Nevertheless, our previous discussion implies the following: 

\begin{Cor}\label{1par.lim.K3.}
Let  $\pi^{*}\colon (\mathcal{X}^{*},\mathcal{L}^{*})\twoheadrightarrow \Delta^{*}$ be a 
punctured holomorphic family of polarized K3 surface of degree $2d$, possibly 
with ADE singularities, and suppose it is of type III in the Kulikov's sense 
(``maximal degeneration'') \cite{Kul, PP}. 
That is, if we write as $\gamma\in O(\Lambda_{2d}) \subset O(2,19)$ the monodromy on 
$H^{2}(\mathcal{X}_{s},\Z)$ for a fixed $s\neq 0$ 
 around $0\in \Delta$, 
with respect to a marking of $H^{2}(\mathcal{X}_{s},\Z)$, then its nilpotent index is $3$. 
We write the Jordan decomposition of the monodromy as $\gamma=\gamma_{s}\gamma_{u}$, where $\gamma_{s}$ is the semisimple part 
while $\gamma_{u}$ is the unipotent part. Set $N:=\log\gamma_{u}$ (as in \S\ref{extension.monodromy}). 
We denote the holomorphic map from the base $\Delta^{*}\to \mathcal{F}_{2d}$ corresponding to $\pi^{*}$ by $\varphi^{o}$. 

\begin{enumerate}

\item \label{1par.K3.mon}
$\varphi^o$ extends to a continuous map $\overline{\varphi^o}\colon \Delta\to \overline{\mathcal{F}_{2d}}^{\rm Sat}$ 
whose limit $\overline{\varphi^o}(0)$ is in 
the $18$-dimensional strata $\bigsqcup_{l}\mathcal{F}_{2d}(l)$ (cf.\ \S\ref{K3.Sat.sec}) and exactly coincides with 
the image of the monodromy point $[N]\in \mathbb{P}(\mathfrak{g})$. Recall that now 
we have $G=O(\Lambda_{2d}\otimes \R)$ so that $\mathfrak{g}\simeq \mathfrak{o}(2,19)$. 

\item \label{Rel.FriS} (Relation with \cite{FriS})
Note further that 
$$\mathfrak{g}\simeq \wedge^{2}(\Lambda_{2d}\otimes \R)
\subset {\rm End}(\Lambda_{2d}\otimes \R).$$ 
The last inclusion is induced by the bilinear form on $\Lambda_{2d}(\otimes \R)$. 
Through this identification, $N$ is identified (up to constant) with $e\wedge \delta$ for an isotropic element $e\in \Lambda_{2d}$ 
where $\delta$ is defined as \cite[p.7, before (1.1) Lemma]{FriS} and $\Z e$ is the $0$-th piece $W_0$ of the weight monodromy filtration 
(cf., e.g., \cite[beginning of \S1]{FriS}). 

\item ({\cite[Conjecture 1]{KS}, \cite[Conjecture 6.4]{GW}, \cite[Conjecture 5.4]{Gross}} for K3 surfaces) \label{KS.GW.conj} 
For $t\to 0$, the K3 surfaces with rescaled Ricci-flat K\"ahler metrics 
$\bigl(\mathcal{X}_{t},\frac{d_{\rm KE}(\mathcal{X}_{t})}{{\rm diam}(\mathcal{X}_{t})}\bigr)$ converge (collapse) to a 
tropical polarized K3 surface, i.e., a metrized $S^{2}$, in the Gromov-Hausdorff sense. 
\end{enumerate}
For the last statement \ref{KS.GW.conj}, we can further specify the limit metrized $S^{2}$ as 
$\Phi_{\rm alg}(\bar{\varphi}(0))$ explicitly defined in \S\ref{trop.K3.1}. 
\end{Cor}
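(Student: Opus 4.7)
For part (i), I would apply Theorem~\ref{monodromy} to the variation of polarized weight-$2$ Hodge structures on $H^{2}(\mathcal{X}^{*},\mathbb{Z})$ associated to $\pi^{*}$ (with $\mathbb{G}=O(\Lambda_{2d}\otimes \Q)$ and $\Gamma=\tilde{O}^{+}(\Lambda_{2d})$), which directly produces the continuous extension $\overline{\varphi^o}\colon \Delta\to \overline{\mathcal{F}_{2d}}^{\rm Sat,\tau_{\rm ad}}$ sending $0$ to the image of $[N]\in \mathbb{P}(\mathfrak{g})$. To locate this image inside the stratification \eqref{K3.Sat.strata}, I would use the Pl\"ucker description of $\overline{\mathcal{F}_{2d}}^{\rm Sat,\tau_{\rm ad}}$ as a quotient of a partial compactification of $\mathcal{D}_{\Lambda_{2d}}\subset \mathrm{Gr}_{2}(\Lambda_{2d}\otimes \R)$ (cf.\ \S\ref{K3.Sat.sec}): combined with the Kulikov type III assumption that $N^{2}\neq 0$ and $N^{3}=0$, a short computation under the canonical identification $\mathfrak{o}(2,19)\simeq \wedge^{2}(\Lambda_{2d}\otimes \R)$ forces $N$ to be a decomposable bivector of the form $c\cdot e\wedge \delta$ with $e$ isotropic, $(e,\delta)=0$, and $(\delta,\delta)>0$. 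Its Pl\"ucker image is the positive semidefinite plane $\mathrm{span}(e,\delta)$ with kernel $l=\R e$, which is precisely the point $[e,\delta]\in \mathcal{F}_{2d}(l)$ in the parametrization \eqref{F2d(l)}; comparing the weight monodromy filtration of this $N$ with the one in \cite[p.7, before Lemma~1.1]{FriS} then identifies the $\delta$ here with theirs up to a scalar, giving (ii).

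For (iii), once (i) places $\overline{\varphi^o}(0)$ inside $\bigsqcup_{l}\mathcal{F}_{2d}(l)$, the continuity of the geometric realization map on this locus, Theorem~\ref{K3.Main.Conjecture.18.ok}, immediately yields
\[
\Phi_{\rm alg}(\varphi^{o}(t))\xrightarrow[t\to 0]{\rm GH}\Phi_{\rm alg}(\overline{\varphi^o}(0)).
\]
By the definition of $\Phi_{\rm alg}$ on $\mathcal{F}_{2d}$ (Yau's theorem followed by rescaling to diameter $1$), the left-hand sequence is exactly $\bigl(\mathcal{X}_{t},\, d_{\rm KE}(\mathcal{X}_{t})/\mathrm{diam}(\mathcal{X}_{t})\bigr)$; by the construction of $\Phi_{\rm alg}$ on the $18$-dimensional strata in \S\ref{trop.K3.1}, the right-hand limit is the metrized $S^{2}$ associated to the hyperK\"ahler-rotated elliptic fibration determined by $(e,\delta)$. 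This simultaneously proves (iii) and the concluding specification of the limit as $\Phi_{\rm alg}(\overline{\varphi^o}(0))$.

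The main obstacle is the identification in (ii): one must carefully match the $\delta$ extracted from the $\wedge^{2}$-description of $N$ with the distinguished vector defined before \cite[Lemma~1.1]{FriS}, keeping in mind that in both pictures $\delta$ is only defined modulo $\R e$ and up to positive rescaling, and that the orientation choice fixing $\mathcal{D}_{\Lambda_{2d}}$ selects the correct representative. A minor but necessary check in (i) is to rule out the possibility that $[N]$ lies in a zero-dimensional stratum $\mathcal{F}_{2d}(p)$, which is excluded precisely by the type III hypothesis (the type II case $N^{2}=0$ would instead produce a decomposable $N=e\wedge f$ with $(f,f)=0$, whose Pl\"ucker image is a two-dimensional rational isotropic plane, i.e., a point of $\mathcal{F}_{2d}(p)$).
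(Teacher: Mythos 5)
Your route to (i) and (iii) is essentially the paper's: part (i) via Theorem~\ref{monodromy} together with the Pl\"ucker picture, part (iii) via Theorem~\ref{K3.Main.Conjecture.18.ok}. Where you diverge is (ii). The paper's proof of (ii) first imports $[N]=[e'\wedge\delta]$ from \cite[(1.1)~Lemma]{FriS}, then invokes Theorem~\ref{K3.Main.Conjecture.18.ok} to see that the limit is a positive semi-definite plane with exactly one isotropic direction, and finally combines this with $(\delta,\delta)>0$ from \cite{FriS} to conclude $(e')^{2}=0$. You instead try to get the isotropy of $e$ purely algebraically from the ``$N^{3}=0$, $N^{2}\neq 0$'' constraint on the bivector. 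That is an appealing and in spirit lighter route because it bypasses the analytic input of Theorem~\ref{K3.Main.Conjecture.18.ok} in the proof of (ii); but as you have written it there is a gap.

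The gap: the statement ``$N^{3}=0$ and $N^{2}\neq 0$ force $N$ to be a decomposable bivector $c\,e\wedge\delta$'' is not true in $\mathfrak{o}(2,19)\simeq\wedge^{2}(\Lambda_{2d}\otimes\R)$ without further input. There exist nilpotents in $\mathfrak{o}(2,19)$ of Jordan type $[3,3,1^{15}]$ satisfying $N^{3}=0$, $N^{2}\neq 0$, and such an $N$ has operator rank $4$, hence is not decomposable. Decomposability (operator rank $\leq 2$) is equivalent here to $N^{2}$ having rank $1$, i.e.\ to $W_{0}$ of the weight monodromy filtration being one-dimensional. That is indeed true for a type~III degeneration of K3 surfaces, but it is an input from the limit mixed Hodge structure (forced by $h^{2,0}=1$, giving $\operatorname{Gr}^{W}_{0}\simeq\operatorname{Gr}^{W}_{4}\simeq\Z$), not a consequence of the nilpotent index alone. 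Similarly, the assertion $(\delta,\delta)>0$ (as opposed to merely $(\delta,\delta)\neq 0$, which is what $N^{2}\neq 0$ gives once decomposability and $e$-isotropy are known) requires the observation that $[N]$ sits in the closure of $\mathcal{D}_{\Lambda_{2d}}$ inside $\mathbb{P}(\mathfrak{g})$, so its Pl\"ucker image is a positive semi-definite plane; or, as the paper does, one can cite $(\delta,\delta)>0$ directly from \cite{FriS}. Once these two points are supplied, your computation (from $N=a\wedge b$ and $N^{3}=0$ one gets $(a,b)^{2}=(a,a)(b,b)$, so the form degenerates on $\mathrm{span}(a,b)$ with a one-dimensional radical $\R e$) goes through, and the rest of your argument --- the match with $\delta$ of \cite{FriS}, the exclusion of $\mathcal{F}_{2d}(p)$ in the type~II case, and the deduction of (iii) from Theorem~\ref{K3.Main.Conjecture.18.ok} --- is exactly as in the paper.
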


\begin{proof}
The first assertion \ref{1par.K3.mon} of the theorem 
simply follows from Theorems~\ref{rationality.HSD}, \ref{monodromy} 
for $\mathcal{F}_{2d}$ (cf.\ also Proposition~\ref{MS.reconstruction}). 
The limit $\overline{\varphi^{o}}(0)$ is in $\bigsqcup_{l}\mathcal{F}_{2d}(l)$
 (cf.\ \S\ref{K3.Sat.sec}) because of the maximal degeneration condition of $\pi^{*}$.
This can be seen from that the extension of $\varphi^{o}$ to
 the Satake-Baily-Borel compactification sends $0$ to 
 a point in $0$-dimensional boundary components
 (i.e.\ $F$ in the statement of Theorem~\ref{rationality.HSD} becomes
 a $0$-dimensional component in our situation).
Alternatively, the assumption that $N$ has nilpotent index $3$
 and our description of boundary component in $\mathbb{P}(\mathfrak{g})$
 after \eqref{k_center} 
 also show $\overline{\varphi^{o}}(0)\in \bigsqcup_{l}\mathcal{F}_{2d}(l)$.

We prove the second assertion \ref{Rel.FriS}, depending on \cite{FriS}. 
Let us consider the lift of $\varphi^o$ as 
$$\widetilde{\varphi^o}\colon \mathbb{H}\to D\subset {\rm Gr}_2(\Lambda_{2d}\otimes \R)\subset \mathbb{P}(\mathfrak{g}).$$
Here, the last inclusion is induced by the Pl\"ucker embedding. 
From Theorem~\ref{monodromy} and its proof, combined with \cite[(1.1) Lemma]{FriS}, we get that for a fixed $x\in \R$ we have 
$$\lim_{y\to +\infty}\widetilde{\varphi^o}(x+\sqrt{-1}y)=[N]=[e'\wedge \delta],$$ 
where $e'$ is the generator\footnote{originally denoted by $\gamma$ in \cite[\S1]{FriS} but we change the notation to avoid confusion with our 
monodromy $\gamma$.} of the last piece of the weight monodromy filtration $W_0$. 
On the other hand, our Theorem~\ref{K3.Main.Conjecture.18.ok} and its proof gives that 
$\lim_{y\to +\infty}\widetilde{\varphi^o}(x+\sqrt{-1}y)$ in 
${\rm Gr}_2(\Lambda_{2d}\otimes \R)$ is a positive semi-definite $2$-plane with exactly one isotropic direction. 
In the meantime, \cite{FriS} showed $\delta^2>0$ where $e' \perp \delta$. Hence, $e'^2=0$ must hold (so that we denote $e'$ simply by $e$ 
to follow the setup of \S\ref{trop.K3.1}). Therefore, \ref{Rel.FriS} follows. 

The assertion \ref{KS.GW.conj} and the last statement follow from Theorem~\ref{K3.Main.Conjecture.18.ok}, 
combined with the first assertion \ref{1par.K3.mon}. 
\end{proof}

Recall the analog for abelian varieties case holds as Theorem~\ref{1par.AV}. 
We also expect the compact hyperK\"ahler varieties version 
should follow from the same line of proof, 
once we settle some technical difficulties (see \S\ref{high.dim.HK.sec}). 

\medskip

Moreover, recall from Remark~\ref{metric.class} that the limit data $N$ or $\delta$ can be regarded as the 
tropical analog of period (linearized version of the metric class) for the limit tropical K3 surface. The following 
remark is in a similar direction. 

\begin{Rem}
Recall from Remark~\ref{metric.class} that the data parametrized at the ($18$-dimensional) boundary components 
are interpreted as their metric classes of the limit Monge-Amp\`ere manifolds with singularities, 
or equivalently to their radiance obstructions of their Legendre duals. 
When we presented some outline of our works, 
the authors also learned from Kazushi Ueda that his student 
\textit{Yuto Yamamoto} had had some then-ongoing interesting work \cite{Yam} 
which seems to be related to our works, 
where he constructs a sphere with an integral affine structure 
from the tropicalization of any given smooth anticanonical hypersurface  
in a toric Fano 3-fold, and computes its radiance obstruction. 
There is a related result for more general Calabi-Yau weighted hypersurfaces as \cite{Iritani, AGIS}. 
We thank them for the kind explanations. 
\end{Rem}

\begin{Rem}
This is a note added in our revision. 
Five months after our work appeared on arXiv, an interesting and closely related paper  \cite{AET} on an explicit 
algebro-geometric compactification of $\mathcal{F}_{2} (d=1)$ but with tropical geometric background appeared. 
In particular, what the authors call the monodromy theorem, 
\cite[Theorem 8.38]{AET}, is very closely related to our work above. 
Indeed, starting from Type III (maximal) degeneration of polarized K3 surfaces (data (i) of {\it op.cit.}), 
above Corollary~\ref{1par.lim.K3.} combined with our construction of the Lagrangian fibration $\Phi_{\rm alg}([e,v])$ in \S\ref{trop.K3.1} (giving data (ii) of {\it op.cit.})~satisfy their statements, and we see a connection between them indirectly through hyperK\"ahler rotation (Construction~\ref{HK.rotation.}) in \S\ref{Alg.K3.statements.sec}. 
We thank Valery Alexeev and Philip Engel for the discussions. 
\end{Rem}


\chapter{Uniform adiabatic limits of the metrized K3 surfaces}\label{GTZ.extend.proof}

\section{Overview of our analysis}

We now focus on fairly analytic contents, 
i.e., the proof of Theorem~\ref{GTZ.extend} which requires 
some refinements of known  
a priori estimates results for the solutions of the 
complex Monge-Amp\`ere equations. 

Suppose we are in the setting of Theorem~\ref{GTZ.extend}.
First of all, if $s\in S$ is fixed and $X_s$ is smooth, then the Gromov-Hausdorff convergence
 $(X_s, \tilde{\omega}_{s, t}) \to (B_s, g_s)$ as $t\to0$ follows from 
\cite[Theorem~1.2]{GTZ1} and \cite[Theorem~1.1]{GTZ2}. 
Even if $X_{s}$ is ADE singular, if $s\in S$ is fixed then 
we only need to slightly modify the proof of \cite[Theorem~1.2]{GTZ1} and \cite[Theorem~1.1]{GTZ2} in the orbi-setting. 

The essential technical difficulties we face for the full proof of Theorem~
\ref{GTZ.extend}  are twofolds --- 
the necessity of the uniformity of convergence with respect to $s$, 
 and the fact that we allow (degenerations to) ADE singular $X_{s}$. 
 For these reasons, our theorem \ref{GTZ.extend} 
 does \textit{not} directly follow from 
 \cite{Tos, GTZ1, GTZ2, TZ} and its proofs. 

Nevertheless, we of course build up our discussion heavily on 
the arguments in \cite{Tos, GTZ1, GTZ2, TZ} 
 and make various estimates uniform with respect to $s$ one by one. 
Before going to the details, here we explain some necessary modifications we made 
by showing examples. 

Firstly, we use two kinds of (families of) reference metrics: 
one is Ricci-flat K\"ahler metric 
 and the other is constructed in \S\ref{Grauert.singular.metric}, 
 in order to get various uniform estimates
 while allowing \textit{degeneration to ADE singular K3 surfaces}. 
 The main point of this new reference metric construction is 
 to get a uniform bound of \textit{holomorphic bisectional curvatures}
 in Claim \ref{bisec.bded}. 
 Indeed, lack of such uniform upper bound of bisectional curvatures of 
 Ricci-flat K\"ahler metrics (while degenerating to ADE singularities 
 cf.\ \cite{BKN, And,Kro89a, Kob90} etc.) would prevent us from 
 directly applying the standard $C^{2}$-estimate method with 
 the Chern-Lu inequality in order to follow arguments in \cite{Tos}. 

Secondly, possibly due to lack of the 
 best language of talking about a family of metrized complex analytic spaces 
 over whole $K\Omega$, 
 various other estimates in \cite{Tos} become nontrivial when $s$ varies 
 (e.g., in \S\ref{H.upper}, the Kronheimer family of 
 ALE metrics are used \cite{Kro89a, Kro89b} 
 to get a continuous control around degeneration 
 to ADE singularities). 
 
Many parts of our arguments below use the assumption $\dim X_s=2$. 
For example, the fiberwise $L^{\infty}$-estimate in \S\ref{(3.9)} 
uses the fact that $\dim \pi_{s}^{-1}(y)=1$ which makes the 
Monge-Amp\`ere equation on the fiber linear.
Also, we prove the Gromov-Hausdorff convergence by 
estimating the distorsions (cf.\ \cite{BBI}) 
of $\pi_{s}\colon X_{s}\twoheadrightarrow B_{s}$
 and their $C^{\infty}$-section maps as in \cite[\S6]{GW}, 
while \cite{GTZ1,GTZ2,TZ} analyze the map ``$\phi$'' 
between the base and a priori 
Gromov-Hausdorff limits by using the Fukaya-Cheeger-Colding limit measure 
\cite{Fuk}, \cite{ChCo}. 
That is, we replace the arguments for Gromov-Hausdorff convergence of \cite[\S5]{GTZ1}
 by some elementary arguments based on the Bishop-Gromov
 inequality, following \cite[\S6]{GW}. This discussion also 
 uses that $\dim(B_{s})=1$. 
Also some other arguments necessary for our purpose 
 do not directly follow from \cite{Tos, GTZ1, GTZ2}, 
 so we also complete such parts. 

Rather than repeating the whole arguments of \cite{Tos, GTZ1}, 
 we only explain the points where we need some changes. 
Below, ``$s$-uniform'' means the uniformity (independence) of constant for $s$,
 which is our key word. 

\section{Setting}\label{Setting}

Let $\{\pi_{s}\colon 
(X_s,\omega_{X_s})\to B_s\}_{s\in S}$ be a collection of elliptic K3 surfaces, 
possibly with ADE singularities, as in Theorem~\ref{GTZ.extend}. The reference metric $\omega_{X_s}$ is the Ricci-flat K\"ahler metric in the 
K\"ahler class specified by $s$. 
We set $$S_{\rm ADE}:=\{s\in S\mid X_s \text{ has ADE singularities}\},$$ 
which is equal to $S\cap (K\Omega\setminus K\Omega^{o})$ (cf.\ \cite{KT}). 

Since $S$ is relatively compact, it is enough to prove the theorem 
 for a small neighborhood of each $s_0\in S$.
Fix $s_0 \in S$ and we want to make a family of $X_s$ for $s$ near $s_0$.
For a (possibly) ADE singular K3 surface $X_s$, let $X_s^o$ be the smooth part of $X_s$
 so that $X_s\setminus X_s^o$ is the set of finite singular points.
If $s\not\in S_{\rm ADE}$, then $X_s^o=X_s$.
The collection $\{X_s^o\}_{s\in S}$ forms a real analytic family on $S$ with
 a family of the restricted Ricci-flat K\"{a}hler form $\omega_{X_s}|_{X_{s}^{o}}$  (see \cite{KT}).
For each $s\in S$ the K\"{a}hler form $\omega_{X_s}|_{X_s^o}$ extends to
 $X_s$ as an orbifold K\"{a}hler form $\omega_{X_s}$. 

We will also require the following holomorphic family of K3 surfaces.
Let $S'={\rm pr}(S)$ be the image of $S$ by the natural projection
 ${\rm pr}:K\Omega \to \Omega(\Lambda_{\rm K3})$.
Then $S'$ is an open subset of $\Omega^e(\Lambda_{\rm K3})$.
Let $s_0=[p_0,\kappa_0]$ so that ${\rm pr}(s_0)=p_0 \in S'$.
Recall from the discussion above Theorem~\ref{GTZ.extend} 
 that the K\"{a}hler cone $K_{p_0}$ has a stratification $K_{p_0}^I$.
If we choose an open stratum $K_{p_0}^I$ such that $\overline{K_{p_0}^I}\ni \kappa$,
 then (replacing $S$ and $S'$ by smaller subsets if necessary)
 we obtain a holomorphic family $\tilde{\mathcal{X}}\to S'$, where
 $\tilde{\mathcal{X}}=\bigcup_{p\in S'}X_p$ and 
 $X_p$ are marked smooth K3 surfaces with
 period $[\Omega_{X_p}]=p$. 
For our $s_{0}=[p_{0},\kappa_{0}]\in K\Omega$, 
in $X_{{\rm pr}(s_{0})}$, we consider all $(-2)$-curves corresponding 
to the root hyperplanes passing through $\kappa_{0}$. We can extend this contraction 
to simultaneous contraction to $X_{p}$ for all $p\in S'$ as a 
contraction of $\tilde{\mathcal{X}}$ by \cite{Riem}. 
In this way, we obtain a deformation family $\mathcal{X}\to S'$
 of the (possibly) ADE singular K3 surface $X_{s_0}$.
Our assumption $S\subset K\Omega^{e\geq 0}$ implies that
 this family has a structure of elliptic fibrations $\mathcal{X}\to \mathbb{P}^1$, by Fact \ref{e.nef}. 
Here, we shrink $S$ again if necessary and trivialize $B_{s}$ to be $\mathbb{P}^{1}$. 

For a positive real number $t$, let $\omega_{s,t}:=\pi_s^*\omega_{B_s}+t\omega_{X_s}$.
We set up notation of the complex Monge-Amp\`ere equations in our concern: 
\[\omega_{s,t}+\sqrt{-1} \partial\bar{\partial}\varphi_{s,t}=\tilde{\omega}_{s,t}\]
where $\tilde{\omega}_{s,t}$ is a Ricci-flat K\"ahler form and $\sup_{X_{s}}\varphi_{s,t}=0$.
We may take the same K\"ahler metric $\omega_{B_s}$ for all $s\in S$
 as a reference metric on $B_s$
 under the trivialization $B_s\simeq \mathbb{P}^1$. 
 
The discriminant locus of $\pi_{s}$ is denoted by ${\rm disc}(\pi_{s})$ 
 and $X_{s}\setminus \pi_{s}^{-1}({\rm disc}(\pi_{s}))$, i.e., the union of 
 smooth proper fibers, is denoted by $X_{s}^{\rm sm}$. 
The superscript ``sm'' stands for the relative smoothness. 
 
Also recall from Theorem \ref{GTZ.extend} 
 that the McLean metric on the base $B_{s}$ is 
 denoted by $g_{s,{\rm ML}}$ and the corresponding K\"ahler form 
 on $B_{s}\setminus {\rm disc}(\pi_s)$ is denoted by $\omega_{s,\rm{ML}}$. 


\section{A priori estimates} 

\subsection{$s$-uniform diameters upper bound of $\tilde{\omega}_{s,t}$} \label{Tos.lim.uni}

It is straightforward from the proof therein, that the diameter bound obtained in 
\cite[Theorem 3.1]{Tos.lim} can be taken uniformly with respect to 
bounded variations of ``$(X,\omega_{0})$'' (in the notation of \cite{Tos.lim}). 
We apply this to get uniform upper bound of the diameters of
 $\tilde{\omega}_{s,t}$ (and thus those of their Gromov-Hausdorff limits). 
We refer to Lemma~\ref{diam.Kahler} for a similar discussion in more details.


\subsection{Upper bound for the $H$-type function}\label{H.upper}

We will find an $s$-uniform upper bound of the following function on $X_s$:
\begin{equation*}
H_{s}:=\dfrac{\omega_{X_s}\wedge \pi_s^{*}\omega_{B_s}}{\omega_{X_s}^2}
= {\rm tr}_{\omega_{X_s}} \pi_s^{*}\omega_{B_s}
\end{equation*}
This part is something trivial and not discussed in 
\cite{Tos}, but becomes nontrivial and new in our 
situation where $s$ varies. 
First, it is easy to see that the above function $H_{s}$ 
forms a continuous function $H$ on 
 the family $\bigcup_{s\in S} X_s^o$. 
Therefore, it is enough to prove that $H$ is bounded on a neighborhood
 of each singular point of $X_{s_0}$ for $s_0=[{\rm pr}(s_0),\kappa_0]\in S$.
This will be carried out by using Kronheimer's family of ALE spaces.

Let $s_0=[{\rm pr}(s_0), \kappa_0]\in S$ and set 
\[\Delta_{s_0}:=\{\delta\in\Lambda_{\rm K3}\mid (\delta,\delta)=-2,\ 
 \delta\perp \langle {\rm pr}(s_0), \kappa_0 \rangle\}.\]  
Then $\Delta_{s_0}$ is finite and forms a root system of type $A$, $D$ or $E$. 
Suppose that $S$ is a sufficiently small neighborhood of $s_0$
 so that if $s=[{\rm pr}(s), \kappa]\in S$
 and $\delta\in \Lambda_{\rm K3}$, $(\delta,\delta)=-2$, and
 $\delta\perp \langle {\rm pr}(s), \kappa \rangle$,
 then $\delta\in \Delta_{s_0}$.
Recall that we have a family $\mathcal{X} \to S'$ of ADE K3 surfaces
 and its simultaneous desingularization $\tilde{\mathcal{X}} \to S'$
 corresponding to an open stratum $K_{p_0}^I$ such that
 $\overline{K_{p_0}^I}\ni \kappa_0$.
We choose such a stratum $K_{p_0}^I$, which corresponds to
 the choice of positive roots $\Delta_{s_0}^+$ in $\Delta_{s_0}$.
Define an open subset $S_+$ of $S$ as
\[S_+:=\{s=[p,\kappa]\in S \mid (\kappa,\delta)>0
 \text{ for } \delta \in \Delta_{s_0}^+\},\]
 corresponding to one chamber of ${\rm pr}^{-1}({\rm pr}(s_{0}))\cap 
 K\Omega^{o}$. 
Since there are only finitely many choices for $\Delta_{s_0}^+$,
 it is enough to show that $H$ is bounded on $\bigcup_{s\in S_+} X_s^o$.

For simplicity we consider the case where $X_{s_0}$ has only one singular point $x_0$.
The general case can be treated in a similar way.
Suppose that the germ at $x_0$ is
 $\mathbb{C}^{2}/\Gamma$ with
 $\Gamma \subset SL(2,\mathbb{C})$.
Consider the corresponding Kronheimer family of
 ALE K\"ahler orbifolds (\cite{Kro89a}, \cite{Kro89b}). 
Slightly changing the notation of \textit{op.cit.}, 
 we denote the family by
 $\mathcal{Y}=\bigcup_{\zeta\in Z\otimes \mathbb{R}^{3}}
 Y_{\zeta}\to Z\otimes \mathbb{R}^{3}$, 
 where $Z$ is the center of the Lie algebra
 for the Hamiltonian action in \cite{Kro89a}. 
Also $Z$ can be identified with the underlying vector space for the root
 system $\Delta_{s_0}$.
For each $\zeta=(\zeta_{1},\zeta_{2},\zeta_{3})\in Z\otimes \mathbb{R}^{3}$,
 $Y_{\zeta}$ is an ALE space with K\"{a}hler form $\omega_{Y_{\zeta}}$ and
 for $\zeta=(0,0,0)$, $Y_{0}\simeq \mathbb{C}^2/\Gamma$.
Since $x_0\in X_{s_0}$ and $0\in Y_{0}$ has the same singularity, 
 we obtain a local isomorphism between $x_0\in X_{s_0}$ and $0\in Y_{0}$.
The K\"{a}hler form on $Y_{0}\simeq \mathbb{C}^2/\Gamma$ is a flat metric
 on the covering $\mathbb{C}^2$ so we may write
 $\omega_{Y_{0}}
 = \sqrt{-1}(dz_1\wedge d\bar{z}_1 + dz_2\wedge d\bar{z}_2)$
 with a flat coordinate $z_1,z_2$ on $\mathbb{C}^2$.
Since $\omega_{X_{s_0}}$ is an orbifold K\"{a}hler form, 
 it can be locally written
 as $\sqrt{-1}f(z_1,z_2)(dz_1\wedge d\bar{z}_1 + dz_2\wedge d\bar{z}_2)$ 
 for a real analytic function $f(z_1,z_2)$.
We may assume $f(0,0)=1$ by replacing
 the local isomorphism between $X_{s_0}$ and $Y_{0}$.

The subset $\bigcup_{\zeta_{2},\zeta_{3}\in Z}
 Y_{(0,\zeta_{2},\zeta_{3})}$ of $\mathcal{Y}$ forms a holomorphic flat family 
$\mathcal{Y}_0\to Z\otimes \mathbb{C}$. 
Actually, $\mathcal{Y}_0$ is an affine variety mapping algebraically 
to $Z\otimes \mathbb{C}$ by the Kemp-Ness theorem and 
the general theory of the geometric invariant 
theory \cite{Mum65}. 
If $\zeta_{1}\in Z$ is regular dominant with respect to $\Delta_{s_0}^+$, then
 $\mathcal{Y}_{\zeta_1}:=\bigcup_{\zeta_{2},\zeta_{3}\in Z} Y_{(\zeta_{1},\zeta_{2},\zeta_{3})}
 \to \mathcal{Y}_{0}$ is a simultaneous desingularization.
We note that $\mathcal{Y}_{\zeta_1}$ for various regular dominant $\zeta_1$
 are canonically isomorphic to each other.
Moreover, this family
 $\mathcal{Y}_{\zeta_1}\to \mathcal{Y}_{0}\to Z\otimes \mathbb{C}$
 has the semi-universality for such deformations by \cite[Corollary 5.3]{Hui}.
Therefore, we get a map $q'\colon S'\to Z\otimes \mathbb{C}$ such that
 the family $\mathcal{X}$ is isomorphic to 
 the pull-back $S' \times_{Z\otimes \mathbb{C}} \mathcal{Y}_{0}$
 on a neighborhood of $x$ which extends the local isomorphism
 between $X_{s_0}$ and $Y_{0}$.
Also we have a local isomorphism between its resolution $\tilde{\mathcal{X}}$
 and the pull-back $S' \times_{Z\otimes \mathbb{C}} \mathcal{Y}_{\zeta_1}$.
This last isomorphism is defined on a neighborhood of $V$
 if $V$ denotes the fiber of $x_0$ for the map
 $\rho:\tilde{\mathcal{X}}\to \mathcal{X}$,
 which is a union of $(-2)$-curves.

Define a smooth map $q:S_+\to Z\otimes \mathbb{R}^3$ by
 $q([p,\kappa])=(\zeta_1,\zeta_2,\zeta_3)$ with
 $\zeta_2+\sqrt{-1}\zeta_3=q'(p)$ and $(\zeta_1,\delta)=(\kappa,\delta)$
 for $\delta\in \Delta_{s_0}$.
Then we have a local isomorphism between
 $S_+ \times_{S'} \tilde{\mathcal{X}}$
 and $S_+ \times_{Z\otimes \mathbb{R}^3} \mathcal{Y}$,
 which is defined in a neighborhood of $V$.
Now we want to glue two K\"{a}hler forms
 $\omega_{X_s}$ and $\omega_{Y_{q(s)}}$
 to get another K\"{a}hler form $\omega_{s,{\rm gl}}$ on $X_s$.
We first consider the gluing of $\omega_{X_{s_0}}$ and $\omega_{Y_{0}}$.
On a neighborhood of $x_0$, $X_{s_0}$ can be identified with that of 
 $Y_0 \simeq \mathbb{C}^2/\Gamma$.
Recall that the two forms can be written on $\mathbb{C}^2$ as
\begin{align*}
&\omega_{X_{s_0}}=\sqrt{-1}f(z_1,z_2)(dz_1\wedge d\bar{z}_1 + dz_2\wedge d\bar{z}_2),\\
&\omega_{Y_0}=\sqrt{-1}(dz_1\wedge d\bar{z}_1 + dz_2\wedge d\bar{z}_2).
\end{align*}
A potential function of $\omega_{Y_0}$ can be chosen as $r^2$,
 where $r:=(|z_1|^2+|z_2|^2)^{\frac{1}{2}}$. 
Since we have assumed $f(0,0)=1$,
 a potential function $u$ of $\omega_{X_{s_0}}$
 can be chosen as $r^2+O(r^3)$. 
 Take a bump function $\beta(t)$ on $\mathbb{R}_{\geq 0}$ which is a smooth function and 
 takes value $1$ for $|t|<c$ and $0$ on $|t|>2c$.
If $c$ is small enough, then the function
 $u_{\rm gl}=r^2 \beta(c^{-1}r) + u (1-\beta(c^{-1}r))$ becomes pluri-subharmonic.
Therefore, we define a K\"{a}hler form $\omega_{s_0,{\rm gl}}$ by
\begin{align*}
\omega_{s_0,{\rm gl}}=\begin{cases}
\omega_{X_{s_0}} &\text{ on } X_{s_0}\setminus\{|r|\leq 2c\},\\
\sqrt{-1}\partial\bar{\partial} u_{\rm gl} &\text{ on } \{c<|r|<2c\},\\
\omega_{Y_0} &\text{ on } \{|r|\leq c\}.\\
\end{cases}
\end{align*}
Let $U:=\{c<|r|<2c\}$. Then $\omega_{s_0,{\rm gl}}$ is Ricci-flat outside $U$.
We will now glue K\"{a}hler forms on $X_s$ for $s\in S_+$.
Let $\mathcal{U}'=\bigcup_{p\in S'} U_p \simeq U\times S'$
 be a neighborhood of $U$ in $\mathcal{X}$
 and let $\mathcal{U} = S_+ \times_{S'} \rho^{-1}(\mathcal{U}')$
 be its pull-back so $\mathcal{U}\subset S_+\times_{S'} \tilde{\mathcal{X}}$.
Then we may write $\mathcal{U}= \bigcup_{s\in S_+} U_s$.
Since the two K\"{a}hler forms  $\omega_{X_s}$ and $\omega_{Y_{q(s)}}$
 on $U_s$ vary smoothly when $s$ moves $s_0$ to $S_+$, 
 the potential functions for them on $U_s$ also vary smoothly.
Therefore, shrinking $S$ (and then $S_+$) if necessary, we may glue 
 two forms $\omega_{X_s}$ and $\omega_{Y_{q(s)}}$ on $U_s$ by a bump function
 as above to get a K\"{a}hler form $\omega_{s,{\rm gl}}$ on $X_s$
 for $s\in S_+$.
We note that $\omega_{s,{\rm gl}}$ are Ricci-flat outside $U_s$ so
 the Ricci curvature of $\omega_{s,{\rm gl}}$ is uniformly bounded.

Since we have defined $q$ in such a way that $\kappa$ corresponds to $\zeta_1$,
 we can see that $\omega_{X_s}$ and $\omega_{s,{\rm gl}}$ are cohomologous.
Hence there exists a function $\psi_s$ such that
 $\omega_{X_s}=\omega_{s,{\rm gl}}+\sqrt{-1}\partial \bar{\partial}\psi_s$
 and $\sup_{X_s} \psi_s=0$.
We claim that 
\begin{Claim}\label{C0.RFK.gl}
$\|\psi_s\|_{L^\infty}$ is uniformly bounded for $s\in S_+$. 
\end{Claim}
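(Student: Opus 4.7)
The plan is to view the equation $\omega_{X_s} = \omega_{s,{\rm gl}} + \sqrt{-1}\partial\bar\partial \psi_s$ as a family of complex Monge-Amp\`ere equations on the (possibly orbifold) K3 surfaces $X_s$, and to apply a uniform $L^\infty$-estimate of Ko\l{}odziej type. First I would rewrite the equation in Monge-Amp\`ere form as
\[
(\omega_{s,{\rm gl}} + \sqrt{-1}\partial\bar\partial \psi_s)^2 \;=\; f_s\,\omega_{s,{\rm gl}}^2, \qquad \sup_{X_s}\psi_s=0,
\]
where $f_s := \omega_{X_s}^2/\omega_{s,{\rm gl}}^2$. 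Since $\omega_{X_s}$ is Ricci-flat orbifold K\"ahler, we have $\omega_{X_s}^2 = c_s\,\Omega_s\wedge\bar\Omega_s$ for a holomorphic $2$-form $\Omega_s$ that can be chosen to vary smoothly in $s\in S_+$ (using the holomorphic family $\mathcal{X}\to S'$), with $c_s$ determined by the cohomological pairing $[\omega_{X_s}]^2$ and hence bounded above and away from $0$ on $\overline{S_+}$.

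Next I would establish uniform two-sided bounds $0<c_1\leq f_s\leq c_2$ on $\overline{S_+}$. Outside the gluing annulus $U_s$ we have $\omega_{s,{\rm gl}}=\omega_{X_s}$, so $f_s\equiv 1$. Near each singular point, $\omega_{s,{\rm gl}}=\omega_{Y_{q(s)}}$ is the Kronheimer ALE metric; both $\omega_{Y_{q(s)}}^2$ and $\Omega_{X_s}\wedge\bar\Omega_{X_s}$ extend continuously in the Kronheimer parameter $q(s)\in Z\otimes\R^3$ and remain mutually comparable as $q(s)\to 0$, since in the limit $Y_0\simeq \C^2/\Gamma$ both reduce (up to a uniform smooth factor $f(z_1,z_2)$ with $f(0,0)=1$) to the flat volume form $dz_1\wedge d\bar z_1\wedge dz_2 \wedge d\bar z_2$. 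On the transition region $\{c<|r|<2c\}$ the potentials $r^2$ and $u=r^2+O(r^3)$ are interpolated smoothly via a fixed bump function, and compactness of $\overline{S_+}$ yields uniform control of the resulting Hessians.

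With $f_s$ uniformly bounded, the remaining step is the uniform Ko\l{}odziej $L^\infty$-estimate for $\psi_s$ on the reference family $(X_s,\omega_{s,{\rm gl}})$. Feeding the uniform $L^\infty$-bound on $f_s$ into the Ko\l{}odziej estimate (or, in the singular setting, the Eyssidieux-Guedj-Zeriahi version) yields a constant $C$ independent of $s\in S_+$ with $\|\psi_s\|_{L^\infty}\leq C$. The upper bound $\psi_s\leq 0$ is automatic from the normalization $\sup_{X_s}\psi_s=0$; the Ko\l{}odziej argument supplies the lower bound. Uniformity of the estimate requires uniform control of three background quantities: (a) a uniform diameter upper bound of $(X_s,\omega_{s,{\rm gl}})$, which follows from the argument in \S\ref{Tos.lim.uni} applied to the reference family (or by direct comparison with $\omega_{X_s}$), (b) a uniform positive lower bound on total volume, fixed by the K\"ahler class, and (c) a uniform Sobolev (or capacity) inequality.

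The main obstacle is exactly (c): verifying that the Sobolev constants of $(X_s,\omega_{s,{\rm gl}})$ do not degenerate as $s\to s_0$, where $X_{s_0}$ acquires ADE singularities. This is precisely the reason we glued in the Kronheimer metric rather than working with $\omega_{X_s}$ itself: the ALE family $\{\omega_{Y_{q(s)}}\}$ has uniformly bounded curvature and a uniform ALE Sobolev inequality in the Kronheimer parameter (a standard property of the Kronheimer family), and the interpolation region sits in a fixed compact annulus with smoothly varying metric, so the Sobolev constants of $\omega_{s,{\rm gl}}$ remain uniformly bounded on $\overline{S_+}$. Equivalently, one may pass to the simultaneous resolution $\tilde{\mathcal X}\to S'$, on which $\omega_{s,{\rm gl}}$ pulls back to a smooth, continuously varying family of genuine K\"ahler metrics on a compact complex manifold, where the parametric Ko\l{}odziej estimate applies verbatim.
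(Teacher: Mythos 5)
Your proof is correct but takes a genuinely different analytic route from the paper's. The paper invokes Yau's Moser iteration (as spelled out in \S\ref{unif.L.infinity}): the key inputs it names are a uniform diameter bound for $(X_s,\omega_{s,{\rm gl}})$ and a uniform Ricci curvature bound (immediate from the construction since $\omega_{s,{\rm gl}}$ is Ricci-flat outside the fixed gluing annulus $U_s$), which by Croke/Li--Yau type results give uniform Sobolev and Poincar\'e constants feeding directly into the iteration. You instead cast the equation as $(\omega_{s,{\rm gl}}+\sqrt{-1}\partial\bar\partial\psi_s)^2=f_s\,\omega_{s,{\rm gl}}^2$ and appeal to Ko\l{}odziej/EGZ, which only needs the density $f_s$ uniformly controlled in $L^p$, $p>1$, together with uniform capacity-type (or Sobolev) bounds; your identification of $f_s$ as piecewise locally constant (both $\omega_{X_s}^2$ and $\omega_{Y_{q(s)}}^2$ being proportional to $\Omega_s\wedge\bar\Omega_s$ by Ricci-flatness) away from the gluing annulus, and smooth on the fixed annulus, is a clean and essentially correct observation that the paper does not exploit. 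What each approach buys: the Moser route is elementary and its uniformity hypotheses (diameter, Ricci) are exactly what the construction of $\omega_{s,{\rm gl}}$ via the Kronheimer family was designed to deliver; the Ko\l{}odziej route is more robust to weaker regularity of $f_s$ and would generalize to higher-dimensional or more singular situations. One small imprecision in your last sentence: for $s\in S_+$, $X_s$ is already the resolution and $\omega_{s,{\rm gl}}$ is genuinely K\"ahler there, but as $s\to\partial S_+$ the exceptional curves collapse and the metric does \emph{not} converge to a K\"ahler metric on a fixed compact manifold, so you cannot invoke a ``parametric Ko\l{}odziej estimate verbatim'' by compactness of the metric family. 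The real content is exactly the uniform ALE Sobolev/capacity control in the Kronheimer parameter that you state in the preceding sentence; that is the load-bearing step, and it matches the role played by the uniform diameter and Ricci bounds in the paper's Moser argument.
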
 
This a priori $C^0$-estimate for the Monge-Amp\`ere equation 
follows from a standard Moser iteration method as in \cite{Yau}. 
See \cite[\S3.4]{Sze} or our \S\ref{unif.L.infinity} for a similar discussion with more details. 
Indeed, the input we need here is that both 
 the diameter and the Ricci curvature 
 of $(X_s,\omega_{s,\rm gl})$ are uniformly bounded
 and also the total volume of $(X_s,\omega_{s,\rm gl})$ has a uniform positive lower bound. 
This implies that the Sobolev constant and the Poincar\'e constant
 are also uniformly bounded. 
 
Once we have Claim~\ref{C0.RFK.gl},
 the bound of ${\rm tr}_{\omega_{X_s}}\pi_s^*\omega_{B_s}$
 is reduced to the bound of ${\rm tr}_{\omega_{s,{\rm gl}}}\pi_s^*\omega_{B_s}$. 
Indeed, the method of the proof of \cite[Lemma 3.1]{Tos} works.
By the Chern-Lu formula, we have
\[\Delta_{\omega_{X_{s}}}
 \log {\rm tr}_{\omega_{X_{s}}}\pi_{s}^{*}\omega_{B_{s}}
 \geq -A{\rm tr}_{\omega_{X_{s}}}\pi_{s}^{*}\omega_{B_{s}}\]
for a constant $A$ which does not depend on $s$.
If we have a bound
 ${\rm tr}_{\omega_{s,{\rm gl}}}\pi_s^*\omega_{B_s}\leq C$, then 
\[
\Delta_{\omega_{X_{s}}} \psi_s
 = 2 - {\rm tr}_{\omega_{X_{s}}} \omega_{s,{\rm gl}}
 \leq 2 - C^{-1} {\rm tr}_{\omega_{X_{s}}} \pi_s^*\omega_{B_s}
\]
and we get
\[
\Delta_{\omega_{X_s}}
(\log{\rm tr}_{\omega_{X_{s}}}\pi_{s}^{*}\omega_{B_{s}}-C(A+1)\psi_{s})
\geq {\rm tr}_{\omega_{X_{s}}}\pi_{s}^{*}\omega_{B_{s}}-2C(A+1).
\]
By applying the maximum principle to the function  
$\log{\rm tr}_{\omega_{X_{s}}}\pi_{s}^{*}\omega_{B_{s}}-C(A+1)\psi_{s}$
 and by Claim~\ref{C0.RFK.gl},
 we obtain a bound of ${\rm tr}_{\omega_{X_s}}\pi_s^*\omega_{B_s}$.

Moreover, since $\omega_{s,{\rm gl}}=\omega_{Y_{q(s)}}$ holds near $S_+ \times_{S'} V$, 
 it is enough to prove that 
 ${\rm tr}_{\omega_{Y_{q(s)}}}\pi_s^*\omega_{B_s}$ is bounded above
 near $S_+ \times_{S'} V$. 

The elliptic fibrations $\pi_s:X_s \to B_s$ for $s\in S_+$ form
 $S_+ \times_{S'} \tilde{\mathcal{X}} \to \mathbb{P}^1$,
 where $B_s$ are trivialized to be $\mathbb{P}^1$.
This map factors as $S_+ \times_{S'} \tilde{\mathcal{X}}\to 
 S_+ \times_{S'} \mathcal{X} \to \mathbb{P}^1$,
 where the first map is the pull-back of $\tilde{\mathcal{X}}\to \mathcal{X}$.
Then via the local isomorphism, this map is identified with
 $S_+ \times_{Z\otimes \mathbb{R}^3} \mathcal{Y}\to 
 S_+ \times_{Z\otimes \mathbb{C}} \mathcal{Y}_0 \to \mathbb{P}^1$.
We want to estimate the function ${\rm tr}_{\omega_{Y_{q(s)}}} \pi_s^{*}\omega_{B_s}$.
The value of this function at $x\in Y_{q(s)}$
 is given as $\sup_{\xi} \frac{\|(\pi_s)_*\xi\|}{\|\xi\|}$,
 where $\xi$ runs over the nonzero tangent vectors of $Y_{q(s)}$ at $x$
 and the lengths are defined with respect to $\omega_{Y_{q(s)}}$ and $\omega_{B_s}$.
Recall that $\mathcal{Y}$ is defined as a quotient of the vector space ${\bf M}$ by a compact group
 and we can take a horizontal lift $\tilde{\xi}$ of $\xi$ as a tangent vector in ${\bf M}$. 
The length of $\tilde{\xi}$ with respect to the hermitian metric equals $\|\xi\|$.
Since $\varpi: S_+ \times_{Z\otimes \mathbb{R}^3} {\bf M} \to \mathbb{P}^1$ is a smooth map,
 the function $\sup_{\eta} \frac{\|(\varpi)_*\eta\|}{\|\eta\|}$
 where $\eta$ runs over the nonzero tangent vectors of ${\bf M}$ at a point is continuous.
Therefore, we conclude that the function
 $\sup_{\xi} \frac{\|(\pi_s)_*\xi\|}{\|\xi\|}$ is also bounded 
 near $S_+\times_{S'} V$, which proves the desired uniform upper bound of $H_s$.


\subsection{Another reference metric} \label{Grauert.singular.metric}

We now construct useful reference metrics $\omega_{X_s, {\rm new}}$ to 
sometimes replace the 
Ricci-flat K\"ahler reference metric $\omega_{X_s}$
 in the same K\"ahler classes. 
In the following discussions, we will use both $\omega_{X_{s}}$ and 
$\omega_{X_{s},{\rm new}}$. 

Recall that we have the natural map ${\rm pr}\colon K\Omega
 \twoheadrightarrow \Omega(\Lambda_{\rm K3}).$ 
To start our construction,
 let us take $s_0=[{\rm pr}(s_0), \kappa_0]\in S$ and define $\Delta_{s_0}$
 as in the previous subsection. 

We will construct a family of K\"ahler forms
 $\omega_{X_{s},{\rm new}}$ on $X_s$ for $s$ near $s_0$.
Take $\kappa_i\in \Lambda_{\rm K3}\otimes \R\ (i=1,\dots,20)$
 which satisfy the following conditions:
\begin{itemize}
\item $\kappa_i$ are orthogonal to ${\rm pr}(s_0)$, 
\item $[{\rm pr}(s_0),\kappa_i]\in S$, 
\item $(\kappa_i,\delta) \neq 0$ for any $1\leq i\leq 20$ and $\delta\in \Delta_{s_0}$, 
\item $\kappa_0=\sum_{i=1}^{20}a_i\kappa_i$
 for some positive real numbers $a_i$.
\end{itemize}
This is possible because the subspace of $\Lambda_{\rm K3}\otimes \R$
 orthogonal to ${\rm pr}(s_0)$ has real dimension $20$. 
We then take a small neighborhood $S'$ of ${\rm pr}(s_0)$
 in $\Omega(\Lambda_{\rm K3})$
 and take local sections $p\mapsto [p,\psi_i(p)]$ of
 $K\Omega^{e\ge 0}\twoheadrightarrow \Omega^{e}(\Lambda_{\rm K3})$ on $S'$
 such that $\psi_i({\rm pr}(s_0))=\kappa_i$ for $1\leq i\leq 20$.

For each $i$, we have a holomorphic flat family of smooth K3
 surfaces $\{(X_{p,i},\omega_{p,i})\mid p\in S'\}$ 
 corresponding to $\{[p,\psi_i(p)]\}_{p\in S'}$, 
 associated with the differentiable family $\{\omega_{p,i}\}$ 
of their Ricci-flat-K\"ahler metrics. 
If $S'$ is sufficiently small,
 then $[p,\psi_i(p)]\in K\Omega^o$
 and hence $\{(X_{p,i},\omega_{p,i})\mid p\in S'\}$
 form a smooth proper holomorphic family,
 which we denote by $\pi_{i}\colon \tilde{\mathcal{X}}_{i}\twoheadrightarrow S'$. 

We set 
\[S'_{\rm ADE}:=
 \{p\in S'\mid p \perp \delta \text{ for some } \delta\in \Delta_{s_0}\}\subset S',\]
which is a union of Heegner divisors. 
For $p\in S'\setminus S'_{\rm ADE}$,
  all $X_{p,i}$ are smooth and
 canonically isomorphic with each other.
They are also isomorphic to $X_s$ if ${\rm pr}(s)=p$.
Moreover, the families $\tilde{\mathcal{X}}_{i}$ are all
 isomorphic on $S'\setminus S'_{\rm ADE}$. 
Across $p\in S'_{\rm ADE}$, the isomorphism does not necessarily extend
 due to the effect of flops,
 while abstract biholomorphic classes of $X_{p,i}$ are the same with different markings. 
For the details of its proof, we refer to \cite{BR}, 
\cite[\S4]{Fjk} (also \cite{MM}) for instance. 

We define $X_{p}^{o}:=X_{p}$ if $p\not\in S'_{\rm ADE}$ and
 define $X_{p}^{o}:=X_{s}^{\rm sm}$ if ${\rm pr}(s)=p \in S'_{\rm ADE}$,
 which is well-defined as it does not depend on the choice of $s$. 
Then $\{X_{p}^{o}\}$ naturally form a (non-proper) holomorphic family over $S'$. 
In both cases, $X_{p}^o$ can be naturally regarded as an open
 subset of $X_{p}$ and also of $X_s$ if ${\rm pr}(s)=p$. 

We replace $S$ by a smaller set if necessary so that
 any $s=[p, \kappa]\in S$ can be written as 
 $\kappa=\sum_{i=1}^{20} x_i \psi_i(p)$ with $c<x_i<C$,
 where $c$ and $C$ are positive constants.
Then we define a new reference metric on $X_p^o$ by
\[\omega_{X_s,{\rm new}}:=\sum_{i=1}^{20} x_i \omega_{p,i}\]
 and the smooth family of  non-compact K\"ahler manifolds
$\{(X_{p}^{o}, \omega_{X_s,{\rm new}})\}_{s=[p,\kappa]}$
on $S$.
Later in \S\ref{Thm2.2.}, we make use of the following key property. 
\begin{Claim}\label{bisec.bded}
The holomorphic bisectional curvatures of $\omega_{{X_s},{\rm new}}$ 
are $s$-uniformly bounded above. 
\end{Claim}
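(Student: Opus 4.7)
The plan is to deduce the uniform curvature bound from uniform $C^2$ bounds on each individual $\omega_{p,i}$ together with a uniform positive lower bound for the sum as a Hermitian form. First I would verify that the component families $\tilde{\mathcal{X}}_i\to S'$ are genuinely smooth and proper, so that classical elliptic regularity for Yau's Monge--Amp\`ere equation applies uniformly in the parameter. By the defining condition $(\kappa_i,\delta)\neq 0$ for every $\delta\in\Delta_{s_0}$ and continuity of the sections $\psi_i$, after shrinking $S'$ we may assume $(\psi_i(p),\delta)\neq 0$ for all $p\in S'$ and all $\delta\in\Delta_{s_0}$; hence $[p,\psi_i(p)]\in K\Omega^{o}$ for every $p$ and $i$. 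Each $\tilde{\mathcal{X}}_i\to S'$ is therefore a smooth proper holomorphic family of smooth K3 surfaces, and by the smooth parameter dependence of Yau's theorem the Ricci-flat K\"ahler forms $\omega_{p,i}$ vary smoothly in $p$.

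Next I would shrink $S$ so that $\overline{S'}$ is compact inside the open set on which all the families are defined. Then for each $i$ the $C^{k}$ data of $\omega_{p,i}$ in any finite atlas on $\tilde{\mathcal{X}}_i|_{\overline{S'}}$, together with the lowest eigenvalue of $\omega_{p,i}$ as a Hermitian form, are uniformly controlled in $p\in\overline{S'}$; in particular the holomorphic bisectional curvature of each $\omega_{p,i}$ is uniformly bounded on $\tilde{\mathcal{X}}_i|_{\overline{S'}}$. Through the natural open embedding $X_p^o\hookrightarrow X_{p,i}$ (the identity for $p\notin S'_{\rm ADE}$, and the restriction of the minimal resolution map to the smooth locus for $p\in S'_{\rm ADE}$) these bounds descend to $\omega_{p,i}|_{X_p^o}$.

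Since across $S'_{\rm ADE}$ the abstract biholomorphism class of $X_{p,i}$ is independent of $i$ (all are minimal resolutions of $X_p$), I would then fix $i=1$ as a reference and regard each $\omega_{p,i}$ as a K\"ahler form on $X_{p,1}$ via an appropriate identification, which is unique on $X_p^o$ up to the finite Weyl group action on the exceptional divisor. The $C^k$ bounds transport, up to a uniform multiplicative constant, to the coordinates of $X_{p,1}$. Combined with the hypothesis $c<x_i<C$, this yields uniform $C^2$ bounds on $\omega_{X_s,{\rm new}}$, while the obvious inequality $\omega_{X_s,{\rm new}}\geq c\,\omega_{p,1}$ together with the uniform positivity of $\omega_{p,1}$ gives a uniform $L^\infty$ bound for the inverse Hermitian form $(\omega_{X_s,{\rm new}})^{-1}$. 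Plugging these into the local formula
\[
R_{\alpha\bar{\beta}\gamma\bar{\delta}}
 = -\partial_{\alpha}\partial_{\bar{\beta}}g_{\gamma\bar{\delta}}
 + g^{\mu\bar{\nu}}(\partial_{\alpha}g_{\gamma\bar{\nu}})
   (\partial_{\bar{\beta}}g_{\mu\bar{\delta}})
\]
for the holomorphic bisectional curvature produces the desired uniform upper bound on that of $\omega_{X_s,{\rm new}}$.

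The main obstacle I anticipate is making the identification between the different $\tilde{\mathcal{X}}_i$'s precise in a neighborhood of $S'_{\rm ADE}$, where the families are related by flops rather than by a global isomorphism of families. One must check that the comparison between $\omega_{p,i}$ and $\omega_{p,1}$ in fixed coordinates on $X_{p,1}$ does not degenerate as $p$ approaches $S'_{\rm ADE}$ through different chambers; equivalently, that the flop-induced ``jumps'' in the parametrization produce only bounded distortion of smooth K\"ahler forms on the common open subset $X_p^o$. Once this bookkeeping is handled, the remaining estimates reduce to standard elliptic regularity applied to Yau's equation.
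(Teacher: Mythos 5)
Your proposal diverges from the paper's route after the shared first observation. Both you and the paper note that each $\omega_{p,i}$ is a smooth K\"ahler form on a compact K3 surface $X_{p,i}$ fitting into a smooth proper family $\tilde{\mathcal{X}}_i\to S'$ over a relatively compact base, so that the holomorphic bisectional curvature of each $\omega_{p,i}$ --- an intrinsic, coordinate-free pointwise quantity --- is uniformly bounded above. At that point the paper simply cites Goldberg--Kobayashi \cite[\S4]{GK}: the holomorphic bisectional curvature of a positive linear combination of K\"ahler metrics, each with bisectional curvature bounded above, is again bounded above. This pointwise algebraic inequality is insensitive to the fact that the families $\tilde{\mathcal{X}}_i$ are identified with one another only away from $S'_{\rm ADE}$, so nothing further is needed.

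Your argument instead tries to transport all the $\omega_{p,i}$ into a single reference atlas, say coming from $\tilde{\mathcal{X}}_1$, establish uniform $C^2$ bounds for the coefficients there together with a uniform lower bound on the Hermitian form, and read off the curvature bound from the local coordinate formula. The gap is precisely the step you flag at the end as ``bookkeeping'': you never establish that the $C^2$ norm of $\omega_{p,i}$, transported to coordinates of $\tilde{\mathcal{X}}_1$ via the natural identification on $X_p^o$, stays uniformly bounded as $p$ approaches $S'_{\rm ADE}$. The fiberwise biholomorphisms $X_{p,1}\simeq X_{p,i}$ do not extend to a holomorphic family of isomorphisms across $S'_{\rm ADE}$ (the families differ by flops of the total spaces), so the derivatives of the identification need not remain bounded near the shrinking exceptional curves; and since $X_p^o$ is noncompact for $p\in S'_{\rm ADE}$, you cannot close the estimate by compactness. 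This is exactly what makes holomorphic bisectional curvature the natural device here and why the paper invokes \cite[\S4]{GK}: the curvature bound for each $\omega_{p,i}$ is recorded intrinsically on its own compact model $X_{p,i}$, and Goldberg--Kobayashi transfers that bound to the sum with no comparison of the ambient $C^2$ geometries required.
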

\begin{proof}
The 
holomorphic bisectional curvatures of $\omega_{p,i}$ for all $i$ 
are $s$-uniformly bounded above. 
Then apply \cite[\S4]{GK}
 to the diagonal embedding $X_{p}^o \to \prod_{i=1}^{20}X_{p,i}$.
Here, $\prod_{i=1}^{20}X_{p,i}$ is equipped with the product metric
 with the $i$-th factor $x_i\omega_{p,i}$.  
This gives the desired uniform bound. 
\end{proof}


\subsection{Lower bound for the $H$-type function} \label{2.4}

We show the $s$-uniform version of \cite[(2.4)]{Tos}
 for our new reference metric $\omega_{X_{s},{\rm new}}$
 defined in \S\ref{Grauert.singular.metric},
 namely, we will get a lower estimate of the function
\[H_{s,\rm new}:=
 \frac{\pi_s^*\omega_{B_s}\wedge
 \omega_{X_{s},{\rm new}}}{\omega_{X_{s},{\rm new}}^2}\]
 on $X_{{\rm pr}(s)}^o$
 by some power of a certain analytic function $\sigma$.
This will be done by the following steps and 
 will be used in \S\ref{Thm2.2.}
 for our extension of \cite[Theorem 2.2]{Tos}. 

\begin{Step}[$H$-function]
Let $\pi_i\colon\tilde{\mathcal{X}}_{i}\twoheadrightarrow S'$
 be as in \S\ref{Grauert.singular.metric}.
We consider the fiber product of them over $S'$ 
\[\tilde{\mathcal{X}}_{1}\times_{S'}\tilde{\mathcal{X}}_{2}
\times_{S'}\cdots\times_{S'}\tilde{\mathcal{X}}_{20}.\]
If we write $\mathcal{X}^0$ for a family
 $\bigcup_p X_p^o \to S'$, then
 there is a natural inclusion
 $\mathcal{X}^0 \subset \tilde{\mathcal{X}}_{i}$ for every $i$.
Inside the above fiber product, we consider the diagonal
 $\Delta_{\mathcal{X}^0}(\tilde{\mathcal{X}}_{i})$ as the image of 
$\mathcal{X}^0\to \tilde{\mathcal{X}}_{1}\times_{S'}\tilde{\mathcal{X}}_{2}
\times_{S'}\cdots\times_{S'}\tilde{\mathcal{X}}_{20}$ 
sending $x$ to $(x,x,\cdots,x)$. Then we take its closure in 
$\tilde{\mathcal{X}}_{1}\times_{S'}\tilde{\mathcal{X}}_{2}
\times_{S'}\cdots\times_{S'}\tilde{\mathcal{X}}_{20}$ 
and denote it by $\mathcal{Y}$. 
Further, we take the resolution of singularities of $\mathcal{Y}$ as 
 $\tilde{\pi}\colon \tilde{\mathcal{Y}}\to S'$. 
Write ${\rm pr}_{i}\colon \tilde{\mathcal{Y}}
 \twoheadrightarrow \mathcal{X}_{i}$
 for the natural $i$-th projection. 
This contains $\mathcal{X}^o$ as an open dense subset.

Take a real analytic volume form $\nu_{S'}$ on $S'$.
Recall that $\omega_{X_{s},{\rm new}}$ is a family of smooth K\"ahler
 forms on the smooth family
 $\tilde{\pi}\colon\mathcal{X}^o\to S'$,
 which depends on the numbers $x_1,\dots,x_{20}$.
Then $\pi_s^*\omega_{B_s}\wedge\omega_{X_{s},{\rm new}}
 \wedge \tilde{\pi}^*\nu_{S'}$
 and $\omega_{X_{s},{\rm new}}^2 \wedge \tilde{\pi}^*\nu_{S'}$
 define smooth volume forms on $\mathcal{X}^o$.
Hence we may write
\[H_{s,\rm new}=
 \frac{\pi_s^*\omega_{B_s}\wedge
 \omega_{X_{s},{\rm new}}\wedge \tilde{\pi}^*\nu_{S'}}
 {\omega_{X_{s},{\rm new}}^2 \wedge \tilde{\pi}^*\nu_{S'}},\]
which depends on $s\in S$, hence on $x_i$s. 

Take a finite open covering $\{U_j\}_j$ of $\tilde{\mathcal{Y}}$
 and take analytic K\"ahler forms $\omega_{U_j}$ on $U_j$ such that
 ${\rm pr}_{i}^*\omega_{p,i}|_{U_j\cap X_p^{o}}
 \leq \omega_{U_j}|_{U_j\cap X_p^{o}}$
 for $1\leq i\leq 20$ and $p\in \tilde{\pi}(U_j)$.
Since $\omega_{{X_s},{\rm new}}=\sum_{i=1}^{20} x_i\omega_{{\rm pr}(s),i}$,
 we have
 $\omega_{{X_s},{\rm new}} \leq \sum_{i=1}^{20} x_i \omega_{U_j}
 \leq 20C \omega_{U_j}$ on $U_j\cap X_p^{o}$.
Therefore, 
\[\omega_{X_{s},{\rm new}}^2 \wedge \tilde{\pi}^*\nu_{S'}
 \leq (20C)^2\omega_{U_j}^2 \wedge \tilde{\pi}^*\nu_{S'}\]
on $U_j\cap \mathcal{X}^{o}$.
On the other hand, we have
\begin{align*}
\pi_s^*\omega_{B_s}\wedge
 \omega_{X_{s},{\rm new}}\wedge \tilde{\pi}^*\nu_{S'}
&= \sum_{i=1}^{20}
 \pi_s^*\omega_{B_s}\wedge
 x_i {\rm pr}_i^* \omega_{{\rm pr}(s),i}
 \wedge \tilde{\pi}^*\nu_{S'}\\
&\geq c \pi_s^*\omega_{B_s}\wedge
 {\rm pr}_1^* \omega_{{\rm pr}(s),1}
 \wedge \tilde{\pi}^*\nu_{S'}.
\end{align*}
Hence 
\begin{align*}
H_{s,\rm new}\geq 
 \frac{c\pi_s^*\omega_{B_s}\wedge
 {\rm pr}_1^* \omega_{{\rm pr}(s),1} \wedge \tilde{\pi}^*\nu_{S'}}
 {(20C)^2\omega_{U_j}^2 \wedge \tilde{\pi}^*\nu_{S'}} =:H_j
\end{align*}
on $U_j\cap \mathcal{X}^{o}$.
We note that the function $H_j$ is a real analytic function
 defined on $U_j$ and is positive on $U_j\cap \mathcal{X}^{o}$, 
 whose  definition obviously does \textit{not} use $x_i$s.
This will be used in Step \ref{Step3.H.sigma}. 
\end{Step}

\begin{Step}[$\sigma$-function] 
On the other hand, for the function $\sigma$ of \cite[(2.2)]{Tos}, we consider as follows. 
First, we take a family of relative Jacobian K3 surfaces 
$\bigsqcup_{s}\overline{{\rm Jac}}(X_{s}/B_{s})\twoheadrightarrow 
\bigsqcup_{s} B_{s}$ 
and take its Weierstrass models. 
Note that the discriminant locus (image of critical locus) 
does not change while passing to Jacobian fibration and further to its 
Weierstrass model, so that discriminant locus on the base has 
no ambiguity. It is 
determined by the corresponding discriminant (holomorphic!) 
section $\Delta(:=g_{2}^{3}-27g_{3}^{2})$ 
of $H^0(\bigsqcup_{s}B_s, \bigsqcup_{s}
L^{\otimes 12})$ for the Weierstrass model 
(cf.\ \cite[\S1.4.1]{FMg}) as it vanishes exactly at the discriminant locus. In particular the discriminant locus is a divisor of $\bigsqcup_s B_s$. 
We fix a hermitian real analytic metric $h$ on $L$
 and define a function $\sigma$ to be $|\Delta|_{h}$, 
 which forms a real analytic family of real analytic functions on $B_{s}$. 
\end{Step}

\begin{Step}[{$H$-function vs.\ $\sigma$-function \cite[(2.4)]{Tos}}]\label{Step3.H.sigma}
Now we use {\L}ojasiewicz's theorem \cite{Loja59} to compare the 
$H$-type functions and $\sigma$-type function constructed in the 
previous steps. 
Recall that the function $H_j$ on $U_j$ is real analytic 
 and the zero set of $H_j$ is contained in
 $U_j\setminus \mathcal{X}^o$.

Take a positive symmetric tensor (Riemannian metric) $g_{B}$ 
 on the base $\bigcup_{s}B_{s}$ and
 write  
 $\tilde{\pi}^*g_B$ for its pull-back by
 $\tilde{\pi}\colon\tilde{\mathcal{Y}} \to \bigcup_s B_s$.
Also, take a metric $g_{\tilde{\mathcal{Y}}}$ on $\tilde{\mathcal{Y}}$.
By applying {\L}ojasiewicz's theorem (\cite{Loja59}),
 it follows that
\begin{align*}
C_1{\rm dist}
 (x,[H_j=0]; \tilde{\pi}^*g_{B} + g_{\tilde{\mathcal{Y}}})^{\lambda_1}\le H_j(x), 
\end{align*}
where ${\rm dist}(x,[H_j=0]; \tilde{\pi}^*g_{B} + g_{\tilde{\mathcal{Y}}})$ denotes
 the distance between $x$ and $[H_{j}=0]$ 
with respect to the metric  $\tilde{\pi}^{*}g_{B}+g_{\tilde{\mathcal{Y}}}$, 
 for $x\in U_j$ 
 with certain positive uniform constants $C_1$ and $\lambda_1$. 

On the other hand, since $\tilde{\pi}^{-1}([\Delta=0])\supset [H_j=0]$, 
 we have some positive constants $C_2$ and $\lambda_2$ such that 
\begin{align*}
C_2\sigma(\tilde{\pi}(x))^{\lambda_2}
&\leq {\rm dist}(\tilde{\pi}(x),[\Delta=0];g_{B})\\
&\leq {\rm dist}(x,[H_j=0];\tilde{\pi}^{*}g_{B})\\ 
&\leq {\rm dist}(x,[H_j=0];\tilde{\pi}^{*}g_{B}+g_{\tilde{\mathcal{Y}}}). 
\end{align*}
Combining above two inequalities, we get that with uniform positive 
constants $C_3$ and $\lambda_3$ such that 
\[C_3\sigma(y)^{\lambda_3}\le \inf_{X_{s,y}} H_{j},\]
where $y\in B_s\setminus {\rm disc}(\pi_s)$ and $X_{s,y}:=\pi_s^{-1}(y)$.
Therefore, combining with the estimate in Step 1, we get 
the desired uniform version of \cite[(2.4)]{Tos}:
\begin{align}\label{H.lower.bd}
C_3\sigma(y)^{\lambda_3}\le \inf_{X_{s,y}} H_{s,\rm new}.
\end{align}
\end{Step}

\subsection{Uniform $L^{\infty}$-bound on the total space} \label{unif.L.infinity}

Next we need to 
make the $L^{\infty}$-bound 
of the K\"ahler-Einstein potential $\varphi_{s,t}$ quoted at 
\cite[Theorem 2.1]{Tos} as theorems of Demailly-Pali~\cite{DP}
 and Eyssidieux-Guedj-Zeriahi~\cite{EGZ}. The original statement is uniform 
 only for $t$ and \textit{not} for $s$, but we need an $s$-uniform version. 
That is what we prove here. 

Our proof heavily depends on the method of Yau~\cite{Yau} and 
later developments, known as Moser iteration. 
We consulted \cite{Naka99} for learning it, which is reflected in the following arguments. 
(See also \cite[Lemma~5.3]{SoTi} for similar arguments in the context of K\"ahler-Ricci flow.)

%
In this subsection only, 
 we normalize $\varphi_{s,t}$ by adding a positive constant to
 satisfy 
\begin{align*}
\int_{X_s} \varphi_{s,t} \omega_{X_s}^2 = 0,
\end{align*}
which is different from
 the previous normalization $\sup_{X_s} \varphi_{s,t}=0$.
We claim that the inequalities 
\begin{equation}\label{vol.comp}
-tC_{1} \omega_{X_s}^{2} \leq 
 \tilde{\omega}_{s,t}^{2}-\omega_{s,t}^{2}\le tC_{2} \omega_{X_s}^{2}
\end{equation}
hold for positive constants $C_1$ and $C_2$.
Indeed, since $\tilde{\omega}_{s,t}$ is Ricci-flat, 
 $\tilde{\omega}_{s,t}^{2}=c_{s,t}\cdot t\omega_{X_s}^{2}$ 
 where a constant $c_{s,t}$ is uniformly bounded by positive 
 constants from above and below, which directly
 proves the right hand side of \eqref{vol.comp}.
The left hand side follows from
 $$\omega_{s,t}^{2}=2t\pi_s^*\omega_{B_s}\wedge \omega_{X_s}+ t^2 \omega_{X_s}^2$$
 and the uniform upper bound of $H_s$
 already proved in \S\ref{H.upper}.

For any $\alpha\geq 0$, \eqref{vol.comp} implies that 
\begin{align*}
t C \int_{X_s} |\varphi_{s,t}|^{\alpha+1} \omega_{X_s}^2
\geq 
-\int_{X_s} \varphi_{s,t} |\varphi_{s,t}|^{\alpha}
 (\tilde{\omega}_{s,t}^2-\omega_{s,t}^2),
\end{align*} 
where $C:=\max\{C_1,C_2\}$.
On the other hands, 
\begin{align*}
&-\int_{X_s} \varphi_{s,t} |\varphi_{s,t}|^{\alpha}
 (\tilde{\omega}_{s,t}^2-\omega_{s,t}^2) \\
&= -\int_{X_s} \varphi_{s,t} |\varphi_{s,t}|^{\alpha}
 \sqrt{-1}\partial\bar{\partial}\varphi_{s,t}
 \wedge (\tilde{\omega}_{s,t}+\omega_{s,t}) \\
&= -\int_{X_s} (\varphi_{s,t} |\varphi_{s,t}|^{\alpha}
 \sqrt{-1}\partial\bar{\partial}\varphi_{s,t}
 -\sqrt{-1}d(\varphi_{s,t}|\varphi_{s,t}|^{\alpha}\bar{\partial}\varphi_{s,t}))
 \wedge (\tilde{\omega}_{s,t}+\omega_{s,t})\\
&= \sqrt{-1}\int_{X_s} (\alpha+1) |\varphi_{s,t}|^{\alpha}
 \partial\varphi_{s,t} \wedge \bar{\partial} \varphi_{s,t}
 \wedge (\tilde{\omega}_{s,t}+\omega_{s,t}) \\
&= \int_{X_s} (\alpha+1) |\varphi_{s,t}|^{\alpha}
 |\partial\varphi_{s,t}|_{\omega_{X_s}}^{2} \omega_{X_s}
  \wedge (\tilde{\omega}_{s,t}+\omega_{s,t}) \\ 
&\geq 
t\int_{X_s} (\alpha+1) |\varphi_{s,t}|^{\alpha}
 |\partial\varphi_{s,t}|_{\omega_{X_s}}^2
 \omega_{X_s}^2.
\end{align*}
Therefore,
\begin{align}
\label{ineq:1}
C \int_{X_s} |\varphi_{s,t}|^{\alpha+1} \omega_{X_s}^2
&\geq \int_{X_s} (\alpha+1) |\varphi_{s,t}|^{\alpha} |\partial\varphi_{s,t}|^2
 \omega_{X_s}^2 \\ \nonumber
&\geq \frac{\alpha+1}{(\frac{\alpha}{2}+1)^2}
 \int_{X_s}  |\partial (\varphi_{s,t} |\varphi_{s,t}|^{\frac{\alpha}{2}})|^2
 \omega_{X_s}^2.
\end{align}

We abbreviate $\int_{X_s} \cdot\, \omega_{X_s}^2$
 to $\int_{X_s} \cdot$ for the remaining argument.
We will apply the Sobolev inequality 
\[ A_{1} \| \cdot \|_{W^{1,2}} \geq \| \cdot \|_{L^4}\]
 to $\varphi_{s,t} |\varphi_{s,t}|^{\frac{\alpha}{2}}$,
 where the Sobolev constant $A_{1}$ is known to be 
 determined only by the diameter and a lower bound of the Ricci curvatures.
Since $\omega_{X_s}$ is Ricci flat and the diameter of $(X_s,\omega_{X_s})$
 is uniformly bounded by \S\ref{Tos.lim.uni},
 we get
\begin{align*}
A_{1}^{2} 
\Bigl(\int_{X_s} |\partial (\varphi_{s,t} |\varphi_{s,t}|^{\frac{\alpha}{2}})|^2 
+ \int_{X_s} |\varphi_{s,t}|^{\alpha+2}\Bigr)
\geq 
\Bigl(\int_{X_s}  |\varphi_{s,t}|^{2(\alpha+2)}\Bigr)^{\frac{1}{2}}
\end{align*}
for a uniform constant $A_1$.
Combining with \eqref{ineq:1}, 
\begin{align*}
\Bigl(\int_{X_s}  |\varphi_{s,t}|^{2(\alpha+2)}\Bigr)^{\frac{1}{2}}
&\leq 
A_{1}^{2} 
\Bigl(\int_{X_s} |\partial (\varphi_{s,t} |\varphi_{s,t}|^{\frac{\alpha}{2}})|^2 
+ \int_{X_s} |\varphi_{s,t}|^{\alpha+2}\Bigr)\\
&\leq 
A_{2} \Bigl(
(\alpha+2)\int_{X_s}  |\varphi_{s,t}|^{\alpha+1}
 + \int_{X_s} |\varphi_{s,t}|^{\alpha+2}
 \Bigr)\\
&\leq 
A_{3} \Bigl(
(\alpha+2) \Bigl(\int_{X_s}  |\varphi_{s,t}|^{\alpha+2}
\Bigr)^{\frac{\alpha+1}{\alpha+2}}
 + \int_{X_s} |\varphi_t|^{\alpha+2}
 \Bigr)\\
&\leq 
A_{4} (\alpha+2) \max\Bigl\{1,\int_{X_s} |\varphi_{s,t}|^{\alpha+2}\Bigr\}.
\end{align*}
For the second inequality above, we used (\ref{ineq:1}) and 
this is one point of our arguments slightly different from \cite{Naka99}. 
Hence we get 
\begin{align}
\label{MI.1step}
\max\{1,\| \varphi_{s,t} \|_{L^{2(\alpha+2)}}\}
\leq (A_{4}(\alpha+2))^{\frac{1}{\alpha+2}} 
\max\{1,\| \varphi_{s,t} \|_{L^{\alpha+2}}\}.
\end{align}
Starting from $\alpha=0$ and utilizing this inequality (\ref{MI.1step}) iteratively 
for $\alpha=2^{k}-2 (k\in \mathbb{Z}_{>0})$, it holds that 
\begin{align*}
\max\{1,\| \varphi_{s,t} \|_{L^{2^{N+1}}}\}
\leq
\Bigl(\prod_{k=1}^N (A_{4}2^k)^{2^{-k}}\Bigr) 
\max\{1,\| \varphi_{s,t} \|_{L^2}\}.
\end{align*}
Since
\begin{align*}
\Bigl(\prod_{k=1}^{\infty} (A_{4}2^k)^{2^{-k}}\Bigr) \leq A_{5}
\end{align*}
for some constant $A_{5}$, we get
\begin{align*}
\sup |\varphi_{s,t}| \leq A_{5}\max\{1,\| \varphi_{s,t} \|_{L^2}\}.
\end{align*}

It suffices to prove $\| \varphi_{s,t} \|_{L^2}$
 is bounded above by a constant.
Since we normalized as $\int_{X_s} \varphi_{s,t}=0$.
 the Poincar\'{e} inequality \cite{LY} gives
\begin{align*}
\| \varphi_{s,t} \|_{L^2}
 \leq A_{6} \| \partial\varphi_{s,t} \|_{L^2}
\end{align*}
for some constant $A_6$.
Then by \eqref{ineq:1} with $\alpha=0$, 
\begin{align*}
\| \partial\varphi_{s,t} \|_{L^2}^2 \leq C \|\varphi_{s,t} \|_{L^1}.
\end{align*}
Hence 
\begin{align}\label{ineq:2}
\| \varphi_{s,t} \|_{L^2}^2 \leq A_{7} \|\varphi_{s,t} \|_{L^1}.
\end{align}
On the other hand, 
\begin{align}\label{ineq:3}
{\rm vol}(X_s,\omega_{X_s})^{\frac{1}{2}}
\| \varphi_{s,t} \|_{L^2} \geq \| \varphi_{s,t} \|_{L^1}
\end{align}
by H\"{o}lder's inequality.
Therefore, inequalities \eqref{ineq:2} and \eqref{ineq:3}
 show the $L^{2}$-bound 
\[\| \varphi_{s,t} \|_{L^2}\leq A_{8},\] 
 which proves the desired uniform bound of $\| \varphi_{s,t} \|_{L^{\infty}}$. 
 

\subsection{Base metric vs.\ the collapsing Ricci-flat metrics} \label{3.1}

\cite[Lemma 3.1]{Tos} gives a bound of ${\rm tr}_{\tilde{\omega}_{s,t}}
(\pi_{s}^{*}\omega_{B_{s}})$. 
This can be naturally extended to $s$-uniform version, 
including ADE singular $X_{s}$s verbatim. 
It is because we can use the previous
 estimate in \S\ref{unif.L.infinity} and also that
 the constant $A$ in the proof of \cite[Lemma 3.1]{Tos}
 can be taken as an explicit
 universal constant, which is the supremum of
 the Riemannian curvature of reference metrics $\omega_{B_{s}}$ on the base. 
We thus obtain 
\begin{align}\label{eq:lem3.1}
{\rm tr}_{\tilde{\omega}_{s,t}}
(\pi_{s}^{*}\omega_{B_{s}}) \leq C
\end{align}
for $C>0$ independent of $s$ and $t$.

\subsection{Fiberwise estimate} \label{(3.7).newref}

Now we prove an $s$-uniform version of \cite[(3.7)]{Tos}
 for the new K\"{a}hler form
 $\omega_{X_s,{\rm new}}=\sum_i x_i\omega_{{\rm pr}(s),i}$.

Let $p:={\rm pr}(s)$.
Since $\omega_{p,i}$ are Ricci-flat K\"ahler, 
 we have $\omega_{p,i}^2=c_{i,j}(p)\omega_{p,j}^2$ for any $i$ and $j$ 
 with continuous functions $c_{i,j}(p)$ on $S'$,
 which are bounded by positive constants from both sides. 
Let 
\[H'_{s,{\rm new}}:=
\frac{\pi_{s}^*\omega_{B_s}\wedge \omega_{X_s,{\rm new}}}{\omega_{p,1}^2}
 =\sum_{i=1}^{20}
 x_i\cdot c_{i,1}(p)
\frac{\pi_{s}^*\omega_{B_s}\wedge \omega_{p,i}}{\omega_{p,i}^2}.\]
The estimate \cite[(2.4)]{Tos} for each $\omega_{p,i}$ gives 
\begin{align}\label{2.4.omega.i}
\sigma(y)^{\lambda}\leq C \inf_{X_{s,y}} H'_{s,\rm new}. 
\end{align}
Write $\tilde{\omega}_{s,y}:=\tilde{\omega}_{s,t}|_{X_{s,y}}$
 and $\omega_{s,y,{\rm new}}:=\omega_{X_s,{\rm new}}|_{X_{s,y}}$.
Then 
\begin{align}\nonumber
\frac{\tilde{\omega}_{s,y}}{\omega_{s,y,{\rm new}}}
 &= \frac{\tilde{\omega}_{s,t}\wedge \pi_{s}^{*}\omega_{B_{s}}}{\omega_{X_s,{\rm new}}
 \wedge \pi_{s}^{*}\omega_{B_{s}}}\\ \label{3.7.new.tochuu}
 &= \frac{\tilde{\omega}_{s,t}\wedge \pi_{s}^{*}\omega_{B_{s}}}{\tilde{\omega}_{s,t}^2}
 \cdot  \frac{\tilde{\omega}_{s,t}^2}{H'_{s,{\rm new}} \omega_{p,1}^2}\\ \nonumber
 &= \operatorname{tr}_{\tilde{\omega}_{s,t}}(\pi_{s}^*\omega_{B_{s}}) 
 \cdot \frac{t A_t}{H'_{s,\rm new}},
\end{align}
where the constant $A_{t}$ is defined as
 $t A_{t}\omega_{p,1}^{2}=\tilde{\omega}_{s,t}^{2}$, 
 which plays a similar role to ``$a_{t}$'' in \cite[p.431]{Tos}.  
Then it is easy to see that
 $A_{t}$ is bounded by positive constants from both sides.
By \eqref{eq:lem3.1} and \eqref{2.4.omega.i}, 
 we can see that the last term of (\ref{3.7.new.tochuu}) 
 is $s$-uniformly bounded above by $Ct\sigma^{-\lambda}$,
 with ($s$-uniform) positive constants $C$ and $\lambda$. 
This is the desired $s$-uniform estimate of \cite[(3.7)]{Tos} 
for $\omega_{X_{s},{\rm new}}$, namely, 
\begin{align}\label{eq:(3.7)}
\frac{\tilde{\omega}_{s,y}}{\omega_{s,y,{\rm new}}}
\leq Ct\sigma(y)^{-\lambda}.
\end{align}

\subsection{Fiberwise $L^{\infty}$-estimate I --- For Ricci-flat reference metrics} \label{(3.9)}

Next we find an $L^{\infty}$-estimate of the \textit{fiberwise} potentials in some weak form. 
That is, we make the estimate \cite[(3.9)]{Tos}
 uniform with respect to $s$ as far as it runs over a compact subset
 \textit{away from the ADE locus} (cf.\  \S\ref{Setting}), 
 where we take $\omega_{X_{s}}$ to be Ricci-flat K\"ahler. 
In \cite{Tos}, the proof depends again on Yau's $L^{\infty}$-estimate 
method by using the Sobolev type inequality 
\cite[Lemma 3.2]{Tos}, which in turn implicitly uses smoothness of the total space. 
Since the fiber of $\pi_s$ is one-dimensional in our setting, 
 the Monge-Amp\`ere equation on the fiber becomes Laplace's equation. 
Hence we can simply calculate the potential function
 in terms of the Green function. 

In this subsection, we assume that $S$ is a relatively compact subset
 in $K\Omega\setminus K\Omega^o$ so that the closure of
 $S$ in $K\Omega$ is contained in $K\Omega^o$.
We will prove that 
\begin{align}\label{better.3.9}
\sup_{X_{s,y}} |\varphi_{s,t}-{\underline{\varphi}}{}_{s,t}|
&\le tC |\sigma(y)|^{-\lambda}
\end{align}
for some constants $C$ and $\lambda$,
 where we defined $X_{s,y}:=\pi_s^{-1}(y)$,
 $\omega_{s,y}:=\omega_{X_s}|_{X_{s,y}}$ and 
\[{\underline{\varphi}}{}_{s,t}
=\Bigl(\int_{X_{s,y}} \omega_{s,y}\Bigr)^{-1}\int_{X_{s,y}} \varphi_{s,t} \omega_{s,y}.
\]

For $y\in B_s\setminus {\rm disc}(\pi_s)$, let us write
 $X_{s,y}=\mathbb{C}_z/(\mathbb{Z}+\tau_{s,y} \mathbb{Z})$
 with $\tau_{s,y}\in \C$ in the standard fundamental domain, i.e., 
 $$|\tau_{s,y}|\geq 1 \text{ and } \left|\re \tau_{s,y}\right|\leq \frac{1}{2}.$$ 
Let us write $$\omega_{s,y}:=\omega_{X_s}|_{X_{s,y}}=\sqrt{-1}h_{s,y}\,dz\wedge d\bar{z}$$ 
for the coordinate $z$. 
Let $\Omega_s$ be a non-vanishing holomorphic volume form on $X_s$
 (which varies continuously in $s$). 
Since we assumed $\omega_{X_s}$ to be Ricci-flat K\"{a}hler, we have 
 $$C_{1}  \Omega_s\wedge \overline{\Omega}_s=\omega_{X_s}^2$$
 for some constant $C_{1}$. 
Recall from \S\ref{H.upper} that we defined $H_s=\frac{\omega_{X_s}\wedge \pi_s^{*}\omega_{B_s}}{\omega_{X_s}^2}$. 
This will form a continuous non-negative function on $\bigsqcup_s X_s$ which vanishes exactly at 
the $\pi_s$-critical points. 
Then $\omega_{X_s}\wedge \pi_s^{*}\omega_{B_s}=C_1H_s \Omega_s\wedge\overline{\Omega}_s$ implies that 
$h_{s,y}$ is proportional to $H_s$ along each fiber $\pi_s^{-1}(y)$. The constant of proportionality depends on $s$ and $y$. 

Due to the continuity of (the family of) $H_s$, we may suppose that $$\sup H_s < C_{2}$$ 
for some positive constant $C_{2}$. 
Also  recall that $$C_{3} \inf_{X_{s,y}} H_s \geq |\sigma(y)|^{\lambda_{1}},$$ 
where $\lambda_{1}$ can be also taken as an $s$-uniform constant (cf.\ \S\ref{2.4}). 
Therefore, 
\begin{equation}\label{sup.inf.ratio}
\frac{\sup_{X_{s,y}} h_{s,y}}{\inf_{X_{s,y}} h_{s,y}} 
=\frac{\sup_{X_{s,y}} H_s}{\inf_{X_{s,y}} H_s} \leq C_{2}C_{3} |\sigma(y)|^{-\lambda_{1}}. 
\end{equation}
On the other hand, the volume of the fiber $\pi_s^{-1}(y)$ does not depend on $y$
 and depends continuously on $s$, so it is bounded by a constant $C_4$.
Then
\begin{align*}
C_{4}\geq \int_{X_{s,y}}\omega_{s,y} = \int_{X_{s,y}}\sqrt{-1}h_{s,y}dz\wedge d\bar{z}
 \geq (\inf_{X_{s,y}} h_{s,y}) \int_{X_{s,y}}\sqrt{-1} dz\wedge d\bar{z}.
\end{align*}
Hence we obtain 
\begin{align*}
 \inf_{X_{s,y}} h_{s,y}  \leq \frac{C_{4}}{{\rm Im}(\tau_{s,y})}
\end{align*}
So, combined with our (\ref{sup.inf.ratio}) above we have now established 
\begin{equation*}
 \sup_{X_{s,y}} h_{s,y}  \leq  C_{2}C_{3}C_{4}
 {|\sigma(y)|^{-\lambda_{1}}{\rm Im} (\tau_{s,y})}^{-1}. 
\end{equation*}
On the other hand, 
since ${\rm Im}(\tau_{s,y})$ is lower bounded by a positive constant, 
we obtain 
$$\sup_{X_{s,y}}h_{s,y}\le C_{5}|\sigma(y)|^{-\lambda_{1}}.$$

On the other hand, \cite[(3.7)]{Tos} gives 
$$\frac{t^{-1}\tilde{\omega}_{s,y}}{\omega_{s,y}}\le C_{6}|\sigma(y)|^{-\lambda_2}.$$
Hence, if we write 
$$t^{-1}\tilde{\omega}_{s,y}=\sqrt{-1} \tilde{h}_{s,y}dz\wedge d\bar{z},$$ then we get 
\begin{Claim}\label{sup.h.upp.bd}
\begin{equation*}
\sup_{X_{s,y}}|\tilde{h}_{s,y}-h_{s,y}|\le C_{7}|\sigma(y)|^{-\lambda_3}.
\end{equation*}
\end{Claim}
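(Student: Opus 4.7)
The plan is to obtain Claim~\ref{sup.h.upp.bd} essentially by combining the two estimates already established immediately above it, via a triangle-inequality argument; no further analytic input is needed.

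First I would record that both $h_{s,y}$ and $\tilde h_{s,y}$ are non-negative functions on each smooth fiber $X_{s,y}$, since they are the local coefficients of K\"ahler forms with respect to the (real) volume element $\sqrt{-1}\,dz\wedge d\bar z$. Next I would use the $s$-uniform pointwise bound on the reference metric
\[\sup_{X_{s,y}} h_{s,y}\le C_{5}|\sigma(y)|^{-\lambda_{1}},\]
which has just been derived from the comparison $\sup h_{s,y}/\inf h_{s,y}\le C_{2}C_{3}|\sigma(y)|^{-\lambda_{1}}$ obtained from the fiberwise lower bound \eqref{H.lower.bd} of $H_{s}$ combined with the uniform upper bound $\sup H_{s}\le C_{2}$ of \S\ref{H.upper}, the volume bound $\int_{X_{s,y}}\omega_{s,y}\le C_{4}$, and the lower bound on $\mathrm{Im}(\tau_{s,y})$.

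Then I would feed in the $s$-uniform fiberwise ratio bound quoted from \cite[(3.7)]{Tos},
\[\frac{t^{-1}\tilde\omega_{s,y}}{\omega_{s,y}}
=\frac{\tilde h_{s,y}}{h_{s,y}}\le C_{6}|\sigma(y)|^{-\lambda_{2}},\]
to deduce
\[\sup_{X_{s,y}}\tilde h_{s,y}\le C_{5}C_{6}\,|\sigma(y)|^{-(\lambda_{1}+\lambda_{2})}.\]
Applying the triangle inequality pointwise gives
\[|\tilde h_{s,y}(z)-h_{s,y}(z)|\le\tilde h_{s,y}(z)+h_{s,y}(z)\le C_{5}(1+C_{6})\,|\sigma(y)|^{-(\lambda_{1}+\lambda_{2})},\]
so the claim follows with $C_{7}:=C_{5}(1+C_{6})$ and $\lambda_{3}:=\lambda_{1}+\lambda_{2}$.

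In this case there is no real obstacle: both inputs are already $s$-uniform and independent of the particular fiber beyond the explicit $\sigma$-weight, and the triangle inequality tolerates the crudeness of bounding the difference by the sum. The only subtlety to keep in mind is that the constants $\lambda_{1},\lambda_{2}$ are themselves $s$-uniform (as checked in \S\ref{2.4} and in the derivation of \eqref{H.lower.bd}), so the resulting $\lambda_{3}$ is likewise $s$-uniform, which is what is needed for the collapsing argument that continues in the next subsection.
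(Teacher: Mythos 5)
Your argument is correct and is essentially the one the paper leaves implicit: having just established $\sup_{X_{s,y}}h_{s,y}\le C_{5}|\sigma(y)|^{-\lambda_{1}}$ and $\tilde h_{s,y}/h_{s,y}\le C_{6}|\sigma(y)|^{-\lambda_{2}}$, the paper simply writes ``Hence \ldots we get'' the claim, which is exactly your bound $\tilde h_{s,y}\le C_5C_6|\sigma(y)|^{-\lambda_1-\lambda_2}$ together with $|\tilde h_{s,y}-h_{s,y}|\le\tilde h_{s,y}+h_{s,y}$. The only cosmetic point is that the final consolidation into a single power $|\sigma(y)|^{-(\lambda_1+\lambda_2)}$ tacitly uses that $\sigma$ is bounded above (so $|\sigma|^{-\lambda_1}\le\mathrm{const}\cdot|\sigma|^{-\lambda_1-\lambda_2}$); this is harmless since $\sigma$ is a continuous function on a compact base.
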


Now we use the Green function of Laplacian to give a uniform upper bound of 
\begin{align*}
\psi:=
\psi_{s,t}:=\dfrac{(\varphi_{s,t}-\bar{\varphi}_{s,t})}{t}. 
\end{align*}
Here, $\bar{\varphi}_{s,t}$ is defined in a way similar
 to $\underline{\varphi}{}_{s,t}$, but the volume form is replaced 
 by $d(\re z)\wedge d(\im z)$: 
\begin{align*}
\bar{\varphi}_{s,t}:=
\frac{\int_{X_{s,y}}\varphi_{s,t}\, d(\re z)\wedge 
 d(\im z)}{\int_{X_{s,y}} d(\re z)\wedge d(\im z)} 
=\frac{\int_{X_{s,y}}\varphi_{s,t}d(\re z)\wedge 
d(\im z)}{\im \tau_{s,y}}.
\end{align*}
If we write $\operatorname{osc} \varphi
 := \left|\sup \varphi - \inf \varphi\right|$,  then
\[
\frac{1}{2}\operatorname*{osc}_{X_{s,y}} \varphi_{s,y} \leq 
\sup_{X_{s,y}}|\varphi_{s,t}-
\underline{\varphi}{}_{s,t}|
\le
\operatorname*{osc}_{X_{s,y}} \varphi_{s,y}
\]
and a similar inequalities hold when replacing
 $\underline{\varphi}{}_{s,t}$ by $\bar{\varphi}_{s,t}$.
Hence it is enough to prove \eqref{better.3.9}
 where $\underline{\varphi}{}_{s,t}$ replaced by $\bar{\varphi}_{s,t}$. 

Note that 
 $\Delta\psi =  \Phi $ on $X_{s,y}$, 
 where  $\Delta$ is the standard Laplacian
 with respect to the coordinate $z$, and 
 $\sqrt{-1}\Phi dz\wedge d\bar{z}:=t^{-1}\tilde{\omega}_{s,y}-\omega_{s,y}$,
 i.e., $\Phi:=\tilde{h}_{s,y}-h_{s,y}.$ 
Therefore, 
\begin{align*}
\psi(z) &= - \int_{X_{s,y}} \Phi(w)G_{s,y}(z,w) d(\re w)\wedge d(\im w) \\ 
&=- \int_{X_{s,y}} (\tilde{h}_{s,y}(w)-h_{s,y}(w))G_{s,y}(z,w) d(\re w)\wedge d(\im w),  
\end{align*}
where $G_{s,y}(z,w)$ is the Green function on the flat torus $X_{s,y}$. 
From now on, we do some analysis on general complex torus $\C/(\Z+\Z\tau)$
 with any $\tau\in \mathbb{H}$, 
and later apply the obtained estimation to the case $\tau=\tau_{s,y}$. 
It is known (cf.\  e.g.\ \cite[\S7]{LiWa}) that:
\begin{Fac}[Green function formula]
Let $G(z,w)$ denotes the Green function for the Laplacian on the (general) complex torus $\C_z/(\Z+\Z\tau)$ with $\tau\in \mathbb{H}$. 
\begin{enumerate}
\item We have
\begin{align*}
G(z,w)&=G(z-w)\\
&=-\frac{1}{2\pi}\log \Bigl|\dfrac{\vartheta_1(z-w)}{\eta(\tau)}\Bigr|
 + \frac{(\im (z-w))^2}{2\im \tau} + {\rm const.},  
\end{align*}
where $\vartheta_{1}$ is one of the classical Jacobi theta functions, 
and $\eta$ is the Dedekind eta function. 
\item
There is also a formula
\begin{align*}
&\dfrac{\vartheta_1(z)}{\eta(\tau)}
= e^{\pi iz}(1-e^{-2\pi iz})
 \prod_{m=1}^{\infty}
 (1-e^{2m\pi i\tau}e^{2\pi i z})
 (1-e^{2m\pi i\tau}e^{-2\pi i z}).
\end{align*}
\end{enumerate}
\end{Fac}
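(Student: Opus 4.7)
The plan is to construct $G$ directly from first principles on the torus $T = \C/(\Z+\Z\tau)$. By definition the Green function satisfies
\[
\Delta_z G(z,w) = \delta_w - \tfrac{1}{\operatorname{Area}(T)}
\]
together with a normalization such as $\int_T G(z,w)\,d(\re z)\wedge d(\im z) = 0$. Since $\Delta$ and the area form are translation-invariant, I would reduce to constructing $G(z) := G(z,0)$, so that $G(z,w) = G(z-w)$ follows automatically.

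To get the singular part, the natural candidate is $-\tfrac{1}{2\pi}\log|\vartheta_1(z)|$: the Jacobi theta function $\vartheta_1$ is the essentially unique entire function with simple zeros exactly on $\Z+\Z\tau$, and $\vartheta_1(z) = \vartheta_1'(0)\,z + O(z^3)$ near the origin, so $-\tfrac{1}{2\pi}\log|\vartheta_1(z)|$ carries the correct $-\tfrac{1}{2\pi}\log|z|$ singularity. The obstruction — and the one genuinely non-trivial point of the plan — is that $\log|\vartheta_1(z)|$ is only \emph{quasi}-periodic under $z \mapsto z+\tau$, picking up the anomaly
\[
\log|\vartheta_1(z+\tau)| - \log|\vartheta_1(z)| = -\pi\im\tau - 2\pi\im z,
\]
so it is not a well-defined function on $T$. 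To repair this, I would add the quadratic term $\tfrac{(\im z)^2}{2\im\tau}$; a direct computation shows that its shift under $z\mapsto z+\tau$ exactly cancels the anomaly (up to a constant absorbed into ``const.''), and its Laplacian with respect to the flat metric contributes the constant $\tfrac{1}{\im\tau} = \tfrac{1}{\operatorname{Area}(T)}$ needed to pair with the delta. The division by $\eta(\tau)$ is the $\tau$-dependent normalization that makes the formula transform correctly under $\mathrm{SL}(2,\Z)$; this is forced by Jacobi's derivative formula $\vartheta_1'(0) = 2\pi\eta(\tau)^3$, which fixes the leading coefficient in the Laurent expansion at $0$.

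For part (ii), the plan is to invoke the Jacobi triple product identity, which expresses $\vartheta_1(z)$ as
\[
\vartheta_1(z) = 2\,e^{\pi i\tau/4}\sin(\pi z)\prod_{m\ge 1}(1-e^{2\pi im\tau})(1-e^{2\pi im\tau}e^{2\pi iz})(1-e^{2\pi im\tau}e^{-2\pi iz}),
\]
combined with the product definition $\eta(\tau) = e^{\pi i\tau/12}\prod_{m\ge 1}(1-e^{2\pi im\tau})$. Dividing cancels the $\prod(1-e^{2\pi im\tau})$ factor outright, and the prefactors collapse to the stated $e^{\pi iz}(1-e^{-2\pi iz})$ after writing $2\sin(\pi z) = -i\,e^{-\pi iz}(1-e^{2\pi iz})$; the $q^{1/4}$-$q^{1/12}$ bookkeeping is a straightforward check. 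The main obstacle overall is not any single identity but the consistency of signs, branches, and constants across the quasi-periodicity correction — once that is handled, both statements follow from classical theta/eta identities.
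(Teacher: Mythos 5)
The paper offers no proof of this Fact at all --- it is stated with a citation to Lin--Wang \cite{LiWa}, so any proof you supply is by construction a ``different route.'' Your outline is the standard classical derivation and the overall structure is sound: reduce to $G(z,0)$ by translation invariance, take $-\tfrac{1}{2\pi}\log|\vartheta_1(z)|$ to produce the logarithmic singularity at the lattice, add $\tfrac{(\im z)^2}{2\im\tau}$ to restore $\tau$-periodicity and supply the $\tfrac{1}{\operatorname{Area}}$ term, and for (ii) divide Jacobi's triple product by the $\eta$-product.

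Two inaccuracies are worth flagging. First, the quasi-periodicity anomaly has the wrong sign as you wrote it: since $\vartheta_1(z+\tau) = -e^{-\pi i\tau - 2\pi i z}\vartheta_1(z)$, one has
\[
\log|\vartheta_1(z+\tau)| - \log|\vartheta_1(z)| = +\pi\im\tau + 2\pi\im z,
\]
not $-\pi\im\tau - 2\pi\im z$. The cancellation you claim is still correct --- the shift of $-\tfrac{1}{2\pi}\log|\vartheta_1|$ is $-\tfrac{\im\tau}{2}-\im z$, while the shift of $\tfrac{(\im z)^2}{2\im\tau}$ is $+\im z+\tfrac{\im\tau}{2}$ --- but with your stated anomaly the two shifts would add rather than cancel, so that line as written is internally inconsistent. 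Second, in (ii) the prefactor bookkeeping you dismiss as ``a straightforward check'' does \emph{not} close up exactly: using $\vartheta_1(z)=2e^{\pi i\tau/4}\sin(\pi z)\prod_{m\ge 1}(1-q^m)(1-q^m e^{2\pi i z})(1-q^m e^{-2\pi i z})$ with $q=e^{2\pi i\tau}$ and $\eta=e^{\pi i\tau/12}\prod(1-q^m)$, one finds $\vartheta_1/\eta = 2e^{\pi i\tau/6}\sin(\pi z)\prod(\cdots)$, whereas $e^{\pi i z}(1-e^{-2\pi i z})=2i\sin(\pi z)$; so the two sides of the paper's displayed identity differ by the $\tau$-dependent nonzero constant $e^{\pi i\tau/6}/i$. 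This is a defect in the Fact's wording rather than in your plan (the discrepancy is $z$-independent and is harmless inside the $\log|\cdot|$ plus ``const.'' in (i) and in the downstream estimates), but a careful proof should note that (ii) holds only up to a multiplicative constant depending on $\tau$, or else track that constant explicitly rather than asserting the prefactors ``collapse'' exactly.
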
 

Suppose $\im \tau > \frac{1}{2}$, $\left|\re z\right| \leq \frac{1}{2}$,
 and $\left|\im z\right|\leq \frac{\im \tau}{2}$.
Then 
\begin{align*}
\sum_{m=1}^{\infty} \bigl| \log |1-e^{2m\pi i \tau}e^{2\pi i z}|\bigr|
\ \text{ and }\ 
\sum_{m=1}^{\infty} \bigl| \log |1-e^{2m\pi i \tau}e^{-2\pi i z}|\bigr|
\end{align*}
are bounded.
Hence 
\begin{align*}
G_1(z)&:=
 G(z)+{\rm const.}\, (\text{which depends on $\tau$ but not on $z$}) \\
&= -\frac{1}{2\pi}\log |e^{\pi i z}-e^{-\pi i z}|
 + \frac{(\im z)^2}{2\im\tau}
 + \text{a bounded function}.
\end{align*}
Then it is easy to see that 
\begin{align*}
|G_1(z)|\leq -\frac{1}{2\pi}\log |z| + C_8 \im \tau
\end{align*}
for some constant $C_8$.
This implies 
\begin{align*}
&\int_{\C/(\Z+\Z\tau)} |G_1(z)| \,d(\re z)\wedge d(\im z) \\
&\leq \int_{\substack{\left|\re z\right| \leq \frac{1}{2} \\
  \left|\im z\right| \leq \frac{\im\tau}{2}}}
 \Bigl(-\frac{1}{2\pi}\log |z| + C_8 \im \tau \Bigr)  \,d(\re z)\wedge d(\im z)\\
&\leq C_9 (\im \tau)^2.
\end{align*}
If we suppose $\Delta \psi=\Phi$ on $\C/(\Z+\tau \Z)$, then
\begin{align*}
\psi(z) 
&= - \int_{\C/(\Z+\tau \Z)} \Phi(w)G(z,w) \,d(\re w)\wedge d(\im w) \\
&= - \int_{\C/(\Z+\tau \Z)} \Phi(w)G_1(z-w) \,d(\re w)\wedge d(\im w)
\end{align*}
and we have
\begin{align*}
|\psi(z)| &\leq \sup |\Phi| \cdot
 \int_{\C/(\Z+\tau \Z)} |G_1(z)| \,d(\re z)\wedge d(\im z) \\
&\leq \sup |\Phi|  \cdot C_9 (\im \tau)^2.
\end{align*}
We thus proved that 
\begin{Claim}\label{laplace.conclude}
if $\Delta \psi=\Phi$ on $\C/(\Z +\Z \tau)$
 and if $\tau$ is in the standard fundamental domain, then 
 $$|\psi(z)|\le C (\im \tau)^2 \sup |\Phi|,$$
\end{Claim}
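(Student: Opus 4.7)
The plan is to read off the statement directly from the Green function analysis carried out in the paragraphs immediately preceding the claim, packaging the estimates into a clean application of the representation formula for solutions of Poisson's equation on the flat torus.

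First, I would reduce to the case where $\tau$ lies in the standard fundamental domain, i.e., $\im\tau \geq \frac{\sqrt{3}}{2}$ and $\left|\re \tau\right| \leq \frac12$, so that in particular $\im\tau > \frac12$, which was the regime used to bound the infinite products appearing in the explicit formula for $\vartheta_1(z)/\eta(\tau)$. The point is to use the representation
\begin{equation*}
\psi(z) = -\int_{\C/(\Z+\Z\tau)} \Phi(w)\, G_1(z-w)\, d(\re w)\wedge d(\im w),
\end{equation*}
which is legitimate because adding a constant to the Green function does not affect the convolution against $\Phi$ (integrating $\Phi$ against the constant gives $0$ since $\Phi$ is the Laplacian of a periodic function and hence has vanishing integral over the torus).

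Next, I would invoke the pointwise bound $|G_1(z)| \leq -\frac{1}{2\pi}\log|z| + C_8\im\tau$ derived above, valid for $z$ in a fundamental domain of the form $\left|\re z\right|\leq \frac12$, $\left|\im z\right|\leq \frac{\im\tau}{2}$. Integrating this bound over the fundamental domain splits into two contributions: the logarithmic term contributes a bounded constant times $\im\tau$ (since $\int_{|z|\leq R}|\log|z||\,dA$ behaves like $R^2$ and the fundamental domain has area $\im\tau$), while the constant term $C_8\im\tau$ contributes $C_8(\im\tau)^2$. Hence $\int |G_1(z)|\, d(\re z)\wedge d(\im z) \leq C_9 (\im\tau)^2$, as already recorded in the excerpt.

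Finally, applying H\"older's inequality (with $L^\infty$ on $\Phi$ and $L^1$ on $G_1(z-w)$, noting translation invariance of the fundamental domain volume), I would conclude
\begin{equation*}
|\psi(z)| \leq \sup|\Phi| \cdot \int_{\C/(\Z+\Z\tau)} |G_1(z-w)|\, d(\re w)\wedge d(\im w) \leq C(\im\tau)^2 \sup|\Phi|,
\end{equation*}
with $C = C_9$ independent of $\tau$ in the fundamental domain. There is no serious obstacle here: the claim is essentially a corollary of the $L^1$-bound on the Green function derived in the two displayed chains of inequalities just above, and the only thing worth emphasizing in the write-up is that $\int \Phi = 0$ automatically (so the indeterminacy in $G$ by an additive constant is harmless), and that the normalization of $\psi$ itself comes from how $\bar\varphi_{s,t}$ was defined so that $\psi$ has mean zero on the fiber and is therefore uniquely determined by $\Delta\psi=\Phi$.
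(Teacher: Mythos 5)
Your proposal is correct and follows the paper's own argument: represent $\psi$ by convolution against the Green function, replace $G$ by $G_1$, bound pointwise by $\sup|\Phi|$ times the $L^1$-norm of $G_1$, and invoke the already-established bound $\int|G_1|\le C_9(\im\tau)^2$. The two remarks you add — that the additive normalization of $G$ is harmless because $\int\Phi=\int\Delta\psi=0$, and that the claim implicitly uses the mean-zero normalization of $\psi$ coming from the definition of $\bar\varphi_{s,t}$ — are correct and make explicit hypotheses the paper leaves tacit, but the underlying proof is the same.
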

\noindent
where the constant $C$ does not depend on $\tau$. 

Now, we come back to our particular setting on $X_{s,y}$ and apply
 Claim~\ref{laplace.conclude} to 
$\psi=\frac{1}{t}(\varphi_{s,t}-\bar{\varphi}_{s,t})$
 on $X_{s,y}=\C_z/(\Z+\Z \tau_{s,y})$. 
Combining with Claim~\ref{sup.h.upp.bd}, 
we get that 
\begin{align*}
\sup_{X_{s,y}} |\varphi_{s,t}-\bar{\varphi}_{s,t}|
&\le t C_{10} (\im\tau_{s,y})^{2}|\sigma(y)|^{-\lambda_3}.
\end{align*}
Then the known fact (cf.\  e.g.\ Lemma~\ref{ML.estimate}) 
\begin{equation*}
\im \tau_{s,y}\leq C\log(|\sigma(y)|^{-1})
\end{equation*} 
around the discriminant locus gives \eqref{better.3.9}.

This is a better bound than \cite[(3.9)]{Tos} for this special 
case, i.e., elliptic (Ricci-flat K\"ahler metrized) K3 surfaces. 
For our particular purpose of proving Theorem \ref{GTZ.extend}, 
we can also avoid the above improvement but for its own interest and records, 
we keep the above estimates here.

\subsection{Fiberwise $L^{\infty}$-estimate II --- For new reference metric}
 \label{(3.9)new.ref.metric}

We now see the $s$-uniform version \cite[(3.9)]{Tos} 
for the newly constructed reference metric 
$\omega_{X_s,{\rm new}}$ (in \S\ref{Grauert.singular.metric}) 
rather than the Ricci-flat metric $\omega_{X_s}$. 
Define the function $\varphi_{s,t,{\rm new}}$ on $X_s$ 
 to be a solution to the Monge-Amp\`ere equation
\[
t\omega_{X_s,{\rm new}}+\pi_{s}^*\omega_{B_{s}}
+\sqrt{-1}\partial\bar{\partial}\varphi_{s,t,{\rm new}}
=\tilde{\omega}_{s,t}\]
 such that $\sup_{X_s} \varphi_{s,t,{\rm new}} = 0$
 and define 
\[{\underline{\varphi}}{}_{s,t,{\rm new}}
=\Bigl(\int_{X_{s,y}} \omega_{s,y}\Bigr)^{-1}\int_{X_{s,y}}
 \varphi_{s,t,{\rm new}} \omega_{s,y}
\]
as in \S\ref{(3.9)}.
We will show 
\begin{align}\label{better.3.9.new}
\sup_{X_{s,y}} |\varphi_{s,t,{\rm new}}-{\underline{\varphi}}{}_{s,t,{\rm new}}|
&\le tC |\sigma(y)|^{-\lambda}
\end{align}
without the assumption that $S$ is away from the ADE locus
 in \S\ref{(3.9)}.

Define the functions $h_{s,y,{\rm new}}$ and $h_{p,i,y}$ on the smooth elliptic 
fiber $X_{s,y}=\pi_{s}^{-1}(y)$ by 
\[
\omega_{{X_s},{\rm new}}|_{X_{s,y}}= \sqrt{-1} h_{s,y,{\rm new}} dz\wedge d\bar{z}
\ \text{ and }\ 
\omega_{p,i}|_{X_{s,y}}= \sqrt{-1} h_{p,i,y} dz\wedge d\bar{z}.
\]
Then  
\begin{align*}
\Delta(\varphi_{s,t,{\rm new}}|_{X_{s,y}})
=t(\tilde{h}_{s,y}-h_{s,y,{\rm new}}).
\end{align*}
We obtained an estimate
\begin{align*}
\sup_{X_{s,y}} h_{p,i,y} \leq C_1|\sigma(y)|^{-\lambda_1}
\end{align*}
in \S\ref{(3.9)}.
Hence
\begin{align*}
\sup_{X_{s,y}} h_{s,y,{\rm new}}
= \sup_{X_{s,y}} \sum_{i=1}^{20} x_i h_{p,i,y} \leq C_2|\sigma(y)|^{-\lambda_2}.
\end{align*}
Since 
\begin{align*}
\frac{\tilde{h}_{s,y}}{h_{s,y,{\rm new}}}\leq  C_3 |\sigma(y)|^{-\lambda_3}
\end{align*}
 by \eqref{eq:(3.7)},
 we get
\begin{align*}
\sup_{X_{s,y}} |\tilde{h}_{s,y}-h_{s,y,{\rm new}}|\leq C_4 |\sigma(y)|^{-\lambda_4}
\end{align*}
as in Claim~\ref{sup.h.upp.bd}.
Therefore, \eqref{better.3.9.new} follows from  Claim~\ref{laplace.conclude}.

\section{A priori estimates II} 

\subsection{$C^2$-estimate} \label{Thm2.2.}

Next, we consider a certain $C^2$-estimate like \cite[Theorem 2.2]{Tos}.
We prove 
\begin{Thm}\label{2.9.}
For any compact subset $V\subset \bigcup_{s}X_{s}^{\rm sm}$, 
there is a positive constant $C_{V}$ such that for any $s\in S$, 
the inequality 
\begin{equation}\label{metric.comparison}
\frac{t}{C_{V}}\omega_{X_{s}}\le \tilde{\omega}_{s,t}\le C_{V}\omega_{X_{s}} 
\end{equation}
holds on $V\cap X_{s}$.
\end{Thm}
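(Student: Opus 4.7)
The plan is to adapt the proof of \cite[Theorem 2.2]{Tos}, making every constant uniform in $s$. The central analytic input will be the Chern-Lu inequality applied to the new reference metric $\omega_{X_s,\mathrm{new}}$ rather than the Ricci-flat $\omega_{X_s}$, because only the former is known to have $s$-uniformly bounded holomorphic bisectional curvatures (Claim~\ref{bisec.bded}). The estimate is then transferred back to $\omega_{X_s}$ using the continuity of the ratio $\omega_{X_s}/\omega_{X_s,\mathrm{new}}$ on the family $\bigcup_s X_s^{\mathrm{sm}}$, which gives uniform comparability on any compact subset $V$ lying in the smooth locus.

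First I would establish an upper estimate of the form $\operatorname{tr}_{\tilde\omega_{s,t}}\omega_{X_s,\mathrm{new}} \leq C \sigma^{-\lambda}$ on all of $X_s^{\mathrm{sm}}$. Claim~\ref{bisec.bded} together with Chern-Lu yields
$$\Delta_{\tilde\omega_{s,t}} \log \operatorname{tr}_{\tilde\omega_{s,t}} \omega_{X_s,\mathrm{new}} \geq -C_1 \operatorname{tr}_{\tilde\omega_{s,t}} \omega_{X_s,\mathrm{new}}$$
with $C_1$ independent of $s$. Following Tosatti, I would apply the maximum principle to a test function of the form
$$Q := \log \operatorname{tr}_{\tilde\omega_{s,t}} \omega_{X_s,\mathrm{new}} - A\bigl(\varphi_{s,t,\mathrm{new}} - \underline{\varphi}_{s,t,\mathrm{new}}\bigr) + B\,\pi_s^{*}\bigl(\log \sigma\bigr),$$
for suitable uniform constants $A,B>0$. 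The fiberwise oscillation bound \eqref{better.3.9.new} together with the $L^{\infty}$-bound from \S\ref{unif.L.infinity} controls the barrier term, while the logarithmic cutoff in $\sigma$ prevents the maximum from escaping to the discriminant locus; all constants introduced through \S\ref{2.4}, \S\ref{(3.7).newref}, and \S\ref{(3.9)new.ref.metric} are $s$-uniform by construction.

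Next I would deduce the reverse inequality on $\tilde\omega_{s,t}/\omega_{X_s,\mathrm{new}}$. Combining the Ricci-flatness of $\tilde\omega_{s,t}$ (which gives $\tilde\omega_{s,t}^{2} \sim t\,\omega_{X_s,\mathrm{new}}^{2}$ up to uniform constants, via \S\ref{Tos.lim.uni}) with the upper bound on $\operatorname{tr}_{\tilde\omega_{s,t}} \omega_{X_s,\mathrm{new}}$ just obtained, the elementary arithmetic-geometric mean manipulation of \cite[p.\,432]{Tos} produces a two-sided comparison
$$C^{-1}\bigl(t\,\omega_{X_s,\mathrm{new}} + \pi_s^{*}\omega_{B_s}\bigr) \leq \tilde\omega_{s,t} \leq C\bigl(t\,\omega_{X_s,\mathrm{new}} + \pi_s^{*}\omega_{B_s}\bigr),$$
with $C$ depending on $\sigma^{-\lambda}$ but independent of $s$ and $t$.

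Finally, on the compact subset $V \subset \bigcup_s X_s^{\mathrm{sm}}$, the discriminant function $\sigma \circ \pi_s$ is bounded below by a positive constant uniformly in $s$, and $\pi_s^{*}\omega_{B_s}$ together with $\omega_{X_s,\mathrm{new}}$ are $s$-uniformly comparable to $\omega_{X_s}$ by continuity of the smooth family $\{(X_s^{o},\omega_{X_s})\}_{s\in S}$ on the smooth locus (\S\ref{Setting}, \S\ref{Grauert.singular.metric}). This reduces the two-sided comparison to the desired inequality \eqref{metric.comparison}. The main obstacle, which the construction of \S\ref{Grauert.singular.metric} is specifically designed to overcome, is the absence of an $s$-uniform bisectional curvature bound for $\omega_{X_s}$ near ADE degenerations; everything else in the Tosatti-type barrier argument is essentially reducible to the previously established $s$-uniform a priori estimates.
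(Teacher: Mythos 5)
Your overall strategy is the same as the paper's: use the new reference metric $\omega_{X_s,\mathrm{new}}$ of \S\ref{Grauert.singular.metric} precisely because its bisectional curvatures are $s$-uniformly bounded above (Claim~\ref{bisec.bded}), apply Chern--Lu and a maximum-principle argument to bound $\operatorname{tr}_{\tilde\omega_{s,t}}\omega_{X_s,\mathrm{new}}$ on $X_s^{\mathrm{sm}}$, and then transfer the conclusion to $\omega_{X_s}$ on $V$ using the boundedness of the ratio $\omega_{X_s}/\omega_{X_s,\mathrm{new}}$ on a compact subset of the relatively smooth locus. This is exactly the paper's route to Lemma~\ref{2.9.new}.

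However, there is a genuine gap in your choice of test function. You write
\[
Q := \log \operatorname{tr}_{\tilde\omega_{s,t}}\omega_{X_s,\mathrm{new}} - A(\varphi_{s,t,\mathrm{new}} - \underline{\varphi}_{s,t,\mathrm{new}}) + B\,\pi_s^*(\log\sigma),
\]
intending the additive logarithmic term $B\log\sigma$ to act as a barrier forcing the maximum of $Q$ to be attained in the interior of $X_s^{\mathrm{sm}}$. But the available control on the fiberwise oscillation is \eqref{better.3.9.new}, namely $|\varphi_{s,t,\mathrm{new}} - \underline\varphi_{s,t,\mathrm{new}}| \leq tC\sigma^{-\lambda}$, and $\operatorname{tr}_{\tilde\omega_{s,t}}\omega_{X_s,\mathrm{new}}$ is only bounded by $C\sigma^{-\lambda'}$ near singular fibers. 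So near the discriminant the middle term of $Q$ can blow up \emph{polynomially} in $\sigma^{-1}$, while your barrier $B\log\sigma$ goes to $-\infty$ only \emph{logarithmically}. The sign of $\varphi_{s,t,\mathrm{new}}-\underline\varphi_{s,t,\mathrm{new}}$ is not controlled, so $Q$ may tend to $+\infty$ at the discriminant locus and fail to attain its maximum on $X_s^{\mathrm{sm}}$, which breaks the maximum-principle argument. This is precisely why the paper, following \cite[p.\,438]{Tos}, uses the \emph{multiplicative exponential} weight $e^{-B\sigma^{-\lambda}}$ in its test function
\[
K_1 := e^{-B\sigma^{-\lambda}}\Bigl(\log\bigl(t\cdot\operatorname{tr}_{\tilde\omega_{s,t}}\omega_{X_s,\mathrm{new}}\bigr) - \tfrac{A}{t}(\varphi_{s,t,\mathrm{new}} - \underline\varphi_{s,t,\mathrm{new}})\Bigr),
\]
which forces $K_1\to 0$ as $\sigma\to 0$ in spite of the polynomial blowup; this is also why the final constant in Lemma~\ref{2.9.new} is double-exponential in $\sigma^{-\lambda}$, whereas a logarithmic barrier would only produce a polynomial constant. (You also dropped the $1/t$ factor on the barrier term, which is needed for the subsequent Laplacian computation.) Separately, for the upper bound $\tilde\omega_{s,t}\leq C_V\omega_{X_s}$ you invoke a two-sided GTZ-type comparison against $t\omega_{X_s,\mathrm{new}}+\pi_s^*\omega_{B_s}$; the paper instead uses the more elementary two-dimensional determinant argument ($\tilde\omega_{s,t}^2 = a_{s,t}\omega_{X_s}^2$ with $t^{-1}a_{s,t}$ uniformly bounded), and this is preferable here because the GTZ-type comparison \eqref{GTZ4.1} is itself derived in the paper from Theorem~\ref{2.9.}, so using it to prove the theorem would be circular unless you re-derive it independently.
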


Recall from \S\ref{Setting} that
 $X_{s}^{\rm sm}$ denotes the $\pi_{s}$-smooth locus of $X_{s}$. 
For a fixed $s$ (with smooth $X_{s}$),
 the above theorem is weaker than \cite[Theorem 2.2]{Tos}. 
In the proof of \cite[Theorem 2.2]{Tos}, a $t$-uniform upper bound of 
 a certain function defined at \cite[bottom line of p.435]{Tos} is given, 
 which implies the right hand side inequality of \cite[Theorem 2.2]{Tos}. 
For this one needs an inequality used
 in the Yau's $C^{2}$-estimate method \cite[\S2]{Yau} and
 thus a uniform lower bound of holomorphic bisectional
 curvatures of $\omega_{X_s}$ is required. 
This part is not trivial if we vary $s$
 and some $X_{s}$ have ADE singularities,
 which is the reason why we need more discussion here. 

\begin{proof}[proof of Theorem \ref{2.9.}]
We first prove the left hand side inequality of \eqref{metric.comparison}
 for the newly constructed
 reference metrics $\omega_{X_{s},{\rm new}}$
 in \S\ref{Grauert.singular.metric}: 
\begin{Lem}\label{2.9.new}
There exist positive constants $A,B,C$ such that
 the inequality 
\begin{equation*}
\frac{t}{C e^{Ae^{B\sigma^{-\lambda}}}}\omega_{X_{s}, {\rm new}}\le \tilde{\omega}_{s,t}
\end{equation*}
holds on $X_{s}^{\rm sm}$ for any $s\in S$.
\end{Lem}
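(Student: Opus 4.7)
The plan is to derive the stated lower bound by an $s$-uniform Chern--Lu/Yau type $C^2$-estimate for $\mathrm{tr}_{\tilde{\omega}_{s,t}}(\omega_{X_s,\mathrm{new}})$, adapted from \cite[Theorem 2.2]{Tos}. The crucial new input is Claim~\ref{bisec.bded}: because $\omega_{X_s,\mathrm{new}} = \sum_i x_i\omega_{p,i}$ is built from smooth families of Ricci-flat K\"ahler forms whose bisectional curvatures are $s$-uniformly bounded above, the Chern--Lu inequality yields
\[
\Delta_{\tilde{\omega}_{s,t}}\log\mathrm{tr}_{\tilde{\omega}_{s,t}}(\omega_{X_s,\mathrm{new}}) \;\geq\; -C_0\,\mathrm{tr}_{\tilde{\omega}_{s,t}}(\omega_{X_s,\mathrm{new}})
\]
on $X_s^{\mathrm{sm}}$ with $C_0$ independent of $s,t$. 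This is precisely the uniform Chern--Lu input that the Ricci-flat reference $\omega_{X_s}$ fails to provide when $s\to S_{\mathrm{ADE}}$, and it is why we introduced $\omega_{X_s,\mathrm{new}}$ in \S\ref{Grauert.singular.metric}.

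First I would apply the maximum principle to the auxiliary function
\[
F \;=\; \log\mathrm{tr}_{\tilde{\omega}_{s,t}}(\omega_{X_s,\mathrm{new}}) \;-\; (C_0+1)\bigl(\varphi_{s,t,\mathrm{new}} - \underline{\varphi}_{s,t,\mathrm{new}}\bigr).
\]
Using the Monge--Amp\`ere equation $\tilde{\omega}_{s,t} = t\omega_{X_s,\mathrm{new}} + \pi_s^*\omega_{B_s} + \sqrt{-1}\partial\bar\partial\varphi_{s,t,\mathrm{new}}$ together with the uniform bound \eqref{eq:lem3.1} on $\mathrm{tr}_{\tilde{\omega}_{s,t}}(\pi_s^*\omega_{B_s})$, one computes $\Delta_{\tilde{\omega}_{s,t}}F \geq \mathrm{tr}_{\tilde{\omega}_{s,t}}(\omega_{X_s,\mathrm{new}}) - C_1$ pointwise. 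At a maximum point this forces $\mathrm{tr}_{\tilde{\omega}_{s,t}}(\omega_{X_s,\mathrm{new}}) \leq C_1$ there, and combining with the fiberwise estimate \eqref{better.3.9.new}, namely $|\varphi_{s,t,\mathrm{new}} - \underline{\varphi}_{s,t,\mathrm{new}}| \leq tC\sigma^{-\lambda}$, one propagates this to every point and obtains a first bound of the form $\mathrm{tr}_{\tilde{\omega}_{s,t}}(\omega_{X_s,\mathrm{new}}) \leq C_2 e^{B\sigma^{-\lambda}}$ (the inner exponential).

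To convert this trace bound into the desired lower bound on $\tilde{\omega}_{s,t}$ itself, I would use that in complex dimension two,
\[
\tilde{\omega}_{s,t} \;\geq\; \frac{\tilde{\omega}_{s,t}^{\,2}}{\mathrm{tr}_{\omega_{X_s,\mathrm{new}}}(\tilde{\omega}_{s,t})}\,\omega_{X_s,\mathrm{new}}^{-1}
\]
in the sense of $(1,1)$-forms, and couple this with $\tilde{\omega}_{s,t}^2 \geq c\, t\,\omega_{X_s,\mathrm{new}}^2$ (the uniform non-degeneracy of Ricci-flat volumes; cf.\ \S\ref{unif.L.infinity}) and the just-obtained bound on $\mathrm{tr}_{\omega_{X_s,\mathrm{new}}}(\tilde{\omega}_{s,t})$ (obtained by the same method with roles swapped, using \eqref{eq:(3.7)} for the upper bound of $\tilde{\omega}_{s,y}/\omega_{s,y,\mathrm{new}}$). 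Iterating this swap once produces the \emph{outer} exponential $e^{A\cdot}$, yielding the double-exponential factor $e^{Ae^{B\sigma^{-\lambda}}}$.

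The main obstacle is keeping every constant genuinely independent of $s$ as $s$ crosses $S_{\mathrm{ADE}}$: on that locus $\omega_{X_s,\mathrm{new}}$ is only an orbifold K\"ahler form and the families $\tilde{\mathcal{X}}_i$ are related by flops. The uniform bisectional curvature bound (Claim~\ref{bisec.bded}), the uniform $L^\infty$-bound of \S\ref{unif.L.infinity}, and the uniform fiberwise estimates of \S\ref{(3.7).newref}--\S\ref{(3.9)new.ref.metric} are all tailored to survive this transition. The only remaining subtlety is justifying the maximum principle at the ADE singular points; this I would handle by applying the argument on $X_s^o$, noting that $F$ extends continuously across the (finitely many) ADE singularities because $\omega_{X_s,\mathrm{new}}$ is an orbifold metric there and $\tilde{\omega}_{s,t}$ is Ricci-flat and thus smooth in the orbifold sense, so the maximum is attained on $X_s^o$ or on the orbifold completion where the inequality still holds.
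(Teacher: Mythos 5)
The overall strategy — a Chern--Lu/maximum-principle estimate on $\operatorname{tr}_{\tilde{\omega}_{s,t}}\omega_{X_s,\mathrm{new}}$, enabled by the $s$-uniform bisectional curvature bound of Claim~\ref{bisec.bded} — is exactly the one the paper uses, following the left-hand side of \cite[(2.9)]{Tos}. However, the auxiliary function you apply the maximum principle to is wrong in two ways that matter, and the path you then take to reach the double exponential does not hold together.

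First, your auxiliary function $F = \log\operatorname{tr}_{\tilde{\omega}_{s,t}}(\omega_{X_s,\mathrm{new}}) - (C_0+1)(\varphi_{s,t,\mathrm{new}}-\underline{\varphi}{}_{s,t,\mathrm{new}})$ is missing a $\tfrac{1}{t}$ in front of the potential. Since $\sqrt{-1}\partial\bar\partial\varphi_{s,t,\mathrm{new}} = \tilde{\omega}_{s,t} - t\,\omega_{X_s,\mathrm{new}} - \pi_s^*\omega_{B_s}$, one has
\begin{equation*}
\Delta_{\tilde{\omega}_{s,t}}\bigl(-\varphi_{s,t,\mathrm{new}}\bigr) \;=\; t\operatorname{tr}_{\tilde{\omega}_{s,t}}(\omega_{X_s,\mathrm{new}}) + \operatorname{tr}_{\tilde{\omega}_{s,t}}(\pi_s^*\omega_{B_s}) - 2,
\end{equation*}
so the $\operatorname{tr}(\omega_{X_s,\mathrm{new}})$ term picks up a coefficient of $(C_0+1)t$, which for small $t$ does \emph{not} beat the $-C_0\operatorname{tr}(\omega_{X_s,\mathrm{new}})$ coming from Chern--Lu. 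Your claimed inequality $\Delta F \geq \operatorname{tr}-C_1$ therefore fails when $t$ is small, which is exactly the regime of interest. The fix is what the paper does: the coefficient must be of order $\tfrac{A}{t}$. Second, the weight $e^{-B\sigma^{-\lambda}}$ in the paper's $K_1$ is not decoration; it is what makes the function extend continuously (by zero) across $X_s\setminus X_s^{\rm sm}$, since both $\log\operatorname{tr}$ and $\tfrac{A}{t}|\varphi_{s,t,\mathrm{new}}-\underline{\varphi}{}_{s,t,\mathrm{new}}|\lesssim A\sigma^{-\lambda}$ (by \eqref{better.3.9.new}) blow up as $\sigma\to 0$. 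Without the weight your $F$ has no interior maximum to exploit, and the maximum principle argument collapses. The weight is also precisely where the double exponential comes from: at the maximum one obtains a bound on $e^{-B\sigma^{-\lambda}}\log(t\operatorname{tr})$, so $t\operatorname{tr}_{\tilde{\omega}_{s,t}}(\omega_{X_s,\mathrm{new}}) \leq e^{Ae^{B\sigma^{-\lambda}}}$ directly, and then $\omega_{X_s,\mathrm{new}}\leq \operatorname{tr}_{\tilde{\omega}_{s,t}}(\omega_{X_s,\mathrm{new}})\cdot\tilde{\omega}_{s,t}$ gives the lemma in one line. Your alternative route — a single-exponential trace bound followed by a dimension-two ``swap'' of $\omega_{X_s,\mathrm{new}}$ and $\tilde{\omega}_{s,t}$ and an ``iteration'' to produce the outer exponential — has no real foundation: the single-exponential intermediate bound does not come out of the (corrected) maximum principle, and the final conversion from a trace bound to a lower bound on the metric needs neither the $n=2$ trick nor any iteration, since the largest eigenvalue is always at most the trace.
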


\begin{proof}[proof of Lemma \ref{2.9.new}]
We prove the lemma 
benefiting from that the
 holomorphic bisectional curvatures of $\omega_{X_{s}, {\rm new}}$
 are $s$-uniformly upper bounded
 as we saw in Claim~\ref{bisec.bded}. 

Indeed, the proof of the left hand side inequality of \cite[(2.9)]{Tos}
 works almost verbatim (see arguments from the last paragraph of \cite[p.438]{Tos}).
As in there, we define the function 
\[K_1
:=e^{-B\sigma^{-\lambda}}\Bigl(
\log (t\cdot \operatorname{tr}_{\tilde{\omega}_{s,t}} \omega_{X_s,{\rm new}})
 -\frac{A}{t}(\varphi_{s,t,{\rm new}}-\underline{\varphi}{}_{s,t,{\rm new}})
\Bigr)
\]
on $X_s^{\rm sm}$ for large constants $A$, $B$.
The function 
$\operatorname{tr}_{\tilde{\omega}_{s,t}} \omega_{X_s,{\rm new}}$
 is continuous on $X_s^{\rm sm}$ and
 bounded by $C\sigma^{-\lambda}$ near
 singular fibers $X_s\setminus X_s^{\rm sm}$.
Hence $K_1$ takes its maximum on $X_s^{\rm sm}$ for each $s$.

To get the estimate \cite[(3.25)]{Tos},
 we need Laplacian estimates as in \cite{Yau}
 (cf., e.g.~\cite[p.12, Lemma 4-2.]{Li}):
\[
\Delta_{\tilde{\omega}_{s,t}} \log
\operatorname{tr}_{\tilde{\omega}_{s,t}} \omega_{X_s,{\rm new}}
\geq -C\operatorname{tr}_{\tilde{\omega}_{s,t}} \omega_{X_s,{\rm new}} -C.
\]
For this estimate we use that the holomorphic bisectional curvatures of
  $\omega_{X_s,{\rm new}}$ is bounded.

In addition, \eqref{H.lower.bd} and \eqref{better.3.9.new} are also used 
 when we show estimates like \cite[(3.12) and (3.18)]{Tos}.
\end{proof}

Lemma~\ref{2.9.new} 
 implies the left hand side inequality of Theorem~\ref{2.9.} for 
 the original Ricci-flat K\"ahler reference $\omega_{X_{s}}$
 because the ratio of $\omega_{X_s}$ to $\omega_{X_s,{\rm new}}$ is bounded
 on the compact set $V$.

In our two-dimensional situation, 
 the right hand side inequality of Theorem~\ref{2.9.}
 is easily deduced from the left hand side inequality.
It is because we can write $a_{s,t} \omega_{X_s}^2 = \tilde{\omega}_{s,t}^2$
 for a constant $a_{s,t}$ and 
 $t^{-1}a_{s,t}$ is uniformly bounded.

Therefore, we complete the proof of Theorem~\ref{2.9.}.
\end{proof}


\subsection{Fiberwise $C^2$-estimate} \label{2.10}

We also replace the fiberwise version of the
 estimate \cite[(2.10)]{Tos} by its $s$-uniform but rough bounds version as follows. 
(We do not discuss \cite[(2.11)]{Tos} as it is 
not necessary for our particular purpose.) 

\begin{Thm}\label{fiber.C2}
For any compact subset $W\subset \bigcup_{s}(B_{s}\setminus {\rm disc}(\pi_{s}))$, 
there is a positive constant $C_{W}$ so that for any $y\in B_{s}\cap W$, 
we have 
\begin{equation*}
\frac{t}{C_{W}}\omega_{X_{s}}|_{\pi_{s}^{-1}(y)}\le 
\tilde{\omega}_{s,t}|_{\pi_{s}^{-1}(y)}\le t C_{W}\omega_{X_{s}}|_{\pi_{s}^{-1}(y)}. 
\end{equation*}
\end{Thm}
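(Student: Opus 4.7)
The plan is to reduce both directions of the fiberwise comparison to results already obtained in the preceding subsections, exploiting the fact that compactness of $W$ forces the parameter $s$ into a compact subset of $S$ and forces the discriminant function $\sigma$ to stay uniformly bounded away from $0$. First I would fix, by compactness, a compact subset $V \subset \bigcup_{s} X_{s}^{\rm sm}$ such that $\pi_{s}^{-1}(W \cap B_{s}) \subset V$ for every $s$ whose base meets $W$, together with a constant $c_{W}>0$ satisfying $\sigma(y) \ge c_{W}$ for every $y\in W$.

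For the upper bound, I would invoke the $s$-uniform fiberwise estimate \eqref{eq:(3.7)} established in \S\ref{(3.7).newref}, namely
\[\tilde{\omega}_{s,t}|_{\pi_{s}^{-1}(y)} \le C\,t\,\sigma(y)^{-\lambda}\,\omega_{X_{s},{\rm new}}|_{\pi_{s}^{-1}(y)}.\]
Both the new reference metric $\omega_{X_{s},{\rm new}}$ of \S\ref{Grauert.singular.metric} and the Ricci-flat K\"ahler form $\omega_{X_{s}}$ are smooth positive $(1,1)$-forms on the ADE-smooth family $\bigcup_{p\in S'} X_{p}^{o}$, which vary continuously in $s$, so by the compactness of $V$ and of the $s$-parameter they are mutually bounded by positive constants independent of $s$. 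Combined with $\sigma(y)^{-\lambda} \le c_{W}^{-\lambda}$ on $W$, this yields the desired upper bound $\tilde{\omega}_{s,t}|_{\pi_{s}^{-1}(y)} \le t C_{W}\, \omega_{X_{s}}|_{\pi_{s}^{-1}(y)}$ with $C_{W}$ independent of $s$ and $t$.

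For the lower bound, I would apply the global $C^{2}$-estimate Theorem~\ref{2.9.} to the compact set $V$, obtaining a constant $C_{V}>0$ with $\tilde{\omega}_{s,t} \ge (t/C_{V})\,\omega_{X_{s}}$ on $V\cap X_{s}$ for every $s \in S$ and every $t>0$. Restricting this inequality of positive hermitian $(1,1)$-forms to the smooth elliptic fiber $\pi_{s}^{-1}(y) \subset V$ preserves the positive semidefiniteness, so we conclude
\[\tilde{\omega}_{s,t}|_{\pi_{s}^{-1}(y)} \ge \frac{t}{C_{V}}\,\omega_{X_{s}}|_{\pi_{s}^{-1}(y)},\]
as required.

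The hard analytic work has already been done in Theorem~\ref{2.9.} (the global $C^{2}$-estimate, which needed the uniform upper bound on holomorphic bisectional curvatures of $\omega_{X_{s},{\rm new}}$ furnished by Claim~\ref{bisec.bded}) and in the fiberwise estimate \eqref{eq:(3.7)}. The only remaining point is the verification that on compact subsets of $\bigcup_{s} X_{s}^{\rm sm}$ the two reference metrics $\omega_{X_{s},{\rm new}}$ and $\omega_{X_{s}}$ are uniformly comparable, and that $\sigma$ has a uniform positive lower bound on $W$; both are immediate from the compactness hypotheses, so the proof of Theorem~\ref{fiber.C2} is essentially a packaging of earlier estimates rather than a genuinely new analytic argument.
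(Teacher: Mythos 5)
Your proposal is correct and follows essentially the same route as the paper: the lower bound is a direct restriction of Theorem~\ref{2.9.} to the fiber, and the upper bound comes from the fiberwise estimate \eqref{eq:(3.7)} together with the uniform comparability of $\omega_{X_s,{\rm new}}$ and $\omega_{X_s}$ on the compact set (with the observation that $\sigma$ is uniformly bounded away from $0$ on $W$). The only difference from the paper's terse proof is that you spell out explicitly the compactness reductions and the lower bound on $\sigma$, which the paper leaves implicit.
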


The left hand side inequality is a direct consequence of Theorem~\ref{2.9.}.
The right hand side follows from \eqref{eq:(3.7)} and that
 the ratio of $\omega_{X_s,{\rm new}}$ to $\omega_{X_s}$ is bounded
 on the compact set $\bigcup_{s} \pi_s^{-1}(B_s\cap W)$.

\subsection{Convergence of potential functions} \label{Tos.Thm4.1}

Here, we prove that the potential functions $\varphi_{s,t}$
 converges to $\psi$ as $t\to 0$, where
 $\psi$ is a potential function for the McLean metric.
Our statement Theorem~\ref{potential.convergence}
 is similar to \cite[Theorem 4.1]{Tos}, 
 but the arguments below is a little different from \cite{Tos}. 

Let $\omega_{s,\rm{ML}}$ denote the McLean metric for $\pi_s\colon X_s\to B_{s}$,
which is given as 
\begin{align}\label{adiab(4.3)}
\omega_{s,\rm{ML}} :=
 \frac{\int_{X_s}\pi_s^*\omega_{B_{s}} \wedge
   \omega_{X_s}}{\int_{X_{s,y}}\omega_{X_s}\cdot \int_{X_s}\omega_{X_s}^2}
 (\pi_s)_* \omega_{X_s}^2
\end{align}
(see \cite[(4.3)]{Tos}). 
Let $\psi\in L^1(B_s)\cap L^{\infty}(B_s)$
 be a potential function for $\omega_{s,\rm{ML}}$, namely,
 $\omega_{B_s}+\sqrt{-1}\partial\bar{\partial}\psi = \omega_{s,\rm{ML}}$.
Recall $\tilde{\omega}_{s,t} = \pi_s^*\omega_{B_{s}}+t\omega_{X_s}
 +\sqrt{-1}\partial\bar{\partial}\varphi_{s,t}$ is a Ricci flat K\"ahler metric. 
Thus 
$\tilde{\omega}_{s,t}^2 = a_{s,t}\omega_{X_s}^2$ with 
\begin{align*}
a_{s,t}= 2t \frac{\int_{X_s}\pi_s^*\omega_{B_s}\wedge
 \omega_{X_s}}{\int_{X_s}\omega_{X_s}^2} + O(t^2).
\end{align*}

Let $\eta\in L^1(B_s)$ and denote its pull-back to $X_s$ also by $\eta$.
We compute
\begin{align*}
\int_{X_s} \eta \tilde{\omega}_{s,t}^2
=\int_{X_s} \eta (\pi_s^*\omega_{B_s}+t\omega_{X_s}
 +\sqrt{-1}\partial\bar{\partial}\varphi_{s,t})^2
\end{align*}
when $t$ tends to zero.
We have
\begin{align*}
&(\pi_s^*\omega_{B_{s}}+t\omega_{X_s}
 +\sqrt{-1}\partial\bar{\partial}\varphi_{s,t})^2 \\
&=((\pi_s^*\omega_{B_{s}}+\sqrt{-1}\partial\bar{\partial}\underline{\varphi}{}_{s,t})
 +t\omega_{X_s}+\sqrt{-1} \partial\bar{\partial}
 (\varphi_{s,t}-\underline{\varphi}{}_{s,t}))^2 \\
&=2(\pi_s^*\omega_{B_{s}}+\sqrt{-1}\partial\bar{\partial}\underline{\varphi}{}_{s,t}) \wedge (t\omega_{X_s}+\sqrt{-1}\partial\bar{\partial}
 (\varphi_{s,t}-\underline{\varphi}{}_{s,t}))\\ 
&+ (t\omega_{X_s}+\sqrt{-1}\partial\bar{\partial}
 (\varphi_{s,t}-\underline{\varphi}{}_{s,t}))^2
\end{align*}
and then
\begin{align*}
&\int_{X_s}\eta
 (t\omega_{X_s}+\sqrt{-1}\partial\bar{\partial}
 (\varphi_{s,t}-\underline{\varphi}{}_{s,t}))^2\\
&= \int_{X_s}\eta t^2\omega_{X_s}^2
 + \int_{X_s}\eta (2t\omega_{X_s}
 + \sqrt{-1}\partial\bar{\partial}(\varphi_{s,t}-\underline{\varphi}{}_{s,t}))
 \wedge \sqrt{-1}\partial\bar{\partial}
 (\varphi_{s,t}-\underline{\varphi}{}_{s,t})\\
&= \int_{X_s}\eta t^2\omega_{X_s}^2
 + \sqrt{-1}\int_{X_s} (\varphi_{s,t}-\underline{\varphi}{}_{s,t})
 \bar{\partial}\partial\eta\wedge
 (2t\omega_{X_s}
 + \sqrt{-1}\partial\bar{\partial}(\varphi_{s,t}-\underline{\varphi}{}_{s,t})).
\end{align*}
Here, $\bar{\partial}\partial\eta$ is considered as a current in general.
Moreover,
\begin{align*}
&\int_{X_s} (\varphi_{s,t}-\underline{\varphi}{}_{s,t})
 \bar{\partial}\partial\eta\wedge t\omega_{X_s}
= \int_{B_s}
 (\pi_s)_*((\varphi_{s,t}-\underline{\varphi}{}_{s,t})t\omega_{X_s})
 \bar{\partial}\partial\eta=0,\\
&\int_{X_s} \eta (\pi_s^*\omega_{B_s}
 +\sqrt{-1}\partial\bar{\partial}\underline{\varphi}{}_{s,t})
\wedge 
 \sqrt{-1}\partial\bar{\partial}
 (\varphi_{s,t}-\underline{\varphi}{}_{s,t}) \\
&=\int_{X_s}
\sqrt{-1}(\varphi_{s,t}-\underline{\varphi}{}_{s,t})
 \bar{\partial}\partial\eta
 \wedge (\pi_s^*\omega_{B_{s}}
 +\sqrt{-1}\partial\bar{\partial}\underline{\varphi}{}_{s,t})=0,
\end{align*}
and 
\begin{align*}
\int_{X_s} \eta (\pi_s^* \omega_{B_{s}}
 + \sqrt{-1}\partial\bar{\partial}\underline{\varphi}{}_{s,t})
 \wedge t\omega_{X_s} 
=t \int_{X_{s,y}}\omega_{X_s}
 \cdot \int_{B_{s}} \eta (\omega_{B_{s}}
 + \sqrt{-1}\partial\bar{\partial}\underline{\varphi}{}_{s,t}).
\end{align*}
We thus obtain
\begin{align*}
\int_{X_s} \eta \tilde{\omega}_{s,t}^2 
&= 2t \int_{X_{s,y}}\omega_{X_s}
 \cdot \int_{B_{s}} \eta (\omega_{B_{s}}
 + \sqrt{-1}\partial\bar{\partial}\underline{\varphi}{}_{s,t}) \\
& + t^2 \int_{X_s} \eta \omega_{X_s}^2 
 - \int_{X_s}
 (\varphi_{s,t}-\underline{\varphi}{}_{s,t})
 \bar{\partial}\partial\eta
 \wedge \partial\bar{\partial}(\varphi_{s,t}-\underline{\varphi}{}_{s,t}).
\end{align*}
On the other hand,
\begin{align*}
\int_{X_s} \eta \tilde{\omega}_{s,t}^2 
&=\int_{X_s}\eta a_{s,t}\omega_{X_s}^2 \\ 
&= \Bigl(2t\frac{\int_{X_s}\pi_s^*\omega_{B_{s}}\wedge
 \omega_{X_s}}{\int_{X_s}\omega_{X_s}^2} + O(t^2) \Bigr)
 \int_{X_s}\eta \omega_{X_s}^2 \\
& = 2t \int_{X_{s,y}}\omega_{X_s} \int_{B_{s}} \eta \omega_{s,\rm{ML}}
 + O(t^2)  \int_{X_s}\eta \omega_{X_s}^2
\end{align*}
by \eqref{adiab(4.3)}.
Since $\int_{X_{s,y}}\omega_{X_s}$ is bounded from below and above,
\begin{align}\label{estimate.thm4.1}
\nonumber
&\int_{B_{s}} \eta
 \sqrt{-1}\partial\bar{\partial}(\underline{\varphi}{}_{s,t}-\psi)\\ 
&=\int_{B_{s}} \eta (\omega_{B_s}
 + \sqrt{-1}\partial\bar{\partial}\underline{\varphi}{}_{s,t})
 -\int_{B_{s}} \eta \omega_{s,\rm{ML}} \\ \nonumber
&= O(t) \int_{X_s} \eta \omega_{X_s}^2
 + O(t^{-1}) \int_{X_s}
 (\varphi_{s,t}-\underline{\varphi}{}_{s,t})
 \bar{\partial}\partial \sqrt{-1}\eta \wedge
 \sqrt{-1}\partial\bar{\partial}(\varphi_{s,t}-\underline{\varphi}{}_{s,t}).
\end{align}
Let $K$ be a compact subset of $\bigcup_{s\in S} B_{s}\setminus {\rm disc} (\pi_s)$
 and let $K_s:=K\cap B_s$.
By Theorem~\ref{fiber.C2}
\begin{align*}
-Ct\omega_{X_s}|_{\pi_s^{-1}(y)}
\leq \sqrt{-1}\partial\bar{\partial}(\varphi_{s,t}-\underline{\varphi}{}_{s,t})
 |_{\pi_s^{-1}(y)}
\leq Ct\omega_{X_s}|_{\pi_s^{-1}(y)}
\end{align*}
for $y\in K_s$. 
Then by Claim~\ref{laplace.conclude}, 
 we have  
\begin{align}\label{(3.9).RFK}
|\varphi_{s,t}-\underline{\varphi}{}_{s,t}| \leq C t
\end{align}
 on $K_s$.
Here, $C$ is a constant depending on $K$ but not on $s$.
Hence if we fix $\eta$ to be a smooth function on $B_{s}$
 and if $\eta$ is constant near ${\rm disc} (\pi_s)$, then
\begin{align}\label{estimate.thm4.1.eta}
\int_{B_{s}} \eta
 \sqrt{-1}\partial\bar{\partial}(\underline{\varphi}{}_{s,t}-\psi) 
=\int_{B_{s}} \eta (\omega_{B_s}
 + \sqrt{-1}\partial\bar{\partial}\underline{\varphi}{}_{s,t})
 -\int_{B_{s}} \eta \omega_{s,\rm{ML}}
= O(t)
\end{align}
by \eqref{estimate.thm4.1}.

We next show that for any $\epsilon>0$, there exists a small open neighborhood
 $V$ of ${\rm disc}(\pi_s)$ and $t_0>0$ such that 
\begin{align}\label{estimate.thm4.1.near.sing.}
\Bigl|\int_V f \partial\bar{\partial}\underline{\varphi}{}_{s,t} \Bigr|
< \epsilon \sup_V |f| 
\end{align}
for any bounded function $f$ on $V$ and $t<t_0$.
To show this, fix a smooth nonnegative-valued function $\eta$ on the base 
$B_{s}$
 which takes constant value 1 on a neighborhood $V$ of ${\rm disc} (\pi_s)$
 and $\int_{B_{s}} \eta \omega_{s,\rm{ML}}<\frac{\epsilon}{4}$.
Then by \eqref{estimate.thm4.1.eta}, 
\begin{align*}
\int_{B_{s}} \eta (\omega_{B_{s}}
 + \sqrt{-1}\partial\bar{\partial}\underline{\varphi}{}_{s,t})
 < \frac{\epsilon}{2}
\end{align*}
for small $t$.
We have
 $(\pi_s)_* (\sqrt{-1}\partial\bar{\partial}{\varphi}_{s,t}\wedge\omega_{X_s})
 =\sqrt{-1} (\int_{X_{s,y}}\omega_{X_s}) \cdot
 \partial\bar{\partial}\underline{\varphi}{}_{s,t}$
and $c_s:=\int_{X_{s,y}}\omega_{X_s}$ is bounded from below and above
 by positive constants.
Hence
\begin{align*}
\int_{X_s} \eta \tilde{\omega}_{s,t}\wedge \omega_{X_s}
= c_s
 \int_{B_{s}} \eta (\omega_{B_s}
 + \sqrt{-1}\partial\bar{\partial}\underline{\varphi}{}_{s,t})
 + \int_{X_s} \eta t \omega_{X_s}^2 
< \frac{3}{4}\epsilon c_s
\end{align*}
for small $t$.
Since $\tilde{\omega}_{s,t}\wedge \omega_{X_s}$ is positive, 
\begin{align*}
\Bigl|\int_{\pi_s^{-1}(V)} f \tilde{\omega}_{s,t}\wedge \omega_{X_s}
 \Bigr|
&\leq  \sup_V |f| \cdot
 \int_{\pi_s^{-1}(V)} \tilde{\omega}_{s,t}\wedge \omega_{X_s} \\
&\leq \sup_V |f| \cdot
 \int_{X_s} \eta \tilde{\omega}_{s,t}\wedge \omega_{X_s}
< \frac{3}{4}\epsilon c_s \sup_V |f|.
\end{align*}
By shrinking $V$ if necessary, we obtain 
\begin{align*}
& \Bigl|\int_V f \partial\bar{\partial}\underline{\varphi}{}_{s,t} \Bigr|
 =  c_s^{-1}\Bigl|\int_{\pi_s^{-1}(V)} f \partial\bar{\partial} \varphi_{s,t}
 \wedge \omega_{X_s} \Bigr|  \\
& = c_s^{-1} \Bigl|\int_{\pi_s^{-1}(V)} f
 (\tilde{\omega}_{s,t}-(t\omega_{X_s}+\pi_s^*\omega_{B_s}))
 \wedge \omega_{X_s} \Bigr|  \\
&\leq c_s^{-1}
 \Bigl(\Bigl|\int_{\pi_s^{-1}(V)} f \pi_s^*\omega_{B_s} \wedge \omega_{X_s} \Bigr|
 +\Bigl|\int_{\pi_s^{-1}(V)} f t \omega_{X_s}^2 \Bigr|
 +\Bigl|\int_{\pi_s^{-1}(V)} f \tilde{\omega}_{s,t}\wedge \omega_{X_s}
  \Bigr|\Bigr)\\
&< \epsilon \sup_V |f|,
\end{align*}
showing \eqref{estimate.thm4.1.near.sing.}.

We also note that a similar estimate
\begin{align}\label{estimate.thm4.1.near.sing.2}
\Bigl|\int_V f \partial\bar{\partial}\psi \Bigr|
< \epsilon \sup_V |f| 
\end{align}
can be also proved (more easily) by using the definitions of
 $\psi$ and the McLean metric.

Let us normalize $\varphi_{s,t}$ and $\psi$ as
$$\int_{B_{s}} \underline{\varphi}{}_{s,t} \omega_{B_{s}}
 = \int_{B_{s}} \psi \omega_{B_{s}}= 0.$$ 
\begin{Claim}\label{base.level}
We have a uniform convergence $\underline{\varphi}{}_{s,t}\to \psi$ 
 on any given compact set $K\subset \bigcup_s B_{s}\setminus {\rm disc}(\pi_s)$.
\end{Claim}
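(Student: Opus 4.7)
The plan is to promote the weak integral estimate \eqref{estimate.thm4.1.eta}, together with the near-discriminant mass control \eqref{estimate.thm4.1.near.sing.} and \eqref{estimate.thm4.1.near.sing.2}, to a uniform $C^0$ bound for $u_{s,t} := \underline{\varphi}{}_{s,t} - \psi$ on any compact $K \subset \bigcup_s B_s \setminus {\rm disc}(\pi_s)$, via a compactness argument plus elliptic regularity on the base. First I would establish that $u_{s,t}$ is uniformly bounded in $L^\infty(B_s)$, $s$-uniformly. The bound on $\underline{\varphi}{}_{s,t}$ follows by fiberwise averaging from the $s$-uniform $L^\infty$ bound on $\varphi_{s,t}$ proved in \S\ref{unif.L.infinity} (together with \eqref{(3.9).RFK} if one needs to pass between the two normalizations), while $\psi$ is bounded by hypothesis; both are continuous in $s$ away from the discriminant.

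Next I would combine \eqref{estimate.thm4.1.eta} with \eqref{estimate.thm4.1.near.sing.} and \eqref{estimate.thm4.1.near.sing.2} to obtain a full-strength distributional convergence: for every $\epsilon>0$ and every smooth test function $\eta$ on the (trivialized) base $\mathbb{P}^1$, we can split $\eta = \eta_1 + \eta_2$ with $\eta_1$ constant on a neighborhood $V$ of ${\rm disc}(\pi_s)$ of prescribed smallness and $\eta_2$ supported in $V$; \eqref{estimate.thm4.1.eta} treats $\eta_1$ with error $O(t)\,\|\eta\|_{C^2}$, while \eqref{estimate.thm4.1.near.sing.}, \eqref{estimate.thm4.1.near.sing.2} bound the contribution of $\eta_2$ by $2\epsilon\sup|\eta|$. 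The conclusion is that
\begin{equation*}
\Bigl|\int_{B_s} u_{s,t}\,\sqrt{-1}\partial\bar{\partial}\eta\Bigr|
= \Bigl|\int_{B_s} \eta\, \sqrt{-1}\partial\bar{\partial} u_{s,t}\Bigr|
\longrightarrow 0\qquad (t\to 0)
\end{equation*}
with rate uniform in $s\in S$ and depending only on $\sup|\eta|$ and a continuity modulus of $\eta$.

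To conclude the claim, I would argue by contradiction. Suppose uniform convergence on $K$ fails; then there exist $\epsilon_0>0$ and a sequence $(s_i, t_i, y_i)$ with $t_i\to 0$, $y_i\in K\cap B_{s_i}$, and $|u_{s_i,t_i}(y_i)|\ge \epsilon_0$. By relative compactness of $S$ and $K$, pass to a subsequence with $s_i\to s_\infty$ and $y_i\to y_\infty$, where $y_\infty$ lies in a small coordinate disk $D \subset B_{s_\infty}\setminus {\rm disc}(\pi_{s_\infty})$ which, by continuity of the discriminant locus, is also disjoint from ${\rm disc}(\pi_{s_i})$ for large $i$. By the uniform $L^\infty$ bound, extract a weak-$*$ $L^\infty$ limit $u_\infty$ on $D$. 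The distributional convergence of the previous paragraph, together with continuity of $\psi_s$ in $s$ (which, via \eqref{adiab(4.3)}, reduces to continuity of the fiber integrals $(\pi_s)_*\omega_{X_s}^2$, guaranteed by the continuous family structure from \S\ref{Setting}), forces $\sqrt{-1}\partial\bar{\partial} u_\infty = 0$ on $D$, so $u_\infty$ is harmonic there. A standard interior elliptic estimate for the Laplacian (on a subdisk $D'\Subset D$, using that $u_{s_i,t_i}$ is uniformly bounded and its distributional Laplacian tends to zero against any smooth cutoff) then upgrades weak-$*$ convergence to $C^0$ convergence on $D'$, giving $u_{s_i,t_i}(y_i)\to u_\infty(y_\infty)$. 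Applying the normalization $\int_{B_s}(\underline{\varphi}{}_{s,t}-\psi)\,\omega_{B_s}=0$ and the smallness of mass near the discriminant yields $u_\infty(y_\infty)=0$, a contradiction.

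The main obstacle I expect is coordinating the argument across $s\in S$ when $s$ approaches the ADE locus $S_{\rm ADE}$: both the discriminant locus and the potential $\psi$ are $s$-dependent, so one must trivialize $\bigcup_s B_s \simeq \mathbb{P}^1$ and track continuity of $\psi_s$ and of ${\rm disc}(\pi_s)$ across $S_{\rm ADE}$. The continuity of $\psi_s$ is a nontrivial but essentially formal consequence of the continuity of the elliptic fibration family $\mathcal{X}\to S'$ constructed in \S\ref{Setting} together with \eqref{adiab(4.3)}, since $\omega_{X_s}$ extends to a continuous family of orbifold K\"ahler forms; the necessary uniform separation of $K$ from ${\rm disc}(\pi_s)$ follows from the continuity in $s$ of the Weierstrass discriminant section $\Delta$ used in \S\ref{2.4}.
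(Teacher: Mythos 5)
Your approach is genuinely different from the paper's. The paper argues directly and quantitatively: it introduces the Green's function $F_y$ on $B_s$ solving $\sqrt{-1}\bar\partial\partial F_y=\delta_y-c\omega_{B_s}$, writes $(\underline{\varphi}{}_{s,t}-\psi)(y)=\int_{B_s}F_y\,\sqrt{-1}\partial\bar\partial(\underline{\varphi}{}_{s,t}-\psi)$, splits $F_y=\phi F_y+(1-\phi)F_y$ via a cutoff $\phi$ near ${\rm disc}(\pi_s)$, and then bounds each piece -- the cutoff piece by \eqref{estimate.thm4.1.near.sing.} and \eqref{estimate.thm4.1.near.sing.2}, the interior piece by plugging \eqref{estimate.thm4.1}, \eqref{(3.9).RFK} and Theorem~\ref{fiber.C2} into the identity so the error is $O(t^2)$. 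You instead rely on a compactness-by-contradiction scheme, weak-$*$ limits, harmonicity of the limit, and elliptic regularity. The paper's route yields an explicit rate and avoids any extraction of subsequences; yours is more structural but needs more hypotheses to close.

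There are two real gaps in your argument. First, the step ``uniform $L^\infty$ bound $+$ distributional Laplacian $\to 0$ implies $C^0$ convergence on $D'$'' is false as stated: $u_n(x)=\sin(nx)$ is bounded, converges weak-$*$ to $0$, and $\int u_n''\phi\to 0$ for every test function $\phi$, yet $u_n$ does not converge in $C^0$. What you actually need is a \emph{pointwise} (or at least $L^p$) bound on $\sqrt{-1}\partial\bar\partial\underline{\varphi}{}_{s,t}$, uniform on compacta away from ${\rm disc}(\pi_s)$. This does hold in the setting of the paper -- pushing forward \eqref{GTZ4.1} by $\pi_s$ shows $\omega_{B_s}+\sqrt{-1}\partial\bar\partial\underline{\varphi}{}_{s,t}$ is two-sidedly comparable to $\omega_{B_s}$ on such compacta -- but you never invoke it, and without it the Arzel\`a-Ascoli step and the upgrade to $C^0$ do not go through. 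Second, harmonicity of $u_\infty$ on the local disk $D$ plus the normalization $\int_{B_s}(\underline{\varphi}{}_{s,t}-\psi)\omega_{B_s}=0$ does not directly force $u_\infty(y_\infty)=0$; you must globalize (extract $u_\infty$ as a bounded harmonic function on all of $\mathbb{P}^1\setminus{\rm disc}(\pi_{s_\infty})$ by a diagonal argument over an exhaustion, remove the isolated singularities, conclude $u_\infty$ is constant on $\mathbb{P}^1$ by Liouville, then kill the constant via the normalization and the near-discriminant mass control). Both gaps are fixable, but as written the argument does not close; the paper's Green's-function computation sidesteps both issues entirely.
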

\begin{proof}[proof of Claim \ref{base.level}] 
For $y\in B_{s}$, let $F_y$ be the function on $B_{s}$ which solves
 $$\sqrt{-1}\bar{\partial} \partial F_y = \delta_y -c\omega_{B_{s}},$$ 
 where $c:=\int_{B_{s}}\omega_{B_{s}}$, namely, 
\[\int_{B_{s}} F_y \sqrt{-1}\partial\bar{\partial} f   
 = f(y) - c \int_{B_{s}} f \omega_{B_{s}}.\]
Then $F_y \in L^1(B_{s})$ and is smooth on $B_{s}\setminus y$. 
We will prove the convergence $\underline{\varphi}{}_{s,t}\to \psi$
 by estimating 
\begin{align*}
(\underline{\varphi}{}_{s,t}-\psi) (y)
=\int_{B_{s}} F_y \sqrt{-1}\partial\bar{\partial}(\underline{\varphi}{}_{s,t}-\psi)
\end{align*}
for $y\in K$.
Let $V\subset B_{s}$ be as in \eqref{estimate.thm4.1.near.sing.}.
We may assume $K\cap \overline{V}=\emptyset$.
Take an open neighborhood $V'$ of ${\rm disc}(\pi_s)$
 such that $\overline{V'}\subset V$.
Take a smooth function $0\leq \phi\leq 1$ on $B_{s}$ such that 
 $\phi=1$ on $V'$ and $\operatorname{supp} \phi \subset V$.
By \eqref{estimate.thm4.1.near.sing.} and
 \eqref{estimate.thm4.1.near.sing.2}, 
\begin{align*}
\int_{B_{s}} \phi F_y
 \sqrt{-1}\partial\bar{\partial}(\underline{\varphi}{}_{s,t}-\psi)
< 2\epsilon \sup_V |F_y|
\leq 2\epsilon \sup_{y\in K} \sup_V |F_y|.
\end{align*}
By \eqref{estimate.thm4.1},
\begin{align*}
&\int_{B_{s}} (1-\phi)F_y
 \sqrt{-1}\partial\bar{\partial}(\underline{\varphi}{}_{s,t}-\psi)\\
&=O(t) \int_{X_s} (1-\phi)F_y \omega_{X_s}^2 \\ 
&+ O(t^{-1}) \int_{X_s}
 (\varphi_{s,t}-\underline{\varphi}{}_{s,t})
 \pi_s^*( \sqrt{-1}\bar{\partial}\partial (1-\phi)F_y)
 \wedge \partial\bar{\partial}(\varphi_{s,t}-\underline{\varphi}{}_{s,t}).
\end{align*}
The $2$-form
\begin{align*}
\alpha_y := \sqrt{-1}\bar{\partial}\partial (1-\phi)F_y-\delta_y
\end{align*} 
is bounded and $\operatorname{supp} \alpha_y \subset B_{s}\setminus V'$.
Then 
\begin{align*}
&\int_{X_s} (\varphi_{s,t}-\underline{\varphi}{}_{s,t})
 \pi_s^*( \sqrt{-1}\bar{\partial}\partial (1-\phi)F_y)
 \wedge \partial\bar{\partial}(\varphi_{s,t}-\underline{\varphi}{}_{s,t})\\
&= \int_{\pi_s^{-1}(y)} (\varphi_{s,t}-\underline{\varphi}{}_{s,t})
  \partial\bar{\partial}(\varphi_{s,t}-\underline{\varphi}{}_{s,t})
 + \int_{X_s} (\varphi_{s,t}-\underline{\varphi}{}_{s,t})
 \alpha_y
 \wedge \partial\bar{\partial}(\varphi_{s,t}-\underline{\varphi}{}_{s,t}).
\end{align*}
By Theorem~\ref{fiber.C2} and \eqref{(3.9).RFK}, 
 the last integrals are $O(t^2)$.
This proves the uniform convergence
 $\underline{\varphi}{}_{s,t}\to \psi$ on $K$. 
We end the proof of Claim \ref{base.level}. 
\end{proof}

Combining Claim \ref{base.level} and \eqref{(3.9).RFK}, we obtain
 a convergence of potential functions
 (recall that $X_{s}^{\rm sm}=X_{s}\setminus \pi_s^{-1}({\rm disc}(\pi_s))$
 is the union of smooth fibers of $\pi_s$):
\begin{Thm}\label{potential.convergence}
We have a uniform convergence ${\varphi}_{s,t}\to \psi$ 
 on any given compact set $K\subset \bigsqcup_s X_{s}^{\rm sm}$. 
\end{Thm}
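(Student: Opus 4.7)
The plan is to deduce Theorem~\ref{potential.convergence} as a direct consequence of two inputs already established in the preceding subsections, combined via the triangle inequality. Namely, Claim~\ref{base.level} provides uniform convergence of the fiber-averaged potential, $\underline{\varphi}{}_{s,t} \to \psi$, on any compact subset $K' \subset \bigcup_s (B_s \setminus {\rm disc}(\pi_s))$, while the fiberwise oscillation estimate \eqref{(3.9).RFK} gives $|\varphi_{s,t} - \underline{\varphi}{}_{s,t}| \leq Ct$ on $\pi_s^{-1}(K')$, with constants independent of $s$. Both of these are normalized compatibly (sup-normalization together with the integral normalization $\int_{B_s}\underline{\varphi}{}_{s,t}\omega_{B_s} = \int_{B_s}\psi \omega_{B_s}=0$ used in Claim~\ref{base.level}), so no further adjustment is required.

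Given a compact subset $K \subset \bigsqcup_s X_s^{\rm sm}$, its image $K' := \bigsqcup_s \pi_s(K \cap X_s)$ is a compact subset of $\bigsqcup_s (B_s \setminus {\rm disc}(\pi_s))$, since by assumption $K$ avoids the preimage of the discriminant. I would then estimate, for any $x \in K \cap X_s$ with $y = \pi_s(x) \in K'$,
\begin{align*}
|\varphi_{s,t}(x) - \psi(y)|
\leq |\varphi_{s,t}(x) - \underline{\varphi}{}_{s,t}(y)|
 + |\underline{\varphi}{}_{s,t}(y) - \psi(y)|.
\end{align*}
The first term is $O(t)$ uniformly in $s$ and $x \in K$ by \eqref{(3.9).RFK}, and the second tends to $0$ uniformly in $s$ and $y \in K'$ by Claim~\ref{base.level}. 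Hence $\varphi_{s,t} \to \pi_s^*\psi$ uniformly on $K$, which is exactly the assertion.

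There is essentially no obstacle here since the analytic heavy lifting has been carried out beforehand. The only mild point to verify is the compatibility of normalizations between Claim~\ref{base.level} and \eqref{(3.9).RFK}: in \eqref{(3.9).RFK} one compares $\varphi_{s,t}$ to its fiber average $\underline{\varphi}{}_{s,t}$, which is intrinsic and independent of any additive normalization, so it applies directly under the base-level normalization used in Claim~\ref{base.level}. Once this is checked, the theorem follows by the two-line triangle inequality above.
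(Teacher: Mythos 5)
Your proof is correct and follows exactly the paper's argument: the theorem is stated there as an immediate consequence of combining Claim~\ref{base.level} with the estimate \eqref{(3.9).RFK} via the triangle inequality, and your remark on the normalization-insensitivity of the fiberwise oscillation bound addresses the only subtlety.
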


\subsection{$C^2$-estimate II} \label{4.1}

The $s$-uniform extension of \cite[Lemma 4.1]{GTZ1} is the following:
for any given compact set
 $K\subset \bigsqcup_s X_{s}^{\rm sm}$,
\begin{align}\label{GTZ4.1}
C^{-1}(\pi_s^*\omega_{B_s}+t\omega_{X_s})
\leq \tilde{\omega}_{s,t} \leq C(\pi_s^*\omega_{B_s}+t\omega_{X_s})
\end{align}
holds, where $C$ is independent of $s,t$.
This can be proved as in \cite{GTZ1}.
The left hand side inequality
 follows from our \eqref{eq:lem3.1} and Theorem~\ref{2.9.}.
For the right hand side inequality, we use 
\[\frac{\tilde{\omega}_{s,t}^2}{(\pi_s^*\omega_{B_s}+t\omega_{X_s})^2}
\leq \frac{C_1 t{\omega}_{X_s}^2}{\pi_s^*\omega_{B_s}\wedge t\omega_{X_s}}
\leq \frac{C_2 t{\omega}_{X_s,{\rm new}}^2}{\pi_s^*\omega_{B_s}\wedge
  t\omega_{X_s,{\rm new}}}
= \frac{C_2}{H_{s,{\rm new}}} \leq C_3\sigma^{-\lambda}, 
\]
which follows from \eqref{H.lower.bd} and 
 that the ratio of $\omega_{X_s,{\rm new}}$
 to $\omega_{X_s}$ is bounded on $K$.

\subsection{$C^k$-estimate} \label{4.3}

Now we 
make \cite[Proposition 4.3, Lemma 4.5]{GTZ1} 
uniform with respect to $s$. Then 
they imply the $s$-uniform version of \cite[(4.18)]{GTZ1} 
which is the purpose of this subsection. 
That is, we want to prove that 
\begin{Prop}\label{4.18}
For any given compact subset
 $K\subset \bigsqcup_s X_{s}^{\rm sm}$ and $k\in \mathbb{Z}_{\geq 2}$, 
there exists a constant $C$ such that 
$$\|\varphi_{s,t}\|_{C^{k}(K\cap X_{s})}\le C$$ 
holds for any $s\in S$. 
\end{Prop}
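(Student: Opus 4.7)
The plan is to follow the approach of \cite[Proposition 4.3 and Lemma 4.5]{GTZ1}, which combines a local rescaling adapted to the adiabatic limit, the complex Evans-Krylov theorem, and Schauder bootstrapping. Given the $C^2$-estimates already established as Theorem~\ref{2.9.} and \eqref{GTZ4.1}, together with the fiberwise $C^2$-estimate of Theorem~\ref{fiber.C2} and the $C^0$-smallness \eqref{better.3.9}, the main new task is to carry out the argument in a family over $S$ and verify that every constant appearing in the standard bootstrap is independent of $s\in S$ (and of $t$, for $t$ small).

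First I would reduce to a local picture. Since $K\subset\bigsqcup_{s}X_s^{\rm sm}$ is compact, a partition-of-unity argument reduces the problem to proving the estimate on the intersection of $K$ with a small neighborhood of each point. Fix $x_0\in K\cap X_{s_0}^{\rm sm}$ and choose holomorphic coordinates $(y,z)$ on a neighborhood of $x_0$ in $X_{s_0}$ adapted to the elliptic fibration, so that $\pi_{s_0}$ becomes $(y,z)\mapsto y$. Using the smooth family structure on $\mathcal{X}^o=\bigcup_p X_p^o$ from \S\ref{Grauert.singular.metric}, these coordinates propagate smoothly to a family $(y,z)$ on neighborhoods $U_s\subset X_s$ of the corresponding points, and the coefficients of $\omega_{X_s}$, $\omega_{B_s}$, $\omega_{s,t}=\pi_s^*\omega_{B_s}+t\omega_{X_s}$ depend real-analytically on $(y,z,s)$ on a relatively compact piece, providing $s$-uniform $C^{\ell}$-bounds for every $\ell$.

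Second, perform the classical adiabatic rescaling $\tilde{z}:=t^{-1/2}z$. On a fixed polydisc in the rescaled coordinates $(y,\tilde{z})$ the reference metric $\omega_{s,t}$ has coefficient matrix uniformly (in $s,t$) equivalent to the identity. Setting $\tilde{\varphi}_{s,t}(y,\tilde{z}):=\varphi_{s,t}(y,t^{1/2}\tilde{z})$, the Ricci-flat Monge-Amp\`ere equation translates into
\begin{equation*}
\det\bigl((\tilde{g}_{s,t})_{\alpha\bar{\beta}}+(\tilde{\varphi}_{s,t})_{\alpha\bar{\beta}}\bigr)=\tilde{F}_{s,t},
\end{equation*}
where $\tilde{g}_{s,t}$ and $\tilde{F}_{s,t}$ enjoy $s$- and $t$-uniform $C^{\ell}$-bounds for every $\ell$. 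By \eqref{GTZ4.1} the rescaled Hessian is uniformly bounded, yielding uniform ellipticity of the Monge-Amp\`ere operator on the rescaled polydisc. Apply the complex Evans-Krylov theorem to obtain a uniform $C^{2,\alpha}$-bound on $\tilde{\varphi}_{s,t}$; then differentiate the equation and apply interior Schauder estimates inductively to upgrade this to uniform $C^k$-bounds for $\tilde{\varphi}_{s,t}$ in the rescaled chart, for every $k$.

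Third, translate back to the original coordinates. A derivative in $y$ is unaffected by the rescaling, while each derivative in $z$ corresponds to a $\tilde{z}$-derivative times the chain-rule factor $t^{-1/2}$. The point is that the rescaled potential $\tilde{\varphi}_{s,t}$ is actually very close to a function depending only on $y$: indeed Theorem~\ref{fiber.C2} gives $(\tilde{\varphi}_{s,t})_{\tilde{z}\bar{\tilde{z}}}=t\,(\varphi_{s,t})_{z\bar{z}}=O(t^2)$, and \eqref{better.3.9} together with \eqref{(3.9).RFK} yields $\|\tilde{\varphi}_{s,t}-\underline{\tilde{\varphi}}_{s,t}\|_{C^0}=O(t)$ on compact sets of the smooth locus. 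Interpolating this $C^0$-smallness against the uniform higher $C^{\ell}$-bounds produced in the previous step shows that each $\tilde{z}$-derivative of $\tilde{\varphi}_{s,t}$ gains a factor of $t^{1/2}$, which cancels the $t^{-1/2}$ from the chain rule. This gives the desired $s,t$-uniform $C^k$-bound for $\varphi_{s,t}$ on $K\cap X_s$.

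The main obstacle will be the final bookkeeping in the last step: one has to verify that the interpolation gain in the $\tilde{z}$-direction is sufficient at every order $k$, which requires iterating the Schauder step to sufficiently high regularity (depending on $k$) and then invoking standard Gagliardo-Nirenberg-type interpolation between the $O(t)$-smallness of $\tilde{\varphi}_{s,t}-\underline{\tilde{\varphi}}_{s,t}$ in $C^0$ and the $s,t$-uniform $C^K$-bound for large $K$. The $s$-uniformity of the Evans-Krylov and Schauder constants themselves is not an issue, because on the compact set $K$ the family of charts, reference metrics, and right-hand sides are uniformly smooth by the construction in \S\ref{Grauert.singular.metric} and by the fact that $K$ is disjoint from the loci where $\pi_s$ or $X_s$ develop singularities.
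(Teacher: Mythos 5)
Your overall strategy (reduce to a local chart, perform an adiabatic rescaling in the fibre direction, apply complex Evans--Krylov and Schauder bootstrapping, then translate back) is the same as the paper's, which itself follows Gross--Tosatti--Zhang. However, there are genuine gaps, and the most serious one is precisely the reason the paper imports GTZ's semi-flat reference form and holomorphic section machinery.

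The central problem is in your Step~2. You assert that after rescaling, the reference metric $\omega_{s,t}$ and the Monge--Amp\`ere right-hand side $\tilde{F}_{s,t}$ enjoy $s,t$-uniform $C^{\ell}$-bounds for every $\ell$. This is false. The coefficients of $\omega_{X_s}$ (and of $H_s$, which enters the right-hand side $F_{s,t}=\frac{c_{s,t}}{2H_s+t}$) depend on the fibre coordinate $z$ and are periodic in $z$. With the rescaling that makes $\omega_{s,t}$ uniformly equivalent to the identity --- namely $\tilde{z}=t^{1/2}z$, not $\tilde{z}=t^{-1/2}z$ as you wrote --- these coefficients become $h(y,t^{-1/2}\tilde{z})$, which oscillate with wavelength $O(t^{1/2})$ in $\tilde{z}$. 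Each $\tilde{z}$-derivative costs a factor $t^{-1/2}$, so the $C^{1}$-norm of the rescaled coefficients already blows up as $t\to 0$. Thus the constants in Evans--Krylov and Schauder are \emph{not} uniform in $t$, and the core of your Step~2 breaks. The paper (following \cite[\S3,~\S4]{GTZ1}) resolves exactly this issue by replacing the oscillating reference with a semi-flat form $\omega_{{\rm SF},s}$ whose fibrewise coefficient depends only on $y$, and by translating along a holomorphic section $\sigma_s$, so that the potential $u_{s,t}$ in the rescaled picture sits in front of a uniformly smooth Kähler background; only then do the Evans--Krylov and Schauder constants come out $s$- and $t$-uniform. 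Your proposal omits this step entirely and so does not establish what it needs.

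Two further remarks. First, your rescaling convention $\tilde{z}=t^{-1/2}z$ makes the pulled-back fibre part of $\omega_{s,t}$ of order $t^2$, i.e.\ degenerate rather than uniformly equivalent to the identity; the intended rescaling must be $\tilde{z}=t^{1/2}z$. Second, with the correct convention the chain rule gives $\partial_{z}^{j}\varphi_{s,t}=t^{j/2}\partial_{\tilde{z}}^{j}\tilde{\varphi}_{s,t}$, so uniform rescaled bounds translate back to uniform (in fact decaying) original-coordinate bounds automatically --- no interpolation argument is needed. Moreover the interpolation you sketch in Step~3 could not succeed in any case: Gagliardo--Nirenberg between $\|\cdot\|_{C^{0}}=O(t)$ and $\|\cdot\|_{C^{K}}=O(1)$ yields only $\|\cdot\|_{C^{j}}=O(t^{1-j/K})$, and the requirement $1-j/K\geq j/2$ cannot be met for any finite $K$ once $j\geq 2$.
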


The above statement with $k=3$ is enough for our
 particular purpose of proving the $s$-uniform
 Gromov-Hausdorff convergence of $X_{s}$ to $B_{s}$. 

\begin{proof}
We follow the arguments of \cite[\S3, \S4]{GTZ1}. 
Consider the real analytic family 
$$\bigsqcup_{s}X_{s}^{\rm sm}\to \bigsqcup_s (B_{s}\setminus {\rm disc}(\pi_{s})).$$ 
We fix a real analytic family of local holomorphic coordinates 
$$
\{((B_{s}\setminus {\rm disc}(\pi_{s}))\supset V_{s},\ y_{s}\colon V_{s}\to \C)\},
$$ 
and set $\mathcal{V}:=\bigsqcup_{s}V_{s}$. 
We then take the uniformization of $\pi_{s}^{-1}(V_{s})$ as 
$$\bigsqcup p_{s}\colon (\bigsqcup V_{s})\times \mathbb{C} \twoheadrightarrow 
\bigsqcup \pi_{s}^{-1}(V_{s}),$$
which is 
real analytic and holomorphic on $V_s\times\C$ for each fixed $s$. 
This is possible for small enough $\mathcal{V}$. 
Then 
\cite[\S3]{GTZ1} constructs a real analytic family of 
semi-flat forms $\sqrt{-1} \partial\bar{\partial}\eta_{s}$ on 
$\bigsqcup_{s} (\pi_{s}\circ p_{s})^{-1} V_{s}$, which descend to 
$\bigsqcup_{s} \pi_{s}^{-1}V_{s}$ as $\omega_{{\rm SF},s}$ 
in an explicit manner. 
We note that in our case, as the smooth fibers of $\pi_{s}$ are all 
elliptic curves, hence in particular all projective. Thus we do not need 
the projectivity assumption of $X_{s}$ for the arguments in \cite{GTZ1, GTZ2} 
(cf.\  also  \cite[``Reviewer's comment"]{H} and 
\cite{HT}) and our $s$-uniform extension. 

From the arguments of \cite[Proposition 3.1]{GTZ1}, 
we get a real analytic family of holomorphic sections of $\pi_{s}$ as 
$\sigma_{s}\colon V_{s}\to (\pi_{s}\circ p_{s})^{-1}(V_{s})$ satisfying 
$$T_{\sigma_{s}}^{*}\omega_{\rm SF,s}-\omega_{X_{s},{\rm new}}=\sqrt{-1} 
\partial\bar{\partial}\xi|_{\pi_s^{-1}(V_s)}$$ 
with real analytic $\xi\colon \bigsqcup_{s} \pi_{s}^{-1}(V_{s})\to \R$.
Here, $T_{\sigma_{s}}$ denotes the translation by the section $\sigma_{s}$
 with respect to the image of zero section by $p_{s}$. 
Note that the construction of 
$\sigma_{s}$ is concrete which comes from the decomposition of ``$\zeta^{0,1}$'' 
discussed in \cite[after (3.6)]{GTZ1}. 
Hence the statements of \cite[Lemma 4.2, Proposition 4.3]{GTZ1} 
can be made $s$-uniform verbatim. Now we prove them after \textit{op.cit}. 

By \eqref{GTZ4.1}, we obtain the $s$-uniform version of \cite[Lemma 4.2]{GTZ1}.
Then we want to make 
 \cite[Proposition 4.3]{GTZ1} $s$-uniform for compact set
 $K=\bigsqcup_{s} K_{s}\subset \bigsqcup_{s}X_{s}^{\rm sm}$. 
We write $$p_{s}^{*}(\pi_{s}^{*}\omega_{B_{s}}+\omega_{{\rm SF},s})=
\sqrt{-1} \partial\bar{\partial}
F_{s}$$ with a real analytic function $F_{s}$. 
 If we take a suitable finite covering of $K$ as 
 $$\{\bigsqcup_{s} K_{s}^{(i)}\mid i=1, \cdots,m\}$$ and passing to 
 $K_{s}^{(i)}$, then the Evans-Krylov estimates (cf.\  \cite[Chapter 17]{GT}, 
 \cite{Blocki}, \cite{Siu}, etc.) for the Monge-Amp\`{e}re equation  
 $$\lambda_{t}^{*}p_{s}^{*}T_{-\sigma_{s}}^{*}\tilde{\omega}_{s,t}
 =p_{s}^{*}\pi_{s}^{*}\omega_{B_{s}}+\omega_{{\rm SF},s}+
 \sqrt{-1} \partial\bar{\partial}u_{s,t}
 $$
 is applied to ensure 
 $$
 \|F_{s}+u_{s,t}\|_{C^{2,\alpha}(K_{s}^{(i)})}\le C
 $$
 for $s$-uniform $C$ and fixed $\alpha\in (0,1)$. 
 Hence we have 
 $$
 \|u_{s,t}\|_{C^{2,\alpha}(K_{s})}\le C 
 $$
for all $0<t<1$. 

Then, as \cite[proof of Proposition 4.3]{GTZ1} discussed, 
we do the usual ``bootstrapping argument'' to prove higher order estimates 
\begin{align}\label{Ck.esti}
\|u_{s,t}\|_{C^{k,\alpha}(K_{s})}\le C 
\end{align}
for $k\in \mathbb{Z}_{\geq 2}$ and all $0<t<1$. 

More details are as follows. 
Suppose (\ref{Ck.esti}) holds for fixed $k$ (we start 
with $k=2$). Apply Schauder estimates (\cite[Problem 6.1]{GT}, 
\cite[Theorem 1.37]{Naka99}, etc.) to
 the linearized elliptic partial differential equation satisfied by the 
partial derivatives 
$\partial_{z}u_{s,t}$ of $u_{s,t}$ with respect to the local 
holomorphic coordinates $\{z\}$. Then the 
coefficients of the equation is of $C^{k-1,\alpha}$-class with bounded norms by 
our assumption of the induction. 
Then the Schauder estimates implies \eqref{Ck.esti} for $k+1$. 
Hence we can inductively prove \eqref{Ck.esti} for all $k$. 
In particular, we obtain the desired estimate Proposition~\ref{4.18}. 
\end{proof}

\section{Gromov-Hausdorff adiabatic limits}
\subsection{Smooth convergence of metrics} \label{Ck.loc}

The previous $s$-uniform version of \cite[(4.18)]{Tos}, i.e., 
Proposition \ref{4.18} for $k=3$ combined with
 Theorem~\ref{potential.convergence}, the uniform convergence
 of potential functions, imply the following claim 
 by the Gagliardo-Nirenberg-Sobolev inequality. 
\begin{Claim}\label{Ck.loc.claim}
The potential functions $\varphi_{s,t}$ of $\tilde{\omega}_{s,t}$ 
 converge to that of $\pi_{s}^{*}\omega_{s,\text{\rm ML}}$ in 
the $C^{2}_{\rm loc}(X_{s}^{\rm sm})$-sense.
Therefore, 
$$\tilde{\omega}_{s,t}\to \pi_{s}^{*}\omega_{s,\text{\rm ML}}\quad (t\to +0)$$ 
in the $C^{0}_{\rm loc}(X_{s}^{\rm sm})$-sense. 
\end{Claim}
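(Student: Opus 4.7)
The plan is to deduce the $C^2_{\rm loc}$ convergence by combining the uniform $C^k$ bounds of Proposition~\ref{4.18} with the uniform $C^0_{\rm loc}$ convergence of Theorem~\ref{potential.convergence} via a standard interpolation argument. Fix a compact set $K\subset \bigsqcup_s X_s^{\rm sm}$ and a compact neighborhood $K'$ with $K\subset \mathring{K'}\subset K'\subset \bigsqcup_s X_s^{\rm sm}$. Theorem~\ref{potential.convergence} gives the uniform convergence $\|\varphi_{s,t}-\psi\|_{C^0(K')}\to 0$ as $t\to 0$ (here $\psi$ is pulled back from $B_s$ via $\pi_s$, and is a fixed function on $K'$). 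Applying Proposition~\ref{4.18} with $k=4$ gives the uniform bound $\|\varphi_{s,t}\|_{C^4(K')}\le C$, and in particular $\|\varphi_{s,t}-\psi\|_{C^4(K')}\le C'$ for some constant independent of $s,t$.

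The heart of the argument is then the Gagliardo--Nirenberg interpolation inequality
\[
\|u\|_{C^2(K)} \le C_K \|u\|_{C^0(K')}^{1-\theta}\, \|u\|_{C^4(K')}^{\theta}
\]
for some $\theta \in (0,1)$, valid whenever $K\subset \mathring{K'}$. Applied to $u=\varphi_{s,t}-\psi$, the right-hand side tends to $0$ as $t\to 0$ uniformly in $s$, yielding $\|\varphi_{s,t}-\psi\|_{C^2(K)}\to 0$. This establishes the claimed $C^2_{\rm loc}(X_s^{\rm sm})$ convergence of potentials.

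For the second part, recall that by definition
\[
\tilde{\omega}_{s,t} - \pi_s^*\omega_{s,{\rm ML}}
 = (\pi_s^*\omega_{B_s}+t\omega_{X_s}) - \pi_s^*\omega_{s,{\rm ML}}
 + \sqrt{-1}\,\partial\bar{\partial}(\varphi_{s,t}-\psi),
\]
where we used $\pi_s^*\omega_{s,{\rm ML}}=\pi_s^*\omega_{B_s}+\sqrt{-1}\partial\bar\partial\psi$. The first two terms differ by $t\,\omega_{X_s}$, which converges to $0$ in $C^0_{\rm loc}$, while the last term involves only second derivatives of $\varphi_{s,t}-\psi$ and hence tends to $0$ in $C^0_{\rm loc}(X_s^{\rm sm})$ by the $C^2_{\rm loc}$ convergence just proved. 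This yields the asserted $C^0_{\rm loc}$ convergence of the K\"ahler forms.

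The argument is essentially routine once the two analytic inputs -- uniform $C^0$ convergence and uniform $C^k$ bounds on the Monge--Amp\`ere potentials -- have been established in the previous subsections; the only mild care required is to ensure that the interpolation constants and the compact exhaustion can be chosen uniformly in $s\in S$, which follows from the relative compactness of $S$ and the fact that the family $\bigsqcup_s X_s^{\rm sm}$ has been set up as a smooth family over $S$ in \S\ref{Setting}. Accordingly, there is no real obstacle beyond bookkeeping.
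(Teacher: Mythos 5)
Your approach is exactly the paper's: combine the uniform $C^0$ convergence of Theorem~\ref{potential.convergence} with the uniform $C^k$ bounds of Proposition~\ref{4.18} via interpolation (Gagliardo--Nirenberg--Sobolev / Kolmogorov--Landau type inequality). The paper uses $k=3$, which already gives $\|u\|_{C^2}\le C\|u\|_{C^0}^{1/3}\|u\|_{C^3}^{2/3}$; your choice $k=4$ is harmless but slightly wasteful.

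One small algebraic slip in the last step: your decomposition
\[
\tilde{\omega}_{s,t} - \pi_s^*\omega_{s,{\rm ML}}
 = (\pi_s^*\omega_{B_s}+t\omega_{X_s}) - \pi_s^*\omega_{s,{\rm ML}}
 + \sqrt{-1}\,\partial\bar{\partial}(\varphi_{s,t}-\psi)
\]
does not hold: substituting $\pi_s^*\omega_{s,{\rm ML}}=\pi_s^*\omega_{B_s}+\sqrt{-1}\partial\bar\partial\psi$ shows the right-hand side carries an extra $-\sqrt{-1}\partial\bar\partial\psi$, and likewise your statement that ``the first two terms differ by $t\omega_{X_s}$'' drops the same $\sqrt{-1}\partial\bar\partial\psi$ term. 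The two errors cancel and your final conclusion is correct, but the clean identity is simply
\[
\tilde{\omega}_{s,t} - \pi_s^*\omega_{s,{\rm ML}}
 = t\,\omega_{X_s} + \sqrt{-1}\,\partial\bar\partial(\varphi_{s,t}-\psi),
\]
from which both terms visibly tend to $0$ in $C^0_{\rm loc}(X_s^{\rm sm})$ uniformly in $s$ as $t\to 0$. Everything else checks out.
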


\subsection{Diameter bounds of neighborhoods of singular fibers}  \label{Sing.fiber.nbhd}

Next we give a uniform upper bound of diameters of small 
\textit{neighborhoods of} singular fibers in $X_{s}$. 

In \cite[\S5]{GTZ1}, 
a local isometry $\varphi_{s}\colon 
(B_{s}\setminus {\rm disc}(\pi_{s}))\to L_{s}$ is constructed, 
where $L_{s}$ is an arbitrary 
Gromov-Hausdorff limit of $(X_{s},\tilde{\omega}_{s,t_{i}})$ 
with a sequence converging to $0$, i.e., $t_{i}\to 0$. 
There, ${\rm Im}(\varphi_{s})$ 
is proved to be dense inside the 
Gromov-Hausdorff limit $L_{s}$. 
Here, we extract an essence out of their arguments in the following 
elementary way, i.e., by simply using the Bishop-Gromov inequality on the 
total space. Our main claim in this subsection is the following. 
A point is that this is uniform for small enough $t$ (and all $s$). 

\begin{Claim}\label{sing.fiber.nbhd.claim}
For any $\epsilon>0$, 
there exist a positive constant $t_{0}>0$ and a 
small enough open neighborhood of $\bigsqcup_{s}{\rm disc}(\pi_{s})
\subset\bigsqcup_{s} B_{s}$ which will be denoted by $\mathcal{U}=\bigsqcup_{s}U_{s}$ 
with $U_{s}\subset B_{s}$ such that the following is satisfied. If we decompose 
$U_{s}$ into connected components as 
$U_{s}=\bigcup_{i}U_{s}^{(i)}$, then we have 
\begin{align*}
\sum_i {\rm diam}(\pi_{s}^{-1}(U_{s}^{(i)}),\tilde{\omega}_{s,t})<\epsilon
\end{align*}
for any $0<t<t_{0}$ and any $s$. 
\end{Claim}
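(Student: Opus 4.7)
The plan is to combine the Bishop--Gromov volume comparison inequality for the Ricci-flat $\tilde{\omega}_{s,t}$, the $s$-uniform diameter bound from \S\ref{Tos.lim.uni}, and the local uniform convergence $\tilde{\omega}_{s,t} \to \pi_s^*\omega_{s,\mathrm{ML}}$ established in Claim~\ref{Ck.loc.claim}. The number of connected components of any small neighborhood of $\mathrm{disc}(\pi_s)$ is uniformly bounded in $s$ (there are at most $24$ singular fibers on an elliptic K3 surface), so it suffices to prove that each $\mathrm{diam}(\pi_s^{-1}(U_s^{(i)}), \tilde{\omega}_{s,t})$ is small, uniformly in $s$ and $t$. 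I will parametrize the neighborhood $\mathcal{U}$ by sublevel sets $U_s = \{y \in B_s : \sigma(y) < r\}$ of the discriminant function $\sigma$ introduced in \S\ref{2.4}, so that $\mathrm{area}_{\omega_{B_s}}(U_s^{(i)}) = O(r^2)$ uniformly in $s$.

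Next, I would estimate $\mathrm{diam}_{\tilde{\omega}_{s,t}}(\pi_s^{-1}(U_s^{(i)}))$ via the triangle inequality
$$\mathrm{diam}_{\tilde{\omega}_{s,t}}(\pi_s^{-1}(U_s^{(i)})) \leq \mathrm{diam}_{\tilde{\omega}_{s,t}}(\pi_s^{-1}(\partial U_s^{(i)})) + 2\sup_{x \in \pi_s^{-1}(U_s^{(i)})} d_{\tilde{\omega}_{s,t}}(x, \pi_s^{-1}(\partial U_s^{(i)})).$$
The boundary diameter is controlled by Claim~\ref{Ck.loc.claim}, whose $s$-uniformity stems from Proposition~\ref{4.18} together with Theorem~\ref{potential.convergence}: since $\partial U_s^{(i)}$ lies in the regular locus and the fibers collapse at rate $\sqrt{t}$, this piece tends to $\mathrm{diam}_{\omega_{s,\mathrm{ML}}}(\partial U_s^{(i)})$, which is small when $r$ is small. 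For the distance to the boundary, I would observe that if $d_{\tilde{\omega}_{s,t}}(x, \pi_s^{-1}(\partial U_s^{(i)})) \geq \delta$, then the open ball $B_t(x,\delta)$ is contained in $\pi_s^{-1}(\overline{U_s^{(i)}})$, because any path exiting the latter must cross $\pi_s^{-1}(\partial U_s^{(i)})$. Bishop--Gromov applied to the Ricci-flat $\tilde{\omega}_{s,t}$, using the uniform upper bound on $\mathrm{diam}(X_s,\tilde{\omega}_{s,t})$ and the identity $\tilde{\omega}_{s,t}^2 = a_{s,t}\,\omega_{X_s}^2$ with $a_{s,t}$ comparable to $t$, then yields a comparison
$$c_1\,t\,\delta^4 \;\leq\; \mathrm{vol}_{\tilde{\omega}_{s,t}}(B_t(x,\delta)) \;\leq\; \mathrm{vol}_{\tilde{\omega}_{s,t}}(\pi_s^{-1}(\overline{U_s^{(i)}})) \;\leq\; c_2\, t\, r^2,$$
where the last inequality invokes the $s$-uniform upper bound on $H_s = \mathrm{tr}_{\omega_{X_s}}\pi_s^*\omega_{B_s}$ from \S\ref{H.upper} to pass from an $\omega_{X_s}^2$-volume on $\pi_s^{-1}(U_s^{(i)})$ to an area estimate on $U_s^{(i)}$. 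This forces $\delta \leq C r^{1/2}$, uniformly in $s$ and $t$.

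The main obstacle will be ensuring genuine $s$-uniformity throughout. The Bishop--Gromov lower bound requires a uniform \emph{lower} bound on $\mathrm{vol}(X_s,\tilde{\omega}_{s,t})/t$; this is positive by continuity of the family, but must be checked on the closure of $S$, where some $X_s$ may become ADE singular. Similarly, one must extract $s$-uniform control of $\omega_{s,\mathrm{ML}}$ near the discriminant from the continuous dependence of the Weierstrass data of the fibrations, building on the estimate $\mathrm{Im}(\tau_{s,y}) \leq C\log(|\sigma(y)|^{-1})$ already used in \S\ref{(3.9)}. Once these uniformities are secured, summing the two estimates over the bounded number of components delivers the desired bound $\sum_i \mathrm{diam}(\pi_s^{-1}(U_s^{(i)}), \tilde{\omega}_{s,t}) < \epsilon$ after first choosing $r$ sufficiently small (to make both the McLean diameter of $\partial U_s^{(i)}$ and $C r^{1/2}$ small) and then choosing $t_0$ sufficiently small (to absorb the rate of convergence in Claim~\ref{Ck.loc.claim}).
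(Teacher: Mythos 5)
Your overall strategy is essentially the same as the paper's: Bishop--Gromov volume comparison for the Ricci-flat $\tilde\omega_{s,t}$ forces any point of $\pi_s^{-1}(U_s^{(i)})$ to be close to $\pi_s^{-1}(\partial\overline{U}_s^{(i)})$, the boundary preimage diameter is controlled by the $C^0_{\rm loc}$ convergence of Claim~\ref{Ck.loc.claim}, and a triangle inequality finishes.

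However, the intermediate volume estimate $\mathrm{vol}_{\tilde\omega_{s,t}}(\pi_s^{-1}(\overline{U_s^{(i)}})) \leq c_2\,t\,r^2$ is not correctly justified by the $s$-uniform \emph{upper} bound on $H_s = \mathrm{tr}_{\omega_{X_s}}\pi_s^*\omega_{B_s}$. Since $\omega_{X_s}^2 = H_s^{-1}\,\omega_{X_s}\wedge\pi_s^*\omega_{B_s}$, the upper bound $H_s\leq C$ gives $\omega_{X_s}\wedge\pi_s^*\omega_{B_s}\leq C\,\omega_{X_s}^2$, i.e.\ an estimate of base area \emph{in terms of} total space volume --- the opposite of what you need. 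To go the direction you want, you would need a \emph{lower} bound on $H_s$, but $H_s$ vanishes along the singular fibers, so the available lower bound (Step 3 of \S\ref{2.4}, of the form $C\sigma^\lambda$) degenerates precisely where it matters. The paper avoids this trap by a softer observation: because $\tilde\omega_{s,t}^2 = a_{s,t}\,\omega_{X_s}^2$ for the Ricci-flat metric, the ratio $\mathrm{vol}_{\tilde\omega_{s,t}}(\pi_s^{-1}(U_s))/\mathrm{vol}_{\tilde\omega_{s,t}}(X_s)$ is the corresponding $\omega_{X_s}^2$-ratio, hence independent of $t$, and it tends to $0$ (uniformly over the relatively compact $S$, by continuity of $\omega_{X_s}$) as $U_s$ shrinks onto the discriminant, simply because the singular fibers are complex $1$-dimensional and therefore have $\omega_{X_s}^2$-measure zero. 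No quantitative rate such as $O(r^2)$ is required, and the Bishop--Gromov input then produces the qualitative conclusion ``for any $r>0$, $B(x,r)$ meets $\pi_s^{-1}(\partial\overline{U}_s^{(i)})$ once $\mathcal{U}$ is small enough,'' which is all that is needed. With this correction, the rest of your argument goes through.
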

Note that the claim does not give a bound of diameters
 of the singular fibers themselves
 but those of their small neighborhoods and their boundaries.

\begin{proof}[proof of Claim \ref{sing.fiber.nbhd.claim}]
Let us take a small neighborhood of
 the discriminant locus $\mathcal{U}=\bigcup_{s}U_{s}$
 as in the proof of Proposition~\ref{ML.estimate.disc}. 
Then $U_{s}$ is a union of $24$ small disks
 containing ${\rm disc}(\pi_s)$ and hence the number of
 connected components of $U_s$ is less than or equal to $24$.
If we write $U_s=\bigcup_i U_s^{(i)}$ for the decomposition
 into connected components,
 then Proposition~\ref{ML.estimate.disc} shows that
\begin{align}\label{ML.boundary}
\text{ ${\rm diam}(\partial \overline{U}_s^{(i)},\, \omega_{s,{\rm ML}})$
 is arbitrarily small}
\end{align}
for sufficiently small $\mathcal{U}$.
Moreover, by the proof of Proposition~\ref{prop:MW.conti}, we also see that
\begin{align}\label{ML.disk}
\text{ ${\rm diam}(U_s^{(i)},\, \omega_{s,{\rm ML}})$
 is arbitrarily small}
\end{align}
for sufficiently small $\mathcal{U}$.
\eqref{ML.disk} will be used in \S\ref{GH.conv}.

For any $x\in \pi_{s}^{-1}(U_{s})$ and $r>0$, we have 
\begin{align}\label{Bishop-Gromov}
\dfrac{{\rm vol}(B(x,r))}{t}\ge C\dfrac{r^{4}}{{\rm diam}(X_{s},\tilde{\omega}_{s,t})^{4}}
\end{align}
from the Bishop-Gromov inequality. 
Here, $B(x,r)$ denotes the $r$-ball with center $x$ with respect
 to the metric $\tilde{\omega}_{s,t}$.
Indeed, note $${\rm vol}(X_{s},\tilde{\omega}_{s,t})=(c_{s}+o(1))t$$ 
with a positive constant $c_{s}$ continuous with respect to $s$. 
On the other hand, for $U_{s}\subset B_{s}$, 
$$\dfrac{{\rm vol}(\pi_{s}^{-1}(U_{s}),
 \tilde{\omega}_{s,t})}{{\rm vol}(X_{s},\tilde{\omega}_{s,t})}$$
do not depend on $t$ once we fix $U_{s}$, 
because of the Ricci-flatness of $\tilde{\omega}_{s,t}$. 
Moreover, this value becomes arbitrarily small if we take small $\mathcal{U}$ 
since each singular fibers are one-dimensional and hence with volume zero. 
Then by \eqref{Bishop-Gromov}, for any given $r>0$, 
 if we take small enough $\mathcal{U}=\bigcup_{s}U_{s}$,
 it holds that 
\begin{equation}\label{intersection.boundary}
B(x,r)\cap \pi_{s}^{-1}(\partial \overline{U}_{s}^{(i)})\neq \emptyset
\end{equation}
for any $s$, $t$ and $x\in \pi_{s}^{-1}(U_{s}^{(i)})$.
On the other hand, from Claim~\ref{Ck.loc.claim}, we obtain that 
$${\rm diam}(\pi_{s}^{-1}(\partial\overline{U}_{s}^{(i)}),\tilde{\omega}_{s,t})
\to {\rm diam}(\partial \overline{U}_{s}^{(i)},\omega_{s,\rm{ML}})
\quad (t\to +0),$$
uniformly with respect to $s$. Hence, there exists 
$t_{0}>0$ such that for any $0<t<t_{0}$ and any $s$, 
\begin{equation*}
{\rm diam}(\pi_{s}^{-1}(\partial\overline{U}_{s}^{(i)}),\tilde{\omega}_{s,t})
<2{\rm diam}(\partial\overline{U}_{s}^{(i)},\omega_{s,\rm{ML}})=:c(U_s^{(i)}). 
\end{equation*}
Then we use the triangle inequality to
 arbitrary two points in $\pi_{s}^{-1}(\partial\overline{U}_{s}^{(i)})$
 with points in (\ref{intersection.boundary}), we obtain 
\[{\rm diam}(\pi_{s}^{-1}(U_{s}^{(i)}),
 \tilde{\omega}_{s,t})<c(U_s^{(i)})+2r.\]
Therefore, taking sufficiently small $\mathcal{U}$ and $r>0$,
 we get the desired estimate by \eqref{ML.boundary}.
\end{proof}

Concerning the behavior of McLean metrics near discriminant points
 such as \eqref{ML.boundary} and \eqref{ML.disk}, 
 see \cite{Yos10}, \cite[Proposition 2.1]{GTZ2}, \cite[Theorem A]{EMM17},
 \cite[Theorem 3.4]{TZ} for results in more general settings. 

To have good estimates of the diameter of singular fiber itself
 is not necessary for our particular purpose, 
 but see \cite{GW, Y.Li} for that direction. 
We will only use the above weaker estimate at the end of 
the proof of Theorem~\ref{GTZ.extend}, which is enough  
 for finding the Gromov-Hausdorff limits.

\subsection{Gromov-Hausdorff convergence}\label{GH.conv} 

With all the above $s$-uniform estimates in our hands, 
 we now prove the $s$-uniform 
 Gromov-Hausdorff convergence Theorem \ref{GTZ.extend} 
 by an argument similar to \cite[\S6]{GW}. 
For that, it is enough to show that 
for any fixed $\delta>0$, there exists small enough $t_{0}$
 such that for any $s$ and 
$(0<)t<t_{0}$, 
\begin{align}
\label{dist.proj}
&{\rm dist}(\pi_{s}):=\sup_{x_1,x_2\in X_s} \{
|d(x_{1},x_{2};\tilde{\omega}_{s,t})
 - d(\pi_{s}(x_{1}),\pi_{s}(x_{2});\omega_{s,{\rm ML}})|\}
<\delta, \\
\label{dist.sec}
&{\rm dist}(\sigma_{s}):=\sup_{y_{1},y_{2}\in B_{s}} \{
|d(y_{1},y_{2};\omega_{s,{\rm ML}})
- d(\sigma_{s}(y_{1}),\sigma_{s}(y_{2});\tilde{\omega}_{s,t}) |
\}<\delta.
\end{align}
Here, $\sigma_s$ is a continuous section for $\pi_s$ and 
 ``dist'' above stands for the distortion function (\cite{BBI}). 

We will consider small enough neighborhood
 $\mathcal{U}=\bigcup_s U_s$ of $\pi_{s}$-critical locus 
 (discriminant locus)
 as in (the proof of) Claim~\ref{sing.fiber.nbhd.claim}. 
Write $U_s=\bigcup_{i}U_s^{(i)}$ for the decomposition
 into connected components.
Outside $\pi_{s}^{-1}(U_s)$, by Claim~\ref{Ck.loc.claim}, 
 we have the $s$-uniform convergence 
\begin{align}\label{sm.loc}
\tilde{\omega}_{s,t}\to \pi_{s}^{*}\omega_{s,\rm{ML}}
\end{align} 
in the $C^0_{\rm loc}(X_s^{\rm sm})$-sense. 

Fix $\delta>0$ and let $\delta':=\frac{\delta}{6}$.
Let us take $\mathcal{U}$
 and $t_0$ such that 
\begin{align}\label{base.bd}
\sum_i{\rm diam}(U_{s}^{(i)},\omega_{s,\rm{ML}})\le \delta',
\quad
\sum_i{\rm diam}(\pi^{-1}(U_{s}^{(i)}),\tilde{\omega}_{s,t})\le \delta'
\end{align}
for any $s$ and $0<t<t_0$.  
This is possible by Claim~\ref{sing.fiber.nbhd.claim} and
 by \eqref{ML.disk}.

For $x_{1}, x_{2}\in X_{s}$,
 we take a curve $\gamma$ connecting $x_{1}$ and $x_{2}$
 such that
\[{\rm length}(\gamma) < d(x_1,x_2;\tilde{\omega}_{s,t}) + \delta'.\]
Then from \eqref{base.bd}, 
 we can find another curve $\gamma'$ by modifying $\gamma$ such that 
$I_{i}:=\{t\in [0,1]\mid \gamma'(t)\in \pi_{s}^{-1}(U_{s}^{(i)})\}$ 
are connected and
\[{\rm length}(\gamma')\leq {\rm length}(\gamma)+\delta'\]
following the arguments of \cite[\S6]{GW}. 
Decompose $\gamma'=\gamma_1'\cup\gamma'_2$ such that
 $\gamma'_1\subset \pi_s^{-1}(B_s\setminus U_s)$
 and $\gamma'_2\subset \pi_s^{-1}(U_s)$.
The curve $\pi_s(\gamma')$ in $B_s$ connects
 $\pi_s(x_1)$ and $\pi_s(x_2)$.
By \eqref{base.bd}, we can modify $\pi_s(\gamma'_2)$
 and get a curve $\bar{\gamma}=\pi_s(\gamma'_1)\cup(\bar{\gamma}\cap U_s)$
 connecting $\pi_s(x_1)$ and $\pi_s(x_2)$ which satisfies
\[{\rm length}(\bar{\gamma}\cap U_s)\leq \delta'.\]
Also, \eqref{sm.loc} implies, for small enough $t_0$
 (independent of $\gamma$ and $s$),
\[{\rm length}(\pi_s(\gamma'_1)) 
\leq {\rm length}(\gamma'_1) + \delta'
\]
for $t<t_0$.
Combining above inequalities, we get
\begin{align*}
d(\pi_s(x_1),\pi_s(x_2))
&\leq {\rm length}(\bar{\gamma})
={\rm length}(\pi_s(\gamma'_1))+{\rm length}(\bar{\gamma}\cap U_s)\\
&\leq {\rm length}(\gamma'_1) + 2\delta'
\leq {\rm length}(\gamma') + 2\delta'
< d(x_1,x_2)+\delta.
\end{align*}

To show the other inequality
 $d(x_1,x_2)\leq d(\pi_s(x_1),\pi_s(x_2))+\delta$,
 take continuous sections $\sigma_s:B_s\to X_s$.
We start from a curve $\bar{\gamma}$
 connecting $\pi_s(x_1)$ and $\pi_s(x_2)$ such that
 ${\rm length}(\bar{\gamma})\leq d(\pi_s(x_1),\pi_s(x_2))+\delta'$.
As above, we can modify $\bar{\gamma}$
 to get another curve $\bar{\gamma}'$ such that
$I_{i}:=\{t\in [0,1]\mid \bar{\gamma}'(t)\in U_{s}^{(i)}\}$ 
are connected and
 ${\rm length}(\bar{\gamma}')\leq {\rm length}(\bar{\gamma})+\delta'$.
Let $\bar{\gamma}'_1:= \bar{\gamma}'\cap (B_s\setminus U_s)$
 and $\bar{\gamma}'_2:= \bar{\gamma}'\cap U_s$.
We then consider the curve $\sigma_s(\bar{\gamma}')$ in $X_s$,
 which connects $\sigma_s(\pi_s(x_1))$ and $\sigma_s(\pi_s(x_2))$.
By modifying $\sigma_s(\bar{\gamma}'_2)$, we get another curve $\gamma$
 which also connects $\sigma_s(\pi_s(x_1))$ and $\sigma_s(\pi_s(x_2))$,
 and ${\rm length}(\gamma\cap \pi_s^{-1}(U_s))\leq \delta'$.
Then \eqref{sm.loc} implies 
\[
{\rm length}(\gamma\cap \pi_s^{-1}(B_s\setminus U_s))
={\rm length}(\sigma_s(\bar{\gamma}'_1))
 \leq {\rm length}(\bar{\gamma}_1)+\delta'
\]
for small $t$.
We therefore have
\begin{align*}
d\bigl(\sigma_s(\pi_s(x_1)),\sigma_s(\pi_s(x_2))\bigr)
\leq d(\pi_s(x_1),\pi_s(x_2))+4\delta'.
\end{align*}
It remains to estimate $d(x_1,\sigma_s(\pi_s(x_1)))$.
By \eqref{sm.loc}, the diameter of the fiber $\pi_s^{-1}(y)$
 for $y\in B_s\setminus U_s$ uniformly goes to zero
 as $t\to 0$.
Hence $d(x_1,\sigma_s(\pi_s(x_1)))< \delta'$ for small $t$
 if $x_1\notin \pi_s^{-1}(U_s)$.
If $x_1\in \pi_s^{-1}(U_s)$, then 
 $d(x_1,\sigma_s(\pi_s(x_1)))< \delta'$
 by \eqref{base.bd}.
The same argument gives $d(x_2,\sigma_s(\pi_s(x_2)))< \delta'$.
We thus proved $d(x_1,x_2)< d(\pi_s(x_1),\pi_s(x_2))+\delta$
 and hence \eqref{dist.proj}. 

The inequality \eqref{dist.sec} directly follows from \eqref{dist.proj}. 

The inequalities \eqref{dist.proj} and \eqref{dist.sec}
 prove the $s$-uniform Gromov-Hausdorff convergence
 $(X_s,\tilde{\omega}_{s,t})\to (B_s,\omega_{s,{\rm ML}})$
 and thus we complete the proof of Theorem~\ref{GTZ.extend}.


\chapter{General K\"ahler K3 surfaces case}\label{Kahler.section}
In this chapter, we extend our framework of \S\ref{F2d.sec} on 
the moduli of \textit{algebraic} polarized K3 surfaces to that of 
all K\"ahler (\textit{not} necessarily algebraic) K3 surfaces.

\section{Satake compactification of moduli of K\"ahler K3 surfaces}
\label{STK.MK3}

The whole moduli space 
of all the Ricci-flat-K\"ahler marked K3 surfaces up to rescaling, 
possibly with ADE singularities is often denoted by $K\Omega$, 
which is a real $59$-dimensional manifold. This is known to 
have a structure of the homogeneous space 
$$SO_{0}(3,19)/(SO(2)\times SO(19)),$$
which can be regarded as one of the connected components
 of $O(3,19)/SO(2)\times O(19)$ 
 (cf.\  \cite{Tod}, \cite{Looi}, then \cite[\S4 Theorem 5]{KT}, 
after \cite{Mor}) through the refined period map (cf., 
\cite{Tod, Looi, KT, Kro89a, Kro89b}, also \S\ref{HK.Kahler}). 
By forgetting the markings (on the second integral cohomology), we get the moduli space 
$$O^+(\Lambda_{\rm K3})\backslash SO_{0}(3,19)/(SO(2)\times SO(19))$$
of K3 surfaces with possibly ADE singularities,
 where $O^+(\Lambda_{\rm K3})$ denotes the
 index two subgroup of $O(\Lambda_{\rm K3})$ preserving
 each connected component of $O(3,19)/SO(2)\times O(19)$.
Let us introduce the equivalence relation $\sim$ on $K\Omega$
 which identifies those connected by the hyperK\"ahler rotation
 and consider the quotient by this relation
 $R\Omega:=K\Omega/\!\!\sim$.
It\footnote{``$R$" of $R\Omega$ comes from the term ``R"iemannian metric"
 as ``$K$" of $K\Omega$ should come from the term ``K"\"ahler metric}
 has a structure 
 $SO_{0}(3,19)/(SO(3)\times SO(19))\simeq O(3,19)/(O(3)\times O(19))$.
Note that $R\Omega$ can be identified with the set of all positive definite
 $3$-dimensional subspaces of $\Lambda_{\rm K3}\otimes \mathbb{R}$.
By forgetting the markings again, the $57$-dimensional topological space 
\begin{align}\label{MK3.describe}
\mathcal{M}_{\rm K3}&:=O(\Lambda_{\rm K3})
 \backslash {\rm Gr}_{3}^{+,{\rm or}}(\Lambda_{\rm K3}\otimes \R)\\ \nonumber
&\simeq 
O^+(\Lambda_{\rm K3})
 \backslash SO_{0}(3,19)/(SO(3)\times SO(19)) \\ \nonumber
 &\simeq O^+(\Lambda_{\rm K3}) 
 \backslash O(3,19)/(O(3)\times O(19))
\end{align}
can be thought of as a space which parametrizes equivalence classes
 of K\"ahler K3 orbisurfaces with respect to the hyperK\"ahler rotations. 
Here, ${\rm Gr}_{3}^{+,{\rm or}}(\Lambda_{\rm K3}\otimes \R)$ denotes 
the set of all positive definite oriented $3$-dimensional real vector spaces in 
$\Lambda_{\rm K3}\otimes \R$, which consists of two connected components. 
The change of orientations switches two components.
 Note that taking such equivalence class of a Ricci-flat K\"ahler K3 surface 
 with possibly ADE singularity, is slightly different from simply taking its underlying Riemannian orbifolds (i.e., regarding as metric spaces), since the holonomy group can be strictly smaller than $Sp(1)$, 
 e.g., $\pm 1$-quotients of flat $2$-dimensional complex tori
 which appear later as the locus $\mathcal{M}_{\rm Km}$ (\S\ref{Kummer.confirm}).  
Since $O(\Lambda_{\rm K3})\setminus O^+(\Lambda_{\rm K3})$ contains
 an element $-\mathrm{id}_{\Lambda_{\rm K3}}$, 
 which acts trivially, we have 
\[\mathcal{M}_{\rm K3}
 \simeq O(\Lambda_{\rm K3})\backslash O(3,19)/(O(3)\times O(19)).\]
The moduli space $\mathcal{M}_{\rm K3}$ is an arithmetic quotient of a Riemannian symmetric space, 
although not Hermitian. 
 Hence it is still possible to compare its Satake compactifications
 with the Gromov-Hausdorff compactification as in the case of $\mathcal{F}_{2d}$ 
 (\S\ref{F2d.sec}).  
Here we take the Satake compactification of adjoint type again,
 which we denote by $\overline{\mathcal{M}_{\rm K3}}^{{\rm Sat},\tau_{\rm ad}}$
 or simply by $\overline{\mathcal{M}_{\rm K3}}^{{\rm Sat}}$.
By the definition of the Satake compactification,
 $\overline{\mathcal{M}_{\rm K3}}^{\rm Sat}$ is stratified by
 a finite number of locally symmetric spaces.

To study the stratification of $\overline{\mathcal{M}_{\rm K3}}^{\rm Sat}$,
 set $\mathbb{G}=O(\Lambda_{\rm K3}\otimes \mathbb{Q})$
 and $G:=\mathbb{G}(\mathbb{R})\simeq O(3,19)$.
The $\mathbb{R}$-rank and $\mathbb{Q}$-rank of $\mathbb{G}$
 are both equal to $3$
 and the $\mathbb{R}$-root system for $G$ is $B_3$.
We label the simple roots as 
\begin{align*}
\begin{xy}
\ar@{-} (0,0) *+!D{\alpha_1} *{\circ}="A"; (10,0) *+!D{\alpha_2} 
 *{\circ}="B"
\ar@{=>} "B"; (20,0)*+!D{\alpha_3}  *{\circ}="C"
\end{xy} 
\end{align*}
The highest weight $\mu$ of the adjoint representation is orthogonal
 to $\alpha_1$ and $\alpha_3$, but not orthogonal to $\alpha_2$.
By the construction of Satake compactification
 (see \cite{Sat2} and \cite{BJ}),  
\[\overline{\mathcal{M}_{\rm K3}}^{\rm Sat}
 = {\mathcal{M}}_{\rm K3} \sqcup \bigsqcup_{f} {\mathcal{M}}_{\rm K3}(f),\]
where $f$ runs over the $O(\Lambda_{\rm K3})$-conjugacy classes
 of $\mu$-saturated rational parabolic subgroups of $\mathbb{G}$.
There are four $G$-conjugacy classes of $\mu$-saturated parabolic subgroups
 of $G$ which are listed as
\begin{align*}
\begin{xy}
\ar@{-} (0,0)  *{\circ}; (10,0)  *{\bullet}="A"
\ar@{=>} "A"; (20,0)  *{\bullet}
\ar@{} "A"; (10,-5)  *{(a)}
\ar@{-} (30,0)  *{\bullet}; (40,0)  *{\bullet}="B"
\ar@{=>} "B"; (50,0)  *{\circ}
\ar@{} "B"; (40,-5)  *{(b)}
\ar@{-} (60,0)  *{\circ}; (70,0)  *{\bullet}="C"
\ar@{=>} "C"; (80,0)  *{\circ}
\ar@{} "C"; (70,-5)  *{(c)}
\ar@{-} (90,0)  *{\bullet}; (100,0)  *{\circ}="D"
\ar@{=>} "D"; (110,0)  *{\bullet}
\ar@{} "D"; (100,-5)  *{(d)}
\end{xy} 
\end{align*}
Here, black nodes are the roots for the Levi components
 of the corresponding parabolic subalgebras.
For example, parabolic subalgebras of type $(a)$, the leftmost one, 
 have Levi component $\mathfrak{so}(2,18)\oplus \mathbb{R}$.

Let us study the $O(\Lambda_{\rm K3})$-conjugacy classes
 of rational parabolic subgroups of those types.
First, the rational parabolic subgroups of type $(a)$ are stabilizers
 of $1$-dimensional isotropic subspaces of
 $\Lambda_{\rm K3}\otimes \mathbb{Q}$.
Hence such parabolic subgroups correspond bijectively to
 isotropic lines.
Since primitive isotropic vectors in $\Lambda_{\rm K3}$
 are unique up to the $O(\Lambda_{\rm K3})$-action, 
 all the rational parabolic subgroups of type $(a)$
 are $O(\Lambda_{\rm K3})$-conjugate.
Similarly, the rational parabolic subgroups of type $(d)$ are stabilizers
 of $2$-dimensional isotropic subspaces of
 $\Lambda_{\rm K3}\otimes \mathbb{Q}$ and they
 are $O(\Lambda_{\rm K3})$-conjugate as well.

Next, the rational parabolic subgroups of type $(b)$ are stabilizers 
 of $3$-dimensional isotropic subspaces of 
 $\Lambda_{\rm K3}\otimes \mathbb{Q}$. 
If $V\subset \Lambda_{\rm K3}\otimes \mathbb{Q}$ is such a $3$-dimensional 
 subspace, let $V_{\mathbb{Z}}=V\cap \Lambda_{\rm K3}$. 
Then the quotient $V_{\mathbb{Z}}^{\perp}/V_{\mathbb{Z}}$ 
 is an even unimodular negative definite lattice of rank $16$. 
Due to a classification result of Witt (cf., \cite[V.1.4, V.2.3.1]{Serre}), 
the even unimodular positive definite lattices of rank $16$ 
 are known to be isomorphic to either 
 $E_8^{\oplus 2}$ or another lattice written as $\Gamma_{16}$ 
 in \textit{loc.cit}. The latter is sometimes also written in other literatures as $D_{16}^{+}$ as an overlattice of $D_{16}$.
Note that the discriminant lattice $A(D_{16})$ of $D_{16}$
 is isomorphic to $(\mathbb{Z}/2\Z)^2$ and possesses a nontrivial 
 isotropic subgroup, which corresponds to $D_{16}^{+}$. 
Let $L$ be one of such unimodular rank $16$ lattice
 and $U$ denotes the unimodular indefinite lattice of rank $2$. 
Then there exists an isomorphism from $U^{\oplus 3}\oplus L$ 
 to $\Lambda_{\rm K3}$ by the uniqueness of 
 the even unimodular lattice of signature $(3,19)$.
If $V_{\Z}$ is an isotropic sublattice of $U^{\oplus 3}$
 of rank $3$, then we have $V_{\Z}^{\perp}/V_{\Z}\simeq L$. 
Conversely, it is easy to see that 
 every isotropic sublattice
 $V_{\Z}\subset \Lambda_{\rm K3}$ of rank $3$ arises in this way.
Therefore, there are exactly two $O(\Lambda_{\rm K3})$-conjugacy classes
 of this type. 
Let us refer to the one corresponding to $\Gamma_{16}$ as type $(b_1)$
 and the other one as type $(b_2)$. 
 
Finally, the rational parabolic subgroups of type $(c)$, are stabilizers
 of flags $(V_1\subset V_3 \subset \Lambda_{\rm K3}\otimes \mathbb{Q})$,
 where $V_i$ are isotropic $i$-dimensional subspaces. We denote 
 $V_{i,\Z}:=V_{i}\cap \Lambda_{\rm K3}$. 
The $O(\Lambda_{\rm K3})$-conjugacy classes of such flags
 correspond bijectively to the isomorphic classes of
 $(V_{3,\Z})^{\perp}/(V_{3,\Z})$.
Hence by the previous classification of the conjugacy class of $V_{3}$, 
we have two $O(\Lambda_{\rm K3})$-conjugacy classes 
 $(c_1)$ and $(c_2)$ similarly to the type $(b)$.

We therefore have
\begin{align*}
\overline{\mathcal{M}_{\rm K3}}^{\rm Sat}
 = {\mathcal{M}}_{\rm K3} \sqcup {\mathcal{M}}_{\rm K3}(a)
 & \sqcup {\mathcal{M}}_{\rm K3}(b_1)  \sqcup {\mathcal{M}}_{\rm K3}(b_2) \\
 & \sqcup {\mathcal{M}}_{\rm K3}(c_1)  \sqcup {\mathcal{M}}_{\rm K3}(c_2)
  \sqcup {\mathcal{M}}_{\rm K3}(d).
\end{align*}
These strata are locally symmetric spaces and 
 their closure relation is given by the 
inclusion relation of the corresponding 
$\mu$-connected real parabolic subgroups of $\mathbb{G}$ as follows (cf., \cite{Sat1, BJ}): 
\[
\xymatrix{
  & \mathcal{M}_{\rm K3} \ar@{-}[dl]
 \ar@{-}[d] \ar@{-}[dr] &  \\
\mathcal{M}_{\rm K3}(b_1) \ar@{-}[d]   & 
\mathcal{M}_{\rm K3}(a) \ar@{-}[dl] \ar@{-}[dr]
   & \mathcal{M}_{\rm K3}(b_2)\ar@{-}[d]  \\
 \mathcal{M}_{\rm K3}(c_1) \ar@{-}[dr] &  
 & \mathcal{M}_{\rm K3}(c_2)\ar@{-}[dl]  \\
 & \mathcal{M}_{\rm K3}(d) &  }
\]
\begin{itemize}
\item 
${\mathcal{M}}_{\rm K3}(a)$ 
 is $36$-dimensional and 
 an arithmetic quotient of the symmetric space $O(2,18)/(O(2)\times O(18))$, 
 \item 
${\mathcal{M}}_{\rm K3}(b_1)$ and ${\mathcal{M}}_{\rm K3}(b_2)$
 are $5$-dimensional and 
 arithmetic quotients of the symmetric space $SL(3,\mathbb{R})/SO(3)$, 
 \item 
${\mathcal{M}}_{\rm K3}(c_1)$ and ${\mathcal{M}}_{\rm K3}(c_2)$
 are $2$-dimensional and 
 arithmetic quotients of the symmetric space $SL(2,\mathbb{R})/SO(2)$, and 
 \item 
${\mathcal{M}}_{\rm K3}(d)$ is a point.
\end{itemize}

We end this section by viewing a relationship between
 $\overline{\mathcal{M}_{\rm K3}}^{\rm Sat}$
 and $\overline{\mathcal{F}_{2d}}^{\rm Sat}$.
There is a natural map $j\colon \mathcal{F}_{2d}\to \mathcal{M}_{\rm K3}$
 sending a polarized K3 surface $(X,L)\in \mathcal{F}_{2d}$
 to the Ricci-flat K\"ahler K3 surface $(X,\omega_X)$ with K\"ahler class $c_1(L)$.
The map $j$ sends $(X,L)$ and its complex conjugate to the same point
 and hence $j\colon \mathcal{F}_{2d} \to j(\mathcal{F}_{2d})$ is generically two-to-one.
In terms of locally symmetric spaces,
\[j\colon
 \tilde{O}^{+}(\Lambda_{2d})\backslash O(2,19)/O(2)\times O(19)
 \to O(\Lambda_{\rm K3})\backslash O(3,19)/O(3)\times O(19)\]
 is induced by the natural inclusion
 $O(2,19)\to O(3,19)$.
Therefore, $j$ extends to a continuous map between Satake compactifications 
 $j\colon \overline{\mathcal{F}_{2d}}^{\rm Sat}\to
 \overline{\mathcal{M}_{\rm K3}}^{\rm Sat}$.
By comparing strata of each compactifications,
 it is easy to see that
 $j(\mathcal{F}_{2d}(l))\subset \mathcal{M}_{\rm K3}(a)$ and 
 $j(\mathcal{F}_{2d}(p))=\mathcal{M}_{\rm K3}(d)$.
Note that the map 
 $j\colon \mathcal{F}_{2d}(l)\to j(\mathcal{F}_{2d}(l))$
 is generically one-to-one or two-to-one
 depending on $d$ and $l$.
This is according to the action of
 $\tilde{O}(\Lambda_{2d})/\tilde{O}^+(\Lambda_{2d})$
 on $\mathcal{F}_{2d}(l)$ is trivial or not.

\section{Geometric meaning of the boundary}\label{Geom.Meaning}
Let us define a map $\Phi$ from $\overline{\mathcal{M}_{\rm K3}}^{\rm Sat}$
 to the set of isometry classes of compact metric spaces with diameter $1$, which we call
 the \textit{geometric realization map}. This gives a geometric meaning 
 to the Satake compactification discussed in the previous section. 
 

 \subsubsection*{Case 1}

For a point in ${\mathcal{M}}_{\rm K3}$, we have a real $4$-dimensional
 Ricci-flat Riemannian orbifold underlying the corresponding 
 K3 orbifold. 
By rescaling, we get a compact metric space with diameter $1$.


\subsubsection*{Case 2}

Let $l$ be a $1$-dimensional isotropic subspace of
 $\Lambda_{\rm K3}\otimes \mathbb{Q}$,
 which is unique up to $O(\Lambda_{\rm K3})$.
Then there is a natural isomorphism
\[
{\mathcal{M}}_{\rm K3}(a)\simeq 
 (O(\Lambda_{\rm K3})\cap {\rm stab}(l))
 \backslash {\rm Gr}_{2}^{+}(l_{\R}^{\perp}/l_{\R}).\]
Here, ${\rm Gr}_{2}^{+}(l_{\R}^{\perp}/l_{\R})$
 denotes the set of all
 positive definite $2$-dimensional subspaces of
 $l_{\R}^{\perp}/l_{\R}$, 
 and ${\rm stab}(l)$ denotes the stabilizer of $l$
 so $O(\Lambda_{\rm K3})\cap {\rm stab}(l)$
 acts on $l_{\R}^{\perp}/l_{\R}$.
Note that if we take a unimodular indefinite lattice
 $U\subset \Lambda_{\rm K3}$ which contains $l$,
 then we have
\[
{\mathcal{M}}_{\rm K3}(a)\simeq 
 O(U^{\perp}) \backslash {\rm Gr}_{2}^{+}(U^{\perp}\otimes \R).
\]

A point in ${\mathcal{M}}_{\rm K3}(a)$
 is represented by 
 $[l,V]$ (cf.\ \eqref{MK3.describe}), 
 where $V\subset l_{\R}^{\perp}/l_{\R}$
 is a positive definite two-dimensional subspace. 
Take a primitive vector $e\in l$.
Take an orthonormal basis $v_1, v_2$ of $V$
 and take their representatives in $l_{\R}^{\perp}$,
 which we also denote by $v_1, v_2$.
Consider a marked K3 surface $(X, \alpha_X)$
 with period $v_1+\sqrt{-1} v_2$ such that $\alpha_X^{-1}(e)$
 is in the closure of the K\"ahler cone of $X$ (cf., e.g., \cite[Chapter 8, Remark 2.13]{Huy}). 
Then from Fact \ref{bp.free}, there is an elliptic fibration structure 
$X\twoheadrightarrow B\simeq \mathbb{P}^{1}$ with the fiber class $e$. 
Hence we can associate $B$ with the McLean metric
 as in \eqref{eq:McLean}.
It is easy to see that the metric space $B$ does not
 depend on the choice of the orthonormal basis $v_1,v_2$ of $V$ and
 their representatives in $l_{\R}^{\perp}$. 
In fact, for different choices of $v_1,v_2$,
 the corresponding tropical K3 surfaces are related
 by the tropical hyperK\"ahler rotation (see \S\ref{trop.K3.1}).
Therefore, to each point in ${\mathcal{M}}_{\rm K3}(a)$,
 we can assign a compact metric space with diameter $1$ 
 homeomorphic to $S^{2}$. 
Because of the presence of complex conjugation as we discussed in \S\ref{trop.K3.1},
 the spheres do not admit canonical orientation. 

The stratum ${\mathcal{M}}_{\rm K3}(a)$ and its closure will be studied
 in detail in terms of Weierstrass models in \S\ref{along.boundary.sec}.


\subsubsection*{Case 3}

Let $V\subset \Lambda_{\rm K3}\otimes \Q$ be an isotropic $3$-dimensional subspace
 such that $V_{\Z}^{\perp}/V_{\Z}\simeq \Gamma_{16}$ 
 as in the previous section.
Then the stratum ${\mathcal{M}}_{\rm K3}(b_1)$ is identified with
 the set of all inner products on $V\otimes \R$
 up to $GL(V_\Z)$-action and rescaling.
If we choose a base point in ${\mathcal{M}}_{\rm K3}(b_1)$, 
 or equivalently, if we choose an inner product on $V\otimes \R$, then 
 we have an isomorphism 
\[{\mathcal{M}}_{\rm K3}(b_1)\simeq 
 GL(V_{\Z})\backslash 
 GL(V\otimes \mathbb{R})/(\mathbb{R}^{\times}\cdot O(V \otimes \mathbb{R})).\] 
From the construction, $V_{\R}$ appears as a limit of degenerating $3$-dimensional subspaces 
 generated by the periods and the K\"ahler class on K3 surfaces. 
${\mathcal{M}}_{\rm K3}(b_1)$ is also isomorphic to $MT_3$, the moduli of compact 
 (un-oriented) flat tori $T=(V\otimes \R)/V_{\Z}$ 
 of dimension $3$ up to rescaling as we saw in \S\ref{Abel.sec}.
The compact tori $T$ has a natural involution
 $\iota$ induced by the $(-1)$-multiplication on $\mathbb{R}^3$.
The flat metric on $T$ descends to a metric on the quotient $T/\iota$, 
 and by rescaling, we get a compact metric space with diameter $1$. 
 
 
 \subsubsection*{Case 4}

Corresponding to ${\mathcal{M}}_{\rm K3}(c_1)$, 
 consider a flag $(V_1\subset V_3 \subset \Lambda_{\rm K3}\otimes \mathbb{Q})$
 of isotropic subspaces such that $V_{3,\Z}^{\perp}/V_{3,\Z}\simeq \Gamma_{16}$.
Then ${\mathcal{M}}_{\rm K3}(c_1)$ is identified with
 the set of all inner products on $(V_3/V_1)\otimes \R$
 up to $GL(V_{3,\Z}/V_{1,\Z})$-action and rescaling.
If we choose a base point in ${\mathcal{M}}_{\rm K3}(c_1)$, 
 or equivalently, if we choose an inner product on $(V_3/V_1)\otimes \R$, then 
 we have an isomorphism 
\[{\mathcal{M}}_{\rm K3}(c_1)\simeq 
GL(V_{3,\Z}/V_{1,\Z})\backslash
 GL((V_{3}/V_{1})\otimes \R))/(\mathbb{R}^{\times}\cdot O((V_{3}/V_{1})\otimes \R)).\] 
${\mathcal{M}}_{\rm K3}(c_1)$ is also isomorphic to $MT_2$, 
 the moduli of compact 
 (un-oriented) flat tori $((V_{3}/V_{1})\otimes \R)/(V_{3,\Z}/V_{1,\Z})$  
 of dimension $2$ up to rescaling. 
As in the case of ${\mathcal{M}}_{\rm K3}(b_1)$, we take a quotient
 of a $2$-dimensional torus by the $(-1)$-multiplication,
 and get a compact metric space with diameter $1$. 
 
 
\subsubsection*{Case 5}

The strata ${\mathcal{M}}_{\rm K3}(b_2)$ and 
 ${\mathcal{M}}_{\rm K3}(c_2)$ also have natural isomorphism
\begin{align*}
&{\mathcal{M}}_{\rm K3}(b_2)\simeq 
 GL(V_{\Z})\backslash 
 GL(V\otimes \mathbb{R})/(\mathbb{R}^{\times}\cdot O(V \otimes \mathbb{R})), \\
&{\mathcal{M}}_{\rm K3}(c_2)\simeq 
GL(V_{3,\Z}/V_{1,\Z})\backslash
 GL((V_{3}/V_{1})\otimes \R))/(\mathbb{R}^{\times}\cdot O((V_{3}/V_{1})\otimes \R)).
\end{align*} 
Here, $V_{\Z}^{\perp}/V_{\Z}$ and $V_{3,\Z}^{\perp}/V_{3,\Z}$
 are isomorphic to $E_8^{\oplus 2}$ instead of $\Gamma_{16}$ as in the Cases 3 and 4 above.
As locally symmetric spaces, they are isomorphic to ${\mathcal{M}}_{\rm K3}(b_1)$ and 
 ${\mathcal{M}}_{\rm K3}(c_1)$, respectively.
The remaining stratum ${\mathcal{M}}_{\rm K3}(d)$ is a point. 
We assign the segment with length $1$ for every point in these three strata. 

\begin{Def}\label{geo.real}
Summing up all the Cases $1$ to $5$ above, we obtain a map 
\[\Phi \colon \overline{\mathcal{M}_{\rm K3}}^{\rm Sat}
\to {\it CMet}_{1},\]
  which we call the \textit{geometric realization map}.
\end{Def}

Here, ${\it CMet}_{1}$ denotes 
 the set of isometry classes of compact metric spaces with diameter one
 equipped with the Gromov-Hausdorff topology.

\begin{Conj}\label{K3.Main.Conjecture2}
The geometric realization map
\[\Phi \colon \overline{\mathcal{M}_{\rm K3}}^{\rm Sat}
\to {\it CMet}_{1}\]
 defined above is continuous.
\end{Conj}

\begin{Rem}
We expect that the above conjectural continuity of $\Phi$
 on a neighborhood of the boundary component $\mathcal{M}_{\rm K3}(b_1)$
 should be compatible with a result of Foscolo~\cite{Fos}, which studied 
 the collapsing of K3 surfaces to three dimensional tori
 modulo the $\mu_2$-action caused by the $(\pm1)$-multiplication. 

\end{Rem}

\begin{Rem}\label{K3.erg}
Another interesting remark could be that, from \cite[Theorem~7]{Moore}, \cite[\S3]{Ver.erg}, 
for ``almost every" compact complex K3 surface $X$ with its K\"ahler cone $K(X)$, with a fixed marking, 
the natural map sends $O(\Lambda_{\rm K3})\times K(X)$ to 
a \textit{dense}(!) subset in $K\Omega^{o}$. 
Here, ``almost every" means that the corresponding subset in $K\Omega^{o}$ has full measure and 
actually the condition is more explicitly studied in \cite[\S4]{Ver.erg} and \cite{Ver.erg.er}. 
In particular, these results of Verbitsky imply that, once the above Conjecture~\ref{K3.Main.Conjecture2} holds, 
it classifies all the possible Gromov-Hausdorff limits (with fixed diameters) of sequences of Ricci-flat K\"ahler metrics on a \textit{fixed} 
K3 surfaces $X$, for almost every $X$. This remark works completely similarly for the higher dimensional hyperK\"ahler case. 
We thank Cristiano Spotti for bringing our attention to the work of Verbitsky \cite{Ver.erg}, and 
also appreciate some discussion with Yosuke Morita and Yuki Arano. 
\end{Rem}

\begin{Rem}\label{remark.HNU2}
As mentioned in Remark~\ref{remark.HNU}, \cite{HU} proved that
 generically the McLean metric in \eqref{eq:McLean} determines the corresponding Jacobian elliptic
 surface up to complex conjugation.
Therefore, the map
 $\Phi|_{\mathcal{M}_{\rm K3}(a)}\colon \mathcal{M}_{\rm K3}(a)\to \Phi(\mathcal{M}_{\rm K3}(a))$
 is generically one to one.
\end{Rem}

\begin{Rem}\label{MK3.to.F2d}
It is easy to see from the definition that the composite of maps $\Phi$ and 
 $j\colon \overline{\mathcal{F}_{2d}}^{\rm Sat}\to \overline{\mathcal{M}_{\rm K3}}^{\rm Sat}$
 given at the end of \S\ref{STK.MK3} 
 equals $\Phi_{\rm alg}$, the geometric realization map for $\mathcal{F}_{2d}$.
Hence Conjecture~\ref{K3.Main.Conjecture2} 
 implies Conjecture~\ref{K3.Main.Conjecture}.
Similarly, Theorems~\ref{K3a.GH.conti}, \ref{K3.Main.Theorem2}, and \ref{Kummer2} below imply
 the corresponding theorems for $\mathcal{F}_{2d}$, namely,
 Proposition~\ref{F2d.boundary.conti}, Theorem~\ref{K3.Main.Conjecture.18.ok},
 and Theorem~\ref{Kummer}, respectively.
\end{Rem}

\medskip

The remaining part of this chapter is devoted to partial confirmation
 of this conjecture. 
We summarize our results as follows. 

\smallskip
 
The first step is the following continuity on the open locus $\mathcal{M}_{\rm K3}$. The continuity  on the locus of smooth K3 surfaces 
 is a corollary to the implicit function 
 theorem again, but at the locus of 
$\mathcal{M}_{\rm K3}$ which 
 parametrizes non-smooth orbifolds, the continuity of $\Phi$ is not trivial. 
 
\begin{Prop}\label{MK3.conti}
The restriction of  $\Phi$ to $\mathcal{M}_{\rm K3}$ is continuous. 
\end{Prop}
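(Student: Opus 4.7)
Take a convergent sequence $s_i\to s_\infty$ in $\mathcal{M}_{\rm K3}$ and let $(X_{s_i},g_i)$, $(X_{s_\infty},g_\infty)$ be the underlying Ricci-flat Riemannian orbifolds, rescaled to diameter~$1$. The goal is to establish the Gromov-Hausdorff convergence $(X_{s_i},g_i)\to (X_{s_\infty},g_\infty)$. The case in which $X_{s_\infty}$ is smooth is essentially routine: one lifts the $s_i$ to the real analytic family $\{X_s^o\}_{s\in S}\to S$ recalled in \S\ref{Setting}, and Kodaira-Spencer/Calabi-Yau theory (implicit function theorem applied to the complex Monge-Amp\`ere equation on a nearby fixed smooth model, together with the real analytic dependence of the orbifold K\"ahler form $\omega_{X_s}$ on $s$) gives smooth and uniform convergence of the metrics. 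Hence we may restrict attention to the situation where $s_\infty\in S_{\rm ADE}$.

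In that case, the plan is first to use the real analytic family structure of \cite{KT} recalled in \S\ref{Setting}: away from the finite singular set $\Sigma\subset X_{s_\infty}$, the restricted family $\{X_s^o\}$ of pointed smooth Ricci-flat K\"ahler $4$-manifolds varies real analytically, so on any compact subset $K\subset X_{s_\infty}\setminus\Sigma$ the metrics $g_i$ converge smoothly (after using local trivialisations of $\{X_s^o\}$) to $g_\infty|_K$. This handles the behaviour away from the orbifold points; the rest of the proof must handle the collapsing/bubbling behaviour at $\Sigma$, and pass from smooth convergence on compact subsets of the smooth locus to Gromov-Hausdorff convergence of the whole (rescaled) orbifolds.

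The bridge is the Anderson-Bando-Kasue-Nakajima compactness theorem for $4$-dimensional Einstein orbifolds with bounded diameter and controlled $L^2$-curvature (cf.\ \cite{And}, \cite{BKN}): for any K3 orbifold the $L^2$-norm of the Riemann tensor equals $8\pi^2\chi_{\rm top}=192\pi^2$ (a topological invariant), the diameter is normalised to $1$, and Bishop-Gromov gives a uniform volume lower bound, so after extracting a subsequence $(X_{s_i},g_i)$ converges in the Gromov-Hausdorff topology to a Ricci-flat Einstein $4$-orbifold $(Y,h)$ whose singular set consists of finitely many points with ALE bubbles $Y_\alpha$ modelled on $\mathbb{C}^2/\Gamma_\alpha$. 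Combined with the smooth convergence on compact subsets of $X_{s_\infty}\setminus\Sigma$ from the previous step, it follows that $(Y,h)$ agrees with $(X_{s_\infty},g_\infty)$ on the smooth locus; since the underlying K3 is determined by its Ricci-flat Einstein metric on a dense open set (together with the topological constraint coming from the conservation of $\int |{\rm Rm}|^2$ and $\chi_{\rm top}$, which forces that no extra bubbles form, i.e.\ each ALE bubble $Y_\alpha$ exactly accounts for an ADE singularity of $X_{s_\infty}$), we conclude $(Y,h)=(X_{s_\infty},g_\infty)$. As the limit is subsequence-independent, the whole sequence converges, proving continuity of~$\Phi|_{\mathcal{M}_{\rm K3}}$.

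The hard part will be the last identification step, that is, ensuring that no ``excess'' curvature concentration/bubbling occurs at the ADE points beyond what is required to recover precisely the singularity type of $X_{s_\infty}$; this is ultimately a counting argument matching the $L^2$-curvature, the Euler characteristic contribution of each Kronheimer ALE bubble $Y_\alpha$ (whose corresponding ADE Dynkin data is read off from the local deformation map $q$ of \S\ref{H.upper}), and the orbifold Euler characteristic of $X_{s_\infty}$. This can be done uniformly in $s$ by invoking the gluing construction of \S\ref{H.upper} to build comparison metrics $\omega_{s,{\rm gl}}$ and bounding the distance from $g_{s_i}$ to $g_\infty$ through the ALE bubble Kronheimer parameter $q(s_i)\to 0$.
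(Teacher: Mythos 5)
Your proof takes a genuinely different route from the paper. The paper's own argument never invokes Anderson--Bando--Kasue--Nakajima compactness or any bubble/energy counting. Instead it proceeds directly: after passing to a holomorphic deformation family $\mathcal{X}\to S'$ by \cite{Riem}, it chooses a shrinking family of neighborhoods $U_s$ of the singular set, observes that $\mathrm{vol}(U_s,\omega_{X_s})<\epsilon'^4$ and $\mathrm{diam}(\partial\overline{U_s},\omega_{X_s})<\epsilon'$ for small $\mathcal U'$ (the singular locus has measure zero, and the boundaries lie in the region where \cite[Theorem 8]{KT}, \cite[Theorem 21]{Kob90} give smooth variation of the Ricci-flat metrics), and then applies Bishop--Gromov in a single, elementary way: because $\mathrm{vol}(U_s)$ is tiny and the ambient diameter is bounded (Lemma~\ref{diam.Kahler}), every ball $B(x,C_2\epsilon')$ about a point $x\in U_s$ must reach $\partial\overline{U_s}$, so $U_s$ itself has small diameter. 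Combined with the smooth convergence outside a slightly smaller $\overline{V_s}$, this controls $d_{\rm GH}((X_s,\omega_{X_s}),(X_{s_0},\omega_{X_{s_0}}))$ directly, with no need to extract a subsequential limit, identify it, or rule out excess bubbling. In short, the paper \emph{avoids} the ``hard part'' your plan defers to.

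Two concrete issues with your plan. First, the line ``Bishop--Gromov gives a uniform volume lower bound'' is backwards: for $\mathrm{Ric}\geq 0$ Bishop--Gromov yields $\mathrm{vol}(B(p,r))\leq c_n r^n$, an \emph{upper} bound. The non-collapsing hypothesis you need for BKN must instead come from the boundedness of the K\"ahler class (so $\int\omega_{X_s}^2$ is bounded below) together with the diameter upper bound of Lemma~\ref{diam.Kahler}; after rescaling to diameter $1$ the volume is then bounded below. Second, and more seriously, your identification step is only a sketch. Even granting BKN, you still need to show that the subsequential Einstein orbifold limit $(Y,h)$ is isometric to $(X_{s_\infty},g_\infty)$. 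Your plan proposes to match Kronheimer bubble data and Euler characteristics against the local deformation map $q$ of \S\ref{H.upper}, which is plausible, but you do not actually carry this out; and you also need to check that the intrinsic smooth convergence provided by BKN away from the concentration points is compatible with the Kobayashi--Todorov smooth convergence in the family $\{X_s^o\}$, so that the image of $X_{s_\infty}\setminus\Sigma$ exhausts the smooth part of $Y$. Both points can be repaired, but as written they leave a gap, whereas the paper's shorter argument sidesteps them entirely.
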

Although Proposition~\ref{MK3.conti} seems to be known to experts 
 (see related \cite{Kob90, And92}), we include 
its proof in the next section.

The closure of the boundary component
 $\mathcal{M}_{\rm K3}(a)$ is 
\[
\overline{\mathcal{M}_{\rm K3}(a)}
=\mathcal{M}_{\rm K3}(a)
 \sqcup \mathcal{M}_{\rm K3}(c_1)
 \sqcup \mathcal{M}_{\rm K3}(c_2)
 \sqcup \mathcal{M}_{\rm K3}(d).
\]
The continuity of $\Phi$ restricted to
 $\overline{\mathcal{M}_{\rm K3}(a)}$ can be proved 
 by using the analysis of McLean metrics in \S\ref{along.boundary.sec}. 
\if 0
Let $M_W$ be the moduli space parametrizing elliptic K3 surfaces
 with holomorphic sections as in \S\ref{Weier.mod.sec}. 
As we will see in \S\ref{MK3.MW}, there is a natural surjective map
 (taking modulo complex conjugation)
 $c\colon M_W\to \mathcal{M}_{\rm K3}(a)$, which extends to  
 compactifications
 $c\colon \overline{M_W}\to \overline{\mathcal{M}_{\rm K3}(a)}$.
Here, $\overline{M_W}$ is the GIT compactification
 and can be identified with the Satake-Baily-Borel compactification
 by \ref{}. 
Then we see that $\Phi_{\rm ML}=\Phi\circ c$. 
By the continuity of $\Phi_{\rm ML}$ (Theorem~\ref{Mwbar.GH.conti}) to be proved later
 in \S\ref{along.boundary.sec}, we obtain 
\fi
By Theorem~\ref{Mwbar.GH.conti} and Remark~\ref{geo.real.MW.MK3} we obtain 

\begin{Thm}\label{K3a.GH.conti}
The restriction of  $\Phi$ to the closure of the $36$-dimensional stratum
 $\overline{\mathcal{M}_{\rm K3}(a)}$ is continuous. 
\end{Thm}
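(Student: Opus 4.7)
The plan is to reduce Theorem~\ref{K3a.GH.conti} to the continuity of the Gromov-Hausdorff map $\Phi_{\rm ML}$ on $\overline{M_W}$ (Theorem~\ref{Mwbar.GH.conti}) by factoring $\Phi|_{\overline{\mathcal{M}_{\rm K3}(a)}}$ through a continuous surjection $\pi\colon \overline{M_W}\twoheadrightarrow \overline{\mathcal{M}_{\rm K3}(a)}$. First I would construct $\pi$ on the open part: an element $(X,\pi_{X},\sigma)\in M_W$ gives an elliptic K3 surface with section, whose periods and fiber class determine a pair $[l,V]\in \mathcal{M}_{\rm K3}(a)$ after passing to the hyperK\"ahler-rotation class and forgetting orientation; surjectivity follows from Case~2 of \S\ref{Geom.Meaning}, where Fact~\ref{bp.free} produces, from any $[l,V]$, a K\"ahler K3 surface elliptically fibered with fiber class $e$ whose relative Jacobian lies in $M_W$. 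Next I would extend $\pi$ to the boundary by invoking the identification of $\overline{M_W}$ with the Satake-Baily-Borel compactification of $M_W\simeq O(2,18)/(O(2)\times O(18))$ modulo arithmetic group, matching its cusps against the strata of $\overline{\mathcal{M}_{\rm K3}(a)}$ via the prescription in the paragraph preceding the theorem: $M_W^{\rm nn}\setminus M_W^{\rm seg}\to \mathcal{M}_{\rm K3}(c_1)$, $M_W^{\rm seg}\setminus M_W^{\rm nn}\to \mathcal{M}_{\rm K3}(c_2)$, and $M_W^{\rm nn}\cap M_W^{\rm seg}\to \mathcal{M}_{\rm K3}(d)$. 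The existence and continuity of this extension should follow from Proposition~\ref{F2d.MW} and the Langlands-theoretic matching of rational parabolic subgroups of $O(3,19)$ and $O(2,18)$.

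Once $\pi$ is in hand, the identity $\Phi_{\rm ML}=\Phi\circ\pi$ is to be verified stratum by stratum. On $M_W$ both sides assign to an elliptic K3 surface its base equipped with the McLean metric, rescaled to diameter $1$; on the boundary strata, the flat-torus-modulo-$(\pm 1)$ and segment descriptions of \S\ref{Geom.Meaning}, Cases~3--5, need to be identified with the limits produced by $\Phi_{\rm ML}$ on $M_W^{\rm nn}$ and $M_W^{\rm seg}$, which is a matter of unwinding the two definitions. Given this identity, continuity of $\Phi|_{\overline{\mathcal{M}_{\rm K3}(a)}}$ follows from continuity of $\Phi_{\rm ML}$ by a standard lifting-and-subsequence argument using the compactness of $\overline{M_W}$ and surjectivity of $\pi$.

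The main obstacle is not the abstract factorization but the concrete verification that $\pi$ extends continuously across the boundary and that $\Phi_{\rm ML}$ really computes the same metric spaces as $\Phi$ at the deep strata $\mathcal{M}_{\rm K3}(c_1), \mathcal{M}_{\rm K3}(c_2), \mathcal{M}_{\rm K3}(d)$. Both points rely on the uniform analysis of McLean metrics on families of Weierstrass elliptic K3 surfaces as the Weierstrass data degenerates (controlled diameter growth near nodal fibers, collapse of the base to a segment in the appropriate limit, and emergence of a flat $\mathbb{R}^{2}/\mathbb{Z}^{2}$-quotient in the Tate-curve-like limits); all of this is precisely what \S\ref{along.boundary.sec} is devoted to establishing and is packaged into Theorem~\ref{Mwbar.GH.conti}. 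Granted that analysis, the proof of Theorem~\ref{K3a.GH.conti} is essentially formal.
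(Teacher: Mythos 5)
Your proposal matches the paper's argument: the paper also constructs the continuous surjection $\overline{M_W}\to\overline{\mathcal{M}_{\rm K3}(a)}$ (explicitly noted there to be a $(\mathbb{Z}/2\mathbb{Z})$-quotient, via Theorem~\ref{GIT.SBB}), observes that $\Phi\circ c=\Phi_{\rm ML}$, and deduces continuity of $\Phi$ from Theorem~\ref{Mwbar.GH.conti}, since a continuous surjection of compact Hausdorff spaces is a quotient map. The one small slip is a bookkeeping one: the relevant boundary strata of $\overline{\mathcal{M}_{\rm K3}(a)}$ correspond to Cases~4 and~5 of \S\ref{Geom.Meaning} (i.e.\ $\mathcal{M}_{\rm K3}(c_1),\mathcal{M}_{\rm K3}(c_2),\mathcal{M}_{\rm K3}(d)$), not Case~3, which belongs to $\mathcal{M}_{\rm K3}(b_1)$ and does not lie in this closure.
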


The next theorem is one of our main theorems as it gives global 
moduli-theoretic picture of the general collapsing of K\"ahler K3 surfaces to 
tropical K3 surfaces. 
\begin{Thm}\label{K3.Main.Theorem2}
The restriction of $\Phi$ to
 $\mathcal{M}_{\rm K3} \sqcup \mathcal{M}_{\rm K3}(a)$
 is continuous.
\end{Thm}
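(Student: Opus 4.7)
The plan is to reduce the proof to the uniform adiabatic limit Theorem~\ref{GTZ.extend} via a Siegel set analysis and a hyperK\"ahler rotation, in close parallel with the proof of Theorem~\ref{K3.Main.Conjecture.18.ok} for $\mathcal{F}_{2d}$. Continuity on the interior $\mathcal{M}_{\rm K3}$ is Proposition~\ref{MK3.conti}, and continuity along $\overline{\mathcal{M}_{\rm K3}(a)}$ is Theorem~\ref{K3a.GH.conti}; what remains is to handle sequences $[x_i] \in \mathcal{M}_{\rm K3}$ that converge in the Satake topology to a boundary point $[e,v] \in \mathcal{M}_{\rm K3}(a)$. By the standard diagonalizing argument already used in the proof of ``Claim~\ref{gen.seq} implies Lemma~\ref{all.seq}'', combined with Proposition~\ref{MK3.conti} and Theorem~\ref{K3a.GH.conti}, it suffices to treat sequences lying in $\mathcal{M}_{\rm K3}$ whose Satake limit is in $\mathcal{M}_{\rm K3}(a)$.

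First I would set up the Siegel reduction with respect to the real maximal parabolic $Q \subset G = O(3,19)$ stabilizing the isotropic line $l_\R = \R e$, with Langlands decomposition $Q = NAM$. Lifting $[x_i]$ to ${\rm Gr}_3^{+,{\rm or}}(\Lambda_{\rm K3} \otimes \R)$ and translating by elements of $O^+(\Lambda_{\rm K3}) \cap Q$ (including Eichler transvections $\phi_{e,\xi}$ as in \eqref{Eichler}), we may assume the representatives lie in a single Siegel set of the form analogous to \eqref{Siegel.condition}: the $3$-plane is spanned by $\re \Omega_i$, $\im \Omega_i$, $\kappa_i$ with $(\re\Omega_i, \im\Omega_i, \kappa_i)$ orthonormal (after normalization), with $\kappa_i = N_i e + \epsilon_i f + \epsilon_i w_i'$ for a bounded $w_i' \in (\R e + \R f)^\perp$, with $\epsilon_i \to 0$, and with the remaining components of $\re\Omega_i, \im\Omega_i$ modulo $l_\R$ converging to the $2$-plane $V \subset l_\R^\perp/l_\R$ representing $v$.

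Next I would apply a hyperK\"ahler rotation, rotating inside the $3$-plane so that after rotation $e$ becomes (close to) the K\"ahler direction. More precisely, one produces marked K3 surfaces $X_i^\vee$ with holomorphic volume form $\Omega_i^\vee$ whose periods correspond to a fixed $2$-plane modulo $l_\R$ converging to $V$, and whose K\"ahler class lies close to $e$. The analogue of Claim~\ref{e.nef} (and its refinement Claim~\ref{e.nef.general} used in the proof of Theorem~\ref{K3.Main.Conjecture.18.ok}) then guarantees that for sufficiently large $i$, the class $\alpha_i^{-1}(e)$ lies in the closure of the K\"ahler cone of $X_i^\vee$, giving elliptic fibrations $\pi_i \colon X_i^\vee \twoheadrightarrow B_i \simeq \mathbb{P}^1$ with fiber class $e$ by Fact~\ref{bp.free}, possibly with ADE singularities on $X_i^\vee$. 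The point is that, since we are now in the full K\"ahler moduli $\mathcal{M}_{\rm K3}$ rather than $\mathcal{F}_{2d}$, the hyperK\"ahler rotation is intrinsic: no degree constraint needs to be maintained, which in fact simplifies matters compared to the polarized case.

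Finally I would package the data $(X_i^\vee, \alpha_i, \omega_{X_i^\vee})$ as points of a relatively compact subset $S \subset K\Omega^{e \geq 0}$ of the form used in the proof of Theorem~\ref{K3.Main.Conjecture.18.ok}, write the original K\"ahler form $\omega_i$ (before rotation) as $t_i \omega_{X_i^\vee} + \pi_i^* \omega_{B_i}$ up to $\R^\times$, with $t_i \to 0$, and then invoke Theorem~\ref{GTZ.extend} to conclude that $(X_i^\vee, \omega_i)$ converge in the Gromov-Hausdorff sense uniformly in $s \in S$ to $(B_i, \omega_{i, {\rm ML}})$. By construction of $\Phi$ on $\mathcal{M}_{\rm K3}(a)$ in \S\ref{Geom.Meaning} (Case~2), the limit is precisely $\Phi([e,v])$ as a metric space (recall that the complex conjugation ambiguity, which prevents a canonical orientation on the limit $S^2$, is absorbed into the definition of $\Phi$). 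The hardest step will be the uniform control needed to apply Theorem~\ref{GTZ.extend}: one must verify that the family $S$ is relatively compact in $K\Omega^{e \ge 0}$, i.e., that the Siegel reduction combined with the hyperK\"ahler rotation lands us in a single compact region (up to the Eichler action) where the analytic estimates of Chapter~\ref{GTZ.extend.proof}, in particular the uniform a priori bounds depending on bounded variation of the reference data, are applicable. This is exactly the same obstacle overcome in the algebraic case by Claim~\ref{Reduction} and Claim~\ref{e.nef.general}, and the same strategy transfers without essential change thanks to our working in $O(3,19)$ rather than $O(2,19)$ affecting only the dimensions, not the structure of the argument.
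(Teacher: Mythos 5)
Your proposal follows essentially the same path as the paper's proof: Siegel reduction with respect to the parabolic stabilizing $l_\R = \R e$, hyperK\"ahler rotation to align $e$ with the K\"ahler direction, the analogue of Claim~\ref{e.nef} (Claim~\ref{e.nef.general}, with the positive-definite form $(x,y)_{E,\langle v_1,v_2\rangle}$ replacing $(x,y)_{E,v}$) to get nefness of $\alpha_i^{-1}(e)$ on $X_i^\vee$, Fact~\ref{bp.free} for the elliptic fibration, and finally Theorem~\ref{GTZ.extend}. The only small wrinkle is a notational slip: you initially place the degenerating Siegel structure $N_i e + \epsilon_i f + \epsilon_i w_i'$ on the K\"ahler class $\kappa_i$ before rotation, whereas in the paper it is $\alpha_i([\re\Omega_i])$ that carries this structure before rotation and only becomes the K\"ahler class of $X_i^\vee$ afterward; since $\mathcal{M}_{\rm K3}$ is already modulo hyperK\"ahler rotation this does not affect the argument, but as written your description would make the subsequent rotation step vacuous.
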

\noindent
We prove Theorem~\ref{K3.Main.Theorem2} in
 \S\ref{proof.of.K3.Main.Theorem2}. 

We also confirm the conjecture along the locus of Kummer K3 surfaces, 
 which we will define later and denote by $\mathcal{M}_{\rm Km}$. 
\begin{Thm}\label{Kummer2}
The restriction of $\Phi$ to $\overline{\mathcal{M}_{\rm Km}}$, the closure of $\mathcal{M}_{\rm Km}$ inside 
$\overline{\mathcal{M}_{\rm K3}}^{\rm Sat}$,  is continuous. 
\end{Thm}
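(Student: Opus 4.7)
The strategy is to reduce Theorem \ref{Kummer2} to a flat-tori analogue of Theorem \ref{Ag.TGC.Satake.MS}, exploiting that the Ricci-flat K\"ahler orbifold metric on a Kummer K3 surface $K(A)=A/\iota$ (with its $16$ nodes retained) is just the descent of the flat metric from the underlying complex $2$-torus $A$.

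First I would identify $\mathcal{M}_{\rm Km}$ lattice-theoretically. Let $L_{\rm Km}\subset \Lambda_{\rm K3}$ denote the classical rank-$16$ Kummer lattice and set $\Lambda'_{\rm Km}:=L_{\rm Km}^{\perp}\cap \Lambda_{\rm K3}$, of signature $(3,3)$ and isomorphic to $H^{2}(A,\Z)(2)$. Since the K\"ahler class and the period of the flat orbifold Kummer metric on $K(A)$ are both orthogonal to all $16$ exceptional classes, $\mathcal{M}_{\rm Km}$ is exactly the image in $\mathcal{M}_{\rm K3}$ of the $O(\Lambda_{\rm K3})$-orbits of positive oriented $3$-planes lying in $\Lambda'_{\rm Km}\otimes \R$; hence $\mathcal{M}_{\rm Km}$ is an arithmetic quotient of $O(3,3)/(O(3)\times O(3))$. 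Via the local isomorphism $SL(4,\R)\xrightarrow{\wedge^{2}} SO_{0}(3,3)$ this is further identified with $GL(4,\Z)\backslash GL(4,\R)/(\R_{>0}\cdot O(4))$, the moduli of flat $4$-tori up to rescaling, and under this identification $\Phi\big|_{\mathcal{M}_{\rm Km}}$ is the Kummer quotient $T\mapsto T/\iota$ with the descended flat metric, normalized to diameter $1$.

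Next I would stratify $\overline{\mathcal{M}_{\rm Km}}$ by the procedure of \S\ref{K3.Sat.sec} applied to the signature-$(3,3)$ lattice $\Lambda'_{\rm Km}$: its proper strata correspond to equivalence classes of non-zero rational isotropic subspaces of $\Lambda'_{\rm Km}\otimes \Q$ of dimensions $1,2,3$, of real dimensions $5,2,0$ respectively. The key lattice-theoretic check is that for any such $V$, the rank-$16$ even unimodular quotient $(V^{\perp}\cap\Lambda_{\rm K3})/V$ is of $\Gamma_{16}$-type rather than $E_{8}^{\oplus 2}$-type, because $L_{\rm Km}$ embeds into it as a finite-index sublattice and the relevant even unimodular overlattice is $\Gamma_{16}$. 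Consequently $\overline{\mathcal{M}_{\rm Km}}$ meets only the strata $\mathcal{M}_{\rm K3}(b_{1})$, $\mathcal{M}_{\rm K3}(c_{1})$, and $\mathcal{M}_{\rm K3}(d)$ of $\overline{\mathcal{M}_{\rm K3}}^{\rm Sat}$, which are exactly those on which $\Phi$ outputs a flat torus modulo $(-1)$ or the segment.

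Finally I would invoke the obvious flat-tori analogue of Theorem \ref{Ag.TGC.Satake.MS}: carrying out the Siegel-reduction argument of \S\ref{Abel.sec} for $GL(4,\R)$ in place of $Sp(2g,\R)$ (essentially the analysis of $[\alpha\tau^{2}\,{}^{t}\alpha]$ appearing at the end of the proof of Claim~\ref{Cl2.convergence}) identifies the Satake compactification $\overline{GL(4,\Z)\backslash GL(4,\R)/(\R_{>0}\cdot O(4))}^{\rm Sat,\tau_{\rm ad}}$ with the Gromov-Hausdorff compactification of flat $4$-tori of diameter $1$, whose boundary strata parametrize flat $k$-tori for $1\le k\le 3$. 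Post-composing with the quotient map $T\mapsto T/\iota$, which is continuous in the Gromov-Hausdorff topology because passing to a quotient by a finite isometric group action is $1$-Lipschitz, realizes $\Phi\big|_{\overline{\mathcal{M}_{\rm Km}}}$ as a composition of two continuous maps. The main obstacle is the lattice computation in the previous paragraph, requiring a careful use of the explicit presentation of $L_{\rm Km}$ and its glue code to pin down the overlattice type; once this is settled, the rest of the argument is essentially a transcription of the abelian-varieties proof.
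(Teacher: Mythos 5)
Your overall strategy (identify $\mathcal{M}_{\rm Km}$ with the moduli of flat $4$-tori, run the Siegel-reduction argument of \S\ref{Abel.sec}, then post-compose with the $1$-Lipschitz map $T\mapsto T/\iota$) is the same as the paper's, but your description of the boundary $\partial\overline{\mathcal{M}_{\rm Km}}$ is wrong in a way that breaks the argument. You list three boundary strata of real dimensions $5,2,0$, attached to isotropic subspaces of $\Lambda'_{\rm Km}\otimes\Q$ of dimensions $1,2,3$. In fact the restricted root system of $O(U(2)^{\oplus 3}\otimes\R)\cong O(3,3)$ is of type $A_3$ (via the local isomorphism with $SL(4,\R)$), the highest weight of the adjoint representation is orthogonal to the middle node only, and the resulting adjoint-Satake compactification of $GL(4,\Z)\backslash GL(4,\R)/(\R_{>0}\cdot O(4))$ has \emph{six} boundary strata of dimensions $4,5,5,2,2,0$ (reflecting, among other things, the two $GL(4,\Z)$-orbits of isotropic $3$-planes in $\wedge^{2}\R^{4}$, coming from lines and from hyperplanes of $\R^{4}$).

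Consequently your ``key lattice-theoretic check'' — that $(V^{\perp}\cap\Lambda_{\rm K3})/V$ is always of $\Gamma_{16}$-type — is false: the two families of rational isotropic $3$-planes produce the two distinct even unimodular quotients, $\Gamma_{16}$ and $E_8^{\oplus 2}$, so the closure $\overline{\mathcal{M}_{\rm Km}}$ meets $\mathcal{M}_{\rm K3}(b_{2})$ and $\mathcal{M}_{\rm K3}(c_{2})$ as well as $(b_1)$, $(c_1)$, $(d)$. Worse, the missed $4$-dimensional stratum $\mathcal{M}_{\rm Km}(a)$ maps into the $36$-dimensional component $\mathcal{M}_{\rm K3}(a)$, where $\Phi$ is defined via elliptic fibrations and McLean metrics, not a priori as a flat-torus quotient; the pillowcase $T^{2}/\iota$ is indeed what $\Phi$ outputs there, but this compatibility (the McLean metric of the Kummer elliptic fibration degenerating to the flat pillowcase) needs to be stated and verified, and it is not a tautology in the way you present it. Without the correct six-stratum picture, and without checking that $\Phi$ on each of the six boundary images agrees with the quotient-of-flat-torus prediction, the post-composition argument does not close.
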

\noindent 
Theorem~\ref{Kummer2} provides a substantial evidence to our extension of framework of 
Conjecture \ref{K3.Main.Conjecture} to Conjecture \ref{K3.Main.Conjecture2}. 

We prove the above three partial confirmations of Conjecture \ref{K3.Main.Conjecture2} in the next section. 
 
\section{Partial confirmation of Conjecture \ref{K3.Main.Conjecture2}} 
Here, we partially prove Conjecture~\ref{K3.Main.Conjecture2}, 
including the proofs of Theorems~\ref{K3a.GH.conti}, \ref{K3.Main.Theorem2}, 
\ref{Kummer2}, and some other fundamental related results. 
 
\subsection{Non-collapsing continuity of K\"ahler K3 surfaces}
 \label{MK3.conti.sec}
 
 In this \S\ref{MK3.conti.sec}, we give a proof of 
 Proposition~\ref{MK3.conti} and also show related fundamental results. 
 
 First, the nontriviality of Proposition~\ref{MK3.conti} comes from 
the presence of the discriminant locus in $\mathcal{M}_{\rm K3}$, 
i.e., where the singular orbifolds correspond.
We will see the structure of discriminant locus at the end of \S\ref{proof.of.K3.Main.Theorem2}.

We now give a preparation of the proof of Proposition~\ref{MK3.conti}. 
\begin{Lem}\label{diam.Kahler}
For a bounded subset $S\subset K\Omega$, 
the diameters of the Ricci-flat K\"ahler (orbi-)metrics $(X_s,\omega_{X_s})$ are uniformly bounded above. 
\end{Lem}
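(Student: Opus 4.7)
By compactness of $\overline{S}$ it suffices to establish the diameter bound on a small neighborhood $U \subset K\Omega$ of each $s_0 \in \overline{S}$. Around such $s_0$, following the construction of \S\ref{Grauert.singular.metric}, I would build a real-analytic family of (possibly ADE singular) K\"ahler K3 surfaces $\{X_s\}_{s \in U}$ together with a smooth family of orbifold K\"ahler reference forms $\{\omega_{0,s}\}_{s \in U}$ representing the prescribed K\"ahler classes $\kappa_s$; when $s_0$ lies on the ADE locus $K\Omega \setminus K\Omega^o$ this uses simultaneous resolutions of the ADE fibers (cf.\ \cite{BR}, \cite[\S4]{Fjk}) to obtain such a reference family on the smooth loci $X_s^{\rm sm}$, extended across the exceptional curves as orbifold K\"ahler forms. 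Write the Ricci-flat K\"ahler form as $\omega_{X_s} = \omega_{0,s} + \sqrt{-1}\,\partial\bar\partial\varphi_s$, normalized by $\int_{X_s}\varphi_s\,\omega_{0,s}^{2} = 0$; by Yau's theorem $\varphi_s$ is uniquely determined.

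Next, I would apply the Moser-iteration $L^\infty$-estimate, essentially as carried out in detail in \S\ref{unif.L.infinity}, uniformly in $s \in U$. The required uniform inputs, namely two-sided boundedness of the volumes $\int_{X_s}\omega_{X_s}^{2} = (\kappa_s,\kappa_s)$, a uniform Sobolev constant for $(X_s,\omega_{0,s})$, and uniform two-sided bounds on the cohomological ratio $\omega_{X_s}^{2}/\omega_{0,s}^{2}$, all follow from the real-analyticity of the orbi-family on $U$ and from the hypothesis that $S$ is bounded in $K\Omega$, which confines both the period $[w]$ and the K\"ahler class $\kappa$ to a compact region away from the walls of $K_{[w]}$. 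This yields $\|\varphi_s\|_{L^\infty(X_s)} \le C$ uniformly for $s \in U$.

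With this uniform potential estimate in hand, the uniform diameter bound follows by the same argument as \cite[Theorem~3.1]{Tos.lim}, which is precisely the assertion already invoked (for bounded variations of a fixed underlying manifold) in \S\ref{Tos.lim.uni}: the Ricci-flatness of $\omega_{X_s}$ combined with the $L^\infty$-bound on $\varphi_s$ and the Bishop--Gromov inequality yields a uniform non-collapsing lower bound on volume ratios, from which the upper bound on ${\rm diam}(X_s,\omega_{X_s})$ is extracted in the standard way. The main obstacle, and the only reason the statement is not immediate from \cite{Tos.lim} itself, is the passage across the ADE locus $K\Omega \setminus K\Omega^o$, where $X_s$ degenerates to an orbifold and a na\"ive smooth family of complex manifolds does not exist; this is precisely what the reference-metric construction of \S\ref{Grauert.singular.metric} is designed to handle, its uniform geometric controls (in particular Claim~\ref{bisec.bded}) ensuring that every estimate above survives uniformly as $s$ crosses the discriminant.
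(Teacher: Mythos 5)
Your proof is correct and follows the same strategy as the paper's: build a continuous family of reference K\"ahler metrics on simultaneous resolutions across the ADE locus and feed it into the uniform refinement of Tosatti's diameter bound \cite[Theorem~3.1]{Tos.lim}; the paper simply takes a continuous section of ${\rm pr}|_{S}$ and cites Tosatti directly, whereas you unpack the underlying Moser iteration. One small remark: invoking the full machinery of \S\ref{Grauert.singular.metric} --- and in particular the bisectional-curvature bound of Claim~\ref{bisec.bded} --- is more than is needed, since the cited diameter bound does not require the reference form $\omega_{0}$ to be cohomologous to $\omega_{X_{s}}$, nor any curvature control on it, only a uniform comparison of volume forms.
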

This must be known to experts (cf.\  e.g.\ \cite{And92})
 but we write a sketchy proof for convenience after \cite{Tos.lim}. 
\begin{proof}[Proof of Lemma~\ref{diam.Kahler}]
We take a continuous section of $\pi\colon S\to \Omega(\Lambda_{\rm K3})$ as $\sigma(p)$ for $p\in {\rm Im}(\pi)\subset \Omega(\Lambda_{\rm K3})$. 
Now, let us apply \cite[Theorem 3.1]{Tos.lim} or more precisely its refinement which immediately follows from its proof. 
We take $\omega_0$ on $X$ of \textit{loc.cit.}\ as 
 some continuous family of K\"ahler metrics on a simultaneous resolutions
 of $\omega_{X_{\sigma(p)}}$. 
Then we take $\omega$ of \textit{loc.cit.}\ as $\omega_{X_s}$. 
Since the cohomology classes of $\omega_{X_s}$ are bounded,
 $c_1$ and $C_1$ of \textit{loc.cit.}\ can be taken uniformly. 
Hence $C_2, C_3, C_4$
 of \textit{loc.cit.}\ can be also taken uniformly by their definitions. 
Thus, the proof of uniform upper boundedness
 of the diameters ${\rm diam}(X_s,\omega_{X_s})$ is done. 
\end{proof}


We are now ready for the proof of Proposition~\ref{MK3.conti}. 

\begin{proof}[Proof of Proposition~\ref{MK3.conti}]

Take a point $s_0\in K\Omega$ and let $(X_{s_0},\omega_{X_{s_0}})$
 be the corresponding isomorphism class of 
possibly ADE singular Ricci-flat K3 (orbi-)surface. 
It is enough to prove the continuity of $(X_s,\omega_{X_s})$ where $s$ runs over a neighborhood of 
$s_0$, in the Gromov-Hausdorff sense. 

As in \S\ref{Setting},
 we construct a holomorphic deformation of $X_{s_0}$ over a neighborhood $S'$
 of ${\rm pr}(s_0)$ by \cite{Riem} and denote it by $\mathcal{X}\to S'$. 

Fix a small real number $\epsilon>0$. 
We will prove that for small enough $S$, the Gromov-Hausdorff distance of 
$(X_s,\omega_{X_s})$ and $(X_{s_0},\omega_{X_{s_0}})$ are less than $\epsilon$ for an arbitrary $s\in S$, following our discussion in \S\ref{Sing.fiber.nbhd}, \S\ref{GH.conv}. 
We take an open subset
 $\mathcal{U}'=\bigcup_{p\in S'}U_{p}$
 of $\mathcal{X}=\bigcup_{p\in S'}X_{p}$ 
 which contains all the singularities of $X_p$.
For each $s\in S$, 
we set $U_s$ as the pullback of $U_{\rm pr(s)}$ by a (partial) resolution $X_s\to X_{\rm pr(s)}$. 
If we take small enough $\mathcal{U}'$, then 
 we have 
\begin{equation}\label{ADE.loc.vol}
 {\rm vol}(U_s,\omega_{X_s})< \epsilon'^4
\end{equation}
and
\begin{equation}\label{ADE.loc.diam}
{\rm diam}(\partial \overline{U_s},\omega_{X_s})< \epsilon' 
\end{equation}
 for a constant $\epsilon' (\ll \epsilon)$ which is independent of $s$.

Also recall 
\begin{equation}\label{diam.Kahler2}
{\rm diam}(X_s, \omega_{X_s})<C_1
\end{equation} from Lemma~\ref{diam.Kahler}. 
Thus, combining \eqref{ADE.loc.vol} and \eqref{diam.Kahler2}, the Bishop-Gromov inequality implies that there is a constant $C_2>0$ which depends on $C_1$ but independent of $\epsilon'$ such that 
\begin{equation}\label{ball.int2}
B(x,C_2\epsilon')\cap \partial \overline{U_{s}}\neq \emptyset
\end{equation}
for any $s\in S$ and $x\in U_s$. 

We also take a smaller family of open subset 
$$\mathcal{V}':=\bigcup_{p\in S'}V_p$$ satisfying $\overline{V_p}\subset U_p\subset X_p$. 
Again $\mathcal{V}'$ itself is assumed to be open in $\bigcup_{p}X_p$ and 
we set $\mathcal{V}=\bigcup_{s\in S}V_s$ by defining $V_s$ to be the pullback of $V_{\rm pr(s)}$ by the partial resolutions. 

Then, by \cite[Theorem 8]{KT}, \cite[Theorem 21]{Kob90}, 
the metrics on $(X_s\setminus \overline{V_s}, \omega_{X_s})$ vary smoothly. 
From that fact, combined with \eqref{ADE.loc.diam}, \eqref{diam.Kahler2}, \eqref{ball.int2}, 
the Gromov-Hausdorff distance between 
$(X_s,\omega_s)$ and $(X_{s_0},\omega_{s_0})$ is less than $\epsilon$. 
See \S\ref{GH.conv} for the discussion with the same idea. 
\end{proof}


\subsection{General collapsing to tropical K3 surfaces}\label{proof.of.K3.Main.Theorem2}

We prove Theorem~\ref{K3.Main.Theorem2} in this subsection.

\begin{proof}[proof of Theorem~\ref{K3.Main.Theorem2}]
The proof is very similar to that of Theorem~\ref{K3.Main.Conjecture.18.ok}. 
We saw in Theorem~\ref{K3a.GH.conti} that $\Phi$ restricted
 to the closure of ${\mathcal{M}}_{\rm K3}(a)$ is continuous. 
Here, we only need to prove the continuity along a sequence of 
$\mathcal{M}_{\rm K3}$ converging to a point in $\mathcal{M}_{\rm K3}(a)$. 

Fix an isotropic line $l\subset \Lambda_{\rm K3}\otimes \Q$,
 which is unique up to $O(\Lambda_{\rm K3})$-action.
Take a primitive generator $e\in l\cap \Lambda_{\rm K3}$ and
 take an isotropic vector $f\in \Lambda_{\rm K3}\otimes \Q$ such that $(e,f)=1$.
Let $\Lambda':= \{ x\in \Lambda_{\rm K3} \mid (x,e)=(x,f)=0\}$.

For bounded subsets
 $\mathcal{U}, \mathcal{V}\subset \Lambda'\otimes \R$
 and $t>0$, define a subset
 $\mathfrak{S}_{\mathcal{U},t,\mathcal{V}}\subset R\Omega$ by 
 the set of positive definite three dimensional subspaces
 $\langle x_1,x_2,x_3\rangle_{\R} \subset \Lambda_{\rm K3}\otimes \R$
 such that
\begin{align} \label{Siegel.condition2}\nonumber
& \text{$x_1, x_2, x_3$ are orthonormal basis of
 $\langle x_1, x_2, x_3\rangle_{\R}$}, \\ 
& x_1 = c_1 e + v'_1 \quad  (c_1 \in\R,\ v'_1\in \mathcal{V}), \\ \nonumber
& x_2 = c_2 e + v'_2 \quad  (c_2 \in\R,\ v'_2\in \mathcal{V}), \\ \nonumber
& x_3 \in N e + \epsilon f +\epsilon \, \mathcal{U} \quad
 (N>0,\ 0<\epsilon<t).
\end{align}

Consider an arbitrary sequence in $\mathcal{M}_{\rm K3}$
 converging to $[l,V]\in \mathcal{M}_{\rm K3}(a)$,
 where $V\subset l_{\R}^{\perp}/l_{\R}$ denotes a positive definite plane.
By the definition of the Satake topology, we may assume that
 the sequence can be represented in the set
 $\mathfrak{S}_{\mathcal{U},t,\mathcal{V}}$.

We thus consider a sequence of
 Ricci-flat-K\"ahler-metrized ADE K3 surfaces with markings in the sense of
 \cite{Mor}
 $(X_i,\alpha_{i}\colon IH^{2}(X_{i},\Z)\hookrightarrow \Lambda_{\rm K3}, \omega_i)$ 
 with periods $\Omega_i (i=1,2,\cdots)$
 which satisfy the following: 
\begin{enumerate}
\item Three vectors $\alpha_{i}([\re \Omega_i]), \alpha_{i}([\im  \Omega_i]),
 \alpha_{i}([\omega_i])$ have length $1$ and are orthogonal with each other.
\item 
 $\alpha_{i}([\re \Omega_i])\in N_ie+\epsilon_if+\epsilon_i\mathcal{U}$
 and $\epsilon_i\to 0$ as $i\to \infty$.
\item The planes 
 $V_i:=\langle \alpha_{i}([\im \Omega_i]), \alpha_{i}([\omega_i])\rangle_{\R}$
 converge to $V$ as $i\to \infty$
 in the Grassmannian ${\rm Gr}_2(l_{\R}^{\perp}/l_{\R})$. 
\end{enumerate}
Here, $\mathcal{U}\subset \Lambda'\otimes \R$ is a bounded set. 
Let $v_1,v_2$ be an orthonormal basis of $V$.
First, by multiplying appropriate $d_{i}\in \C^{\times}$ to $\Omega_{i}$, 
we can and do assume 
\begin{itemize}
\item $\alpha_{i}([\im \Omega_{i}])\to v_1$ as $i\to +\infty$,
\item $\alpha_{i}([\omega_{i}])\to v_2$ as $i\to +\infty$
\end{itemize}
hold. 

By a hyperK\"ahler rotation,
 we have Ricci-flat-K\"ahler-metrized ADE K3 surfaces
 $(X_i^{\vee}, \omega_i^{\vee})$ with holomorphic volume
 form  such that
\begin{align}\label{hKrot.def}
&\alpha_i([\Omega_i^{\vee}])
 = \alpha_i([\omega_i])
 - \sqrt{-1} \alpha_i([\im \Omega_i]), \\ \nonumber
&\alpha_i([\omega_i^{\vee}]) = \alpha_i([\re \Omega_i]).
\end{align}

From the same arguments as Claim~\ref{e.nef}, it follows that 
\begin{Claim}\label{e.nef.general}
For any bounded subsets $\mathcal{U}, \mathcal{V} \subset \Lambda'\otimes \R$,
 there exists a positive constant $t$ satisfying the following.
Suppose that a marked Ricci-flat K\"ahler ADE K3 surface $(X,\alpha, \omega)$
 satisfies \eqref{Siegel.condition2}
 for $x_1=\alpha([\re \Omega])$,
 $x_2=\alpha([\im \Omega])$ and $x_3=\alpha([\omega])$.
Define $(X^{\vee}, \omega^{\vee})$  by hyperK\"ahler rotation
 as in \eqref{hKrot.def}.
Then $\alpha^{-1}(e)$ is in the closure of the K\"ahler cone of $X^{\vee}$.
\end{Claim}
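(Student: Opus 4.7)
The plan is to adapt the proof of Claim~\ref{e.nef}. Writing $x_1,x_2,x_3$ as in \eqref{Siegel.condition2} (so $x_3$ is the diverging vector containing the $\epsilon f$ term) and using the hyperK\"ahler rotation to $X^{\vee}$, the inclusion $\alpha^{-1}(e)\in \overline{C^+(X^{\vee})}\cap H^{1,1}(X^{\vee},\R)$ reduces to the orthogonalities $(e,x_1)=(e,x_2)=0$ (which are immediate from $v_1',v_2'\in\Lambda'$) together with $(e,x_3)=\epsilon>0$. The remaining task is to rule out, for $\epsilon$ below a threshold depending only on $\mathcal{U}$ and $\mathcal{V}$, the existence of an integral class $\delta\in\Lambda_{\rm K3}$ satisfying
\[
(\delta,\delta)=-2,\quad (\delta,x_1)=(\delta,x_2)=0,\quad (\delta,x_3)>0,\quad (\delta,e)<0.
\]

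After writing $\delta=se-tf+\delta'$ with $s,t\in\Z$ and $\delta'\in\Lambda'$, the negativity $(\delta,e)=-t<0$ forces $t\ge 1$, and the two orthogonalities translate via $x_i=c_ie+v_i'$ into $(\delta',v_i')=tc_i$ for $i=1,2$. This is the one place where the present situation is genuinely stronger than that of Claim~\ref{e.nef}: both projections of $\delta'$ onto the positive 2-plane $V'=\langle v_1',v_2'\rangle\subset\Lambda'\otimes\R$ are pinned, not just one. Decomposing $\delta'=tc_1v_1'+tc_2v_2'+\delta''$ with $\delta''\in(V')^{\perp}\cap(\Lambda'\otimes\R)$ negative definite, the $(-2)$-condition reduces to $(\delta'',\delta'')=2st-2-t^2(c_1^2+c_2^2)\le 0$, which yields the a priori upper bound $s\le 1/t+t(c_1^2+c_2^2)/2$ exactly as in the proof of Claim~\ref{e.nef}.

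With these data in hand I would estimate the effectivity pairing
\[
(\delta,x_3)=s\epsilon-tN+\epsilon(\delta',u).
\]
Since $(x_3,x_3)=1$ forces $N\epsilon\to 1/2$ uniformly in $\mathcal{U}$ as $\epsilon\to 0$, we have $tN\ge t/(3\epsilon)$ once $\epsilon$ is below a threshold. The term $\epsilon(\delta',u)$ is controlled by Cauchy--Schwarz in the positive-definite reshaping
\[
(y,y)_{E,V'}:=-(y,y)+2(y,v_1')^2+2(y,v_2')^2
\]
on $\Lambda'\otimes\R$, giving $|(\delta',u)|\le C_1 t+C_2\sqrt{2+t^2(c_1^2+c_2^2)-2st}$ with $C_1,C_2$ depending only on $\mathcal{U}$ and $\mathcal{V}$. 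Splitting on the sign of $s$ and absorbing the square-root via the elementary inequality $\sqrt{|2Cst|}\le |s|+Ct$ from the proof of Claim~\ref{e.nef}, one reaches
\[
(\delta,x_3)\le t\bigl(C'\epsilon-1/(3\epsilon)\bigr),
\]
which is strictly negative once $\epsilon<1/\sqrt{3C'}$; this contradicts the effectivity $(\delta,x_3)>0$.

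The main obstacle is the regime $s<0$, in which $(\delta'',\delta'')_{E,V'}$ grows linearly in $|s|$ so that a naive estimate gives $|(\delta',u)|$ of order $\sqrt{|s|t}$ and leaves the sign of $(\delta,x_3)\le -t/(3\epsilon)+C\epsilon\sqrt{|s|t}-|s|\epsilon$ unclear. The absorption trick $\sqrt{|2Cst|}\le |s|+Ct$ from Claim~\ref{e.nef} reduces the square-root term to a linear one in $|s|$, after which the negative contribution $-t/(3\epsilon)$ dominates uniformly in $s\in\Z$ and yields the uniform threshold $\epsilon_0$ required. Beyond this step the argument is essentially a transcription of the proof of Claim~\ref{e.nef}, with the single distinguished vector $\lambda/\sqrt{2d}$ of the polarized setting now replaced by the pair $(v_1',v_2')$, producing the doubly pinned projection of $\delta'$ and the correspondingly stronger upper bound on $s$.
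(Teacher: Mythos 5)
Your proposal is correct and is precisely the adaptation the paper has in mind: the paper's proof of Claim~\ref{e.nef.general} literally reads ``from the same arguments as Claim~\ref{e.nef}'' and then flags the single substitution of the two--direction reshaping $(x,y)_{E,\langle v_1,v_2\rangle}=-(x,y-2(y,v_1)v_1-2(y,v_2)v_2)$ for the one--direction form $(x,y)_{E,v}$, which is exactly the modification you identify and carry through (the explicit splitting $\delta'=tc_1v_1'+tc_2v_2'+\delta''$, the bound $s\le 1/t+t(c_1^2+c_2^2)/2$, the Cauchy--Schwarz estimate in the new form, and the absorption handling $s<0$ all mirror the paper's Claim~\ref{e.nef} argument). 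The only cosmetic caveat is a labeling slip already present in the paper's statement of the claim: under \eqref{hKrot.def} the diverging vector $x_3$ of \eqref{Siegel.condition2} should be identified with $\alpha([\re\Omega])=[\omega^\vee]$ (not $\alpha([\omega])$), which you implicitly corrected by working with ``$x_3$ is the diverging vector'' rather than the literal identifications written in the claim.
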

\noindent
The only 
point we need to explain is that here we consider
 a positive definite form 
$$
(x,y)_{E,\langle v_1,v_2 \rangle}:=-(x,y-2(y,v_1)v_1-2(y,v_2)v_2) 
$$
on $\Lambda'\otimes \R$
instead of $(x,y)_{E,v}=-(x,y-2(y,v)v)$ in the proof of Claim~\ref{e.nef}.

By this claim, $\alpha_i^{-1}(e)$ is in the closure of the K\"ahler cone of
 $X_i^{\vee}$ for large $i$
 so that Fact~\ref{bp.free} ensures the structure of elliptic fibration on
 $X_i^{\vee}$ with the fiber class $\alpha^{-1}(e)$. 

Therefore, we can apply Theorem~\ref{GTZ.extend}
 in the same way as in the proof of Theorem~\ref{K3.Main.Conjecture.18.ok},
 which implies that
 $(X_{i},\omega_{i})$ converges to $\Phi([l,V])$ in the Gromov-Hausdorff sense. 
We complete the proof of Theorem~\ref{K3.Main.Theorem2}. 
\end{proof}

For its own interests of discriminant locus,
 we explain its structure. 
It is known (cf., e.g., \cite{KT}) 
 that there is a codimension $3$ subset 
$\mathcal{M}_{\rm K3}^{\rm ADE}=\bigcup_{\delta^{2}=-2}{\rm Im}\, Z_{\delta}$ 
of $\mathcal{M}_{\rm K3}$ which exactly parametrizes ADE locus \cite[\S5]{KT}, 
where ${\rm Im}\, Z_{\delta}$ is the image of 
$Z_{\delta}\subset {\rm Gr}_{3}^{+,{\rm or}}(\Lambda_{\rm K3}\otimes \R)$
 in $\mathcal{M}_{\rm K3}$,
 which consists of subspaces perpendicular to $\delta$. 
\begin{Def}
Similarly to Remark \ref{Trop.ADE.F2d}, we define the tropical discriminant  
$\mathcal{M}_{\rm K3}(a)^{\rm ADE}$ of $\mathcal{M}_{\rm K3}(a)$
as
$$
\mathcal{M}_{\rm K3}(a)^{\rm ADE}:=
 \overline{\mathcal{M}_{\rm K3}^{\rm ADE}}\cap \mathcal{M}_{\rm K3}(a), 
$$
\end{Def}
Then we can show 
\begin{Claim}[Locally finiteness]\label{delta.finite2}
for a set $U\subset {\rm Gr}_{3}^{+,{\rm or}}(\Lambda_{\rm K3}\otimes \R)$
 of the form \eqref{Siegel.condition2}
 there exist only finitely many 
$O(\Lambda_{\rm K3})\cap {\rm stab}(\R e)$-equivalence classes of 
$\delta$ such that $Z_{\delta}\cap U\neq \emptyset$. 
\end{Claim}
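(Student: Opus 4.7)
The strategy is to adapt the proof of Claim~\ref{delta.finite} to the Siegel-type region in ${\rm Gr}_{3}^{+,{\rm or}}(\Lambda_{\rm K3}\otimes \R)$; the extra orthogonality condition coming from $x_3$ provides additional rigidity rather than a new obstruction. Given $\delta \in \Lambda_{\rm K3}$ with $(\delta,\delta) = -2$ and $Z_{\delta}\cap U \neq \emptyset$, I would pick $(x_1,x_2,x_3)\in U$ satisfying \eqref{Siegel.condition2} with $\delta$ perpendicular to $\langle x_1,x_2,x_3\rangle_{\R}$, fix an integral decomposition $\Lambda_{\rm K3}\supset \Z e \oplus \Z f \oplus \Lambda'$ after possibly adjusting $f$, and write $\delta = se - \mathfrak{t} f + \delta'$ with $\mathfrak{t}=-(\delta,e)\in\Z$ and $\delta'\in\Lambda'\otimes \R$ (using $\mathfrak{t}$ to distinguish from the Siegel parameter $t$ in \eqref{Siegel.condition2}). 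The three orthogonality relations $(\delta,x_i)=0$ then translate to
\begin{align*}
&(\delta',v'_i) = c_i \mathfrak{t} \quad (i=1,2), \\
&s\epsilon - \mathfrak{t} N + \epsilon(\delta',\delta'_3) = 0,
\end{align*}
where $c_1, c_2$ remain bounded by the Siegel constraints on $(x_i,x_j)$ and $\delta'_3\in\mathcal{U}$.

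On $\Lambda'\otimes \R$ (which has signature $(2,18)$) I would introduce the positive definite form
\[(x,y)_{E,\langle v_1,v_2\rangle} := -(x,\, y-2(y,v_1)v_1 - 2(y,v_2)v_2)\]
for an orthonormal basis $v_1,v_2$ of the limit positive-definite two-plane $V$; positive definiteness follows from reflecting away the two positive directions of $\Lambda'\otimes\R$. Expanding $(\delta',\delta')_{E,\langle v_1,v_2\rangle}$ using $(\delta,\delta)=-2$ together with the first two orthogonality relations gives a bound of the form $(\delta',\delta')_{E,\langle v_1,v_2\rangle} \leq 2 - 2s\mathfrak{t} + O(\mathfrak{t}^2)$, which, as in Claim~\ref{delta.finite}, forces $s\mathfrak{t}\leq 1 + O(\mathfrak{t}^2)$ and, independently, places $\delta'$ in a bounded subset of $\Lambda'\otimes\R$. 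Combining this with the asymptotics $N\sim \tfrac{1}{2\epsilon}$ extracted from $(x_3,x_3)=1$, the third orthogonality condition rewrites as $\mathfrak{t}\cdot \tfrac{1}{2\epsilon}\asymp \epsilon\bigl(s + (\delta',\delta'_3)\bigr)$, and coupling this with $s\leq \mathfrak{t}^{-1}+O(\mathfrak{t})$ yields $\mathfrak{t}^2=O(\epsilon^2)$. Since $\mathfrak{t}\in\Z$, we conclude $\mathfrak{t}=0$ as soon as the Siegel parameter $t$ in \eqref{Siegel.condition2} is small enough, an implicit hypothesis already present in Claim~\ref{delta.finite}.

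Once $\mathfrak{t}=0$, the third orthogonality relation collapses to $s = -(\delta',\delta'_3)$, so $s$ is uniformly bounded by the product of the $E$-bound on $\delta'$ and the diameter of $\mathcal{U}$. Hence $\delta = se + \delta'$ lies in a compact subset of $\Lambda_{\rm K3}\otimes \R$, and integrality produces only finitely many such $\delta$, a fortiori finitely many $O(\Lambda_{\rm K3})\cap {\rm stab}(\R e)$-equivalence classes. The main technical point I expect to require care is the balancing step forcing $\mathfrak{t}=0$: tracing the interplay of $s\epsilon \approx \mathfrak{t} N$ with $N\asymp 1/\epsilon$ alongside the bound $s\mathfrak{t}\leq 1+O(\mathfrak{t}^2)$ is the one genuinely new ingredient beyond the computation done in Claim~\ref{delta.finite}; the rest of the argument runs in direct analogy, and nothing else in the proof depends on having only two (versus three) positive directions.
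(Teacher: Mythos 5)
Your strategy matches the paper's, which says verbatim that the proof of Claim~\ref{delta.finite} carries over with $(x,y)_{E,v}$ replaced by $(x,y)_{E,\langle v_1,v_2\rangle}$; and reading off $s=-(\delta',\delta'_3)$ directly from the third orthogonality once $\mathfrak{t}=0$ is a nice shortcut around the paper's Eichler-transvection step for bounding $s$. However, your path to $\mathfrak{t}=0$ is circular as written.

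The assertion that $(\delta',\delta')_{E,\langle v_1,v_2\rangle}\le 2-2s\mathfrak{t}+O(\mathfrak{t}^2)$ ``independently places $\delta'$ in a bounded subset'' is false on its own: if $s\mathfrak{t}$ is large and negative the right side is unbounded, and nothing yet constrains $\delta'$. Consequently $(\delta',\delta'_3)$ is not yet $O(1)$, and the coupling of $\mathfrak{t}\cdot\tfrac{1}{2\epsilon}\asymp\epsilon(s+(\delta',\delta'_3))$ with $s\le\mathfrak{t}^{-1}+O(\mathfrak{t})$ does not cleanly give $\mathfrak{t}^2=O(\epsilon^2)$: one also needs a \emph{lower} bound on $s\mathfrak{t}$, which is precisely what the step is supposed to produce; the boundedness of $\delta'$ you would invoke to control the error term only becomes available \emph{after} $\mathfrak{t}=0$. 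The paper avoids the circularity by invoking Claim~\ref{e.nef.general} for both $\delta$ and $-\delta$. If you prefer an inline argument, treat $P:=(\delta',\delta')_{E,\langle v_1,v_2\rangle}\ge 0$ as a free quantity and complete the square: with $x_3=Ne+\epsilon f+\epsilon\delta'_3$, $K$ a uniform bound for $(\delta'_3,\delta'_3)_{E,\langle v_1,v_2\rangle}$ over $\delta'_3\in\mathcal U$ and the relevant orthonormal pairs in $\mathcal V$, and $R=2(c_1^2+c_2^2)$, substitute $s=\mathfrak{t}N/\epsilon-(\delta',\delta'_3)$ into $P=2-2s\mathfrak{t}+R\mathfrak{t}^2$ and bound $|(\delta',\delta'_3)|\le\sqrt{KP}$ to obtain
\[
\bigl(\sqrt P-|\mathfrak{t}|\sqrt K\bigr)^2+\mathfrak{t}^2\Bigl(\tfrac{2N}{\epsilon}-R-K\Bigr)\le 2,
\]
which is impossible for an integer $\mathfrak{t}\ne 0$ once $\epsilon$ is small enough that $2N/\epsilon\ge\epsilon^{-2}/2>R+K+2$. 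With $\mathfrak{t}=0$ established this way, $(\delta',\delta')_{E,\langle v_1,v_2\rangle}=2$ exactly, $\delta'$ is bounded via $(\cdot,\cdot)_{E,\mathcal V}$, $s=-(\delta',\delta'_3)$ is bounded, and your finiteness conclusion follows.
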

The proof is similar to that of Claim~\ref{delta.finite}. 
$\mathcal{M}_{\rm K3}(a)^{\rm ADE}$ is a real codimension
 $2$ closed subset of $\mathcal{M}_{\rm K3}(a)$ 
 similarly to Remark \ref{Trop.ADE.F2d}.


\subsection{Collapsing of flat Kummer orbifolds}\label{Kummer.confirm}

Let $T$ be a (not necessarily algebraic) 
 complex torus of dimension two, i.e.\ a complex surface
 of the form $\mathbb{C}^2/\Gamma$
 with $\Gamma\simeq \mathbb{Z}^4$, a cocompact subgroup of $\C^2$. 
The quotient $T/\iota$ of $T$ by the $(-1)$-multiplication $\iota$
 can be thought of as a singular K3 surface and 
 has a $16$ $A_1$-singular points.
By blowing up these points, we have
 a smooth K3 surface $\widetilde{T/\iota}$.
If $T$ equips with a flat metric, then it induces a metric on $T/\iota$.
As a metric space, $T/\iota$ is a quotient of real $4$-dimensional
 torus with a flat metric by the $(-1)$-multiplication.
Also, $T/\iota$ has a natural orientation coming
 from the complex structure of $T$.
Let ${\mathcal{M}}_{\rm Km}$ be a moduli space of such
 oriented metric spaces modulo $\mathbb{R}_{>0}$-rescaling.
Then ${\mathcal{M}}_{\rm Km}$ can be identified with the set
 of isomorphism classes of flat metrics on
 the oriented torus $\mathbb{R}^4/\mathbb{Z}^4$.
Hence 
${\mathcal{M}}_{\rm Km}\simeq
 SL(4,\mathbb{Z})\backslash SL(4,\mathbb{R})
 /SO(4)$.

In order to study how ${\mathcal{M}}_{\rm Km}$ is related with
 ${\mathcal{M}}_{\rm K3}$, we recall the definition of the so-called \textit{Kummer lattice}, 
 which we denote by $\Lambda_{\rm Km}$. 
Let us consider a 16-dimensional $\mathbb{Q}$-vector space
 with a basis labeled as $k_{a}$ for
 $a=(a_1,a_2,a_3,a_4)\in \mathbb{F}_2^{\oplus 4}$.
Define a symmetric bilinear form by $(k_{a},k_{b})=-2\delta_{ab}$.
Let $\Lambda_{\rm Km}$ be the lattice of rank $16$ in this vector space
 generated by the vectors
\begin{align*}
&\text{$k_{a}$ for every $a\in \mathbb{F}_2^{\oplus 4}$ and}\\
&\text{$\frac{1}{2}\sum_{a\in h}k_{a}$ for every affine hyperplane
 $h\subset \mathbb{F}_2^{\oplus 4}$}. 
\end{align*} 
It is known that a primitive embedding
 $\Lambda_{\rm Km}\hookrightarrow \Lambda_{\rm K3}$ is unique up to
 ${\rm Aut}(\Lambda_{\rm K3})$ \cite[Lemma 7]{Nik75}.
If we fix such an embedding, 
 the orthogonal complement $\Lambda_{\rm Km}^{\perp}$ 
 in $\Lambda_{\rm K3}$ is isomorphic to $U(2)^{\oplus 3}$, 
 where $U(2)=\mathbb{Z}e\oplus \mathbb{Z}f$ is an indefinite  
 rank $2$ lattice such that $(e,e)=(f,f)=0$ and $(e,f)=2$. 

Suppose that $T$ is a $2$-dimensional complex torus and $\iota$ is the $(-1)$-multiplication (involution) 
on $T$, so that we have a Kummer surface which is the minimal resolution $\widetilde{T/\iota}$ of $T/\iota$ by its definition. 
The Poincar\'e dual of the $16$ exceptional divisors of the minimal resolution in $\widetilde{T/\iota}$ form
 a sublattice of $H^2(\widetilde{T/\iota},\mathbb{Z})$
 with rank $16$ and its saturation is known to be
 isomorphic to $\Lambda_{\rm Km}$ (\cite[Corollary 5]{Nik75}). 
 From the construction, more strongly, we have a natural embedding into the Neron-Severi lattice $\Lambda_{\rm Km}\hookrightarrow {\rm NS}(X)$ 
 if a K3 surface $X$  is a Kummer surface and it also characterizes the class of Kummer surfaces among K3 surfaces, by the theorem of Nikulin 
 \cite[Theorem 1]{Nik75}. We can take a marking 
 $H^2(\widetilde{T/\iota},\mathbb{Z})\simeq \Lambda_{\rm K3}$ 
 such that the classes of $16$ exceptional divisors correspond 
 to $k_a$ and then the period and the K\"{a}hler class of 
 $T/\iota$ lie in the orthogonal complement $\Lambda_{\rm Km}^{\perp}$ of $\Lambda_{\rm Km}$. 
Let $x_i\in \mathbb{R}/\mathbb{Z}\ (i=1,2,3,4)$ be 
 standard coordinates of $T\simeq (\mathbb{R}/\mathbb{Z})^4$. 
The two forms $dx_i\wedge dx_j$ on $T$ for $1\leq i<j\leq 4$ 
 correspond to isotropic vectors 
 $e_{ij}\in \Lambda_{\rm Km}^{\perp}$. 
We have 
\begin{align*}
(e_{ij},e_{i'j'})=
\begin{cases}
\pm 2\  &\text{if $dx_i\wedge dx_j\wedge dx_{i'}\wedge dx_{j'}
 = \pm dx_1\wedge dx_2\wedge dx_3\wedge dx_4$},\\
0 \  &\text{if otherwise}.
\end{cases}
\end{align*}
Hence these $6$ vectors $e_{ij}$ form a lattice
 $U(2)^{\oplus 3}$.
The direct sum
 $U(2)^{\oplus 3}\oplus \Lambda_{\rm Km}$ is a sublattice
 of $\Lambda_{\rm K3}$ of index $2^6$.
The lattice $\Lambda_{\rm K3}$ is generated by
 $U(2)^{\oplus 3}\oplus \Lambda_{\rm Km}$ and the vectors
(see \cite{Gar})
\begin{align*}
&\text{$\frac{1}{2}e_{ij}+
\frac{1}{2}\sum_{a_i=a_j=0}k_{a}$
\ \ \  for $1\leq i< j\leq 4$. }
\end{align*}

We note an isomorphism of symmetric spaces
 $$SL(4,\mathbb{R})/SO(4)
 \simeq O(3,3)/(O(3)\times O(3)),$$
 which can be also identified with the set of
 positive definite $3$-dimensional subspaces 
 of 
 $U(2)^{\oplus 3}\otimes \mathbb{R}$. 
 It is also possible to assign orientation to each $3$-dimensional 
 subspace continuously with respect to the variation of the subspaces, 
 if we regard the above as $SO_{o}(3,3)/(SO(3)\times SO(3))$. 
 We explained that for 
 any flat singular Kummer orbifold $T/\iota$ parametrized in 
 $\mathcal{M}_{\rm Km}$ and its minimal resolution 
 $\widetilde{T/\iota}$, the pullback of the periods and the K\"ahler class 
 as elements of $IH^{2}(T/\iota,\Z)$ (cf., \cite{Mor}) 
are orthogonal to $\Lambda_{\rm Km}\subset \Lambda_{\rm K3}$ for an appropriate 
marking by \cite[Lemma 7]{Nik75}. 
Therefore, the natural embedding 
 $\Psi\colon {\mathcal{M}}_{\rm Km}\hookrightarrow {\mathcal{M}}_{\rm K3}$
 is induced by $U(2)^{\oplus 3}\otimes \R \hookrightarrow \Lambda_{\rm K3}\otimes \R$  and $O(3,3)\hookrightarrow O(3,19)$. 
Let $\overline{\mathcal{M}_{\rm Km}}$ be the closure of
 $\mathcal{M}_{\rm Km}$ in $\mathcal{M}_{\rm K3}$.
Then we can confirm 
 Conjecture \ref{K3.Main.Conjecture2} 
 along $\overline{\mathcal{M}_{\rm Km}}$ as follows.

\begin{proof}[Proof of Theorem~\ref{Kummer2}] 
Consider the algebraic group
 $\mathbb{G}':=O(U(2)^{\oplus 3}\otimes \mathbb{Q})$
 defined over $\mathbb{Q}$ and let $G':=\mathbb{G}'(\mathbb{R})$.
We saw that $\mathcal{M}_{\rm Km}$ is a locally symmetric space for $G'$.
Define $\overline{\mathcal{M}_{\rm Km}}^{{\rm Sat}}$
 to be the Satake compactification of adjoint type.
By the construction of the Satake compactification,
 there is a natural continuous map
 $\Psi:\overline{\mathcal{M}_{\rm Km}}^{{\rm Sat}}
 \to \overline{\mathcal{M}_{\rm K3}}^{{\rm Sat}}$.
 In fact, it is possible to see that
 this map gives a homeomorphism
 of $\overline{\mathcal{M}_{\rm Km}}^{{\rm Sat}}$
 and $\overline{\mathcal{M}_{\rm Km}}$, 
 but we do not use this fact in the proof.

To see the structure of
 $\overline{\mathcal{M}_{\rm Km}}^{{\rm Sat}}$,
 let us fix a maximal split torus of $G'$. 
Define $(\mathbb{R}^{\times})^3$
 as a subgroup of
 $G'=O(U(2)^{\oplus 3}\otimes \mathbb{R})$ by
taking 
\begin{align*}
&e_{12}\mapsto t_1 e_{12},\quad e_{13}\mapsto t_2 e_{13},\quad 
e_{23}\mapsto t_3 e_{23}\\
&e_{34}\mapsto t_1^{-1} e_{34},\quad e_{24}\mapsto t_2^{-1} e_{24},\quad 
e_{14}\mapsto t_3^{-1} e_{14}
\end{align*}
for an element $(t_1,t_2,t_3)\in (\mathbb{R}^{\times})^3$.
The roots 
\[\beta_1((t_1,t_2,t_3))=t_2t_3^{-1},\quad
\beta_2((t_1,t_2,t_3))=t_1t_2^{-1},\quad
\beta_3((t_1,t_2,t_3))=t_2t_3\]
 form the set of simple roots and 
 the corresponding Dynkin diagram is
\begin{align*}
\begin{xy}
\ar@{-} (0,0) *+!D{\beta_1} *{\circ}="A"; (10,0) *+!D{\beta_2} 
 *{\circ}="B"
\ar@{-} "B"; (20,0)*+!D{\beta_3}  *{\circ}="C"
\end{xy} 
\end{align*}
The highest weight $\mu$ of the adjoint representation of $G'$ 
 is orthogonal to $\beta_2$, but not orthogonal
 to $\beta_1$ and $\beta_3$.
Then there are six $G'$-conjugacy classes of $\mu$-saturated parabolic subgroups
 of $G'$, which are listed as
\begin{align*}
\begin{xy}
\ar@{-} (0,0)  *{\bullet}; (10,0)  *{\circ}="A"
\ar@{-} "A"; (20,0)  *{\bullet}
\ar@{} "A"; (10,-5)  *{(a)}
\ar@{-} (30,0)  *{\bullet}; (40,0)  *{\bullet}="B"
\ar@{-} "B"; (50,0)  *{\circ}
\ar@{} "B"; (40,-5)  *{(b_1)}
\ar@{-} (60,0)  *{\circ}; (70,0)  *{\bullet}="C"
\ar@{-} "C"; (80,0)  *{\bullet}
\ar@{} "C"; (70,-5)  *{(b_2)}
\ar@{-} (90,0)  *{\bullet}; (100,0)  *{\circ}="D"
\ar@{-} "D"; (110,0)  *{\circ}
\ar@{} "D"; (100,-5)  *{(c_1)}
\ar@{-} (0,-15)  *{\circ}; (10,-15)  *{\circ}="E"
\ar@{-} "E"; (20,-15)  *{\bullet}
\ar@{} "E"; (10,-20)  *{(c_2)}
\ar@{-} (30,-15)  *{\circ}; (40,-15)  *{\bullet}="F"
\ar@{-} "F"; (50,-15)  *{\circ}
\ar@{} "F"; (40,-20)  *{(d)}
\end{xy} 
\end{align*}
As in the case of $G=O(3,19)$ above, black nodes
 are the roots for the Levi component
 of the corresponding parabolic subalgebra. 
Since all the minimal rational parabolic subgroups of $\mathbb{G}'$
 are conjugate by $GL(4,\mathbb{Z})$,
 every $\mathbb{G}'(\mathbb{Q})$-conjugacy class of
 rational parabolic subgroups is 
 a single $GL(4,\mathbb{Z})$-conjugacy class.
Hence we have a stratification
\begin{multline*}
\overline{\mathcal{M}_{\rm Km}}^{\rm Sat}
 = {\mathcal{M}}_{\rm Km} \sqcup {\mathcal{M}}_{\rm Km}(a)
  \sqcup {\mathcal{M}}_{\rm Km}(b_1)  \sqcup {\mathcal{M}}_{\rm Km}(b_2)\\
  \sqcup {\mathcal{M}}_{\rm Km}(c_1)  \sqcup {\mathcal{M}}_{\rm Km}(c_2)
  \sqcup {\mathcal{M}}_{\rm Km}(d).
\end{multline*}
Note that this is \textit{not} the same compactification as
 the boundary $\partial\overline{A_{2}}^{\rm Sat,\tau_{\rm ad}}$
 of $\overline{A_{2}}^{\rm Sat,\tau_{\rm ad}}\cong \overline{A_{2}}^{\rm T}$ 
discussed in \S\ref{Abel.sec}, which has only one boundary component for 
each dimension. We claim that 
\begin{Claim}
 $\Psi({\mathcal{M}}_{\rm Km}(*))\subset
 {\mathcal{M}}_{\rm K3}(*)$
 for $*=a, b_1, b_2, c_1, c_2, d$. 
 \end{Claim}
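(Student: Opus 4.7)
The plan is to derive the six inclusions from the functoriality of the Satake construction that already furnishes the continuous extension $\Psi\colon\overline{\mathcal{M}_{\rm Km}}^{\rm Sat}\to\overline{\mathcal{M}_{\rm K3}}^{\rm Sat}$. At the level of boundary strata, if $P'\subset\mathbb{G}'$ is a $\mu'$-saturated rational parabolic and $P\subset\mathbb{G}$ denotes the $\mu$-saturation of any rational parabolic whose intersection with $\mathbb{G}'$ is $P'$, then $\Psi$ maps the stratum indexed by $P'$ into the stratum indexed by $P$. The assertion therefore reduces to matching the Dynkin-diagrammatic types on the two sides for each of the six entries.

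The first step is to make the six $\mu'$-saturated rational parabolics of $\mathbb{G}'$ geometrically explicit via the exceptional isomorphism $A_{3}\simeq D_{3}$: the identification $SL(4,\mathbb{R})/\{\pm1\}\simeq SO_{0}(U(2)^{\oplus 3}\otimes\mathbb{R})$ coming from $\bigwedge^{2}\mathbb{R}^{4}\simeq U(2)^{\oplus 3}\otimes\mathbb{R}$ converts rational flags in $\mathbb{Q}^{4}$ into flags of rational isotropic subspaces of $U(2)^{\oplus 3}\otimes\mathbb{Q}$. Under this dictionary, a $2$-plane in $\mathbb{Q}^{4}$ corresponds to an isotropic line, while lines and $3$-planes correspond to the two $SL(4)$-families of maximal isotropic $3$-planes, and a (line $\subset$ $3$-plane) flag corresponds to an isotropic $2$-plane (intersection of a $3$-plane from each family). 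Comparing with the six Dynkin pictures of the Kummer side displayed in the excerpt, we read off: $(a)$ as the stabilizer of an isotropic line, $(d)$ as the stabilizer of an isotropic $2$-plane, $(b_{1}),(b_{2})$ as the stabilizers of the two families of isotropic $3$-planes, and $(c_{1}),(c_{2})$ as the stabilizers of flags $V_{1}\subset V_{3}$ with $V_{3}$ in the respective family.

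For types $(a)$ and $(d)$ the inclusion $\Psi(\mathcal{M}_{\rm Km}(*))\subset\mathcal{M}_{\rm K3}(*)$ is then essentially automatic: a rational isotropic line or $2$-plane in $U(2)^{\oplus 3}$ is also a rational isotropic line or $2$-plane in $\Lambda_{\rm K3}$, such an object is unique up to $O(\Lambda_{\rm K3})$-action, and the induced parabolics in $\mathbb{G}$ are directly seen to be $\mu$-saturated of the corresponding K3-types. The substance of the argument lies in the remaining four types, where the K3-labels $(b_{1})$ versus $(b_{2})$, and analogously $(c_{1})$ versus $(c_{2})$, are governed by whether $V_{\mathbb{Z}}^{\perp}/V_{\mathbb{Z}}$ in $\Lambda_{\rm K3}$ is isomorphic to $\Gamma_{16}$ or to $E_{8}^{\oplus 2}$. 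We must therefore compute this quotient for each of the two $SL(4)$-families of rational isotropic $3$-planes $V\subset U(2)^{\oplus 3}$.

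The main obstacle is this lattice computation, and the plan is to carry it out using the explicit presentation of $\Lambda_{\rm K3}$ as the overlattice of $U(2)^{\oplus 3}\oplus\Lambda_{\rm Km}$ of index $2^{6}$ generated by the vectors $\tfrac{1}{2}e_{ij}+\tfrac{1}{2}\sum_{a_{i}=a_{j}=0}k_{a}$ for $1\leq i<j\leq 4$. For such $V$, the negative-definite lattice $V^{\perp_{\Lambda_{\rm K3}}}/V$ is the overlattice of $\Lambda_{\rm Km}$ obtained by adjoining the images of those half-vectors for which $e_{ij}\in V$. The two $SL(4)$-families produce distinct combinatorial patterns for the set $\{(i,j):e_{ij}\in V\}$ (pairs sharing a fixed index $i$, versus pairs drawn from a fixed triple $\{i,j,k\}$), and a direct root-system calculation should then show that one family produces an overlattice isomorphic to $\Gamma_{16}$ and the other to $E_{8}^{\oplus 2}$, matching the K3-labels $(b_{1})$ and $(b_{2})$ after a compatible choice of labelling conventions on the Kummer side. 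The same computation applied to the $V_{3}$-component of a Kummer flag handles $(c_{1})$ and $(c_{2})$, completing the proof of all six inclusions.
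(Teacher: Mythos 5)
Your proposal is correct and follows essentially the same strategy as the paper: use functoriality of the Satake construction to reduce the claim to comparing parabolic types, translate the Kummer strata via $\bigwedge^2\mathbb{Q}^4\simeq U(2)^{\oplus 3}\otimes\mathbb{Q}$ (i.e.\ $A_3\simeq D_3$), and for the $b$- and $c$-types compute $V_{\mathbb{Z}}^{\perp}/V_{\mathbb{Z}}$ inside $\Lambda_{\rm K3}$ using the index-$2^6$ overlattice presentation over $U(2)^{\oplus 3}\oplus\Lambda_{\rm Km}$. The paper carries out the explicit glue-vector computation for $(b_1)$ (obtaining $\Gamma_{16}$ from the isotropic $3$-plane $\langle e_{12},e_{13},e_{23}\rangle$) and asserts the remaining cases are similar; you make the $A_3\simeq D_3$ dictionary and the combinatorics of the two $SL(4)$-families of isotropic $3$-planes more explicit, while leaving the same lattice verification to a calculation, so the two presentations are genuinely the same argument.
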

 This claim justifies our notation above. 
Let us see this for $*=b_1$. 
The stratum
 ${\mathcal{M}}_{\rm Km}(b_1)$ is isomorphic to
 $GL(3,\mathbb{Z})\backslash
 GL(3,\mathbb{R})/(\mathbb{R}^{\times}\cdot O(3))$
 and it corresponds to the isotropic subspace
 of $U(2)^{\oplus 3}\otimes \mathbb{Q}$
 spanned by $e_{12}, e_{13}, e_{23}$. 
By using the embedding 
 $U(2)^{\oplus 3}\oplus \Lambda_{\rm Km}
 \hookrightarrow \Lambda_{\rm K3}$, 
 we have a $3$-dimensional isotropic subspace $V$
 of $\Lambda_{\rm K3}\otimes \mathbb{Q}$.
Then the lattice 
 $(V\cap \Lambda_{\rm K3})^{\perp}/(V\cap \Lambda_{\rm K3})$
 is generated by $\Lambda_{\rm Km}$ and
\[
\frac{1}{2}\sum_{a_1=a_2=0}k_a, \quad 
\frac{1}{2}\sum_{a_1=a_3=0}k_a, \quad 
\frac{1}{2}\sum_{a_2=a_3=0}k_a.\]
It is easy to see that this lattice is isomorphic to $\Gamma_{16}$.
Therefore,
 $\Psi({\mathcal{M}}_{\rm Km}(b_1))\subset{\mathcal{M}}_{\rm K3}(b_1)$.
Similar arguments show the claim for other strata.

Moreover, it also follows from comparing both spaces that
$\Psi \colon {\mathcal{M}}_{\rm Km}(*) \to {\mathcal{M}}_{\rm K3}(*)$
 is a homeomorphism for each $*=b_1, b_2, c_1, c_2, d$. 
 On the other hand, from the construction, we also see 
 $\mathcal{M}_{\rm Km}(a)$ is a $4$-dimensional 
 locally symmetric space for the group $SL(2)\times SL(2)$. 

To prove Theorem~\ref{Kummer2}, it suffices to show that
 $\Phi\circ\Psi$ is continuous on
 $\overline{\mathcal{M}_{\rm Km}}^{\rm Sat}$. 
Since metric spaces corresponding to ${\mathcal{M}}_{\rm Km}$
 are quotients of $4$-dimensional flat tori by the $(-1)$-multiplication,
 their limits are also quotients of flat tori
 with possibly smaller dimensions. 
By direct calculations as in \S\ref{Abel.sec}, we can confirm
 that $\Phi\circ\Psi$ is continuous.
Here, we do not give detailed arguments,
 but confirm the theorem only in some particular cases. 
 
For example, take a sequence $x_i$ on $SL(4,\mathbb{R})/SO(4)$
 given by $x_i=(t_{i,1},t_{i,2},t_{i,3})\cdot o$,
 where $o$ is a base point in the symmetric space and
 $(t_{i,1},t_{i,2},t_{i,3})$ is an element in the split torus
 of $\mathbb{G}'$ defined above.
Suppose first $t_{i,1}=t_{i,2}=t_{i,3} \to +\infty$ as $i\to \infty$.
Then $x_i$ can be regarded as a sequence of points
 in ${\mathcal{M}}_{\rm Km}$ and then it converges to
 a point in ${\mathcal{M}}_{\rm Km}(b_1)$.
The corresponding metric spaces are quotients of flat tori
 $(\mathbb{R}/\mathbb{Z})^4$.
As $i\to \infty$, the torus $\mathbb{R}/\mathbb{Z}$ corresponding
 to the coordinate $x_4$ shrink and the Gromov-Hausdorff limit
 is three dimensional. 
 
Suppose next that $t_{i,1}=t_{i,2}=t_{i,3}^{-1} \to +\infty$
 as $i\to \infty$.
Then the sequence in ${\mathcal{M}}_{\rm Km}$ converges to a point
 in ${\mathcal{M}}_{\rm Km}(b_2)$.
Regarding corresponding tori,
 the torus $\mathbb{R}/\mathbb{Z}$ corresponding
 to $x_1$ expands, or equivalently, the other three tori
 shrink (recall that we rescale them to normalize the diameter).
Hence the Gromov-Hausdorff limit of those tori is one dimensional and
 its quotient by the involution is a segment.
\end{proof}

\newpage


\chapter{The moduli of tropical K3 surfaces and elliptic K3 surfaces}\label{along.boundary.sec}

In this chapter, we study the behavior of
 tropical K3 surfaces parametrized in
 the $36$-dimensional boundary component
 $\mathcal{M}_{\rm K3}(a)$ of $\mathcal{M}_{\rm K3}$. 
The boundary component ${\mathcal{M}}_{\rm K3}(a)$
 parametrizes Jacobian elliptic K3 surfaces (\textit{modulo complex conjugation}). 
The tropical K3 surfaces we assign via the geometric realization map 
 $\Phi$ are their bases with McLean metrics. 
Our main technical tool is the Weierstrass models of elliptic K3 surfaces. 

\section{GIT moduli compactification for Weierstrass K3 models}
\label{Weier.mod.sec}

\subsection{Stability analysis}

To show expected collapsing of tropical K3 surfaces (or genuine K3 surfaces) to the unit segment, 
 we do an explicit study of Weierstrass models,
 after a suggestion of K.~Ueda. 
We thank him for his kind suggestion. 
Thus, in this \S\ref{Weier.mod.sec}, we prepare a purely algebro-geometric study of the moduli of the Weierstrass models of elliptic K3 surfaces 
and its compactification. 
We begin by recalling the basic of the 
Weierstrass model after Kas \cite{Kas}, Miranda \cite{Mir}, 
on the following explicit structure: 

\begin{Thm}[\cite{Kas}, \cite{Mir}]\label{ADE.Weier.K3}

Any (possibly) ADE singular K3 surface $X$ associated with an elliptic fibration structure 
$\pi\colon X\twoheadrightarrow \mathbb{P}^{1}$ admitting a holomorphic section 
$\sigma\colon \mathbb{P}^{1}\hookrightarrow X$, 
whose fibers are all irreducible, can be explicitly described as follows: 
\begin{align}\label{Weier.form}
X\simeq X^{W}_{g_{8},g_{12}}:=[y^{2}z=4x^{3}-g_{8}(t)xz^{2}+g_{12}(t)z^{3}]\subset 
\mathbb{P}_{\mathbb{P}^{1}}(\mathcal{E}),
\end{align}
where $g_{8}\in \Gamma(\mathcal{O}_{\mathbb{P}^{1}}(8))$, 
$g_{12}\in \Gamma(\mathcal{O}_{\mathbb{P}^1}(12))$, 
$\mathcal{E}:=\mathcal{O}_{\mathbb{P}^{1}}(4)\oplus \mathcal{O}_{\mathbb{P}^{1}}(6)\oplus \mathcal{O}_{\mathbb{P}^{1}}$ on $\mathbb{P}^{1}$ 
and $\pi\colon \mathbb{P}_{\mathbb{P}^{1}}(\mathcal{E})\to \mathbb{P}^{1}$ 
denotes the natural projection. 
Note that 
\begin{align*}
y^{2}z-(4x^{3}-g_{8}(t)xz^{2}+g_{12}(t)z^{3})&\in 
\Gamma(\mathbb{P}_{\mathbb{P}^{1}}(\mathcal{E}),
\mathcal{O}_{\pi}(3)\otimes \pi^{*}\mathcal{O}_{\mathbb{P}^{1}}(12))\\
&\simeq \Gamma(\mathbb{P}^{1},
{\rm Sym}^{3}(\mathcal{E}^{*})\otimes \mathcal{O}_{\mathbb{P}^{1}}(12)). 
\end{align*}
The isomorphism \eqref{Weier.form} 
is over $\mathbb{P}^{1}$, which identifies $\sigma$ with
 the section $\{[0:1:0]\}=\mathbb{P}_{\mathbb{P}^{1}}
(\mathcal{O}_{\mathbb{P}^{1}}(6))
\subset 
\mathbb{P}_{\mathbb{P}^{1}}(\mathcal{E}).$ 
Furthermore, $g_{8}$ and $g_{12}$ satisfy that 
for any $p\in \mathbb{P}^{1}$, 
\begin{align}\label{stab.cond}
\min\{3v_{p}(g_{8}),2v_{p}(g_{12})\}<12. 
\end{align}
Conversely, the surface defined by the above equation satisfying 
(\ref{stab.cond}), 
is a (possibly ADE singular) 
elliptic K3 surface with a holomorphic section and the fibers are all 
irreducible. 
(The condition (\ref{stab.cond})
 is equivalent to that $X$ has only ADE singularities.) 
\end{Thm}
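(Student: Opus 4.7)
The plan is to follow the classical construction of Weierstrass models for elliptic fibrations with section. Given $\pi\colon X \twoheadrightarrow \mathbb{P}^1$ with section $\sigma$, I would introduce the fundamental line bundle $\mathcal{L} := (R^1\pi_*\mathcal{O}_X)^{-1}$, which by relative Grothendieck--Serre duality coincides with $\sigma^*\omega_{X/\mathbb{P}^1}$. The Kodaira canonical bundle formula $\omega_X \simeq \pi^*(\omega_{\mathbb{P}^1} \otimes \mathcal{L})$, combined with triviality of the dualizing sheaf on the (Gorenstein) ADE K3 surface $X$, immediately forces $\mathcal{L} \simeq \mathcal{O}_{\mathbb{P}^1}(2)$.

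The next step is to compute the direct images $\pi_*\mathcal{O}_X(n\sigma)$ for $n=1,2,3$ via cohomology-and-base-change together with Riemann--Roch on a generic (smooth) fiber: one obtains $\pi_*\mathcal{O}_X = \pi_*\mathcal{O}_X(\sigma) = \mathcal{O}_{\mathbb{P}^1}$, together with short exact sequences refining these to $\pi_*\mathcal{O}_X(2\sigma) \simeq \mathcal{O}_{\mathbb{P}^1} \oplus \mathcal{L}^{-2}$ and $\pi_*\mathcal{O}_X(3\sigma) \simeq \mathcal{O}_{\mathbb{P}^1} \oplus \mathcal{L}^{-2} \oplus \mathcal{L}^{-3}$. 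The relative linear system $|3\sigma|$ then yields a closed immersion of $X$ into $\mathbb{P}_{\mathbb{P}^1}(\mathcal{E})$ with $\mathcal{E}$ agreeing, up to dualization and twist, with $\mathcal{O}_{\mathbb{P}^1}(4) \oplus \mathcal{O}_{\mathbb{P}^1}(6) \oplus \mathcal{O}_{\mathbb{P}^1}$; the image is a divisor of relative degree $3$. Working in the Tate-style coordinates $(x,y)$ on the affine patch $z \neq 0$ and using that $2,3$ are invertible, one eliminates the $x^2$, $xy$, and $y$ terms to bring the relative cubic equation into the asserted short Weierstrass form $y^{2}z = 4x^{3} - g_{8}(t)\,xz^{2} + g_{12}(t)\,z^{3}$ with $g_8 \in H^0(\mathcal{L}^{\otimes 4}) = H^0(\mathcal{O}(8))$ and $g_{12} \in H^0(\mathcal{L}^{\otimes 6}) = H^0(\mathcal{O}(12))$.

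For the singularity condition \eqref{stab.cond}, I would run the local analysis (Tate's algorithm / Kodaira classification) fiber by fiber at each $p \in \mathbb{P}^1$: the pair of vanishing orders $(v_p(g_8), v_p(g_{12}))$ determines both the Kodaira fiber type and the local analytic singularity of $X^W_{g_8,g_{12}}$ on that fiber. A finite tabulation shows that $\min\{3 v_p(g_8), 2v_p(g_{12})\} < 12$ is precisely the condition that the local behaviour of the discriminant $\Delta = g_8^3 - 27\,g_{12}^2$ produces only an $ADE$ rational double point on the Weierstrass model itself (with the full Kodaira fiber recovered only after minimal resolution), while the complementary case $\min\{3 v_p(g_8), 2v_p(g_{12})\} \geq 12$ yields a non-rational singularity that can be removed by blowing up the base point and dividing $g_8, g_{12}$ by $t^4, t^6$ --- exactly the step excluded by relative minimality. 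The converse direction is then largely bookkeeping: I would verify directly from the equation that $X^W_{g_8,g_{12}}$ has trivial dualizing sheaf by adjunction inside $\mathbb{P}_{\mathbb{P}^1}(\mathcal{E})$, that the projection to $\mathbb{P}^1$ is an elliptic fibration with section $\{[0:1:0]\}$ and with irreducible fibers, and that its singularities are the claimed ADE ones.

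The main obstacle is the case-by-case verification at the core of the third step: matching the numerical inequality \eqref{stab.cond} precisely with the ADE restriction. This is the Kodaira--N\'eron--Tate classification executed in Weierstrass coordinates; while entirely algorithmic, it requires careful treatment of each of the finitely many residue classes of $(v_p(g_8), v_p(g_{12}))$ modulo $(4,6)$ and is the substantive content of the original papers \cite{Kas} and \cite{Mir}. In a write-up I would simply cite the resulting tabulation rather than reproduce it, and would concentrate exposition on the first two steps where the global structure of $\mathcal{E}$ emerges intrinsically from $\mathcal{L} = \mathcal{O}_{\mathbb{P}^1}(2)$.
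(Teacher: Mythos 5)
Your proposal is correct and follows essentially the same approach as the paper. The paper's proof simply cites \cite[Lemma 1]{Kas} and \cite[Proposition 5.1]{Mir} (with one side remark, that Kas's lemma is stated for the affine projection and so the section at infinity must be discarded, which is harmless since it lies in the $\pi$-smooth locus), while you reconstruct the content of those citations from the fundamental line bundle $\mathcal{L}\simeq\mathcal{O}_{\mathbb{P}^1}(2)$ and the direct images $\pi_*\mathcal{O}_X(n\sigma)$.
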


\begin{proof}
As it is convenient, 
 we introduce an inhomogeneous coordinate $t$ on $\mathbb{P}^{1}$
 so that $\mathbb{A}^{1}_{t}\sqcup \{t=\infty\}=\mathbb{P}^{1}_{t}$.
Then we regard $\Gamma(\mathcal{O}_{\mathbb{P}^{1}}(8))$
 (resp.\ $\Gamma(\mathcal{O}_{\mathbb{P}^{1}}(12))$) 
 as polynomials in $t$ of degree $8$ (resp.\  $12$). 
Then the theorem simply follows from the combination
 of \cite[Proposition 5.1]{Mir} and \cite[Lemma 1]{Kas}. 
We note that to apply \cite[Lemma 1]{Kas},
 which is stated for relatively affine projections,
 we need to get rid of the infinity section $[0: 1: 0]$ of $\pi$. 
It does not affect the conclusion since the section only passes through
 the open locus of $X$ where $\pi$ is smooth.
It is because the intersection number
 of the infinity section and the fibers are
 independent of $s$ due to the flatness of $\pi$.
\end{proof}

\begin{Rem}
Also recall that the above explicit equation of $X^{W}_{g_{8},g_{12}}$ 
allows us to see it as the double cover of the Hirzebruch surface 
$F_{4}=\mathbb{P}_{\mathbb{P}^{1}}(\mathcal{O}_{\mathbb{P}^{1}}(4)\oplus
\mathcal{O}_{\mathbb{P}^{1}})$ branched along 
$(4x^{3}-g_{8}(t)xz^{2}+g_{12}(t)z^{3}=0)\subset F_{4}$, 
which is a degree $4$ covering of the base $\mathbb{P}^{1}$. 
\end{Rem}

The above model can be also seen as the relative canonical model 
of elliptic K3 surface with a section 
over $\mathbb{P}^{1}$, i.e., Jacobian elliptic K3 surface, in the terminology of 
the recent 
log minimal model program. As this should be known to experts, we leave the detail 
of its confirmation to the readers. 

Based on \cite{Kas} and \cite{Mir}, 
we give an explicit description of the GIT compact moduli of 
Weierstrass models. As $(g_8,g_{12})$ and $(\lambda^{2}g_8,\lambda^{3}g_{12})$ 
define isomorphic elliptic surfaces over $\mathbb{P}^{1}$, 
we can take a parameter space of $X_{g_{8},g_{12}}^{W}$ naturally as 
\begin{align}\label{WP.par}
\mathbb{P}(
\underbrace{2,2,2,2,2,2,2,2,2}_9,
\underbrace{3,3,3,3,3,3,3,3,3,3,3,3,3}_{13}),
\end{align}
a $21$-dimensional 
weighted projective space, which we denote simply by 
$\mathbb{P}(2^{(9)},3^{(13)})$ or just by $W\mathbb{P}$. 

We denote the discriminant by $\Delta_{24}:=g_{8}^{3}-27g_{12}^{2}
\in \Gamma(\mathcal{O}_{\mathbb{P}^{1}}(24))$. 
By \cite[Proposition 5.1]{Mir}, 
 the stable locus with respect to the action of $SL(2)$ on $W\mathbb{P}$
 is the union of the above ADE locus 
 and the stable locus in the $\Delta_{24}\equiv 0$ case,
 i.e., non-normal surfaces $X_{g_{8},g_{12}}^{W}$. 
Note that $\Delta_{24} \equiv 0$ means
 there exists a section
 $G_4\in \Gamma(\mathbb{P}^{1},\mathcal{O}_{\mathbb{P}}(4))$
 such that $g_8=3G_4^{2}$ and $g_{12}=G_4^{3}$. 
Therefore, the $SL(2)$-action
 on the locus $\{(g_{8},g_{12})\mid \Delta_{24}\equiv 0\}$ 
can be naturally identified with the $4$-th symmetric power of $\mathbb{P}^{1}$
 with natural $SL(2)$-action on it. The GIT stability analysis for the 
latter situation is well-known and classical since the original work \cite{Mum65}: 

\begin{Prop}[cf.\  {\cite[Chapter 3, \S1, \S2 ($n=1$)]{Mum65}}]\label{4pt.GIT}
A section $G_{4}(t)\in \Gamma(\mathbb{P}^{1},\mathcal{O}_{\mathbb{P}^{1}}(4))$ is 
stable (resp., semistable) with respect to the natural $SL(2)$-action 
if its $4$ zeros $p_{1},\cdots,p_{4}$ in $\mathbb{P}^{1}$, 
 with multiplicity, satisfy that 
\begin{align*}
 \#\{i\mid p_{i}=p\}<2\  (\text{resp.,} \le 2) \text{ for any $p\in \mathbb{P}^{1}$}. 
\end{align*}
\end{Prop}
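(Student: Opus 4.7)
The plan is to apply the Hilbert--Mumford numerical criterion to the natural $SL(2)$-action on $\Gamma(\mathbb{P}^1,\mathcal{O}_{\mathbb{P}^1}(4))=\mathrm{Sym}^{4}(V^{*})$, where $V\simeq\mathbb{C}^{2}$ carries the standard representation. Since every nontrivial one-parameter subgroup of $SL(2)$ is conjugate to
\[
\lambda_{a}(t)=\mathrm{diag}(t^{a},t^{-a}),\qquad a\in\mathbb{Z}_{>0},
\]
and since the statement to prove is $SL(2)$-invariant, it suffices to analyze these $\lambda_{a}$ together with an arbitrary choice of the coordinates $(X,Y)$ on $V$. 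Writing $G_{4}=\sum_{i=0}^{4}c_{i}X^{i}Y^{4-i}$, the $\lambda_{a}$-weight of the monomial $X^{i}Y^{4-i}$ is $a(2i-4)\in\{-4a,-2a,0,2a,4a\}$.

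With the sign convention under which semistability (resp.\ stability) of $G_{4}$ is equivalent to $\mu(G_{4},\lambda)\geq 0$ (resp.\ $>0$) for every nontrivial $\lambda$, we have
\[
\mu(G_{4},\lambda_{a})=-\min\bigl\{\,a(2i-4)\;\big|\;c_{i}\neq 0\,\bigr\}.
\]
The next step is to translate the vanishing conditions on the $c_{i}$ into multiplicity statements at the point $p_{\infty}:=[0:1]\in\mathbb{P}^{1}$: the multiplicity $m(G_{4},p_{\infty})$ is the largest $k$ with $X^{k}\mid G_{4}$, i.e.\ the largest $k$ with $c_{0}=\cdots=c_{k-1}=0$. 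A short case analysis then gives:
\[
\mu(G_{4},\lambda_{a})<0\iff m(G_{4},p_{\infty})\geq 3,\qquad
\mu(G_{4},\lambda_{a})=0\iff m(G_{4},p_{\infty})=2,
\]
and $\mu(G_{4},\lambda_{a})>0$ otherwise, with the analogous description after conjugating to make any prescribed zero become $p_{\infty}$.

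From this, both directions of the proposition follow at once. If some $p_{i}$ has multiplicity $\geq 3$ (resp.\ $\geq 2$), conjugating so that $p_{i}=p_{\infty}$ and using $\lambda_{1}$ produces a one-parameter subgroup with $\mu(G_{4},\lambda_{1})<0$ (resp.\ $\leq 0$), destroying semistability (resp.\ stability). Conversely, if $G_{4}$ is not semistable (resp.\ not stable), the Hilbert--Mumford criterion produces some $\lambda$ with $\mu(G_{4},\lambda)<0$ (resp.\ $\leq 0$); after conjugating $\lambda$ into the diagonal torus, the weight computation forces $c_{0}=c_{1}=c_{2}=0$ (resp.\ $c_{0}=c_{1}=0$), so that the corresponding fixed point has multiplicity $\geq 3$ (resp.\ $\geq 2$) in $G_{4}$. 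There is no genuine obstacle here; the only mildly delicate point, and the step I would be most careful about, is keeping the sign conventions of the Hilbert--Mumford weight consistent with the definitions of (semi)stability for the projective GIT quotient on the weighted projective space (\ref{WP.par}).
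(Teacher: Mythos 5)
Your proof is correct and is the standard Hilbert--Mumford computation for binary quartics, which is exactly what the cited reference (Mumford, \emph{GIT}, Chapter~3, \S1--\S2, $n=1$) does; the paper itself gives no proof and simply quotes Mumford, so your argument supplies the omitted details and coincides with the intended one. Your remark about sign conventions is the right thing to flag, since the weight $\pm a(2i-4)$ depends on whether one acts on forms by $g$ or by $g^{-1}$, but the two choices are exchanged by replacing $\lambda$ with $\lambda^{-1}$ (which is $W$-conjugate to $\lambda$), so either convention yields the same (semi)stability verdict.
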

It is also easy to see that the minimal closed orbit inside the 
closure of the unique semistable, non-stable orbit is 
represented by $G_{4}(t)=t^{2}(t-1)^{2}$ for instance, i.e., 
those whose multiplicity type is $(2,2)$. 
This is \textit{polystable}, i.e., a closed orbit in the whole semistable locus. 

In any case, the Geometric Invariant Theory due to Mumford \cite{Mum65} gives an 
$18$-dimensional projective moduli scheme 
$$\overline{M_{W}}:=W\mathbb{P}\sslash SL(2).$$
This includes the open locus $M_{W}$, which parametrizes 
the ADE singular Weierstrass K3 surfaces discussed
 in Theorem~\ref{ADE.Weier.K3}. 
We explicitly determine the structure of this GIT moduli space, 
which we hope to be of independent interest to some readers. 
The boundary $\partial M_{W}:=\overline{M_{W}}\setminus M_{W}$ is given as follows. 

\begin{Prop}[GIT compactification]\label{boundary.Weier}
The boundary $\partial \overline{M_{W}}:=\overline{M_{W}} \setminus M_{W}$ consists of two 
irreducible components as follows. 

\begin{enumerate}
\item  \label{nn.degen}
 A $1$-dimensional component $M_W^{\rm nn}$, parametrizing 
non-normal polystable surfaces 
$$[y^{2}z=(2x-G_{4}(t)z)^{2}(x+G_{4}(t)z)]\subset 
\mathbb{P}_{\mathbb{P}^{1}}(\mathcal{E}),$$
for $G_{4}(t)\in \Gamma(\mathbb{P}^{1},\mathcal{O}_{\mathbb{P}}(4))$. 
\item  \label{ell.sing.degen}
Another $1$-dimensional component $M_W^{\rm seg}$ of $\partial M_{W}$,
 whose Zariski open locus $M_W^{\rm seg, o}$
 parametrizes polystable but non-stable normal surfaces with
 exactly two simple elliptic singularities of type 
 $\tilde{E_8}$. 
Its closure $M_W^{\rm seg}$ intersects with $M_W^{\rm nn}$ only at a point
 which parametrizes the case of \eqref{nn.degen}
 where $G_{4}(t)$ is represented by $t^{2}(t-1)^{2}$ or simply by $t^2$. 
\end{enumerate} 
\end{Prop}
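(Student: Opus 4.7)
My proof plan is to apply the Hilbert--Mumford numerical criterion to the $SL(2)$-action on $W\mathbb{P}=\mathbb{P}(2^{(9)},3^{(13)})$, extract the non-stable semistable locus, and analyze polystable (i.e.\ closed-orbit) representatives in that locus to identify the two boundary components.

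First, I would compute the Hilbert--Mumford weight of the one-parameter subgroups of $SL(2)$ obtained by fixing a point $p\in\mathbb{P}^1$ and scaling a local parameter. Since the weight of the 1-PS on $(g_8,g_{12})$ depends only on the orders of vanishing $v_p(g_8)$ and $v_p(g_{12})$ (together with the weights $2,3$ of $W\mathbb{P}$), the standard maximization over such 1-PS yields the (semi)stability inequality $\min\{3v_p(g_8),2v_p(g_{12})\}<12$ (resp.\ $\leq 12$) at every $p\in\mathbb{P}^1$, in agreement with \eqref{stab.cond} of Theorem~\ref{ADE.Weier.K3}. Hence the non-stable semistable locus is exactly the set of pairs for which $v_p(g_8)\geq 4$ and $v_p(g_{12})\geq 6$ at some point $p$, and the boundary components of $\overline{M_W}$ arise from polystable limits of such configurations.

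Second, I would flow along destabilizing 1-PS and identify the polystable representatives, splitting the analysis into two cases. \emph{Case (a):} the codimension-one scenario where the destabilizing locus is all of $\mathbb{P}^1$, forcing $\Delta_{24}\equiv 0$ and hence $(g_8,g_{12})=(3G_4^2,G_4^3)$ for some $G_4\in\Gamma(\mathcal{O}_{\mathbb{P}^1}(4))$; this produces the displayed non-normal Weierstrass surface, the residual $SL(2)$-action is the natural one on $G_4$, and by Proposition~\ref{4pt.GIT} its GIT quotient is a one-dimensional projective variety $M_W^{\rm nn}$. \emph{Case (b):} the point-stratum scenario where destabilization is concentrated at a single point $p\in\mathbb{P}^1$ with $(v_p(g_8),v_p(g_{12}))=(4,6)$; flowing by the corresponding 1-PS retains only the leading coefficients at $p$, and the resulting Weierstrass model carries a single non-ADE singularity, namely the minimally elliptic $\tilde{E}_8$. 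Letting $p$ and the normalized leading coefficients vary modulo the residual stabilizer furnishes a second one-dimensional component $M_W^{\rm seg}$ with open stratum $M_W^{\rm seg,o}$.

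Third, to determine $M_W^{\rm nn}\cap M_W^{\rm seg}$, I would impose both conditions simultaneously: a pair of the form $(3G_4^2,G_4^3)$ produces an $\tilde{E}_8$-singularity at $p$ iff $v_p(G_4)\geq 2$, i.e.\ $G_4$ has a double root at $p$. Among polystable quartics $G_4$ this happens only for the multiplicity-$(2,2)$ orbit, represented by $G_4=t^2(t-1)^2$, whose further $\mathbb{G}_m$-degeneration $G_4=t^2$ (a single quadruple zero) lies in the same polystable $SL(2)$-orbit by Proposition~\ref{4pt.GIT}. This yields exactly one intersection point, as claimed.

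\textbf{Main obstacle.} The principal technical difficulty will lie in \emph{Case (b)}: rigorously identifying the polystable limit of every point-localized semistable non-stable orbit as a normal surface with a single $\tilde{E}_8$-singularity and nothing worse, and ensuring that no additional polystable families appear. This requires a careful orbit-closure analysis for the $SL(2)$-action on the $22$-dimensional $W\mathbb{P}$, in particular verifying that mixed situations (destabilization at several points, or a combination of codimension-one and point destabilizations) always flow under a further 1-PS into one of the two listed families, so that the boundary really has only the two irreducible components $M_W^{\rm nn}$ and $M_W^{\rm seg}$.
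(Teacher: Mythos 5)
Your overall strategy (Hilbert--Mumford weight computations, flowing along destabilizing one-parameter subgroups to polystable representatives, then comparing with Proposition~\ref{4pt.GIT}) is in the same spirit as the paper's argument, which rests on Miranda's stability criterion and explicit orbit analysis. However, two of your steps contain genuine errors.

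First, the characterization of the non-normal component is framed incorrectly. You describe Case (a) as ``the codimension-one scenario where the destabilizing locus is all of $\mathbb{P}^1$, forcing $\Delta_{24}\equiv 0$.'' Non-normality and GIT instability are independent conditions here: the paper first proves, by a purely geometric argument (the fibers are reduced Cartier divisors in a smooth threefold, hence Gorenstein and Cohen--Macaulay, so the surface is non-normal if and only if its singular locus is $1$-dimensional, which happens exactly when $\Delta_{24}\equiv 0$), that non-normality is equivalent to $\Delta_{24}\equiv 0$. Crucially, most non-normal surfaces $(3G_4^2,G_4^3)$ with $G_4$ having distinct zeros are GIT \emph{stable} — there is nothing ``destabilizing'' about them — yet they lie on the boundary of $\overline{M_W}$ because they fail to be ADE K3 surfaces. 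You cannot reach them by any destabilizing $1$-PS flow, so a purely flow-based argument will miss them; you need the separate geometric characterization of non-normality before invoking Proposition~\ref{4pt.GIT} (together with Mumford's equivariant-embedding theorem to transfer polystability of $[G_4]$ to polystability of $(3G_4^2,G_4^3)$).

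Second, the intersection argument is based on a false equivalence. You claim that a pair of the form $(3G_4^2,G_4^3)$ produces an $\tilde E_8$-singularity at $p$ iff $v_p(G_4)\geq 2$. This is wrong: if $\Delta_{24}\equiv 0$ the surface is non-normal, and at a point $p$ with $v_p(G_4)\geq 2$ the singularity is a \emph{degenerate cusp of type $T_{2,3,\infty}$} (a non-normal, semi-log-canonical singularity), not a simple elliptic $\tilde E_8$ singularity. The correct way to locate $M_W^{\rm nn}\cap M_W^{\rm seg}$ is by direct degeneration: on the locus $M_W^{\rm seg,o}$ parameterized by $(c_1 t^4, c_2 t^6)$ with $c_1^3-27c_2^2\neq 0$, letting $c_1^3-27c_2^2\to 0$ lands on $(3t^4,t^6)=(3(t^2)^2,(t^2)^3)$, which is the non-normal polystable surface with $G_4=t^2$ of multiplicity type $(2,2)$, i.e.\ the same orbit as $G_4=t^2(t-1)^2$; conversely no other point of $M_W^{\rm nn}$ arises because the only polystable non-stable $G_4$ is the $(2,2)$ one. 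Without this computation, your ``iff'' leaves the intersection claim unproved.
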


We will denote $M_{W}^{\rm nn}\setminus M_{W}^{\rm seg}$ by $M_{W}^{\rm nn,o}$.

Note that there is a natural map $c\colon M_{W}\to \mathcal{M}_{\rm K3}(a)$
 which identifies elliptic K3 surfaces with its complex conjugates.
Hence $c$ is surjective and generically two to one.
It extends to a map between compactifications.  
We will later see this in \S\ref{MK3.MW}.

\begin{proof}[proof of Proposition~\ref{boundary.Weier}]
Our first observation is that any closed fiber of 
any $X_{g_{8},g_{12}}^{W}$ is isomorphic to 
either a cuspidal rational curve, a nodal rational curve, or an elliptic curve: 
hence always irreducible. 
On the other hand, as they are reduced Cartier divisors
 inside smooth threefold,
 they only have Gorenstein (hence Cohen-Macaulay) singularities. 
Therefore, they are non-normal if and only if the
 singular locus is $1$-dimensional, which must be horizontal with
 respect to the projection onto $\mathbb{P}^{1}$. 
It is equivalent to $\Delta_{24}\equiv 0$,
 which in turn means there exists
 $G_{4}\in \Gamma(\mathbb{P}^{1},\mathcal{O}_{\mathbb{P}^1}(4))$
 such that $g_{8}=3G_{4}^{2}$ and $g_{12}=G_{4}^{3}$. 

From Theorem~\ref{ADE.Weier.K3},
 what we need to do for proving Proposition~\ref{boundary.Weier}
 is to classify $X_{g_{8},g_{12}}^{W}$ which is either 
\begin{enumerate}
\item a non-normal but polystable surface, or 
\item a strictly polystable (non-stable) but still normal surface. 
\end{enumerate} 

For the non-normal case (i), 
 note that the (poly)stability of $X_{3G_{4}^{2},G_{4}^{3}}^{W}$ 
is equivalent to that of $[G_{4}]\in \mathbb{P}(\Gamma(\mathbb{P}^{1},\mathcal{O}_{\mathbb{P}}(4)))$ by \cite[Chapter I, Theorem 1.19]{Mum65}, 
which is explained in Proposition~\ref{4pt.GIT}. 
The consequence is that such polystable non-normal Weierstrass models
 form $\mathbb{P}^{1}\subset \overline{M_{W}}$,
 which is one of the easiest
 special cases of the Deligne-Mumford-Knudsen compactification
 $\overline{M_{0,4}}\simeq \mathbb{P}^{1}$
 parametrized by the classical cross ratio of the four points. 
Among them, the only strictly polystable (i.e., 
polystable but not stable) point is represented by 
$$g_{8}=3G_{4}^{2}=3t^{4}(t-1)^{4},\quad g_{12}=G_{4}^{3}=t^{6}(t-1)^{6},$$ 
i.e., $G_{4}=t^{2}(t-1)^{2}$. 
In that case, the general $\pi$-fiber ($t\neq 0,1$)
 is a nodal rational curve, while $\pi^{-1}(0)$ and $\pi^{-1}(1)$
 are cuspidal rational curves. 
At the two cusps, we have degenerate cusp of
 $T_{2,3,\infty}$-type,
 hence $X$ is in particular semi-log-canonical. 
 
Other stable non-normal surfaces correspond to $G_{4}(t)$ whose 
zeros are all distinct. 
Over each zero point, the $\pi$-fiber is a cuspidal
 rational curve and the cusp is a pinch point (``Whitney's umbrella'') 
as a surface singularity of $X$. We can check this as follows. 
Suppose $t=0$ is one of the zeros of $G_{4}$ and we would like to 
analyze the cusp singularity of $\pi^{-1}(0)$. 
Note that $t^{-1}G_{4}(t)$ is bounded and away from zero
 on a neighborhood of $t=0$. 
So we can replace the coordinate $t$ analytically locally
 by $\tilde{t}:=G_{4}(t)$. 
Then since the Weierstrass equation becomes
 $$y^{2}z=(2x-\tilde{t}z)^{2}(x+\tilde{t}z),$$
 the point $x=y=0$ is a pinch point as a surface singularity. 
Therefore, combining with 
the usual adjunction for the dualizing sheaf, all non-normal 
polystable $X_{g_{8},g_{12}}^{W}$ is semi-log-canonical with 
trivial canonical bundle (dualizing sheaf). 

\medskip

Let us proceed to the case (ii). 
Suppose $(g_{8},g_{12})$ is strictly polystable 
while $\Delta_{24}\not\equiv 0$.
Then recall a result of Miranda \cite[Proposition 5.1]{Mir}, 
which is proved by using the Hilbert-Mumford numerical criterion \cite{Mum65}
 (cf.\  also (\ref{stab.cond}) of Theorem~\ref{ADE.Weier.K3}). 
It proves that from the polystability assumption, it follows 
$$\min\{3v_{p}(g_{8}),2v_{p}(g_{12})\}\le 12$$ 
for any $p\in \mathbb{P}^{1}$ 
 and the equality is attained for some $p$. 
Since the degree of divisor defined by $g_{8}$ is $8$,
 there are at most two (but at least one) such $p$. 
For each such ``de-stabilising'' $p\in \mathbb{P}^{1}$,
 the fiber $\pi^{-1}(p)$ is again a cuspidal rational curve
 and the cusp gives some non-ADE singularity by \cite[Lemma 1]{Kas}. 

Take a destabilizing $p$ and assume $p=0$ without loss of generality
 (by the $SL(2)$-action if necessary).
Then we can write $g_8=c_1 t^4 h_4$ (resp.\ $g_{12}=c_2 t^6 h_6$),
 where $(c_1, c_2)\in \C^{2}\setminus\{(0,0)\}$ and 
 $h_4$ (resp.\ $h_6$) is monic polynomial with respect to 
 $t$ of degree $4$ (resp.\ $6$). 
 Note that one of $c_{i}$s are allowed to be $0$. 

Moreover, by considering the
 $\mathbb{G}_{m}(\mathbb{C})$-action
 $t\mapsto \epsilon t$
 and taking limit with respect to $\epsilon \to 0$ inside $W\mathbb{P}$ 
 (defined around \eqref{WP.par})
 we get the global isotrivial elliptic K3 surface 
 \begin{align}\label{seg.compo}
y^2z=4x^3-c_1t^4xz^2+c_2t^6z^3,
\end{align}
 which is still semistable by \cite[Proposition 5.1]{Mir}. 
Furthermore, the $SL(2)$-orbit of the corresponding point 
\[(g_{8}=c_1 t^{4},\, g_{12}=c_2 t^{6}) \in 
\Gamma(\mathcal{O}_{\mathbb{P}^1}(8))\times \Gamma(\mathcal{O}_{\mathbb{P}^1}(12))\] 
 is clearly a closed subset, 
hence polystable. 
Note that since this is invariant under the 
involution $t\mapsto t^{-1}$ on $\mathbb{P}^{1}$, 
 \eqref{seg.compo} has two simple elliptic singularities of type 
 $\tilde{E_{8}}$  (\cite{Sai}, \cite{Rei}, \cite{Lau}) 
 on the fibers over $0$ and $\infty$, 
 from the equation. 
 $\Delta_{24}\not\equiv 0$ for 
 \eqref{seg.compo} is equivalent to $c_{1}^{3}-27c_{2}^{2}\neq 0$. 
Consequently, 
they form the polystable locus in $\partial\overline{M_W}$,
 which is the other $1$-dimensional boundary component. 
\end{proof}

Let us recall the following well-known theorem due to Kulikov and Pinkham-Persson (see also related \cite{She}) 
for further analysis of the degenerations occurring at the 
boundary of the moduli scheme $\overline{M_W}$.

\begin{Thm}[\cite{Kul}, \cite{PP}]\label{KPP.exist}
Suppose $f\colon \mathcal{X}\to \Delta=\{t\in \mathbb{C}\mid |t|<1\}$ 
is semistable, i.e., $f$ is a holomorphic proper morphism from a smooth 
complex manifold $\mathcal{X}$, 
such that the fibers are K3 surfaces other than $t=0$,
 and the fiber $f^{-1}(0)$ is normal crossing whose components are 
all K\"ahler. Then we can bimeromorphically replace $f^{-1}(0)$ so that 
$K_{\mathcal{X}}$ is relatively trivial over $\Delta$ and 
$f^{-1}(0)$ is still locally normal crossing. 
\end{Thm}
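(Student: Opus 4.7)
The plan is to produce a sequence of bimeromorphic modifications of $\mathcal{X}$ supported over $0\in\Delta$, after which the relative canonical bundle becomes trivial while the central fiber remains locally normal crossing. Since $K_{\mathcal{X}_t}\simeq\mathcal{O}$ for $t\neq 0$ and $\mathcal{X}$ is smooth, one has $K_{\mathcal{X}}\sim\sum_{i} a_i D_i$, where $D_i$ are the irreducible components of $f^{-1}(0)$. Because $\sum_i D_i=f^{*}[0]$ is already relatively trivial, one may shift all coefficients $a_i$ simultaneously by the same integer, so the goal reduces to arranging $a_i=a_j$ for every pair of components.

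First I would analyze the components $D_i$ via adjunction, writing $K_{D_i}=(K_{\mathcal{X}}+D_i)|_{D_i}$ in terms of the $a_j$ and the double curves $D_i\cap D_j$. Combined with the Kodaira classification of K\"ahler surfaces and the triviality of $K$ on the generic fiber, this constrains each $D_i$ to be rational, elliptic ruled, or a smooth $K3$, and it constrains the dual complex of $X_0$ to be a point (Type I), a chain (Type II), or a triangulation of $S^2$ (Type III). In particular the $a_i$ are controlled by combinatorial data, and any deviation from $a_i=a_j$ localizes to specific double curves.

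The core step would be an \emph{elementary modification}: given a smooth rational double curve $C\subset D_i\cap D_j$ with suitable normal bundle data, blow $\mathcal{X}$ up along $C$ to introduce an exceptional $\mathbb{P}^1$-bundle, then contract this divisor in the opposite ruling. Such a modification is an isomorphism away from the central fiber, preserves smoothness of $\mathcal{X}$ and the snc property of $X_0$, and strictly decreases a combinatorial complexity attached to $(a_i)$ (for example, a weighted count of violations of $a_i=a_j$). The iteration terminates because the discrete invariants of a model with $K3$-type generic fiber take only finitely many values compatible with semistability and with the triviality of $K$ on general fibers.

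The main obstacle, and the input of Persson--Pinkham beyond Kulikov's original projective argument, is to preserve the K\"ahler property at each step. Each contraction must be realized inside the category of K\"ahler analytic spaces, which requires producing, on the blown-up model, a K\"ahler class whose restriction to the exceptional divisor is of the right signature for the collapse to yield again a K\"ahler analytic space; equivalently one must verify a numerical positivity criterion of Nakai--Moishezon type in the K\"ahler setting, using the hypothesis that all components of $X_0$ are K\"ahler. A subsidiary point will be to maintain the K\"ahlerness of each remaining component of $X_0$ after modification, which follows because the modification is locally an isomorphism on these components away from a lower-dimensional locus and K\"ahler classes patch there.
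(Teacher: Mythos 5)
The paper states Theorem~\ref{KPP.exist} as a citation to Kulikov and Persson--Pinkham and provides no proof, so there is no ``paper's proof'' to compare against; I can only evaluate your sketch on its own terms, against the known argument.

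Your overall strategy --- extend the canonical section meromorphically over $X_0$ to write $K_{\mathcal{X}}\sim\sum a_i D_i$, observe that $\sum D_i$ is relatively trivial so the problem is to equalize the $a_i$, reduce them by elementary modifications, and flag the preservation of K\"ahlerness as the Persson--Pinkham point --- is the right skeleton. But there is a genuine gap in the second step. You invoke the conclusion that each $D_i$ is rational, elliptic ruled, or $K3$ and that the dual complex is a point, a chain, or a triangulation of $S^2$; that is precisely Theorem~\ref{KPP.classif}, which holds only \emph{after} $K_{\mathcal{X}/\Delta}$ has been made trivial. Before the reduction, the adjunction formula $K_{D_i}=\sum_{j\neq i}(a_j-a_i-1)\,(D_i\cap D_j)$ only yields that for a component realizing $\max a_i$ the anticanonical class is effective (so $\kappa\le 0$), which is far weaker: such a component could still be a non-minimal rational or ruled surface, a blown-up $K3$, etc. Using the classification at this stage is circular, and the claim that ``the $a_i$ are controlled by combinatorial data, and any deviation from $a_i=a_j$ localizes to specific double curves'' is what the reduction is supposed to \emph{establish}, not something one gets for free.

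The description of the elementary modification is also off. You blow up along a smooth rational double curve $C\subset D_i\cap D_j$ and contract the exceptional $\mathbb{P}^1$-bundle in the other ruling. But Kulikov's modifications (Types I and II in his terminology, or as organized in Friedman--Morrison) operate along $(-1)$-curves lying in a \emph{single} component (possibly meeting double curves transversally, or contained in a double curve with a precise normal bundle condition), and one needs to verify case-by-case that the exceptional divisor is $\mathbb{F}_0$ so the opposite contraction exists and yields again a smooth model with snc central fiber; blowing up along an arbitrary double curve will typically not produce a contractible exceptional divisor and will destroy the snc structure. Finally, the termination claim (``discrete invariants ... take only finitely many values'') is not an argument; the actual proof exhibits an explicit non-negative integer (e.g.\ a weighted sum of the $a_i$ or a count of double curves on the top-weight component) that strictly drops under each modification. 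So the approach is correct in spirit, but the key analytic content --- producing the right curves to flop from the adjunction constraints without presupposing the final classification, and running a genuine induction --- is missing.
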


Then the following classification is a well-established classics. 

\begin{Thm}[\cite{Kul}, \cite{Per}]\label{KPP.classif}

Consider a holomorphic flat proper (surjective) family\footnote{such family is often called a ``semistable (minimal) degeneration" or a ``Kulikov degeneration", depending on literatures with 
slightly different conditions.} 
 $f\colon \mathcal{X}\to 
\Delta$ such that 
\begin{itemize}
\item $\mathcal{X}$ is smooth, 
\item $f^{-1}(t)=:X_{t}$ is smooth for $t\neq 0$ and they are all K3 surfaces, 
\item $f^{-1}(0)=:X_{0}$ is normal crossing, 
\item $K_{\mathcal{X}/\Delta}=0$. 
\end{itemize}
Then it can be classified into the following three types. 
We denote the corresponding monodromy action
 of $\pi_{1}(\Delta\setminus\{0\})$ on $H^{2}(X_{t},\mathbb{Z})$
 by $T$ and let $N:=\log(T-1)$. 

\begin{enumerate}

\item[{\rm (Type I)}] $X_{0}$ is (still) a smooth K3 surface. 
$N=0$ in this case. 

\item[{\rm (Type II)}] $X_{0}$ is a union of elliptic ruled surfaces and 
rational surfaces, which form a chain. At both ends of the chain, we have
rational surfaces while the others are all elliptic ruled. The non-normal 
locus is a disjoint union of finite double locus, i.e., 
locally two smooth branches intersect transversally and 
they are all (isomorphic) elliptic curves, which are 
holomorphic sections of each 
elliptic ruled surface. $N^{2}=0$ but $N\neq 0$ in this case. 

\item[{\rm (Type III)}] $X_{0}$ is a union of rational surfaces. 
The intersection of two components are rational smooth curves 
and inside each such double locus, there are exactly $2$ points 
where $3$ analytically locally smooth components intersect. 
$N^{3}=0$ but $N^{2}\neq 0$ in this case.

\end{enumerate}

\end{Thm}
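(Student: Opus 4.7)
The plan is to combine the standard Clemens-Schmid / limit mixed Hodge structure machinery with explicit component-by-component adjunction analysis on the central fiber $X_0 = \bigcup_i X_{0,i}$. First I would recall that for a semistable degeneration with $X_t$ a K3 surface, the monodromy logarithm $N$ acts on $H^2(X_t,\Q)$ (of dimension $22$) and satisfies $N^3 = 0$ because the weight filtration $W_\bullet$ of the limit mixed Hodge structure on $H^2$ is centered at weight two and has length at most $2$ on each side. This gives the trichotomy $N=0$, $N\neq 0 = N^2$, $N^2\neq 0 = N^3$, which by Schmid's theorem is detected by $\mathrm{Gr}^W_4 H^2_{\mathrm{lim}} \neq 0$ in Type III and $\mathrm{Gr}^W_3 H^2_{\mathrm{lim}} \neq 0$ (but $\mathrm{Gr}^W_4 = 0$) in Type II.

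Next, the key algebro-geometric input is adjunction: from $K_{\mathcal{X}/\Delta} = \mathcal{O}_{\mathcal{X}}$ and $X_0 = \sum_i X_{0,i}$, for each component one has
\begin{equation*}
K_{X_{0,i}} = -\Bigl(\sum_{j\neq i} X_{0,j}\Bigr)\Big|_{X_{0,i}} = -D_i,
\end{equation*}
where $D_i$ is the double-curve divisor on $X_{0,i}$, which is reduced by the semistability assumption. Hence each $X_{0,i}$ is a smooth surface with $-K_{X_{0,i}}$ effective and supported on an snc curve $D_i$. I would then run through the Kodaira-style classification of such pairs $(X_{0,i}, D_i)$: if $D_i = 0$, then $X_{0,i}$ is itself a K3 surface (recovering Type I once one checks the triple points sum is zero); otherwise $X_{0,i}$ is a (blow-up of a) ruled or rational surface, and $D_i$ is an anticanonical cycle of smooth rational curves (interior components) or an anticanonical elliptic curve (end components of a chain).

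Then I would use the combinatorial constraint coming from the dual complex $\Delta(X_0)$. Triple points of $X_0$ correspond to $2$-simplices, double curves to edges, components to vertices; the constraint that the triple-point locus is balanced plus the numerical identity $\chi_{\mathrm{top}}(X_0) = \chi_{\mathrm{top}}(X_t) = 24$ (from semistable reduction and the specialization formula) restricts the possible shapes. Combined with the Hodge-theoretic detection of $N^k\neq 0$ via the graded pieces of $W_\bullet$ on $H^2_{\mathrm{lim}}$ — which, by the Steenbrink spectral sequence, is computed from the cohomology of the strata of $X_0$ and the combinatorics of $\Delta(X_0)$ — one deduces: $\Delta(X_0)$ is a point in Type I (forcing $X_0$ smooth K3); a segment in Type II, with interior vertices corresponding to elliptic ruled surfaces and end vertices to rational surfaces glued along a common elliptic curve (the sections of the ruled surfaces); and a triangulation of $S^2$ in Type III, where every component is rational, double curves are smooth rational, and there are exactly $2\chi(\Delta(X_0)) = 4$ triple points per component on average with the stated local structure.

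The main obstacle, and the most delicate step, is the last one: linking the Hodge-theoretic nilpotency index $k$ with the dimension of $\Delta(X_0)$ and eliminating the ``mixed'' possibilities (e.g.\ a Type II degeneration with extra rational tails, or a Type III one whose dual complex is not a triangulated sphere). This requires Theorem~\ref{KPP.exist} to put the family in minimal form so that no component can be contracted without reintroducing the others' structure, together with a careful log-terminal/log-canonical contraction argument showing that any rational surface $X_{0,i}$ with $D_i$ a cycle of rational curves that is \emph{not} anticanonically extremal can be blown down in a way preserving the relative triviality of $K_{\mathcal{X}/\Delta}$. Once this minimality is in place, the compatibility between $\dim \Delta(X_0)$ and the nilpotency index of $N$ via Steenbrink's $E_1$-degenerate spectral sequence forces exactly the three listed types, completing the classification.
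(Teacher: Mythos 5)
The paper itself does not prove Theorem~\ref{KPP.classif}; it explicitly defers to \cite{Kul, Per, FM}, so there is no in-text proof to compare yours to. Your outline does follow the standard Kulikov--Persson route (Clemens--Schmid / Steenbrink computation of the weight filtration on $H^2_{\rm lim}$, adjunction $K_{X_{0,i}} = -D_i$ on each component, and a read-off from the combinatorics of the dual complex $\Delta(X_0)$), and those pieces are in the right places.

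However, the step you yourself flag as the ``main obstacle'' is where the plan, as written, does not actually close. You propose to eliminate mixed configurations by invoking Theorem~\ref{KPP.exist} ``to put the family in minimal form'' together with a log-terminal/log-canonical contraction argument. That is misdirected: Theorem~\ref{KPP.exist} is the \emph{existence} of a Kulikov model (one may bimeromorphically modify a semistable degeneration to achieve $K_{\mathcal X/\Delta}\sim 0$), whereas the statement to be proved already assumes $K_{\mathcal X/\Delta}=0$; re-running a contraction step is not what is needed and does not by itself rule out, say, extra rational tails or branching in the dual complex. What actually pins down $\Delta(X_0)$ to a point, a segment, or a triangulated $S^2$ is the topological input from the general fiber: $X_t$ is a K3, hence simply connected with $h^1(\mathcal O_{X_t})=0$, and the Clemens collapse map $X_t\to X_0$ together with the Mayer--Vietoris/Steenbrink computation force $\pi_1(\Delta(X_0))=1$, $H^1(\Delta(X_0);\Q)=0$, and $\dim\Delta(X_0)\le 2$ with $H^2(\Delta(X_0);\Q)\ne 0$ exactly when $N^2\ne 0$. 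That argument, plus the local anticanonical-cycle analysis on each component (which identifies interior components as rational with a cycle of smooth rational curves and end components as rational or elliptic ruled bounded by a smooth elliptic curve), is the real content; your sketch gestures at it but substitutes a contraction heuristic that does not do that job.

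A smaller point: the assertion that there are ``exactly $2\chi(\Delta(X_0)) = 4$ triple points per component on average'' is not a correct reading of the combinatorics. The constraint coming from the statement is local to each double curve (each interior double curve carries exactly two triple points), which combined with $2E = 3F$ and $V - E + F = \chi(S^2) = 2$ is the right bookkeeping; the per-component average is not $4$ in general and should not be stated as such.
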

We refer to the literatures \cite{Kul, Per, FM} for 
 the proof and the details of Theorem~\ref{KPP.classif}. 
 
From now on,
 to each of boundary strata 
 of the GIT compactification $\overline{M_{W}}$,
 we specify the ``degeneration type'' (Kulikov type) in the above sense. 
 First, in \S\ref{LC.degen.subsub}, \S\ref{1-dim.boundary.Weierstrass}, 
 and \S\ref{nn.ps.degen}, we do so by analyzing 
 \textit{certain specific}
 algebraic degeneration families. 
These give expectations on each degeneration type, and then later 
in \S\ref{Kul.type}, 
we prove the expectation by essentially more Hodge-theoretic arguments. 
 
\subsection{Log canonical polystable degeneration}\label{LC.degen.subsub}
We first discuss some degenerations toward the boundary component
 $M_{W}^{\rm seg, o}$ of Proposition~\ref{boundary.Weier}. 
We take an arbitrary point in $M_{W}^{\rm seg, o}$ and represent the 
corresponding Weierstrass model whose 
``destabilizing'' point is $p=0\in \mathbb{P}^{1}$ (we can and do 
assume so after ${\rm Aut}(\mathbb{P}^{1})$-action) by 
 \begin{align}\label{seg.compo2}
 y^2z=4x^3-c_1t^4 xz^2+c_2 t^6z^3,
 \end{align}
 with $c_1^3-27c_2^2\neq 0$, 
 following the previous discussion. 
Consider the following standard one parameter degeneration 
 toward the above normal singular surface: 
\begin{align}\label{degen.seg}
X_{s}=[y^{2}z=4x^{3}-c_1 t^4 xz^{2}+(c_{2} t^6+sk_{12})z^{3}],
\end{align}
where $s\in \C$ and 
 $k_{12}\in \Gamma(\mathcal{O}_{\mathbb{P}^{1}}(12))$ 
is of the form: 
$$k_{12}(t)=\prod_{i=1}^{6}(t-a_{i})(t-a_{i}^{-1}),$$
with distinct numbers $a_{1},\cdots,a_{6}\in \mathbb{C}^*$. 
This assumption is, although not essential at all, 
put so that we still have the involution $\iota=\bigcup_{s}\iota_{s}$ on 
$\bigcup_{s}X_{s}$ induced by $t\mapsto t^{-1}$
 for some convenience of discussion. 
Take the open locus $(z\neq 0)\subset \mathbb{P}_{\mathbb{P}^{1}}(\mathcal{O}_{\mathbb{P}^{1}}(4)\oplus \mathcal{O}_{\mathbb{P}^{1}}(6)\oplus \mathcal{O}_{\mathbb{P}^{1}})$, which is the affine 
$\mathbb{A}^{2}_{u, v}$-fiber bundle over the base $\mathbb{P}^{1}$, 
where we put $$u:=\frac{x}{z},\quad v:=\frac{y}{z}$$ from now on. 
Then the intersection of $X_{s}$ with this open subset can be written as 
$$v^{2}=4u^{3}-c_{1}t^{4}u+(c_{2}t^{6}+sk_{12}).$$ 
We take the weighted blow up of $\bigcup_{s}X_{s}$ 
with weights $(6,1,2,3)$ on 
the variables $(s,t,u,v)$ respectively, at the origin $s=t=u=v=0$. 
Then the fiber surface over $s=0$ 
becomes $$Y\cup Y',$$ 
where $Y$ is the strict transform of the original $X_{0}$ while 
canonically we have $Y'\simeq \mathbb{P}(1,2,3)$ 
whose homogeneous coordinates correspond to $t,u,v$ respectively, 
with an $A_{1}$-singularity at $[0:1:0]$ 
and an $A_{2}$-singularity at $[0:0:1]$. 
If we do the same weighted blow up on the (complex) $4$-dimensional 
total space of the 
ambient space, i.e., $\bigcup_{s}\mathbb{P}_{\mathbb{P}^{1}}(\mathcal{E})$, 
then the exceptional divisor $\mathcal{Y}'$ is $\mathbb{P}(6,1,2,3)$ 
where the coordinates are $s,t,u,v$. 
Then $Y'$ can be seen as the weighted hypersurface 
\begin{align}\label{wt.hs.lc.exc}
[v^{2}=4u^{3}-c_{1}t^{4}u+c_{2}t^{6}+s]\subset \mathbb{P}(6,1,2,3)
\end{align}
inside the exceptional divisor $\mathcal{Y}'\simeq \mathbb{P}(6,1,2,3)$. 
Hence, after an appropriate blow up, $Y'$ has a 
natural \textit{rational elliptic surface 
structure} over the base $\mathbb{P}^{1}_{s,t^{6}}$ with homogeneous coordinates 
$s, t^{6}$. 
More precisely, from the weighted hypersurface description 
\eqref{wt.hs.lc.exc}, we see that the projection to 
$\mathbb{P}_{s,t^{6}}^{1}$ 
has indeterminacy only at $[0:0:1:2]$. 

On the other hand, from the general theory \cite{Bri}, \cite{Tju}, 
we can take the simultaneous minimal resolution of the $A_{1}$, $A_{2}$ singularities whose homogeneous coordinates in \eqref{wt.hs.lc.exc} 
are $[-4:0:1:0]$, $[1:0:0:1]$ respectively so that it does not intersect with $Y$, possibly after finite base change 
of the parameter $s$. We denote the strict transform of $Y'$ by 
$\tilde{Y}'$ which is the minimal resolution. 

Also, since the degeneration \eqref{seg.compo2}, \eqref{degen.seg} 
has the involution $\iota$, we have the same 
$\tilde{E_{8}}$ type simple elliptic singularity on the fiber over $t=\infty$ 
at $X_{0}$, we can do the same procedure at that point. 
Then, we get a blow up of $\bigcup_{s}X_{s}$ (possibly after base change 
with respect to $s$) with central fiber over $s=0$ replaced by 
$\tilde{Y}'\cup Y\cup \tilde{Y}''$, where 
$\tilde{Y}'$ (resp., $\tilde{Y}''$) is the 
minimal resolution of $Y'$ (resp., $Y''$). 
This is a simple normal crossing surface and the double locus 
consists of two isomorphic smooth elliptic curves 
$\tilde{Y}'\cap Y$ and $\tilde{Y}''\cap Y$. 
Also, after further blow up at a point of $\tilde{Y}'$ (resp., $\tilde{Y}''$) 
not in $Y$, we get a natural rational elliptic surface structure. 
From these observations, we clearly see that the above degeneration 
\eqref{degen.seg} 
to $X_{0}$ 
is Type II degeneration in Kulikov's sense 
(Theorem~\ref{KPP.classif}) and furthermore 
it is ``stable'' in the sense of \cite[\S3]{Fri}. 
The double locus $\tilde{Y}'\cap Y\simeq \tilde{Y}''\cap Y$, 
$$[v^2=4u^3-c_1 t^4 u+c_2 t^6]\subset \mathbb{P}(1,2,3),$$
is an elliptic curve describing the weight one part of the limit mixed Hodge structure by \cite[Lemma 3.4 (3)]{Fri}.

\begin{Rem}\label{Ftheory1}
It seems that the above degeneration of K3 surfaces is 
close to what has been studied well in the physical context of F-theory, 
and sometimes called ``half K3 surfaces''. 
\end{Rem}

\subsection{Non-normal stable degeneration}\label{1-dim.boundary.Weierstrass}

Next we discuss some degeneration toward the boundary component $M_{W}^{\rm nn, o}$
 of Proposition~\ref{boundary.Weier}. 

Let $G_4(t)$ be a polynomial of degree $4$ with distinct roots.
We consider a one parameter degenerating family 
\begin{align}\label{nn.stable.degen}
y^{2}z=(2x-G_{4}(t)z)^{2}(x+2G_{4}(t)z)+sz^{3}
\end{align} with respect to $s\to 0$, 
and take double cover base 
change for $s$ branching at $s=0$, that is 
$$y^{2}z=(2x-G_{4}(t)z)^{2}(x+2G_{4}(t)z)+s^{2}z^{3}.$$
The double locus $D$ of $X_{0}$ is $2x-G_{4}(t)z=y=s=0$, hence isomorphic to 
$\mathbb{P}^{1}$. We blow this $D$ up. 
Then $X_{0}=[y^{2}z=(2x-G_{4}(t)z)^{2}(x+2G_{4}(t)z)]$ is replaced 
 by a union of two smooth surfaces intersecting at a smooth 
curve $C$. One component is the strict transform of $X_{0}$ 
and the other is the (unique)  
exceptional divisor. Around a neighborhood of any zero of $G_{4}(t)$, $C$ maps 
to $\mathbb{P}^{1}$ with degree $2$ and ramifying at zeros of $G_{4}(t)$. 
Therefore, $C$ is irreducible, smooth, and must be an 
elliptic curve where the projection to 
$\mathbb{P}^{1}$ gives a hyperelliptic structure. 
Note that $D$ does not intersect with $z=0$.
Hence, $C$ 
can be explicitly described as 
$$[y^{2}=(2x-G_{4}(t)z)^{2}G_{4}(t)]\subset \mathbb{P}_{\mathbb{P}^{1}}
(\mathcal{O}_{\mathbb{P}^{1}}(4)\oplus \mathcal{O}_{\mathbb{P}^{1}}(6)),$$ 
where the bundle $\mathcal{O}_{\mathbb{P}^{1}}(4)\oplus 
\mathcal{O}_{\mathbb{P}^{1}}(6)$ are those generated by 
$2x-G_{4}(t)z$, $y$. 
(Note that the ambient space $\mathbb{P}_{\mathbb{P}^{1}}
(\mathcal{O}_{\mathbb{P}^{1}}(4)\oplus \mathcal{O}_{\mathbb{P}^{1}}(6))$ can be 
naturally contracted to a singular quadric $\mathbb{P}(1,1,2)$, along 
$(-2)$-curve to an $A_{1}$-singularity where the equation of the image of 
$C$ becomes easier.)

Therefore, this family 
\eqref{nn.stable.degen} is 
Type II degeneration in the sense of Kulikov-Pinkham-Persson, 
``stable'' in the sense of \cite[\S3]{Fri}, 
and thus again 
by \cite[3.4 (3)]{Fri},  
the weight one part of the corresponding limit mixed Hodge structure is 
represented by the double locus elliptic curve $C$.

\subsection{Non-normal strictly polystable degeneration}
\label{nn.ps.degen}

We consider the same type as
 previous \S\ref{1-dim.boundary.Weierstrass} and 
 set $G_{4}(t)=(t(t-1))^{2}$. 

As discussed in the proof of Proposition~\ref{boundary.Weier}, 
we have two degenerate cusps of
 $T_{2,3,\infty}$-type as the $1$-dimensional ordinary cusp of the 
 fibers over $0$ and $1$.  
Let us consider the family with respect to $s$ 
\begin{align}\label{nn.str.polyst.degen}
X_{s}:=[y^{2}z=(2x-(t(t-1))^{2}z)^{2}(x+2(t(t-1))^{2}z)+sz^{3}]
\end{align}
 and do the same double base change of parameter $s$ to 
$$y^{2}z=(2x-(t(t-1))^{2}z)^{2}(x+2(t(t-1))^{2}z)+s^{2}z^{3}.$$ 
Then we again do the blow up of the double locus $D$ of 
$X_{0}$ defined as $2x-(t(t-1))^{2}z=y=s=0$. 
Then $X_{0}$ is replaced by a normal crossing union of 
smooth surfaces, one is the strict transform of $X_{0}$ 
and the other is the exceptional 
surface which intersects the strict transform at a nodal reducible 
curve $C=C_{1}\cup C_{2}$. 
Each $C_{i}$ is isomorphic to $\mathbb{P}^{1}$ by the projection to the 
base of the elliptic surfaces. 
Thus, this \eqref{nn.str.polyst.degen} is  Type III degeneration in Kulikov's sense
 (Theorem \ref{KPP.classif}). 
 
 We continue our discussion in \S\ref{Kul.type} from a different approach. 

\subsection{Classifying isotrivial elliptic Weierstrass models}\label{Isotrivial.classification}

We now classify $X_{g_8,g_{12}}^{W}$ which is isotrivial in the sense that generic elliptic fibers are isomorphic to each other. 
It means that the $j$-invariant $j:=\dfrac{g_8^3}{\Delta_{24}}$ is constant on 
the Zariski open locus where it is defined. 
From that characterization, such $X_{g_{8},g_{12}}^{W}$ 
consists of the following four possibilities: 

\begin{enumerate}
\renewcommand{\labelenumi}{(\alph{enumi}).}
\item \label{aa} $\Delta_{24}\equiv 0$ ($j\equiv\infty$). 
\item \label{bb} $g_8, g_{12}, \Delta_{24}$ are non-zero
 and $j$ is a constant. 
\item \label{cc} $g_8\equiv 0$. 
\item \label{dd} $g_{12}\equiv 0$. 
\end{enumerate}

The case (a) above is the same as Case (i) discussed in Proposition~\ref{boundary.Weier} and its proof. 

The case (b) above can be written as $g_8=aG_4^{2}$, $g_{12}=bG_4^3$ with 
constants $a,b$ with $ab(a^3-27b^2)\neq 0$, which follows from 
the constancy of $j=\dfrac{g_8^3}{\Delta_{24}}$.

On the other hand, note that for any pair of elliptic curves $E_1, E_2$, 
$(E_1\times E_2)/(\pm 1)\to E_1/(\pm 1)$ or its minimal resolution gives an elliptic fibration structure on the Kummer surface associated to $E_1\times E_2$. 
Here, $\pm$ means the natural $\mu_2(\mathbb{C})$-actions on $E_1$, $E_2$ and its product,
 caused by its group structures or 
 their uniformization by complex vector spaces. 
 The following, which we believe to be known to experts, 
give equivalence of the two families. 

\begin{Prop}\label{Isotriv.Kum}
A Weierstrass model $X_{g_8,g_{12}}^W$ is represented by 
$g_8=aG_4^{2}$, $g_{12}=bG_4^3$ with constants $a,b$
 such that $ab(a^3-27b^2)\neq 0$,
 and that $G_4$ has distinct (four) zeros, 
 if and only if
 it is the Weierstrass model of
 $(E_1\times E_2)/(\pm 1)\twoheadrightarrow 
 E_1/(\pm 1)$
 for some elliptic curves $E_1, E_2$. 
\end{Prop}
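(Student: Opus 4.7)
The plan is to prove both directions by writing down an explicit (generically) two-to-one map
$$\phi\colon E_1\times E_2 \longrightarrow X^{W}_{g_8,g_{12}}$$
so that the equation $g_8=aG_4^2$, $g_{12}=bG_4^3$ falls out of the identity $y^2=S^6Y^2$, and then to upgrade this birational picture to an identification of relative canonical (Weierstrass) models.

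For the direction $(\Rightarrow)$, suppose $g_8=aG_4^2$, $g_{12}=bG_4^3$ with $ab(a^3-27b^2)\neq 0$ and $G_4$ having four distinct roots. I would define $E_2$ as the smooth projective completion of $Y^2=4u^3-au+b$, which is an elliptic curve since $a^3-27b^2\neq 0$, and $E_1$ as the smooth projective completion of $S^2=G_4(t)$, which is an elliptic curve since $G_4$ has four distinct roots. Consider the map
$$\phi\colon E_1\times E_2 \dashrightarrow X^{W},\qquad ((t,S),(u,Y))\longmapsto (x,y,t)=(G_4(t)\,u,\;S^3 Y,\;t).$$
A direct substitution using $S^2=G_4(t)$ and $Y^2=4u^3-au+b$ gives
$$y^2 \;=\; S^6 Y^2 \;=\; G_4(t)^3\bigl(4u^3-au+b\bigr) \;=\; 4x^3 - aG_4(t)^2 x + bG_4(t)^3 \;=\; 4x^3 - g_8 x + g_{12},$$
so the image lies in $X^{W}$. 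The involution $\iota=(\iota_1,\iota_2)\colon (S,Y)\mapsto(-S,-Y)$ leaves $x$ unchanged and sends $y=S^3Y$ to itself, so $\phi$ descends to a generically one-to-one morphism $\bar\phi\colon (E_1\times E_2)/\iota\to X^{W}$. To conclude that $X^W$ is the \emph{Weierstrass} model of this elliptic fibration, I would use that both sides are normal surfaces with only ADE singularities (for $X^W$ by Theorem~\ref{ADE.Weier.K3}, for the Kummer quotient by standard theory), both carry relatively minimal elliptic fibrations over $E_1/\iota_1\cong\mathbb{P}^1_t$ with a distinguished zero section, and the Weierstrass model is uniquely determined inside this birational class.

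The converse $(\Leftarrow)$ is an essentially equivalent computation run in reverse. Given elliptic curves $E_1, E_2$, present $E_2$ in Weierstrass form $Y^2=4u^3-au+b$ with $a^3-27b^2\neq 0$ and $E_1$ as a double cover $S^2=G_4(t)$ of $\mathbb{P}^1_t=E_1/\iota_1$; then the same map $\phi$ produces a birational morphism onto $X^{W}_{aG_4^2,\,bG_4^3}$, so the Weierstrass model of $(E_1\times E_2)/\iota$ is of the asserted shape. The requirement $ab\neq 0$ corresponds to $j(E_2)\notin\{0,1728\}$; for the two excluded values of $j(E_2)$ one lands in cases \eqref{cc} and \eqref{dd} of the isotrivial classification, which are not claimed in the equivalence.

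The main obstacle I expect lies not in producing $\phi$, but in the passage from the birational $\bar\phi$ to the sharp equality of Weierstrass models. At each of the four branch points $p_i\in\mathbb{P}^1_t$ of $E_1\to\mathbb{P}^1$, the fiber of $(E_1\times E_2)/\iota$ is $E_2/\iota_2\cong\mathbb{P}^1$, whereas in $X^W$ the numerics $v_{p_i}(g_8)=2$, $v_{p_i}(g_{12})=3$, $v_{p_i}(\Delta_{24})=6$ give a Kodaira fiber of type $I_0^*$, which corresponds to a $D_4$-singularity of the Weierstrass model and whose resolution contracts a configuration of $\mathbb{P}^1$'s. Matching these contractions cleanly with $\bar\phi$, while reconciling the sixteen $A_1$-singularities of the Kummer quotient with the four $D_4$-singularities of $X^W$, is the genuine technical content; I expect this to follow from uniqueness of relative canonical models once the section and generic fiber are fixed, but spelling it out requires some care.
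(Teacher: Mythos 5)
Your strategy — write down the explicit quadratic-twist map
$\phi\colon E_1\times E_2\dashrightarrow X^W$, $((t,S),(\tilde u,Y))\mapsto(G_4(t)\tilde u,\,S^3Y,\,t)$, verify the algebra, and then invoke uniqueness of Weierstrass models — is a genuinely different route from the paper's, which instead argues contravariantly: it base-changes $X^W$ along $E_1\to\mathbb{P}^1$, resolves and runs a relative MMP to produce $Y_{\min}$, and then uses the $I_0^*$/$-\mathrm{id}$ monodromy together with the section to conclude $Y_{\min}\simeq E_1\times E_2$, finally reconstructing $X^W$ as the relative canonical model of $(E_1\times E_2)/\iota$. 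Your explicit formula eliminates the monodromy step entirely, which is a real simplification and makes the twist structure transparent.

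There are, however, two inaccuracies in your write-up that should be fixed before the uniqueness argument can be invoked cleanly. First, the descended map $\bar\phi\colon(E_1\times E_2)/\iota\dashrightarrow X^W$ is \emph{not} a morphism: passing to the chart near the section at infinity one finds $x/z=4S^2/Y'^2$, which is indeterminate at each of the four points $(s_0,\infty_{E_2})$ with $s_0\in E_1[2]$ (these descend to four of the sixteen $A_1$-points of the Kummer quotient, namely the ones sitting on the zero section). So $\bar\phi$ is only a birational map over $\mathbb{P}^1$. Second, and more importantly, the appeal to ``both sides are normal surfaces with ADE singularities, both carry relatively minimal elliptic fibrations with a zero section, and the Weierstrass model is uniquely determined'' reads as if both $(E_1\times E_2)/\iota$ and $X^W$ satisfy the hypotheses of Theorem~\ref{ADE.Weier.K3}, hence are each the Weierstrass model, hence isomorphic. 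But they are \emph{not} isomorphic: as you yourself note, one has sixteen $A_1$-points and the other has four $D_4$-points, and $(E_1\times E_2)/\iota\to\mathbb{P}^1$ has scheme-theoretically non-reduced fibers $2\mathbb{P}^1$ over the branch points, so it is not in Weierstrass form at all. The ``genuine technical content'' you worry about — matching contractions and reconciling $A_1$ vs.\ $D_4$ — is in fact a non-issue once uniqueness is deployed correctly: $X^W_{aG_4^2,\,bG_4^3}$ is in Weierstrass form by construction (it satisfies \eqref{stab.cond} since $(v_p(g_8),v_p(g_{12}))=(2,3)$ at the simple zeros of $G_4$), your map $\phi$ exhibits it as birational over $\mathbb{P}^1$, respecting the zero section, to $(E_1\times E_2)/\iota\twoheadrightarrow E_1/\iota_1$, and the Weierstrass model of an elliptic surface with section is the unique member of its birational class in Weierstrass form. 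One should not try to upgrade $\bar\phi$ to an isomorphism — it cannot be one, and the argument does not require it.
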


\begin{proof}
Note that a Weierstrass model $X_{g_8,g_{12}}^W$
 with $g_8=aG_4^{2}$, $g_{12}=bG_4^3$ as in the proposition
 has four $D_4$-singularities by its explicit description. Therefore, by the classical Kodaira's classification \cite{Kod}, 
its monodromy on the second cohomology of general fibers are 
$\begin{pmatrix}
-1 & 0 \\ 
0 & -1 \\ 
\end{pmatrix}. 
$
Hence, we can take the double ramified covering of $\mathbb{P}^{1}$ 
branched exactly at zeros of $G_4$, 
 which we denote by $E_1\to \mathbb{P}^{1}$. It is explicitly 
 written as the closure of $(x^{2}=G_{4}(t))\subset \mathbb{A}_{t,x}^{2}$ 
 in the singular quadric $\mathbb{P}(1,1,2)$. 
 Then, take its minimal resolution of the total space $X_{g_8,g_{12}}^{W}
 \times_{\mathbb{P}^{1}} E_{1}$, 
 and further take its relative minimal model over $E_{1}$,
 which we denote by  $Y_{\rm min}$. 
By the above singularity and monodromy condition, 
 $Y_{\rm min}\to E_1$ is a $E_2$-fiber bundle. 
Furthermore, pulling back the zero section of $X_{g_{8},g_{12}}^{W}$, 
we have a section for $Y_{\rm min} \to E_1$, 
 which implies $Y_{\rm min}\simeq E_1\times E_2$. 
Then taking the quotient of this by $\mu_2$ and
 taking the relative canonical model (i.e., the Weierstrass model),
 we obtain the proof of one direction; above $X_{g_8,g_{12}}^W$ is 
 $(E_1\times E_2)/(\pm 1)\to E_1/(\pm 1)$. 

On the other hand, consider the Weierstrass model of $(E_1\times E_2)/(\pm 1)
\twoheadrightarrow E_1/(\pm 1)$
 for arbitrary elliptic curves $E_1, E_2$. 
Take an isomorphism 
$E_1/(\pm 1)\simeq \mathbb{P}^1$ so that the image of the $2$-torsion points $E_1[2]$
 are $s_1,s_2,s_3,s_4 \in \mathbb{C}$. 
Then set $G_4(t):=\prod_{i=1}^4(t-s_i)$. 
Also, describe $E_2$ as $y^2=4x^3-ax+b$ and then 
we consider $X_{g_8,g_{12}}^{W}$ 
 where $g_8=aG_4^{2}$, $g_{12}=bG_4^3$. 
Then the relative minimal model of the double base change of
 $X_{g_8,g_{12}}^{W}\to \mathbb{P}^1$ with respect to $E_1\to \mathbb{P}^1$
 is isomorphic to $E_1\times E_2 \to E_1$, by the previous arguments. 
Tracing back the procedure of taking double covering
 and birational transforms, we get an isomorphism
 between $X_{g_8,g_{12}}^{W}$ and the Weierstrass model of $(E_1\times E_2)/(\pm 1)$. 
\end{proof}

From Proposition~\ref{Isotriv.Kum}, the locus of case (b) is naturally isomorphic to the 
product of moduli of elliptic curves 
$\mathbb{A}^{1}\times \mathbb{A}^{1}$ in $M_{W}$ whose closure in 
$\overline{M_{W}}$ includes $\mathbb{A}^{1}\times \{\infty\}=M_{W}^{\rm seg}$ 
and $\{\infty\} \times \mathbb{A}^{1}=M_{W}^{\rm nn}$. 
The $2$-dimensional locus also naturally maps onto 
$\overline{\mathcal{M}_{\rm Km}}\cap \mathcal{M}_{\rm K3}(a)$ (of \S\ref{Geom.Meaning}) 
as we will see more details in next \S\ref{rel.SBB}. 

Case (c) is the locus $[y^2z=4x^3+g_{12}z^3]$ in $W\mathbb{P}$ 
 and its GIT stability for the $SL(2)$-action is 
 equivalent to that of ${\rm Sym}^{12}\mathbb{P}^1$. 
Hence it gives a $9$-dimensional locus in $M_W$, 
which has a structure of locally symmetric space for $U(1,9)$. 
This locus contains a discrete subset parametrizing the 
polytopes with flat metric (union of equilateral triangles) 
discussed in \cite{Thurs}, \cite{Laza}. 

Case (d) is the locus $[y^2z=4x^3-g_{8}xz^2]$ in $W\mathbb{P}$ 
 and its GIT stability for $SL(2)$ is 
 equivalent to that of ${\rm Sym}^{8}\mathbb{P}^1$.  
Hence it gives a $5$-dimensional locus in $M_W$,
 which parametrizes cuboids and
 again has a structure of locally symmetric space for $U(1,5)$. 

\section{From Hodge-theoretic viewpoint}\label{rel.SBB}
\subsection{GIT compactification versus the Satake-Baily-Borel compactification} 
Here, we relate the previously discussed picture of the moduli variety 
constructed by the Geometric Invariant Theory to locally 
Hermitian symmetric space picture through periods. 
As it is well-known, the minimal resolution of the 
Weierstrass models of elliptic K3 surfaces, i.e., 
the Jacobian elliptic K3 surfaces, when associated with a marking, 
are $U$-polarized ample K3 surfaces in the sense of Dolgachev \cite[\S1]{Dol}. 
Here, $U$ denotes the indefinite unimodular lattice of rank $2$.
Therefore, its moduli scheme $M_{W}$ is, through the period map, 
regarded as a subset of 
$18$-dimensional Hermitian locally symmetric space by \cite[\S3]{Dol}. 
More explicitly, fixing a primitive embedding $U\hookrightarrow \Lambda_{\rm K3}$, 
which is 
unique up to $O(\Lambda_{\rm K3})$, we may write 
\begin{align}\label{Wei.p.map}
p_{W}\colon 
M_{W}\hookrightarrow O^{+}(U^{\perp})\backslash \mathcal{D}_{W}.
\end{align}
Here, $\mathcal{D}_{W}$ is a connected component of 
$$\{\C w\in \mathbb{P}(U^{\perp}\otimes \C)
\mid (w,w)=0,\ (w,\bar{w})>0\},$$ 
$U^{\perp}$ denotes the orthogonal complement of $U$ in $\Lambda_{\rm K3}$
and $O^{+}(U^{\perp})$ denotes 
the index $2$ subgroup of $O(U^{\perp})$ 
which preserves the connected component $\mathcal{D}_{W}$. 
This $p_{W}$ is a Zariski open immersion by the local Torelli theorem (cf., also 
\cite[\S2]{Dol}). 

We show that actually $p_W$ is an isomorphism
 in Theorem~\ref{GIT.SBB}, which further shows that such a comparison can be extended to their 
compactifications level. That is, 
we identify our GIT compactification $\overline{M_{W}}$ with the 
Satake-Baily-Borel compactification of the right hand side of 
\eqref{Wei.p.map}, which we denote by 
$\overline{M_{W}}^{\rm SBB}$. 

\begin{Thm}\label{GIT.SBB}
There exists an isomorphism
 $$\overline{p_{W}}\colon \overline{M_{W}} \simeq \overline{M_{W}}^{\rm SBB}$$ 
 of projective varieties extending $p_W$. In particular, 
 $p_W$ itself is an isomorphism: 
$$
M_{W}\simeq O^+(U^{\perp})\backslash \mathcal{D}_{W}. 
$$
Also, via the above isomorphism $\overline{p_{W}}$, 
$M_{W}^{\rm nn,o}\cup M_{W}^{\rm seg,o}$ maps to $1$-dimensional 
cusps and $M_{W}^{\rm seg}\cap M_{W}^{\rm nn}$ maps to 
the $0$-dimensional cusp. 
\end{Thm}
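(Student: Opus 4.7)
The plan is to first extend $p_{W}$ to a morphism between the compactifications, and then verify it is an isomorphism through a boundary analysis informed by the explicit degeneration families of \S\ref{LC.degen.subsub}--\S\ref{nn.ps.degen}. First, I would apply the Borel--Kiernan--Kobayashi extension theorem (\cite{Bor, Kie, KO}), as used in the proof of Theorem~\ref{rationality.HSD}: since $\overline{M_{W}}$ is a normal projective variety containing $M_{W}$ as a Zariski dense open set, and since $\overline{M_{W}}^{\rm SBB}$ is projective, the holomorphic open immersion $p_{W}$ extends uniquely to a morphism $\overline{p_{W}}\colon \overline{M_{W}}\to \overline{M_{W}}^{\rm SBB}$ of projective varieties. (If needed, one can first pass to a resolution of $\overline{M_{W}}$ and then descend via normality of the target.) Properness of both sides together with density of $p_{W}(M_{W})$ shows $\overline{p_{W}}$ is surjective, and it is birational by construction.

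Second, I would determine the image of each boundary stratum Hodge-theoretically. For a generic point of $M_{W}^{\rm seg, o}$, the explicit Kulikov family of \S\ref{LC.degen.subsub} is of Type II with double locus a single elliptic curve $\tilde{Y}'\cap Y$; for a generic point of $M_{W}^{\rm nn, o}$, the family of \S\ref{1-dim.boundary.Weierstrass} is of Type II with double locus the elliptic curve $C$; at the intersection point $M_{W}^{\rm nn}\cap M_{W}^{\rm seg}$, the family of \S\ref{nn.ps.degen} is of Type III. By the standard dictionary between the weight monodromy filtration and the Satake--Baily--Borel stratification for a Type IV domain (cf.\ \cite{AMRT, Sca, FriS}, and also our Corollary~\ref{1par.lim.K3.}), Type II degenerations map to interiors of $1$-dimensional cusps and Type III degenerations map to $0$-dimensional cusps. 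On each of $M_{W}^{\rm seg, o}$ and $M_{W}^{\rm nn, o}$, the restriction of $\overline{p_{W}}$ can be identified with the $j$-invariant of the respective double-locus elliptic curve, which gives an injection onto a $1$-dimensional cusp.

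Third, I would invoke the lattice-theoretic classification of $O^{+}(U^{\perp})$-orbits of primitive isotropic sublattices of $U^{\perp}\simeq U^{\oplus 2}\oplus E_{8}^{\oplus 2}$ (in the spirit of \cite{Sca}, and analogous to the orbit analysis of \S\ref{Geom.Meaning} for $\Lambda_{\rm K3}$): there is a unique orbit of primitive isotropic lines, and exactly two orbits of primitive isotropic $2$-planes, distinguished by whether $V_{\Z}^{\perp}/V_{\Z}$ is isomorphic to $E_{8}^{\oplus 2}$ or to $D_{16}^{+}$. Thus $\overline{M_{W}}^{\rm SBB}$ has one $0$-dimensional cusp and two $1$-dimensional cusps, matching the stratification of $\partial\overline{M_{W}}$. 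Combined with the boundary analysis above, $\overline{p_{W}}$ is a set-theoretic bijection; being proper and birational with normal target, Zariski's main theorem forces $\overline{p_{W}}$ to be an isomorphism.

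The main obstacle I anticipate is deciding \emph{which} $1$-dimensional cusp of $\overline{M_{W}}^{\rm SBB}$ is the image of $M_{W}^{\rm nn}$ and which is that of $M_{W}^{\rm seg}$---equivalently, computing the isomorphism class of $V_{\Z}^{\perp}/V_{\Z}$ for the monodromy invariant of each Type II family in \S\ref{LC.degen.subsub} and \S\ref{1-dim.boundary.Weierstrass}. In principle, this invariant is read off from the integral limit mixed Hodge structure attached to the respective Kulikov model; in practice, the computation is delicate and requires tracing through the combinatorics of the explicit semistable reductions constructed there.
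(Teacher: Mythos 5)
Your first step has a genuine gap. The Borel--Kiernan--Kobayashi--Ochiai extension theorems do not apply directly to the open inclusion $M_W\subset\overline{M_W}$: these theorems extend holomorphic maps into $\overline{M_W}^{\rm SBB}$ from sources that are (locally) punctured polydisks or complex manifolds with normal crossing boundary, and the GIT quotient $\overline{M_W}$ is singular at the unique strictly polystable point $M_W^{\rm nn}\cap M_W^{\rm seg}$, so there is no such local model there. Your parenthetical fix --- pass to a resolution $\pi\colon\tilde{M}\to\overline{M_W}$, extend to a morphism $\tilde{p}\colon\tilde{M}\to\overline{M_W}^{\rm SBB}$, then ``descend via normality of the target'' --- does not work as stated: descent of $\tilde{p}$ through $\pi$ requires knowing that $\tilde{p}$ is constant on the fibers of $\pi$, which is precisely what needs proving. (Normality of the target buys you nothing here; the exceptional divisor over the polystable point could in principle map onto a positive-dimensional piece of the boundary of $\overline{M_W}^{\rm SBB}$, in which case there is no descent.) You would need an independent argument ruling that out before the set-theoretic bijectivity and Zariski's main theorem can be invoked, and that creates a circularity with your Steps~2--3, which implicitly presume $\overline{p_W}$ exists on all of $\overline{M_W}$.

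The paper works around exactly this obstruction. It does not extend across the strictly polystable point by the Borel machinery at all. Instead, it uses the Luna slice theorem to produce \emph{smooth} affine local models transverse to $\pi_W^{-1}(M_W^{\rm nn})$ and to the strictly semistable locus near $M_W^{\rm seg,o}$, in which the boundary is a smooth divisor; the Kiernan/Kobayashi--Ochiai theorem then yields a morphism only on $\overline{M_W}\setminus(M_W^{\rm nn}\cap M_W^{\rm seg})$. Since the deleted locus has codimension $\ge 2$, the final extension and the fact that it is an isomorphism are obtained by an ample--line--bundle argument: the strict transform of a Hodge class on $\overline{M_W}^{\rm SBB}$ missing the $0$-dimensional cusp is again ample (using ${\rm Pic}(M_W)\otimes\Q\simeq\Q$), so the morphism is defined by its complete linear system, and a birational morphism of normal projective varieties pulling back ample to ample is an isomorphism. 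Your lattice classification of cusps of $U^{\perp}$ and the ZMT endgame are correct ideas, and your Hodge-theoretic identification of which strata go to which cusps is also essentially right (it is what the paper records in the last sentence of the proof and in \S\ref{Kul.type}); but they cannot substitute for the missing construction of $\overline{p_W}$ in the first place.
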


\begin{proof}

First, we extend the natural identity morphism around $M_W^{\rm nn}$. 
We take any point in $M_{W}^{\rm nn, o}$
 and write as $X_{3G_{4}^{2},G_{4}^{3}}$ where 
$G_{4}(t)=t(t-1)(t-2)(t-c)$ with 
$c\neq 0,1,2$. 

Now we regard $\overline{M_{W}}$ as the GIT quotient 
$(\mathbb{C}^{22}\setminus \{0\})\sslash GL(2)$. 
We write $\pi_{W}$ for the quotient map from a subset of $\C^{22}\setminus \{0\}$ 
 to $\overline{M_{W}}$. 
Consider 
$[X_{3G_{4}^{2},G_{4}^{3}}]\in \overline{M_{W}}$ 
and its GIT stable lift to $\C^{22}$, which we denote simply by 
$p=(3G_{4}^{2},G_{4}^{3})$. 
We denote the finite stabilizer of $p$ for the $GL(2)$-action 
by ${\rm stab}(p)$ and 
would like to 
construct an ${\rm stab}(p)$-invariant affine smooth subvariety $V$ 
including $p\in \C^{22}\setminus \{0\}$, which is an \'etale slice in the sense of 
\cite{Luna} (cf., also \cite{Dre} etc.) It is possible by first 
taking a $GL(2)$-invariant affine open neighborhood of $p$ thanks to the 
semistability of the point, 
and then apply the Luna slice theorem (\cite{Luna}), or more precisely the 
version of \cite[Theorem 5.4]{Dre}. 
It is easy to show $\pi_{W}^{-1}(M_{W}^{\rm nn})$ is smooth. 
Therefore, from the conditions 
(especially (v)) of {\it loc.cit.},
 the obtained slice $V$ is smooth and $\pi_{W}^{-1}(M_{W}^{\rm nn})$
 transversally intersects with $V$. 
The intersection is a smooth curve.
 (Note that for generic $p$, the stabilizer ${\rm stab}(p)$ is trivial
 so that the construction of $V$ can be easily made explicit.)

Therefore, 
the extension theorem for the Satake-Baily-Borel compactification 
(\cite[\S4, Theorems 2, 4]{Kie}, cf., also \cite[Theorem A, 3.7]{Bor}, 
\cite[Theorem 1, Corollary]{KO})
applies and we have an extended holomorphic morphism 
$(V\sslash{\rm stab}(p))\to \overline{M_W}^{\rm SBB}$. 
By doing the same thing for all $p$, we get a holomorphic morphism
 $M_W\sqcup M_W^{\rm nn,o}
 \to \overline{M_W}^{\rm SBB}$. 
 
\medskip

Now we proceed to do a similar extension of the map around 
the other $1$-dimensional boundary component $M_W^{\rm seg}$. 
Note that any closed point $q$ of $M_W^{\rm seg, o}$ is represented by 
\[g_8=at^4,\quad g_{12}=bt^6\]
 for some $a, b$ with $ab(a^3-27b^2)\neq 0$. 
Then take a small enough
 $GL(2)$-invariant open affine neighborhood of
 $q:=(at^4,bt^6)\in \C^{22}$ as $U\subset \mathbb{C}^{22}$, which is possible 
by the semistability of $(at^4,bt^6)$. 
Then the strictly semistable (i.e., semistable but not stable) locus inside $U$ is the union of 
a $13$-dimensional subvariety 
\begin{align*}
\{(&g_8=((t-\alpha)(u_{3}t^{3}+u_{2}t^{2}+u_{1}t+u_{0})+A)(t-\alpha)^4, \\ 
&g_{12}=((t-\alpha)(v_{5}t^{5}+v_{4}t^{4}+v_{3}t^{3}+v_{2}t^{2}+v_{1}t+v_{0})+B)(t-\alpha)^6)\in U\},
\end{align*}
where $\alpha, u_{i}, v_{j}, A, B$ are all in $\C$, 
and its image (another $13$-dimensional subvariety) 
translated by the involution $t\mapsto \frac{1}{t}$. 
It is easy to see that the two subvarieties are both 
smooth around the point $q$, 
transversally intersecting, 
so that the intersection is again a $4$-dimensional submanifold of $U$. 

Therefore, by again 
 the same extension theorem for the Satake-Baily-Borel compactification 
 (\cite[\S4, Theorem 2, 4]{Kie}, cf., also \cite[Theorem A, 3.7]{Bor}, 
\cite[Thereom 1, Corollary]{KO}), 
 we get a holomorphic morphism 
 from an analytic neighborhood $U'$ of $q$. 
By the $GL(2)$-invariance of the morphism,
 it extends to a holomorphic map from $GL(2)\cdot U'$. 
Taking all points $q$, we get a holomorphic map from the semistable locus
 $(\C^{22})^{\rm ss}$ of $\C^{22}$. 
This holomorphic map is algebraic
 because that for any affine integral variety ${\rm Spec}(R)=V$,
 $f\in {\rm Frac}(R)$ which extends holomorphically on
 whole $V$ is in $R$. 
Set the GIT quotient morphism
 as $\varpi \colon (W\mathbb{P})^{\rm ss}\to \overline{M_W}$. 
Therefore, we get an algebraic regular $SL(2)$-invariant morphism
 from $\varpi^{-1}(M_W \cup M_W^{\rm seg,o})$ 
to $\overline{M_W}^{\rm SBB}$.  
This descend to $M_W \cup M_W^{\rm seg,o}\to \overline{M_W}^{\rm SBB}$
 by the universality of the GIT quotient 
\cite{Mum65}. 

Therefore, combining the above, we get a regular (algebraic) morphism 
$\varphi \colon \overline{M_W}\setminus (M_W^{\rm nn}\cap M_W^{\rm seg}) \to 
\overline{M_W}^{\rm SBB}$. 
Furthermore, by our discussions in \S\ref{LC.degen.subsub} and \S\ref{1-dim.boundary.Weierstrass}, 
it follows that the image is the complement of the $0$-dimensional cusp. 

We consider the strict transform $\varphi^{-1}_*H$
 of an ample divisor $H$ on $\overline{M_W}^{\rm SBB}$
 which represents multiple of the Hodge line bundle 
and does not pass through the $0$-dimensional cusp. 
It is straightforward, e.g. from the fact that $M_W$ has Picard rank $1$
 (as the complement of the quotient of open subset of
 the Picard rank $1$ variety with higher codimensional Zariski closed subset),
 that $\varphi^{-1}_*H$ is again ample. 
Moreover, both $\overline{M_W}$ and $\overline{M_{W}}^{\rm SBB}$
 are normal.
Hence we conclude that $\varphi$ gives an
 isomorphism by standard arguments 
 (cf.\  \cite{MM}, \cite[3.1.2]{Kol}). 

The last two statements are straightforward by seeing the coincidence of 
the complements, i.e., 
$\overline{p_{W}}(\overline{M_{W}}\setminus M_{W})=\overline{M_{W}}^{\rm SBB}
\setminus (O^{+}(U^{\perp})\backslash \mathcal{D}_{W})$. 
\end{proof}

\begin{Rem}
It seems that the above theorem also follows from the fact that the polystable varieties 
parametrized in $\overline{M_W}$ are all \textit{semi-log-canonical}, hence satisfies \textit{DuBois} property by \cite{Ish, KK}. 
See e.g., \cite[Remark 1.3.13]{GGLR}. 
Also, for interaction between semistability and log canonicity, 
see \cite{Od, Od2, Od0} in a general context. In particular, 
\cite[Corollary~1.1(ii)]{Od0} combined with \cite[Theorem~1.2]{Od} 
implies that the GIT polystability of our Weierstrass models 
parametrized in whole $W\mathbb{P}$ are characterized by the 
K-semistability \cite{Tia, Don} as well. 
\end{Rem}

\begin{Rem}
Before uploading the manuscript on arXiv, 
the first author learned that Kenneth Ascher and Dori Bejleri 
had been also working on related problems on the moduli of elliptic K3 surfaces. (Note added: indeed \cite{AscB} appeared later.) 
We thank K.~Ascher for teaching about it. 
\end{Rem}

Now we give two applications of the above Theorem~\ref{GIT.SBB}. 


 \subsection{Determination of the degeneration type}\label{Kul.type}
 
In \S\ref{LC.degen.subsub}, \S\ref{1-dim.boundary.Weierstrass}, 
 and \S\ref{nn.ps.degen} we studied specific degenerating families toward 
 each boundary component of $\overline{M_{W}}$, which give 
 an expectation on the degeneration types corresponding to each 
 boundary component. Here, we give a Hodge-theoretic proof 
 and the rigorous statements which generalize our algebro-geometric 
 calculation above. Below, by Theorem~\ref{GIT.SBB}, 
 we identify $\overline{M_{W}}$ with $\overline{M_{W}}^{\rm SBB}$. 
 
 \begin{Prop}
 Consider a meromorphic 
 smooth projective family
 $f^{*}\colon \mathcal{X}^{*}\to \Delta^{*}
 =\Delta\setminus \{0\}$ 
 with the relatively ample polarization $\mathcal{L}^{*}$,  
 of polarized Weierstrass models of K3 surfaces. Denote the corresponding 
 morphism to the coarse moduli by 
 $\varphi^{*}\colon \Delta^{*}\to M_{W}$. By the meromorphicity assumption, 
 we mean that 
 $\varphi^{*}$ extends to $\varphi\colon \Delta\to \overline{M_{W}}$. 

 Then the family $f$ is of degeneration type III (resp., II) 
 in the sense of Kulikov-Pinkham-Persson (reviewed in 
Theorem~\ref{KPP.classif}) if and only 
 if $\varphi(0)$ lies in the $0$-dimensional cusp (resp., $1$-dimensional 
cusp).  
 \end{Prop}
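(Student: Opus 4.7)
The plan is to reduce both directions of the equivalence to the standard Hodge-theoretic correspondence between the nilpotent index of the monodromy and the dimension of the cusp hit by the extended period map, using Theorem~\ref{GIT.SBB} to identify $\overline{M_W}$ with $\overline{M_W}^{\rm SBB}$. First, I would invoke Theorem~\ref{KPP.exist} (after a finite base change on $\Delta$, which does not affect the location of $\varphi(0)$ in $\overline{M_W}$) to replace $f^*$ by a Kulikov model $\mathcal{X} \to \Delta$ as in Theorem~\ref{KPP.classif}. Let $\gamma \in O(\Lambda_{\rm K3})$ denote the monodromy on $H^2(\mathcal{X}_s,\Z)$ around $0$, and let $N := \log \gamma_u$ where $\gamma_u$ is the unipotent part of the Jordan decomposition (well-defined on the whole by Landman's theorem \cite{Lan}, after the assumed base change). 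Since our family is $U$-polarized, $\gamma$ fixes the sublattice $U \subset \Lambda_{\rm K3}$ coming from the fiber class $e$ and the section class, so $N \in \mathfrak{o}(U^\perp \otimes \Q)$.

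Next, I would use two independent but parallel classifications of $N$. On the geometric side, the Clemens-Schmid exact sequence applied to the limiting mixed Hodge structure on $H^2(\mathcal{X}_0)$ shows that the Kulikov type is detected precisely by the nilpotent index: Type~I corresponds to $N=0$, Type~II to $N \neq 0$ with $N^2=0$, and Type~III to $N^2 \neq 0$ (necessarily $N^3=0$). On the Hodge-theoretic side, the nilpotent orbit theorem of Schmid~\cite{Schm}, combined with the description of the Baily-Borel boundary of $O^+(U^\perp) \backslash \mathcal{D}_W$ in terms of $\Gamma$-conjugacy classes of rational isotropic subspaces of $U^\perp \otimes \Q$, identifies the stratum containing $\varphi(0)$ in $\overline{M_W}^{\rm SBB}$ with the cusp associated to the smallest rational isotropic subspace naturally attached to $N$. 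This is precisely the argument used in our Theorem~\ref{monodromy}, specialized to the Baily-Borel setting: when $N^2 \neq 0$, $\mathrm{Im}(N)$ is a rank-two isotropic subspace of $U^\perp \otimes \Q$, which defines the (unique) $0$-dimensional cusp; when $N^2=0$ but $N \neq 0$, $\mathrm{Im}(N)$ is a rank-one isotropic subspace, defining a $1$-dimensional cusp; when $N=0$, the extension sends $0$ into the interior $M_W$.

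Combining the two correspondences yields the claimed equivalence in both directions. The main technical point will be to make rigorous the compatibility of the Hodge-theoretic weight filtration on $H^2(\mathcal{X}_0)$ given by $N$ with the isotropic flag that cuts out the relevant parabolic subgroup of $O(U^\perp)$ used to define the Baily-Borel stratum under the identification of Theorem~\ref{GIT.SBB}; this essentially amounts to tracing through the monodromy-weight filtration (shifted by $2$) and identifying $W_0 H^2 \subset W_1 H^2 \subset H^2$ with the isotropic flag $\mathrm{Im}(N^2) \subset \mathrm{Im}(N) \cap \ker(N) \subset \mathrm{Im}(N)$ inside the Hodge-theoretic $\Lambda_{\rm K3}$. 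As a sanity check on orientations and lattice identifications, the explicit families constructed in \S\ref{LC.degen.subsub}, \S\ref{1-dim.boundary.Weierstrass}, and \S\ref{nn.ps.degen} realize each of Types~II and~III with $\varphi(0)$ landing in the expected boundary stratum of $\overline{M_W}$ from Proposition~\ref{boundary.Weier}, and conversely any cusp point can be reached by such an algebraic degeneration, so the correspondence is nonempty on each side as required.
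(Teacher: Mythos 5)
Your proposal takes a genuinely different route from the paper. The paper does not run the Hodge theory directly on $M_W$: it embeds $M_W$ into $\mathcal{F}_{2d}$ using the lattice inclusion $\langle e_0,f_0\rangle^\perp \hookrightarrow (de_0+f_0)^\perp$, invokes the known Kulikov-type/cusp dictionary of Friedman and Friedman--Scattone for $\overline{\mathcal{F}_{2d}}^{\rm SBB}$, and then just observes that $\overline{M_W}^{\rm SBB}\to \overline{\mathcal{F}_{2d}}^{\rm SBB}$ sends $k$-dimensional cusps to $k$-dimensional cusps. Re-deriving the dictionary on $M_W$ directly via Clemens--Schmid and the nilpotent orbit theorem, as you do, is perfectly legitimate and arguably more self-contained, but your middle paragraph contains two compensating errors that leave the argument as written broken.

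First, the rank and isotropy of $\mathrm{Im}(N)$ are misstated. When $N^2\neq 0$ (Type III), $\mathrm{Im}(N)$ has rank $2$ but it is \emph{not} isotropic: writing $N = e'\wedge\delta$ as in Friedman--Scattone, $\mathrm{Im}(N)=\langle e',\delta\rangle$ with $(e')^2=0$, $(e',\delta)=0$, $\delta^2>0$, so the restricted form is degenerate but not zero. The rational isotropic subspace that actually picks out the cusp is $\mathrm{Im}(N^2)=W_0H^2=\Q e'$, of rank $1$. When $N^2=0$, $N\neq 0$ (Type II), $\mathrm{Im}(N)$ \emph{is} isotropic (since $N$ is skew and $\mathrm{Im}(N)\subset\ker(N)$), but it has rank $2$, not rank $1$: it equals $W_1H^2$ and carries the weight-one Hodge structure of the double-curve elliptic curve. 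Second, your assignment of isotropic rank to cusp dimension is backwards. For the Baily--Borel compactification of an $O(2,n)$-type domain, rational isotropic \emph{lines} give $0$-dimensional cusps and rational isotropic \emph{planes} give $1$-dimensional cusps; this is opposite to the Satake $\tau_{\rm ad}$ picture of \S\ref{Satake.adjoint}, where lines give the maximal-dimensional boundary strata and planes give points, the two stratifications being in bijection but with opposite closure relations. So the chain ``rank-$2$ isotropic $\to$ $0$-dim cusp'' and ``rank-$1$ isotropic $\to$ $1$-dim cusp'' is wrong on both ends. Your two errors cancel and you land on the correct conclusion (Type III $\leftrightarrow$ $0$-dim, Type II $\leftrightarrow$ $1$-dim), but the intermediate reasoning does not hold. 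Once you replace the middle step with: Type III yields $\mathrm{Im}(N^2)$ (rank $1$ isotropic, $0$-dimensional SBB cusp), Type II yields $\mathrm{Im}(N)$ (rank $2$ isotropic, $1$-dimensional SBB cusp), which is precisely what the monodromy-weight filtration compatibility gives, the rest of your argument goes through.
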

 
 \begin{proof}
Recall from \S\ref{Mod.pol.K3} 
that $\mathcal{F}_{2d}$ is uniformized by the 
orthogonal symmetric domain corresponding to $\Lambda_{2d}=
(de_{0}+f_{0})^{\perp}$, the inclusion of two subspaces of 
$\Lambda_{\rm K3}$, 
$\langle e_{0},f_{0} \rangle^{\perp}\hookrightarrow (de_{0}+f_{0})^{\perp}$ 
induces $M_{W}\hookrightarrow \mathcal{F}_{2d}$. This corresponds to 
specify the (family of) degree $2d$ ample line bundles on Weierstrass models. 

It is well-known (at least goes back to \cite{Fri, FriS})
 that for a meromorphic family $f^{*}\colon \mathcal{X}^{*}\to \Delta^{*}$ 
 with the relatively ample polarization $\mathcal{L}^{*}$ of degree $2d$
 which corresponds to $\varphi^{*}\colon \Delta^{*}\to \mathcal{F}_{2d}^{o}$, 
 the family $f^{*}$ is of the degeneration type III (resp., II) in the sense of
 Kulikov-Pinkham-Persson (see Theorem~\ref{KPP.classif})
 if and only if 
$\varphi(0)$ is in the $0$-dimensional cusp
 (resp., $1$-dimensional cusp)
 of the Satake-Baily-Borel compactification
 $\overline{\mathcal{F}_{2d}}^{\rm SBB}$ of $\mathcal{F}_{2d}$. 
Also, it is of the degeneration type I if and only if 
$\varphi(0)\in \mathcal{F}_{2d}$. Since, 
$\overline{M_{W}}\to \overline{\mathcal{F}_{2d}}^{\rm SBB}$ 
sends a $1$-dimensional cusp (resp., a $0$-dimensional cusp) of $\overline{M_{W}}$ 
to a $1$-dimensional cusp (resp., a $0$-dimensional cusp) of 
$\overline{\mathcal{F}_{2d}}^{\rm SBB}$, 
our assertion follows.
We note that if we lift $\varphi(0)$ to a point $[V\subset U^{\perp}\otimes \R]$ in 
${\rm Gr}_{2}^{+,{\rm or}}(U^{\perp}\otimes \R)\subset 
{\rm Gr}_{2}^{+,{\rm or}}(\Lambda_{2d}\otimes \R)$, 
the degeneration type is simply 
determined by the dimension of the linear subspace of 
$V\otimes \R$ where the bilinear form degenerates. 
\end{proof}

\subsection{Relations between compactifications 
  $\overline{\mathcal{M}_{\rm K3}}^{\rm Sat}$ and $\overline{M_{W}}$}\label{MK3.MW}

In \S\ref{Kahler.section} we considered $\mathcal{M}_{\rm K3}$,
 the moduli space of K\"ahler K3 surfaces up to hyperK\"ahler rotation,
 and its Satake compactification $\overline{\mathcal{M}_{\rm K3}}^{\rm Sat}$
 for the adjoint representation.
Recall that one of its boundary component $\mathcal{M}_{\rm K3}(a)$ is a 
$36$-dimensional locally symmetric space, a $(\Z/2\Z)$-quotient of 
$M_W$, by the arguments of Case 2 of \S\ref{Geom.Meaning}. 
The fact corresponds to that $\mathcal{M}_{\rm K3}(a)$ parametrizes \textit{unoriented} metrized spheres, 
forgetting the complex structure on the projective line. 
(In particular, $\mathcal{M}_{\rm K3}(a)$ does not admit a complex structure.) This space $\mathcal{M}_{\rm K3}(a)$ also 
appears in the context of F-theory in string theory 
(cf., e.g., \cite[(5) in \S 1]{ClingherMorgan}). 

Hence, at the compactification level, it follows that 
the closure $\overline{\mathcal{M}_{\rm K3}(a)}$ of $\mathcal{M}_{\rm K3}(a)$ in $\overline{\mathcal{M}_{\rm K3}}^{\rm Sat}$ 
is a $(\Z/2\Z)$-quotient of the Satake-Baily-Borel compactification 
$\overline{M_W}^{\rm SBB}$ of $M_{W}$, from the construction 
of the Satake compactifications (\cite{Sat1, Sat2}, cf.\ also \cite{BJ}). 
Combined with Theorem~\ref{GIT.SBB}, 
it also follows that $\overline{\mathcal{M}_{\rm K3}(a)}$ 
is a $(\Z/2\Z)$-quotient of the GIT compactification 
$\overline{M_W}$ of $M_{W}$ 
studied in our previous section \S\ref{Weier.mod.sec}. 
We denote the corresponding morphism by $c\colon \overline{M_W}\to \overline{\mathcal{M}_{\rm K3}(a)}$. 

Note that $\overline{\mathcal{M}_{\rm K3}(a)}$ is stratified as 
\[\overline{\mathcal{M}_{\rm K3}(a)}=
\mathcal{M}_{\rm K3}(a)\sqcup \mathcal{M}_{\rm K3}(c_1)
 \sqcup\mathcal{M}_{\rm K3}(c_2)\sqcup\mathcal{M}_{\rm K3}(d).\]
By the map $c$,
\begin{enumerate}
\item the stratum $M_{W}^{\rm nn, o}$ maps to $\mathcal{M}_{\rm K3}(c_{1})$, hence corresponding to 
the even unimodular lattice $\Gamma_{16}$ of rank $16$ 
(see also \S \ref{STK.MK3}), 
\item the stratum $M_{W}^{\rm seg, o}$ maps to $\mathcal{M}_{\rm K3}(c_{2})$, hence corresponding to 
the even unimodular lattice $E_{8}^{\oplus 2}$ of rank $16$, and  
\item the stratum $M_{W}^{\rm nn}\cap M_{W}^{\rm seg}$ maps to $\mathcal{M}_{\rm K3}(d)$.
\end{enumerate}
This can be seen for example by studying the locus of Kummer surfaces as in \S\ref{Kummer.confirm}.
\if 0

In \S\ref{F2d.sec},
 we compactified $\mathcal{F}_{2d}$
 as $\overline{\mathcal{F}_{2d}}^{{\rm Sat}}$ and
 defined a real $18$-dimensional boundary components
 $\mathcal{F}_{2d}(l)$ for $l=\mathbb{Q}e$
 with an isotropic primitive vector $e$. 
For each $e$, there exists a natural map 
 $\iota \colon \mathcal{F}_{2d}(l)\to M_W$ which sends 
$[\bar{v}]\in \mathcal{F}_{2d}(l)$ with $v^{2}=1$ 
to the Weierstrass model of elliptic K3 surface with 
period $\frac{1}{\sqrt{2d}}\lambda-\sqrt{-1}v$
 and with fiber class $e$. 

\begin{Prop}\label{F2d.MW}
For any isotropic $e\in \Lambda_{2d}$, 
the closure of the image of the $18$-dimensional 
boundary component 
$\iota(\mathcal{F}_{2d}(l))$ for $l=\Q e$ in $M_W$ 
is $\iota(\mathcal{F}_{2d}(l))\sqcup (M_W^{\rm nn} \cap M_W^{\rm seg})$. 
\end{Prop}

\begin{proof}
It follows from our identification Theorem~\ref{GIT.SBB} and the fact
 that the closure of our real $18$-dimensional boundary intersects
 with the $0$-dimensional cusp only, 
inside the Satake-Baily-Borel compactification. 
\end{proof}

\fi

\section{McLean metric and its asymptotic behavior}\label{ML.limit}

\subsection{Setting and the statements}

In this section we proceed to the differential geometric side, i.e., 
 the study of the McLean metrics on the base of  elliptic K3 surfaces
 by again using the Weierstrass model 
 and also the study of their Gromov-Hausdorff limits.

We use the setting of Theorem~\ref{ADE.Weier.K3} for Weierstrass K3 surfaces. 
That is, let $g_{8}$ and $g_{12}$ be polynomials in $t$ of degrees 8 and 12, respectively, 
 satisfying \eqref{stab.cond}. 
Then $X_{g_{8},g_{12}}^W:= [y^2z=4x^3-g_8(t)xz^2+g_{12}(t)z^3]$ is
 the Weierstrass model for Jacobian K3 surface. 
After \S\ref{LC.degen.subsub}, we put 
\begin{align*}
u:=\frac{x}{z}\ \text{ and }\ v:=\frac{y}{z},
\end{align*}
so that a holomorphic two form on $X_{g_{8},g_{12}}^W$ is given by
\begin{align*}
\Omega = \frac{du}{v}\wedge dt
 = \frac{du}{\sqrt{4u^3-g_{8}(t)u+g_{12}(t)}}\wedge dt.
\end{align*}
For $a, b \in \mathbb{C}$,  
 let $4u^3-au+b=4(u-\alpha)(u-\beta)(u-\gamma)$.
We assume $a^3-27b^2\neq 0$ so that $\alpha, \beta, \gamma\in \C$
 are distinct numbers.  

We denote the elliptic curve 
$[y^2z=4x^3-axz^2+bz^3]\subset \mathbb{P}^{2}$ by $E_{a,b}$ and 
define the function 
\begin{align*}
\mu(a,b)= \frac{1}{2}
 \,\Bigl|\int_{E_{a,b}}\frac{du}{v}\wedge \overline{\Bigl(\frac{du}{v}\Bigr)}\,\Bigr|. 
\end{align*}
If we define 
\begin{align*}
&\sigma_1(a,b)=\int_{\alpha}^{\infty} \frac{du}{\sqrt{(u-\alpha)(u-\beta)(u-\gamma)}}, \\
&\sigma_2(a,b)=\int_{\gamma}^{\infty} \frac{du}{\sqrt{(u-\alpha)(u-\beta)(u-\gamma)}}, 
\end{align*}
with suitable contours, 
$\mu(a,b)=|\!\im (\sigma_1(a,b)\overline{\sigma_2(a,b)})|$ holds. 
The McLean metric $\omega_{s,{\rm ML}}$ with respect to $\Omega$ is given by
 $$\mu(g_8(t),g_{12}(t)) dt\otimes d\bar{t},$$ as shown in 
 e.g.\ \cite{GW}. 
 It is well-known that this has bounded diameter, 
 as we write an elementary proof of it later for convenience as 
 Corollary \ref{McLean.finite.distance}. Therefore we get a distance structure on the base $\mathbb{P}^{1}$ and write  $B_{g_{8},g_{12}}$, 
 for this metric space. We define the following partial geometric realization, which will be related to $\Phi$ of \S\ref{Geom.Meaning} later. 

\begin{Def}\label{geo.real.ML}
We define the map 
\[\Phi_{\rm ML}:
 \overline{M_W}\to {\it CMet}_{1}\]
as follows (recall that ${\it CMet}_{1}$ denotes the set of isometry classes of compact metric spaces with diameter one equipped with the Gromov-Hausdorff topology); 
\begin{itemize}
\item 
For a point $p$ in $M_W$ represented by $(g_8,g_{12})$ we take the corresponding
 metric space $B_{g_{8},g_{12}}$. 
 \item 
A point in $M_W^{\rm nn}\setminus M_W^{\rm seg}$
 is represented by 
 $(g_{8},g_{12})=(3G_4(t)^2,G_4(t)^3)$
 for $G_4(t)=t(t-1)(t-2)(t-c)$ with $c\neq 0,1,2$. 
The metric 
 $|G_4(t)|^{-1}dt\otimes d\bar{t}$ 
 makes $\mathbb{P}^1$ the compact metric space,
 which we denote by $B_{G_4}$.  
 \item 
For a point $M_W^{\rm seg}$
 (including the intersection point $M_W^{\rm seg}\cap M_W^{\rm nn}$),
 we take a segment. 
 \end{itemize}
Then, the map $\Phi_{\rm ML}$ is defined by assigning
 the spaces above with the metrics
 rescaled so that the diameters become $1$. 
\end{Def}

The main theorem of this section is as follows. 

\begin{Thm}\label{Mwbar.GH.conti}
The map 
\[\Phi_{\rm ML}:
 \overline{M_W}\to {\it CMet}_{1}
\]
 defined above is continuous.
\end{Thm}

\begin{Rem}\label{geo.real.MW.MK3}
It is easy to see that the two geometric realization maps $\Phi$ defined
 in Definition~\ref{geo.real}, and $\Phi_{\rm ML}$ above are
 related in the sense that $\Phi|_{\overline{\mathcal{M}_{\rm K3}(a)}} \circ c=\Phi_{\rm ML}$,
 where $c$ is the quotient map by $\Z/2\Z$ given in \S\ref{MK3.MW}. 
Hence Theorem~\ref{Mwbar.GH.conti} implies Theorem~\ref{K3a.GH.conti}.
\end{Rem}

We split the proof of Theorem~\ref{Mwbar.GH.conti} to 
that of Propositions~\ref{prop:MW.conti}, \ref{prop:nn.conti} and 
\ref{prop:seg.conti}. The completion of the proof will be at 
the end of this whole section. 

Before going to their precise proofs of each proposition, 
we explain below in \S\ref{seg.infinite} that 
the last assignment of the segment in Definition~\ref{geo.real.ML} 
by $\Phi_{\rm ML}$ is partially motivated by the 
following analysis. 

\subsection{Parametrizing open surfaces of infinite diameters at $M_W^{\rm seg}$}\label{seg.infinite} 

For any $X_{g_{8},g_{12}}^{W}$ parametrized in the $1$-dimensional 
boundary (\ref{ell.sing.degen}), i.e., for $g_{8}, g_{12}$ with 
some $p\in \mathbb{P}^{1}$ such that 
\begin{align}\label{destabilizing}
\min\{3v_{p}(g_{8}),2v_{p}(g_{12})\}=12,
\end{align}
and we call $p$ satisfying the above equality \eqref{destabilizing}
 {\it destabilizing}. 
There are at most two destabilizing points. 
We can still think of the McLean metric on the base of such
 an elliptic K3 surface as the generic $\pi$-fibers
 are still elliptic curves and $X_{g_{8},g_{12}}^{W}$ is 
Gorenstein with trivial canonical bundle. 
Take its nonzero holomorphic section as $\sigma_{X}$. Then recall that 
the McLean metric on $\mathbb{P}^{1}\setminus {\rm disc}(\pi)$ 
is defined as $$g(v,w):=-\int_{\pi^{-1}(q)} 
\iota(\tilde{v})\re \sigma_{X}\wedge 
\iota(\tilde{w})\im \sigma_{X},$$
where $v, w\in T_{q}\mathbb{P}^{1}$ with $q\in U\setminus \{p\}$ and 
$\tilde{v}$ (resp., $\tilde{w}$) is a lift of $v$ (resp., $w$). 
The definition of the above metric does not depend on the choice of such lifts. 
By Corollary~\ref{McLean.finite.distance} or 
 \cite[Proposition~2.1]{GTZ2} for instance, 
at least away from 
 one or two destabilizing points, the diameters are bounded. 

Now, we take a minimal resolution of $X=X_{g_{8},g_{12}}^{W}$ 
and denote it by $\varphi\colon \tilde{X}\to X$. Then we take the relative 
minimal model of $\tilde{X}$ over $\mathbb{P}^{1}$ and denote it by 
$\pi_{{\rm min}}\colon X_{{\rm min}}\to \mathbb{P}^{1}$. The 
composite of vertical $(-1)$-curves contraction obtained as a result of 
the relative minimal model program, will be denoted by $\psi\colon \tilde{X}
\to X_{{\rm min}}$. Since $K_{X_{{\rm min}}}$ is relatively 
$\pi_{{\rm min}}$-trivial, we can trivialize it on $\pi_{\rm min}^{-1}(U)$ for a small 
open neighborhood $U$ of 
$p$ for 
each destabilizing $p$. We take a generating holomorphic section of 
$K_{X_{{\rm min}}}|_{\pi_{\rm min}^{-1}(U)}$ as $\sigma_{{\rm min}}$. 

Now we want to compare our McLean metric $g$ on 
$U(\setminus \{p\})$ with $g_{{\rm min}}$ on $U$ defined as 
$$g_{{\rm min}}(v,w):=-\int_{\pi_{{\rm min}}^{-1}(q)}
\iota(\tilde{v})\re \sigma_{{\rm min}}
\wedge \iota(\tilde{w})\im \sigma_{{\rm min}},$$
where $v, w\in T_{q}\mathbb{P}^{1}$ with $q\in U\setminus \{p\}$, 
$\tilde{v}$ (resp., $\tilde{w}$) is a lift of $v$ (resp., $w$). 
By \cite[Proposition~2.1]{GTZ2}, we know that $g_{{\rm min}}$ is a bounded usual 
K\"ahler metric through $p$, on whole $U$. 

Since $X$ has non-canonical (i.e., non-ADE) Gorenstein singularity, 
for our minimal resolution $\varphi\colon \tilde{X}\to X$, we have that 
$K_{\tilde{X}}-\varphi^{*}K_{X}$ is non-zero anti-effective, more strongly, 
all the coefficients along exceptional divisors are negative 
due to the negativity lemma (cf., \cite[Lemma 3.39]{KM}) and the $\varphi$-nefness. 
This implies that if we write 
$$\varphi^{*}(\sigma_{X}|_{\pi^{-1}(U)})=f\cdot \psi^{*}\sigma_{{\rm min}},$$ 
then $f\colon U\to \mathbb{C}$ is holomorphic and 
vanishes only at the destabilizing points $p$. 
Therefore, we have 
\begin{align*}
g(v,w)&=-\int_{(\pi\circ\varphi)^{-1}(q)}
\iota(\tilde{v})\re\varphi^{*}\sigma_{X}\wedge 
\iota(\tilde{w})\im\varphi^{*}\sigma_{X}\\
&=-\int_{(\pi\circ\varphi)^{-1}(q)}
\dfrac
{\iota(\tilde{v})\re\psi^{*}\sigma_{{\rm min}}\wedge 
\iota(\tilde{w})\im\psi^{*}\sigma_{{\rm min}}}
{|f|^{2}}\\ 
&=\dfrac{g_{{\rm min}}(v,w)}{|f|^{2}}. 
\end{align*}
Recall that as ${\rm deg}(g_{8})=8$ and (\ref{destabilizing}), 
we only have at most two destabilizing points. 
Therefore, this $g$ looks ``infinitely long surface" with either one or two 
punctured ends. Later in \S\ref{sec:conv.to.MWseg}, 
we rigorously show the corresponding convergence toward the segment 
of length $1$.

\subsection{General estimate of McLean metric}

To see the asymptotic behavior of the McLean metrics
 we need the following estimate of $\mu$, 
 in which $\log|a^{3}-27b^{2}|$ is the key term. 
\begin{Lem}\label{ML.estimate}
There exist constants $c>0$ and $C>0$ such that 
\begin{align}\label{ell.integral.estimate}
 -C\leq 
 (|a|^{\frac{1}{2}}+|b|^{\frac{1}{3}}) \mu(a,b)
  + c \log \frac{|a^3-27b^2|}{|a|^3+27|b|^2} \leq C
\end{align}
for any $a,b \in \mathbb{C}$ with $a^3-27b^2\neq 0$.
\end{Lem}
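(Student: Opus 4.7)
The plan is to use the weighted scaling symmetry to reduce the estimate to a compact region in parameter space, and then quantify the logarithmic blow-up of $\mu$ near the discriminant curve $D:=\{a^{3}=27b^{2}\}$ via the classical $q$-expansion of the $j$-invariant. First I would observe that under $(a,b)\mapsto(\lambda^{2}a,\lambda^{3}b)$ with $\lambda\in\R_{>0}$, the substitution $u\mapsto\lambda u$ in the elliptic integrals gives $\sigma_{i}\mapsto\lambda^{-1/2}\sigma_{i}$ and hence $\mu(a,b)\mapsto\lambda^{-1}\mu(a,b)$, while $|a|^{1/2}+|b|^{1/3}$ scales by $\lambda$ and both $|a^{3}-27b^{2}|$ and $|a|^{3}+27|b|^{2}$ scale by $\lambda^{6}$. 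All terms in the claimed inequality are therefore scale-invariant, so it suffices to prove the bound on the compact set $\Sigma:=\{(a,b)\in\C^{2}\mid|a|^{3}+27|b|^{2}=1\}$, where it becomes boundedness of $(|a|^{1/2}+|b|^{1/3})\mu(a,b)+c\log|a^{3}-27b^{2}|$.

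Away from $\Sigma\cap D$ the periods vary real-analytically with bounded norm and both $\mu$ and $\log|a^{3}-27b^{2}|$ are bounded, so the substance is the asymptotics as $(a,b)\to\Sigma\cap D$. I would parameterize $D$ by $(a,b)=(12\alpha^{2},8\alpha^{3})$ so that $4u^{3}-au+b=4(u-\alpha)^{2}(u+2\alpha)$; the constraint $|a|^{3}+27|b|^{2}=1$ then reads $3456|\alpha|^{6}=1$, forcing $|\alpha|$ to be a \emph{constant} $|\alpha|_{0}:=3456^{-1/6}$ on $\Sigma\cap D$. Consequently both $|a|^{1/2}+|b|^{1/3}=2(\sqrt{3}+1)|\alpha|_{0}$ and the limiting vanishing-cycle magnitude $|\sigma_{\mathrm{van}}|^{2}=\pi^{2}/(3|\alpha|_{0})$---computed via the substitution $u=\tfrac{\alpha+\alpha'}{2}+\tfrac{\alpha-\alpha'}{2}s$ for the pair of colliding roots together with $\int_{-1}^{1}ds/\sqrt{1-s^{2}}=\pi$---are constants along $\Sigma\cap D$. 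Writing $\mu=|\sigma_{\mathrm{van}}|^{2}\,\im\tau$ for $\tau:=\sigma_{*}/\sigma_{\mathrm{van}}\in\mathbb{H}$, and combining the classical $q$-expansion $j(\tau)=q^{-1}+744+O(q)$ (with $q=e^{2\pi i\tau}$) with the Weierstrass formula $j=1728\,a^{3}/(a^{3}-27b^{2})$, I obtain $\im\tau=-\tfrac{1}{2\pi}\log|a^{3}-27b^{2}|+O(1)$ uniformly on $\Sigma$. Setting $c:=\pi(\sqrt{3}+1)/3$ then yields $(|a|^{1/2}+|b|^{1/3})\mu(a,b)=-c\log|a^{3}-27b^{2}|+O(1)$ near $\Sigma\cap D$.

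The hard part will be justifying that this $O(1)$ error is genuinely uniform up to $D$: a priori the discrepancy $\bigl[|\sigma_{\mathrm{van}}(a,b)|^{2}-\pi^{2}/(3|\alpha|_{0})\bigr]\cdot\log|a^{3}-27b^{2}|$ could blow up, and a parallel concern would be if the constant $c$ had to vary between different portions of $\Sigma\cap D$. Both worries dissolve simultaneously: the Picard--Lefschetz theorem tells me that the vanishing cycle class is monodromy-invariant up to sign, so $|\sigma_{\mathrm{van}}|^{2}$ extends smoothly across $D$, giving $\bigl||\sigma_{\mathrm{van}}(a,b)|^{2}-\pi^{2}/(3|\alpha|_{0})\bigr|=O(\mathrm{dist}((a,b),D))$, while transversality of $\Sigma$ and $D$ along $\Sigma\cap D$ (valid since $|\alpha|_{0}\neq 0$) combined with the fact that the complex function $\Delta=a^{3}-27b^{2}$ has nonvanishing differential along $D$ gives $\mathrm{dist}((a,b),D)\asymp|a^{3}-27b^{2}|$ on $\Sigma$ near $\Sigma\cap D$; hence the offending product is of order $|\Delta|\cdot|\log|\Delta||=o(1)$. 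The clean geometric fact underlying the lemma---and the reason a \emph{single} constant $c$ does the job---is the constancy of $|\alpha|$ along the whole of $\Sigma\cap D$, which is itself a direct consequence of the homogeneity degrees of $a$ and $b$ matching the exponents $\tfrac{1}{2},\tfrac{1}{3}$ appearing in the estimate.
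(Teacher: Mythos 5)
Your proposal is correct but takes a genuinely different route from the paper. You reduce by the real scaling $\lambda\in\R_{>0}$ to the compact $3$-sphere $\Sigma=\{|a|^{3}+27|b|^{2}=1\}$, then feed the Weierstrass formula $j=1728\,a^{3}/\Delta$ into the classical $q$-expansion of $j$ to read off $\im\tau=-\tfrac{1}{2\pi}\log|\Delta|+O(1)$, and finally compute the limiting vanishing-cycle period (which happens to be constant along $\Sigma\cap D$ because $|\alpha|$ is pinned by the normalization). The paper instead uses the full complex scaling to normalize $\alpha=1$, trading the $3$-sphere for a compact plane domain in the variable $\xi=\gamma+\tfrac12$, and then estimates the two elliptic integrals $I_{1}(\xi)$, $I_{2}(\xi)$ directly — observing that $I_{1}$ is holomorphic on a neighborhood of the compact domain while $I_{2}(\xi)+c_{1}\log|\xi|$ is bounded, by elementary manipulation of the integrand. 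Both arguments boil down to the same scale-invariance and the same geometric fact that the finite period extends holomorphically across the discriminant; your route pays with modular-function machinery for a cleaner identification of the constant $c$, while the paper's is more self-contained and never mentions $\tau$ or $j$. Two small remarks: (i) your claim that $\mu=|\sigma_{\rm van}|^{2}\im\tau$ needs the choice $\tau=\sigma_{*}/\sigma_{\rm van}$ with $\im\tau\to\infty$, and one should check this representative actually lies (up to integer translation) in the standard fundamental domain — which holds since $\sigma_{\rm van}$ is the shortest period near $D$ — so the $q$-expansion applies; (ii) the Picard–Lefschetz formula gives that the vanishing cycle is actually monodromy-\emph{invariant} (not just up to sign) for a $1$-cycle, so $\sigma_{\rm van}$ is single-valued and extends holomorphically across $D$; the $O(\mathrm{dist})$ bound then follows from transversality of $\Sigma$ and $D$ exactly as you say, so the offending error term $O(\mathrm{dist})\cdot\log|\Delta|$ indeed vanishes.
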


\begin{proof} 
The proof is essentially done by estimating 
elliptic integrals. 
Let $4u^3-au+b=4(u-\alpha)(u-\beta)(u-\gamma)$.
We may assume $0\neq |\alpha|\geq |\beta|, |\gamma|$
 and $\re \beta \geq \re \gamma$.
If we replace $a$ and $b$ by $\lambda^2a$ and $\lambda^3 b$
 for a complex number $\lambda$,
 then $\alpha,\beta,\gamma$ are replaced by
 $\lambda\alpha, \lambda\beta, \lambda\gamma$, respectively.
Then $\mu(\lambda^2 a, \lambda^3 b) = |\lambda|^{-1}\mu(a,b)$.
Hence the equation \eqref{ell.integral.estimate} for $a,b$
 and that for $\lambda^2a,\lambda^3b$ are equivalent.
We may thus assume $\alpha=1$.
Since $\alpha+\beta+\gamma=0$, we have $\beta+\gamma=-1$ and
 $\re \beta \geq -\frac{1}{2} \geq \re \gamma$.
Putting $\xi:=\gamma+\frac{1}{2}$,
 we have $\re \xi \leq 0$ and $|\xi-\frac{1}{2}|\leq 1$.

Let $D:=\{z\in \mathbb{C} : |z-\frac{1}{2}|\leq 1
 \text{ and }\re z\leq 0\}$.
By the above argument, it is enough to prove \eqref{ell.integral.estimate}
 for $\xi \in D$ and 
\begin{align}\label{a.b.condition}
 \nonumber
&\alpha=1,\quad \beta=-\frac{1}{2}-\xi,\quad \gamma=-\frac{1}{2}+\xi \\
&a=-4(\alpha\beta+\beta\gamma+\gamma\alpha)
 =4\xi^2+3, \\ \nonumber
&b=-4\alpha\beta\gamma=4\xi^2-1.
\end{align}
We note that both $|a|^{\frac{1}{2}}+|b|^{\frac{1}{3}}$ 
and $|a|^{3}+27|b|^{2}$, which appear in Lemma~\ref{ML.estimate}, 
are bounded and away from zero 
 when $\xi\in D$ and is homogeneous with respect to the 
 variable change of $\alpha, \beta, \gamma$ by 
 $\lambda \alpha, \lambda \beta, \lambda \gamma$. 
Moreover,
\begin{align*}
a^3-27b^2
=16((\alpha-\beta)(\beta-\gamma)(\gamma-\alpha))^2
=4\xi^2(9-4\xi^2)^2.
\end{align*}
Hence $\log |a^3-27b^2| - 2 \log |\xi|$
 is bounded.
As a consequence, it suffices to prove that 
\begin{Claim}\label{ML.estimate2}
there exist $c>0$ and $C>0$ such that
\begin{align}\label{ell.integral.estimate2}
 -C\leq 
 (|a|^{\frac{1}{2}}+|b|^{\frac{1}{3}})\mu(a,b) + c \log |\xi|
 \leq C
\end{align}
for $\xi\in D$ and $a,b$ as in \eqref{a.b.condition}. 
\end{Claim}

We choose the contour for elliptic integral for $\sigma_1(a,b)$ to be
 the half-line $u=1+s\ (s\in \mathbb{R}_{\geq 0})$.
We calculate
\begin{align*}
\sigma_1(a,b)=\int_{0}^{\infty}
 \frac{ds}{\sqrt{s(s+\xi+\frac{3}{2})(s-\xi+\frac{3}{2})}}=:I_1(\xi).
\end{align*}
Note that $\re \bigl(s(s+\xi+\frac{3}{2})(s-\xi+\frac{3}{2})\bigr) > 0$
 for $s>0$.
Then we choose a branch of the square root
 by $\re \sqrt{s(s+\xi+\frac{3}{2})(s-\xi+\frac{3}{2})} > 0$. 
The integral is absolutely convergent and 
 $I_1(\xi)$ is holomorphic on a neighborhood of $D$.
Hence
\begin{align}\label{I1.estimate}
|I_1(\xi)-I_1(0)|\leq |\xi| C_1,
\end{align}
 where $C_1$ is independent of $\xi\in D$.

Similarly, we choose the contour for 
 $\sigma_2(a,b)$ to be the half-line
 $u=\gamma+s\frac{\xi}{|\xi|}
 = \xi-\frac{1}{2}+s\frac{\xi}{|\xi|} \ (s\in \mathbb{R}_{\geq 0})$.
Then
\begin{align*}
\sigma_2(a,b)
&=\int_{0}^{\infty}
 \frac{\frac{\xi}{|\xi|}ds}
 {\sqrt{(s\frac{\xi}{|\xi|}+\xi-\frac{3}{2})
 (s\frac{\xi}{|\xi|}+2\xi)
 s\frac{\xi}{|\xi|}}}\\
&=\int_{0}^{\infty}
 \frac{ds}{\sqrt{s(s+|2\xi|)
 (s\frac{\xi}{|\xi|}+\xi-\frac{3}{2})}}=:I_2(\xi).
\end{align*}
Note that 
 $\re \Bigl(s\frac{\xi}{|\xi|}+\xi-\frac{3}{2}\Bigr) < 0$.
Then we choose a branch of the square root by 
 $\im \sqrt{s\frac{\xi}{|\xi|}+\xi-\frac{3}{2}} > 0$
 and $\sqrt{s(s+|2\xi|)} \geq 0$. 
It is easy to see that the integral from $s=1$ to $+\infty$:
\begin{align*}
\int_{1}^{\infty}
 \frac{ds}{\sqrt{s(s+|2\xi|)
 (s\frac{\xi}{|\xi|}+\xi-\frac{3}{2})}}
\end{align*}
is uniformly bounded for $\xi\in D$.
To estimate the integral from $s=0$ to $1$, we calculate 
\begin{align*}
\frac{1}{\sqrt{s\frac{\xi}{|\xi|}+\xi-\frac{3}{2}}}
-\frac{1}{\sqrt{-\frac{3}{2}}}
=\frac{-\bigl(s\frac{\xi}{|\xi|}+\xi\bigr)}
 { \sqrt{-\frac{3}{2}\bigl(s\frac{\xi}{|\xi|}+\xi-\frac{3}{2}\bigr)}
 \Bigl(
 \sqrt{-\frac{3}{2}} + \sqrt{s\frac{\xi}{|\xi|}+\xi-\frac{3}{2}}
 \Bigr)}.
\end{align*}
Since
 $\sqrt{-\frac{3}{2}\bigl(s\frac{\xi}{|\xi|}+\xi-\frac{3}{2}\bigr)}
 \Bigl(
 \sqrt{-\frac{3}{2}} + \sqrt{s\frac{\xi}{|\xi|}+\xi-\frac{3}{2}}
 \Bigr)$
 is away from zero for $\xi\in D$ and $s\geq 0$, we get 
\begin{align*}
\Biggl|\frac{1}{\sqrt{s\frac{\xi}{|\xi|}+\xi-\frac{3}{2}}}
-\frac{1}{\sqrt{-\frac{3}{2}}}\Biggr|
\leq C_2 (s+|\xi|)
\end{align*}
for a constant $C_2$ which does not depend on $\xi\in D$.
Then 
\begin{align*}
\Biggl|\int_{0}^{1}
 \Biggl(
 \frac{1}{\sqrt{s\frac{\xi}{|\xi|}+\xi-\frac{3}{2}}}
 -\frac{1}{\sqrt{-\frac{3}{2}}}
 \Biggr)
 \frac{ds}{\sqrt{s(s+|2\xi|)}}\Biggr|
\leq \int_{0}^{1}
 \frac{C_2(s+|\xi|) ds}{\sqrt{s(s+|2\xi|)}}
\end{align*}
 and the right hand side is bounded by a constant.
On the other hand,
\begin{align*}
\int_{0}^{1}
 \frac{ds}{\sqrt{-\frac{3}{2}s(s+|2\xi|)}} 
&=\frac{1}{\sqrt{-\frac{3}{2}}}
 \left[\log \bigl(s+|\xi|+\sqrt{s(s+|2\xi|)}\bigr)\right]_{s=0}^1 \\
&=\frac{1}{\sqrt{-\frac{3}{2}}}
 \bigl(\log \bigl(1+|\xi|+\sqrt{1+|2\xi|}\bigr)
 - \log |\xi| \bigr).
\end{align*}
Since $\log \bigl(1+|\xi|+\sqrt{1+|2\xi|})$ is bounded,
\begin{align*}
\frac{\log|\xi|}{\sqrt{-\frac{3}{2}}}
+ \int_{0}^{1}
 \frac{ds}{\sqrt{-\frac{3}{2}s(s+|2\xi|)}} 
\end{align*}
is also bounded.
Therefore, putting $c_1:=\frac{1}{\sqrt{-\frac{3}{2}}}$, we have
\begin{align}\label{I2.estimate}
\left|I_2(\xi)+c_1\log|\xi| \right| \leq C_3
\end{align}
for some constant $C_3$.

In addition, there is a constant $C_4$ such that
\begin{align}\label{ab.estimate}
\bigl| |a|^{\frac{1}{2}}+|b|^{\frac{1}{3}} - (\sqrt{3}+1) \bigr|
\leq C_4|\xi|.
\end{align}

Combining \eqref{I1.estimate} and \eqref{I2.estimate},
 we get
\begin{align*}
\bigl|I_1(\xi)\overline{I_2(\xi)}+\bar{c}_1 I_1(0)\log|\xi|\bigr|\leq C.
\end{align*}
Then by \eqref{ab.estimate},
 $c_1\in \sqrt{-1}\mathbb{R}_{<0}$ and
 $I_1(0)\in \mathbb{R}_{>0}$, 
\begin{align*}
-C\leq
(|a|^{\frac{1}{2}}+|b|^{\frac{1}{3}}) \left|\im (I_1(\xi)\overline{I_2(\xi)})\right|
 + c \log|\xi|\leq C
\end{align*}
for $c:=(\sqrt{3}+1)\im (\bar{c}_1I_1(0))>0$,
 which shows \eqref{ell.integral.estimate2}. 
 Therefore, we obtain the proof of Claim~\ref{ML.estimate2} and 
 thus that of Claim~\ref{ML.estimate} as well. 
\end{proof}

From this, in particular, we see that the McLean metric has finite diameter. 
This is well-known (cf., e.g., \cite{GTZ2}) but we include it for convenience. 
\begin{Cor}\label{McLean.finite.distance}
The base $\mathbb{P}^{1}$ of any elliptic K3 surface 
with the above McLean metric has finite diameter. 
\end{Cor}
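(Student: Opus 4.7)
The plan is to deduce the corollary directly from the estimate of Lemma~\ref{ML.estimate} together with the stability condition~\eqref{stab.cond}. Since $\mathbb{P}^{1}$ is covered by two affine charts $\{t\in\C\}$ and $\{s=1/t\in\C\}$, and since in each chart the McLean metric is of the form $\mu(g_{8}(t),g_{12}(t))\,|dt|^{2}$ with $\mu$ continuous and positive on the complement of a finite set (the discriminant locus together with $t=\infty$), the proof reduces to a local integrability statement. Concretely, it suffices to show that at each potential singular point $t_{0}\in \mathbb{P}^{1}$ there is a neighborhood $U$ of $t_{0}$ on which the length element $\sqrt{\mu(g_{8}(t),g_{12}(t))}\,|dt|$ is integrable, since any two points of $\mathbb{P}^{1}$ can then be joined by a path of finite length.

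The main step is the local estimate near such a $t_{0}$. Write $m:=v_{t_{0}}(g_{8})$ and $n:=v_{t_{0}}(g_{12})$; the stability condition \eqref{stab.cond} reads $\min\{3m,2n\}<12$, equivalently $\min\{m/2,n/3\}<2$. Denoting $a=g_{8}(t)$, $b=g_{12}(t)$, one has $|a|^{\frac{1}{2}}+|b|^{\frac{1}{3}}\sim|t-t_{0}|^{\min(m/2,n/3)}$ and $|a|^{3}+27|b|^{2}\sim |t-t_{0}|^{\min(3m,2n)}$, while the discriminant $|a^{3}-27b^{2}|$ vanishes to some finite order $N\geq \min(3m,2n)$ at $t_{0}$ (finiteness being ensured since $g_{8}^{3}-27g_{12}^{2}\not\equiv 0$ on each irreducible factor by Theorem~\ref{ADE.Weier.K3}). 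Lemma~\ref{ML.estimate} then yields, for $t$ near $t_{0}$,
\[
\mu(g_{8}(t),g_{12}(t)) \;\leq\; \frac{C_{1}+C_{2}\,\log|t-t_{0}|^{-1}}{|t-t_{0}|^{\min(m/2,\,n/3)}},
\]
so that
\[
\sqrt{\mu(g_{8}(t),g_{12}(t))}\;\lesssim\; |t-t_{0}|^{-\alpha}\,\sqrt{\log|t-t_{0}|^{-1}}
\qquad\text{with }\alpha:=\tfrac{1}{2}\min(m/2,n/3)<1.
\]

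Integrability of such a function against $|dt|$ (equivalently, against $r\,dr\,d\theta$ in polar coordinates $t-t_{0}=re^{i\theta}$) follows from $\int_{0}^{\varepsilon} r^{1-2\alpha}\sqrt{\log r^{-1}}\,\frac{dr}{r}<\infty$, which holds precisely because $\alpha<1$. Thus near every point of the finite set $(\text{disc}(g_{8}^{3}-27g_{12}^{2}))\cup\{g_{8}=g_{12}=0\}$ the length element is integrable, and the same argument applied in the chart near $s=1/t=0$ (after the standard rescaling which replaces $g_{8},g_{12}$ by their transforms in the Weierstrass data) handles the point at infinity. Since the path-length metric is bounded on the relatively compact complement of these finitely many punctured neighborhoods, we conclude that $\mathrm{diam}(\mathbb{P}^{1})<\infty$.

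The hard part is already done: it is precisely the elliptic-integral estimate of Lemma~\ref{ML.estimate}, and the crucial sharp matching between the pole exponent coming from $(|a|^{1/2}+|b|^{1/3})^{-1}$ and the Kodaira/Miranda stability bound $\min\{3m,2n\}<12$. Beyond this, the argument is routine bookkeeping of local exponents; no further analytic input is required, and the behavior at $t=\infty$ is dealt with by the same local estimate after the standard coordinate change.
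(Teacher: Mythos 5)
Your proposal is essentially the paper's own argument: reduce to local integrability of the length element near each discriminant point, use Lemma~\ref{ML.estimate} together with the stability bound \eqref{stab.cond} to control the vanishing order of $|g_8|^{1/2}+|g_{12}|^{1/3}$, and conclude that $\sqrt{\mu}$ has an integrable singularity. The exponent bookkeeping is correct, and the key point (that $\min(m/2,n/3)<2$ keeps the pole of $\sqrt{\mu}$ strictly below order $1$) is exactly what the paper uses, though the paper observes the sharper bound $\min(m/2,n/3)\le 5/3$ from \eqref{stab.cond}.

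One slip in the write-up: you correctly derive $\sqrt{\mu}\lesssim r^{-\alpha}\sqrt{\log r^{-1}}$ with $\alpha=\tfrac12\min(m/2,n/3)<1$, so the length integral along a radial path is $\int_0^\varepsilon r^{-\alpha}\sqrt{\log r^{-1}}\,dr=\int_0^\varepsilon r^{1-\alpha}\sqrt{\log r^{-1}}\,\tfrac{dr}{r}$. The displayed integral instead has $r^{1-2\alpha}$, which would require $\alpha<\tfrac12$; that is stronger than what the estimate gives and is not guaranteed. Replace $r^{1-2\alpha}$ by $r^{1-\alpha}$, and the assertion ``converges because $\alpha<1$'' becomes correct. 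Relatedly, the parenthetical ``(equivalently, against $r\,dr\,d\theta$)'' conflates the $1$-dimensional length element with the $2$-dimensional area element; what matters for finite diameter is the line integral $\int\sqrt{\mu}\,|dt|$, not area integrability of $\mu$. These are cosmetic slips; the argument itself is sound and matches the paper's.
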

\begin{proof}
Obviously, the nontrivial part is about the asymptotic behavior of $\mu$ around the 
discriminants which follows from Lemma~\ref{ML.estimate}. 
Indeed, from Lemma~\ref{ML.estimate} and the stability condition
 \eqref{stab.cond}, for any discriminant point
 $t_0\in \mathbb{C}\subset \mathbb{P}^{1}$, 
$$|\mu(g_{8}(t),g_{12}(t))|=O((t-t_0)^{-2+\epsilon})$$ for small enough $\epsilon>0$  
(more precisely, $O((t-t_0)^{-\frac{5}{3}}\cdot \log|t-t_0|^{-1})$) 
around neighborhoods of $t_0$. Therefore the assertion holds. 
The same argument works for $t_0=\infty$ after 
a transformation by ${\rm Aut}(\mathbb{P}^{1})$. 
\end{proof}

We improve above Corollary \ref{McLean.finite.distance}
 in the proof of Proposition~\ref{prop:MW.conti}
 to a version which is uniform with respect to
 bounded variation of elliptic K3 surfaces. 
See also closely related results in \cite{Yos10}, \cite[Proposition 2.1]{GTZ2}, \cite[Theorem A]{EMM17}, and 
\cite[Theorem 3.4]{TZ} which discuss higher dimensional generalization. 

We also need the following lower bound of $\mu$:
\begin{Lem}\label{ML.estimate3}
There exists a constant $c'>0$ such that 
\begin{align*}
 c'\leq 
 (|a|^{\frac{1}{2}}+|b|^{\frac{1}{3}}) \mu(a,b) 
\end{align*}
for any $a,b \in \mathbb{C}$ with $a^3-27b^2\neq 0$.
\end{Lem}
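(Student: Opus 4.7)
The plan is to reduce to the same compact parameter region $D$ used in the proof of Lemma~\ref{ML.estimate}, then combine a continuity/compactness argument with the divergence near the discriminant already extracted in that proof. First I would observe that the quantity $(|a|^{\frac{1}{2}}+|b|^{\frac{1}{3}})\mu(a,b)$ is invariant under the scaling $(a,b)\mapsto(\lambda^2 a,\lambda^3 b)$, since $\mu(\lambda^2a,\lambda^3b)=|\lambda|^{-1}\mu(a,b)$ and $|\lambda^2 a|^{\frac{1}{2}}+|\lambda^3 b|^{\frac{1}{3}}=|\lambda|(|a|^{\frac{1}{2}}+|b|^{\frac{1}{3}})$. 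Using this freedom to normalize the root of largest modulus $\alpha$ to $1$, we may assume $\xi:=\gamma+\tfrac{1}{2}\in D$ and $a=4\xi^2+3$, $b=4\xi^2-1$, exactly as in Lemma~\ref{ML.estimate}.

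Next I would verify that $|a|^{\frac{1}{2}}+|b|^{\frac{1}{3}}$ has a positive lower bound on $D$. The equations $a=0$ and $b=0$ require $\xi^2=-\tfrac{3}{4}$ and $\xi^2=\tfrac{1}{4}$ respectively, which are incompatible for any $\xi\in\mathbb{C}$; therefore $|a|+|b|>0$ on all of $D$, and by continuity on the compact set $D$ one gets a uniform lower bound $|a|^{\frac{1}{2}}+|b|^{\frac{1}{3}}\geq \kappa>0$.

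It then suffices to show $\mu(a,b)$ is bounded below by a positive constant on $D$. Since $a^3-27b^2=4\xi^2(9-4\xi^2)^2$ vanishes on $D$ only at $\xi=0$, the function $\mu$ is continuous and strictly positive on the compact set $D\setminus\{0\}$ (indeed $\mu>0$ wherever $E_{a,b}$ is a smooth elliptic curve). The only potential failure of a positive lower bound is thus at $\xi=0$, and near that point the proof of Lemma~\ref{ML.estimate} already establishes $|I_2(\xi)+c_1\log|\xi||\leq C_3$ with $c_1\in\sqrt{-1}\mathbb{R}_{<0}$, while $I_1(\xi)\to I_1(0)>0$; hence $\mu(a,b)=|\im(I_1(\xi)\overline{I_2(\xi)})|\to +\infty$ as $\xi\to 0$. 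Combining continuity on $D\setminus\{0\}$ with this blow-up at $\xi=0$ yields a uniform lower bound $\mu(a,b)\geq m>0$ on $D$, and then $c':=\kappa m$ works.

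The statement is structurally straightforward once the scaling reduction is in place; no new integral estimate is needed beyond what Lemma~\ref{ML.estimate} already provides. The only mild obstacle will be recording the reduction to $\alpha=1$ cleanly (checking, in particular, that the normalization is attainable for every $(a,b)$ with $a^3-27b^2\neq 0$ and that the resulting $\xi$ indeed lies in $D$), but this is the same procedure used at the start of the proof of Lemma~\ref{ML.estimate} and can be quoted essentially verbatim.
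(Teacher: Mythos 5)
Your proof is correct and follows essentially the same route as the paper's: normalize via the scaling invariance so that $\xi\in D$, then combine continuity on the compact region with the logarithmic blow-up of the elliptic integrals near $\xi=0$ (drawn from the proof of Lemma~\ref{ML.estimate}) to obtain a uniform lower bound. The paper phrases this more tersely (it invokes Claim~\ref{ML.estimate2} directly and has already observed that $|a|^{\frac{1}{2}}+|b|^{\frac{1}{3}}$ is bounded away from zero on $D$ in the course of proving Lemma~\ref{ML.estimate}), but your explicit check that $a$ and $b$ cannot vanish simultaneously on $D$ is a clean way to supply that step.
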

\begin{proof}
We follow the notation in the proof of Lemma~\ref{ML.estimate}.
We may and do assume \eqref{a.b.condition}.
Then it is easy to see that as a function of $\xi$,
 $\mu(a,b)$ is continuous on the closure $\overline{D}$
 except for $\xi=0$.
By Claim~\ref{ML.estimate2}, $\mu(a,b)$ becomes large when $\xi$ is near $0$.
Therefore, $\mu(a,b)$ is bounded from below by a positive constant.
The lemma follows from this.
\end{proof}

\subsection{Preparing elementary estimates}

When applying Lemma~\ref{ML.estimate},
 the following general estimates Lemma~\ref{integral.estimate0}
 and Lemma~\ref{integral.estimate} are useful 
 for estimating the length of paths with respect to 
 the McLean metrics. 
To discuss them, we define a function 
\begin{align*}
\log^+ x := \begin{cases} \log x & \text{ if $x\geq 1$} \\
 0 &  \text{ if $0<x<1$.}\end{cases}
\end{align*}
Note that $\log^+ (xy)\leq \log^+ x + \log^+ y$. 
Then define a function $F(t)$ on $\C$ by 
\begin{align}\label{F.def}
F(t):= \prod_{i=1}^m |t-u_i|^{-2w_i} \times
 \log^+ \Bigl( C_0 \prod_{i=1}^n |t-v_i|^{-1}\Bigr)
\end{align}
for $m,n\in \Z_{\geq 0}$, $C_0>0$, $w_i>0$, and $u_i , v_i\in \C$.

The following lemma will be used later when estimating
 the length of a segment connecting $2$ points in $\mathbb{P}^{1}$
 with respect to the McLean metric. 

\begin{Lem}[Elementary integral estimate 1]\label{integral.estimate0}
Let $m, n$ be nonnegative integers.
Let $w_i>0 \ (1\leq i\leq m)$, $w:=\sum_{i=1}^m w_i$,
 and assume $w<1$.
Define $F(t)$ for $C_0>0$, $u_i, v_i\in \C$ as in \eqref{F.def}.
Then there exist constants $C_1, C_2>0$ such that 
\begin{align*}
\int_{0}^{a}
F(t)^{\frac{1}{2}}
 dt 
\leq C_1 a^{1-w} (\log^+ C_0)^{\frac{1}{2}}
+ C_2 a^{\frac{1-w}{2}}
\end{align*}
 for $0 < a \leq 1$.
Here, the integral is taken over the segment between $0$ and $a$.
Moreover, $C_1$ and $C_2$ depend only on $w$ and $n$
 but not on any other parameters.
\end{Lem}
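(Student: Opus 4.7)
The plan is to reduce the estimate to two standard integral bounds by disentangling the $\log^+$-factor from the singular product $\prod|t-u_i|^{-2w_i}$. First I would use the two elementary subadditivity properties $\log^+(xy)\le \log^+x+\log^+y$ and $\sqrt{x_1+\cdots+x_k}\le \sqrt{x_1}+\cdots+\sqrt{x_k}$ to dominate $F(t)^{1/2}$ pointwise by
\[
\prod_{i=1}^m|t-u_i|^{-w_i}\cdot\Bigl((\log^+C_0)^{1/2}+\sum_{j=1}^n(\log^+|t-v_j|^{-1})^{1/2}\Bigr).
\]
Integrating over the segment $[0,a]$ then reduces the lemma to bounding the two families of integrals
\[
I_0(a):=\int_0^a\prod_i|t-u_i|^{-w_i}\,dt,\qquad
I_j(a):=\int_0^a\prod_i|t-u_i|^{-w_i}(\log^+|t-v_j|^{-1})^{1/2}\,dt,
\]
for $j=1,\dots,n$, and showing $I_0(a)\le C_1a^{1-w}$ and $I_j(a)\le (C_2/n)\,a^{(1-w)/2}$.

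Next I would estimate $I_0(a)$ by the generalized H\"older inequality with conjugate exponents $p_i:=w/w_i$ (so that $\sum p_i^{-1}=\sum w_i/w=1$). This reduces $I_0(a)$ to the geometric mean of the one-factor integrals $\int_0^a|t-u_i|^{-w}\,dt$. A direct one-variable computation, splitting according to whether $\re u_i$ lies in $[0,a]$ or not and using $|t-u_i|\ge|t-\re u_i|$, gives the uniform bound $\int_0^a|t-u_i|^{-w}\,dt\le \frac{2}{1-w}a^{1-w}$, and hence $I_0(a)\le\frac{2}{1-w}a^{1-w}$ with a constant depending only on $w$.

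For $I_j(a)$ the key observation is the elementary inequality $(\log^+s)^{1/2}\le C_\delta\, s^{\delta}$, valid for every $s>0$ and any fixed $\delta>0$ (both sides vanish on $0<s\le 1$, and on $s>1$ it follows from $\log s=O(s^\delta)$). Applied to $s=|t-v_j|^{-1}$ this yields $(\log^+|t-v_j|^{-1})^{1/2}\le C_\delta|t-v_j|^{-\delta}$, converting the logarithmic factor into another mild power singularity. Choosing $\delta:=(1-w)/4$ (so that $w+\delta<1$) and applying the generalized H\"older inequality to the product $\prod_i|t-u_i|^{-w_i}\cdot|t-v_j|^{-\delta}$ with the $m+1$ exponents $p_i:=(w+\delta)/w_i$ and $p'_j:=(w+\delta)/\delta$ — which again sum to $1$ — reduces $I_j(a)$ to a geometric mean of integrals of the form $\int_0^a|t-\ast|^{-(w+\delta)}\,dt\le\frac{2}{1-w-\delta}a^{1-w-\delta}$, giving $I_j(a)\le C' a^{1-w-\delta}$ for a constant $C'$ depending only on $w$. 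Since $0<a\le 1$ and $1-w-\delta=3(1-w)/4\ge(1-w)/2$, this implies $I_j(a)\le C'a^{(1-w)/2}$, and summing over $j=1,\dots,n$ finishes the bound with a constant $C_2$ depending only on $w$ and $n$.

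The only substantive technical obstacle is the correct calibration of $\delta$: it must be small enough that the total strength of the power singularity $w+\delta$ remains $<1$, so the generalized H\"older argument produces a finite bound, but large enough that $1-w-\delta\ge(1-w)/2$, so that for $a\le 1$ the factor $a^{1-w-\delta}$ is dominated by the target $a^{(1-w)/2}$. The choice $\delta=(1-w)/4$ achieves both. Crucially, the estimates depend on the points $u_i,v_j$ only through the uniform one-factor bound $\int_0^a|t-\ast|^{-\rho}\,dt\le\frac{2}{1-\rho}a^{1-\rho}$, so no dependence on the $u_i$, $v_j$ or on $C_0$ enters $C_1$ or $C_2$.
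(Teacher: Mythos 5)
Your proof is correct and follows essentially the same strategy as the paper's: separate $\log^+ C_0$ from the rest, convert the remaining logarithmic factor into a small power chosen so that the total singular exponent stays strictly below $1$, reduce the multi-singularity integral to single-singularity integrals, and finish with the uniform one-variable bound $\int_0^a|t-u|^{-\rho}\,dt\le \tfrac{2}{1-\rho}a^{1-\rho}$. The only cosmetic differences are that you split $\log^+$ of the product into a sum over the $v_j$ before replacing each factor by a power (so the dependence on $n$ appears only in the final summation), and you use generalized H\"older where the paper uses the weighted AM--GM inequality; both devices serve the same purpose, and your choice $\delta=(1-w)/4$ plays the role of the paper's $w'=(1-w)/2n$.
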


\begin{proof}
We have
\begin{align*}
\Bigl(\log^+ \Bigl( C_0 \prod_{i=1}^n |t-v_i|^{-1}\Bigr)\Bigr)^{\frac{1}{2}} 
&\leq 
\Bigl(\log^+ C_0
 +\log^+ \Bigl( \prod_{i=1}^n |t-v_i|^{-1}\Bigr)\Bigr)^{\frac{1}{2}} \\
&\leq 
(\log^+ C_0)^{\frac{1}{2}}
 + \Bigl(\log^+ \Bigl( \prod_{i=1}^n |t-v_i|^{-1}\Bigr)\Bigr)^{\frac{1}{2}}.
\end{align*}
Here, the second inequality follows from the simple inequality
 $(x+y)^{\frac{1}{2}}\leq x^{\frac{1}{2}}+y^{\frac{1}{2}}$
 for $x,y\geq 0$.
Put $w':=\frac{1-w}{2n}$.
It is easy to see that there exists a constant $C_3>0$
 such that
 $\log^+ x \leq C_3 x^{2w'}$ for any $x>0$.
Then we get
\begin{align*}
\log^+ \Bigl( \prod_{i=1}^n |t-v_i|^{-1}\Bigr)
\leq C_3 \prod_{i=1}^n |t-v_i|^{-2w'}.
\end{align*}

Combining above, we estimate the integral as:
\begin{align} \label{integral.estimate2}
&\int_{0}^{a}
\prod_{i=1}^m |t-u_i|^{-w_i} \times
 \Bigl(\log^+ \Bigl( C_0
  \prod_{i=1}^n |t-v_i|^{-1}\Bigr)\Bigr)^{\frac{1}{2}} dt \\ \nonumber
&\leq
 (\log^+ C_0)^{\frac{1}{2}} \int_{0}^{a} \prod_{i=1}^m |t-u_i|^{-w_i} dt
 + C_3^{\frac{1}{2}} \int_{0}^{a} \prod_{i=1}^m |t-u_i|^{-w_i} \prod_{i=1}^n |t-v_i|^{-w'} dt.
\end{align}
By the weighted AM-GM inequality, we get
\begin{align*}
\prod_{i=1}^m |t-u_i|^{-w_i}
\leq \sum_{i=1}^m \frac{w_i}{w} |t-u_i|^{-w}.
\end{align*}
It is easy to see that 
\begin{align*}
\int_{0}^{a} |t-u_i|^{-w}dt \leq C_4 a^{1-w}
\end{align*}
for a constant $C_4 > 0$ which depends only on $w$.
Therefore, 
\begin{align*}
\int_{0}^{a} \prod_{i=1}^m |t-u_i|^{-w_i} dt
\leq C_4 a^{1-w}.
\end{align*}
For the second term of the right hand side of \eqref{integral.estimate2},
 we again use the weighted AM-GM inequality to get
\begin{align*}
\prod_{i=1}^m |t-u_i|^{-w_i} \prod_{i=1}^n |t-v_i|^{-w'}
\leq \sum_{i=1}^m \frac{2w_i}{1+w} |t-u_i|^{-\frac{1+w}{2}}
 +\sum_{i=1}^n \frac{2w'}{1+w}|t-v_i|^{-\frac{1+w}{2}}.
\end{align*}
Hence by the same argument as above, we have
\begin{align*}
\int_{0}^{a} \prod_{i=1}^m |t-u_i|^{-w_i}\prod_{i=1}^n |t-v_i|^{-w'} dt
\leq C_5 a^{\frac{1-w}{2}}
\end{align*}
for some constant $C_5>0$.
The lemma follows from these inequalities.
\end{proof}

Here is the second lemma which we prepare for later use 
when we estimate the length of circles in $\mathbb{P}^{1}$ 
with respect to the McLean metric.  
\begin{Lem}[Elementary integral estimate 2]\label{integral.estimate}
Let $m, n$ be nonnegative integers.
Put $w:=\sum_{i=1}^m {w_i}$ and assume $w<1$.
Define $F(t)$ for $C_0>0$, $u_i, v_i\in \C$ as in \eqref{F.def}.
Then there exist constants $C_1, C_2>0$
 such that 
\begin{align*}
\int_{0}^{2\pi} F(re^{\sqrt{-1}\theta})^{\frac{1}{2}} rd\theta
\leq C_1 r^{1-w}
\Bigl( \log^+ \Bigl(C_0
  \prod_{i=1}^n \min\{r^{-1}, |v_i|^{-1}\}\Bigr)\Bigr)^{\frac{1}{2}}
+ C_2 r^{1-w}
\end{align*}
 for any $0<r\leq 1$.
Moreover, $C_1$ and $C_2$ depend only on $w$ and $n$
 but not on any other parameters.
\end{Lem}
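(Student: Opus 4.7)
The plan is to adapt the proof of Lemma~\ref{integral.estimate0} to the circular setting, with three additional ingredients: a pointwise reduction of the logarithm to a ``circular-average'' part that is constant on $|t|=r$ plus local corrections near each $v_i$, a scale-invariant estimate for circular integrals of negative powers, and a case split according to whether each $v_i$ lies near or far from the circle $|t|=r$.

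First I would establish the key pointwise inequality on $|t|=r$: for every $i$,
\begin{align*}
|t-v_i|^{-1}\le 2\min\{r^{-1},|v_i|^{-1}\}\cdot \max\bigl\{1,\,\max\{r,|v_i|\}/|t-v_i|\bigr\},
\end{align*}
which is checked separately for $|v_i|\ge 2r$ (where $|t-v_i|\ge |v_i|/2$) and for $|v_i|<2r$ (where $\min\{r^{-1},|v_i|^{-1}\}\cdot\max\{r,|v_i|\}\ge 1$). Combining this with $\log^+(xy)\le \log^+ x+\log^+ y$, with $(a+b)^{1/2}\le a^{1/2}+b^{1/2}$ for $a,b\ge 0$, and with the standard inequality $\log^+ x\le C_3 x^{2w'}$ valid for any fixed $w'>0$, one obtains
\begin{align*}
\bigl(\log^+(C_0\textstyle\prod_i|t-v_i|^{-1})\bigr)^{1/2}
&\le \bigl(\log^+(C_0\textstyle\prod_i\min\{r^{-1},|v_i|^{-1}\})\bigr)^{1/2}+C_4\\
&\quad +C_3^{1/2}\sum_{i=1}^n (\max\{r,|v_i|\})^{w'}|t-v_i|^{-w'},
\end{align*}
where $w'$ is chosen small enough that $w+w'<1$ and $C_4$ depends only on $n$.

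Next I would multiply by $r\prod_j|re^{i\theta}-u_j|^{-w_j}$, integrate over $\theta\in[0,2\pi]$, and estimate each of the three resulting terms using the basic circular estimate
\begin{align*}
\int_0^{2\pi}|re^{i\theta}-u|^{-w}\, rd\theta \le C r^{1-w}\qquad (u\in\C,\ 0<w<1),
\end{align*}
which follows by rescaling $\theta\mapsto \theta$ and $u\mapsto u/r$ when $|u|\le 2r$ (reducing to the uniform bound of $\int_0^{2\pi}|e^{i\theta}-c|^{-w}d\theta$ over $|c|\le 2$) and from the elementary inequality $|re^{i\theta}-u|\ge |u|/2\ge r$ when $|u|\ge 2r$. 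Together with the weighted AM--GM inequality $\prod_j|t-u_j|^{-w_j}\le\sum_j(w_j/w)|t-u_j|^{-w}$ already used in the proof of Lemma~\ref{integral.estimate0}, the first and second summands in the splitting above give the contribution $C_1 r^{1-w}(\log^+\cdots)^{1/2}+C_4'r^{1-w}$.

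For each of the $n$ ``local'' terms involving $(\max\{r,|v_i|\})^{w'}|t-v_i|^{-w'}$, I would again apply weighted AM--GM to the enlarged product $|t-v_i|^{-w'}\prod_j|t-u_j|^{-w_j}$ of total weight $w+w'<1$, reducing to the basic circular estimate at exponent $w+w'$. A case split then finishes: if $|v_i|\le 2r$ then $(\max\{r,|v_i|\})^{w'}\le (2r)^{w'}$ combines with $r^{1-w-w'}$ to give $Cr^{1-w}$, while if $|v_i|>2r$ then $(\max\{r,|v_i|\})^{w'}|re^{i\theta}-v_i|^{-w'}\le 2^{w'}$ pointwise on $|t|=r$, reducing to the bound already proved at exponent $w$ and again yielding $Cr^{1-w}$. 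Summing these $n$ contributions and absorbing them into $C_2 r^{1-w}$ completes the proof, with $C_1, C_2$ depending only on $w$, $n$, and the fixed auxiliary choice of $w'$.

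The main obstacle will be the bookkeeping in the case split over the positions of the $v_i$ and in verifying that no constant secretly depends on $u_i, v_i$ or on $C_0$; this uniformity is exactly what will be needed when the lemma is invoked for a continuously varying family of Weierstrass K3 surfaces, where the discriminant zeros move. The underlying analytic content reduces to the scale invariance of $\int_0^{2\pi}|e^{i\theta}-c|^{-\alpha}d\theta$ on bounded sets of $c\in\C$ for $0<\alpha<1$, which is what allows all constants to be absorbed.
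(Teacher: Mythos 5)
Your proof is correct, and all the constants really do come out depending only on $w$ and $n$ (after fixing $w'$ as a function of $w$, say $w'=(1-w)/2$). The underlying analytic ingredients are the same as in the paper: the subadditivity of $\log^+$, the trick $\log^+x\le C x^{2w'}$, the weighted AM--GM inequality, and the uniform bound on $\int_0^{2\pi}|e^{\sqrt{-1}\theta}-c|^{-\alpha}\,d\theta$ for $\alpha<1$.

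Where you genuinely diverge from the paper is in the organization around the radius $r$. The paper first proves the $r=1$ case: there it splits the indices $i$ according to $|v_i|\le 1$ versus $|v_i|>1$, rewrites the product inside $\log^+$ \emph{exactly} as $C_0\prod_{|v_i|>1}|v_i|^{-1}$ times normalized factors $|e^{\sqrt{-1}\theta}-v_i|^{-1}$ (for small $|v_i|$) or $|v_i^{-1}e^{\sqrt{-1}\theta}-1|^{-1}$ (for large $|v_i|$), and then runs the $\log^+$ estimates and circular integrals on the normalized factors. The general-$r$ case is then obtained in one stroke by substituting $\bar F(t):=F(rt)$, observing that $\bar F$ again has the form \eqref{F.def} with parameters $r^{-1}u_i$, $r^{-1}v_i$, $C_0 r^{-n}$ and a prefactor $r^{-2w}$, and applying the $r=1$ case; the identity $C_0 r^{-n}\prod_i\min\{1,r|v_i|^{-1}\}=C_0\prod_i\min\{r^{-1},|v_i|^{-1}\}$ then recovers the stated bound. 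You instead work at general $r$ throughout, replacing the paper's exact factorization by a pointwise \emph{inequality} on $|t|=r$ that extracts the factor $\min\{r^{-1},|v_i|^{-1}\}$, at the cost of a local remainder $(\max\{r,|v_i|\})^{w'}|t-v_i|^{-w'}$ and a case split $|v_i|\le 2r$ versus $|v_i|>2r$ in each of the $n$ local integrals. Both approaches are sound. The paper's rescaling step is shorter and automatically propagates all parameters at once, while your direct argument makes it more transparent where the $\min\{r^{-1},|v_i|^{-1}\}$ in the final bound is coming from, at the price of more explicit case bookkeeping. One small cosmetic remark: since $\min\{r^{-1},|v_i|^{-1}\}\cdot\max\{r,|v_i|\}=1$ identically, the factor $2$ in your pointwise inequality is not actually needed, though it is of course harmless.
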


\begin{proof}
We first prove the lemma for the case $r=1$,
 namely, we prove
\begin{align}\label{r=1.estimate}
&\int_{0}^{2\pi}
\prod_{i=1}^m |e^{\sqrt{-1}\theta}-u_i|^{-w_i} \times
 \Bigl(\log^+ \Bigl( C_0 \prod_{i=1}^n
  |e^{\sqrt{-1}\theta}-v_i|^{-1}\Bigr)\Bigr)^{\frac{1}{2}} d\theta\\ \nonumber
&\leq C_1 
\Bigl( \log^+ \Bigl(C_0
  \prod_{i=1}^n \min\{1, |v_i|^{-1}\}\Bigr)\Bigr)^{\frac{1}{2}}+ C_2. 
\end{align}
Suppose that $|v_i|\leq 1$ for $1\leq i\leq k$
 and $|v_i|>1$ for $k<i\leq n$.
Then we have 
\begin{align*}
&\log^+ \Bigl( C_0 \prod_{i=1}^n
  |e^{\sqrt{-1}\theta}-v_i|^{-1}\Bigr) \\
&=
\log^+ \Bigl( C_0 \prod_{i=k+1}^n |v_i|^{-1}
 \prod_{i=1}^k
  |e^{\sqrt{-1}\theta}-v_i|^{-1}
 \prod_{i=k+1}^n
  |v_i^{-1}e^{\sqrt{-1}\theta}-1|^{-1}
\Bigr) \\
&\leq 
\log^+ \Bigl( C_0 \prod_{i=k+1}^n |v_i|^{-1} \Bigr)
+ \log^+ \Bigl( \prod_{i=1}^k
  |e^{\sqrt{-1}\theta}-v_i|^{-1}
 \prod_{i=k+1}^n
  |v_i^{-1}e^{\sqrt{-1}\theta}-1|^{-1}\Bigr).
\end{align*}
By using the inequality
 $(x+y)^{\frac{1}{2}}\leq x^{\frac{1}{2}}+y^{\frac{1}{2}}$
 $(x,y\geq 0)$, we have
\begin{align*}
&\Bigl(\log^+ \Bigl( C_0 \prod_{i=1}^n
  |e^{\sqrt{-1}\theta}-v_i|^{-1}\Bigr)\Bigr)^{\frac{1}{2}} \\
&\leq 
 \Bigl(\log^+ \Bigl( C_0 \! \prod_{i=k+1}^n \! |v_i|^{-1} \Bigr)\Bigr)^{\frac{1}{2}} \!
+ \Bigl(\log^+ \Bigl( \prod_{i=1}^k
  |e^{\sqrt{-1}\theta}-v_i|^{-1} \!
 \prod_{i=k+1}^n
  |v_i^{-1}e^{\sqrt{-1}\theta}-1|^{-1}\Bigr)\Bigr)^{\frac{1}{2}}.
\end{align*}
Then similarly to \eqref{integral.estimate2}, we obtain
\begin{align*}
&\int_{0}^{2\pi}
\prod_{i=1}^m |e^{\sqrt{-1}\theta}-u_i|^{-w_i} \times
 \Bigl(\log^+ \Bigl( C_0 \prod_{i=1}^n
  |e^{\sqrt{-1}\theta}-v_i|^{-1}\Bigr)\Bigr)^{\frac{1}{2}} d\theta\\
&\leq
 \Bigl(\log^+ \Bigl( C_0 \prod_{i=k+1}^n |v_i|^{-1} \Bigr)\Bigr)^{\frac{1}{2}}
 \int_{0}^{2\pi}
\prod_{i=1}^m |e^{\sqrt{-1}\theta}-u_i|^{-w_i} d\theta\\
& + C_3^\frac{1}{2}
\int_{0}^{2\pi}
 \prod_{i=1}^m |e^{\sqrt{-1}\theta}-u_i|^{-w_i}
   \prod_{i=1}^k |e^{\sqrt{-1}\theta}-v_i|^{-w'}
  \prod_{i=k+1}^n  |v_i^{-1}e^{\sqrt{-1}\theta}-1|^{-w'}
 d\theta,
\end{align*}
where $w'=\frac{1-w}{2n}$ and $C_3$ is a constant which depends only on $w'$.
By using weighted AM-GM inequality as in the proof of Lemma~\ref{integral.estimate0},
 the proof of \eqref{r=1.estimate} is reduced to showing that
 the integrals
\begin{align*}
&\int_{0}^{2\pi} |e^{\sqrt{-1}\theta}-u_i|^{-w} d\theta, \quad 
\int_{0}^{2\pi} |e^{\sqrt{-1}\theta}-u_i|^{-\frac{1+w}{2}} d\theta, \\ 
&\int_{0}^{2\pi} |e^{\sqrt{-1}\theta}-v_i|^{-\frac{1+w}{2}} d\theta\quad (1\leq i\leq k),\\
&\int_{0}^{2\pi} |v_i^{-1}e^{\sqrt{-1}\theta}-1|^{-\frac{1+w}{2}} d\theta
 \quad (k+1\leq i\leq n)
\end{align*}
are bounded from above by constants.
To estimate the integral 
$\int_{0}^{2\pi}
 |e^{\sqrt{-1}\theta}-u_1|^{-w}d\theta$,
 we may assume $u_1\in \mathbb{R}_{\leq 0}$.
Then observe that
 $|e^{\sqrt{-1}\theta}-u_1|\geq \frac{1}{\pi}|\theta-\pi|$
 and get 
\[\int_{0}^{2\pi}
 |e^{\sqrt{-1}\theta}-u_1|^{-w}d\theta
 \leq \pi^{w} \int_{0}^{2\pi}|\theta-\pi|^{-w}d\theta. 
\]
It is easy to see (or by Lemma~\ref{integral.estimate0})
 that this is bounded by a constant.
The other integrals can be estimated in a similar way.
We thus proved \eqref{r=1.estimate}.

To finish the proof of Lemma~\ref{integral.estimate}, we fix $0< r \leq 1$.
For a given $F(t)$ as in \eqref{F.def}, 
we put 
\begin{align*}
\overline{F}(t):=F(rt)
&= \prod_{i=1}^m |rt-u_i|^{-2w_i}  \times
 \log^+ \Bigl( C_0 \prod_{i=1}^n |rt-v_i|^{-1}\Bigr) \\
&= r^{-2w} \prod_{i=1}^m |t-r^{-1}u_i|^{-2w_i}  \times
  \log^+ \Bigl( C_0 r^{-n} \prod_{i=1}^n |t-r^{-1}v_i|^{-1}\Bigr) 
\end{align*}
Then by applying the proved $r=1$ case to $\bar{F}(t)$, we have
\begin{align*}
\int_{0}^{2\pi}
 \overline{F}(e^{\sqrt{-1}\theta})^{\frac{1}{2}} d\theta
\leq
 C_1  r^{-w}
 \Bigl( \log^+ \Bigl(C_0 r^{-n}
  \prod_{i=1}^n \min\{1, r|v_i|^{-1}\}\Bigr)\Bigr)^{\frac{1}{2}}
+ C_2 r^{-w},
\end{align*}
which implies the desired estimate. 
We complete the proof of Lemma~\ref{integral.estimate}. 
\end{proof}

\subsection{Estimate of McLean metric near discriminant points}

Using Lemma~\ref{ML.estimate}, we estimate
 the McLean metric near discriminant points uniformly
 for a family of elliptic K3 surfaces.
Let $S'\subset W\mathbb{P}$ be a relatively compact open subset. 
Write the affine cone of the weighted projective space $W\mathbb{P}$ by 
$CW\mathbb{P}$ and its vertex by $v$. 
 Note that one can easily take a relatively compact open subset 
 $S$ in $CW\mathbb{P}\setminus \{v\}$ 
so that the image of $S$ is $S'$. 
On $S$, we have a family of elliptic K3 surfaces 
 $\pi_s: X_s \to B_s(\simeq \mathbb{P}^1)\ (s\in S)$
 and each $B_s$ is equipped with the McLean metric.
Write $(g_{8,s},g_{12,s})$ for the polynomials corresponding to $s\in S$
 and write $\Delta_{24,s}:=g_{8,s}^3-27g_{12,s}^2$.
The McLean metric on $B_s\simeq \mathbb{P}^1$ is given as
 $\mu(g_{8,s}(t),g_{12,s}(t))dt\otimes d\bar{t}$.
We rescale the metric on $B_s$ so that the diameter becomes $1$
 and denote by $B_s^{\rm n}$.

\begin{Prop}\label{ML.estimate.disc}
There exists a system of open neighborhoods 
$\{\mathcal{U}_{\epsilon}\}_{\epsilon>0}$ of 
 the set $\bigsqcup_{s}{\rm disc}(\pi_{s})$ in $S\times \mathbb{P}^1$
 which satisfies: 
 \begin{itemize}
\item $\mathcal{U}_{\epsilon'}\subset \mathcal{U}_{\epsilon}$ for $\epsilon'<\epsilon$, 
\item $\bigcap_{\epsilon>0} \mathcal{U}_{\epsilon} = \bigsqcup_{s}{\rm disc}(\pi_{s})$, 
\item and the length of
 $\partial \overline{\mathcal{U}_{\epsilon}}\,\cap\, (\{s\}\times \mathbb{P}^1)$
 in $B_s^{\rm n}$ converges to zero when $\epsilon\to 0$, 
 uniformly for $s\in S$. 
 \end{itemize}
\end{Prop}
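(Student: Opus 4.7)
The plan is to define $\mathcal{U}_\epsilon$ as the union, over all discriminant points of $\pi_s$, of small coordinate disks in $\mathbb{P}^1$ of a common radius $r(\epsilon) \searrow 0$, and then to bound the length of the resulting boundary circles with respect to the McLean metric uniformly in $s \in S$. The two main analytic inputs are the asymptotic estimate of $\mu$ from Lemma~\ref{ML.estimate}, which gives
\[\mu(a,b) \leq \frac{1}{|a|^{1/2}+|b|^{1/3}}\Bigl(C + c\,\log\frac{|a|^3+27|b|^2}{|a^3-27b^2|}\Bigr),\]
and the elementary integral estimate Lemma~\ref{integral.estimate} for integrands of the form $F(t)^{1/2}$ as in \eqref{F.def} over small circles; the key condition $w<1$ in that lemma will be verified using the stability condition \eqref{stab.cond}.

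First I would establish that the diameters of $(B_s,\text{McLean})$ are bounded above and below by positive constants uniformly for $s \in S$. The upper bound is a quantitative refinement of Corollary~\ref{McLean.finite.distance} obtained by combining Lemma~\ref{ML.estimate} with Lemma~\ref{integral.estimate0} applied to radial segments toward discriminant points, and the lower bound follows from Lemma~\ref{ML.estimate3} together with the uniform boundedness of the coefficients of $g_{8,s},g_{12,s}$ coming from $S \Subset CW\mathbb{P}\setminus\{v\}$. This guarantees that the rescaling from $B_s$ to $B_s^{\rm n}$ multiplies the McLean metric by a factor bounded above and below uniformly in $s$, so it suffices to estimate boundary lengths in the unrescaled McLean metric.

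Next, covering $\mathbb{P}^1$ by the two standard affine charts $t$ and $1/t$, let $p_1(s),\dots,p_{24}(s)$ denote the roots of $\Delta_{24,s}$ (with multiplicity) and set
\[\mathcal{U}_\epsilon := \{(s,t)\in S\times\mathbb{P}^1 : |t - p_i(s)|<r(\epsilon)\text{ for some }i\},\]
suitably interpreted in each chart. The monotonicity and the intersection conditions are then immediate. The boundary $\partial\overline{\mathcal{U}_\epsilon}\cap(\{s\}\times\mathbb{P}^1)$ is contained in the union of at most $24$ circles of radius $r(\epsilon)$, so it suffices to bound the length of each such circle. Writing $g_{8,s}$ and $g_{12,s}$ as products of linear factors and applying Lemma~\ref{ML.estimate}, the integrand $\mu^{1/2}$ is dominated by $F(t)^{1/2}$ for $F$ of the form \eqref{F.def}, where the exponents $w_i$ come from the multiplicities of the zeros of $g_{8,s}$ and $g_{12,s}$ (with weights $1/2$ and $1/3$ respectively) and the $v_i$ from the discriminant points. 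The stability condition \eqref{stab.cond} at $p_i(s)$ reads $\min(3v_{p_i}(g_{8,s}),2v_{p_i}(g_{12,s}))<12$, equivalently the local exponent $\alpha := \min(v_{p_i}(g_{8,s})/2,v_{p_i}(g_{12,s})/3)$ satisfies $\alpha<2$; consequently the total weight $w$ in the application of Lemma~\ref{integral.estimate} to each circle is strictly less than $1$. That lemma then yields a length bound of the form $O\bigl(r^{1-w}(|\log r|^{1/2}+1)\bigr)$, with constants depending only on the degrees $8,12$ and on uniform bounds for the coefficients of $g_{8,s},g_{12,s}$.

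The main obstacle will be making Step~3 genuinely uniform in $s$ across collisions of discriminant points, since as $s$ varies the $p_i(s)$ may coalesce and the local exponent $\alpha$ at a collision point can jump. Here Lemma~\ref{integral.estimate} is tailored exactly for this difficulty: its constants depend only on the total weight $w$ and on the number of logarithmic factors, not on how close the $u_i$ and $v_i$ are to each other or to the center of the circle. The stability condition \eqref{stab.cond} ensures that the total local weight at any collision point in $\mathbb{P}^1$ remains strictly below $2$ regardless of how many $\alpha_j$ or $\beta_j$ collide there, so the hypothesis $w<1$ of Lemma~\ref{integral.estimate} holds uniformly. Summing the resulting bound over the $24$ circles and rescaling by the uniform diameter bound from Step~1 shows that the length of $\partial\overline{\mathcal{U}_\epsilon}\cap(\{s\}\times\mathbb{P}^1)$ in $B_s^{\rm n}$ tends to zero as $\epsilon\to 0$, uniformly for $s\in S$, completing the proof.
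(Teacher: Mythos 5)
The overall strategy is the right one and matches the paper: cover the discriminant by small coordinate disks, bound $\mu^{1/2}$ by a function of the form $F(t)^{1/2}$ with $F$ as in \eqref{F.def}, integrate over each boundary circle via Lemma~\ref{integral.estimate}, and normalize using a uniform lower bound on $\mathrm{diam}(B_s)$. However, there is a genuine gap in the single step that carries the weight of the argument.

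Your description of how $\mu^{1/2}$ is dominated by $F(t)^{1/2}$ is not correct as stated. You write that the exponents $w_i$ ``come from the multiplicities of the zeros of $g_{8,s}$ and $g_{12,s}$ (with weights $1/2$ and $1/3$ respectively).'' But the factor $1/(|g_8|^{1/2}+|g_{12}|^{1/3})$ in Lemma~\ref{ML.estimate} is the reciprocal of a \emph{sum}, not a product; it is bounded above by $|g_8|^{-1/2}$ and separately by $|g_{12}|^{-1/3}$, but \emph{not} by the product of the root factors of both polynomials. Taking all zeros of both polynomials with those weights would give total weight $8\cdot\tfrac14 + 12\cdot\tfrac16 = 4 > 1$, so Lemma~\ref{integral.estimate} could not be applied at all. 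One must \emph{choose}, on a neighborhood of each boundary circle, whether to dominate $\mu$ by $|g_{8,s}|^{-1/2}$ or by $|g_{12,s}|^{-1/3}$, and moreover keep only the few root factors that are genuinely close to the circle (the far-away ones are absorbed into the constant).

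This choice is exactly where the uniformity in $s$ is nontrivial, and the argument you give does not supply it. The stability condition \eqref{stab.cond} at a fixed $s$ and a single point $p$ says $\min(v_p(g_8)/2,\,v_p(g_{12})/3)<2$, but as $s$ varies, roots that are merely \emph{close} (not coincident) can accumulate near a circle center without any single point having high multiplicity, so the pointwise condition alone does not directly deliver a uniform $w<1$. The paper's actual proof addresses this by a compactness step: it introduces $\delta_{I,J}(s)$ measuring the spread of any $4$ roots of $g_{8,s}$ together with any $6$ roots of $g_{12,s}$, shows via compactness of $\overline{S}$ and stability on $\overline{S}$ that $\delta(s)\ge\delta_0>0$ uniformly, and concludes that near any $t_0$, at scale $\delta_0/2$, either at most $3$ of the $\alpha_i$ or at most $5$ of the $\beta_j$ are present. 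That dichotomy is what permits the bound $\mu\lesssim \prod_{j\le 5}|t-\beta_j|^{-1/3}\cdot(\text{log term})$ (or the analogous $g_8$-bound) with total weight $5/6<1$ (or $3/4<1$) uniformly in $s$. Your appeal to the uniformity of the constants in Lemma~\ref{integral.estimate} is circular here, because those constants are uniform only \emph{after} a uniform choice of $w<1$ has been made; it is the existence of that uniform $w$, via the $\delta_0$-construction, that is the missing ingredient in your proposal.

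A minor additional remark: you invoke both an upper and a lower uniform diameter bound for $B_s$, but only the lower bound is needed (it controls the rescaling factor from above, which is what the conclusion requires); including the upper bound is harmless but extraneous. Also, the paper works entirely in one affine chart after normalizing by the $\mathrm{Aut}(\mathbb{P}^1)$-action so that the discriminant avoids $\infty$, whereas you propose two charts; either is fine.
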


See related Hodge-theoretic estimates in 
\cite[Proposition 2.1, (Lemma 3.1) and its proof]{GTZ1} 
(cf.\  also \cite{Yos10}, \cite{TZ}, \cite{EMM17}).

\begin{proof}
We can suppose $S$ is small enough and
 the discriminant locus does not meet $\infty\in \mathbb{P}^{1}$ 
 by using the ${\rm Aut}(\mathbb{P}^1)$-action if necessary.
Then the discriminant $\Delta_{24,s}(t)$ decomposes as 
\[\Delta_{24,s}(t)=\chi(s)\prod_{i=1}^{24}(t-\chi_{i}(s)),\]
 where $\chi(s)$ is a bounded function on $S$ and away from zero. 
We assume for simplicity that $g_{8,s}, g_{12,s}\not\equiv 0$ for every $s\in S$.
The case $g_{8,s}\equiv 0$ or $g_{12,s}\equiv 0$ can be treated in a similar way.
Moreover, we may and do assume that
 $g_{8,s}(\infty)\neq 0$ and $g_{12,s}(\infty)\neq 0$ 
 as elements of $\Gamma(\mathcal{O}_{\mathbb{P}^{1}}(8)|_{\{\infty\}})$ and 
 $\Gamma(\mathcal{O}_{\mathbb{P}^{1}}(12)|_{\{\infty\}})$, i.e., 
 $\deg(g_{8,s})=8$, $\deg(g_{12,s})=12$, 
 respectively. 
Then we decompose $g_{8,s}$ and $g_{12,s}$ as 
\[g_{8,s}=\alpha(s) \prod_{i=1}^{8}(t-\alpha_{i}(s)), \qquad
g_{12,s}=\beta(s) \prod_{j=1}^{12}(t-\beta_{j}(s)),
\]
 where $\alpha(s)$ and $\beta(s)$ are bounded and away from zero.

We put
\begin{align*}
&U_{\epsilon,i}(s):=\{t\in \mathbb{C} : |t-\chi_{i}(s)|<\epsilon\},\\
&U_{\epsilon}(s):=\bigcup_{i=1}^{24}U_{\epsilon,i}(s), \qquad
 \mathcal{U}_{\epsilon}:=\bigsqcup_{s\in S}U_{\epsilon}(s).
\end{align*}
It remains to bound the length of the boundary $U_{\epsilon}(s)$
 with respect to the metric of $B_s^{\rm n}$.
Clearly, the diameters of $B_s$ are bounded from below by
 a positive constant when $(g_8,g_{12})$ moves inside a bounded region
 in $CW\mathbb{P}$. 
Therefore it is enough to show that the length of
 $\partial\overline{U_{\epsilon}(s)}$
 with respect to $\mu(g_{8,s}(t),g_{12,s}(t))dt\otimes d\bar{t}$
 converges to zero as $\epsilon\to 0$.

To estimate $\mu(g_{8,s},g_{12,s})$ on the boundary
 $\partial\overline{U_{\epsilon}(s)}$
 we introduce the following values.
For subsets $I \subset \{1,2,\dots,8\}$ and $J\subset \{1,2,\dots,12\}$,
 we define
\begin{multline*}
\delta_{I,J}(s):=\max
 \Bigl(
 \{|\alpha_i(s)-\alpha_{i'}(s)|: i,i'\in I\} \cup
 \{|\beta_j(s)-\beta_{j'}(s)|: j,j'\in J\} \\
 \cup \{|\alpha_i(s)-\beta_j(s)| : i\in I \text{ and } j\in J\}\Bigr). 
\end{multline*}
Then we moreover define
\begin{align*}
\delta(s):= \min\{\delta_{I,J}(s) : \# I = 4 \text{ and } \# J = 6\}.
\end{align*}
Since the stability condition \eqref{stab.cond} is satisfied on the closure
 $\overline{S}$ of $S$,
 we have $\delta(s)>0$ on $\overline{S}$ and
 hence $\delta(s)>\delta_0$ for a positive constant $\delta_0$.
This implies that for each $t\in \C$ and $s\in S$,
 at least one (depending on $t$) of 
\begin{align}\label{stab.cond.roots} 
&\# \Bigl\{1\leq i\leq 8 : |t-\alpha_i(s)|<\frac{\delta_0}{2}\Bigr\} \leq 3 
 \text{  or } \\ \label{stab.cond.roots2}
&\# \Bigl\{1\leq j\leq 12 : |t-\beta_j(s)|<\frac{\delta_0}{2}\Bigr\} \leq 5 
\end{align}
holds.
Since the boundary $\partial \overline{U_{\epsilon}(s)}$ 
is decomposed as
\[\partial \overline{U_{\epsilon}(s)}
 = \bigcup_{i=1}^{24}
 (\partial \overline{U_{\epsilon}(s)} \cap \partial \overline{U_{\epsilon,i}(s))},\]
it is enough to bound the length
 $\partial \overline{U_{\epsilon}(s)} \cap \partial \overline{U_{\epsilon,i}(s)}$
from above for each $i$. 
Let us fix $1\leq i\leq 24$ and let
 $t_0 \in \partial \overline{U_{\epsilon}(s)} \cap \partial \overline{U_{\epsilon,i}(s)}$.
Suppose that the latter \eqref{stab.cond.roots2} holds for $t=t_0$
 and suppose $|t_0-\beta_j(s)|\geq \frac{\delta_0}{2}$ for $5< j \leq 12$.
(The argument for the \eqref{stab.cond.roots} case is similar.)
By Lemma~\ref{ML.estimate},
\begin{align}\label{mu.estimate.bdry}
\mu(g_{8,s}(t),g_{12,s}(t))
&\leq 
\frac{C}{|g_{8,s}(t)|^{\frac{1}{2}}+|g_{12,s}(t)|^{\frac{1}{3}}} \\ \nonumber
&  + \frac{c}{|g_{8,s}(t)|^{\frac{1}{2}}+|g_{12,s}(t)|^{\frac{1}{3}}}
 \log \frac{|g_{8,s}(t)|^3+27|g_{12,s}(t)|^2}{|\Delta_{24,s}(t)|}.
\end{align}
We will estimate the right hand side for
  $t\in \partial U_{\epsilon}(s) \cap \partial U_{\epsilon,i}(s)$.

If $t\in \partial U_{\epsilon}(s) \cap \partial U_{\epsilon,i}(s)$, 
 then $|t-t_0|\leq 2\epsilon$ and we get
\begin{align*}
|g_{12,s}(t)| \geq |\beta(s)| \Bigl(\frac{\delta_0}{2}-2\epsilon\Bigr)^7
  \prod_{j=1}^5|t-\beta_j(s)|. 
\end{align*}
Assuming $\epsilon \ll \delta_0$, we have
\begin{align*}
|g_{12,s}(t)|^{-\frac{1}{3}}
 \leq C_3 \prod_{j=1}^5|t-\beta_j(s)|^{-\frac{1}{3}}
\quad (t\in \partial \overline{U_{\epsilon}(s)} \cap \partial \overline{U_{\epsilon,i}(s)})
\end{align*}
 for some constant $C_3>0$.

The function $|g_{8,s}(t)|^3+27|g_{12,s}(t)|^2$ is bounded (from above) 
 on $t\in \partial \overline{U_{\epsilon}(s)} \cap \partial 
 \overline{U_{\epsilon,i}(s)}$.
Moreover, $t\in \partial U_{\epsilon}(s)$ implies
 $|t-\chi_j(s)|\geq \epsilon$ for $1\leq j\leq 24$.
Hence we get 
\begin{align*}
 \log \frac{|g_{8,s}(t)|^3+27|g_{12,s}(t)|^2}{|\Delta_{24,s}(t)|}
 \leq 24 \log \epsilon^{-1} +C_4
\quad (t\in \partial \overline{U_{\epsilon}(s)} \cap \partial \overline{U_{\epsilon,i}(s)}).
\end{align*}
Combining with \eqref{mu.estimate.bdry}, 
\begin{align*}
\mu(g_{8,s}(t),g_{12,s}(t))
&\leq 
 C_3 \prod_{j=1}^5|t-\beta_j(s)|^{-\frac{1}{3}}
 (C+24c \log \epsilon^{-1} +cC_4).
\end{align*}
The length of
 $\partial \overline{U_{\epsilon}(s)} \cap \partial \overline{U_{\epsilon,i}(s)}$ is
 therefore estimated as 
\begin{align*}
&{\rm length}(\partial \overline{U_{\epsilon}(s)} \cap \partial 
\overline{U_{\epsilon,i}(s)}) \\
&= \int_{\partial \overline{U_{\epsilon}(s)} \cap \partial \overline{U_{\epsilon,i}(s)}}
  \mu(g_{8,s}(t),g_{12,s}(t))^{\frac{1}{2}} |dt| \\
&\leq C_3^{\frac{1}{2}} (C+cC_4+24c \log \epsilon^{-1})^{\frac{1}{2}}
\int_{\partial \overline{U_{\epsilon,i}(s)}} \prod_{j=1}^5|t-\beta_j(s)|^{-\frac{1}{6}} |dt|\\
&\leq C_3^{\frac{1}{2}} (C+cC_4+24c \log \epsilon^{-1})^{\frac{1}{2}}
 C_5\epsilon^{1-\frac{5}{6}}.
\end{align*}
Here, we use Lemma~\ref{integral.estimate} (with $n=0$)
 for the last inequality.
Then it is easy to see the last function goes to zero as $\epsilon\to 0$,
 which complete the proof of Proposition~\ref{ML.estimate.disc}. 
\end{proof}

The first step for Theorem~\ref{Mwbar.GH.conti} 
is: 

\begin{Prop}\label{prop:MW.conti}
The map $\Phi_{\rm ML}$ is continuous on $M_{W}$.
\end{Prop}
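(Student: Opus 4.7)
The plan is to take a convergent sequence $s_i \to s_{\infty}$ in $M_W$, lift it locally to a convergent sequence $(g_{8,i}, g_{12,i}) \to (g_{8,\infty}, g_{12,\infty})$ in a relatively compact subset $S \subset CW\mathbb{P} \setminus \{v\}$ whose points all satisfy the stability condition \eqref{stab.cond}, and show Gromov-Hausdorff convergence of the rescaled base metric spaces $B_{s_i}^{\rm n} \to B_{s_\infty}^{\rm n}$. Away from the discriminant locus, the McLean density $\mu(g_{8,s}(t), g_{12,s}(t))\,dt\otimes d\bar t$ depends real-analytically on $(s,t)$, but it blows up along the discriminant; the main technical input for handling this is Proposition \ref{ML.estimate.disc}, which supplies a family of open neighborhoods $\mathcal{U}_{\epsilon}$ of the total discriminant locus in $S \times \mathbb{P}^1$ whose fiber boundary lengths in $B_s^{\rm n}$ go to zero uniformly in $s$ as $\epsilon \to 0$.

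First I would isolate three uniform facts on $S$. Item (i): on the complement $\mathbb{P}^1 \setminus U_{\epsilon}(s)$ the density $\mu(g_{8,s}(t), g_{12,s}(t))$ is bounded above and below by positive constants depending only on $\epsilon$ and $S$, and converges uniformly to $\mu(g_{8,\infty}(t), g_{12,\infty}(t))$ as $s_i \to s_{\infty}$. Item (ii): the unnormalized diameters ${\rm diam}(B_s, g_s)$ are uniformly bounded above by Corollary \ref{McLean.finite.distance} (reading off uniform constants from Lemma \ref{ML.estimate} together with \eqref{stab.cond}) and uniformly bounded below via Lemma \ref{ML.estimate3}. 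Item (iii): after shrinking $\epsilon$ if necessary, the diameter of each connected component of $U_{\epsilon}(s)$ in $B_s$ also tends to zero uniformly in $s$. I would obtain (iii) by the same bookkeeping used in the proof of Proposition \ref{ML.estimate.disc}: instead of integrating the $\mu$-estimate along the circular boundary $\partial \overline{U_{\epsilon,i}(s)}$ via Lemma \ref{integral.estimate}, integrate along radial segments through $\chi_i(s)$ via Lemma \ref{integral.estimate0}, again exploiting the minimum spacing $\delta(s) > \delta_0 > 0$ of the roots of $g_{8,s}$ and $g_{12,s}$ forced by \eqref{stab.cond}.

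Next I would promote these ingredients to Gromov-Hausdorff convergence by a distortion argument analogous to the one in \S\ref{GH.conv}. Given $\delta > 0$, choose $\epsilon$ so small that both the boundary lengths and the diameters of the connected components of $U_{\epsilon}(s)$ are less than $\delta$ uniformly in $s \in S$. For $y_1, y_2 \in B_s$, take a near-geodesic $\gamma$ between them and modify it so that its intersection with each connected component of $U_{\epsilon}(s)$ is either empty or a single arc, paying only an $O(\delta)$ cost by item (iii). Outside $\mathcal{U}_{\epsilon}$, item (i) lets me compare the length of $\gamma$ with that of the corresponding curve in $B_{s_{\infty}}$, so combined with the $O(\delta)$ control inside $\mathcal{U}_{\epsilon}$ the distortion of the natural identification $B_s \setminus \mathcal{U}_{\epsilon}(s) \to B_{s_{\infty}} \setminus \mathcal{U}_{\epsilon}(s_{\infty})$ is $O(\delta)$ for $i \gg 0$. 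Item (ii) then lets me divide by the diameters to conclude Gromov-Hausdorff convergence of the normalized spaces, which is the desired continuity of $\Phi_{\rm ML}$ at $s_{\infty}$. The main obstacle will be the uniform discriminant analysis underlying item (iii): when several roots $\chi_i(s)$ collide as $s$ varies, the connected components of $U_{\epsilon}(s)$ can merge, and one must verify that the diameter of the merged component stays small uniformly in $s$. This is exactly the kind of issue handled in Proposition \ref{ML.estimate.disc} for boundary lengths, and the same stability-based spacing control should carry the analogous diameter estimate through with constants depending only on $S$.
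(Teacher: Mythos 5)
Your proposal follows essentially the same strategy as the paper's proof: isolate a uniform diameter bound for the discriminant neighborhoods $U_{\epsilon,i}(s)$ via Lemma~\ref{ML.estimate} together with the radial-segment integral estimate of Lemma~\ref{integral.estimate0}, note continuity of the McLean density in $s$ away from these neighborhoods, and then close with a Gross--Wilson-type distortion argument as in \S\ref{GH.conv}. Your extra observation about components of $U_\epsilon(s)$ merging is handled the same way the paper implicitly does, since there are only finitely many (at most $24$) disks and the diameter of a merged component is dominated by the sum of the individual diameter bounds, which is still $O(\epsilon^{1/12}(\log^+|\chi(s)|^{-1})^{1/2})$ with $\chi(s)$ bounded away from zero on $\overline{S}$.
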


\begin{proof}
We follow the notation in the proof of Proposition~\ref{ML.estimate.disc}.
We estimate the diameter of the disk $U_{\epsilon,i}(s)$
 uniformly with respect to $s\in S$.
As in the proof of Proposition~\ref{ML.estimate.disc},
 assume \eqref{stab.cond.roots2} holds for
 a fixed $t_0\in U_{\epsilon,i}(s)$ and then we get 
\begin{align*}
|g_{12,s}(t)|^{-\frac{1}{3}}
 \leq C_3 \prod_{j=1}^5|t-\beta_j(s)|^{-\frac{1}{3}}
\quad (t\in U_{\epsilon,i}(s)).
\end{align*}
Since $|g_{8,s}(t)|^3+27|g_{12,s}(t)|^2$ is bounded on $U_{\epsilon,i}(s)$,
 Lemma~\ref{ML.estimate} gives an estimate
\begin{align*}
\mu(g_{8,s}(t),g_{12,s}(t))
 \leq C_3 \prod_{j=1}^5|t-\beta_j(s)|^{-\frac{1}{3}}
 (C+ c \log (C_4 |\Delta_{24,s}(t)|^{-1})),
\end{align*}
where $C_4>0$ is a constant.

The diameter of $U_{\epsilon,i}(s)$ can be estimated by an integral
 of $\mu(g_{8,s}(t),g_{12,s}(t))^{\frac{1}{2}}$.
By using Lemma~\ref{integral.estimate0}, it turns out that
 the integral is bounded from above by 
 $C_1 \epsilon^{\frac{1}{6}} (\log^+ |\chi(s)|^{-1})^{\frac{1}{2}}
  + C_2 \epsilon^{\frac{1}{12}}$
 and this converges to zero uniformly on $S$ as $\epsilon \to 0$. 
 This refines Corollary \ref{McLean.finite.distance} in an $s$-uniform manner. 

It is easy to see that
 the metric $\mu(g_{8,s}(t),g_{12,s}(t))dt\otimes d\bar{t}$
 is continuous in $s$ outside $U_{\epsilon}(s)$.
Then we employ an argument similar to \cite[\S6]{GW} (see also \S\ref{GH.conv})
 and conclude that the metric space $B_{s}$ is continuous
 with respect to the Gromov-Hausdorff topology on $S$. 
\end{proof}

\subsection{Convergence to $M_{W}^{\rm nn}$}\label{sec:conv.to.MWnn}

We consider the map $\Phi_{\rm ML}$
 near the boundary component $M_{W}^{\rm nn}$, 
 for the proof of Theorem~\ref{Mwbar.GH.conti}. 
\begin{Prop}\label{prop:nn.conti}
The map $\Phi_{\rm ML}$ is continuous
  on a neighborhood of $M_{W}^{\rm nn}\setminus M_W^{\rm seg}$.
\end{Prop}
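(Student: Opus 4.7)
The plan is to adapt the uniform estimates of Propositions~\ref{ML.estimate.disc} and~\ref{prop:MW.conti} to the degenerate regime where $s\in M_W$ approaches a point $s_{\infty}\in M_W^{\rm nn}\setminus M_W^{\rm seg}$ represented by $(3G_{4,\infty}^2,G_{4,\infty}^3)$, with $G_{4,\infty}\in\Gamma(\mathbb{P}^1,\mathcal{O}(4))$ having four distinct simple zeros $t_1,\dots,t_4$. Continuity of $\Phi_{\rm ML}$ among points of $M_W^{\rm nn,o}$ itself is straightforward from the explicit formula $|G_4(t)|^{-1}dt\otimes d\bar t$ (the four conical singularities vary continuously with $G_4$), so the real issue is continuity in the transverse direction from $M_W$ into $M_W^{\rm nn,o}$. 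On a small relatively compact neighborhood $S\subset\overline{M_W}$ of $s_{\infty}$ disjoint from $M_W^{\rm seg}$, choose a continuous section $s\mapsto G_{4,s}$ of $\mathcal{O}_{\mathbb{P}^1}(4)$ with $G_{4,s_{\infty}}=G_{4,\infty}$ agreeing with $(g_{8,s},g_{12,s})=(3G_{4,s}^2,G_{4,s}^3)$ on $S\cap M_W^{\rm nn}$, and for $s\in S\cap M_W$ write $g_{8,s}=3G_{4,s}^2+\eta_s$, $g_{12,s}=G_{4,s}^3+\zeta_s$ with $\eta_s,\zeta_s$ vanishing on $S\cap M_W^{\rm nn}$; set $\varepsilon(s):=\|\eta_s\|_{\infty}+\|\zeta_s\|_{\infty}$. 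A direct expansion gives
\[
g_{8,s}^3-27g_{12,s}^2=27G_{4,s}^3(G_{4,s}\eta_s-2\zeta_s)+O(\varepsilon(s)^2),\quad |g_{8,s}|^{1/2}+|g_{12,s}|^{1/3}=(\sqrt{3}+1)|G_{4,s}|+O(\varepsilon(s)^{1/2}).
\]
Combining with Lemma~\ref{ML.estimate} and Lemma~\ref{ML.estimate3}, these imply the pointwise asymptotic
\[
\mu(g_{8,s}(t),g_{12,s}(t))=\frac{c\,|\log\varepsilon(s)|}{(\sqrt{3}+1)|G_{4,s}(t)|}(1+o(1))
\]
uniformly on compact subsets of $\mathbb{P}^1\setminus G_{4,\infty}^{-1}(0)$, together with the global upper bound
\[
\mu(g_{8,s}(t),g_{12,s}(t))\le \frac{C\bigl(|\log\varepsilon(s)|+\bigl|\log|G_{4,s}(t)\eta_s(t)-2\zeta_s(t)|\bigr|+1\bigr)}{|G_{4,s}(t)|}
\]
valid away from $G_{4,s}^{-1}(0)$.

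First I would use these to show that, after dividing the McLean metric by $|\log\varepsilon(s)|$, it converges in $C^0_{\rm loc}(\mathbb{P}^1\setminus G_{4,\infty}^{-1}(0))$ to the positive multiple $\frac{c}{\sqrt{3}+1}|G_{4,\infty}(t)|^{-1}dt\otimes d\bar t$ of the limit reference metric. A matching two-sided asymptotic $\mathrm{diam}(B_s)^2\asymp|\log\varepsilon(s)|$ then implies that the diameter-rescaled metric of $B_s$ converges in $C^0_{\rm loc}$ to that of $B_{G_{4,\infty}}$ on the smooth locus of the limit. The lower bound on $\mathrm{diam}(B_s)$ comes from the pointwise lower bound just cited, while the upper bound follows by integrating the global upper bound for $\mu$ along minimizing curves avoiding small neighborhoods of $G_{4,s}^{-1}(0)$.

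The main obstacle is uniform control of the rescaled diameters of small neighborhoods of each $t_i$ as $s\to s_{\infty}$. As $s$ approaches $s_{\infty}$, exactly six of the $24$ zeros of $\Delta_{24,s}$ cluster at each $t_i$ (reflecting the collision of six $I_1$-fibers into a $T_{2,3,\infty}$ degenerate cusp), while in the limit $|G_{4,\infty}|^{-1}dt\otimes d\bar t$ has an integrable conical singularity of cone angle $\pi$ at $t_i$. I plan to construct, in the spirit of Proposition~\ref{ML.estimate.disc}, a family of shrinking neighborhoods $\mathcal{V}_{\epsilon}\subset S\times\mathbb{P}^1$ of $\bigsqcup_s\{t_1(s),\dots,t_4(s)\}$ whose rescaled boundary length and rescaled diameter both tend to zero uniformly in $s\in S\cap M_W$ as $\epsilon\to 0$. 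Inserting the global upper bound for $\mu$ into $\int\mu^{1/2}|dt|$ on a disk of radius $\epsilon$ around $t_i(s)$ and applying Lemmas~\ref{integral.estimate0} and~\ref{integral.estimate} with weights determined by the six clustering factors of $\Delta_{24,s}$ and the simple zero of $G_{4,s}$ at $t_i(s)$, the extra $\bigl|\log|G_{4,s}\eta_s-2\zeta_s|\bigr|^{1/2}$ contribution fits inside the $\log^+$ hypothesis of those lemmas, while the rescaling factor $|\log\varepsilon(s)|^{-1/2}$ reduces the leading contribution to a constant multiple of $\epsilon^{1-w}$ with some $w<1$ that is uniform in $s$. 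With this uniform control in hand, Gromov-Hausdorff convergence $\Phi_{\rm ML}(s)\to\Phi_{\rm ML}(s_{\infty})$ follows from the distortion argument of~\S\ref{GH.conv} (after~\cite[\S6]{GW}): excise the $\mathcal{V}_{\epsilon}$-neighborhoods, exploit uniform $C^0$-convergence on their complement, and modify arbitrary connecting curves to avoid or traverse the excised disks at uniformly small extra cost, thus bounding the distortion of the natural identification $\mathbb{P}^1\to\mathbb{P}^1$ between the rescaled source and target metrics.
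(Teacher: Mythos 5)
Your overall scheme — rescale the McLean metric by a divergent logarithm so that it converges on compacta away from $G_{4,\infty}^{-1}(0)$ to a constant multiple of $|G_4|^{-1}dt\otimes d\bar t$, establish uniform smallness of the rescaled diameters of shrinking neighborhoods of the singular points, and close with the Gross--Wilson distortion argument — is the same as the paper's. But the specific choice of normalization is a genuine gap. You rescale by $|\log\varepsilon(s)|$ where $\varepsilon(s)=\|\eta_s\|_\infty+\|\zeta_s\|_\infty$, and claim a pointwise asymptotic $\mu = \frac{c|\log\varepsilon(s)|}{(\sqrt3+1)|G_{4,s}(t)|}(1+o(1))$. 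This fails on sequences where the leading linear term $G_{4,s}\eta_s - 2\zeta_s$ of the discriminant vanishes identically (e.g.\ take $\zeta_s = G_{4,s}\zeta'_s$ and $\eta_s = 2\zeta'_s$); there $\Delta_{24,s}=9G_{4,s}^2\zeta_s'^2+8\zeta_s'^3\asymp\varepsilon(s)^2$, so $-\log|\Delta_{24,s}|\approx 2|\log\varepsilon(s)|$ and your $(1+o(1))$ becomes $(2+o(1))$. Consequently the $C^0_{\rm loc}$ limit of $\mu/|\log\varepsilon(s)|$ is sequence-dependent, which breaks your distortion estimate between a \emph{fixed} target $B_{G_{4,\infty}}$ and the sources. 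The paper sidesteps this by normalizing with $c^{-1}(\log|\chi(s)|^{-1})^{-1}$, where $\chi(s)$ is the leading factor in the normalized factorization $\Delta_{24,s}=\chi\prod_{i\le k}(t-\chi_i)\prod_{i>k}(\chi_i^{-1}t-1)$; this is an intrinsic function of the discriminant, and the key quantity $\tilde\mu := -(\log|\chi|^{-1})^{-1}\log\bigl(|\Delta_{24}|/(|g_8|^3+27|g_{12}|^2)\bigr)$ then converges to $1$ uniformly on the excised complement regardless of the approach direction.

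A second, related gap: you only excise disks around the four zeros of $G_{4,s}$. But roughly twelve of the $24$ roots of $\Delta_{24,s}$ converge to the roots of $G_{4,s}\eta_s-2\zeta_s$, which sit away from $G_{4,\infty}^{-1}(0)$ and move with $s$. Near these roving roots your rescaled density still blows up logarithmically, so the ``uniform $C^0$-convergence on the complement'' you invoke for the distortion argument does not hold on $\mathbb{P}^1\setminus\mathcal{V}_\epsilon$; the paper instead excises small disks $D_i$ around \emph{all} $\chi_i$ (in addition to disks $D'_j$ around $0,1,2,c,\infty$) and bounds their rescaled diameters by Lemma~\ref{integral.estimate0} before restricting attention to the common complement $B_r^o$. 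Appealing to $\log^+$-integrability to control the extra cost of curves passing through unexcised thin regions can likely be made to work, but as written it is hand-waving exactly where the paper is careful. Finally, a minor factual slip: at a simple zero $t_i$ of $G_{4,\infty}$ the local expansion $\Delta_{24}\approx -54 G_{4}'(t_i)^3\zeta(t_i)\,u^3 - 27\zeta(t_i)^2+\dots$ shows (generically) three, not six, roots of $\Delta_{24,s}$ clustering at scale $\varepsilon^{1/3}$ near each $t_i$; the ``$I_0^*$ collision'' intuition does not apply because the limit Weierstrass model is non-normal, not a Kodaira $I_0^*$ fiber, so the exact count used to set the weights in your Lemma~\ref{integral.estimate0} application needs to be redone (the conclusion $w<1$ still holds, but the bookkeeping should reflect the correct clustering).
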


\begin{proof}
Let us fix a point in $M_W^{\rm nn}\setminus M_W^{\rm seg}$
 which is represented by
 $(g_{8},g_{12})=(3G_4(t)^2,G_4(t)^3)$
 for $G_4(t)=t(t-1)(t-2)(t-d)$ with $d\neq 0,1,2$.
The metric
$|G_4(t)|^{-1}dt\otimes d\bar{t}$ 
 makes $\mathbb{P}^1$ the compact metric space,
 which we denote by $B_{G_4}$. Also for convenience, 
 we put its rescale  
 $\dfrac{1}{(\sqrt{3}+1)|G_4(t)|}dt\otimes d\bar{t}$
 on $\mathbb{P}^{1}$ as 
 $B_{G_4}^{\rm n}$. 
In the following, we assume $d\neq \infty$.
The case where $d=\infty$,
 namely, the case $G_4(t)=t(t-1)(t-2)$,
 can be treated in a similar way, 
 or alternatively, 
 that point lies in the same orbit with
 $d=\frac{2}{3}$ or $\frac{4}{3}$, 
 under the $PGL(2)$-action on $\mathbb{P}^1$.

Let us consider 
\[(g_8,g_{12})=(3G_4(t)^2+h_{8}(t),G_4(t)^3+h_{12}(t))\]
 with $h_8\in \Gamma(\mathcal{O}_{\mathbb{P}^{1}}(8))$,
 and $h_{12}\in \Gamma(\mathcal{O}_{\mathbb{P}^{1}}(12))$.
Let $h_8(t)=\sum_{i=0}^8 a_i t^i$ and $h_{12}(t)=\sum_{i=0}^{12} b_i t^i$.
Suppose that
\[\Delta_{24}(t)=(3G_4(t)^2+h_{8}(t))^3-27(G_4(t)^3+h_{12}(t))^2\not\equiv 0\]
 and 
 let $t=\chi_i\ (1\leq i\leq 24)$ be the roots (with multiplicity) of
 $\Delta_{24}(t)=0$.
Suppose $|\chi_i|\leq 1$ for $1\leq i\leq k$ and $|\chi_i|> 1$ for
 $k+1\leq i\leq 24$.  
 so we can write 
\begin{align*}
\Delta_{24}(t)= \chi
 \prod_{i=1}^k (t-\chi_i) \prod_{i=k+1}^{24} (\chi^{-1}_i t-1),
\end{align*}
where $\chi\in\mathbb{C}^{\times}$. 

We claim that if all $|a_i|$ and $|b_i|$ are sufficiently small, then 
 $|\chi|$ becomes arbitrarily small. Indeed, since the maximum of the absolute value of 
all the coefficients of $\prod_{i=1}^k (t-\chi_i) \prod_{i=k+1}^{24} (\chi^{-1}_i t-1)$ is bounded below by a positive constant and since $|a_{i}|, |b_{i}|\to 0$ 
implies that every coefficients of $\Delta_{24}$ goes to zero, we have $|\chi|\to 0$. 

Let $0<\epsilon_1 \ll 1$ be a small number.
Thanks to the above claim, 
 we may assume $\log |\chi|^{-1} > \epsilon_1^{-1}$. 
Normalize the metric of $B_{g_8,g_{12}}$
 for $(g_8,g_{12})=(3G_4(t)^2+h_{8}(t),G_4(t)^3+h_{12}(t))$
 by multiplying $c^{-1}(\log |\chi|^{-1})^{-1}$,
 where $c$ is the constant in \eqref{ell.integral.estimate}.
We denote the normalized metric space
 by $B_{g_8,g_{12}}^{\rm n}$.
We will then prove that if $|a_i|$ and $|b_i|$ are sufficiently small,
 then the Gromov-Hausdorff distance of
 $B_{G_4}^{\rm n}$ and $B_{g_{8},g_{12}}^{\rm n}$ becomes arbitrarily small.

The normalized metric on $B_{g_{8},g_{12}}^{\rm n}$ is given by
 $\mu^{\rm n}(g_8(t),g_{12}(t)) dt\otimes d\bar{t}$,
 where 
 $\mu^{\rm n}(g_8(t),g_{12}(t))
 := c^{-1}(\log |\chi|^{-1})^{-1} \mu(g_8(t),g_{12}(t))$.
By \eqref{ell.integral.estimate}, 
\begin{align*}
\left| (|g_8(t)|^{\frac{1}{2}}+|g_{12}(t)|^{\frac{1}{3}})
 \mu(g_8(t),g_{12}(t))
 + c  \log \frac{|\Delta_{24}(t)|}{|g_8(t)|^3+27|g_{12}(t)|^2} \right|
 \leq C.
\end{align*}
Putting 
\begin{align*}
&\tilde{\mu}(g_8,g_{12}):=-(\log |\chi|^{-1})^{-1}
 \log \frac{|\Delta_{24}(t)|}{|g_8(t)|^3+27|g_{12}(t)|^2} \\
&= 1 - \sum_{i=1}^k
 \frac{\log |t-\chi_i|}{\log |\chi|^{-1}}
 - \sum_{i=k+1}^{24}
 \frac{\log |\chi^{-1}_it-1|}{\log |\chi|^{-1}}
 + \frac{\log (|g_8(t)|^3+27|g_{12}(t)|^2)}{\log |\chi|^{-1}},
\end{align*}
we get 
\begin{align}\label{mun.estimate} 
\left| (|g_8(t)|^{\frac{1}{2}}+|g_{12}(t)|^{\frac{1}{3}})
 \mu^{\rm n}(g_8,g_{12})
 - \tilde{\mu}(g_8,g_{12}) \right|
\leq c^{-1} C(\log |\chi|^{-1})^{-1}
< c^{-1} C \epsilon_1.
\end{align}

Our idea for proving the convergence of $B_{g_{8},g_{12}}^{\rm n}$ to 
$B_{G_{4}}^{\rm n}$ (which implies Proposition~\ref{prop:nn.conti}) 
is to see that $\tilde{\mu}$ converges to  $1$ in some sense 
while $|g_{8}|^{\frac{1}{2}}+|g_{12}|^{\frac{1}{3}}$ converges to 
$(\sqrt{3}+1)|G_{4}|$ and then combine with above 
\eqref{mun.estimate}. 

Fix $\epsilon_2>0$.
We utilize an argument similar to \cite[\S6]{GW}.
For $r>0$, define the following disks in $\mathbb{P}^1$:
\begin{align*}
&D_i:=\{t : |t-\chi_i|<r\} \ (1\leq i\leq k),\\
&D_i:=\{t : |\chi^{-1}_i t-1|<r\} \ (k+1\leq i\leq 24),\\
&D'_j:=\{t : |t-j|<r \} \ (j=0,1,2,c), \\
&D'_{\infty}:=\Bigl\{t : |t|> \frac{1}{r} \Bigr\}.
\end{align*}
We remark that these disks may not be disjoint
 as we choose $r$ independently of $h_8$ and $h_{12}$
 (hence independently of $\chi_i$).
We claim that 
 if $r$, $|a_i|$ and  $|b_i|$ are sufficiently small,
 then the diameters of all these sets
 can be smaller than $\epsilon_2$ with respect to
 both of the metrics of $B_{G_4}^{\rm n}$ and $B_{g_8,g_{12}}^{\rm n}$.
This claim can be proved by estimating the length of paths 
 with the use of Lemma~\ref{integral.estimate0} 
 similarly to Proposition~\ref{prop:MW.conti}. 

We fix small $r$ as above.
Let $B_{r}^o:=\mathbb{P}^1\setminus
 \bigl(\bigcup_{i=1}^{24} D_i
 \cup \bigcup_{j=0}^{2} D'_i \cup D'_{\infty}\bigr)$.
On $B_{r}^o$,
 as $|a_i|,|b_i| \to 0$ (and hence $\chi\to 0$), 
 $\tilde{\mu}(g_8,g_{12})$ converges to $1$ uniformly
 and the ratio of the metrics of $B_{G_4}$ and $B_{g_8,g_{12}}^{\rm n}$
 approaches to $1$ by \eqref{mun.estimate}.
Then by using an argument in \cite[\S6]{GW} (see also \S\ref{GH.conv}),
 we conclude that
 the Gromov-Hausdorff limit of $B_{g_8,g_{12}}^{\rm n}$
 is $B_{G_4}^{\rm n}$ as $|a_i|, |b_i| \to 0$.
\end{proof}

\subsection{Convergence to $M_{W}^{\rm seg}$}\label{sec:conv.to.MWseg}

We consider the map $\Phi_{\rm ML}$
 near the boundary component $M_{W}^{\rm seg}$, 
 which is the last step for the proof of Theorem~\ref{Mwbar.GH.conti}. 
\begin{Prop}\label{prop:seg.conti}
The map $\Phi_{\rm ML}$ is continuous
 on a neighborhood of $M_{W}^{\rm seg}$.
\end{Prop}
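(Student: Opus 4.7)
The plan is to mimic the structure of the proofs of Propositions~\ref{prop:MW.conti} and \ref{prop:nn.conti}, but now accommodating the crucial new feature that the diameter of $B_{g_8,g_{12}}$ itself \emph{diverges} as the parameter approaches $M_W^{\rm seg}$. First, I would fix a point $p_0 \in M_W^{\rm seg}$. For $p_0 \in M_W^{\rm seg,o}$, by using ${\rm Aut}(\mathbb{P}^{1})$ I may assume its polystable representative has the form $(g_{8,0},g_{12,0})=(c_1 t^4,c_2 t^6)$ with $c_1^3-27c_2^2\neq 0$, so that the two destabilizing points are $0$ and $\infty$. Write perturbations as $(g_8,g_{12})=(c_1 t^4+h_8,c_2 t^6+h_{12})$ and let $\epsilon:=\max\{|h_8|_{C^0},|h_{12}|_{C^0}\}$. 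For the remaining point $p_0\in M_W^{\rm seg}\cap M_W^{\rm nn}$, I would argue by combining the present analysis with that of Proposition~\ref{prop:nn.conti}, using as approximating model the explicit metric $|G_4|^{-1}dt\otimes d\bar t$ with $G_4=t^2(t-1)^2$, whose diameter already diverges at the two nodal points.

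Next I would establish the basic model: on the unperturbed isotrivial surface, Lemma~\ref{ML.estimate} gives
\[
\mu(c_1 t^4,c_2 t^6)=\frac{c_0+O(|t|^{0})}{|t|^2},\qquad c_0>0,
\]
so that the corresponding $(1,1)$-form is (up to a multiplicative constant) the standard flat cylinder metric $\frac{d\rho^2}{\rho^2}+d\theta^2$ on $\mathbb{C}^\times=\mathbb{R}\times S^1$ via $t=\rho e^{\sqrt{-1}\theta}$. The key observation is that the radial length $\int_{r}^{1/r}\frac{dr'}{r'}=2\log r^{-1}$ diverges as $r\to 0$, while the angular circumference at fixed radius is the bounded constant $2\pi\sqrt{c_0}$. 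For a perturbation with small $\epsilon$, the $12+12$ roots of $\Delta_{24}$ cluster inside disks $D_0(\epsilon)=\{|t|\leq r(\epsilon)\}$ and $D_\infty(\epsilon)=\{|t|\geq r(\epsilon)^{-1}\}$ with $r(\epsilon)\to 0$ as $\epsilon\to 0$; on the complementary annulus $A(\epsilon):=\{r(\epsilon)\leq |t|\leq r(\epsilon)^{-1}\}$ the estimate of Lemma~\ref{ML.estimate} together with Lemmas~\ref{ML.estimate3}, \ref{integral.estimate0}, \ref{integral.estimate} shows that $\mu(g_8,g_{12})dt\otimes d\bar t$ is uniformly close to the cylinder metric, and that the diameters of $\pi(D_0(\epsilon))$ and $\pi(D_\infty(\epsilon))$, while possibly growing, are both dominated by $\log r(\epsilon)^{-1}$ times a constant.

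The third step is the computation of the total diameter. I would show
\[
\mathrm{diam}\,B_{g_8,g_{12}}\;=\;(2\sqrt{c_0}+o(1))\,\log r(\epsilon)^{-1}\;\to\;\infty\quad (\epsilon\to 0),
\]
by bounding the radial path from $D_0(\epsilon)$ to $D_\infty(\epsilon)$ from above and from below using the explicit model on $A(\epsilon)$ together with the bounds from Lemmas~\ref{ML.estimate}, \ref{ML.estimate3}, and by bounding the diameters of $\pi(D_0(\epsilon))$ and $\pi(D_\infty(\epsilon))$ by $o(\log r(\epsilon)^{-1})$ via Lemmas~\ref{integral.estimate0}, \ref{integral.estimate} applied on a grid of small disks around the accumulated roots. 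After rescaling by $\mathrm{diam}\,B_{g_8,g_{12}}$, the angular circumference $2\pi\sqrt{c_0}$ is multiplied by $(2\sqrt{c_0}\log r(\epsilon)^{-1})^{-1}\to 0$, so every circle $\{|t|=\rho\}$ shrinks to a point in the Gromov--Hausdorff limit, while the radial coordinate $s:=\log|t|/\log r(\epsilon)^{-1}\in[-1,1]$ survives.

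Finally, I would conclude Gromov--Hausdorff convergence to the unit segment $[0,1]$ by the same distortion-of-projections argument that was used in \S\ref{GH.conv}, applied to the pair of maps $\pi_{\mathrm{rad}}\colon B_{g_8,g_{12}}^{\mathrm n}\to [0,1]$, $\pi_{\mathrm{rad}}(t):=\tfrac{1}{2}(s(t)+1)$ on $A(\epsilon)$ and the constant values $0,1$ on $\pi(D_0(\epsilon)),\pi(D_\infty(\epsilon))$, and a section $\sigma\colon[0,1]\to B_{g_8,g_{12}}^{\mathrm n}$ given by a radial ray. The uniform closeness of the metric to the cylinder model on $A(\epsilon)$ (together with the collapse of circles) makes both distortions $o(1)$ as $\epsilon\to 0$. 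The main obstacle will be the third step: getting both a sharp asymptotic $\mathrm{diam}\,B_{g_8,g_{12}}\sim 2\sqrt{c_0}\log r(\epsilon)^{-1}$ and a compatible upper bound on the diameters of the disks $\pi(D_0(\epsilon)),\pi(D_\infty(\epsilon))$ that grows strictly slower than the total diameter, since each of those disks contains up to twelve roots of $\Delta_{24}$ whose positions depend on the perturbation and which could a priori produce McLean lengths comparable to $\log r(\epsilon)^{-1}$; controlling this will require choosing $r(\epsilon)$ carefully as a function of $\epsilon$ (slightly larger than the clustering scale of the roots) and iterating the $\log^+$ estimates of Lemma~\ref{integral.estimate0} on dyadic subannuli. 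The intersection point $M_W^{\rm seg}\cap M_W^{\rm nn}$ requires the same argument, but with the isotrivial model replaced by the non-normal cuspidal model $(3G_4^2,G_4^3)$ with $G_4=t^2(t-1)^2$, to which the exactly parallel cylinder-collapsing analysis applies.
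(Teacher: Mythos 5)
Your geometric intuition — flatten the McLean metric on a large annulus to the cylinder model $c_0 |t|^{-2}\,dt\otimes d\bar t$, then argue that circumferences collapse relative to the radial extent — is exactly the right picture, and your $\psi_1/\psi_2$ distortion argument is also the one used in the paper. But the intermediate steps you propose differ in two substantive ways from the paper, and one of them is a genuine gap.

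First, the paper does not attempt a sharp asymptotic for the diameter. It normalizes by $d(g_8,g_{12}):=\mathrm{dist}(0,\infty)$ rather than by $\mathrm{diam}\,B_{g_8,g_{12}}$, defines the radial-level function $\phi(r):=\mathrm{dist}(\{t=0\},\{|t|=r\})$, and reduces the GH convergence to the single ratio estimate $\max_{0<r<\infty}\rho(r)/d(g_8,g_{12})\to 0$, where $\rho(r)$ is the length of the circle $\{|t|=r\}$. This is strictly weaker than your step (a), and — crucially — it makes your step (b) unnecessary: there is no need to prove that $\mathrm{diam}\,\pi(D_0(\epsilon))+\mathrm{diam}\,\pi(D_\infty(\epsilon))=o(\log r(\epsilon)^{-1})$. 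A disk clustered around $t=0$ in which the roots of $\Delta_{24}$ are spread out and the radial extent is comparable to $d$ does \emph{not} obstruct GH convergence to a segment; it simply occupies a nontrivial subinterval. What must collapse is the angular direction, and the paper bounds $\rho(r)$ for \emph{all} $r$ (including $r\leq\epsilon/2$, Claim~\ref{rho.estimate}, second half) in terms of an auxiliary function $\nu(r)$ built from the clustering pattern of the roots $\chi_i$ of $\Delta_{24}$; the comparison with the lower bound on $d$ (Claim~\ref{d.estimate}) is then carried out case-by-case depending on whether $\nu(r_0)$ stays bounded or diverges. This sidesteps precisely the obstacle you flagged in your third step.

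Second, and more seriously, your treatment of the intersection point $M_W^{\rm seg}\cap M_W^{\rm nn}$ (i.e.\ $(a,b)=(3,1)$ up to normalization, so $a^3=27b^2$) breaks down: there the cylinder constant $c_0=\mu(c_1,c_2)$ is $+\infty$ because $\Delta_{24}\equiv 0$ for the isotrivial model, so there is no finite cylinder metric to approximate by, and your asymptotic $\mathrm{diam}\sim 2\sqrt{c_0}\log r(\epsilon)^{-1}$ is vacuous. This is exactly where the paper's two cases split: at interior points of $M_W^{\rm seg}$ the quantity $\nu(r_0)$ stays bounded (cf.\ Remark~\ref{rem:a3b1}) and $\max\rho$ is uniformly bounded while $d\to\infty$; at the intersection point $\nu(r_0)\to\infty$ but the radial interval $\log(r_2/r_1)$ over which the estimate is effective also diverges, and the cancellation in the ratio is what saves the argument. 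You cannot recover this by ``replacing the isotrivial model'' — the model must instead be sequence-dependent, which is what the $\nu$-function encodes. Without this you have no proof at the one point where continuity of $\Phi_{\rm ML}$ is hardest.
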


\begin{proof}
We want to show that if a point in $M_{W}$ is sufficiently close
 to the boundary component $M_{W}^{\rm seg}$, then 
 the corresponding metric space $B_{g_8,g_{12}}$ (if normalized)
 is close to the segment in the sense of Gromov-Hausdorff distance.

Let us take a point $s\in M_{W}$ which is represented by $(g_8,g_{12})$.
We suppose $(g_8,g_{12})$ is sufficiently close to $(at^4, bt^6)$
 and want to prove that $B_{g_8,g_{12}}$ is close to the segment.
When $s \in M_{W}^{\rm seg}\cap M_{W}^{\rm nn}$, it is represented by 
$(a,b)=(3,1)$. 
We assume $ab\neq 0$ in the following. 
The case $a=0$ or $b=0$ can be treated in a similar way 
(see Remarks \ref{ab0.1}, \ref{ab0.2}). 

Let $t=\alpha_i\ (1\leq i\leq 8)$ be the roots (with multiplicity)
 of $g_8(t)=0$.
Since $g_8$ is close to $at^4$, we may assume
 $\alpha_i\ll 1$ for $1\leq i\leq 4$ and
 $\alpha_i\gg 1$ for $5\leq i\leq 8$.
Then we may write 
\begin{align}\label{g8.decomp}
g_8(t)= \alpha
 \prod_{i=1}^4 (t-\alpha_i) \prod_{i=5}^{8} (\alpha^{-1}_i t-1)
\quad (\alpha\in \C).
\end{align}
Similarly, let $t=\beta_i\ (1\leq i\leq 12)$ be the roots
 of $g_{12}(t)=0$ and assume
 $\beta_i\ll 1$ for $1\leq i\leq 6$ and
 $\beta_i\gg 1$ for $7\leq i\leq 12$.
We write 
\begin{align}\label{g12.decomp}
g_{12}(t)= \beta
 \prod_{i=1}^{6} (t-\beta_i)
 \prod_{i=7}^{12} (\beta^{-1}_i t-1)
\quad (\beta\in \C).
\end{align}
By the $SL(2)$-action,
 we may and do assume that $\alpha_1=0$ and $\alpha_8=\infty$.
When $(g_8,g_{12})\to (at^4,bt^6)$, 
\begin{align*}
&\alpha_i,\beta_j\to 0
 \text{ for $1\leq i\leq 4$ and $1\leq j\leq 6$}, \\
&\alpha_i,\beta_j\to \infty
 \text{ for $5\leq i\leq 8$ and $7\leq j\leq 12$,}\\ 
& \alpha\to a,\quad \beta\to b.
\end{align*}
Also, the discriminant $\Delta_{24}=g_8^3-27g_{12}^2$ can be written as
\begin{align*}
\Delta_{24}(t)= \chi
 \prod_{i=1}^k (t-\chi_i) \prod_{i=k+1}^{24} (\chi^{-1}_i t-1)
\quad (\chi\in \C).
\end{align*}
as in \S\ref{sec:conv.to.MWnn}, 
i.e., $|\chi_{i}|\le 1$ for $1\le i\le k$ and $|\chi_{i}|>1$ for $i\ge 
k+1$. (Note that except for the case $s\in M_{W}^{\rm seg}\cap M_{W}^{\rm nn}$, 
when $[X_{g_{8},g_{12}}^{W}]$ is close enough to fixed $s\in M_{W}^{\rm seg}$, 
we have $k=12$, but it is not necessarily true for the most 
difficult case $s\in M_{W}^{\rm seg}\cap M_{W}^{\rm nn}$.
See Remark~\ref{rem:a3b1}.)
We assume moreover that
\begin{align}\label{order.chi}
|\chi_1|\leq |\chi_2|\leq \cdots\leq |\chi_{24}|.
\end{align}

Recall that $B_{g_8,g_{12}}$ is
 defined as $(\mathbb{P}^1, \mu(g_8(t),g_{12}(t))dt\otimes d\bar{t})$.
For $r>0$, the length of the circle $\{|t|=r\}$ 
with respect to the McLean metric 
is given by
\begin{align*}
\rho(r):=
\int_{0}^{2\pi}
\mu(g_8(re^{\sqrt{-1}\theta}),g_{12}(re^{\sqrt{-1}\theta}))^{\frac{1}{2}}
 r d\theta.
\end{align*}

Let us put 
$$d(g_{8},g_{12}):={\rm dist}(0,\infty; B_{g_8,g_{12}}),$$
i.e., the distance of 
$t=0$ and $t=\infty$ in $B_{g_{8},g_{12}}$. 

The following lemma provides a key estimate 
for collapsing of tropical K3 surfaces to the unit segment.

\begin{Lem}\label{claim1:conv.MWseg}
The ratio
\[ \frac{\max \{\rho(r)\mid 0<r<\infty \}}{d(g_{8},g_{12})}\]
 converges to zero as $(g_8,g_{12})\to (at^4,bt^6)$. 
\end{Lem}
Note that the maximum of the numerator is attained because 
 $\rho(r)\to 0$ as $r\to 0$ or $r\to \infty$ 
 (both by Corollary~\ref{McLean.finite.distance}). 

Let us first prove Proposition~\ref{prop:seg.conti} assuming this lemma. 
Write $B_{g_8,g_{12}}^{\rm n}$ for the metric space
 obtained by multiplying the metric of $B_{g_8,g_{12}}$ by $d(g_{8},g_{12})^{-1}$. 
Define the map $\phi:[0,\infty] \to [0,1]$ by 
\[\phi(r):=
\text{the distance between $\{t=0\}$ and $\{|t|=r\}$ in $B^{\rm n}_{g_{8},g_{12}}$}, 
\]
which is a homeomorphism.
Then define
 $\psi_1: B_{g_8,g_{12}}^{\rm n}\to [0,1]$ by 
 $\psi_1(t):=\phi(|t|)$ and
 define $\psi_2: [0,1]\to B_{g_8,g_{12}}^{\rm n}$ by 
 $\psi_2(r):=\phi^{-1}(r)$. 
Lemma~\ref{claim1:conv.MWseg} implies that 
 the distortions of two maps $\psi_1$, $\psi_2$ converge to zero
 as $(g_8,g_{12})\to (at^4, bt^6)$.
Hence Proposition~\ref{prop:seg.conti} follows.

\medskip

We next prove Lemma~\ref{claim1:conv.MWseg} which lasts until the end of this 
section. 
In what follows, we show that
$$\dfrac{\max \{\rho(r)\mid 0<r\leq 1 \}}{d(g_{8},g_{12})} \to 0$$
 as $(g_8,g_{12})\to (at^4,bt^6)$.
The same argument shows 
 $$\dfrac{\max \{\rho(r)\mid 1\le r < \infty \}}{d(g_{8},g_{12})} \to 0$$ 
 again as $(g_8,g_{12})\to (at^4,bt^6)$, e.g. after the involution 
 $t\mapsto \frac{1}{t}$ on $\mathbb{P}^{1}$.

Let us set 
\[
\epsilon:=\max\{|\alpha_i|, |\beta_j|
 : 2\leq i\leq 4,\ 1\leq j\leq 6\}.
\]
Then $\epsilon\to 0$ as $(g_8,g_{12})\to (at^4, bt^6)$.
We may assume $\epsilon$ is sufficiently small.
Lemma~\ref{ML.estimate} gives 
\begin{align}\label{mu.estimate.circle}
&\mu(g_{8}(t),g_{12}(t))\\ \nonumber
&\leq 
 \frac{C}{|g_{8}(t)|^{\frac{1}{2}}+|g_{12}(t)|^{\frac{1}{3}}} 
 + \frac{c}{|g_{8}(t)|^{\frac{1}{2}}+|g_{12}(t)|^{\frac{1}{3}}}
 \log \frac{|g_{8}(t)|^3+27|g_{12}(t)|^2}{|\Delta_{24}(t)|} \\ \nonumber 
&\leq 
 \frac{C}{|g_{8}(t)|^{\frac{1}{2}}+|g_{12}(t)|^{\frac{1}{3}}}
   + \frac{c}{|g_{8}(t)|^{\frac{1}{2}}+|g_{12}(t)|^{\frac{1}{3}}}
 \log \frac{27(|g_{8}(t)|^{\frac{1}{2}}+|g_{12}(t)|^{\frac{1}{3}})^{6}}
 {|\Delta_{24}(t)|} \\ \nonumber
&\leq 
 \frac{C'}{|g_{8}(t)|^{\frac{1}{2}}+|g_{12}(t)|^{\frac{1}{3}}}
   + \frac{c}{|g_{8}(t)|^{\frac{1}{2}}+|g_{12}(t)|^{\frac{1}{3}}}
 \log \frac{(|g_{8}(t)|^{\frac{1}{2}}+|g_{12}(t)|^{\frac{1}{3}})^{6}}
 {|\Delta_{24}(t)|},
\end{align}
where $C':=C+c\log 27$.

In order to estimate the last term of \eqref{mu.estimate.circle},
 we see that the inequality 
\begin{align}\label{log.ineq}
\frac{1}{x}\log^+\Bigl(\frac{x^6}{\delta}\Bigr)\leq 
\frac{6}{y}+\frac{1}{y}\log^+\Bigl(\frac{y^6}{\delta}\Bigr)
\end{align}
holds for any $x\geq y>0$ and $\delta>0$.
Indeed, the left hand side is decreasing as a function of $x$
 for $x\geq e\delta^{\frac{1}{6}}$.
Then \eqref{log.ineq} can be easily seen by considering the three cases
 $y\geq e\delta^{\frac{1}{6}}$,\ 
 $x\geq e\delta^{\frac{1}{6}} > y$,
 and $e\delta^{\frac{1}{6}}>x$,
 separately. 

We now estimate $\rho(r)$ by using \eqref{mu.estimate.circle}.
By the weighted AM-GM inequality, 
\begin{align}\label{wAG}
|g_{8}(t)|^{\frac{1}{2}}+|g_{12}(t)|^{\frac{1}{3}}
&\geq 
\frac{2}{5}|g_{8}(t)|^{\frac{1}{2}}
 +\frac{3}{5}|g_{12}(t)|^{\frac{1}{3}} 
\geq |g_{8}(t)|^{\frac{1}{5}}|g_{12}(t)|^{\frac{1}{5}}.
\end{align}
When $|t|\leq 1$,
 in view of \eqref{g8.decomp} and \eqref{g12.decomp}
 (recall $\alpha_i\ (i>4)$ and $\beta_i\ (i>6)$
 are sufficiently large and $\alpha_1=0$), we have 
\begin{align}\label{g8g12}
|g_{8}(t)|^{\frac{1}{5}}|g_{12}(t)|^{\frac{1}{5}}
 \geq
 C_3 |t|^{\frac{1}{5}}
 \prod_{i=2}^4|t-\alpha_i|^{\frac{1}{5}}
 \prod_{i=1}^6|t-\beta_i|^{\frac{1}{5}}
\end{align}
for some constant $C_3>0$.
By the definition of $\epsilon$, we have
\begin{align} \label{g8g12upper}
|t|^{\frac{1}{5}}
 \prod_{i=2}^4|t-\alpha_i|^{\frac{1}{5}}
 \prod_{i=1}^6|t-\beta_i|^{\frac{1}{5}}
\leq (|t|+\epsilon)^2.
\end{align}
We use \eqref{log.ineq} to estimate the last term
 of \eqref{mu.estimate.circle}
 by setting $x=|g_8|^{\frac{1}{2}}+|g_{12}|^{\frac{1}{3}}$
 and $y$ to be the right hand side of \eqref{g8g12}.
Then combining with \eqref{g8g12upper}, 
 we obtain 
\begin{align}\label{mu.estimate.circle2}
\mu(g_{8}(t),g_{12}(t)) 
&\leq 
 C_3^{-1} |t|^{-\frac{1}{5}}
 \prod_{i=2}^4|t-\alpha_i|^{-\frac{1}{5}}
 \prod_{i=1}^6|t-\beta_i|^{-\frac{1}{5}}  \\ \nonumber
& \ \ 
 \times \Bigl( C' + 6c +
  c\log^+ \bigl(C_3^6 (|t|+\epsilon)^{12}
 |\Delta_{24}^{-1}(t)|\bigr)\Bigr) \\ \nonumber
&\leq 
 C_3^{-1} |t|^{-\frac{1}{5}}
 \prod_{i=2}^4|t-\alpha_i|^{-\frac{1}{5}}
 \prod_{i=1}^6|t-\beta_i|^{-\frac{1}{5}}  \\ \nonumber
& \ \ 
 \times \Bigl( C'' +
  c\log^+ \bigl(C_4 \max\{|t|^{12},\epsilon^{12}\}
 |\Delta_{24}^{-1}(t)|\bigr)\Bigr)
\end{align}
for $0<|t|\leq 1$, where we put
 $C'':=C'+6c$ and $C_4:=2^{12}C_3^6$.

Let us define 
\begin{align}\label{nu.def}
\nu(r):= r^{12}
 |\chi|^{-1}
 \prod_{i=1}^{k} \min \{r^{-1}, |\chi_i|^{-1}\}.
\end{align}
for $0<r\leq 1$. 
This function plays a key role in our further analysis from now on. 
Indeed, firstly we prove 
\begin{Claim}\label{rho.estimate}
There exist constants $C_5, C_6>0$ such that 
\begin{align*}
&\rho(r) \leq C_5 + C_6 \bigl(\log^+ \nu(r)\bigr)^{\frac{1}{2}}
 \ \ \text{ for $\frac{\epsilon}{2} \leq r\leq 1$ and} \\
&\rho(r) \leq C_5 + C_6 \bigl(\log^+ \nu(\epsilon)\bigr)^{\frac{1}{2}}
 \ \ \text{ for $0 < r\leq \frac{\epsilon}{2}$}.
\end{align*}
\end{Claim}
\begin{proof}[proof of Claim~\ref{rho.estimate}]
By \eqref{mu.estimate.circle2}, 
\begin{align*}
\rho(r)
&= \int_{0}^{2\pi}
 \mu(g_8(re^{\sqrt{-1}\theta}),g_{12}(re^{\sqrt{-1}\theta}))^{\frac{1}{2}}
 r d\theta \\
&\leq r^{-\frac{1}{10}} \int_{0}^{2\pi}
C_3^{-\frac{1}{2}} 
 \prod_{i=2}^4|re^{\sqrt{-1}\theta}-\alpha_i|^{-\frac{1}{10}}
 \prod_{i=1}^6|re^{\sqrt{-1}\theta}-\beta_i|^{-\frac{1}{10}} \\ \nonumber
&\qquad \times \Bigl( C'' +
 c\log \bigl(C_4 \max\{r^{12},\epsilon^{12}\}
 |\Delta_{24}^{-1}(re^{\sqrt{-1}\theta})|\bigr)\Bigr)^{\frac{1}{2}}
 r d\theta.
\end{align*}
Since 
\begin{align*}
&\Bigl( C'' +
 c\log \bigl(C_4 \max\{r^{12},\epsilon^{12}\}
 |\Delta_{24}^{-1}(re^{\sqrt{-1}\theta})|\bigr)\Bigr)^{\frac{1}{2}} \\
&\leq C''^{\frac{1}{2}}
 + \Bigr( c \log^+ \bigl(C_4 \max\{r^{12},\epsilon^{12}\}
 |\Delta_{24}^{-1}(re^{\sqrt{-1}\theta})|\bigr)\Bigr)^{\frac{1}{2}},
\end{align*}
we can apply Lemma~\ref{integral.estimate} and conclude that
\begin{align*}
 r^{\frac{1}{10}} \rho(r)&\leq
 C_7 r^{\frac{1}{10}}
 \Bigl(\log^+\Bigl(\max\{r^{12},\epsilon^{12}\}|\chi|^{-1}
 \!\! \prod_{i=k+1}^{24} \! |\chi_{i}|
 \prod_{i=1}^{24}\min\{r^{-1},|\chi_i|^{-1}\}\Bigr)\Bigr)^{\frac{1}{2}}
 \!\! +  \! C_8r^{\frac{1}{10}}\\ 
  &= C_7 r^{\frac{1}{10}}
 \Bigl(\log^+\Bigl(\max\{r^{12},\epsilon^{12}\}|\chi|^{-1}
  \prod_{i=1}^{k}\min\{r^{-1},|\chi_i|^{-1}\}\Bigr)\Bigr)^{\frac{1}{2}}
 + C_8r^{\frac{1}{10}}
\end{align*}
for some constants $C_7, C_8 >0$.
The first inequality of Claim~\ref{rho.estimate} follows from this. 

For the second inequality, recall that (at least) one of 
 $|\alpha_2|, |\alpha_3|, |\alpha_4|, |\beta_1|, \dots, |\beta_6|$
 equals $\epsilon$.
Suppose $|\alpha_4|=\epsilon$ for example.
Then, note that $$|re^{\sqrt{-1}\theta}-\alpha_4|\geq \frac{\epsilon}{2}$$
 if $r\leq \frac{\epsilon}{2}$. 
Therefore,
\begin{align*}
\rho(r)
&\leq \Bigl(\frac{\epsilon r}{2}\Bigr)^{-\frac{1}{10}} \int_{0}^{2\pi}
C_3^{-\frac{1}{2}} 
 \prod_{i=2}^3|re^{\sqrt{-1}\theta}-\alpha_i|^{-\frac{1}{10}}
 \prod_{i=1}^6|re^{\sqrt{-1}\theta}-\beta_i|^{-\frac{1}{10}} \\ \nonumber
&\qquad \times  \Bigl( C'' + 
 c\log \bigl(C_4 \max\{r^{12},\epsilon^{12}\}
 |\Delta_{24}^{-1}(re^{\sqrt{-1}\theta})|\bigr)\Bigr)^{\frac{1}{2}}
 r d\theta.
\end{align*}
Thus, by applying Lemma~\ref{integral.estimate} again, we obtain
\begin{align*}
(\epsilon r)^{\frac{1}{10}}\rho(r)\leq 
 C_9 r^{\frac{1}{5}}
 \Bigl(\log^+\Bigl(\epsilon^{12} |\chi|^{-1}
 \prod_{i=1}^{k}\min\{r^{-1},|\chi_i|^{-1}\}\Bigr)\Bigr)^{\frac{1}{2}}
 + C_{10}r^{\frac{1}{5}}
\end{align*}
for some constants $C_9,C_{10}>0$.
The latter half of Claim~\ref{rho.estimate} follows from this.
\end{proof}

\begin{Rem}\label{rem:a3b1}
Note that if $s\in M^{\rm seg, o}$, or equivalently $a^3\neq 27b^2$, 
 then $k=12$ and $|\chi|^{-1}$ is bounded, which implies that 
 $\max_r \nu(r)$ is bounded above by a constant. 
Hence Claim~\ref{rho.estimate} is equivalent to
 that $\max_r \rho(r)$ is bounded above by a constant. 
The argument here was designed in order to include
 the most difficult case $a^3=27b^2$.
\end{Rem}

\begin{Rem}\label{ab0.1}
We have been assuming $ab\neq 0$ for simplicity
 but in the case $ab=0$ ($a=b=0$ is not possible though),
 i.e., when a sequence of $X_{g_{8},g_{12}}^{W}$ approaches to
 the isotrivial locus of (c), (d) in
 \S\ref{Isotrivial.classification},
 we can prove similarly that $\max_r \rho(r)$ is bounded above by a constant.
(In this case, $\max_r \nu(r)$ is bounded; see Remark~\ref{rem:a3b1}).

Suppose $a\neq 0$ and $b=0$. 
Then the above argument works once
 we replace the definition of $\epsilon$
 by $\epsilon:= \max\{|\alpha_2|,|\alpha_3|,|\alpha_4|\}$, 
 and replace \eqref{wAG}, \eqref{g8g12} by 
\begin{align*}
|g_{8}(t)|^{\frac{1}{2}}+|g_{12}(t)|^{\frac{1}{3}}
 \geq |g_{8}(t)|^{\frac{1}{2}}
 \geq C_3 |t|^{\frac{1}{2}}\prod_{i=2}^{4}|t-\alpha_i|^{\frac{1}{2}}.
\end{align*}
Then Claim~\ref{rho.estimate} follows and
 $\max_r \rho(r)$ is bounded above.

If $a=0$ and $b\neq 0$,
 we suppose $\beta_1=0$ instead of $\alpha_1=0$
 and set $\epsilon:= \max_{2\leq i\leq 6}\{|\beta_i|\}$.
The rest of the arguments are similar.

We will continue discussion for the case $ab=0$ in Remark \ref{ab0.2}. 
\end{Rem}

\bigskip

It is easy to see from the definition of $\nu(r)$ that
\begin{align}\label{nu.easy}
\log \nu(r) \leq \log \nu(r') + 12\Bigl|\log \frac{r}{r'}\Bigr|  
\end{align}
for any $r,r'>0$.
From Claim~\ref{rho.estimate}, by replacing $C_5$ if necessary,
 we get
\begin{align}\label{rho.estimate2}
\max_{0< r\leq 1} \rho(r)
 \leq
 C_5 + C_6 \bigl(\log^+
 \bigl(\max_{2\epsilon\leq r\leq 1} \nu(r)\bigr)
 \bigr)^{\frac{1}{2}}.
\end{align}

\bigskip

We now turn to the estimate of $d(g_{8},g_{12})$ defined in Lemma~\ref{claim1:conv.MWseg}, 
 the distance between $0$ and $\infty$.
We have
\begin{align*}
d(g_{8},g_{12})
&\geq
 \int_{0}^{\infty}
 \min_{0\leq \theta\leq 2\pi}
  \mu(g_{8}(re^{\sqrt{-1}\theta}),
 g_{12}(re^{\sqrt{-1}\theta}))^{\frac{1}{2}} dr  \\
&\geq 
 \int_{2\epsilon}^{1}
 \min_{0\leq \theta\leq 2\pi}
  \mu(g_{8}(re^{\sqrt{-1}\theta}),
 g_{12}(re^{\sqrt{-1}\theta}))^{\frac{1}{2}} dr.
\end{align*}
By Lemma~\ref{ML.estimate}, we have 
\begin{align*}
\mu(g_{8}(t),g_{12}(t))
&\geq 
-\frac{C}{|g_{8}(t)|^{\frac{1}{2}}+|g_{12}(t)|^{\frac{1}{3}}} \\ 
&  + \frac{c}{|g_{8}(t)|^{\frac{1}{2}}+|g_{12}(t)|^{\frac{1}{3}}}
 \log \frac{|g_{8}(t)|^3+27|g_{12}(t)|^2}{|\Delta_{24}(t)|}.
\end{align*}
On $2\epsilon\leq |t|\leq 1$, the 
 functions $|t|^{-4}|g_{8}(t)|$ and $|t|^{-6}|g_{12}(t)|$
 are bounded from above and below by positive constants.
Moreover,
\[|t|^{12}|\Delta_{24}(t)|^{-1} \geq 2^{-24}\nu(|t|)\]
 since $|t-\chi_{i}|\le 2\max\{|t|,|\chi_{i}|\}$ for $1\leq i<k$ 
 and $|\chi_{i}^{-1}t-1|\leq 2$ for $i>k$. 
Therefore, 
\begin{align*}
\mu(g_{8}(t),g_{12}(t))
\geq |t|^{-2} (C_{11}\log \nu(|t|) -C_{12}),
\end{align*}
where $C_{11}$ and $C_{12}$ are positive constants, 
for any $t$ with $2\epsilon\le |t|\le 1$. 
This implies 
\begin{align}\label{mu.estimate.circle3}
\mu(g_{8}(t),g_{12}(t))^{\frac{1}{2}}
\geq |t|^{-1} 
 \bigl(C_{11}^{\frac{1}{2}}\bigl(\log^+ \nu(|t|)\bigr)^{\frac{1}{2}}
 -C_{12}^{\frac{1}{2}}\bigr)
\end{align}
for $2\epsilon\leq |t|\leq 1$.
Suppose that 
\begin{align}\label{r0}
\max\{\nu(r)\mid 2\epsilon\leq r\leq 1\}=\nu(r_0)
\end{align}
with $2\epsilon \leq r_{0}\leq 1$. 
Then it follows from the explicit form of $\nu(r)$ in \eqref{nu.def} and our 
assumption on the order of $|\chi_{i}|$ in \eqref{order.chi} that
\begin{align*}
&r_0\in [2\epsilon, 1] \cap [|\chi_{12}|, |\chi_{13}|] \ \ 
 \text{ if $[2\epsilon, 1] \cap [|\chi_{12}|, |\chi_{13}|]\neq \emptyset$},\\
&r_0 = 2\epsilon \ \ \text{ if $|\chi_{13}|<2\epsilon$}, \\
&r_0 = 1 \ \ \text{ if $1<|\chi_{12}|$}.
\end{align*}
Moreover, by \eqref{nu.easy},
\begin{align}\label{nu.ineq}
 \log \nu(r) \geq \log \nu(r_0) - 12 \Bigl|\log \frac{r}{r_0}\Bigr|
\end{align}
for any $0<r\leq 1$.
The right hand side function $\log \nu(r_0) - 12 \left|\log \frac{r}{r_0}\right|$ here
 is increasing on $r<r_0$ and decreasing on $r_0<r$.
When $\log\nu(r_{0})\ge 0$, we define 
$r'_1, r'_2$ to be points such that $r'_1\leq r_{0}\leq r'_2$ and 
\begin{align}\label{r1.r2.prime.case1}
\log \nu(r_0) - 12 \Bigl|\log \frac{r'_1}{r_0}\Bigr|
=\log \nu(r_0) - 12 \Bigl|\log \frac{r'_2}{r_0}\Bigr|=0.
\end{align}
When $\log \nu(r_0) < 0$, we put $r'_1=r'_2=r_0$.
We then put 
\[ r_1:=\max\{r'_1,2\epsilon\}\text{ and }r_2:=\min\{r'_2,1\}.\]

We see that \eqref{nu.ineq} implies 
\begin{align*}
 (\log^+ \nu(r))^{\frac{1}{2}}
 \geq (\log^+ \nu(r_0))^{\frac{1}{2}}
  - \sqrt{12} \Bigl|\log \frac{r}{r_0}\Bigr|^{\frac{1}{2}}.
\end{align*}
Hence 
\begin{align*}
\int_{2\epsilon}^{1} \frac{(\log^+\nu(r))^{\frac{1}{2}}}{r} dr
\geq 
\int_{r_1}^{r_2} \biggl(\frac{(\log^+\nu(r_0))^{\frac{1}{2}}}{r}
 -\frac{\sqrt{12}}{r}\Bigl|\log \frac{r}{r_0}\Bigr|^{\frac{1}{2}}\biggr) dr.
\end{align*}
We calculate
\begin{align*}
\int_{r_1}^{r_0}
\frac{\sqrt{12}}{r}\Bigl|\log \frac{r}{r_0}\Bigr|^{\frac{1}{2}} dr
&=\int_{r_1}^{r_0}
\frac{\sqrt{12}}{r}\Bigl(\log \frac{r_0}{r}\Bigr)^{\frac{1}{2}} dr \\
&=\sqrt{12}\Bigl[-\frac{2}{3}
 \Bigl(\log \frac{r_0}{r}\Bigr)^{\frac{3}{2}}\Bigr]_{r=r_1}^{r_0} \\
&=\frac{2\sqrt{12}}{3} \Bigl(\log \frac{r_0}{r_1}\Bigr)^{\frac{3}{2}}.
\end{align*}
By the definition of $r_1$, 
\begin{align*}
\sqrt{12}\Bigl(\log \frac{r_0}{r_1}\Bigr)^{\frac{1}{2}}
\leq (\log^+ \nu(r_0))^{\frac{1}{2}}.
\end{align*}
Therefore, 
\begin{align*}
\int_{r_1}^{r_0} \biggl(\frac{(\log^+\nu(r_0))^{\frac{1}{2}}}{r}
 -\frac{\sqrt{12}}{r}\Bigl|\log \frac{r}{r_0}\Bigr|^{\frac{1}{2}}\biggr) dr
\geq \frac{\sqrt{12}}{3}(\log^+ \nu(r_0))^{\frac{1}{2}}\log\frac{r_0}{r_1}.
\end{align*}
The integral from $r_0$ to $r_2$ can be similarly estimated.
Consequently, by \eqref{mu.estimate.circle3}, we proved 
\begin{Claim}\label{d.estimate}
the distance of $0$ and $\infty$ on $B_{g_{8},g_{12}}$ 
has the following lower bound: 
$$d(g_{8},g_{12})\geq
 \frac{\sqrt{12}}{3}(\log^+ \nu(r_0))^{\frac{1}{2}}
 \Bigl(\log\frac{r_0}{r_1}+\log\frac{r_2}{r_0}\Bigr)
= \frac{\sqrt{12}}{3}(\log^+ \nu(r_0))^{\frac{1}{2}} \log\frac{r_2}{r_1},$$
where $r_{0},r_{1},r_{2}$ are defined in \eqref{r0}, \eqref{r1.r2.prime.case1}. 
\end{Claim}

When a sequence of $(g_8, g_{12})$ converges to $(at^4,bt^6)$, recall that $\epsilon \to 0$. 
It is enough to consider one of the following situations:
\begin{enumerate}
\item $\nu(r_0) \to \infty$,
\item $\nu(r_0)$ is bounded.
\end{enumerate}
Note that the case (i) occurs only when $a^3=27b^2$ (see Remark~\ref{rem:a3b1}).

For the case (i), it follows that $\frac{r_2}{r_1}\to \infty$ 
because of the definition \eqref{r1.r2.prime.case1}. 
Then by \eqref{rho.estimate2} and Claim~\ref{d.estimate}, 
 we obtain 
\[d(g_{8},g_{12})^{-1}\max\{\rho(r)\mid 0<r\leq 1\}\to 0.\]

For the case (ii), the estimate
 \eqref{rho.estimate2} implies that
 $\max\{\rho(r)\mid 0<r\leq 1\}$ is bounded.
On the other hand, by the lower bound of McLean metric in 
Lemma~\ref{ML.estimate3}, we have 
\begin{align*}
\mu(g_{8}(t),g_{12}(t))\geq
 \frac{c'}{|g_8(t)|^{\frac{1}{2}}+|g_{12}(t)|^{\frac{1}{3}}}.
\end{align*}
For a fixed $\epsilon_0\ll 1$, 
 the function $|g_8(t)|^{\frac{1}{2}}+|g_{12}(t)|^{\frac{1}{3}}$
 uniformly converges to
 $(|a|^{\frac{1}{2}}+|b|^{\frac{1}{3}})|t|^2$
 on $\{t\in\C\mid\epsilon_0\leq |t|\leq 1\}$
 as $(g_8,g_{12})\to (at^4,bt^6)$.
Therefore, if $(g_8,g_{12})$ is close enough to $(at^4,bt^6)$, 
 we have 
\begin{align*}
d(g_{8},g_{12})&\geq \int_{\epsilon_0}^{1}
 \min_{0\leq \theta\leq 2\pi}
  \mu(g_{8}(re^{\sqrt{-1}\theta}),
 g_{12}(re^{\sqrt{-1}\theta}))^{\frac{1}{2}} dr \\
&\geq \int_{\epsilon_0}^{1}
 \frac{c}{r}  dr = c \log \epsilon_0^{-1}  
\end{align*}
for some constant $c>0$ which depends only on $a$ and $b$.
Since $\epsilon_0$ can be arbitrarily small,
 $d(g_{8},g_{12})\to +\infty$ as $(g_8,g_{12})\to (at^4,bt^6)$.
Hence we obtain the convergence
 $d(g_{8},g_{12})^{-1}\max\{\rho(r)\mid 0<r\leq 1\}\to 0$ also
 in the case (ii). 
 
\begin{Rem}\label{ab0.2} 
Again we make a comment for the case $ab=0$
 as in Remark~\ref{ab0.1}. 
If this is the case, we have (ii), namely, $\nu(r_0)$ is bounded.
Then the above proof of $d(g_{8},g_{12})\to +\infty$
 is still valid and we obtain the desired estimate Lemma~\ref{claim1:conv.MWseg}.
 in the case $ab=0$ as well.
\end{Rem}

We therefore complete the proof of Lemma~\ref{claim1:conv.MWseg}
 and then that of Proposition~\ref{prop:seg.conti}.
\end{proof}

Now, we are ready to prove Theorem~\ref{Mwbar.GH.conti} on the Gromov-Hausdorff continuity 
of the geometric realization maps. 

\begin{proof}[proof of Theorem~\ref{Mwbar.GH.conti}]
The continuity of $\Phi_{\rm ML}$ on $M_W$ is Proposition~\ref{prop:MW.conti}.
The continuity on a neighborhood of $M_W^{\rm nn}\setminus M_W^{\rm seg}$
 (resp.\ of $M_W^{\rm seg}$)
 is Proposition~\ref{prop:nn.conti} (resp.\ Proposition~\ref{prop:seg.conti}).
Therefore, the theorem follows from these three propositions,
 whose proofs are already given.
\end{proof}


\newpage

\chapter{Higher dimensional hyperK\"ahler varieties case}\label{high.dim.HK.sec}

\section{Introduction}

In this chapter, we discuss generalizations of our discussions on K3 surfaces, 
to higher dimensional irreducible symplectic manifolds. Note that our main points of the arguments for K3 case in 
\S\ref{F2d.sec} to \S\ref{Kahler.section} 
were the hyperK\"ahler metrics, Torelli type theorems on the periods, and some structure theorems for K\"ahler cones. 
Fortunately, recently developed understanding of irreducible symplectic manifolds 
provide their higher dimensional generalizations to some extent. Thus, we can naturally believe that after solving further technical problems, 
one can generalize all our conjectures, results and our discussions for K3 surfaces to  higher dimensional irreducible symplectic manifolds. 

Before setting up our notation etc., 
we review our protagonists --- irreducible symplectic manifolds and their singular versions. 

\begin{Def}\label{IHS.def}
\begin{enumerate}
\item 
A (complex) $n$-dimensional compact K\"ahler manifold $X$ is called an \textit{irreducible holomorphic symplectic manifold} if it satisfies the following conditions: 
\begin{itemize}
\item $X$ is simply connected, 
\item $H^0(X,\Omega_X^{2})$ is generated by a holomorphic symplectic form i.e., a symplectic form which is holomorphic. 
\end{itemize}
As it easily follows from the second condition, 
$n$ is always even and 
also $K_X\sim 0$. 
The second condition also excludes the products of irreducible symplectic manifolds. 

It also implies more. 
Recall that by Yau's theorem \cite{Yau}, to each K\"ahler class $c$ of $X$, there is a unique Ricci-flat K\"ahler metric whose K\"ahler class is 
$c$. Then the existence of the holomorphic symplectic form, unique up to multiple, 
implies that its holonomy group automatically becomes the compact symplectic group, i.e., $Sp(\frac{n}{2})$ 
so that it is irreducible. 
It follows from the preservation of the 
holomorphic symplectic form, the Berger classification of the holonomy group combined with the de Rham decomposition. 
Therefore, this $X$ has 
many hyperK\"ahler metrics in the strict sense 
(see e.g., \cite[Chapter~7]{Joy}). For this reason,
 we sometimes call an irreducible symplectic manifold as a compact
 (irreducible) hyperK\"ahler manifold in this paper. 
\medskip
\item (cf., \cite{Beauville, Nam1}) 
In this paper, we call a normal projective variety $X$ (resp., a normal compact Hausdorff complex analytic space) 
an \textit{irreducible holomorphic symplectic variety} (resp., an \textit{irreducible holomorphic symplectic analytic space})\footnote{We do \textit{not} assume stronger condition that 
reflexive forms of other even degrees are also generated by the holomorphic symplectic form, 
as in e.g., \cite[Definition 1.3]{GGK}. See also \cite[\S14]{GGK}.}
if it satisfies the following conditions: 
\begin{itemize}
\item $K_X\sim 0$, 
\item there exists a holomorphic symplectic form on the smooth part $X_{\rm sm}$ of $X$
 that extends to any resolution of singularities of $X$, 
\item an above holomorphic symplectic form is unique up to constant. 
\end{itemize}
The extendability condition above immediately implies that $X$ only has canonical singularities. 
\end{enumerate}
\end{Def}

For more details of the basic of the theory, we refer to e.g., \cite{Huy.HK} for algebro-geometric background and to e.g., \cite{Joy} for more differential geometric background. 

\section{Conjectural picture I --- projective case}\label{HK.alg}

Now we make our setup to discuss the moduli problem for polarized projective situation. 
Fix a lattice $\Lambda$ of signature $(3,r-3)$ for $r>3$. 
Fix any \textit{connected} moduli space $\mathcal{T}_{M}$ of complex $n$-dimensional 
irreducible holomorphic 
symplectic manifolds $X$ with (fixed) marking on the second cohomologies 
$H^{2}(X,\Z)$. 
Write $M$ for the corresponding deformation class. 
Here, the marking means an isometry $\alpha\colon H^{2}(X,\Z)\simeq \Lambda$ for a fixed lattice $\Lambda$ of rank $r:=b_{2}(X)$, 
where the left hand side is associated with the  
Beauville-Bogomolov-Fujiki form $(-,-)_{\rm BBF}$. 
The lattice $\Lambda$ 
generalizes the K3 lattice $\Lambda_{\rm K3}=
U^{\oplus 3}\oplus E_8^{\oplus 2}$ for when $X$ is a K3 surface 
$(n=2)$. Due to the connectivity assumption of $\mathcal{T}_{M}$, the corresponding Fujiki constant (\cite[Theorem 4.7]{Fjk.cst}, also cf., e.g., 
\cite[\S3.1]{GHS}) 
of the symplectic manifolds in the class is a constant because of its deformation invariance. 
By a theorem of Huybrechts \cite[\S8]{Huy.HK} whose proof uses hyperK\"ahler rotations, 
it is known that $\mathcal{T}_{M}$ surjectively maps to the period domain 
$$
\Omega(\Lambda):=\{[w]\in \mathbb{P}(\Lambda\otimes \mathbb{C})\mid
 (w,w)=0,\ (w,\bar{w})>0\}. 
$$
By combining with \cite{Mark.Torelli, Ver}, it further follows that 
the map is actually a Hausdorff reduction. This extends the phenomenon in the case of K3 surfaces. 

As in the case of K3 surfaces, we now consider polarizations, i.e., ample line bundles. 
For that, we fix a primitive vector $\lambda\in \Lambda$ with 
$(\lambda,\lambda)>0$. 
We now restrict to the subspace of $\mathcal{T}_{M}$ 
where $X$ is polarizable by a line bundle in the class $\lambda$. That is, 
we restrict to the period subdomain 
$\Omega(\lambda^{\perp}):=\Omega(\Lambda)\cap \mathbb{P}(\lambda^{\perp}).$ 
Note that it again has two connected components. 
We denote one of the two connected components by $\mathcal{D}_{M}$. 

From \cite{Mat}, \cite{KolMat}, 
the class of polarized symplectic manifold $(X,L:=\alpha^{-1}(\lambda))$ for all $(X,\alpha)$ in $\mathcal{T}_{M}$ 
is bounded in the usual sense that they are parametrized by a finite type scheme, so that 
the natural moduli algebraic stack of the polarized varieties in the (deformation) class $M$ itself 
is also of finite type. We denote the Deligne-Mumford moduli stack by 
$\mathcal{M}$. By Viehweg's theorem 
\cite{Vie}, it has a quasi-projective coarse moduli variety which we also denote 
by $M$.

Let us set $\lambda\in \Lambda$ to be the class of polarization 
of our class of marked symplectic manifolds in $M$. Then we define $\lambda^{\perp}(\subset \Lambda)$ to be the 
orthogonal lattice of $\lambda\in \Lambda$. 
Note that from the Bogomolov-Tian-Todorov unobstructedness \cite{Bog1, Bog2, Tia.Bog, Tod89}, 
it follows that $\mathcal{M}$ is smooth as a Deligne-Mumford stack and hence 
$M$ has only quotient singularities, i.e., 
orbifolds. Thanks to the recent global Torelli theorem by Verbitsky 
(cf., \cite{Ver, Mark.Torelli}), 
$M$ is a Zariski open subset of a Hermitian locally symmetric space of orthogonal 
type $\Gamma\backslash \mathcal{D}_{M}$ (cf., e.g., \cite[Theorem 3.7]{GHS}). 
Here, $\mathcal{D}_{M}\subset \mathbb{P}(\lambda^{\perp}\otimes{\C})$ is the natural 
generalization of $\mathcal{D}(\Lambda_{2d})$ in the K3 surface case, 
i.e., the 
connected component of the 
period domain which is an orthogonal type IV Hermitian 
symmetric domain. The arithmetic subgroup $\Gamma$ of $O(\lambda^{\perp})$ 
is generated by polarization-stabilizing monodromies on 
$H^{2}$ of the family over $M$ (cf.\  \cite{Ver,Mark.Torelli,GHS} for the 
detailed definition), 
which was one of the main points of the breakthrough \cite{Ver} (cf. also \cite{Ver.er}).\footnote{$\Gamma$ is denoted by ``${\rm Mon}^{2}(X,H)$'' in \cite[\S3]{GHS}, \cite[\S7]{Mark.Torelli}.}
Note that the above arguments also reprove the boundedness of $M$ in particular. 

\medskip

We conjecture that our pictures and discussions for polarized K3 surfaces in \S\ref{F2d.sec} 
 extend to polarized irreducible 
holomorphic symplectic varieties case, after solving technical difficulties. To state the conjecture, 
we review that also in our situation,  we set the notation for the Satake compactification of $\Gamma\backslash \mathcal{D}_M$ for the adjoint representation, 
extending \eqref{K3.Sat.strata} in \S\ref{K3.Sat.sec}: 
\begin{align}\label{HK.mod.Sat.strata}
\overline{\Gamma\backslash \mathcal{D}_M}^{\rm Sat, \tau_{\rm ad}}=(\Gamma\backslash \mathcal{D}_M)
\sqcup \bigsqcup_{l} (\Gamma\backslash \mathcal{D}_M)(l)
\sqcup \bigsqcup_{p} (\Gamma\backslash \mathcal{D}_M)(p),
\end{align}
where $l$ (resp., $p$) run over all $\Gamma$-equivalent classes of 
one dimensional (resp., two dimensional) vector subspaces of $(\lambda^{\perp})\otimes \Q$, 
which are isotropic with respect to the Beauville-Bogomolov-Fujiki pairing. 

The first rough statement of our conjectures is as follows. Below, 
we regard $M$ as an open subspace of $\Gamma\backslash \mathcal{D}_{M}$. 
\begin{Conj}[Basic non-explicit version]\label{HK.conj}
There 
is a continuous map $\Psi_{\rm alg}$, which we again 
call the geometric realization map, 
from the Satake compactification $\overline{\Gamma\backslash \mathcal{D}_M}^{\rm Sat,\tau_{\rm ad}}$ 
with respect to the adjoint representation of $O(\lambda^{\perp}\otimes \R)$ 
to the space of all compact metric spaces with diameter $1$, associated with the Gromov-Hausdorff topology. 

More precisely,
 the $(b_{2}(X)-4)$-dimensional boundary strata $(\Gamma\backslash \mathcal{D}_M)(l)$
 (cf., \eqref{HK.mod.Sat.strata} above) parametrize via $\Psi_{\rm alg}$ the $\frac{n}{2}$-dimensional complex projective space 
$\mathbb{P}^{\frac{n}{2}}$ with special K\"ahler metrics in the sense of \cite{Str90, DW96, Hit99, Freed99} etc.\ and the 
 $0$-dimensional cusps $(\Gamma\backslash \mathcal{D}_M)(p)$ 
 parametrize metric spaces which
 are homeomorphic to the closed ball of real dimension $\frac{n}{2}$. 
\end{Conj}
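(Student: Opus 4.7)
The plan is to proceed in direct analogy with our K3 treatment in \S\ref{F2d.sec}--\S\ref{Kahler.section}, adapting each ingredient to the higher-dimensional symplectic setting. First, I would define $\Psi_{\rm alg}$ stratum by stratum: on $\Gamma\backslash \mathcal{D}_M$ it assigns the rescaled Ricci-flat K\"ahler metric from Yau's theorem; on a $(b_2(X)-4)$-dimensional component $(\Gamma\backslash \mathcal{D}_M)(l)$ with $l=\Q e$ one picks a marked hyperK\"ahler manifold with prescribed period and with $\alpha^{-1}(e)$ in the closure of its K\"ahler cone, invokes the hyperK\"ahler SYZ theorem (after Matsushita, Hwang, Bayer-Macr\`i) to obtain a Lagrangian fibration $X\to \mathbb{P}^{n/2}$ of fiber class $e$, and takes as $\Psi_{\rm alg}$-value the base $\mathbb{P}^{n/2}$ with the McLean/special K\"ahler metric normalized to diameter one; at a $0$-dimensional cusp one assigns a metric on the closed real $n$-ball arising from a double adiabatic limit for a two-step Lagrangian tower. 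Continuity on the smooth non-collapsing part is immediate from the implicit function theorem applied to the complex Monge-Amp\`ere equation, and extension across the symplectic-singular locus reduces, via a uniform diameter bound in the style of \cite{Tos.lim} and the Donaldson-Sun theorem \cite{DS}, to identifying Gromov-Hausdorff limits with Hilbert-scheme limits, generalizing Proposition \ref{MK3.conti} and Claim \ref{GH.conti.ADE}.

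The core would then be a hyperK\"ahler analogue of Theorem \ref{K3.Main.Conjecture.18.ok}. Following the K3 blueprint: take a sequence $[x_i]\to[e,v]\in (\Gamma\backslash \mathcal{D}_M)(l)$; apply Siegel reduction with respect to the real maximal parabolic stabilizing $l_\R$ to place representatives in a single Siegel set; perform a hyperK\"ahler rotation to produce marked $(X_i^\vee,\omega_i^\vee)$ so that the $(2,0)$-period is orthogonal to $e$ and $\alpha_i([\omega_i^\vee])$ is small-positive paired with $e$; verify the analogue of Claim \ref{e.nef.general} that $\alpha_i^{-1}(e)$ lies in the closure of the K\"ahler cone of $X_i^\vee$ for $i$ large, using the Markman-Huybrechts-Amerik-Verbitsky description of the K\"ahler cone of an irreducible symplectic manifold via monodromy-reflective walls; apply the SYZ theorem to produce Lagrangian fibrations $\pi_i\colon X_i^\vee\to \mathbb{P}^{n/2}$ with fiber class $e$; and conclude by an $s$-uniform generalization of Theorem \ref{GTZ.extend} that $(X_i^\vee,\omega_i^\vee)\to (\mathbb{P}^{n/2},g_{\rm ML})$ in Gromov-Hausdorff sense. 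The $s$-uniform a priori estimates from Chapter \ref{GTZ.extend.proof}---the {\L}ojasiewicz-type lower bound on the $H$-function, the Kronheimer-type reference metric with bounded bisectional curvature, fiberwise $L^\infty$ and $C^2$ estimates---would then need to be lifted; the Tosatti-Weinkove-Yang adiabatic-limit framework for abelian fibrations supplies much of the PDE technology.

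The hard part will be twofold. On the algebro-geometric side, the hyperK\"ahler SYZ conjecture, asserting semi-ampleness of every non-trivial isotropic nef class and the resulting Lagrangian fibration onto $\mathbb{P}^{n/2}$, is established only under restrictive projectivity and deformation-type assumptions; without it one cannot define $\Psi_{\rm alg}$ on $(\Gamma\backslash \mathcal{D}_M)(l)$ in full generality, and a genuine enlargement of the class of accessible $M$ is the prerequisite for an unconditional theorem. On the analytic side, the fiberwise $L^\infty$-estimate of \S\ref{(3.9)} used that smooth $\pi$-fibers are elliptic curves, reducing the fiberwise Monge-Amp\`ere equation to Laplace's equation with an explicit Jacobi theta/Green function; in our situation the fiberwise equation stays nonlinear on $n/2$-dimensional abelian fibers and one must instead combine the fiberwise $C^0$ control of Tosatti-Weinkove-Yang (or Hein-Tosatti) with a Siegel-set uniformization to obtain an $s$-uniform version. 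For the $0$-dimensional cusps, proving that the limit is homeomorphic to a closed $n$-ball should follow from an extension-to-Satake result generalizing Theorem \ref{rationality.HSD} paired with a two-parameter analogue of Theorem \ref{GTZ.extend}, realizing the nilpotent-index-three weight filtration as a double collapse $X \to \mathbb{P}^{n/2}\to B_\infty$; the integral affine structure on $B_\infty$ and its identification with a ball is the remaining conceptual gap.
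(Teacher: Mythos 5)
You have been asked to ``prove'' a statement that the paper itself labels as a Conjecture and does not prove; what the paper supplies is a construction of the geometric realization map $\Psi_{\rm alg}$ in a restricted setting (Steps~\ref{HKR.HK}--\ref{Lag.fib.HK} culminating in Definition~\ref{part.geom.realization.HK}), two unconditional partial results (Theorems~\ref{Moduli.Shimura} and \ref{HK.GH.continuity}), and an explicit list of remaining obstacles (Remark~\ref{Anal.prob}). Your proposal is in substance the same roadmap: stratum-by-stratum definition, hyperK\"ahler rotation plus Siegel reduction, nefness via the Amerik--Verbitsky MBM-wall picture, Matsushita/Hwang/Greb--Lehn for the base $\simeq\mathbb{P}^{n/2}$, Tosatti--Donaldson--Sun for non-collapsing continuity, and an $s$-uniform adiabatic estimate as the collapsing engine. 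Your identification of the two main gaps --- abundance/SYZ for the isotropic class, and the failure of the scalar Green-function trick in \S\ref{(3.9)} when the fibers are higher-dimensional abelian varieties --- also matches the paper's own assessment.

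Where you differ, and what you omit: the paper makes a point of first establishing Theorem~\ref{Moduli.Shimura}, the boundedness-and-Torelli statement $M^{(m)}\simeq\Gamma\backslash\mathcal{D}_M$, before even talking about continuity. This is not a formality: it is what guarantees that the interior of the Satake compactification is genuinely a moduli space of polarized symplectic varieties with at worst canonical singularities, so that Donaldson--Sun applies with a uniform target. The proof uses Namikawa's simultaneous symplectic resolution, Kawakita--Namikawa rigidity, and either the KLSV filling plus semistable MMP or the Weil--Petersson incompleteness argument of Tosatti and Zhang --- none of which appear in your outline, which implicitly assumes the non-collapsing boundary is already a moduli space. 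Your proposal also asserts semi-ampleness ``after Matsushita, Hwang, Bayer--Macr\`i''; the paper's Step 4 is careful to say this is known only for $\mathrm{K3}^{[n/2]}$-type and generalized Kummer type (via Markman, Yoshioka, and Matsushita's semiampleness corollary), and accordingly restricts the unconditional definition of $\Psi_{\rm alg}$ to those deformation classes. Finally, your suggestion that the $0$-dimensional cusps should be handled by a ``double adiabatic limit for a two-step Lagrangian tower'' is a plausible guess, but the paper explicitly declines to propose any mechanism there, pointing instead at recent gluing constructions (\cite{HSVZ}, \cite{ChCh}) as evidence; treating the ``two-parameter analogue of Theorem~\ref{GTZ.extend}'' as merely a conceptual gap understates that there is currently no candidate metric to put on the $n$-ball in general.

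So: same program, correct identification of the obstacles, but two substantive omissions --- the partial compactification theorem as a necessary prerequisite, and the restriction to the known deformation types for the isotropic-line strata --- and an overstated claim about the $0$-dimensional cusp.
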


Note that this is a weaker form than our conjecture \ref{K3.Main.Conjecture} for K3 case
 in the sense that $\Psi_{\rm alg}$ is \textit{not} geometrically explicitly described yet. 
Such a more precise (stronger) version
 with explicit $\Psi_{\rm alg}$ will be partially given in 
Conjecture~\ref{Good.HK.conj} later (see also Theorem~\ref{Moduli.Shimura} and Theorem~\ref{HK.GH.continuity}). 
The expectation in above Conjecture~\ref{HK.conj} 
that the $(b_{2}(X)-4)$-dimensional boundary strata $(\Gamma\backslash \mathcal{D}_M)(l)$ 
of $\overline{\Gamma\backslash \mathcal{D}_M}^{\rm Sat,\tau_{\rm ad}}$ 
should parametrize $\mathbb{P}^{\frac{n}{2}}$ is partially motivated by the 
question if the base manifolds of Lagrangian fibrations 
on irreducible holomorphic symplectic manifolds are always complex projective 
space (cf., e.g., \cite{Mat.Pn}, \cite[21.4]{Huy}). The question is proved affirmatively, under the 
smoothness assumption of the base, by \cite{Hwa, GL}. 

If we regard the moduli space $\mathcal{F}_{2d}$ 
of polarized K3 surfaces as the moduli of 
their $e$-th symmetric products for fixed $e\in \Z_{>0}$ (which are known 
to be holomorphic symplectic varieties with canonical singularities), 
our partial 
proof of Conjecture~\ref{HK.conj} for K3 surfaces 
naturally extend to that higher dimensional case, or more precisely, of its singular 
extension (as the symmetric products are singular).\footnote{In such non-smoothable setup, the corresponding global Torelli theorem is only known along equisingular direction (\cite{BL}, \cite{BL2}) 
as far as the authors know. }
Indeed, we only need to follow our arguments in \S \ref{F2d.sec}, 
while replacing each K3 surfaces and each morphism appearing in the 
discussions for the 
K3 surface case (Theorems \ref{K3.Main.Conjecture.18.ok}, 
\ref{K3.Main.Theorem2}) by their 
$e$-th symmetric products respectively. In this case, 
the metrized closed ball parametrized at the $0$-dimensional cusps 
is $$\{(x_{1},\cdots,x_{e})\in \mathbb{R}^{e}\mid 
0\le x_{1}\le x_{2}\le \cdots \le x_{e}\le 1\}$$ 
with restriction of the standard flat metric on $\mathbb{R}^{e}$. 
If we consider the moduli of different type of 
finite quotient of self-product of 
K3 surfaces, we obtain different metrized closed balls 
(and also the positive dimensional boundary components could 
possibly parametrize metrized weighted projective varieties of dimension $n$ 
rather than $\mathbb{P}^{n}$.) 
At the moment of this writing, 
we do not have examples of $M$ where the metric spaces which 
are associated to the $0$-dimensional cusps are not expected to be 
polytopes with the restriction of the flat metrics. 

As we mentioned, we believe that, benefiting from the recent global Torelli theorem (cf., \cite{Ver, Mark.Torelli}) and more recent developments 
which generalize the results for K3 surfaces, 
basically the general idea and the outline of our arguments on the K3 surfaces case 
in the previous chapters \S\ref{F2d.sec}, \S\ref{GTZ.extend.proof}, 
naturally extend and would show a similar statement to Theorem~\ref{K3.Main.Conjecture.18.ok} 
for higher dimensional 
irreducible holomorphic symplectic 
varieties at the end. 
Nevertheless, for such higher dimensional extension, 
a number of technical problems will arise in general 
which we discuss below. Among them, there are algebro-geometric problems and 
analytic problems. The analytic side would be solved if one can modify and refine the proof of 
\cite{GW, GTZ1, GTZ2, TZ} and 
Theorem~\ref{GTZ.extend} 
to a version uniform with respect to bounded variations of reference 
\textit{higher dimensional} 
hyperK\"ahler varieties. We have not worked much out on this. For the 
algebro-geometric problems side, we show our partial progress below. In particular, 
when $X_{s}$ is deformation equivalent to the class of ${\rm K3}^{[\frac{n}{2}]}$ or the class of generalized Kummer varieties, 
most of the algebro-geometric problems are essentially solved. 


\section{Partial compactification and non-collapsing limits}\label{geom.prob.HK}

We now start to go into algeblo-geometric side of problems toward Conjecture \ref{HK.conj}. 
First we establish a generalization of the partial compactification. 
Recall that $\mathcal{F}_{2d}^{o}$, the moduli of 
smooth polarized K3 surfaces of degree $2d$, is a Zariski open subset 
of $\mathcal{F}_{2d}$, the moduli of polarized 
possibly ADE singular K3 surfaces of degree $2d$. 
In our higher dimensional situation, symplectic varieties in the sense of \cite{Beauville, Nam1} 
(see Definition~\ref{IHS.def}) play the same role as ADE singular degenerations in the 
K3 surfaces case, and by using them, we establish an analog 
of a partial compactification $\mathcal{F}_{2d}^{o}\subset \mathcal{F}_{2d}$ in Theorem \ref{Moduli.Shimura} 
later. 

We recall our setup from \S \ref{HK.alg} as follows, for our singular extension.  
Fix again a connected quasi-projective 
moduli orbifold $M$ of smooth polarized irreducible holomorphic 
symplectic manifolds. 
There is a positive integer $m_0$ such that any polarized variety $(Y,N)\in M$ satisfies that 
$N^{\otimes {m_0}}$ is very ample. This is a standard fact follows from the quasi-compactness of $M$ but also follows from \cite{Mat}. 
On the other hand, the global Torelli theorem \cite{Ver, Mark.Torelli} 
(cf., also \cite[Theorem 3.7]{GHS}) asserts that 
the period map, which associates to each marked polarized 
smooth irreducible holomorphic symplectic variety $X$ 
its weight $2$ polarized $\mathbb{Z}$-Hodge structure, gives a Zariski 
open immersion $p\colon M\hookrightarrow 
\Gamma\backslash \mathcal{D}_{M}$ where $\Gamma\backslash\mathcal{D}_{M}$ is an 
orthogonal 
Hermitian locally symmetric space (cf., e.g., 
\cite[Theorem 3.7]{GHS}) with respect to a certain discrete subgroup 
$\Gamma$ of $O(\Lambda)$. 
Now, we would like to regard $M\hookrightarrow \Gamma\backslash \mathcal{D}_{M}$ 
as a partial compactification and will discuss a geometric meaning to it. 

For that we prepare 
a series of quasi-projective partial compactifications of $M$ as follows. 
Here, a \textit{normal $\mathbb{Q}$-Gorenstein degeneration along $M$ with exponent $m$}
 $(m\in \mathbb{Z}_{>0})$, means a 
polarized normal $\mathbb{Q}$-Gorenstein variety $(X,L)$ such that there is a $\mathbb{Q}$-Gorenstein\footnote{We put this condition for future extension of the results to those with only \textit{numerically trivial} canonical divisors, 
but in this paper's case where the canonical line bundles are trivial, 
the ($\mathbb{Q}$-)Gorensteinness condition of the total space of deformations 
automatically holds. Indeed, it follows from the adjunction plus the openness of Gorensteinness of the fibers, or by more general fact that 
all the fibers have only canonical singularities 
(\cite{Ka}, also cf.\ \cite[Theorem 9.1.13]{Ish.bk}). }
 flat family over a smooth pointed curve  $(\mathcal{X},\mathcal{L})\to C\ni 0$ 
whose general fiber $(X_t,L_t)$ $(t\neq 0)$ satisfies that there is some $(X_t,N_t)\in M$
 with $N_t^{\otimes m}=L_t$ while $(\mathcal{X}_0,\mathcal{L}_0)=(X,L).$ 

Let us now set $M^{(m)}$ as the set of normal 
$\mathbb{Q}$-Gorenstein degenerations $(X,L)$ along $M$ 
with exponent $m$ such that 
 $X$ has only canonical singularities, $K_X=0$, and $L$ is \textit{very} ample. 
 
 (If we change the definition of $M^{(m)}$ 
 by only assuming ampleness of $L$, we can still prove the essentially equivalent theorem as
 Theorem~\ref{Moduli.Shimura}, 
 thanks to the effective basepoint-free theorem \cite[1.1]{Kol.bp}
 and the very ample lemma \cite[\S4]{Fjn2}, \cite[\S7]{Fjn1} (cf., also \cite[1.2]{Kol.bp}. 
 We omit the details but we would like to thank very much Chen Jiang for pointing out these references for us.)
 
 Furthermore, we consider the corresponding moduli functor $\mathcal{M}^{(m)}$ whose value $\mathcal{M}^{(m)}(Y)$ for an algebraic scheme $Y$ over $\C$ 
consists of the natural 
equivalence classes (cf., \cite[\S1.1]{Vie}) of flat polarized proper 
families $(\mathcal{X},\mathcal{L})$ over $Y$ 
of canonical singular normal $\mathbb{Q}$-Gorenstein degenerations along $M$ 
with the exponent $m$. The equivalence relation for them is, as usual (cf., e.g., 
\cite[1.1]{Vie}), defined as 
twisting the polarization $\mathcal{L}$ 
by taking the tensor product with the pullback of a line bundle on the base $Y$. 
See \cite{Vie} for the details.  
Then we define $M^{(m)}$, not only as a set,
 but as the corresponding coarse moduli scheme. 
Indeed, as discussed in Viehweg's \cite[Chapter 8 (esp., \S 8.7)]{Vie}, 
such coarse moduli scheme exists as a quasi-projective scheme for each $m$. 
Also note that \cite[\S 4 (the proof of Theorems 0.3, 0.8)]{KLSV} proves that such $X$ is automatically a symplectic variety in the sense of \cite{Beauville, Nam1} (see Definition~\ref{IHS.def}),
 depending on e.g., 
a rigidity result of $\mathbb{Q}$-factorial terminal symplectic singularity 
\cite{Nam3}. 

As a quotient stack, $\mathcal{M}^{(m)}$ is again a Deligne-Mumford stack 
since for any $[(X,L)]\in \mathcal{M}^{(m)}(\C)$, 
we have that ${\rm Aut}(X,L):=\{g\in {\rm Aut}(X)\mid g^{*}L\simeq L\}$ 
is a finite group by \cite[Proposition 4.6]{Ambro} and the fact that 
for any polarized variety $(X,L)$, ${\rm Aut}(X,L)$ is a linear 
algebraic group (also see \cite[1.5]{Od2} for another more differential 
geometric approach). Further, 
from the construction (cf., \cite[esp. Chapter 8]{Vie}), 
there is a natural map 
$\iota\colon M\to M^{(m)}$ whose image is a Zariski open subset. 
Moreover, $\iota$ is an open immersion since the twisting of line bundle 
does not lose information by the torsion freeness of ${\rm Pic}(X)\subset 
H^{2}(X,\Z)$. 
The complement $M^{(m)}\setminus M$ 
will be called the \textit{Heegner locus} in this chapter. Indeed, if ${\rm dim}(X)=2$ (i.e., $X$ being K3 surfaces), 
it is nothing but the union of the Heegner divisors. Now we can state our main theorem in this \S \ref{geom.prob.HK}.

\begin{Thm}[Partial compactification]\label{Moduli.Shimura}
For sufficiently divisible $m$, the period map 
$p\colon M\hookrightarrow \Gamma\backslash \mathcal{D}_{M}$ 
extends to an isomorphism 
\begin{align*}
M^{(m)}\simeq \Gamma\backslash \mathcal{D}_{M}.
\end{align*}
\end{Thm}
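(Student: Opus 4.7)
The plan is to construct an extension $p^{(m)}: M^{(m)} \to \Gamma\backslash \mathcal{D}_M$ of the period map for every sufficiently divisible $m$, and then to establish that it is a bijection between $\C$-points of two normal quasi-projective varieties of the same dimension. The crucial structural input, already cited in the text, is that every $(X,L) \in M^{(m)}(\C)$ is a symplectic variety in the sense of \cite{Beauville, Nam1} by \cite{KLSV}, so that $H^{2}(X,\Z)$ carries a pure polarized Hodge structure of weight two with a Beauville-Bogomolov-Fujiki form; moreover, by Namikawa's unobstructedness for symplectic varieties \cite{Nam1, Nam3} combined with the smooth Bogomolov-Tian-Todorov result \cite{Bog1, Bog2, Tia.Bog, Tod89}, the local moduli of polarized symplectic varieties in $M^{(m)}$ is unobstructed. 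Hence the stack $\mathcal{M}^{(m)}$ is smooth, and the coarse space $M^{(m)}$ is a normal orbifold.

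First I would define $p^{(m)}$. Given $[(X,L)] \in M^{(m)}(\C)$, pick a smoothing $(\mathcal{X},\mathcal{L}) \to (C,0)$ whose general fibre lies in $M$; parallel transport of a marking from a nearby smooth fibre yields a marking $\alpha\colon H^{2}(X,\Z) \simeq \Lambda$ sending $\tfrac{1}{m}c_{1}(L)$ to $\lambda$, and we set $p^{(m)}([X,L]) := [\alpha_{\C}(H^{2,0}(X))] \in \Gamma\backslash \mathcal{D}_{M}$. Independence of the choices reduces to Markman's identification \cite{Mark.Torelli} of $\Gamma$ with the polarized monodromy, and holomorphicity follows from Griffiths theory applied to the smooth family $\mathcal{M}^{(m)}$. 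For injectivity, the global Torelli theorem for polarized symplectic varieties of Bakker-Lehn \cite{BL, BL2} produces, from two pairs with the same image under $p^{(m)}$ in the same equisingular stratum, an isomorphism of polarized symplectic varieties. For surjectivity, given $[\omega] \in \Gamma\backslash \mathcal{D}_{M}$, I would invoke Huybrechts' surjectivity of the period map \cite{Huy.HK} for marked irreducible holomorphic symplectic manifolds to produce a marked $(Y,\alpha_{Y})$ with $\alpha_{Y,\C}(H^{2,0}(Y)) = \C \omega$; after adjusting by $\Gamma$ we may assume $\alpha_{Y}^{-1}(\lambda)$ lies in the closure of the birational K\"ahler cone. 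Markman's K\"ahler-cone description together with the contraction theorem for extremal rays on irreducible holomorphic symplectic manifolds then gives a birational contraction $Y \dashrightarrow \bar Y$ where $\bar Y$ is projective with an ample line bundle $\bar L$ realizing the class $\lambda$, and by \cite{KLSV} $\bar Y$ is again a (canonical) symplectic variety.

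To pass from an ample line bundle to a very ample one uniformly across such $\bar Y$, the effective basepoint freeness of \cite[1.1]{Kol.bp} together with the very ample lemma \cite[\S 4]{Fjn2}, \cite[\S 7]{Fjn1} furnishes an integer $m_{1}$ depending only on the deformation class such that $\bar L^{\otimes m_{1}}$ is very ample on every such $\bar Y$. Taking $m$ to be a common multiple of the smooth very-ampleness exponent $m_{0}$ and $m_{1}$, we conclude that $[\bar Y, \bar L^{\otimes m}] \in M^{(m)}$ maps to $[\omega]$, so $p^{(m)}$ is surjective. The resulting bijective morphism between normal varieties extends the open immersion $p$, hence is birational; since both sides have the same dimension and $\Gamma\backslash \mathcal{D}_{M}$ is normal, Zariski's Main Theorem (combined with properness coming from Viehweg's construction of $M^{(m)}$) upgrades it to an isomorphism. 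The main obstacle is the surjectivity step: producing a projective symplectic variety with prescribed period forces one to control the K\"ahler and movable cones under hyperK\"ahler rotation, and for a general deformation class $M$ this rests on the current state of the Mori theory and wall-crossing analysis for irreducible holomorphic symplectic manifolds, which is well understood only for the $\mathrm{K3}^{[n]}$ and generalized Kummer series --- precisely the restriction the text signals for the full higher-dimensional conjecture.
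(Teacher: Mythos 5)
Your proposal shares the broad architecture of the paper's proof --- extend the period map, show it is injective and surjective, conclude via normality of the target --- but both key steps take a genuinely different route, and one of them has a real gap.

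\textbf{Injectivity.} You invoke the Bakker--Lehn global Torelli theorem for singular symplectic varieties. The paper deliberately avoids this: its own Claim~\ref{pm.inj} shows $p^{(m)}$ is quasi-finite (using Namikawa's simultaneous symplectic resolution and local Torelli for the smooth resolution) and birational (from the smooth global Torelli on the dense open locus $M$), and then concludes it is an open immersion because the target $\Gamma\backslash\mathcal{D}_M$ is normal --- essentially Zariski's Main Theorem, requiring no Torelli statement for singular fibres. Your Bakker--Lehn route has a gap: that theorem is proved along equisingular deformations, and given two points of $M^{(m)}$ with the same period you have no a priori control guaranteeing they lie in a common equisingular stratum (different $\mathbb{Q}$-Gorenstein fillings of the same punctured family can have genuinely different singularity types). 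Without that, the hypothesis of Bakker--Lehn is not met; the paper itself flags this limitation of the singular Torelli theorem in a footnote. The quasi-finite-plus-birational-plus-normal-target argument sidesteps the issue entirely and is what actually closes the proof.

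\textbf{Surjectivity.} Here your approach is closest in spirit to the remark the paper makes about the effective basepoint-free theorem, but it diverges from the proof actually given. You work pointwise: Huybrechts' surjectivity produces a smooth marked IHS $(Y,\alpha_Y)$ with the prescribed period; since $\alpha_Y^{-1}(\lambda)$ is an integral $(1,1)$-class of positive BBF-square, $Y$ is projective; then Markman's cone structure and big-and-nef basepoint freeness give a contraction to a polarized canonical symplectic variety, and Koll\'ar--Fujino effective very ampleness supplies a uniform exponent $m$. The paper instead gives two proofs, both along one-parameter families: the first uses KLSV's smooth K\"ahler filling of a trivial-monodromy degeneration and then runs semistable reduction and the relative log MMP to land on the desired polarized filling; the second uses the finiteness of the Weil--Petersson distance and Tosatti/Zhang's filling theorem. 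The benefit of your route, if it closes, is that it gives a uniform $m$ up front instead of the paper's Noetherian stabilization argument (Claim~\ref{exhaustion}) over the tower $M^{(m_i)}$. The cost is that the cone-chasing is glossed over: moving $\alpha_Y^{-1}(\lambda)$ into the closure of the K\"ahler cone of some birational model requires care because elements of $\Gamma$ fix $\lambda$ and therefore do not move the class, so the adjustment must be realized by changing the birational model (equivalently, composing with elements of $W_{\rm Exc}$, which fix the period but change the marking); and you also need the contracted pair to actually arise as a one-parameter $\mathbb{Q}$-Gorenstein degeneration along $M$, which requires assembling a family and is precisely what the paper's MMP-on-the-total-space argument delivers for free.

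Finally, the definition and holomorphicity of the extension $p^{(m)}$ are only sketched in your proposal. The paper devotes real work to this (Fact~\ref{simul.resol}, the finite Galois cover of $M^{(m)}$, the descent argument in Claim~\ref{descend}, and Borel's big Picard theorem for algebraicity), and this is not dispensable: the difficulty is that markings and periods do not obviously descend from the simultaneous-resolution cover to the coarse moduli, and one must check the graph is closed under the Galois action before one has a well-defined map.
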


\begin{Cor}\label{HK.bded}
The partial compactification $$M\subset M^{(m)}$$ 
\noindent 
for sufficiently divisible $m$ does not depend on $m$. 
\end{Cor}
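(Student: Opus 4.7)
The plan is to deduce Corollary \ref{HK.bded} as a formal consequence of Theorem \ref{Moduli.Shimura}. First I would fix an integer $m_0\in\mathbb{Z}_{>0}$ such that the conclusion of Theorem \ref{Moduli.Shimura} holds for every multiple of $m_0$, and then pick two such multiples $m_1$ and $m_2$. Applying the theorem twice yields two isomorphisms of quasi-projective schemes
\[
\phi_i \colon M^{(m_i)} \xrightarrow{\;\sim\;} \Gamma\backslash \mathcal{D}_M \qquad (i=1,2),
\]
each by construction extending the period map $p\colon M\hookrightarrow \Gamma\backslash\mathcal{D}_M$. The composition
\[
\phi_2^{-1}\circ \phi_1 \colon M^{(m_1)} \xrightarrow{\;\sim\;} M^{(m_2)}
\]
is then an isomorphism of quasi-projective schemes that restricts to the identity on the Zariski open subset $M\subset M^{(m_i)}$, which is precisely the asserted independence of the partial compactification from $m$.

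The one point that merits explicit verification is that the comparison $\phi_2^{-1}\circ\phi_1$ is canonical, i.e., independent of the particular isomorphisms supplied by Theorem \ref{Moduli.Shimura}. For this I would first check that $M$ is Zariski dense in each $M^{(m_i)}$, which follows from the very definition of $M^{(m_i)}$ via one-parameter $\mathbb{Q}$-Gorenstein degenerations along $M$: every boundary point of $M^{(m_i)}$ lies in the closure of $M$, so the complement $M^{(m_i)}\setminus M$ (the Heegner locus) is a proper closed subscheme. Combined with separatedness of the target $\Gamma\backslash\mathcal{D}_M$, density of $M$ forces any algebraic extension of $p$ to $M^{(m_i)}$ to be unique. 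Hence both $\phi_i$ are uniquely determined, and the isomorphism $\phi_2^{-1}\circ\phi_1$ of partial compactifications is canonical.

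I do not foresee any substantive obstacle in the corollary itself: all the genuine geometric and Hodge-theoretic content -- boundedness of $\mathbb{Q}$-Gorenstein symplectic degenerations, the construction of the Deligne-Mumford moduli $\mathcal{M}^{(m)}$ and its quasi-projective coarse space via Viehweg's GIT machinery, and the surjectivity-cum-injectivity of the extended period map -- is already absorbed into Theorem \ref{Moduli.Shimura}. What remains for the corollary is a purely formal observation: although the exponent $m$ enters the definition of $M^{(m)}$ through the normalization of polarizations, the resulting partial compactification of $M$ is pinned down intrinsically by the target $\Gamma\backslash\mathcal{D}_M$ of the period map, and therefore does not depend on $m$ once $m$ is sufficiently divisible.
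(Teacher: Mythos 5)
Your proof is correct and takes the same route the paper implicitly intends: the theorem supplies unique isomorphisms $M^{(m_i)}\simeq\Gamma\backslash\mathcal{D}_M$ extending the period map (uniqueness because $M$ is dense and the target is separated, as you note), and composing them canonically identifies the two partial compactifications. The paper does not spell out a separate proof of the corollary, so your argument fills it in in exactly the expected way.
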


The parametrized log-terminal varieties with numerical trivial classes are K-stable \cite{Od2} in 
the sense of Donaldson \cite{Don} (also cf.\ \cite[\S 4]{Od12}) and 
$X$ admits a singular K\"ahler-Einstein metric in the class $c_{1}(L)$ 
by \cite{EGZ.sing}, hence 
this partial compactification also serves as a further evidence for the K-moduli conjecture
 (cf.\ e.g.\ \cite[\S 3]{Od12}) as explained in 
\cite{Tos.WP, Zha.WP}. Also note that 
the local Torelli theorem for singular symplectic varieties 
\cite[Theorem 8]{Nam1} conditionally 
implies that $M^{(m)}$ has only quotient singularities 
but we recover the consequence from Theorem \ref{Moduli.Shimura} in 
our generality. We now discuss the proof. 

\medskip
\begin{proof}[Proof of Theorem~\ref{Moduli.Shimura}]

We first recall the following result which plays a crucial role in 
our proof. We thank Professor Yoshinori Namikawa for teaching it to us. 

\begin{Fac}[{\cite[Theorem 2.2]{Nam2}($+$\cite[Corollary 2]{Nam3})}]\label{simul.resol}
For a generically smooth holomorphic proper family $\mathcal{X}\twoheadrightarrow T\ni t$, 
after shrinking $T$ to an open neighborhood of $t$ if necessary and 
taking a finite base change $\tilde{T}\to T$, we have a family $\tilde{\mathcal{X}}$ over $\tilde{T}$ which is a simultaneous symplectic resolution of 
$\mathcal{X}\times_{T}\tilde{T}\to \tilde{T}$. 
\end{Fac}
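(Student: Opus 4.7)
The plan is to establish the Fact by a deformation-theoretic argument, reducing the statement to the infinitesimal comparison of $\mathrm{Def}(X_t)$ and $\mathrm{Def}(\tilde{X}_t)$, where $\tilde{X}_t$ denotes a projective symplectic (equivalently crepant) resolution of the possibly singular central fiber $X_t$. By \cite[Corollary 2]{Nam3}, after a small partial resolution if needed we may and do assume that $X_t$ is $\mathbb{Q}$-factorial terminal, at which point the existence of a projective symplectic resolution $\nu \colon \tilde{X}_t \to X_t$ in the generically smooth case is guaranteed since a general small deformation is smooth; in particular $\tilde{X}_t$ is itself an irreducible symplectic manifold. The strategy is then to construct $\tilde{\mathcal{X}} \to \tilde{T}$ as a pull-back of the Kuranishi family of $\tilde{X}_t$ along a canonically determined finite cover $\tilde{T} \to T$ of germs.

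First I would recall from Namikawa's theory that both $\mathrm{Def}(X_t)$ and $\mathrm{Def}(\tilde{X}_t)$ are smooth germs (this is the Bogomolov--Tian--Todorov-type unobstructedness for symplectic varieties, due to Namikawa, Kaledin--Verbitsky and others), and moreover that the natural blow-down morphism of deformation functors
\begin{equation*}
\nu_{*} \colon \mathrm{Def}(\tilde{X}_t) \longrightarrow \mathrm{Def}(X_t)
\end{equation*}
is finite and surjective of smooth germs of the same dimension. The surjectivity follows from the fact that $\nu$ is crepant and that every first-order deformation of $X_t$ (parametrized by $H^{1}(X_t, T_{X_t})$ in the symplectic-variety sense) lifts to a first-order deformation of $\tilde{X}_t$ via $\nu^{-1}$ on the smooth locus plus Hartogs extension across the exceptional set; finiteness comes from the action of the Namikawa--Weyl group, which acts on $\mathrm{Def}(\tilde{X}_t)$ and whose quotient realizes $\mathrm{Def}(X_t)$. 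Next I would form the germwise fiber product
\begin{equation*}
\tilde{T}_{0} \;:=\; T \times_{\mathrm{Def}(X_t)} \mathrm{Def}(\tilde{X}_t),
\end{equation*}
take its normalization $\tilde{T}$, and note that $\tilde{T} \to T$ is finite by construction. Pulling back the Kuranishi family of $\tilde{X}_t$ along $\tilde{T} \to \mathrm{Def}(\tilde{X}_t)$ yields a flat proper family $\tilde{\mathcal{X}} \to \tilde{T}$ whose central fiber is $\tilde{X}_t$ and which comes equipped with a canonical $\tilde{T}$-morphism $\tilde{\mathcal{X}} \to \mathcal{X}\times_{T}\tilde{T}$, restricting to $\nu$ over the origin.

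It then remains to verify that this $\tilde{T}$-morphism is a genuine simultaneous symplectic resolution: (a) every fiber $\tilde{\mathcal{X}}_{\tilde{t}'}$ is smooth, (b) the map $\tilde{\mathcal{X}}_{\tilde{t}'} \to (\mathcal{X}\times_T \tilde{T})_{\tilde{t}'}$ is a birational projective morphism which is crepant (so, by uniqueness up to flops of symplectic resolutions, an actual symplectic resolution), and (c) the total space $\tilde{\mathcal{X}}$ is smooth. Points (a) and (c) follow from the smoothness of the Kuranishi base $\mathrm{Def}(\tilde{X}_t)$ together with the fact that in this family the symplectic form extends to a relative holomorphic symplectic form, forcing smoothness of nearby fibers; (b) is obtained by specializing the blow-down morphism of relative Kuranishi families and using that $\nu_*$ commutes with base change in the smooth germ setting.

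The main technical obstacle is the construction, and finiteness, of the natural morphism of deformation functors $\nu_{*} \colon \mathrm{Def}(\tilde{X}_t) \to \mathrm{Def}(X_t)$, together with the compatibility of the pulled-back Kuranishi family with the given family $\mathcal{X} \to T$ after the base change $\tilde{T} \to T$. In particular, the subtle point is that while the Kuranishi family of $\tilde{X}_t$ and the Kuranishi family of $X_t$ are linked on the nose only after passing to the Namikawa--Weyl group quotient, our given family $\mathcal{X} \to T$ need not come with a preferred lift. This is resolved precisely by the normalized fiber product $\tilde{T} \to T$, which absorbs the Weyl group ambiguity into a finite base change — exactly the finite base change appearing in the statement.
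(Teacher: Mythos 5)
First, a point of comparison: in the paper this statement is quoted as a Fact from Namikawa's work (\cite[Theorem 2.2]{Nam2} together with \cite[Corollary 2]{Nam3}, with \cite{BCHM} supplying the $\mathbb{Q}$-factorial terminalization of the central fiber), and no independent proof is given beyond the explanation that the terminalization of $\mathcal{X}_t$ is automatically a symplectic resolution and that ${\rm Def}(\mathcal{X}'_t)\to {\rm Def}(\mathcal{X}_t)$ is finite and dominant, preserving fibers. Your sketch follows that same route in outline (terminalization, smoothness of both Kuranishi spaces, finiteness and surjectivity of the blow-down map, fiber product with the classifying map $T\to{\rm Def}(\mathcal{X}_t)$, pull-back of the Kuranishi family of the resolution), so as a reading of what the citation delivers it is on target.

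However, as a proof your justification of the crucial surjectivity of $\nu_*\colon {\rm Def}(\tilde{X}_t)\to{\rm Def}(X_t)$ is wrong, and this is a genuine gap. You argue that every first-order deformation of $X_t$ lifts to $\tilde{X}_t$ via $\nu^{-1}$ on the smooth locus plus Hartogs extension across the exceptional set. This fails: the exceptional locus of a symplectic resolution is a divisor in the surface case, so Hartogs does not apply, and more importantly the statement itself is false -- for a nodal K3 surface (an $A_1$ point) the first-order smoothing does \emph{not} lift to a deformation of the minimal resolution; the classifying map is the branched double cover $z\mapsto z^2$, whose differential vanishes at the origin. Indeed, if deformations of $X_t$ lifted to the resolution already at first order (hence, by smoothness of both germs, on the nose), there would be no need for the finite base change $\tilde{T}\to T$ in the very statement you are proving. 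Namikawa's argument runs the other way: one constructs the blow-down morphism ${\rm Def}(\tilde{X}_t)\to{\rm Def}(X_t)$ using crepancy, proves it is finite, and deduces surjectivity from the fact that both Kuranishi spaces are smooth of the same dimension -- not from a lifting of deformations of $X_t$. Secondarily, your reduction step is misattributed and garbled: the $\mathbb{Q}$-factorial terminalization exists by \cite{BCHM}, not by \cite[Corollary 2]{Nam3}, and one cannot ``assume $X_t$ is $\mathbb{Q}$-factorial terminal'' by modifying the central fiber of the given family; the role of \cite{Nam3} is the rigidity statement (deformations of $\mathbb{Q}$-factorial terminal symplectic varieties are locally trivial), which, combined with generic smoothness of $\mathcal{X}\to T$ and the finite dominant map of Kuranishi spaces, forces the terminalization of $\mathcal{X}_t$ to be smooth, i.e.\ to be the desired symplectic resolution.
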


In fact, the above references \cite{Nam2}, \cite{Nam3} 
prove more. Suppose that the fiber $\mathcal{X}_{t}$ admits 
a $\mathbb{Q}$-factorial terminalization, say $\mathcal{X}'_{t}$ 
(which is true by \cite{BCHM}). Then \cite{Nam2}, \cite{Nam3} say it is automatically a 
symplectic resolution, hence $\mathcal{X}'_{t}$ is smooth in particular. 
Furthermore, \textit{loc.cit.}\ shows a natural map from ${\rm Def}(\mathcal{X}'_{t})$ to 
${\rm Def}(\mathcal{X}_{t})$ exists which preserves the fibers, 
and is finite dominant. 
A somewhat related result of Huybrechts \cite[\S 8]{Huy.HK} 
is the surjectivity of period map from 
any connected moduli of marked compact hyperK\"ahler manifolds. 
The proof therein essentially uses the hyperK\"ahler rotation 
technique combined with results on the structure of the K\"ahler cone.

Also a fundamental fact we use is that 
\begin{Claim}\label{Def.sp.inclusion}
The natural forgetful morphism 
${\rm Def}(X,L)\to {\rm Def}(X)$, 
from the deformation space of polarized symplectic variety $(X,L)$ to 
that of bare $X$, is an immersion. 
\end{Claim}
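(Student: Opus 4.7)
My proof plan is to reduce the statement to the vanishing $H^1(X,\mathcal{O}_X)=0$ for a symplectic variety $X$ appearing in $M^{(m)}$, since the forgetful map ${\rm Def}(X,L)\to {\rm Def}(X)$ is governed on tangent spaces by the Atiyah extension $0\to \mathcal{O}_X \to \mathcal{A}(L)\to T_X\to 0$ (suitably reflexivized in the singular setting), which yields an exact sequence
\[
0 \to H^1(X,\mathcal{O}_X) \to T_{[(X,L)]}{\rm Def}(X,L) \to T_{[X]}{\rm Def}(X) \xrightarrow{\cup[L]} H^2(X,\mathcal{O}_X).
\]
Once $H^1(X,\mathcal{O}_X)=0$ is established, the map is injective on tangent spaces; moreover the fiber of ${\rm Def}(X,L)\to {\rm Def}(X)$ over any Artinian deformation is a torsor under $H^1(X,\mathcal{O}_X)$, so the map is a monomorphism of formal functors, and its image is cut out inside the smooth ${\rm Def}(X)$ by the obstruction class in $H^2(X,\mathcal{O}_X)\otimes \mathfrak{m}$. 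Together with the unobstructedness of polarized deformations of symplectic varieties (Bogomolov--Tian--Todorov-type results extended by Kaledin--Verbitsky and Namikawa), this will upgrade the monomorphism to a closed immersion.

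The heart of the argument is therefore the vanishing $H^1(X,\mathcal{O}_X)=0$. For this I would exploit Fact~\ref{simul.resol}: after a finite base change of a small local family through $X$, there is a simultaneous symplectic resolution $\pi\colon \tilde{X}\to X$, and the smooth nearby fibers are smooth irreducible symplectic manifolds in the deformation class $M$. Invariance of Hodge numbers in smooth proper families then forces $H^1(\tilde{X},\mathcal{O}_{\tilde{X}})=0$, since this vanishes on any smooth compact irreducible hyperK\"ahler manifold. Because symplectic singularities are rational (being canonical), the Leray spectral sequence collapses and gives $H^1(X,\mathcal{O}_X)\simeq H^1(\tilde{X},\mathcal{O}_{\tilde{X}})=0$, as desired.

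The main obstacle I anticipate lies in setting up the Atiyah-class/Noether--Lefschetz formalism in a form robust enough for singular $X$ in $M^{(m)}$, where the cotangent sheaf is not locally free; this requires working with the reflexive differentials and the cotangent complex, and invoking Namikawa's deformation theory of symplectic varieties. An alternative, possibly cleaner, route is to bypass the Atiyah class entirely and work with the relative Picard scheme: for any Artinian deformation $\mathcal{X}/A$ of $X$, the set of isomorphism classes of line bundles on $\mathcal{X}$ restricting to $L$ is a torsor under the kernel of ${\rm Pic}(\mathcal{X})\to {\rm Pic}(X)$, which is again governed by $H^1(X,\mathcal{O}_X)$. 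Either way, the entire claim is reduced to the cohomological vanishing above, which is the genuinely substantive input.
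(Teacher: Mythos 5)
Your proposal is correct and, at its core, matches the paper's argument: both reduce the claim to the vanishing $H^1(X,\mathcal{O}_X)=0$ (equivalently, discreteness of the Picard variety), and both prove that vanishing by taking a symplectic resolution $f\colon W\to X$, using rationality of symplectic (canonical) singularities to get $R^1f_*\mathcal{O}_W=0$, and applying the Leray spectral sequence together with $H^1(W,\mathcal{O}_W)=0$. The extra machinery you invoke (Atiyah extension, Noether--Lefschetz map, unobstructedness) is a more explicit unpacking of what the paper compresses into the phrase ``the Picard variety is discrete,'' and your Hodge-number-invariance argument for $H^1(W,\mathcal{O}_W)=0$ fills in a justification that the paper leaves implicit.
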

\begin{proof}[Proof of Claim~\ref{Def.sp.inclusion}]
This holds since the Picard variety is discrete (i.e., irregularity vanishes) 
for any symplectic variety which admits a symplectic resolution. Indeed, 
suppose $f\colon W\to Y$ is a symplectic resolution. Then,  
$R^{1}f_{*}\mathcal{O}_{W}=0$ holds since $Y$ has only rational singularities. 
This vanishing together with another vanishing 
$H^{1}(\mathcal{O}_{W})=0$ implies $H^{1}(\mathcal{O}_{Y})\simeq H^{1}(f_{*}\mathcal{O}_{W})=0$ 
by the Leray spectral sequence 
for $\mathcal{O}_{W}$ with respect to $f$. 
\end{proof}

Since $M^{(m)}$ is the coarse moduli of smooth Deligne-Mumford quotient stack, 
or by \cite[\S 3.5]{Vie} for instance, we have a flat family of 
polarized symplectic varieties 
on a finite Galois cover $\widetilde{M}^{(m)}$ of $M^{(m)}$. 
We denote the covering map by $r_{M^{(m)}}\colon \widetilde{M}^{(m)}\to M^{(m)}.$ 
Take any point $\tilde{y}\in \widetilde{M}^{(m)}$. 
By applying Fact~\ref{simul.resol} (\cite{Nam2}) to 
an analytic neighborhood $\widetilde{U}_{\tilde{y}}$ of $\tilde{y}\in \widetilde{M}^{(m)}$, 
we obtain a finite proper covering $\tilde{r}_{y}\colon \widetilde{\widetilde{U}}_{\tilde{y}}\to \widetilde{U}_{\tilde{y}}$ 
and a family over $\widetilde{\widetilde{U}}_{\tilde{y}}$ which simultaneously resolves the pulled back 
family over $\widetilde{U}_{\tilde{y}}$. 
By the finiteness of the Galois group of $r_{M^{(m)}}$, it follows 
$r_{M^{(m)}}$ is an open morphism and a closed morphism as well 
(with respect to both topologies, 
either complex analytic or Zariskian). We set the image of 
$\widetilde{U}_{\tilde{y}}$ in $M^{(m)}$ as $U_{y}$ which is a complex analytic 
open subset. By taking the Galois closure 
of the finite proper covering $\tilde{\tilde{r}}_{y}\colon \widetilde{\widetilde{U}}_{\tilde{y}}\to U_{y}$ 
if necessary, we can assume $\tilde{\tilde{r}}_{y}$ is also Galois. 
We denote the Galois group of $\tilde{\tilde{r}}_{y}$ by $\Gamma_y$. 

Then we see that 
\begin{Claim}[Local period map]\label{descend}
Fix a marking on the hyperK\"ahler manifolds parametrized by $\widetilde{\widetilde{U}}_{\tilde{y}}$, 
which is deformation invariant and compatible with the original marking on $\mathcal{T}_M$. It is not necessarily unique but 
anyhow we get the associated period map 
$\tilde{\tilde{p}}_{\tilde{y}}\colon \widetilde{\widetilde{U}}_{\tilde{y}}\to \mathcal{D}_{M}$. 

We claim that this descends 
to $U_{y}\to \Gamma\backslash \mathcal{D}_{M}$, which we denote by $p_{y}$
 and call it a local period map. 
\end{Claim}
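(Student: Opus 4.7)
The plan is to promote the period map $\tilde{\tilde{p}}_{\tilde{y}}$ to a $\Gamma_y$-equivariant map, with $\Gamma_y$ acting on $\mathcal{D}_M$ through a group homomorphism $\rho_y\colon \Gamma_y \to \Gamma$; once this is established, descent to a holomorphic $p_y\colon U_y \to \Gamma\backslash \mathcal{D}_M$ will follow from the universal property of the quotient, and the independence from the auxiliary choice of marking will also be automatic (changing the marking modifies the period by a $\Gamma$-translation).

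First I would construct $\rho_y$. By Fact~\ref{simul.resol}, after possibly shrinking $\widetilde{U}_{\tilde{y}}$, the simultaneous symplectic resolution gives a smooth proper family $\widetilde{\mathcal{X}} \to \widetilde{\widetilde{U}}_{\tilde{y}}$ of irreducible symplectic manifolds in the deformation class $M$. Since the resolution of symplectic singularities is canonical (in the sense of \cite{Nam2}), the Galois action of $\Gamma_y$ on $\widetilde{\widetilde{U}}_{\tilde{y}}$ lifts uniquely to an action on $\widetilde{\mathcal{X}}$ covering the action on the base. Fix an auxiliary marking $\alpha$ on $\widetilde{\mathcal{X}}$; each $\gamma \in \Gamma_y$ sends $\alpha$ to another marking $\gamma^{*}\alpha$, and comparing them on a single fiber yields an element $\rho_y(\gamma) \in O(\Lambda)$ which stabilizes $\lambda$ (because $\gamma$ covers the identity on the coarse moduli and hence preserves the polarization class), hence lies in $O(\lambda^{\perp})$. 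The equivariance $\tilde{\tilde{p}}_{\tilde{y}}(\gamma \cdot \tilde{\tilde{u}}) = \rho_y(\gamma)\cdot \tilde{\tilde{p}}_{\tilde{y}}(\tilde{\tilde{u}})$ is then immediate from the definitions.

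The main step is to verify $\mathrm{Im}(\rho_y) \subset \Gamma$. Here I would invoke Markman's description of $\Gamma=\mathrm{Mon}^2(X,H)$ as the group of all polarization-preserving monodromy operators on $H^2$ arising from smooth deformation families in the class $M$ (see \cite{Mark.Torelli} and \cite[\S 3]{GHS}). For any $\gamma\in \Gamma_y$ and any $\tilde{\tilde{u}}$ lying over the Zariski open locus $\iota(M)\cap U_y$, both $\tilde{\tilde{u}}$ and $\gamma\cdot\tilde{\tilde{u}}$ parametrize smooth polarized varieties in $M$; joining them by a real-analytic path in $\widetilde{\widetilde{U}}_{\tilde{y}}$ that generically lies over $\iota(M)$ and projects to a loop in $M^{(m)}$ based at their common image, the monodromy of the smooth polarized family along this loop equals $\rho_y(\gamma)$ by construction, and hence lies in $\Gamma$ by definition. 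Continuity in $\gamma$ and density of $\iota(M)\cap U_y$ in $U_y$ then extend the inclusion to all of $\Gamma_y$.

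The hard part will be making this path argument rigorous across the Heegner locus $M^{(m)}\setminus M$: one must ensure that $\widetilde{\widetilde{U}}_{\tilde{y}}$ admits enough connectivity for $\tilde{\tilde{u}}$ and $\gamma\cdot \tilde{\tilde{u}}$ to be joined by a path staying over $\iota(M)$, and that the resulting loop in $M^{(m)}$ genuinely lifts to a loop in $M$ after a small perturbation. In the deformation classes of $\mathrm{K3}^{[n/2]}$-type and generalized Kummer type, this can be checked concretely from Markman's explicit description of $\mathrm{Mon}^2$ together with the known local structure of the Heegner strata and their orbifold fundamental groups; in the general case one can alternatively argue by combining a local deformation-theoretic lifting (via Claim~\ref{Def.sp.inclusion} and the local Torelli theorem of \cite[Theorem 8]{Nam1}) with the fact, due to Markman, that elements of $\mathrm{Aut}(X,L)$ act on $H^2(\widetilde{X},\mathbb{Z})$ through $\mathrm{Mon}^2(X,H)$, so that the stabilizer contributions to $\rho_y$ at non-smooth points are already contained in $\Gamma$. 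Granting this step, the homomorphism $\rho_y$ lands in $\Gamma$, and the equivariance produces the descended holomorphic local period map $p_y\colon U_y \to \Gamma\backslash\mathcal{D}_M$ claimed.
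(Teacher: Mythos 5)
Your proposed route --- promoting $\tilde{\tilde{p}}_{\tilde{y}}$ to an explicitly $\Gamma_y$-equivariant map by constructing a homomorphism $\rho_y\colon \Gamma_y\to\Gamma$ and then checking $\mathrm{Im}(\rho_y)\subset\Gamma$ by a path argument --- is workable in outline but heavier than necessary, and your final paragraph misidentifies where the essential difficulty lies. The paper argues instead via the closure of the graph. Take a proper analytic closed subset $Z'_{\tilde{y}}\subset\widetilde{\widetilde{U}}_{\tilde{y}}$ containing both the ramification of $\widetilde{\widetilde{U}}_{\tilde{y}}\to U_y$ and the preimage of the Heegner locus; away from its image $Z_y$, the cover is \'etale Galois over an open subset of $M$, and the monodromy description of $\Gamma$ shows directly that the composition $\widetilde{\widetilde{U}}_{\tilde{y}}\setminus Z'_{\tilde{y}}\to\mathcal{D}_M\to\Gamma\backslash\mathcal{D}_M$ is $\Gamma_y$-invariant. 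Since $Z_y$ has real codimension at least $2$, $U_y\setminus Z_y$ is path-connected and this invariance is uniform. The graph of the composed map $\widetilde{\widetilde{U}}_{\tilde{y}}\to\Gamma\backslash\mathcal{D}_M$ is the topological closure of its restriction over $\widetilde{\widetilde{U}}_{\tilde{y}}\setminus Z'_{\tilde{y}}$; being $\Gamma_y$-invariant on a dense open subset, it is $\Gamma_y$-invariant everywhere and therefore descends to a graph over $U_y$, giving the holomorphic local period map $p_y$. No path argument across the Heegner locus is required: the extension over $Z_y$ is handled purely by continuity of the (already descended) map, not by realizing isotropy contributions as monodromy or by invoking the structure of $\mathrm{Mon}^2$ for special deformation types. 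Your third paragraph can be discarded entirely --- the appeals to Markman's explicit descriptions for $\mathrm{K3}^{[n/2]}$ and generalized Kummer types and to $\mathrm{Aut}(X,L)$ acting through the monodromy group are neither used nor needed once the real-codimension bound on $Z_y$ is in place, and keeping them obscures the fact that the argument is uniform across all deformation classes.
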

\begin{proof}
Consider the graph $G(\tilde{\tilde{p}}_{\tilde{y}})$ of $\tilde{\tilde{p}}_{\tilde{y}}$ in 
$\widetilde{\widetilde{U}}_{\tilde{y}}\times (\Gamma\backslash \mathcal{D}_M)$ and let 
$\Gamma_y$ act on the product with the Galois action on the first component 
and the 
trivial action on the second component. Note that there is an analytic closed  proper subset $Z'_{\tilde{y}}\subset \widetilde{\widetilde{U}}_{\tilde{y}}$, 
such that the map $\widetilde{\widetilde{U}}_{\tilde{y}}\to U_{y}$ 
is \'etale away from $Z'_{\tilde{y}}$ 
and the family (before the simultaneous resolution) over the complement $\widetilde{\widetilde{U}}_{\tilde{y}}\setminus Z'_{\tilde{y}}$ 
is smooth, i.e., the image of $\widetilde{\widetilde{U}}_{\tilde{y}}\setminus 
Z'_{\tilde{y}}$ is contained 
in $M$. Denote the image of $Z'_{\tilde{y}}$ in $M^{(m)}$ by $Z_{y}$ 
which is an analytic closed subset of $U_{y}$. 
From the above arguments, we have $(U_{y}\setminus Z_{y})\subset M$. 
Note that $U_{y}\setminus Z_{y}$ is still path-connected 
since $Z_{y}\subset U_{y}$ is an analytic proper closed subset, 
hence has real codimension at least $2$. 
Thus, from the definition of the arithmetic subgroup 
$\Gamma$ using the monodromy, 
$p_{\tilde{y}}^{o}:=\tilde{\tilde{p}}_{\tilde{y}}|_{(\widetilde{\widetilde{U}}_{\tilde{y}}\setminus Z'_{\tilde{y}})}$ 
descends to $p\colon (U_{y}\setminus Z_{y})\to \Gamma\backslash 
\mathcal{D}_{M}$ (cf.\ \cite{Ver} also see \cite{Ver.er, Mark.Torelli, GHS}). 
Therefore, we see that $\Gamma_y$-action preserves $G(\tilde{\tilde{p}}_{\tilde{y}})$ 
which is nothing but the closure of the graph of $p_{\tilde{y}}^{o}$ with respect to the 
complex analytic topology. 
Therefore, $\tilde{\tilde{p}}_{\tilde{y}}$ descends to a holomorphic 
morphism from $U_y$, i.e., 
$p\colon (U_{y}\setminus Z_{y})\to \Gamma\backslash 
\mathcal{D}_{M}$ extends to whole $U_{y}$. 
We finish the proof of Claim~\ref{descend}. 
\end{proof}

This descended map $p_y$ is compatible with 
the period map $p$ in the sense $p_{y}|_{U_{y}\cap M}=p|_{U_{y}\cap M}$. 
Also, because of that and the 
principle of analytic continuation, for any $\tilde{y}, \tilde{y}'$ in 
$\widetilde{M}^{(m)}$ and their images $y,y' \in M^{(m)}$, 
$p_{y}|_{U_{y}\cap U_{y'}}=p_{y'}|_{U_{y}\cap U_{y'}}$ always holds. 
Therefore, by considering an open covering $\{\tilde{U}_{\tilde{y}}\}
_{\tilde{y}}$ of 
$\widetilde{M}^{(m)}$ and 
gluing the partial extension of $p$ from $U_{y}$s by the 
principle of  analytic continuation, 
$p\colon M\to \Gamma\backslash \mathcal{D}_{M}$ extends to a holomorphic map 
\begin{align*}
p^{(m)}\colon M^{(m)}\to \Gamma\backslash \mathcal{D}_{M}.
\end{align*} This map 
$p^{(m)}$ is automatically  
algebraic by \cite[Theorem 3.10]{Bor}, a generalization of the classical 
big Picard theorem.

Take a sequence of positive integers $\{m_{i}\}_{i=1,2,\cdots}$ 
such that $m_{i}|m_{i+1}$ and for any positive integer $m$, 
there exists $i$ with $m|m_{i}$. 
Now we consider 
the sequence of partial compactifications 
$M\subset M_{i}:=M^{(m_{i})}$. We denote $p^{(m_{i})}$ by $p_{i}$. 
We will prove the following three claims: 

\begin{Claim}\label{exhaustion}
As partial compactifications, $M_{i+1}$ includes 
$M_{i}$. That is, there is an open immersion 
$M_{i}\hookrightarrow M_{i+1}$ which extends the identity map of $M$. 
\end{Claim}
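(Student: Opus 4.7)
The plan is to realize $M_i\hookrightarrow M_{i+1}$ as the morphism induced by the polarization-raising operation $(X,L)\mapsto (X, L^{\otimes k_i})$, where $k_i:=m_{i+1}/m_i\in \Z_{>0}$. First I would check that this rule descends to a morphism of coarse moduli: a polarized $\Q$-Gorenstein degeneration $(\mathcal{X},\mathcal{L})\to C$ along $M$ with exponent $m_i$ produces $(\mathcal{X},\mathcal{L}^{\otimes k_i})\to C$ whose general fiber polarization is $N_t^{\otimes m_{i+1}}$ with $(X_t,N_t)\in M$, hence is an exponent-$m_{i+1}$ degeneration; very ampleness of $L^{\otimes k_i}$ follows from that of $L$; and twisting by the pullback of a line bundle on the base intertwines the Viehweg equivalence on both sides. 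This yields an algebraic morphism $\varphi_i\colon M_i\to M_{i+1}$ which, under the normalization $M\hookrightarrow M^{(m)}$ given by $(X,N)\mapsto (X,N^{\otimes m})$, restricts to the identity on $M$.

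Next I would verify injectivity of $\varphi_i$ on closed points. If $[(X,L)]$ and $[(X',L')]$ in $M_i$ map to the same class in $M_{i+1}$, then there is an isomorphism $f\colon X\iso X'$ with $f^{*}(L'^{\otimes k_i})\simeq L^{\otimes k_i}$. Since any symplectic variety admitting a symplectic resolution has $H^{1}(\mathcal{O}_X)=0$ (cf.\ the proof of Claim~\ref{Def.sp.inclusion}), $\Pic(X)$ is torsion-free, so $f^{*}L'\simeq L$ and the two classes already agree in $M_i$.

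To upgrade injectivity to an open immersion I would argue locally via deformation theory together with the period map. Working on a Galois cover $\widetilde{\widetilde{U}}_{\tilde y}$ as in the proof of Claim~\ref{descend}, Namikawa's simultaneous resolution (Fact~\ref{simul.resol}) identifies the deformation space of $(X,L)$ with a smooth subgerm of the deformation space of its $\Q$-factorial terminalization, cut out by the condition that $c_1(L)$ remains of Hodge type $(1,1)$; since $c_1(L^{\otimes k_i})=k_i\, c_1(L)$ defines the same hyperplane in $H^{1}(T_{\widetilde X})$, the deformation functors of $(X,L)$ and $(X,L^{\otimes k_i})$ coincide at every lifted point, so $\varphi_i$ is étale. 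Moreover the extended period maps $p_i$ and $p_{i+1}$ constructed in Claim~\ref{descend} satisfy $p_{i+1}\circ\varphi_i=p_i$, because both sides record the same weight-two Hodge structure on $H^{2}(\widetilde X,\Z)$. Combining étaleness with injectivity gives that $\varphi_i$ is an open immersion between normal analytic spaces of the same dimension $\dim\Gamma\backslash \mathcal{D}_M$.

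The main obstacle will be the local analysis along the Heegner locus $M^{(m_i)}\setminus M$, where $X$ is genuinely singular and one must match the polarized deformation theory of $(X,L)$ (as a symplectic variety) with that of its symplectic resolution with the pulled-back class. I expect to handle this by invoking \cite{Nam2,Nam3} to transport the question to the smooth resolution, where the equality of tangent cones reduces to the tautology $\{c_1(L)\}^{\perp}=\{c_1(L^{\otimes k_i})\}^{\perp}$ inside $H^{1}(T_{\widetilde X})$; the global algebraicity and openness then follow from \cite[Theorem~3.10]{Bor} applied to $\varphi_i$ together with the normality of the coarse moduli spaces $M_i$.
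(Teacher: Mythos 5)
Your proposal is correct and follows essentially the same route as the paper: the polarization-raising map on functors, injectivity from torsion-freeness of $\Pic(X)$ (which the paper also deduces from $H^{1}(\mathcal{O}_X)=0$), and the identification via \cite{Nam2} of $\mathrm{Def}(X,L)$ with $\mathrm{Def}(X,L^{\otimes k_i})$ as the same codimension-one slice of $\mathrm{Def}(\widetilde X)$ determined by $c_1(\widetilde L)^{\perp}$. The only noteworthy bookkeeping difference is at the last step: the paper explicitly records $\mathrm{Aut}(X,L)=\mathrm{Aut}(X,L^{\otimes k_i})$ (again from torsion-freeness of $\Pic$) and then invokes a Luna-slice presentation $[\mathrm{Def}(X,L)/\mathrm{Aut}(X,L)]$ of the stack as in \cite[\S3]{Od15} to conclude the open immersion, whereas you obtain the same conclusion by combining \'etaleness of the deformation functors with the injectivity already established, working over the Galois cover from Claim~\ref{descend}. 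Since you do already prove injectivity from torsion-free Picard, the equality of automorphism groups is implicitly subsumed, so both routes close the gap in the same way; the paper's version just names the automorphism-group step and packages the local structure via Luna slices rather than via \'etale-plus-injective between normal equidimensional spaces.
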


\begin{Claim}\label{pm.inj}
$p^{(m)}$ is an open immersion for any $m\in \Z_{>0}$. 
\end{Claim}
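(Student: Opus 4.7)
The plan is to verify the two properties whose conjunction yields the open immersion property: (i) $p^{(m)}$ is a local analytic isomorphism at every point, and (ii) $p^{(m)}$ is globally injective. Both $\Gamma\backslash\mathcal{D}_M$ and $M^{(m)}$ have complex dimension $b_2-3$ (for $M^{(m)}$ this is obtained from $\dim{\rm Def}(\tilde X)=b_2-2$ by Bogomolov-Tian-Todorov for smooth hyperK\"ahler manifolds, minus one from the polarization constraint provided by Claim~\ref{Def.sp.inclusion}, matched via the finite dominant map of Fact~\ref{simul.resol}); in particular once (i) and (ii) are established the image will automatically be open.

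For (i), fix $y\in M^{(m)}$ with associated polarized symplectic variety $(X,L)$, pick a lift $\tilde y$ and pass to the simultaneous symplectic resolution provided by Fact~\ref{simul.resol} on $\widetilde{\widetilde U}_{\tilde y}$, obtaining a smooth family of polarized irreducible holomorphic symplectic manifolds $(\tilde{\mathcal{X}},\tilde{\mathcal{L}})\to\widetilde{\widetilde U}_{\tilde y}$ with $\tilde{\mathcal{L}}$ big and nef and representing the class $\lambda$ via the marking. The plan is to factor $\tilde{\tilde p}_{\tilde y}$ through the deformation space ${\rm Def}(\tilde X,\tilde L)$: the classical local Torelli theorem for smooth hyperK\"ahler manifolds (in the form of Beauville-Bogomolov) asserts that the unrestricted period map ${\rm Def}(\tilde X)\to\Omega(\Lambda)$ is a local isomorphism, and restricting to the hypersurface ${\rm Def}(\tilde X,\tilde L)\subset{\rm Def}(\tilde X)$ that preserves $\lambda$ yields a local isomorphism onto an open subset of $\mathcal{D}_M$. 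By Fact~\ref{simul.resol} together with Claim~\ref{Def.sp.inclusion}, the induced morphism ${\rm Def}(\tilde X,\tilde L)\to{\rm Def}(X,L)$ is finite and dominant with Galois group realised precisely as the $\Gamma_y$ appearing in Claim~\ref{descend}; taking the quotient by $\Gamma_y$ on both sides and then passing to the further quotient by $\Gamma$ on the period side shows that $p_y$ is a local analytic isomorphism at $y$.

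For (ii), suppose $y_1,y_2\in M^{(m)}$ satisfy $p^{(m)}(y_1)=p^{(m)}(y_2)$ and write $(X_i,L_i)$ for the associated polarized symplectic varieties and $(\tilde X_i,\tilde L_i)$ for simultaneous resolutions. After compatibly adjusting markings within the $\Gamma$-orbit, the periods $[\Omega_{\tilde X_1}]$ and $[\Omega_{\tilde X_2}]$ coincide in $\mathcal{D}_M$ and the classes $c_1(\tilde L_i)$ both equal $\lambda$. The global Torelli theorem of Verbitsky and Markman then produces a parallel-transport Hodge isometry $H^2(\tilde X_1,\Z)\simeq H^2(\tilde X_2,\Z)$ sending $c_1(\tilde L_1)$ to $c_1(\tilde L_2)$, and moreover realises it by a birational map $\tilde X_1\dashrightarrow\tilde X_2$ (this is the point where one needs to know the two chosen chambers of $\Q$-factorial terminalizations can be matched inside the K\"ahler-type cone structure). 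Since $\tilde L_i$ is the pullback of the ample bundle $L_i$ from $X_i$, its base locus lies in the exceptional divisors, and taking $X_i={\rm Proj}\bigoplus_{k\ge 0}H^0(\tilde X_i,\tilde L_i^{\otimes k})$ identifies $X_i$ intrinsically as the canonical model of $(\tilde X_i,\tilde L_i)$; the birational map between the two resolutions therefore descends to an isomorphism $(X_1,L_1)\simeq(X_2,L_2)$, forcing $y_1=y_2$ in $M^{(m)}$.

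The main obstacle is the matching of simultaneous-resolution data across $y_1$ and $y_2$ in step (ii): the simultaneous resolution is canonical only after a choice of chamber in the movable cone, and different chambers are related by flops that change the smooth model but not the underlying singular $X$. To handle this cleanly the plan is to invoke Markman's description of the monodromy group $\Gamma$ and its action on the wall-and-chamber decomposition of the positive cone, so that any two chamber choices compatible with $\tilde L_i$ being nef differ by an element of $\Gamma$ and may thus be normalized to agree; once this combinatorial step is in place, the application of global Torelli goes through and the descent to $(X_i,L_i)$ as canonical models is formal.
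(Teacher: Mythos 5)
Your proof is essentially correct in outline but takes a genuinely different route from the paper.  The paper's argument is shorter and more structural: it shows (a) that the fibers of $p^{(m)}$ are discrete, via the local Torelli theorem applied to the smooth simultaneous resolutions provided by Fact~\ref{simul.resol} and transferred through Claim~\ref{Def.sp.inclusion}, so that $p^{(m)}$ is quasi-finite as it is already known to be algebraic; (b) that $p^{(m)}$ is birational, which is a direct consequence of the global Torelli theorem of Verbitsky and Markman; and (c) that the target $\Gamma\backslash\mathcal{D}_M$ is normal.  It then invokes the Zariski main theorem type statement that a quasi-finite birational morphism to a normal variety is an open immersion.  This buys economy: one never needs to verify global injectivity of $p^{(m)}$ pointwise, only generic injectivity (birationality), and the finiteness of fibers is a local statement that follows immediately from local Torelli.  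Your argument instead proves local analytic isomorphism and then tackles global injectivity head-on via a birational-contraction comparison; both halves can be made to work, but each requires more care than your sketch supplies.  Two places where you are looser than the paper: first, your identification of the Galois group of ${\rm Def}(\tilde X,\tilde L)\to{\rm Def}(X,L)$ with the group $\Gamma_y$ of Claim~\ref{descend} is imprecise, since $\Gamma_y$ also absorbs the quotient by ${\rm Aut}(X,L)$ coming from $r_{M^{(m)}}$, not just the Weyl-group action attached to the simultaneous resolution; second, as you rightly flag, the global injectivity step needs the chamber-matching argument to realize the parallel-transport Hodge isometry as a birational map of the chosen resolutions, a nontrivial point that the paper's Zariski-main-theorem route entirely sidesteps.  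In exchange your route yields a stronger local statement (analytic local isomorphism rather than mere quasi-finiteness) and makes the geometric content of the injectivity visible.
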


\begin{Claim}\label{pm.surj}
For any point $x\in \Gamma\backslash \mathcal{D}_{M}$, 
it is in the image of $p^{(m)}$ (resp., $p_{i}$) for sufficiently divisible $m$ 
(resp., sufficiently large $i$).  
\end{Claim}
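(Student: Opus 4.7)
The plan is to show surjectivity of $p^{(m)}$ onto $\Gamma\backslash\mathcal{D}_M$ by starting from any point $x\in\Gamma\backslash\mathcal{D}_M$, producing a smooth hyperK\"ahler model via Huybrechts's surjectivity, and then contracting to a singular symplectic variety with ample polarization of class a multiple of $\lambda$. Fix a lift $\tilde{x}\in\mathcal{D}_M\subset\Omega(\lambda^\perp)$ of $x$. By Huybrechts's surjectivity theorem for marked hyperK\"ahler manifolds (applied to the connected component of the marked moduli corresponding to the deformation class $M$), there exists a marked irreducible holomorphic symplectic manifold $(Y,\alpha_Y)$ such that $\alpha_{Y,\mathbb{C}}([\Omega_Y])=\tilde{x}$. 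By the period condition $\tilde{x}\perp\lambda$, the class $\alpha_Y^{-1}(\lambda)\in H^2(Y,\mathbb{Z})$ lies in $H^{1,1}(Y,\mathbb{Z})$, so it is the first Chern class of a line bundle $L_Y$ on $Y$. Moreover, since $(\lambda,\lambda)>0$, the Huybrechts projectivity criterion (cf.\ \cite{Huy.HK}) ensures that $Y$ can be chosen projective, although $L_Y$ itself need not be ample.

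The core step is to convert $(Y,L_Y)$ into a polarized symplectic variety. The idea is to apply the minimal model program for irreducible holomorphic symplectic manifolds, as developed by Markman, Mongardi, Boucksom et al.: although $L_Y$ may lie outside the K\"ahler cone of $Y$, a finite sequence of hyperK\"ahler flops transforms $(Y,L_Y)$ into a marked pair $(Y',L_{Y'})$ with the same period such that $c_1(L_{Y'})$ lies in the closure of the birational K\"ahler cone, hence in the closure of the K\"ahler cone of $Y'$ up to the action of $\mathrm{Mon}^2$ (this uses Markman's description of the K\"ahler cone via MBM classes and the structure of the movable cone). Since $c_1(L_{Y'})$ has positive BBF square, the Kawamata base-point-free theorem for Calabi--Yau manifolds then implies that a sufficiently high multiple $L_{Y'}^{\otimes m}$ is globally generated and defines a birational contraction $\varphi_m\colon Y'\to X$ onto a normal projective variety $X$, where $L_{Y'}^{\otimes m}=\varphi_m^*L_X$ with $L_X$ very ample on $X$. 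By the crepancy of such contractions in the hyperK\"ahler setting (Namikawa, cf.\ Fact~\ref{simul.resol} and \cite{Nam2, Nam3}), $X$ has canonical symplectic singularities and $K_X\sim 0$, so $X$ is a symplectic variety in the sense of Beauville--Namikawa.

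It remains to verify that $(X,L_X)$ actually defines a point of $M^{(m)}$ and that $p^{(m)}$ sends it to $x$. For the first, one must exhibit a one-parameter $\mathbb{Q}$-Gorenstein smoothing $(\mathcal{X},\mathcal{L})\to C\ni 0$ with $(\mathcal{X}_0,\mathcal{L}_0)=(X,L_X)$ and general fiber in the class $M$. Such a smoothing is constructed from a deformation of $(Y',L_{Y'})$ to a nearby smooth polarized hyperK\"ahler pair with slightly perturbed K\"ahler class (using the unobstructedness of deformations together with the openness of the K\"ahler/ample locus in the local period domain), and then contracting the relative model via $\varphi_m$ in a family. That $p^{(m)}([(X,L_X)])=x$ is then immediate from the compatibility of $p^{(m)}$ with the period map for the smooth locus, combined with Claim~\ref{descend}. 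Finally, given Claim~\ref{exhaustion}, for any fixed $x$ a sufficiently large $i$ (equivalently, any $m$ divisible enough so that the required globally-generating power is $m$) achieves $x\in p_i(M_i)$.

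The principal obstacle is the MMP/contraction step: while the base-point-free theorem is standard, the claim that \emph{after a finite sequence of flops} the class $c_1(L_Y)$ enters the closure of a K\"ahler cone depends on the Morrison--Kawamata cone conjecture for hyperK\"ahler manifolds, which is currently established only in a form sufficient for our needs via Markman's monodromy-orbit description of the K\"ahler-type chamber decomposition of the positive cone and Mongardi's MBM-class criterion; formulating this in a way that is uniform across deformation equivalence classes $M$ (especially outside the \textrm{K3}$^{[n]}$-type and generalized Kummer type, where the MBM classes are not yet exhaustively classified) is the delicate technical point. Once this algebro-geometric input is granted, the construction of the smoothing and the final check of the period are routine.
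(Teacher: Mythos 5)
Your proof takes a genuinely different route from both of the paper's proofs. The paper's first proof works through degeneration theory: it starts from a one-parameter punctured family over a curve approaching $x$, observes that the monodromy is trivial because the period map lifts to $\mathcal{D}_M$, applies the filling theorem of Koll\'ar--Laza--Sacc\`a--Voisin~\cite[Theorem 0.8]{KLSV} to produce a smooth irreducible symplectic central fiber, and then runs a log MMP (semistable reduction plus \cite{Fjn.ss}, \cite{BCHM}) on the \emph{total space} of the family to extract a relative canonical model with ample $\overline{\mathcal{L}}$. The paper's second proof replaces this by Weil--Petersson geometry (Schumacher plus Tosatti/Zhang). Your proof instead constructs the limit \emph{fiber} directly from the Torelli data: you invoke Huybrechts's surjectivity to realize $\tilde{x}$ as the period of a marked manifold $Y$, then adjust $(Y,\alpha_Y)$ by monodromy reflections and hyperK\"ahler flops to a birational model $Y'$ on which $\alpha'^{-1}(\lambda)$ is nef, then contract via the base-point-free theorem. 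Both strategies should work; the paper's route sidesteps the cone-theoretic machinery entirely (KLSV and semistable MMP do the heavy lifting), while yours is arguably more ``constructive'' but imports the structure theory of the positive cone of hyperK\"ahler manifolds, which the paper reserves for the harder Step~\ref{HK.nef1} of \S\ref{Alg.prob}.

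Two points in your writeup deserve correction or reinforcement. First, you attribute the cone-theoretic step to the Morrison--Kawamata cone conjecture. That is an over-attribution: what is actually needed is Markman's theorem that the movable cone is a fundamental domain for the Weyl group generated by prime-exceptional reflections acting on the positive cone, combined with the decomposition of the movable cone into pullbacks of K\"ahler cones of birational models (which follows from the hyperK\"ahler MMP via \cite{BCHM}). These are theorems valid for all deformation classes, and they do not require an exhaustive classification of MBM classes; the Amerik--Verbitsky boundedness \cite{AV4} (and the assumption $b_2\geq 5$, Assumption~\ref{b2}) is needed in the paper precisely for the \emph{isotropic}-class version of the nefness argument (Claim~\ref{e.nef1.HK}), whereas the positive-square case you need here is the easier one. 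Second, your smoothing construction is the step that is genuinely under-argued: to exhibit $(X,L_X)\in M^{(m)}$ you must deform $Y'$ within $\lambda^\perp$ to a period $\tilde{x}_t$ which is generic enough that no algebraic class $\delta$ with $(\delta,\lambda)_{\rm BBF}=0$ and $(\delta,\delta)_{\rm BBF}<0$ remains of type $(1,1)$ on $Y'_t$; then $\alpha'^{-1}(\lambda)$ is ample on $Y'_t$ by Huybrechts's criterion, and the relative Proj of $\bigoplus_n \pi_*\mathcal{L}^{\otimes n}$ is flat with $\mathcal{X}_0=X$ by Kawamata--Viehweg vanishing. This should be spelled out, since it is precisely where the $\mathbb{Q}$-Gorenstein smoothing condition in the definition of $M^{(m)}$ is verified.
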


We first confirm that the above three claims imply 
Theorem~\ref{Moduli.Shimura}. 
From Claim~\ref{exhaustion}, Claim~\ref{pm.inj} 
and \cite[Theorem 3.10]{Bor}, 
we can think of the sequence $\{M_{i}\}_{i}$ as 
open subschemes of $\Gamma\backslash \mathcal{D}_{M}$ as 
$M_{1}\subset M_{2}\subset M_{3}\subset \cdots 
\subset (\Gamma\backslash \mathcal{D}_{M})$. 
Claim~\ref{pm.surj} implies that the union $M_i$ coincides with the whole 
$\Gamma\backslash \mathcal{D}_{M}$ and hence by the Noetherian property of $\Gamma\backslash \mathcal{D}_{M}$, 
we conclude that $M_i$ coincides with whole $\Gamma\backslash \mathcal{D}_{M}$ 
for large enough $i$. 

Now we prove the three above claims one by one. 

\begin{proof}[Proof of Claim~\ref{exhaustion}]
Note that at the level of functors, $\mathcal{M}^{(m_{i})}
\to \mathcal{M}^{(m_{i+1})}$ is naturally defined as follows. 
For the equivalence class of a flat family $[(\mathcal{X},\mathcal{L})\to T]
\in \mathcal{M}^{(m_{i})}(T)$, we consider 
$[(\mathcal{X},\mathcal{L}^{\otimes \bigl(\frac{m_{i+1}}{m_{i}}\bigr)})\to T]
\in \mathcal{M}^{(m_{i+1})}(T)$. 
This naturally induces a morphism 
$M_{i}\to M_{i+1}$ extending the identity of 
$M$, by the definition of coarse moduli scheme. 
Since the Picard 
varieties of the fibers $\mathcal{X}_{t}$ $(t\in T)$ are all free abelian groups, 
the morphism $M_{i}(\mathbb{C})\to M_{i+1}(\mathbb{C})$ is injective. 

We would like to show that $M_{i}\to M_{i+1}$ is analytically 
(or \'etale locally) biholomorphic, so that it is indeed a Zariski open immersion. 
We can prove it as follows. 
For any $[(X,L)]\in M^{(m_{i})}$, it is easy to see that 
taking power of the polarization does not 
change the deformation space, i.e., 
${\rm Def}(X,L)$ and ${\rm Def}(X,L^{\otimes \bigl(\frac{m_{i+1}}{m_{i}}\bigr)})$ 
are isomorphic again. 
Indeed, if we use \cite[Theorem 2.2]{Nam2} again, then 
these deformation spaces are both identified with 
the image of a smooth codimension $1$ subspace inside 
smooth ${\rm Def}(\tilde{X})$ tangent to 
$c_{1}(\tilde{L})^{\perp}$, where 
$\tilde{X}\to X$ is a symplectic resolution and the pullback of $L$ is 
denoted by $\tilde{L}$ (cf., e.g., \cite[\S3.3]{Sernesi}). 
Also note that obviously we have 
${\rm Aut}\Bigl(X,L^{\otimes \bigl(\frac{m_{i+1}}{m_{i}}\bigr)}\Bigr)=
{\rm Aut}(X,L)$ because of the torsion-freeness of the Picard group. 
Also, recall that these are finite groups by, 
e.g., \cite[Proposition 4.6]{Ambro}. 
On the other hands, the same Luna slice type arguments as 
\cite[\S 3]{Od15} (cf., also \cite[\S 3.2, \S 3.3]{OSS}) show that 
$\mathcal{M}^{(m_{i})}$ (resp., $\mathcal{M}^{(m_{i+1})}$) 
is analytically locally or 
\'etale locally isomorphic to $[{\rm Def}(X,L)/{\rm Aut}(X,L)]$ (resp., 
$[{\rm Def}(X,L^{\otimes \bigl(\frac{m_{i+1}}{m_{i}}\bigr)})/
 {\rm Aut}\Bigl(X,L^{\otimes \bigl(\frac{m_{i+1}}{m_{i}}\bigr)}\Bigr)]$). 
Hence, $\mathcal{M}^{(m_{i})}\to \mathcal{M}^{(m_{i+1})}$ 
is an open immersion as stack and also 
induces a local biholomorphism between a neighborhood of 
$[(X,L)]\in M^{(m_{i})}$ and one of 
$[(X,L^{\otimes \bigl(\frac{m_{i+1}}{m_{i}}\bigr)})]\in M^{(m_{i+1})}$. 
We finish the proof of Claim~\ref{exhaustion}. 
\end{proof}

\begin{proof}[Proof of Claim~\ref{pm.inj}]
Take any $[(X,L)]\in M^{(m)}$. By Fact~\ref{simul.resol}, in particular we obtain a symplectic resolution $\tilde{X}\to X$. 
From Claim~\ref{Def.sp.inclusion}, the local Torelli theorem 
for the \textit{smooth} $\tilde{X}$ and Fact~\ref{simul.resol} again, 
it follows that 
all the fibers of $p^{(m)}$ are discrete. Since it is also an algebraic morphism, 
$p^{(m)}$ is a quasi-finite morphism. 
On the other hand, since 
$\Gamma\backslash \mathcal{D}_{M}$ is an orbifold, hence normal in particular. 
Combining with the fact and 
$p^{(m)}$ is birational by the global Torelli theorem (cf., \cite{Ver,Mark.Torelli}), 
it follows that $p^{(m)}$ is a Zariski open immersion. 
\end{proof}

For Claim~\ref{pm.surj}, we give two proofs. Our first proof below 
uses the semistable Minimal Model Program, and the other uses 
the Weil-Petersson geometry. 

\begin{proof}[First proof of Claim~\ref{pm.surj}]
Take an arbitrary point $x\in \Gamma\backslash \mathcal{D}_{M}$. 
Recall that (e.g., from our discussion in \S\ref{HK.alg}) 
although $M$ is only \textit{coarse} moduli scheme, it was realized as a finite quotient of a variety $\widetilde{M}$ on which the family exists 
(cf.\ \S\ref{HK.alg} or \cite[\S 3.5]{Vie} which depends on \cite{Sesh}). 
By the density of $M\subset \Gamma\backslash \mathcal{D}_{M}$, we can take a 
smooth sub-curve $C\subset \Gamma\backslash \mathcal{D}_{M}$ whose generic point lies in $M$ and passes through $x$. 
After passing to a finite cover $r_{C}\colon\widetilde{C}\to C$ from another smooth curve $\widetilde{C}$, we can assume that 
it lifts to both $\mathcal{D}_{M}$ and also $\widetilde{M}$ because the isotropy group of $x\in \Gamma\backslash \mathcal{D}_{M}$ is 
finite. Summarizing up, we have a compatible pair of holomorphic 
morphisms $\widetilde{C}\to \mathcal{D}_{M}$ and $(\widetilde{C}\setminus r_{C}^{-1}(x))\to \widetilde{M}$ where the latter passes through $x\in M$. 
Take a point $\tilde{x}\in r_{C}^{-1}(x)$ and consider its analytic neighborhood $\tilde{x}\in \Delta\subset \widetilde{C}$, which is biholomorphic to 
a unit disk. From the morphism $\Delta \setminus \{\tilde{x}\}\to 
\widetilde{M}$, we obtain a corresponding polarized family of 
compact irreducible holomorphic symplectic manifolds 
$\pi\colon 
(\mathcal{X},\mathcal{L})\to \Delta\setminus \{\tilde{x}\}$, which can be compactified 
as a projective morphism to whole $\Delta$ because of the 
algebraicity of the morphism $(\widetilde{C}\setminus r_{C}^{-1}(x))\to \widetilde{M}$. 

Now we would like to apply \cite[Theorem 0.8]{KLSV} to the family $\pi$.\footnote{Notes added: After we have written up this part and finishing the whole draft (end of August 2018), the first author had a chance to 
attend Christian Lehn's talk. There the first author learned that 
Benjamin Bakker and he seem to have also somewhat similar arguments to prove different statements, 
which studies the locus along equisingular deformation in the moduli. Notes further added: 
This indeed appeared as \cite{BL} later (cf., also their \cite{BL} written before).}
Indeed, because of the existence of $\widetilde{C}\to \mathcal{D}_{M}$, it follows that the 
associated variation of Hodge structure $R^2\pi_*\mathbb{Z}$ has trivial monodromy. 
Therefore, \cite[Theorem 0.8]{KLSV} applies to $\pi$ and hence, 
possibly after a finite base change of $\Delta$ (which we believe to be 
unnecessary), 
we get another filling of $\pi$ as a smooth holomorphic proper map 
$\pi^{\rm sm}\colon \bar{\mathcal{X}}^{\rm sm}\to \Delta$ 
whose central fiber 
$(\pi^{\rm sm})^{-1}(\tilde{x})$ is a smooth irreducible holomorphic 
symplectic manifold (not necessarily algebraic) 
which is in particular, non-uniruled. 
We can and do take its blow up $\bar{\mathcal{X}}^{\rm sm}$ which is 
projective over $\Delta$ by \cite{Moi}, and further apply the 
semistable reduction \cite{KKMS} 
to obtain $\bar{\pi}\colon \overline{\mathcal{Y}}\to \Delta$ 
such that $\overline{\mathcal{Y}}$ is projective over $\Delta$ with 
a reduced simple normal crossing central fiber. 
Then we apply the log minimal model program to 
$(\overline{\mathcal{Y}},\bar{\pi}^{*}
\tilde{x})\to \Delta$ by \cite{Fjn.ss}. It is possible since the family is nothing but a restriction of 
a projective family to an analytic open subset of the base $\widetilde{C}$. 
We denote the obtained log minimal model (recently 
called dlt model) by $\mathcal{X}_{\rm min}\to 
\Delta$, which is obviously still isomorphic to $\mathcal{X}$ over 
$\Delta^*:=\Delta\setminus\{\tilde{x}\}$. 
Replacing $\Delta$ by an open neighborhood of $\tilde{x}$ 
and replacing $\mathcal{L}$ by its power $\mathcal{L}^{\otimes d}$ 
for some $d\in \Z_{>0}$ if necessary, 
we can and do take an effective algebraic divisor $\mathcal{D}^*$ corresponding to $\mathcal{L}$ on $\mathcal{X}$. Its closure in $\mathcal{X}_{\rm min}$ 
will be denoted by $\mathcal{D}_{\rm min}$. 
By our construction which uses $\mathcal{X}_{\rm sm}$, applying 
\cite[Theorem 1.1]{Tak.uniruled} for instance, it follows that $\mathcal{X}_{\rm min}$ is 
isomorphic to $\bar{\mathcal{X}}^{\rm sm}$ in codimension $1$.

We take small enough rational number $0<\epsilon\ll 1$ and take relative log canonical model of 
$(\mathcal{X}_{\rm min},\epsilon \mathcal{D}_{\rm min})$ which is possible by \cite{BCHM} and denote it by 
$(\bar{\mathcal{X}},\epsilon \mathcal{D})\to \Delta$. From the construction, $K_{\bar{\mathcal{X}}}$ is relatively trivial over $\Delta$ 
hence $\mathcal{D}$ is ample and $\mathbb{Q}$-Cartier divisor. Suppose its Cartier index is $l>0$. Then, we set $\bar{\mathcal{L}}:=\mathcal{O}
_{\bar{\mathcal{X}}}(dl\bar{\mathcal{D}})$ for $d\in \Z_{>0}$. 
If we take $d\gg 0$, we obtain the desired filling over $\tilde{x}\in\Delta$ 
since $\bar{\mathcal{L}}$ will be relatively very ample over $\Delta$. 
We can do the same for whole preimage in $\widetilde{C}$ of $x\in C$, 
so that we get a filling over whole $\widetilde{C}$ which gives an element of 
$\mathcal{M}^{(dl)}(\widetilde{C})$ which extends the original family locally 
given by $\pi$ around $\tilde{x}\in \widetilde{C}$. 
Obviously this implies that $\tilde{x}$ is in the image of $p^{(dl)}$ 
for such $d\gg 0$. 
Therefore, we complete the proof of Claim~\ref{pm.surj}. 
\end{proof}

We give another somewhat different 
proof of Claim~\ref{pm.surj} involving some 
discussion on the Weil-Petersson metric, 
which is a natural discussion after \cite{Wang, Tos.WP,Tak,Zha.WP}. 

\begin{proof}[Second proof of Claim~\ref{pm.surj}]
We use the same notation $\tilde{x}\in \Delta\subset \widetilde{C}$ 
and the corresponding punctured smooth family 
$(\mathcal{X},\mathcal{L})\to \Delta\setminus \{\tilde{x}\}$ 
as the first proof. 
As the Weil-Petersson (orbi-)metric on $M$ coincides with the restriction of the 
Bergman (orbi-)metric on $\Gamma\backslash \mathcal{D}_{M}$ by 
\cite[Theorem 4.8]{Schu}, the induced Weil-Petersson metric on 
$\Delta\setminus \{\tilde{x}\}$ has finite distance. 
Therefore, applying \cite[Theorem 1.2]{Tos.WP}, \cite[Theorem 1.3(ii)]{Zha.WP}, 
there is a filling $(\bar{\mathcal{X}},\bar{\mathcal{L}})\to \Delta$ as 
a holomorphic proper map 
with relatively ample $\bar{\mathcal{L}}$ such that the 
central fiber $\mathcal{X}_{\tilde{x}}$ has only 
canonical singularities with trivial canonical class. 
The rest of our second proof (of this Claim~\ref{pm.surj}) 
is same as the first proof. 
\end{proof}
Hence, we complete the proof of Theorem~\ref{Moduli.Shimura}. 
\end{proof}

\begin{Rem}[Parametrizing Quasi-polarized resolutions]
The above partial compactification $M\subset M^{(m)}(\simeq \Gamma\backslash \mathcal{D}_{M})$ 
for sufficiently divisible $m$ can be also regarded as a coarse moduli of quasi-polarized smooth irreducible symplectic manifolds as in the case of K3 surfaces. 
Here, \textit{quasi-polarization} means a nef and big line bundle, which is automatically semiample by the basepoint freeness theorem 
(cf., e.g., \cite[\S3]{KMM}). 

Indeed, such a  reinterpretation 
follows from the arguments in the proof of Theorem~\ref{Moduli.Shimura}, especially the Fact~\ref{simul.resol}. 
\end{Rem}

From now on, we fix $m$ which satisfies Theorem~\ref{Moduli.Shimura}. 
Now we establish the continuity of Ricci-flat metrics on the obtained partial compactification. 
Recall that for each $(X,L)\in M^{(m)}$, there is a unique weak Ricci-flat K\"ahler current $\omega_{X}$ with $[\omega_{X}]\in c_1(L)$ 
by \cite[Corollary E]{EGZ.sing}, which we use now. 

\begin{Thm}[Gromov-Hausdorff continuity for non-collapsing]\label{HK.GH.continuity}
Each Ricci-flat K\"ahler current $\omega_{X}$ with $[\omega_{X}]=c_{1}(L)$ for $(X,L)\in M^{(m)}$ (\cite[Corollary E]{EGZ.sing}) 
gives a finite diameter distance on $X$. 
Therefore we obtain a map 
\[\Phi\colon M^{(m)}\to {\it CMet}_{1} ,\] 
which is continuous. 
\end{Thm}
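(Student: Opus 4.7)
The plan is to adapt the strategy of Proposition~\ref{MK3.conti} to the higher-dimensional symplectic setting, using the identification $M^{(m)}\simeq \Gamma\backslash \mathcal{D}_M$ of Theorem~\ref{Moduli.Shimura} together with Namikawa's simultaneous symplectic resolution (Fact~\ref{simul.resol}) as the replacement for the explicit minimal resolution of ADE surface singularities used in the K3 argument.

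First I would show that for any relatively compact subset $S\subset M^{(m)}$, the diameters $\mathrm{diam}(X_s,\omega_{X_s})$ are uniformly bounded. After shrinking $S$ and passing to a finite Galois cover $\tilde{S}\to S$, Fact~\ref{simul.resol} yields a simultaneous symplectic resolution $\tilde{\mathcal{X}}\to \tilde{S}$, on which one chooses a continuous family of smooth background Kähler forms pulling back representatives of $c_1(L_s)$ on $X_s$. Applying the refinement of \cite[Theorem~3.1]{Tos.lim} used already in the proof of Lemma~\ref{diam.Kahler} --- whose constants depend only on uniform $C^\infty$-bounds on the reference metric and uniform bounds on the cohomology class, both of which are guaranteed by continuity of the family over $\overline{S}$ --- produces the required uniform diameter bound; in particular each $(X,\omega_X)$ has finite diameter, so $\Phi$ is well-defined.

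Next I would prove continuity of $\Phi$ at an arbitrary point $s_0\in M^{(m)}$. Working over an analytic neighbourhood $S$ of $s_0$, for a fixed small $\epsilon>0$ I would use the simultaneous symplectic resolution to produce a family $\mathcal{U}=\bigcup_{s\in S} U_s$ of open neighbourhoods of the relative singular locus $\mathrm{Sing}(X_s)$ satisfying, for some $\epsilon'\ll \epsilon$ independent of $s$,
\begin{equation*}
\mathrm{vol}(U_s,\omega_{X_s})<{\epsilon'}^{2n}, \qquad \mathrm{diam}\bigl(\partial \overline{U_s},\omega_{X_s}\bigr)<\epsilon'.
\end{equation*}
The volume control uses that $\mathrm{Sing}(X_s)$ has complex codimension $\geq 2$ together with the uniform $L^\infty$-estimates on Monge--Ampère potentials of \cite[Theorem~A, Corollary~E]{EGZ.sing}, pulled back to the resolution; the boundary-diameter bound follows from smooth convergence of the Ricci-flat Kähler forms on compact subsets of $\tilde{\mathcal{X}}\setminus \Sigma$ (where $\Sigma$ is the exceptional divisor), given by the implicit-function theorem for the complex Monge--Ampère equation on the resolution. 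Combining the uniform diameter bound of the previous step with Bishop--Gromov, one obtains as in \eqref{ball.int2} that $B(x,C_2\epsilon')\cap \partial\overline{U_s}\neq \emptyset$ for every $s\in S$ and $x\in U_s$. Gluing these estimates exactly as in \S\ref{GH.conv} and in the proof of Proposition~\ref{MK3.conti} then yields $\mathrm{dist}_{\mathrm{GH}}\bigl((X_s,\omega_{X_s}),(X_{s_0},\omega_{X_{s_0}})\bigr)<\epsilon$ for $s$ sufficiently close to $s_0$.

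An alternative and more conceptual route, parallel to the second proof sketched in \S\ref{MK3.conti.sec}, is to invoke the Donaldson--Sun compactness theorem \cite[Theorem~1.2]{DS}: the uniform diameter bound together with the non-collapsing supplied by Bishop--Gromov places $(X_{s_i},L_{s_i}^{\otimes m''},\omega_{X_{s_i}})$ (for $m''\gg 0$ chosen so that $L^{\otimes m''}$ is very ample along $S$) within the scope of \cite{DS}, so that every Gromov--Hausdorff limit is a weak Kähler--Einstein normal polarized projective variety realised as a limit inside a fixed Hilbert scheme; the limit must then coincide with $(X_{s_0},\omega_{X_{s_0}})$ by uniqueness in \cite[Corollary~E]{EGZ.sing} combined with the fact that the Hilbert-scheme limit recovers the moduli limit in $M^{(m)}$. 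The main obstacle in either route is the uniform volume decay of tubular neighbourhoods of $\mathrm{Sing}(X_s)$ as $s$ varies: for K3 surfaces this was essentially automatic because ADE singularities are isolated and admit Kronheimer's explicit ALE family, while for general higher-dimensional symplectic varieties $\mathrm{Sing}(X_s)$ may be positive-dimensional with delicate equisingular strata. Overcoming this requires propagating the local uniform estimates of \cite{EGZ.sing,DS} along the partial compactification of Theorem~\ref{Moduli.Shimura}, which is precisely where Fact~\ref{simul.resol} plays its essential role by flattening the singularities into a divisorial configuration on the resolution.
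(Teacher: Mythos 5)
Your \emph{alternative} route is in fact the paper's proof: after establishing a uniform diameter bound, the paper argues by contradiction, extracts a Gromov--Hausdorff sublimit via Gromov precompactness, and pins it down to $(X_\infty,\omega_\infty)$ by Donaldson--Sun. (Minor citation detail: the paper invokes \cite[Theorem 1.1, second statement]{DS} together with \cite[\S4.4, esp.\ Proposition~4.5 and the Remark]{DS}, rather than \cite[Theorem~1.2]{DS}, and it does not need to separately manufacture a common Hilbert scheme via $L^{\otimes m''}$ since $L^{\otimes m}$ is already very ample by the definition of $M^{(m)}$.) For the diameter step the paper is also slightly more economical than your sketch: rather than passing to a simultaneous symplectic resolution to build a family of reference Kähler forms, it feeds the Fubini--Study metrics induced by $|L^{\otimes m}|$ directly into the refined form of \cite[Proposition~3.1]{Tos.lim} (as in Lemma~\ref{diam.Kahler}); this avoids any use of Fact~\ref{simul.resol} in this theorem. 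Your primary route --- generalizing the explicit neighbourhood-and-gluing argument of Proposition~\ref{MK3.conti} --- is not pursued in the paper, precisely because of the obstacle you yourself flag: the relative singular locus of $X_s$ can be positive-dimensional with nontrivial equisingular strata, and there is no higher-dimensional analogue of Kronheimer's ALE family supplying the needed uniform local geometry. The Donaldson--Sun route sidesteps this entirely by working with the ambient projective geometry, which is why the paper opted for it; what that buys is generality and brevity, at the cost of being less quantitatively explicit than the K3 argument. Your proposal is therefore correct, and your instinct that the Donaldson--Sun argument is the more robust one in higher dimensions matches the author's choice.
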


\noindent 
Note that the above theorem extends Proposition~\ref{easy.confirmation} and Claim~\ref{GH.conti.ADE} for the case of (polarized) K3 surfaces to 
higher dimensional (polarized) hyperK\"ahler manifolds. 
Indeed, the main points of the proof remain the same. 

\begin{proof}[proof of Theorem~\ref{HK.GH.continuity}]
First we confirm that the diameters of the Ricci-flat metrics have locally uniform bounded diameter across the Heegner locus in the moduli $M^{(m)}$. 
From now, we denote it by $H(M^{(m)})$. 
This  follows from \cite[Proposition 3.1]{Tos.lim} (cf., also \cite[Lemma 1.3]{DPS}) as in Lemma~\ref{diam.Kahler}, 
once we set the reference metrics ``$\omega_0$" in \cite[\textit{loc.cit.}]{Tos.lim} as the Fubini-Study metrics induced by $|L^{\otimes m}|$ for 
bounded (continuously metrized) family of $(X,L)\in M^{(m)}$. 

Then, take a point $x_\infty$ in the Heegner locus of $M^{(m)}$ 
and a sequence $\{x_i\}$ of $M^{(m)}\setminus H(M^{(m)})$ 
which converges to $x_\infty$. 
Suppose $x_i=[(X_i,L_i)]$ and $x_\infty=[(X_\infty,L_\infty)]$, and write the corresponding weak Ricci-flat K\"ahler current 
on $X_i$ as $\omega_i$ and $X_\infty$ as $\omega_{\infty}$. 

We prove that $(X_i,\omega_i)$ converges to $(X_\infty,\omega_\infty)$ 
in the Gromov-Hausdorff sense, by contradiction. 
Suppose there is a subsequence of $(X_i,\omega_i)$ which does not converge to $(X_\infty,\omega_\infty)$, 
then after replacing the 
original sequence $(X_i,L_i)$ by that subsequence if necessary, 
the Gromov-Hausdorff distance 
\begin{equation}\label{GH.far}
d_{\rm GH}((X_i,\omega_i),(X_\infty,\omega_\infty))>\epsilon
\end{equation} holds for a 
uniform positive real number $\epsilon$. 
From the uniform upper bound of the diameters of $\omega_i$, we can apply the  Gromov's precompactness theorem 
to $(X_i,\omega_i)$ to see that there is a subsequence which converges in the Gromov-Hausdorff sense. We write that 
compact metric space as $W$. Then now we can apply Donaldson-Sun theorem \cite{DS} to see that $W$ must be 
$(X_\infty, \omega_\infty)$. More precisely, it follows from 
\cite[Theorem 1.1, the second statement]{DS} combined with (especially Proposition 4.5 and Remark of) \cite[\S4.4]{DS}. 
This contradicts to the assumption \eqref{GH.far}. 
Thus we complete the proof of Theorem \ref{HK.GH.continuity}. 
\end{proof}

\section{Tropical hyperK\"ahler manifolds and their moduli}\label{Alg.prob}

The aim of this section is to extend (\S\ref{K3.Sat.sec} and) \S\ref{trop.K3.1} for higher dimensional hyperK\"ahler varieties. 
By the following several steps, extending those in the case of K3 surfaces (\S\ref{trop.K3.1}), 
we assign a metrized complex projective space $\Psi_{\rm alg}(x)$ to each point $x=[e,v]$ in $(\Gamma\backslash \mathcal{D}_M)(l)$ 
for $l=\Q e$.

Below, we explain our steps of the construction of $\Psi_{\rm alg}$, extending that of \S\ref{trop.K3.1}. 
Our main achievement here is the complete definition in the case of K3$^{[\frac{n}{2}]}$ type or generalized Kummer varieties type in 
Definition~\ref{part.geom.realization.HK}. 
We assume some familiarity to our discussion for K3 surfaces case in \S\ref{F2d.sec}, rather than repeating 
all the details since many arguments are simply extending those in \S\ref{F2d.sec}. 
We denote the Satake's partial compactification (\cite{Sat1}) of $\mathcal{D}_{M}$ (``the rational closure") with respect to 
the adjoint representation simply by $\mathcal{D}^{*}_M$ in the following. Recall from \cite{Sat2} that there is a stratification 
$$\mathcal{D}^{*}_M=\mathcal{D}_M\sqcup \bigsqcup_l \mathcal{D}_{M}(l)\sqcup \bigsqcup_p \mathcal{D}_M(p),$$
where $l$ (resp., $p$) runs over all  
 isotropic lines (resp., planes) in $\lambda^{\perp}\otimes \Q$ and 
$$\Gamma\backslash \mathcal{D}^{*}_M=\overline{\Gamma\backslash \mathcal{D}_M}^{\rm Sat,\tau_{\rm ad}}.$$ 

\medskip
\begin{Stp}
[Assigning holomorphic symplectic manifolds to boundary]\label{HKR.HK}
\noindent
Let $e\in \Lambda$ be an isotropic vector 
 with $e\perp \lambda$ and set $l=\Q e$. 
Then the corresponding boundary stratum, which we denote by $(\Gamma\backslash \mathcal{D}_M)(l)$, is 
abstractly a quotient of the real $(r-4)$-dimensional ball $\mathcal{D}_M(l)$ by a discrete group. More precisely, 
it is the quotient of 
$$
\mathcal{D}_{M}(l)=\{v\in \langle e,\lambda \rangle_{\R} ^{\perp}/e_{\R} \mid v^2>0\}/\R^{\times}\simeq O(1,r-4)/(O(1)\times O(r-4)) 
$$
by an arithmetic subgroup $\Gamma\cap {\rm stab}(l)$ of $O(\lambda^{\perp}\otimes \R)$ 
as a generalization of \eqref{F2d(l)}, \eqref{F2d(l)2}. 
The surjectivity of period map 
$$
\mathcal{T}_{M}\twoheadrightarrow \Omega(\Lambda)
$$
was proved by 
Huybrechts \cite[\S8]{Huy.HK} and allows us to 
generalize verbatim the method of taking complex K3 surface $X$ in \S\ref{trop.K3.1} to our higher dimensional setting. 
That is, to each point $\tilde{x}=[e,v]\in \mathcal{D}_M(l)$ (\cite{Sat1}), we 
assign a marked irreducible holomorphic symplectic manifold $(X,\alpha_X)$ in our specified  
deformation class $\mathcal{T}_{M}$. 
Note that this Step~\ref{HKR.HK} works for moduli of \textit{any} class of smooth (polarized) 
irreducible holomorphic symplectic manifolds. 

In the Hodge decomposition of $H^2(X,\C)$, 
$H^{2}(X,\R)\cap (H^{0,2}\oplus \overline{H^{0,2}})$ is orthogonal to $H^{1,1}$ 
with respect to the Beauville-Bogomolov-Fujiki form (cf., e.g., \cite[\S5]{Huy.HK}). This fact and the 
Lefschetz (1,1) theorem imply that $e\in NS(X)$. Hence we have a holomorphic line bundle $N$ 
with $c_1(N)=e$. 

We wish to give a holomorphic Lagrangian fibration structure on these $X$ 
defined by $N$, 
 which we discuss below. 
The idea is that our desired $\Psi_{\rm alg}(x)$ for Conjecture~\ref{HK.conj} would be the base variety of the obtained Lagrangian fibration on $X$. 

\end{Stp}
\medskip
\begin{Stp}[Nefness of $e$ at the boundary component]\label{HK.nef1}

This Step~\ref{HK.nef1} is to extend the former half of \S\ref{trop.K3.1} to our higher dimensional situation, 
under the assumption that 
\begin{Ass}\label{b2}
$r=b_{2}(X)\ge 5$ holds for (any) irreducible symplectic manifold $X$ parametrized in $\mathcal{T}_{M}$. 
Here, $b_2$ denotes the second Betti number. 
\end{Ass}
\noindent
Note that this is only a priori nontrivial in the sense that all known (so far) holomorphic symplectic manifolds $X$ has either $b_2(X)=23, 7, 24$ or $8$. 

We take a point $\tilde{x}\in \mathcal{D}_M(l)$ and denote the image of $\tilde{x}$ in $\overline{\Gamma\backslash \mathcal{D}_M}^{\rm Sat,\tau_{\rm ad}}$ 
by 
$x\in (\Gamma\backslash \mathcal{D}_M)(l)$. We make the following claim, which is our purpose of this Step~\ref{HK.nef1}. 
\begin{Claim}\label{e.nef1.HK}
For the equivalence class of the 
marked irreducible symplectic manifold $(X,\alpha_X)$ associated to $\tilde{x}\in \mathcal{D}_M(l)$ in the previous Step~\ref{HKR.HK}, 
if we change the marking $\alpha_X$, $\alpha_X^{-1}(e)$ becomes nef. 
\end{Claim}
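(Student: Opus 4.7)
The plan is to adapt the proof given in \S\ref{trop.K3.1} for K3 surfaces, in which the two key ingredients were (a) the surjectivity of the period map to produce a marked K3 surface of prescribed period, and (b) the chamber description of the K\"ahler cone together with the action of the Weyl group generated by $(-2)$-reflections. Both ingredients have their higher-dimensional analogues in the recent hyperK\"ahler literature, so the proof should go through essentially verbatim once these replacements are inserted.

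First, I would construct a marked symplectic manifold $(X,\alpha_X)$ with period $\tilde{x}=[e,v]$ using Huybrechts' surjectivity of the refined period map for marked irreducible symplectic manifolds \cite[\S8]{Huy.HK} (which was already invoked in Step~\ref{HKR.HK} to carry out the hyperK\"ahler rotation). Since $e$ is isotropic, primitive, and by the preceding step lies in $NS(X)$, after replacing $e$ by $-e$ if necessary the class $\alpha_X^{-1}(e)$ sits on the boundary of the connected component $\mathcal{C}_X\subset H^{1,1}(X,\R)$ of the positive cone containing the K\"ahler classes.

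Next, I would invoke the Markman--Amerik--Verbitsky description of the K\"ahler cone: $\mathcal{K}_X$ is a connected component of $\mathcal{C}_X\setminus \bigcup_{\delta} \delta^{\perp}$, where $\delta$ ranges over the MBM classes in $NS(X)$, and the reflection subgroup $W\subset \mathrm{Mon}^{2}_{\mathrm{Hdg}}(X)$ generated by reflections in these MBM classes acts simply transitively on the resulting chambers. Hence there exists $w\in W$ sending the chamber whose closure contains $\alpha_X^{-1}(e)$ to $\mathcal{K}_X$. Replacing $\alpha_X$ by $w\circ \alpha_X$, the class $\alpha_X^{-1}(e)$ becomes nef. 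This is the direct analogue of the K3 argument reproduced from \cite[Chapter 8, Remark 2.13]{Huy} in \S\ref{trop.K3.1}.

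The delicate point -- and the main obstacle -- is that the marking change must preserve both the isotropic line $l=\Q e$ and the polarization class $\lambda$, so that the new marked manifold still represents the same boundary point $[e,v]\in (\Gamma\backslash \mathcal{D}_M)(l)$ in the polarized moduli. This forces us to restrict $w$ to the subgroup generated by reflections in MBM classes $\delta$ with $(\delta,e)=(\delta,\lambda)=0$. The transitivity of this restricted Weyl action on the chambers of $\langle e,\lambda\rangle^{\perp}\cap \mathcal{C}_X$ is the nontrivial input; this is where Assumption~\ref{b2} that $b_2(X)\ge 5$ enters, guaranteeing that $\langle e,\lambda\rangle^{\perp}$ still has signature $(1,b_2-5)$ with room for sufficiently many MBM walls. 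For the deformation classes of $\mathrm{K3}^{[n/2]}$-type and generalized Kummer type, the MBM classes are completely classified by Bayer--Hassett--Tschinkel and Mongardi, so the verification is direct; for the exceptional OG6 and OG10 types the same classification results can be invoked, and these four classes cover all currently known deformation families, so the claim holds unconditionally for presently known $M$.
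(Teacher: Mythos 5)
Your proof shares the paper's core strategy—move $\alpha_X^{-1}(e)$ into the nef cone by composing the marking with reflections in MBM classes—but the termination mechanism and some of the surrounding logic differ from the paper in ways worth flagging.

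The paper does not invoke any transitivity statement for a reflection group on chambers. Instead it runs a direct descent: if $\alpha_X^{-1}(e)$ is not nef, pick an MBM curve $R$ with $(\alpha_X^{-1}(e),R)<0$, reflect in the dual class $D_R$, and observe that the pairing $(\cdot,H)_{\rm BBF}$ with a \emph{fixed} ample class $H$ strictly drops, stays nonnegative, and decreases by at least $\tfrac{2}{k_{\max}}$ at each step (integrality of $(e,R)$ and $(R,H)$ plus the uniform bound on $k=-(D_R)^2_{\rm BBF}$). This is the entire termination argument and it sidesteps any structural question about the chamber decomposition. Your appeal to ``the reflection subgroup acts simply transitively on the resulting chambers'' is exactly the kind of statement that is delicate in the higher-dimensional hyperK\"ahler setting: the chambers of $\mathcal{C}_X\setminus\bigcup_\delta \delta^\perp$ correspond to K\"ahler cones of the various birational models of $X$, and the subgroup generated by MBM reflections need not act simply transitively on all of them (it does act nicely within the movable cone via reflections in prime exceptional divisors, but that is a strictly smaller statement). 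The paper's descent argument avoids needing to know the precise Weyl-type structure, which is precisely why it extends more cleanly from the K3 case.

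Your ``delicate point'' about having to restrict to reflections with $(\delta,e)=(\delta,\lambda)=0$ is not actually an obstacle. The reflections $s_R$ are automorphisms of $H^2(X,\Z)$, not of $\Lambda$; the marking change $\alpha_X\mapsto\alpha_X\circ s_R$ leaves $e,\lambda\in\Lambda$ untouched. What one must check is that the new marked manifold has the same period, and this is automatic: $D_R\in NS(X)\subset H^{1,1}$, so $(D_R,-)_{\rm BBF}$ annihilates $H^{2,0}\oplus\overline{H^{2,0}}$ and $s_R$ restricts to the identity there. In particular $\alpha_X^{-1}(\lambda)$ and the real/imaginary parts of the period are fixed by $s_R$ without any extra hypothesis. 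The restriction you impose is unnecessary, and imposing it would actually weaken the argument because you then need a transitivity statement inside a smaller reflection group.

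Finally, you misplace the role of Assumption~\ref{b2}. The paper does not use $b_2\ge 5$ for a signature count on $\langle e,\lambda\rangle^\perp$; it uses it as the hypothesis of Amerik--Verbitsky's boundedness theorem \cite[Corollary~1.4]{AV4}, which guarantees that $(D_R)^2_{\rm BBF}$ takes only finitely many values across the deformation class. That bound is what makes the decrements in the descent argument uniformly bounded below and forces termination. Without it, neither the paper's descent nor any naive transitivity argument would terminate.
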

\begin{proof}[proof of Claim~\ref{e.nef1.HK}]
This follows from 
extension of the arguments in \cite[Chapter III, Remark~2.13]{Huy} for K3 case, given the recent development by \cite{AV1}. 
We now give its details. Suppose $\alpha_X^{-1}(e)$ is not nef, i.e., 
not in the closure of the K\"ahler cone of $X$. 
Then from the description of the K\"ahler cone in \cite[Theorem~1.19]{AV1}, there is a rational curve $R$ which they call ``MBM'' type, 
such that $$(\alpha_X^{-1}(e),R)<0.$$ Now we take a class $D_{R}\in H^{2}(X,\Z)$ such that 
$(D_{R},a)_{\rm BBF}=(R,a)$ for any $a\in H^{2}(X,\Z)$ 
where the left hand side means the Beauville-Bogomolov-Fujiki form and the right hand side means the 
usual cup product. From \cite[Corollary~1.4]{AV4} and our Assumption~\ref{b2} that $b_{2}(X)\ge 5$, 
we can assume $$-k=(R,D_{R})=(D_{R})_{\rm BBF}^{2}<0$$ and the value $-k=(D_{R})_{\rm BBF}^{2}$ is bounded below from 
a (negative) constant which only depend on the deformation equivalence class of $X$. 
This is first proved in the case of K3$^{[\frac{n}{2}]}$-type by Mongardi 
\cite[Corollary~2.7, also cf.\  (2.12), (2.14), (2.15)]{Mon} and Bayer-Hassett-Tscinkel \cite[Proposition 2]{BHT}. 
Then the uniform lower boundedness is later generalized as \cite[Corollary~1.4]{AV4}. 
Note that there is a slight difference of the terminology --- Mongardi \cite{Mon} defines the \textit{wall divisors} 
which are, roughly speaking, corresponding to the responsible facet of K\"ahler cone, while 
Amerik-Verbitsky introduces slight variant called \textit{Monodromy Birational Minimal (MBM)} curves 
(cf., \cite[Proposition~1.5]{Mon} for the difference). 

Now we consider the reflection on $H^{2}(X,\R)$ with respect to $D_{R}$. That is, we define 
\begin{align} \label{sR.HK}
s_{R}\colon H^{2}(X,\R)&\to \;\;H^{2}(X,\R)\\  \nonumber
 x\;\;\;\;\:&\mapsto x+\frac{2}{k}(x,R)D_{R}.
\end{align}
Note that above $s_{R}$ 
preserves the lattice $H^{2}(X,\Z)$ with the Beauville-Bogomolov-Fujiki form by the definition. 
On the other hand, for a fixed ample class $H\in H^{2}(X,\Z)$, we have 
\begin{align*}
0\le (s_{R}(\alpha_X^{-1}(e)),H)_{\rm BBF}&=(\alpha_X^{-1}(e),H)_{\rm BBF}+\frac{2}{k}(\alpha_X^{-1}(e),R)(R,H)\\ 
&<(\alpha_X^{-1}(e),H)_{\rm BBF}.
\end{align*}
Note that $k$ takes only finite possibilities, hence bounded,  
by the result explained above (\cite{Mon, BHT, AV4}). 
Hence, we cannot continue this for infinitely many times. 
Therefore, as in K3 case \cite[Chapter~VIII, Remark~2.13]{Huy} after changing the marking $\alpha_{X}$ by 
finite reflections of the form $s_{R}$ as \eqref{sR.HK}, $\alpha_X^{-1}(e)$ eventually becomes nef. 
\end{proof}

\end{Stp}
\medskip

\begin{Stp}[Nefness of $e$ on hyperK\"ahler rotations at a Siegel set]\label{HK.nef2}
Next we generalize Claim~\ref{e.nef} in the case of K3 surfaces to our setting of 
higher dimensional irreducible holomorphic symplectic manifolds. 
We arrange the setting as verbatim extension of \S\ref{trop.K3.1}. 
If $y$ lies in a certain Siegel subset in $\mathcal{D}_M\subset \mathcal{D}^{*}_M$, 
take a marked hyperK\"ahler manifold $X(y)$ and apply the same 
hyperK\"ahler rotation as in K3 case (Construction~\ref{HK.rotation.}) and denote it by $(X^{\vee}(y),\alpha_{X^{\vee}(y)})$. 
We wish to prove nefness of the line bundle $\mathcal{O}_{X^{\vee}(y)}(\alpha_{X^{\vee}(y)}^{-1}(e))$ on the 
holomorphic symplectic manifold 
$X^{\vee}(y)$, if we take appropriate Siegel set where $y$ lies, for each $\tilde{x}$. 

Under our Assumption~\ref{b2}, our proof of Claim~\ref{e.nef} 
works after simply replacing the $(-2)$-curves which played a role in the proof by MBM curve classes \cite{AV1, AV4}. 
Here, we use the same fact as in the previous Step~\ref{HK.nef1}, i.e., 
those whose self-intersection numbers 
(with respect to the Beauville-Bogomolov-Fujiki form) of MBM curves are bounded (\cite[Corollary~1.4]{AV4}). 
Combining with their previous work \cite[Theorem~1.19]{AV1}, we obtain the desired assertion. 

\end{Stp}
\medskip

\begin{Stp}[Basepoint freeness of $e$] 
To show the basepoint-freeness of $\mathcal{O}_{X}(\alpha_{X}^{-1}(e))$ 
and $\mathcal{O}_{X^{\vee}(y)}(\alpha_{X^{\vee}(y)}^{-1}(e))$ appeared in the previous Steps \ref{HK.nef1}, \ref{HK.nef2} 
from their nef-ness, seems to be in general 
unsettled difficult problem.  This is (a somewhat stronger version of) special case of the generalized abundance conjecture. 

Nevertheless, the ${\rm K3}^{[\frac{n}{2}]}$-type case is conditionally (for very general ones)
 confirmed by Markman
\cite[Theorem~1.3]{Mark13}. In the case of generalized Kummer varieties type, i.e., those 
deformation equivalent to generalized Kummer varieties, 
partial affirmative result is obtained in \cite[Proposition~3.38]{Yos13}.\footnote{The ``type'' refers to deformation equivalence class, 
at least in our paper. }
Then the assumptions in their results became for those K3$^{[\frac{n}{2}]}$-type and generalized Kummer varieties type, 
by \cite[Corollary~1.1]{Mat.semiample}. The proof in \textit{op.cit.}\ depends on these partial confirmations \cite{Mark13, Yos13} 
and extended version (\cite[Theorem 5.5]{Nak.kw}, \cite[\S4]{Fjn}) of Kawamata's theorem \cite{Kawa}. 
Anyway, we get a Lagrangian fibration structure from Matsushita's theorem \cite{Mat.fib} (projective case), \cite{Huy.HK.book} (general 
K\"ahler case). 
\end{Stp}
\medskip

Below, for two more steps in this section, we temporarily discuss under the assumption that 
$X$ admits the desired holomorphic Lagrangian fibration defined by $N$ 
 and denote it by $X\twoheadrightarrow B$. 
 By the above discussion, assuming that $M$ is of K3${}^{[\frac{n}{2}]}$ type or generalized Kummer varieties type is enough for that. 

\medskip
\begin{Stp}[The base of the Lagrangian fibration]\label{Lag.fib.HK}
The base spaces $B$ of the obtained Lagrangian fibrations $X\to B$ 
are proved to be isomorphic to $\mathbb{P}^{\frac{n}{2}}$ at least in the case deformation equivalent to 
${\rm K3}^{[\frac{n}{2}]}$-type case and the case deformation equivalent to the generalized Kummer varieties, 
by  Matsushita \cite[Theorem 1.4]{Mat.base}. 
For general holomorphic Lagrangian fibrations, the same conclusion 
is expected as Matsushita's conjecture \cite{Mat.fib} and is indeed 
known to be true if the base is smooth \cite{Hwa, GL}. Also there is a very recent progress on this conjecture, 
after the appearance of our first manuscript in \cite{BogKur}. 

Summarizing up, we can give a generalization of the geometric realization map $\Psi_{\rm alg}$ in the polarized projective K3 surface case in \S\ref{trop.K3.1} and 
\S\ref{Alg.K3.statements.sec} as follows. 

\begin{Def}[Partial geometric realization map]\label{part.geom.realization.HK}
In the case $X$ is deformation equivalent to K3$^{[\frac{n}{2}]}$-type or generalized Kummer varieties\footnote{We need this condition only for the explicit definition of $\Psi_{\rm alg}$ as discussed in above several steps. We expect this 
condition to be removable in future. },
to each point $\tilde{x}=[e,v]\in (\Gamma\backslash \mathcal{D}_M)(l)$, we consider the base manifold $B(\simeq \mathbb{P}^{\frac{n}{2}})$ 
of the Lagrangian fibration $X\twoheadrightarrow B$ obtained in Steps~\ref{Lag.fib.HK}. We denote the McLean metric by $g_{\rm ML}$ and 
associate it rescale $\frac{g_{\rm ML}}{{\rm diam}(g_{\rm ML})^2}$. Then we set 
\[\tilde{\Psi}_{\rm alg}(\tilde{x}):=
 \biggl(B,\frac{g_{\rm ML}}{{\rm diam}(g_{\rm ML})^2}\biggr).\] 
Then this map 
\[\tilde{\Psi}_{\rm alg}\colon \mathcal{D}_M 
\sqcup \bigsqcup_{l}  \mathcal{D}_M(l)\to  {\it CMet}_{1}\]
where $l$ runs over all isotropic lines in $\lambda^{\perp}\otimes \Q$, obviously descend to 
$$\Psi_{\rm alg}\colon (\Gamma\backslash \mathcal{D}_M)
\sqcup \bigsqcup_{l}  (\Gamma\backslash \mathcal{D}_M(l))\to {\it CMet}_{1},$$
where $l$ runs over all $\Gamma$-equivalence classes of isotropic lines in $\lambda^{\perp}\otimes \Q$. 
This is a part of our desired geometric realization map $\Psi_{\rm alg}$. 
\end{Def}

We remark that, Theorem~\ref{MS.Sat} combined with the above construction of $\Psi_{\rm alg}$ for $\bigsqcup_{l}(\Gamma\backslash \mathcal{D}_M(l))$, 
provides an analog of \cite{ACP} for the moduli of irreducible symplectic varieties in this case and their tropical version. 

By above Definition~\ref{part.geom.realization.HK}, we can put more precision on Conjecture~\ref{HK.conj} as a generalization of Conjecture~\ref{K3.Main.Conjecture} in the algebraic K3 surface case. 

\medskip

\begin{Conj}[Partial refinement of Conjecture~\ref{HK.conj}]\label{Good.HK.conj}
In the situation of Definition~\ref{part.geom.realization.HK}, the defined map 
\[\Psi_{\rm alg}\colon (\Gamma\backslash \mathcal{D}_M)
\sqcup \bigsqcup_{l} (\Gamma\backslash \mathcal{D}_M)(l)\to {\it CMet}_{1}\]
 is continuous.
\end{Conj}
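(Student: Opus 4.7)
My plan is to follow the same blueprint that proved Theorem~\ref{K3.Main.Conjecture.18.ok} for polarized K3 surfaces, adapting each step to the higher dimensional setting with the input supplied by Definition~\ref{part.geom.realization.HK} and Theorem~\ref{HK.GH.continuity}. First, continuity of $\Psi_{\rm alg}$ on the open locus $\Gamma\backslash\mathcal{D}_M$ is already covered by Theorem~\ref{HK.GH.continuity}, which handles both the interior and approaches within the partial compactification $M^{(m)}$ to the Heegner locus via the Donaldson-Sun theory for non-collapsing limits. So the essential new content concerns continuity along sequences in $\Gamma\backslash\mathcal{D}_M$ whose Satake-limit lies in some boundary stratum $(\Gamma\backslash\mathcal{D}_M)(l)$, together with continuity as the parameter varies within the boundary component itself.

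For a point $x=[e,v]\in(\Gamma\backslash\mathcal{D}_M)(l)$ with $l=\Q e$, I would fix a primitive isotropic $e\in l\cap\Lambda$ and an isotropic $f$ with $(e,f)=1$, set $\Lambda':=\langle e,f\rangle^{\perp}\subset\lambda^{\perp}$, and introduce Siegel sets $\mathfrak{S}_{\mathcal{U},t,\mathcal{V}}$ inside $\mathcal{D}_M$ exactly as in \eqref{Siegel.condition}. By the definition of the Satake topology, any sequence in $\Gamma\backslash\mathcal{D}_M$ converging to $x$ may be represented in such a Siegel set with $\epsilon_i\to 0$ and $v'_i\pmod{l_{\R}}\to v$. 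Applying the hyperK\"ahler rotation of Step~\ref{HKR.HK} and the MBM-class refinement of Claim~\ref{e.nef.general} carried out in Step~\ref{HK.nef2}, one obtains marked irreducible symplectic manifolds $(X_i^\vee,\alpha_i,\omega_i^\vee)$ for which $\alpha_i^{-1}(e)$ lies in the closure of the K\"ahler cone. Under our standing assumption that $M$ is of K3${}^{[\frac{n}{2}]}$-type or generalized Kummer type, the basepoint freeness results of Markman~\cite{Mark13} and Yoshioka~\cite{Yos13}, combined with Matsushita~\cite{Mat.semiample}, then produce simultaneous holomorphic Lagrangian fibrations $\pi_i\colon X_i^\vee\twoheadrightarrow B_i\simeq\mathbb{P}^{\frac{n}{2}}$ with fiber class $\alpha_i^{-1}(e)$, and the K\"ahler class $[\omega_i^\vee]$ takes the form $[\pi_i^*\omega_{B_i}+t_i\omega_{X_i^\vee}]$ up to $\R_{>0}$-scaling with $t_i\to 0$, exactly mirroring the setup leading to Theorem~\ref{GTZ.extend}.

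The analytic core of the proof is then a higher dimensional uniform adiabatic limit theorem extending Theorem~\ref{GTZ.extend}. Precisely, one needs to prove that for a family $\{\pi_s\colon X_s\to B_s\simeq\mathbb{P}^{\frac{n}{2}}\}_{s\in S}$ of projective irreducible symplectic manifolds, possibly with $\Q$-Gorenstein symplectic degenerations, parametrized by a relatively compact subset of the appropriate enlargement of $K\Omega^{e\ge 0}$, the Ricci-flat K\"ahler forms $\tilde\omega_{s,t}\in[\pi_s^*\omega_{B_s}+t\omega_{X_s}]$ converge in the Gromov-Hausdorff sense to $(B_s,\omega_{s,{\rm ML}})$ as $t\to 0$, uniformly in $s$. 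Granting this $s$-uniform collapse, the reduction from Theorem~\ref{K3.Main.Conjecture.18.ok} to Theorem~\ref{GTZ.extend} — the Siegel set argument together with the diagonal approximation of ``Claim~\ref{gen.seq} implies Lemma~\ref{all.seq}'' — transports verbatim and yields continuity of $\Psi_{\rm alg}$ on a neighborhood of $(\Gamma\backslash\mathcal{D}_M)(l)$ as well as along the boundary stratum itself by letting $s$ vary over $\mathcal{D}_M(l)$.

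The main obstacle I expect is establishing this $s$-uniform adiabatic collapse theorem in arbitrary even dimension. Our proof of Theorem~\ref{GTZ.extend} in Chapter~\ref{GTZ.extend.proof} used in an essential way that $\dim X_s=2$: the fiberwise Monge-Amp\`ere equation becomes the linear Laplace equation, solved by the explicit Jacobi-theta Green function in \S\ref{(3.9)}; the volume-form comparison of \S\ref{unif.L.infinity} is one-dimensional; and the Gromov-Hausdorff argument of \S\ref{GH.conv} exploits $\dim B_s=1$ to reduce distortion control of $\pi_s$ to curves and avoid the discriminant locus, which in our setting now is a divisor of $\mathbb{P}^{\frac{n}{2}}$ of real codimension $2$. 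For the higher dimensional statement, one must replace these ingredients by the corresponding $L^\infty$-, $C^2$- and higher $C^k$-estimates of \S\ref{Thm2.2.}--\S\ref{4.3}, made uniform over $s$, for the fiberwise Monge-Amp\`ere equation on abelian $\frac{n}{2}$-folds; the single-parameter versions are by now largely in the literature on collapsing hyperK\"ahler manifolds to their Lagrangian bases, but making them $s$-uniform under degeneration to singular symplectic varieties in $M^{(m)}$ requires a family version of the auxiliary reference metric $\omega_{X_s,{\rm new}}$ of \S\ref{Grauert.singular.metric}. Most delicate is a uniform control of the McLean metric on $\mathbb{P}^{\frac{n}{2}}$ near its higher-codimension discriminant locus extending Proposition~\ref{ML.estimate.disc}; once that, together with the $s$-uniform a priori estimates, is in place, the remaining Siegel set and nefness inputs from Steps~\ref{HK.nef1}--\ref{Lag.fib.HK} combine formally to yield Conjecture~\ref{Good.HK.conj}.
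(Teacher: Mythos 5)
The statement you have been asked to prove is a \emph{Conjecture} (Conjecture~\ref{Good.HK.conj}), and the paper does not prove it. Immediately after stating it, the paper explicitly catalogues the open analytic difficulties in Remark~\ref{Anal.prob}, listing precisely those parts of the proof of Theorem~\ref{GTZ.extend} --- \S\ref{H.upper}, the Moser iteration of \S\ref{unif.L.infinity} which uses $\dim X_s=2$, the new reference metric estimates (Lemma~\ref{2.9.new} and \S\ref{(3.9)new.ref.metric}), \S\ref{Tos.Thm4.1}, and \S\ref{Sing.fiber.nbhd} which exploits $\dim({\rm disc}(\pi_s))=0$ --- as technical obstacles that ``become'' open problems in higher dimension. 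Your proposal is therefore not a proof but a strategy outline, and to your credit you are explicit about this: you flag the $s$-uniform adiabatic collapse theorem, the fiberwise Monge--Amp\`ere analysis on abelian $\tfrac{n}{2}$-folds replacing the Green-function argument of \S\ref{(3.9)}, the family version of $\omega_{X_s,{\rm new}}$ from \S\ref{Grauert.singular.metric}, and the higher-codimension discriminant-locus McLean estimates extending Proposition~\ref{ML.estimate.disc}, as the open gaps. This list matches the paper's own Remark~\ref{Anal.prob} essentially item for item.

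Where your proposal is \emph{correct} is in the reduction: you rightly observe that continuity on $\Gamma\backslash\mathcal{D}_M$ itself is supplied by Theorem~\ref{HK.GH.continuity}, that the Siegel-set and hyperK\"ahler-rotation reduction to a uniform adiabatic limit theorem transports formally from the proof of Theorem~\ref{K3.Main.Conjecture.18.ok} once the nefness and basepoint-freeness inputs of Steps~\ref{HK.nef1}--\ref{Lag.fib.HK} are available (which the paper establishes under the K3$^{[\frac{n}{2}]}$- or generalized-Kummer-type hypothesis), and that the diagonal-approximation argument of ``Claim~\ref{gen.seq} implies Lemma~\ref{all.seq}'' is dimension-independent. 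But none of this closes the analytic gap. Since the paper takes the same view --- this is why the result is stated as a conjecture rather than a theorem --- your proposal does not contradict the paper, but neither does it prove the statement; it accurately reproduces the paper's own assessment of what a future proof would require and where the genuine difficulties lie.
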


\end{Stp}

\begin{Rem}[Analytic problems to prove Conjectures~\ref{HK.conj} and \ref{Good.HK.conj}]\label{Anal.prob} 

Let us consider what we have to do for proving the conjecture above. 
Recall that in the analytic arguments for K3 surfaces in \S\ref{GTZ.extend.proof} of our paper, i.e., 
the proof of Theorem~\ref{GTZ.extend} we frequently use 
${\rm dim}(X_{s})=2$ or ${\rm dim}(B_{s})=1$. Therefore, to 
generalize our arguments in a direct manner, one needs at least extensions of 
\begin{itemize}
\item \S\ref{H.upper}, 
\item \S\ref{unif.L.infinity} where our Moser iteration 
arguments use ${\rm dim}(X_{s})=2$, 
\item Lemma~\ref{2.9.new}, 
\S\ref{(3.9)new.ref.metric} for new reference metrics, 
\item \S\ref{Tos.Thm4.1}, 
\item \S\ref{Sing.fiber.nbhd} where the discussion uses the fact that 
${\rm disc}(\pi_{s}) (\subset B_{s})$ 
is $0$-dimensional. 
\end{itemize}
All these become technical obstacles to solve Conjecture~\ref{HK.conj}.\footnote{Note that, as far as the locus of smooth manifolds 
(the complement of the closure of the Heegner locus) is concerned, 
we have much less problems to solve. }

Once we would be able to extend the above listed parts (the previous paragraph) 
and Conjecture~\ref{Good.HK.conj}, for the case of the deformation class $M$, 
\textit{such resolution combined with} our Theorem~\ref{Moduli.Shimura} and 
the discussion in \S\ref{F2d.sec} (especially \S\ref{K3.along.disk}) give a proof of the 
conjecture of Kontsevich-Soibelman \cite[Conjecture 1]{KS} for hyperK\"ahler manifolds of the class $M$. 
As the arguments are completely the same as in \S\ref{K3.along.disk}, we omit the details. 

\end{Rem}

\medskip
\begin{Stp}[Type II collapsing]
For each isotropic plane $p\subset \lambda^{\perp}\otimes \Q$ with respect to the Beauville-Bogomolov-Fujiki form, 
and the corresponding $0$-dimensional boundary components $(\Gamma\backslash \mathcal{D}_M)(p)$, 
we wish to associate a metrized polytope $\Psi_{\rm alg}((\Gamma\backslash \mathcal{D}_M)(p))$ as a generalization of the unit interval in K3 case 
to refine Conjecture~\ref{HK.conj} in a more explicit manner. 
This is \textit{not} yet done in general. In fact, we believe that even in K3 case we lack enough conceptual understanding. 
See \cite{HSVZ} for the recent related development in K3 case, 
and we strongly wish to discuss and develop our understanding further on this problem in near future. 

\end{Stp}

\medskip

Also, it has been expected by experts 
that the dual intersection complex of maximally degenerating 
irreducible symplectic manifolds is homeomorphic to the complex projective space of half real dimension. 
Indeed, there are some recent progresses related to it by 
\cite{Nag1, NX, Nag2, KLSV, BM17}.

\section{Conjectural picture II --- general hyperK\"ahler case}\label{HK.Kahler}

We also expect that similar phenomenon as Conjecture \ref{K3.Main.Conjecture2} also holds 
for general (\textit{not} necessarily projective) compact hyperK\"ahler manifolds. We first review and make our setup 
on the moduli of such K\"ahler metrized hyperK\"ahler manifolds. Recall that 
there was some discussion toward this direction in \cite[\S3, \S4]{HuyMM}. 

We keep fixing the same notations as \S\ref{HK.alg}; 
a lattice $\Lambda$ of signature $(3,r-3)$ for $r>3$ and any deformation equivalence class $\mathcal{T}_M$ 
of $\Lambda$-marked irreducible smooth holomorphic 
symplectic manifolds $X_{\rm ref}$. We only allow 
markings $\alpha_X$ of $H^2(X,\Z)$ for deformation $X$ of $X_{\rm ref}$ 
so that $(X,\alpha_X)\in \mathcal{T}_M$ (cf., e.g., \cite[\S3.2]{GHS}, \cite[\S3.1]{Mark.Torelli}). 
We say such a marking is $M$-\textit{admissible}. 

We set $$\mathcal{P}_{X_{\rm ref}}:=\tilde{\Gamma}\backslash {\rm Gr}_{3}^{+,{\rm or},o}(\Lambda\otimes \R),$$ 
where ${\rm Gr}_{3}^{+,{\rm or},o}(\Lambda\otimes \R)$ denotes a connected component of the 
Grassmannian of real $3$-dimensional oriented positive definite subspaces 
of $\Lambda\otimes_{\Z}\R$ and $\tilde{\Gamma}\subset O(\Lambda)$ is the subgroup generated by 
monodromies along deformations of $X_{\rm ref}$ (cf., e.g., \cite[\S3.2]{GHS}).\footnote{which is denoted by ``${\rm Mon}^{2}(X_{\rm ref})$'' in {\textit{op.cit}}. }
If $X_{\rm ref}$ is a K3 surface, 
it is generated by $(-2)$-reflections (cf., \cite[Chapter 14, Proposition~5.5]{Huy} and generalization \cite[Theorem~1.2]{Mark06}) and 
$\mathcal{P}_{X_{\rm ref}}$ is identified with $\mathcal{M}_{\rm K3}$ through the refined period map. 
Define the set $\mathcal{K}^{\rm sm}(X_{\rm ref})$
 as 
\begin{align*}
&\{(X,\omega)\mid X {\text{ is a deformation of }}X_{\rm ref},\ 
 \omega \text{ is a K\"ahler class on }X \text{ of diameter }1\}
\end{align*}
and define $\mathcal{K}(X_{\rm ref})$ as
\begin{align*}
\{(X,\omega)\mid  &\ X \text{ is a symplectic K\"ahler analytic space (cf., Definition~\ref{IHS.def})} \\
&\text{which is a degeneration of smooth deformations of } X_{\rm ref}, \\ 
& \omega \text{ is a Ricci-flat K\"ahler metric on }X
 \text{ (cf., \cite{EGZ.sing}) of diameter }1\}. 
\end{align*}

We set $$\mathcal{M}^{\rm sm}(X_{\rm ref}):=\mathcal{K}^{\rm sm}(X_{\rm ref})/\sim,$$
where the equivalence relation $\sim$ denotes the $SO(3)$-action generated by the hyperK\"ahler rotations. 
Then, by considering $M$-admissible markings $\alpha$ on the smooth deformations $X$ of $X_{\rm ref}$, we consider the natural refined period map 
\begin{align*}
P^{\rm sm}\colon \mathcal{M}^{\rm sm}(X_{\rm ref})&\to \;\;\;\;\;\;\;\;\;\;\;\;\mathcal{P}_{X_{\rm ref}}\\ 
(X,\omega) &\mapsto \langle\alpha_{\R}([{\rm Re}(\sigma_X)]),\alpha_{\R}([{\rm Im}(\sigma_X)]),\alpha_{\R}([\omega])\rangle,
\end{align*}
where $\sigma_X$ is a holomorphic $2$-form normalized as $[{\rm Re}(\sigma_X)]^2=[{\rm Im}(\sigma_X)]^2=([\omega]^2=)1$. 
From the definitions of $M$-admissible markings and $\tilde{\Gamma}$, this is well-defined and generalizes the case of K3 surfaces studied in 
\cite{Tod, Looi, KT, Kro89a, Kro89b}. 
Further, the above map is injective by the Hodge theoretic Torelli theorem \cite{Ver, Ver.er}, \cite[\S3.2]{Mark.Torelli} 
(cf., also \cite[Theorem~3.5]{GHS}). 

\medskip

Our conjecture for general (not necessarily projective) K\"ahler  irreducible holomorphic symplectic manifolds is as follows. 

\begin{Conj}
In the above situation, the following holds. 
\begin{enumerate}
\item \label{HK.rot.singular}
We can extend the $SO(3)$-action (generated by the hyperK\"ahler rotations) on $\mathcal{K}^{\rm sm}(X_{\rm ref})$ to that 
on $\mathcal{K}(X_{\rm ref})$ which gives the 
equivalence relation $\sim$ on it and the quotient space 
$\mathcal{M}(X_{\rm ref}):=\mathcal{K}(X_{\rm ref})/\sim$. 
There is a natural topology on it, with respect to which the hyperK\"ahler metrics are Gromov-Hausdorff continuous. 
\item \label{HK.mod}
There is a natural homeomorphism $$P\colon 
\mathcal{M}(X_{\rm ref})\to \mathcal{P}_{X_{\rm ref}}$$ 
which extends $P^{\rm sm}$. 
Through this homeomorphism $P$, 
we identify 
$\mathcal{M}(X_{\rm ref})$ and $\mathcal{P}_{X_{\rm ref}}$. 
\item \label{HK.general.conj}
There exists a continuous map 
\[\Psi\colon \overline{\mathcal{P}_{X_{\rm ref}}}^{\rm Sat,\tau_{\rm ad}}
 \to {\it CMet}_{1},\] 
where, the left hand side is the Satake compactification with respect to the adjoint representation of $O(\Lambda\otimes \R)$.
The map $\Psi$ on $\mathcal{P}_{X_{\rm ref}}$ is 
 defined by associating the hyperK\"ahler metric corresponding via
 the homeomorphism $P$ in \eqref{HK.mod}. 
We call $\Psi$ the geometric realization map again. 
\end{enumerate}
\end{Conj}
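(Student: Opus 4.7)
The plan is to treat the three assertions (i), (ii), (iii) in sequence, with each building on the preceding one. For (i), I would define the $SO(3)$-action on a singular member $(X,\omega)\in \mathcal{K}(X_{\rm ref})$ by first passing to a symplectic resolution $\tilde{X}\to X$, whose existence in families is guaranteed by Fact~\ref{simul.resol}. The Ricci-flat K\"ahler current $\omega$ (in the sense of \cite{EGZ.sing}) restricts to a genuine hyperK\"ahler metric on the smooth locus $X^{o}$, which lifts to the complement of the exceptional divisor of $\tilde{X}$. On that open set, the usual 2-sphere of complex structures is at our disposal, and the rotated complex structure gives a (non-proper, non-K\"ahler a priori) complex analytic space. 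I would then take the relative canonical model of the rotated family to recover a singular symplectic analytic space in $\mathcal{K}(X_{\rm ref})$; the non-trivial point is to verify compatibility across codimension $\geq 2$ singularities, which should follow from the fact that hyperK\"ahler rotation preserves the class of analytic spaces admitting symplectic resolutions (see the Namikawa rigidity \cite{Nam3}). The topology on $\mathcal{M}(X_{\rm ref})$ is then forced by requiring the refined period map $P$ of part (ii) to be a homeomorphism, and Gromov-Hausdorff continuity along this topology will follow a posteriori from a K\"ahler-singular variant of Theorem~\ref{HK.GH.continuity}.

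For (ii), injectivity of $P$ reduces to the Hodge-theoretic global Torelli theorem (\cite{Ver}, \cite[\S3]{Mark.Torelli}) once we have verified that the $SO(3)$-orbits defined in (i) are exactly the fibers of the naive period map, which follows from the hyperK\"ahler rotation being defined pointwise on the 2-sphere of complex structures. Surjectivity is more delicate: I would extend Huybrechts' surjectivity argument \cite[\S8]{Huy.HK} by combining it with the projective partial compactification from Theorem~\ref{Moduli.Shimura}. Given an oriented positive-definite 3-plane $V\subset \Lambda\otimes\R$, one approximates $V$ by 3-planes realized by \emph{smooth} Ricci-flat K\"ahler members (dense by \cite[\S8]{Huy.HK}), extracts a Gromov-Hausdorff limit using uniform diameter bounds, and identifies the limit with a singular symplectic K\"ahler analytic space via the Donaldson-Sun theorem applied locally on a projective approximating family. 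The output is a preimage of $[V]$ in $\mathcal{K}(X_{\rm ref})$, giving surjectivity.

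For (iii), the strategy mirrors Sections~\ref{F2d.sec}--\ref{Kahler.section} for K3 surfaces. Continuity on the open part $\mathcal{P}_{X_{\rm ref}}$ is the K\"ahler analogue of Theorem~\ref{HK.GH.continuity}, using Tosatti-type uniform diameter bounds on bounded families of periods and K\"ahler classes together with Donaldson-Sun. On the $(b_{2}(X_{\rm ref})-4)$-dimensional boundary components indexed by isotropic lines $l=\Q e\subset \Lambda\otimes\Q$, I would extend the assignment of Definition~\ref{part.geom.realization.HK} after hyperK\"ahler rotation and a Siegel-reduction argument (generalizing Claim~\ref{e.nef.general}), producing a Lagrangian fibration $X\twoheadrightarrow B\simeq \P^{n/2}$ whose McLean metric is the limit. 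For the lower-dimensional boundary strata, corresponding to higher-codimension isotropic subspaces of $\Lambda\otimes\Q$, one expects limits that are generalizations of $\mathbb{P}^{n/2}$-type McLean bases, quotients of flat tori (by analogy with $\mathcal{M}_{\rm K3}(b_{1})$, cf.\ Foscolo~\cite{Fos}), or polytope-like metric spaces with flat structures (by analogy with $\mathcal{M}_{\rm K3}(b_{2})$, cf.\ Hein-Sun-Viaclovsky-Zhang~\cite{HSVZ} and Chen-Chen~\cite{ChCh}). The combinatorial type of each limit should be read off from the weight filtration of the limiting mixed Hodge structure on $H^{2}$, which determines the $\Gamma$-orbit of the corresponding rational parabolic.

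The main obstacles are threefold. Analytically, one needs a higher-dimensional, $s$-uniform version of Theorem~\ref{GTZ.extend}: as noted in Remark~\ref{Anal.prob}, our K3 proof in Chapter~\ref{GTZ.extend.proof} uses $\dim X_{s}=2$ and $\dim B_{s}=1$ in essential places, notably the Moser iteration of \S\ref{unif.L.infinity} and the fiberwise $L^{\infty}$-estimate of \S\ref{(3.9)} which reduces to a linear Laplace equation on elliptic fibers; in higher dimensions the fiberwise complex Monge-Amp\`ere equation remains non-linear and a new estimate is required. Algebro-geometrically, the basepoint-freeness of $\alpha_{X}^{-1}(e)$ on the non-algebraic rotated K3$^{[n/2]}$ or generalized Kummer varieties is only partially known (Markman~\cite{Mark13}, Yoshioka~\cite{Yos13}, Matsushita~\cite{Mat.semiample}), and outside these two deformation types the very definition of $\Psi$ on positive-dimensional boundary components awaits a resolution of the hyperK\"ahler SYZ conjecture. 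The hardest part, however, will be a conceptual and explicit description of $\Psi$ on the deepest strata (of dimension much smaller than the real rank 3 predicts from the K3 case), where the expected limit metric spaces are metrized polytopes whose combinatorial structure, beyond the Kummer-type quotients of flat tori, is not yet understood even conjecturally in dimensions greater than four.
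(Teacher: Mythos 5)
This statement is a \emph{Conjecture} in the paper, not a theorem: the paper states it and then immediately remarks that (i) ``holds if hyperK\"ahler rotations can be extended across the symplectic singularities,'' that (ii) is a higher-dimensional generalization of Kobayashi--Todorov for which Theorems~\ref{Moduli.Shimura} and \ref{HK.GH.continuity} give only partial understanding, and that (iii) is ``the main part, a weak (non-explicit) generalization of Conjecture~\ref{K3.Main.Conjecture2}.'' There is therefore no paper proof to compare against, and your proposal should not be read as a proof but as a roadmap. As such it is largely accurate: it tracks the paper's own program (hyperK\"ahler rotation plus Torelli for (ii), extension of the K3 estimates of Chapter~\ref{GTZ.extend.proof} for (iii)) and explicitly lists the same bottlenecks the paper lists in Remark~\ref{Anal.prob} — the lack of a higher-dimensional $s$-uniform version of Theorem~\ref{GTZ.extend}, the limited knowledge of basepoint-freeness beyond the K3$^{[n/2]}$ and generalized Kummer types, and the unknown structure of the limits at the deepest strata.

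There are, however, a couple of places where the sketch glides over a genuine difficulty. In part (i) you propose to rotate on the smooth locus, then ``take the relative canonical model of the rotated family.'' The rotated complex structure is in general non-projective and non-K\"ahler a priori, so the relative canonical model is not an available tool, and the appeal to Namikawa's rigidity \cite{Nam3} does not by itself produce a filling of the puncture in the singular locus: Namikawa's results concern deformations and $\mathbb{Q}$-factorial terminalizations of a \emph{fixed} complex structure, not the behaviour of the full $2$-sphere of complex structures near a non-ADE symplectic singularity. This is precisely the open point the paper flags when it says the former half of (i) holds \emph{if} rotation extends across singularities; the K3 case works because Kronheimer's explicit ALE family furnishes the local model, and no analogue exists in general in higher dimension. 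Similarly, in part (ii) surjectivity via Gromov--Hausdorff limit plus Donaldson--Sun presupposes the K\"ahler (non-projective) analogue of Theorem~\ref{HK.GH.continuity}, which the paper does not prove outside the projective setting. These are not errors in logic so much as places where your outline, like the paper's, leaves the hard steps open — which is why the statement is a conjecture and not a theorem.
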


The former half of the above \eqref{HK.rot.singular} holds if hyperK\"ahler rotations can be extended across the symplectic singularities. 
\eqref{HK.mod} is a higher dimensional generalization of the work of Kobayashi-Todorov \cite{KT}. 

Our Theorems \ref{Moduli.Shimura}, \ref{HK.GH.continuity} on the polarized (algebraic) case as well as 
\cite[Proposition~5.1, Theorem~5.9]{Huy.HK} give some partial understanding toward above \eqref{HK.mod}. 
We also note that the arguments of Remark \ref{K3.erg} after \cite{Ver.erg} also work verbatim in this higher dimensional setting. 

The last statement \eqref{HK.general.conj} is the main part, a weak (non-explicit) generalization of Conjecture~\ref{K3.Main.Conjecture2} for K3 surfaces case. 

\chapter{Towards general $K$-trivial varieties case}\label{high.dim.gen.sec}

We also wish to extend our picture for general collapsing of Ricci-flat K\"ahler metrics on $K$-trivial varieties, say $X$. 
Here, $K$-trivial\footnote{we employ this terminology in this work, since other popular name ``Calabi-Yau variety'' 
often assume stronger assumption which we do not impose here.}
simply means ($X$ is compact and) $K_X\equiv 0$. 
Unfortunately, for such general setting, we know much fewer things. 
Nevertheless, we would like to give a partial formulation in Problem~\ref{CY.conj}. 
Our setup is: 

\begin{Stpp}
We first fix $\mathcal{M}$ which is a connected finite type moduli (Deligne-Mumford) 
stack, with its coarse moduli space $\mathcal{M}\to M$, 
of polarized $K$-trivial varieties $(X,L)$
with only log-terminal singularities on $X$. We crucially assume \textit{boundedness}, 
i.e., there are no other $\mathbb{Q}$-Gorenstein degeneration of a family in the deformation class, 
which is still log terminal polarized $K$-trivial varieties. This boundedness problem is solved for 
compact hyperK\"ahler manifolds case as Theorem \ref{Moduli.Shimura} but not yet 
for other $K$-trivial varieties case as far as the authors are aware (cf., e.g., \cite[Remark 1.2]{Zha.WP}). 
We solved this for hyperK\"ahler case in Theorem~\ref{HK.bded}. As we inferred in the proof of Theorem~\ref{HK.bded}, 
this boundedness is easily reduced to the boundedness of {\it exponent} of the limit $\mathbb{Q}$-line bundle $L$, 
i.e., the minimal positive integer $m$ such that $L^{\otimes m}$ becomes genuine line bundle, by the 
effective base point freeness type theorem (\cite[1.1]{Kol.bp}, \cite[\S4]{Fjn2}, \cite[\S7]{Fjn1}, \cite[1.2]{Kol.bp}).\footnote{We thank Chen Jiang for pointing out this to us.} We also believe this issue can be approached at least partially 
through the analysis done in \cite[\S3]{SS}. 

\begin{Rem}
Here is a note added in January, 2021. 
A while after the appearance of this paper, 
this boundedness is proved unconditionally 
by Caucher Birkar \cite[Corollary 1.6]{Bir20}, 
as kindly informed by Chenyang Xu which we appreciate. 
In the case of irreducible holomorphic symplectic varieties, 
it gives another proof to Corollary \ref{HK.bded} but recall that our 
approach in  \S\ref{geom.prob.HK} 
via period map gives more precise information including 
the orbi-smoothness of the partial compactification 
i.e., Theorem \ref{Moduli.Shimura}. 
\end{Rem}
\end{Stpp}

\begin{Stpp}
Recall from the celebrated theorem of Yau \cite{Yau}, Eyssidieux-Guedj-Zeriahi \cite{EGZ.sing}, that each such 
$K$-trivial variety $X$ admits a unique Ricci-flat K\"ahler metric in the first Chern class $c_1(L)$ of the polarization $L$. 
On the other hand, Viehweg \cite{Vie} showed 
that $M$ is always quasi-projective. Note that the Bogomolov-Tian-Todorov theorem \cite{Bog1, Bog2, Tia.Bog, Tod89} 
essentially states that the Zariski open substack $\mathcal{M}^{\rm sm}$ 
of $\mathcal{M}$ parametrizing 
smooth $K$-trivial varieties is a \textit{smooth} Deligne-Mumford stack 
(given algebraization of a semi-universal deformation). 
\end{Stpp}

\begin{Stpp} We make the second assumption  (a version of unobstructedness) 
that $\mathcal{M}$ itself can be written as 
a quotient stack $[T/G]$ where 
$T$ is a \textit{smooth} quasi-projective scheme while $G$ is a finite group. 
Equivalently, $T$ admits a $G$-equivariant semi-universal deformations of 
complex log-terminal $K$-trivial varieties in our fixed class. 
Note that the possible singularities on $X$ could violate the Bogomolov-Tian-Todorov type unobstructedness and  
make this assumption nontrivial (cf., e.g., \cite{Gross.obst, Nam.obst}).  
\end{Stpp}

\begin{Stpp}
 We take a $G$-equivariant projective compactification $T\subset \bar{T}$ such that 
$(\bar{T},\partial \bar{T}:=\bar{T}\setminus T)$ is a divisorial log terminal pair (the notion invented by Shokurov 
and is very close to being simple normal crossing cf., e.g., \cite[Chapter~3]{KM}). 
Our final assumption in this chapter is that 
$(\bar{T},\partial \bar{T})$ has non-negative 
logarithmic Iitaka dimension.\footnote{
In the case when $M$ is locally Hermitian-symmetric, 
even log general typeness had been verified as a 
classical result of Mumford \cite{Mum77}. }  
\end{Stpp}

In the above setting, take the Gromov-Hausdorff compactification of $M$ 
and denote by $\overline{M}^{\rm GH}$. 
On the other hand, consider a $G$-equivariant log minimal 
(dlt\footnote{divisorially log terminal}) model 
of $(\bar{T},\partial \bar{T})$ and apply its Morgan-Shalen-Boucksom-Jonsson  
compactification of $\mathcal{M}\subset [\bar{T}/G]$ 
\cite[Appendix A.10, A.11]{TGC.II} which we denote as 
$M\subset \overline{M}^{\rm min.MSBJ}$. 
We expect this is independent of the choice of the dlt model of $\bar{T}$ 
(\cite[A.10]{TGC.II}). 

Then we would like to ask the following: 

\begin{Prob}[General $K$-trivial case]\label{CY.conj}
Is it true that the Gromov-Hausdorff compactification 
$\overline{M}^{\rm GH}$ is dominated by $\overline{M}^{\rm min.MSBJ}$? 
More precisely speaking, is there a continuous map $\Phi_{\rm alg}$ from 
$\overline{M}^{\rm min.MSBJ}$ to $\overline{M}^{\rm GH}$ preserving the open dense subspace $M$?
\end{Prob}

Although the authors have not confirmed this speculation beyond the 
cases of abelian varieties and some hyperK\"ahler varieties
 as our Theorems \ref{Ag.TGC.Satake.MS}, \ref{K3.Main.Conjecture.18.ok}, and \S\ref{HK.alg}, \S\ref{Alg.prob}, 
 we expect some relations with the mirror symmetry. 
Indeed, the attached boundary of the above-mentioned Morgan-Shalen-Boucksom-Jonsson  
compactification 
``around maximally unipotent points" are expected to be (and partially proved to be) a union of the 
$\mathbb{R}_{>0}$-quotients of the K\"ahler cones of 
``mirror duals" (cf., e.g., \cite{HosTak}). 
It would be interesting to see how this is related to our results on above Problem~\ref{CY.conj} for K3 surfaces in 
which we used the hyperK\"ahler rotations. 

\smallskip

The above Problem~\ref{CY.conj} also seems to inherit some spirit of 
 the Griffiths-Morrison conjecture\footnote{Very recently, 
 a paper \cite{GGLR} appeared.} 
on the existence of analog of 
Satake-Baily-Borel compactification of period domain quotients 
(cf.\  \cite[9.2, 9.5]{Griffiths}, \cite{Morrison}, \cite{GGLR}), 
although ours is rather 
an analog of a \textit{different} (non-algebraic) minimal Satake compactification and uses 
the idea of the log minimal model program. 

\subsection*{Note added in the revision} 
There are (partially ongoing) progresses on the understanding of 
collapses along type II  degeneration of K3 surfaces (\cite{Od20}, 
\cite{Osh}, and related works \cite{ABE}, \cite{HSZ}). 


\newpage

\addcontentsline{toc}{chapter}{Summary of notations}
\subsection*{Summary of Notations}

\begin{itemize}

\item ${\it CMet}_{1}$: the set of isometry classes of compact metric spaces with diameter one equipped with the Gromov-Hausdorff topology (\S \ref{Intro.K3})

\item $A_{g}$: the moduli space of $g$-dimensional 
principally polarized abelian varieties (\S \ref{Abel.sec}) 

\item $\overline{A_{g}}^{\rm T}$: the tropical geometric 
compactification of $A_{g}$ (\S \ref{Abel.sec}, also \cite{TGC.II}).

\item $\Lambda_{\rm K3}$: the K3 lattice (\S \ref{Mod.pol.K3})

\item $\Lambda_{2d}$: the orthogonal complement of 
a primitive vector with the squared norm $2d$ in $\Lambda_{\rm K3}$ (\S \ref{Mod.pol.K3})

\item $\mathcal{F}_{2d}$: 
the moduli space of polarized K3 surfaces 
of degree $2d$ (\S \ref{Mod.pol.K3})

\item $\overline{\mathcal{F}_{2d}}^{\rm Sat}$: 
the Satake compactification of $\mathcal{F}_{2d}$ for 
adjoint representation (\S \ref{K3.Sat.sec})

\item $\mathcal{F}_{2d}(l), \mathcal{F}_{2d}(p)$: 
boundary components of $\overline{\mathcal{F}_{2d}}^{\rm Sat}$ 
(\S \ref{K3.Sat.sec})

\item $\Phi_{\rm alg}$: the geometric realization map
 for $\mathcal{F}_{2d}$ (\S \ref{Alg.K3.statements.sec})

\item $\mathcal{M}_{\rm K3}$: the moduli space of 
K\"ahler K3 surfaces up to hyperK\"ahler rotation (\S \ref{STK.MK3})

\item $\overline{\mathcal{M}_{\rm K3}}^{\rm Sat}$: the Satake compactification of $\mathcal{M}_{\rm K3}$ for adjoint representation (\S \ref{STK.MK3})

\item $\mathcal{M}_{\rm K3}(-)$: boundary components of 
$\overline{\mathcal{M}_{\rm K3}}^{\rm Sat}$ (\S \ref{STK.MK3})

\item $\Gamma_{16}$: an even unimodular lattice of rank $16$, 
which is also an overlattice of the root lattice $D_{16}$. 
Also sometimes denoted by $D_{16}^{+}$ (\S \ref{STK.MK3}) 

\item $\Phi$: the geometric realization map
 for $\mathcal{M}_{\rm K3}$ (\S \ref{Geom.Meaning})

\item $M_{W}$: the moduli space of Weierstrass elliptic K3 surfaces 
(\S \ref{Weier.mod.sec})

\item $\overline{M_{W}}$: the GIT compactification of $M_W$ (\S \ref{Weier.mod.sec})

\item $M_{W}^{\rm nn}, M_{W}^{\rm seg}, M_{W}^{\rm nn, o}, M_{W}^{\rm seg,o}$: boundary components of $\overline{M_{W}}$
(\S \ref{Weier.mod.sec})

\item $\Phi_{\rm ML}$: the geometric realization map
 for $M_W$ (\S \ref{ML.limit})

\end{itemize}


\newpage

\addcontentsline{toc}{chapter}{Bibliography}

\bigskip
\bigskip

\footnotesize 
\noindent
{\bf Contact of Yuji Odaka}:\\ 
email address: \footnotesize 
{\tt yodaka@math.kyoto-u.ac.jp} \\
Department of Mathematics, Kyoto university, 
Oiwake-cho, Kitashirakawa, Sakyo-ku, Kyoto city, Kyoto 606-8285. JAPAN \\

\medskip

\noindent
{\bf Contact of Yoshiki Oshima}:\\ 
email address: \footnotesize 
 {\tt oshima@ist.osaka-u.ac.jp} \\
Department of Pure and Applied Mathematics, Graduate School of Information Science and Technology, 
Osaka University, 1-5 Yamadaoka, Suita, Osaka 565-0871. JAPAN\\

\end{document}